\tikzset{double line with arrow/.style args={#1,#2}{decorate,decoration={markings,%
mark=at position 0 with {\coordinate (ta-base-1) at (0,1pt);
\coordinate (ta-base-2) at (0,-1pt);},
mark=at position 1 with {\draw[#1] (ta-base-1) -- (0,1pt);
\draw[#2] (ta-base-2) -- (0,-1pt);
}}}} 
\renewcommand{\S}{\ensuremath{\mathbb{S}}}
\newcommand{\cU}{\ensuremath{\mathcal{U}}}
\newcommand{\bD}{\ensuremath{\mathbb{D}}}
\newcommand{\bR}{\ensuremath{\mathbb{R}}}
\newcommand{\Z}{\ensuremath{\mathbb{Z}}}
\newcommand{\R}{\ensuremath{\mathbb{R}}}
\newcommand{\D}{\ensuremath{\mathbb{D}}}
\newcommand{\Tor}{\ensuremath{\mathbb{T}^2}}
\newcommand{\Proj}{\ensuremath{\mathbb{P}^2}}
\newcommand{\Moeb}{\ensuremath{\mathbb{M}^2}}
\newcommand{\Sym}{\ensuremath{\mathfrak{S}}}
\newcommand{\ab}{\ensuremath{\mathrm{ab}}}
\newcommand{\Lie}{\ensuremath{\mathcal{L}}}
\newcommand{\LCS}{\ensuremath{\varGamma}}
\newcommand{\B}{\ensuremath{\mathbf{B}}}
\newcommand{\F}{\ensuremath{\mathbf{F}}}
\newcommand{\PB}{\ensuremath{\mathbf{P}}}
\newcommand{\vB}{\ensuremath{\mathbf{vB}}}
\newcommand{\vP}{\ensuremath{\mathbf{vP}}}
\newcommand{\wB}{\ensuremath{\mathbf{wB}}}
\newcommand{\wP}{\ensuremath{\mathbf{wP}}}
\newcommand{\exwB}{\ensuremath{\widetilde{\mathbf{w}}\mathbf{B}}}
\newcommand{\exwP}{\ensuremath{\widetilde{\mathbf{w}}\mathbf{P}}}
\newcommand{\ev}{\mathrm{ev}}
\DeclareMathOperator{\lcm}{lcm}
\DeclareMathOperator{\Supp}{Supp}
\DeclareMathOperator{\ima}{Im}
\DeclareMathOperator{\Aut}{Aut}
\DeclareMathOperator{\Inn}{Inn}
\DeclareMathOperator{\Int}{Int}
\DeclareRobustCommand{\Spar}{\ifmmode\mathsection\else\textsection\fi}
\DeclareFontFamily{OMX}{MnSymbolE}{}
\DeclareSymbolFont{MnLargeSymbols}{OMX}{MnSymbolE}{m}{n}
\DeclareFontShape{OMX}{MnSymbolE}{m}{n}{
    <-6>  MnSymbolE5
   <6-7>  MnSymbolE6
   <7-8>  MnSymbolE7
   <8-9>  MnSymbolE8
   <9-10> MnSymbolE9
  <10-12> MnSymbolE10
  <12->   MnSymbolE12
}{}
\DeclareFontShape{OMX}{MnSymbolE}{b}{n}{
    <-6>  MnSymbolE-Bold5
   <6-7>  MnSymbolE-Bold6
   <7-8>  MnSymbolE-Bold7
   <8-9>  MnSymbolE-Bold8
   <9-10> MnSymbolE-Bold9
  <10-12> MnSymbolE-Bold10
  <12->   MnSymbolE-Bold12
}{}
\let\llangle\@undefined
\let\rrangle\@undefined
\DeclareMathDelimiter{\llangle}{\mathopen}%
                     {MnLargeSymbols}{'164}{MnLargeSymbols}{'164}
\DeclareMathDelimiter{\rrangle}{\mathclose}%
                     {MnLargeSymbols}{'171}{MnLargeSymbols}{'171}
\newtheorem{theorem}{Theorem}[chapter]
\newtheorem{lemma}[theorem]{Lemma}
\newtheorem{proposition}[theorem]{Proposition}
\newtheorem{corollary}[theorem]{Corollary}
\newtheorem{conjecture}[theorem]{Conjecture}
\theoremstyle{definition}
\newtheorem{definition}[theorem]{Definition}
\newtheorem{example}[theorem]{Example}
\newtheorem{convention}[theorem]{Convention}
\newtheorem{fact}[theorem]{Fact}
\newtheorem{notation}[theorem]{Notation}
\theoremstyle{remark}
\newtheorem{remark}[theorem]{Remark}
\numberwithin{section}{chapter}
\numberwithin{equation}{chapter}
\numberwithin{figure}{chapter}
\begin{document}

\frontmatter

\title{When the Lower Central Series Stops: \\ A Comprehensive Study for Braid Groups and Their Relatives}

\author{Jacques Darné}
\address{Institut de Recherche en Mathématique et en Physique, Chemin du Cyclotron 2, 1348 Ottignies-Louvain-la-Neuve, Belgique}
\email{jacques.darne@normalesup.org}
\thanks{This paper was written while the first author was employed by the FNRS as a \emph{Charg\'e de recherches}. The first and third authors were partially supported by the ANR Projects ChroK ANR-16-CE40-0003 and AlMaRe ANR-19-CE40-0001-01.}

\author{Martin Palmer}
\address{Institutul de Matematic\u{a} Simion Stoilow al Academiei Rom{\^a}ne, 21 Calea Grivi\cb{t}ei, 010702 Bucure\cb{s}ti, Romania}
\email{mpanghel@imar.ro}
\thanks{The second author was partially supported by a grant of the Romanian Ministry of Education and Research, CNCS - UEFISCDI, project number PN-III-P4-ID-PCE-2020-2798, within PNCDI~III}

\author{Arthur Soulié}
\address{Center for Geometry and Physics, Institute for Basic Science (IBS), 77 Cheongamro, Nam-gu, Pohang-si, Gyeongsangbuk-do, Korea 790-784 \& POSTECH, Gyeongsangbukdo, Korea}
\email{artsou@hotmail.fr, arthur.soulie@ibs.re.kr}
\thanks{The third author was partially supported by a Rankin-Sneddon Research Fellowship of the University of Glasgow and by the Institute for Basic Science IBS-R003-D1.}

\thanks{All three authors would like to thank the anonymous referee for very helpful comments and suggestions on an earlier version of this work.}

\date{February 16, 2022}

\subjclass[2020]{Primary 20F14, 20F36, 20F38, 57M07}

\keywords{lower central series, surface braid groups, welded and virtual braid groups, loop braid groups}

\begin{abstract}
Understanding the lower central series of a group is, in general, a difficult task. It is, however, a rewarding one: computing the lower central series and the associated Lie algebras of a group or of some of its subgroups can lead to a deep understanding of the underlying structure of that group. Our goal here is to showcase several techniques aimed at carrying out part of this task. In particular, we seek to answer the following question: when does the lower central series stop? We introduce a number of tools that we then apply to various groups related to braid groups: the braid groups themselves, surface braid groups, groups of virtual and welded braids, and \emph{partitioned} versions of all of these groups. The path from our general techniques to their application is far from being a straight one, and some astuteness and tenacity is required to deal with all of the cases encountered along the way. Nevertheless, we arrive at an answer to our question for each and every one of these groups, save for one family of partitioned braid groups on the projective plane. In several cases, we even compute completely the lower central series. Some results about the lower central series of Artin groups are also included.
\end{abstract}

\maketitle

\tableofcontents

\chapter*{Introduction}

One of the most basic objects one needs to understand when studying the structure of a group $G$ is its \emph{lower central series} (shortened to ``LCS") $G = \LCS_1(G) \supseteq \LCS_2(G) \supseteq \cdots$. Its behaviour varies greatly from one group to another. For instance, if $G$ is perfect (i.e.~all its elements can be written as products of commutators), its LCS is completely trivial; this holds for instance for mapping class groups of closed surfaces of genus $g\geq3$ \cite[Thm~5.1]{Korkmaz_low}. On the contrary, if $G$ is nilpotent, or residually nilpotent, its LCS contains deep information about the structure of~$G$; examples of residually nilpotent groups include free groups~\cite[Chap.~5]{MagnusKarrassSolitar}, pure braid groups~\cite{FalkRandell1,FalkRandell2}, pure braid groups on surfaces~\cite{BellingeriBardakov2, BellingeriGervais}, pure welded braid groups~\cite[\Spar 5.5]{BerceanuPapadima}, and conjecturally pure virtual braid groups~\cite{BardakovMikhailovVershininWu}.
The LCS is also deeply connected to the structure of the group ring of $G$. In particular, Quillen \cite{Quillen} proved that if we consider the filtration of the group ring $\mathbb Q G$ by the powers of its augmentation ideal, then the associated graded algebra is isomorphic to the universal enveloping algebra of the Lie algebra $\Lie(G) \otimes \mathbb Q$, where $\Lie(G)$ is the graded Lie ring obtained from $\LCS_*(G)$.

The amount of information one can hope to extract from the study of a LCS depends in the first place on whether or not it \emph{stops} in the following sense:
\begin{definition}
The LCS of a group $G$ is said to \emph{stop} if there exists an integer $i \geq 1$ such that $\LCS_i(G)=\LCS_{i+1}(G)$. We say that it \emph{stops at $\LCS_i$} if $i$ is the smallest integer for which this holds. Otherwise, we say the LCS \emph{does not stop} or else that it \emph{stops at $\infty$}.
\end{definition}
It follows from the definition of the LCS (recalled in~\Spar\ref{sec_def_LCS} below) that if $\LCS_i(G)=\LCS_{i+1}(G)$ for some $i \geq 1$, then $\LCS_k(G)=\LCS_{k+1}(G)$ for all $k \geq i$, whence our choice of terminology.

\subsection*{Partitioned braid groups.}
In this memoir, we study the LCS of the following families of groups, and their \emph{partitioned} versions, in the sense described below:
\begin{itemize}
     \item The Artin braid group $\B_n$,
     \item The virtual braid group $\vB_n$,
     \item The welded braid group $\wB_n$,
     \item The extended welded braid group $\exwB_n$,
     \item The group $\B_n(S)$ of braids on any surface $S$. 
\end{itemize} 

Let $G_n$ denote one of the above groups. In each case, there is a notion of the underlying permutation of an element of $G_n$, corresponding to a canonical surjection $\pi \colon G_n \twoheadrightarrow \Sym_n$ to the symmetric group, from which we can define \emph{partitioned} versions of $G_n$. Let us first fix our conventions concerning partitions of integers:
\begin{definition}\label{def_partitions}
Let $n \geq 1$ be an integer. A \emph{partition of $n$} is an $l$-tuple $\lambda = (n_1, \ldots , n_l)$ of integers $n_i \geq 1$, for some $l \geq 1$ called the \emph{length} of $\lambda$, such that $n$ is the sum of the $n_i$. Given such a $\lambda$, for $j \leq l$, let us define $t_j := \sum_{i \leq j} n_i$, including $t_0 = 0$. Then the set $b_j(\lambda) := \{t_{j-1}+1, \ldots , t_j\}$ is referred to as the \emph{$j$-th block} of $\lambda$, and $n_i$ is called the \emph{size} of the $i$-th block.
\end{definition}

For $\lambda = (n_1, \ldots , n_l)$ a partition of $n$, we consider the preimage
\[G_\lambda := \pi^{-1}(\Sym_{\lambda}) = \pi^{-1}\left(\Sym_{n_1} \times \cdots \times \Sym_{n_l}\right),\]
which is called the $\lambda$-\emph{partitioned} version of $G_n$. There are two extremal situations: the trivial partition $\lambda = (n)$ simply gives the group $G_n$, whereas the discrete partition $\lambda = (1,1, \ldots ,1)$ corresponds to the subgroup of \emph{pure} braids in $G_n$. 

As we will see later on, for all the families of groups described above, the LCS of $G_n$ stops at $\LCS_2$ or $\LCS_3$ (when $n$ is at least $3$ or $4$, depending on the context), whereas the LCS of the subgroup of pure braids is a very complex object (in particular, it does not stop, when $n$ is at least $2$ or $3$). We can thus expect the partitioned braid groups $G_\lambda$ to display a range of intermediate behaviours when $\lambda$ varies, and this is indeed what we observe.

\subsection*{Methods.} A fundamental tool in the study of LCS is the graded Lie ring structure on the associated graded $\Lie(G) := {\bigoplus}_{i\geq1} \LCS_i(G)/ \LCS_{i+1}(G)$. Namely, this is a graded abelian group endowed with a Lie bracket induced by commutators in $G$. It is always generated, as a Lie algebra over $\Z$, by its degree one piece, which is the abelianisation $G^{\ab} = G/ \LCS_2(G)$. This often allows one to use \emph{disjoint support arguments} to show that the LCS stops, when it does. Precisely, if one can show that pairs of generators of $G^{\ab}$ have commuting representatives in $G$ (which is the case if they have representatives whose \emph{supports} are disjoint, for a certain notion of support), then, by definition of the Lie bracket, they commute in $\Lie(G)$. In this case, $\Lie(G)$ is abelian, and it is generated by $G^{\ab}$, which means that it is reduced to $G^{\ab}$. In turn, that means that $\LCS_i(G) = \LCS_{i+1}(G)$ whenever $i \geq 2$. This kind of argument is used throughout the memoir. In particular, one can apply it readily to each of the $G_n$ above to show that its LCS stops at $\LCS_2$ whenever $n$ is at least $3$ or $4$ (depending on the amount of space needed to have representatives with disjoint support for pairs of generators of $G^{\ab}$). 

Depending on the groups considered, \emph{having disjoint support} will have a different meaning, but it will always imply that the elements considered commute. The most obvious definition is for mapping classes, which have disjoint support if they can be realised by homeomorphisms having disjoint support in the usual sense. This can be applied to usual braids, seen as mapping classes of the punctured disc. For braids on surfaces, one can generalise this by using a very similar idea: two such braids have disjoint support if they can be realised as geometric braids that do not move the same strands, and move them in disjoint regions of the surface. Equivalently, we view the surface braid group as a subgroup of the mapping class group of the punctured surface. The same definition can be used for welded braids, which can actually be seen as mapping classes (see~\cite{Damiani}). However, for the latter, it will be convenient to use another notion of support relying on their interpretation as automorphisms of free groups; with this point of view, the support is the list of generators involved in the definition of the automorphism (see Definition~\ref{d:support-autFn}). Finally, for virtual braids, we will need a diagrammatic definition (Definition~\ref{d:support-diagram}), listing the strands that really interact with the others in a diagram.

One other main line of argumentation for studying LCS is given by \emph{looking for quotients whose LCS is well-understood}. Namely, if we can find a quotient of $G$ whose LCS does not stop, then neither does the LCS of $G$; see Lemma~\ref{lem:stationary_quotient}. Typically, we look for a quotient that is a semi-direct product of an abelian group with $\Z$ or $\Z/2$, a free product of abelian groups or a wreath product of an abelian group with some $\Sym_\lambda$, whose LCS can be computed completely; see Appendix~\ref{sec:appendix_2}.

Finally, a very important tool in our analysis is the study of the \emph{quotient by the residue}. Precisely, if we denote by $\LCS_\infty(G)$ (abbreviated $\LCS_\infty$ when the context is clear) the intersection of the $\LCS_i(G)$, the LCS of $G$ is ``the same'' as the LCS of $G/\LCS_\infty$: each $\LCS_i(G)$ is the preimage of $\LCS_i(G/\LCS_\infty)$ by the canonical projection, and this projection induces an isomorphism between $\Lie(G)$ and $\Lie(G/\LCS_\infty)$. In particular, one of $\LCS_*(G)$ and $\LCS_*(G / \LCS_\infty)$ stops if and only if the other does, which happens exactly when $G/\LCS_\infty$ is nilpotent. Considering $G/\LCS_\infty$ instead of $G$ can lead to very important simplifications. Let us illustrate this by an example, variations of which are used throughout the memoir. We know that $\LCS_\infty = \LCS_2$ for $\B_n$, and that $\LCS_2$ contains the elements $\sigma_i \sigma_j^{-1}$ of $\B_n$. Thus whenever we have a morphism $\B_n \to G$, the subgroup $\LCS_\infty(G)$ must contain the image of $\LCS_\infty(\B_n)$, which contains the images of $\sigma_i \sigma_j^{-1}$, so all the $\sigma_i$ have the same image in $G/\LCS_\infty$.

\subsection*{Results.} Does the LCS stop? We give a complete answer to this question for all of the families of groups listed above, with the single exception of $\B_{2,m}(\Proj)$ with $m \geq 3$ (see Conjecture~\ref{conj:B2m}). We also obtain some amount of information on the associated Lie rings. In particular, we compute completely the Lie rings of partitioned braid groups on surfaces in the stable case. Besides their intrinsic value, these results have several applications, notably to the representation theory of braid groups and their relatives. See for instance \cite{BellingeriGodelleGuaschi} and the work of the second and third authors \cite{PSI}, where the knowledge of the structure of such LCS is key in the construction and the study of representations of these groups using homological approaches. Furthermore, let us mention that one can see surface braid groups and their LCS as invariants of the surfaces themselves; as an application of this point of view, we recover the Riemann-Hurwitz formula for coverings of closed surfaces in Remark~\ref{rmk:Riemann-Hurwitz_formula}.  

In addition to the families of groups introduced above, we also find out when the LCS stops for what we call the \emph{tripartite} welded braid groups $\wB(n_P, n_{S_+}, n_S)$, which are the fundamental groups of the configuration spaces of $n_P$ points, $n_{S_+}$ oriented circles and $n_S$ unoriented circles in $3$-space, where all the circles are unlinked and unknotted. This is a generalisation of both $\wB_n = \wB(0, n, 0)$ and $\exwB_n = \wB(0,0,n)$. We define partitioned versions $\wB(\lambda_P, \lambda_{S_+}, \lambda_S)$ of these groups in the obvious way; see Definition~\ref{def_partitioned_tripartite}.
These groups are not only natural geometrical generalisations (obtained by mixing oriented circles, non-oriented circles and moving points) of welded-type braid groups, they also have key applications for the homological constructions of representations of the groups $\wB_n$ and $\exwB_n$ introduced by \cite{PSI}. In particular, the extension of the Burau representation to welded braid groups introduced by Vershinin~\cite{Vershinin} (by assigning explicit matrices to generators) may alternatively be defined by the methods of \cite{PSI} considering the abelianisation of $\wB(1,n,0)$; see \cite{PS0}.

\medskip

We summarise our results in three tables on pages \pageref{table:stable}--\pageref{table:unstable-non-stopping}. These are organised as follows:

\begin{itemize}
\item In Table \ref{table:stable} are gathered the \emph{stable} cases, which are those where the blocks of the partitions are large enough for the \emph{disjoint support argument} described above to be applied readily. 
\item In Table \ref{table:unstable-stopping} are gathered the cases where there are blocks in the partitions which are too small for the disjoint support argument to be applied readily, but not too many of them, so that the LCS still stops. 
\item In Table \ref{table:unstable-non-stopping} are gathered the cases where the LCS does not stop.
\end{itemize}

Some of our results have already been obtained in the literature, with different methods. Namely, the question of whether or not the lower central series stops has already been studied:
\begin{itemize}
    \item by Gorin and Lin \cite{GorinLin} for $\B_{n}$ and Kohno \cite{Kohno} for the pure braid group $\PB_n = \B_{1,\ldots,1}$, which is moreover known to be residually nilpotent by Falk and Randell \cite{FalkRandell1,FalkRandell2}.
    \item by Bellingeri, Gervais and Guaschi \cite{BellingeriGervaisGuaschi} for $\B_{n}(S)$ where $S$ is a compact, connected, orientable surface with or without boundary.
    \item by Bellingeri and Gervais \cite{BellingeriGervais} for the pure surface braid group $\PB_n(S)$ where $S$ is a compact, connected, non-orientable surface with or without boundary and different from the projective plane $\Proj$.
    \item by Gonçalves and Guaschi \cite{DacibergGuaschiII, DacibergGuaschiI} for $\B_{n}(\mathbb{S}^{2})$ and $\B_{n}(\mathbb{S}^{2}-\mathscr{P})$ where $\mathscr{P}$ is a finite set of points in $\mathbb{S}^{2}$.
    \item by Guaschi and de Miranda e Pereiro \cite{GuaschiPereiro} for $\B_{n}(S)$ where $S$ is a compact, connected, non-orientable surface without boundary.
    \item by van Buskirk \cite{vanBuskirk1966} and by Gonçalves and Guaschi \cite{GoncalvesGuaschiprojective,DacibergGuaschiIII,GoncalvesGuaschisphereFDprojective}, both for the braid group on the projective plane $\B_n(\Proj)$.
    \item by Bardakov and Bellingeri \cite{BardakovBellingeri2009} for the virtual braid group $\vB_n$. The question of whether the pure virtual braid group $\vP_{n}:=\vB_{1,\ldots,1}$ is residually (torsion-free) nilpotent or not for $n\geq4$ remains an open problem, while this property is proven for $\vP_{3}$ by Bardakov, Mikhailov, Vershinin and Wu \cite{BardakovMikhailovVershininWu}. That the pure welded braid group $\wP_{n}:=\wB_{1,\ldots,1}$ is residually (torsion free) nilpotent is proven by Berceanu and Papadima \cite[\Spar 5.5]{BerceanuPapadima}. Results on the lower central series of the classical, virtual and welded pure braid groups and their relatives are collected in the survey paper of Suciu and Wang \cite{SuciuWang}.
\end{itemize}

\subsection*{Notation in the tables.}
The letter $\lambda = (n_1, \ldots , n_l)$ denotes a partition of $n$ of length $l \geq 1$. The letter $\mu$ denotes a partition that is either empty or whose blocks have size at least $3$ for classical braids and surface braids, and at least $4$ for virtual and welded braids. On the other hand, $\nu$ denotes any partition (possibly empty, unless stated otherwise).

In Table \ref{table:unstable-stopping}, the function $f$ is defined by $f(m) = \mathrm{max}\{ v_2(m) , 1 \}$, where $v_2$ is the $2$-adic valuation. The number $\epsilon$ is either $0$ or $1$ (the precise value may depend on the case, in particular on $m$, and is unknown, although we conjecture that it is always $1$ for $m$ even and $0$ for $m$ odd, so that $f(m) + \epsilon = v_2(m) + 1$ in all cases).

The letter $S$ denotes any connected surface (not necessarily compact nor orientable, and possibly with boundary). Six exceptional surfaces are mentioned in the tables, denoted by $\D$ (the disc), $\D-pt$ (the disc minus an interior point), $\Tor$ (the torus), $\Moeb$ (the M{\"o}bius strip), $\S^2$ (the $2$-sphere) and $\Proj$ (the projective plane). A surface $S$ is called \emph{generic} if it is not one of these six exceptional surfaces.

The symbol $(\dagger)$ in front of a family of groups indicates that the result concerned is already partly known in the literature quoted above.

\begin{table}[h]
\centering
\bgroup
\def\arraystretch{1.4}
\begin{tabular}{!{\vrule width 1.5pt}c|c|c||c|c|c!{\vrule width 1.5pt}}
\noalign{\hrule height 1.5pt}

\multicolumn{6}{!{\vrule width 1.5pt}c!{\vrule width 1.5pt}}{\rule{0pt}{1.5em}{\Large The stable cases}} \\
\noalign{\hrule height 1.5pt}

\multicolumn{2}{!{\vrule width 1.5pt}c|}{{\large Family of groups}} & {\large Partition} & {\makecell{\large Stops \\ at $\LCS_k$}} & {\large Ref.} & {\makecell{\large Lie \\ Alg.}} \\
\noalign{\hrule height 1.5pt}

\multicolumn{2}{!{\vrule width 1.5pt}c|}{Classical braids $\B_\lambda$} & $n_i \geq 3$ $(\dagger)$ & $k=2$ & \ref{thm:partitioned-braids} {\small (\ref{LCS_stable_partitioned_braids})} & \ref{partitioned_B^ab} \\
\noalign{\hrule height 1.5pt}

\multirow{4}{*}{\makecell{Surface \\ braids \\ $\B_\lambda(S)$}} & $S \subseteq \S^2$ & $n_i \geq 3$ $(\dagger)$ & $k=2$ & \multirow{4}{*}{\ref{Lie_ring_partitioned_B(S)}} & \ref{partitioned_B(S)^ab} \\
\cline{2-4} \cline{6-6}

& $S \nsubseteq \S^2$, orientable & $n_i \geq 3$ $(\dagger)$ & $k = 3$ && \Spar\ref{sec:Lie_ring_partitioned_braid} \\
\cline{2-4} \cline{6-6}

& \multirow{2}{*}{$S$ non-orientable} & $l=1,\ n_1 \geq 3$ & $k=2$ && \ref{Bn(S)^ab} \\
\cline{3-4} \cline{6-6}

&& $l\geq 2,\ n_i \geq 3$ & $k=3$ && \Spar\ref{sec:Lie_ring_partitioned_braid} \\
\noalign{\hrule height 1.5pt}

\multicolumn{2}{!{\vrule width 1.5pt}c|}{Virtual braids $\vB_\lambda$} & \multirow{3}{*}{$n_i \geq 4$} & \multirow{3}{*}{$k=2$} & \multirow{3}{*}{\ref{LCS_of_partitioned_v(w)B}} & \multirow{3}{*}{\ref{partitioned_v(w)B^ab}} \\
\cline{1-2}

\multicolumn{2}{!{\vrule width 1.5pt}c|}{Welded braids $\wB_\lambda$} &&&& \\
\cline{1-2}

\multicolumn{2}{!{\vrule width 1.5pt}c|}{Ext.\ welded braids $\exwB_\lambda$} &&&& \\
\cline{1-6}

\multicolumn{2}{!{\vrule width 1.5pt}c|}{\makecell{Tripartite welded braids \\ $\wB(\lambda_P, \lambda_{S_+}, \lambda_S)$}} & \rule{0pt}{21pt} $\begin{cases} n_{i,P} \geq 3 \\ n_{i,S_{+}}, n_{i,S} \geq 4 \end{cases}$ \rule[-15pt]{0pt}{0pt} & $k=2$ & \ref{LCS_of_partitioned_exwBn_2} {\small (\ref{stable_LCS_of_partitioned_exwBn_2})} & \ref{tripartite_wB^ab} \\
\noalign{\hrule height 1.5pt}

\end{tabular}
\egroup
\caption{The stable cases.}
\label{table:stable}
\addcontentsline{toc}{section}{Table \ref{table:stable}: The stable cases}
\end{table}

\begin{landscape}
\begin{table}[p]
\centering
\bgroup
\def\arraystretch{1.4}
\begin{tabular}{!{\vrule width 1.5pt}c|c|c||c|c|c!{\vrule width 1.5pt}}
\noalign{\hrule height 1.5pt}

\multicolumn{6}{!{\vrule width 1.5pt}c!{\vrule width 1.5pt}}{\rule{0pt}{1.5em} {\Large The unstable cases for which the LCS stops}} \\
\noalign{\hrule height 1.5pt}

\multicolumn{2}{!{\vrule width 1.5pt}c|}{{\large Family of groups}} & {\large Partition} & {\large Stops at $\LCS_k$} & {\large Ref.} & {\large Lie Alg.} \\
\noalign{\hrule height 1.5pt}

\multicolumn{2}{!{\vrule width 1.5pt}c|}{\multirow{2}{*}{Classical braids $\B_\lambda$}} & $(2)$ & \multirow{2}{*}{$k=2$} & \multicolumn{2}{c!{\vrule width 1.5pt}}{$\B_2 \cong \Z$} \\
\cline{3-3} \cline{5-6}

\multicolumn{2}{!{\vrule width 1.5pt}c|}{} & $(1, \mu)$, $(1, 1, \mu)$ && \ref{thm:partitioned-braids} {\small (\ref{LCS_B1mu}, \ref{LCS_B11mu})} & \ref{partitioned_B^ab} \\
\noalign{\hrule height 1.5pt}

\multirow{11}{*}{\makecell{Surface braids \\ $\B_\lambda(S)$}} & $S = \D-pt$ & $(1, \mu)$ & $k=2$ & \ref{LCS_B1nu(D-pt)} & \ref{partitioned_B^ab} \\
\cline{2-6}

& \multirow{2}{*}{$S = \Tor$} & $(1)$ & $k=2$ & \multicolumn{2}{c!{\vrule width 1.5pt}}{$\B_1(\Tor) \cong \Z^2$} \\
\cline{3-6}

&& $(1, \mu)$, $\mu \neq \varnothing$ & $k=3$ & \ref{thm:braid_torus} & \ref{partitioned_B(S)^ab} and \ref{partitioned_Lie(B(Sg)} \\
\cline{2-6}

& $S = \Moeb$ & $(1)$ & $k=2$ & \multicolumn{2}{c!{\vrule width 1.5pt}}{$\B_1(\Moeb) \cong \Z$} \\
\cline{2-6}

& \multirow{3}{*}{$S = \S^2$} & $(2)$ or $(2,1)$ & \multirow{2}{*}{$k=2$} & \multicolumn{2}{c!{\vrule width 1.5pt}}{$\B_2(\S^2) \cong \Z/2$, $\B_{2,1}(\S^2) \cong \Z/4$} \\
\cline{3-3} \cline{5-6}

&& $(1, \mu)$, $(1, 1, \mu)$, $(1, 1, 1, \mu)$ && \ref{LCS_B_lambda(S2)} & \ref{partitioned_B(S)^ab} \\
\cline{3-6}

&& $(2, m)$, $m \geq 3$ & $k = f(m)+ 1 + \epsilon$ & \ref{LCS_B2m(S2)} & -- \\
\cline{2-6}

& \multirow{4}{*}{$S = \Proj$} & $(1, m)$, $m \geq 3$ & $k = f(m) + 2 +\epsilon$ & \ref{LCS_B1m(P2)} & -- \\
\cline{3-6}

&& $(1)$ & $k=2$ & \multicolumn{2}{c!{\vrule width 1.5pt}}{$\B_1(\Proj) \cong \Z/2$} \\
\cline{3-6}

&& $(1,1)$ & $k=3$ & \multicolumn{2}{c!{\vrule width 1.5pt}}{$\B_{1,1}(\Proj) \cong Q_8$ (\ref{B2(Proj)})} \\
\cline{3-6}

&& $(2)$ & $k=4$ & \multicolumn{2}{c!{\vrule width 1.5pt}}{$\B_2(\Proj) \cong Dic_{16}$ (\ref{B2(Proj)})} \\
\noalign{\hrule height 1.5pt}

\multicolumn{2}{!{\vrule width 1.5pt}c|}{Virtual braids $\vB_\lambda$} & \multirow{2}{*}{$(1, \mu)$} & \multirow{2}{*}{$k=2$} & \multirow{2}{*}{\ref{LCS_of_partitioned_v(w)B} {\small (\ref{LCS_vB_1mu})}} & \multirow{2}{*}{\ref{partitioned_v(w)B^ab}} \\
\cline{1-2}

\multicolumn{2}{!{\vrule width 1.5pt}c|}{Welded braids $\wB_\lambda$} &&&& \\
\cline{1-6}

\multicolumn{2}{!{\vrule width 1.5pt}c|}{\multirow{3}{*}{\makecell{Tripartite welded braids \\ $\wB(\lambda_P, \lambda_{S_+}, \lambda_S)$}}} &$((1, \ldots , 1, \mu_P), \mu_{S_+}, \mu_{S})$ & \multirow{2}{*}{$k=2$} & \ref{LCS_of_partitioned_exwBn_2} {\small (\ref{tripartite_wB_isolated_P})} & \multirow{2}{*}{\ref{tripartite_wB^ab}} \\
\cline{3-3} \cline{5-5}

\multicolumn{2}{!{\vrule width 1.5pt}c|}{} & $((1, \ldots, 1, \mu_P), (1, \mu_{S_+}), \mu_S)$ && \ref{LCS_of_partitioned_exwBn_2} {\small (\ref{tripartite_wB_isolated_S+}, \ref{tripartite_wB_isolated_P_and_S+})} & \\
\cline{3-6}

\multicolumn{2}{!{\vrule width 1.5pt}c|}{} & $((2, \nu_P), \varnothing, \mu_S)$, $\mu_S \neq \varnothing$ & $k=3$ & \ref{LCS_of_partitioned_exwBn_2} {\small (\ref{prop:welded_points_block_size_two})} & \ref{tripartite_wB^ab} and \ref{prop:welded_points_block_size_two} \\
\noalign{\hrule height 1.5pt}

\end{tabular}
\egroup
\caption{The unstable cases for which the LCS stops.}
\label{table:unstable-stopping}
\addcontentsline{toc}{section}{Table \ref{table:unstable-stopping}: The unstable cases for which the LCS stops}
\end{table}
\end{landscape}

\begin{table}[b]
\centering
\bgroup
\def\arraystretch{1.4}
\begin{tabular}{!{\vrule width 1.5pt}c|c|c||c!{\vrule width 1.5pt}}
\noalign{\hrule height 1.5pt}

\multicolumn{4}{!{\vrule width 1.5pt}c!{\vrule width 1.5pt}}{\rule{0pt}{1.5em} {\Large The unstable cases for which the LCS does not stop}} \\
\noalign{\hrule height 1.5pt}

\multicolumn{2}{!{\vrule width 1.5pt}c|}{{\large Family of groups}} & {\large Partition} & {\large Ref.} \\
\noalign{\hrule height 1.5pt}

\multicolumn{2}{!{\vrule width 1.5pt}c|}{\multirow{2}{*}{Classical braids $\B_\lambda$}} & $(1, 1, 1, \nu)$ $(\dagger)$ & \ref{thm:partitioned-braids} {\small (\ref{LCS_B111})} \\
\cline{3-4}

\multicolumn{2}{!{\vrule width 1.5pt}c|}{} & $(2, \nu)$, $l \geq 2$ & \ref{thm:partitioned-braids} {\small (\ref{LCS_B2mu}, \ref{LCS_B22mu}, \ref{LCS_B12mu})} \\
\noalign{\hrule height 1.5pt}

\multirow{11}{*}{\makecell{Surface \\ braids \\ $\B_\lambda(S)$}} & $S$ generic & $(1, \nu)$, $(2, \nu)$ $(\dagger)$ & \ref{LCS_unstable_partitioned_surface_braids} \\
\cline{2-4}

& \multirow{2}{*}{$S = \D-pt$} & $(1,1, \nu)$ & \ref{LCS_B1nu(D-pt)} \\
\cline{3-4}

&& $(2,\nu)$ & \ref{LCS_unstable_partitioned_surface_braids} \\
\cline{2-4}

& \multirow{2}{*}{$S = \Tor$} & $(1,1, \nu)$ & \ref{thm:braid_torus} \\
\cline{3-4}

&& $(2,\nu)$ $(\dagger)$ & \ref{thm:braid_torus} {\small (\ref{LCS_unstable_partitioned_surface_braids})} \\
\cline{2-4}

& \multirow{2}{*}{$S = \Moeb$} & $(1, \nu)$, $l \geq 2$ & \ref{LCS_unstable_partitioned_surface_braids}, \ref{LCS_B11(M)}, \ref{LCS_B1m(Moeb)} \\
\cline{3-4}

&& $(2,\nu)$ & \ref{LCS_unstable_partitioned_surface_braids} \\
\cline{2-4}

& \multirow{2}{*}{$S = \S^2$} & $(1,1,1,1, \nu)$ & \ref{LCS_B_lambda(S2)} {\small (\ref{LCS_P4(S2)})} \\
\cline{3-4}

&& $(2, \nu)$, $l \geq 3$ or $(2,2)$ & \ref{LCS_B_lambda(S2)} {\small (\ref{LCS_B2mu(S2)}, \ref{LCS_B22(S2)})} \\
\cline{2-4}

& \multirow{2}{*}{$S = \Proj$} & $(1, \nu)$, $l \geq 3$ & \ref{LCS_B1mu(P2)} \\
\cline{3-4}

&& $(2, \nu)$, $l \geq 3$ or $(2,2)$ or $(2,1)$ & \ref{LCS_B2mu(P2)}, \ref{LCS_B22(P2)}, \ref{LCS_B12(P2)} \\
\noalign{\hrule height 1.5pt}

\multicolumn{2}{!{\vrule width 1.5pt}c|}{Virtual braids $\vB_\lambda$} & \multirow{2}{*}{$(1, 1, \nu)$, $(2, \nu)$, $(3, \nu)$} & \multirow{3}{*}{\ref{LCS_of_partitioned_v(w)B}} \\
\cline{1-2}

\multicolumn{2}{!{\vrule width 1.5pt}c|}{Welded braids $\wB_\lambda$} && \\
\cline{1-3}

\multicolumn{2}{!{\vrule width 1.5pt}c|}{Ext.\ w.\ braids $\exwB_\lambda$} & [$(1, \nu)$ with $\nu\neq\varnothing$], $(2, \nu)$, $(3, \nu)$ & \\
\cline{1-4}

\multicolumn{2}{!{\vrule width 1.5pt}c|}{\multirow{5}{*}{\makecell{Tripartite \\ welded braids \\ $\wB(\lambda_P, \lambda_{S_+}, \lambda_S)$}}} & $(\nu_P, \nu_{S_+}, (2,\nu_S)), (\nu_P, \nu_{S_+}, (3,\nu_S))$ & \multirow{3}{*}{\ref{LCS_of_partitioned_exwBn_2}} \\
\cline{3-3}

\multicolumn{2}{!{\vrule width 1.5pt}c|}{} & $(\nu_P, (2,\nu_{S_+}), \nu_S), (\nu_P, (3,\nu_{S_+}), \nu_S)$ & \\
\cline{3-3}

\multicolumn{2}{!{\vrule width 1.5pt}c|}{} & $(\nu_P, (1,1,\nu_{S_+}), \nu_S)$ & \\
\cline{3-4}

\multicolumn{2}{!{\vrule width 1.5pt}c|}{} & $(\nu_P, \nu_{S_+}, (1,\nu_S))$, $l\geq 2$ & \ref{LCS_of_partitioned_exwBn_2} {\small (\ref{tripartite_wB_isolated_S})} \\
\cline{3-4}

\multicolumn{2}{!{\vrule width 1.5pt}c|}{} & $((2,\nu_P), \nu_{S_+}, \nu_S)$, $\nu_{S_+} \neq \varnothing$ & \ref{LCS_of_partitioned_exwBn_2} {\small (\ref{tripartite_wB_P2_and_S+})} \\
\noalign{\hrule height 1.5pt}

\end{tabular}
\egroup
\caption{The unstable cases for which the LCS does not stop.}
\label{table:unstable-non-stopping}
\addcontentsline{toc}{section}{Table \ref{table:unstable-non-stopping}: The unstable cases for which the LCS does not stop}
\end{table}

\mainmatter

\chapter{General recollections}
\label{sec:general_recollections}

In this chapter, we recall some classical notions and tools to study the lower central series of groups. These will be used throughout the memoir.

\section{Commutator calculus and lower central series}\label{sec_def_LCS}

Let $G$ be a group. Recall that the \emph{lower central series} (LCS) of $G$ is the descending sequence of normal subgroups $G = \LCS_1(G) \supseteq \LCS_2(G) \supseteq \cdots$ of $G$, also denoted by $\LCS_*(G)$, defined by
\[\LCS_i(G):=\begin{cases}
G & \textrm{if \ensuremath{i=1},}\\
\left[G,\LCS_{i-1}(G)\right] & \textrm{if \ensuremath{i\geq2},}
\end{cases}\]
where $\left[G,\LCS_i(G)\right]$ is the subgroup of $G$ generated by all commutators $\left[x,y\right]:=xyx^{-1}y^{-1}$ with $x$ in $G$ and $y$ in $\LCS_{i-1}(G)$. The subgroups $\LCS_i(G)$ are fully invariant, and in particular normal in $G$. As a consequence, one can also think of the LCS as an ascending chain of quotients $G/\LCS_i(G)$ of $G$. Recall that the abelianisation $G^\mathrm{\ab}$ of $G$ is the first of these quotients, namely $G/\LCS_{2}(G)$. In general, $G/\LCS_{c+1}(G)$ is the universal $c$-nilpotent quotient of $G$ (recall that $G$ is called $c$-nilpotent if $\Gamma_{c+1}(G) = \{1\}$). The group $G$ is called \emph{residually nilpotent} if its \emph{residue} $\LCS_\infty(G) := \bigcap \LCS_i(G)$ is equal to $\{1\}$. The quotient $G/\LCS_\infty(G)$ is the universal (and, in particular, the largest) residually nilpotent quotient of $G$.

The following lemma is folklore and will be used very often in the sequel. In particular, we will often use its contrapositive: if a group $G$ has a quotient whose LCS does not stop, then the LCS of $G$ does not stop either.

\begin{lemma}\label{lem:stationary_quotient}
Let $H$ be a quotient of $G$. If $\LCS_i(G) = \LCS_{i+1}(G)$ for some $i$, then $\LCS_i(H) = \LCS_{i+1}(H)$.
\end{lemma}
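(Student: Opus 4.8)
The plan is to exploit the functoriality of the lower central series under surjective homomorphisms. Let $\varphi \colon G \twoheadrightarrow H$ be the surjection exhibiting $H$ as a quotient of $G$. The key observation, which I would establish first, is that a surjection always sends the LCS of the source onto the LCS of the target: namely, $\varphi(\LCS_j(G)) = \LCS_j(H)$ for every $j \geq 1$.

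I would prove this auxiliary fact by induction on $j$. The base case $j=1$ is just the surjectivity hypothesis, since $\LCS_1(G) = G$ and $\LCS_1(H) = H$. For the inductive step, I would use that a homomorphism respects commutators, $\varphi([x,y]) = [\varphi(x), \varphi(y)]$, together with the fact that $\varphi$ sends a generating set of $\LCS_{j}(G) = [G, \LCS_{j-1}(G)]$ to a generating set of $\LCS_{j}(H)$. Concretely, $\LCS_j(G)$ is generated by commutators $[x,y]$ with $x \in G$ and $y \in \LCS_{j-1}(G)$; applying $\varphi$ and using surjectivity of $\varphi$ on $G$ together with the inductive hypothesis $\varphi(\LCS_{j-1}(G)) = \LCS_{j-1}(H)$, one sees that the images $[\varphi(x), \varphi(y)]$ range over a generating set of $[H, \LCS_{j-1}(H)] = \LCS_j(H)$. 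Since $\varphi$ is a homomorphism, the image of the subgroup generated by a set is the subgroup generated by the image of that set, giving $\varphi(\LCS_j(G)) = \LCS_j(H)$.

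With this in hand, the lemma is immediate. Suppose $\LCS_i(G) = \LCS_{i+1}(G)$. Applying $\varphi$ to both sides and using the auxiliary fact twice yields
\[
\LCS_i(H) = \varphi(\LCS_i(G)) = \varphi(\LCS_{i+1}(G)) = \LCS_{i+1}(H),
\]
which is exactly the conclusion.

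This argument is entirely routine, so there is no genuine obstacle; the only point requiring a modicum of care is the inductive step, where one must check that $\varphi$ carries a generating set of $\LCS_j(G)$ onto a generating set of $\LCS_j(H)$ rather than merely into $\LCS_j(H)$. This is where surjectivity of $\varphi$ is used in an essential way: without it one would only obtain the inclusion $\varphi(\LCS_j(G)) \subseteq \LCS_j(H)$, which is insufficient to transport the equality. Since surjectivity is precisely the hypothesis that $H$ is a quotient of $G$, everything fits together.
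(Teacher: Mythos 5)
Your proof is correct and follows exactly the same route as the paper's: establish that a surjection $\varphi$ satisfies $\varphi(\LCS_j(G)) = \LCS_j(H)$ for all $j$, then apply this to both sides of the hypothesised equality. The paper states this auxiliary fact without proof as following from the definition of the LCS; your inductive argument simply fills in that (routine) detail.
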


\begin{proof}
For all $k \geq 1$, it follows from the definition of the LCS that $\LCS_k H = \pi(\LCS_k G)$. As a consequence, $\LCS_{i+1} H = \LCS_i H$ whenever $\LCS_{i+1} G = \LCS_i G$.
\end{proof}

We now give two partial converses to Lemma~\ref{lem:stationary_quotient}. The first one (also the most obvious one) is the case of a quotient by a subgroup having some finiteness properties. Since most of the extensions that we consider in the sequel will \emph{not} satisfy the required hypothesis, we will use it only once, in the very simple case when the kernel is cyclic of order two. Nevertheless, we state it in a general framework:
\begin{lemma}\label{Quotient_by_finite_subgroup}
Let $K \hookrightarrow G \twoheadrightarrow H$ be a short exact sequence of groups. Suppose that there exists $l \geq 0$ such that every strictly decreasing central filtration of $K$ stops after at most $l$ steps, that is,  if $K = K_1 \supsetneq  K_2 \supsetneq  \cdots  \supsetneq K_m$ is a nested sequence of subgroups satisfying $[K, K_i] \subset K_{i+1}$, then $m \leq l$. Suppose moreover that for some $i \geq 1$, we have $\LCS_{i+1} H = \LCS_i H$. Then $\LCS_{i+l+1} G = \LCS_{i+l} G$.
\end{lemma}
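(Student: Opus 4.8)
The plan is to transfer the question from the lower central series of $G$ to a suitable central filtration of the kernel $K$, where the finiteness hypothesis applies directly. Write $\pi \colon G \twoheadrightarrow H$ for the projection with kernel $K$, and set $A_k := K \cap \LCS_k(G)$ for $k \geq 1$. Since $A_k \subseteq \LCS_k(G)$ and $K$ is normal, one has $[K, A_k] \subseteq K \cap [G, \LCS_k(G)] = A_{k+1}$, so $(A_k)_{k \geq 1}$ is a central filtration of $K$; moreover $A_1 = K \cap G = K$. The idea is that, in the range $k \geq i$, the successive quotients of $\LCS_*(G)$ are detected entirely by $(A_k)$, so the hypothesis bounding strictly decreasing central filtrations of $K$ will control how many steps it takes $\LCS_*(G)$ to become stationary.

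First I would record the key isomorphism. Since $\LCS_{i+1}(H) = \LCS_i(H)$, stationarity of the LCS propagates upward, giving $\LCS_{k+1}(H) = \LCS_k(H)$ for all $k \geq i$; combined with $\LCS_k(H) = \pi(\LCS_k(G))$ from Lemma~\ref{lem:stationary_quotient}, this yields $\pi(\LCS_k(G)) = \pi(\LCS_{k+1}(G))$ for $k \geq i$. A short diagram chase then shows $\LCS_k(G) = \LCS_{k+1}(G)\cdot A_k$ for such $k$: given $x \in \LCS_k(G)$, pick $y \in \LCS_{k+1}(G)$ with $\pi(x) = \pi(y)$, so that $xy^{-1} \in K \cap \LCS_k(G) = A_k$. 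The second isomorphism theorem then gives, for all $k \geq i$,
\[
\LCS_k(G)/\LCS_{k+1}(G) \;\cong\; A_k / (A_k \cap \LCS_{k+1}(G)) \;=\; A_k / A_{k+1}.
\]
In particular $\LCS_k(G) = \LCS_{k+1}(G)$ if and only if $A_k = A_{k+1}$, for every $k \geq i$.

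With this in hand I would argue by contradiction. Suppose $\LCS_{i+l}(G) \neq \LCS_{i+l+1}(G)$, i.e.\ $A_{i+l} \neq A_{i+l+1}$. The set $\{k \geq i : \LCS_k(G) = \LCS_{k+1}(G)\}$ is upward closed and, by the previous paragraph, coincides with $\{k \geq i : A_k = A_{k+1}\}$; hence the failure at $k = i+l$ forces all inclusions $A_i \supsetneq A_{i+1} \supsetneq \cdots \supsetneq A_{i+l+1}$ to be strict, that is, $l+2$ distinct subgroups. Now compress the central filtration $(A_k)_{1 \leq k \leq i+l+1}$ to its distinct terms $K = C_1 \supsetneq C_2 \supsetneq \cdots \supsetneq C_m$; this is again a central filtration of $K$, because for each $s < m$, choosing the largest index $j$ with $A_j = C_s$ gives $[K, C_s] = [K, A_j] \subseteq A_{j+1} = C_{s+1}$. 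Since $A_i, \ldots, A_{i+l+1}$ already contribute $l+2$ distinct values, we get $m \geq l+2 > l$, contradicting the hypothesis that every strictly decreasing central filtration of $K$ has at most $l$ terms. Hence $\LCS_{i+l}(G) = \LCS_{i+l+1}(G)$, as required.

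The main obstacle I anticipate is the bookkeeping around the compression step: one must verify that passing to the subsequence of distinct subgroups preserves the central condition $[K, C_s] \subseteq C_{s+1}$ — the point being to test it at the \emph{last} index of each plateau rather than the first — and that the compressed filtration genuinely starts at $K$, which is precisely where $A_1 = K$ is used. The remaining ingredients (propagation of stationarity and the second isomorphism theorem) are routine, so the crux is setting up $(A_k)$ so that both the quotient comparison and the hypothesis on $K$ can be brought to bear simultaneously.
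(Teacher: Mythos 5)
Your proof is correct and follows essentially the same route as the paper: both consider the central filtration $K \cap \LCS_*(G)$ of $K$, observe that for $k \geq i$ the images $\pi(\LCS_k(G))$ are stationary so that any strict decrease of $\LCS_k(G)$ is detected by its intersection with $K$, and then invoke the hypothesis that such a filtration can strictly decrease at most $l$ times. You merely make explicit (via the second isomorphism theorem and the compression step) what the paper states in one line.
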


\begin{proof}
The filtration $K \cap \LCS_*(G)$ is a central filtration of $K$, so it can strictly decrease only $l$ times. If $\LCS_{i+1} H = \LCS_i H$, then $\LCS_i(H) = \LCS_{i+k}(H)$ is the image of $\LCS_{i+k}(G)$ in $H$, for all $k \geq 0$. Recall that if $L$ and $M$ are subgroups of $G$ such that $L \subseteq M$, then $L$ and $M$ are equal if and only if their image in $H$ and their intersection with $K$ are equal. As a consequence, for $\LCS_{i+k}(G)$ to decrease when $k$ grows, its intersection with the kernel must decrease, which can happen at most $l$ times. So the LCS of $G$ must stop at most at $\LCS_{i+l}$.
\end{proof}

\begin{example}
If $K$ is finite, such an $l$ clearly exists. In fact, for any central filtration $K_*$ on $K$, since the cardinal of $K/K_m$ is the product of the cardinals of the $K_i/K_{i+1}$, one can take $l$ to be the number of prime factors in the cardinal of $K$ (a bound that is optimal if $K$ is abelian). In particular, we will apply this with $K \cong \Z/2$ and $l = 1$ in the proof of Proposition~\ref{LCS_B1m(P2)}.
\end{example}

\begin{remark}
Finite groups are not the only ones for which the hypothesis holds. We could for instance apply the Lemma  with $K$ simple, or more generally with $K$ perfect (with $l = 0$). Also, the class of groups $K$ satisfying this hypothesis is stable by extensions. In fact, an equivalent way of stating it is to ask that the maximal residually nilpotent quotient of $K$ (that is, $K/\LCS_\infty(K)$) is finite.
\end{remark}

The other partial converse to Lemma~\ref{lem:stationary_quotient} that we will use concerns quotients by central subgroups. This case is a bit more subtle, and requires the following result, which can be useful when calculating quotients by residues.

\begin{proposition}\label{Extensions_and_residues}
Let $G$ be a group, and let $N$ be a normal subgroup of $G$. Suppose that for some $i \geq 2$, $N \cap \LCS_i(G) = \{1\}$. Then the canonical morphism $\LCS_\infty(G) \rightarrow  \LCS_\infty(G/N)$ is an isomorphism. In particular, $G$ is residually nilpotent if and only if $G/N$ is. Moreover, we have a short exact sequence:
\[N \hookrightarrow G/\LCS_\infty \twoheadrightarrow (G/N)/\LCS_\infty.\]
\end{proposition}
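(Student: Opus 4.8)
The plan is to work throughout with the quotient map $\pi \colon G \twoheadrightarrow G/N$ and to exploit the identity $\LCS_k(G/N) = \pi(\LCS_k(G))$ recorded in the proof of Lemma~\ref{lem:stationary_quotient}. The first observation to make is that the hypothesis propagates up the lower central series: since $\LCS_k(G) \subseteq \LCS_i(G)$ for every $k \geq i$, we have $N \cap \LCS_k(G) \subseteq N \cap \LCS_i(G) = \{1\}$. Consequently $\pi$ restricts, for each $k \geq i$, to a morphism $\LCS_k(G) \to \LCS_k(G/N)$ that is surjective (by the displayed identity) and injective (its kernel being $N \cap \LCS_k(G) = \{1\}$), hence an isomorphism. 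This is the structural fact that drives everything.

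From here I would prove that the restriction of $\pi$ gives an isomorphism $\LCS_\infty(G) \xrightarrow{\sim} \LCS_\infty(G/N)$. Injectivity is immediate: the kernel is $N \cap \LCS_\infty(G) \subseteq N \cap \LCS_i(G) = \{1\}$. Surjectivity is the only real point, and it is where I expect the main (mild) obstacle to lie, because a priori an element of $\LCS_\infty(G/N)$ need only lift into each $\LCS_k(G)$ separately, not into their intersection. The trick is to use uniqueness of lifts. Given $\bar x \in \LCS_\infty(G/N) \subseteq \LCS_i(G/N)$, let $x \in \LCS_i(G)$ be its unique preimage under the isomorphism $\LCS_i(G) \xrightarrow{\sim} \LCS_i(G/N)$. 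For each $k \geq i$ we also have $\bar x \in \LCS_k(G/N)$, so there is a (unique) $x_k \in \LCS_k(G)$ with $\pi(x_k) = \bar x$; but $x_k \in \LCS_k(G) \subseteq \LCS_i(G)$ and $\pi(x_k) = \bar x = \pi(x)$, so injectivity on $\LCS_i(G)$ forces $x_k = x$. Thus $x \in \LCS_k(G)$ for all $k \geq i$, i.e.\ $x \in \bigcap_{k \geq i} \LCS_k(G) = \LCS_\infty(G)$, and $\pi(x) = \bar x$.

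The two remaining assertions then follow formally. Since an isomorphism sends a trivial group to a trivial group and conversely, $\LCS_\infty(G) = \{1\}$ if and only if $\LCS_\infty(G/N) = \{1\}$, which is exactly the stated equivalence of residual nilpotence. For the short exact sequence, note first that $N \cap \LCS_\infty(G) \subseteq N \cap \LCS_i(G) = \{1\}$, so the composite $N \hookrightarrow G \twoheadrightarrow G/\LCS_\infty(G)$ is injective, with image $N\LCS_\infty(G)/\LCS_\infty(G)$. The canonical map $G \to (G/N)/\LCS_\infty(G/N)$ factors through $G/\LCS_\infty(G)$ because $\pi(\LCS_\infty(G)) = \LCS_\infty(G/N)$ by the surjectivity just proved; its kernel is $\pi^{-1}(\LCS_\infty(G/N)) = N\LCS_\infty(G)$, which matches the image of $N$ above. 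Identifying both outer terms with $G/(N\LCS_\infty(G))$ via the third isomorphism theorem then yields the exactness of $N \hookrightarrow G/\LCS_\infty \twoheadrightarrow (G/N)/\LCS_\infty$, completing the proof.
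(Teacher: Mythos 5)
Your proof is correct and follows essentially the same route as the paper: the key surjectivity of $\LCS_\infty(G) \to \LCS_\infty(G/N)$ is obtained in both cases by lifting an element into each $\LCS_k(G)$ and using $N \cap \LCS_k(G) = \{1\}$ for $k \geq i$ to force the lifts to agree (the paper phrases this as the sequence of lifts being stationary, you phrase it as uniqueness of the lift). The only cosmetic difference is the last step, where the paper invokes the Nine Lemma on a $3\times 3$ diagram while you compute the kernel $\pi^{-1}(\LCS_\infty(G/N)) = N\LCS_\infty(G)$ directly; both are fine.
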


\begin{proof}
Let $\pi \colon G \twoheadrightarrow G/N$ be the canonical projection. Since $N \cap \LCS_\infty(G) = \{1\}$, the induced morphism $\LCS_\infty(G) \rightarrow  \LCS_\infty(G/N)$ is injective. Let us show that it is surjective. Let $y \in \LCS_\infty(G/N)$. Since $\LCS_k (G/N) = \pi(\LCS_k G)$ by definition of the LCS, there is, for each $k \geq 1$, some $x_k \in \LCS_k (G)$ such that $\pi(x_k) = y$. Then $x_k x_{k+1}^{-1} \in N \cap \LCS_k (G)$, which implies that $x_k x_{k+1}^{-1} = 1$ whenever $k \geq i$. Thus the sequence $(x_k)$ is stationary at $x := x_i$, which must be in $\LCS_\infty(G)$ by definition of the $x_k$, and is sent to $y$ by $\pi$. This proves the first part of the Proposition.
Now, let us consider the commutative diagram of groups:
\[\begin{tikzcd}
1 \ar[r, hook] \ar[d, hook] & \LCS_\infty(G) \ar[r, "\cong"] \ar[d, hook] & \LCS_\infty(G/N) \ar[d, hook] \\
N \ar[r, hook] \ar[d, "="] & G \ar[r, two heads] \ar[d, two heads] & G/N \ar[d, two heads] \\
N \ar[r, dashed] & G/\LCS_\infty  \ar[r, dashed] & (G/N)/\LCS_\infty. \\
\end{tikzcd}\]
By the Nine Lemma, the bottom row must be a short exact sequence.
\end{proof}

The following corollary provides the promised partial converse to Lemma~\ref{lem:stationary_quotient}:
\begin{corollary}\label{Quotient_by_central_subgroup}
Let $G$ be a group and $A$ be a central subgroup of $G$. Suppose that for some $i \geq 2$, the canonical map $A \rightarrow G/\LCS_i(G)$ is injective. If the LCS of $G/A$ stops at $\LCS_k$, then the LCS of $G$ stops at $\LCS_k$ or at $\LCS_{k+1}$. 
\end{corollary}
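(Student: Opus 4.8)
The plan is to route everything through Proposition~\ref{Extensions_and_residues}, whose hypothesis is tailor-made for this situation. First I would rephrase the injectivity assumption: the canonical map $A \to G/\LCS_i(G)$ is injective precisely when $A \cap \LCS_i(G) = \{1\}$. Since $A$ is central (hence normal), this is exactly the hypothesis of Proposition~\ref{Extensions_and_residues} applied with $N = A$. Applying it yields an isomorphism $\LCS_\infty(G) \cong \LCS_\infty(G/A)$ together with a short exact sequence
\[ A \hookrightarrow \bar G \twoheadrightarrow \bar H, \qquad \bar G := G/\LCS_\infty(G), \quad \bar H := (G/A)/\LCS_\infty(G/A), \]
in which $A$ is central in $\bar G$, being the image of a central subgroup of $G$. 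Both $\bar G$ and $\bar H$ are residually nilpotent, so the problem is reduced to a central extension of residually nilpotent groups.

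Next I would convert the hypothesis on $G/A$ into a nilpotency statement. By the principle recalled in \Spar\ref{sec_def_LCS} (the projection onto the quotient by the residue induces an isomorphism of associated graded Lie rings), the LCS of $G/A$ and of $\bar H$ stop at the same index, so $\bar H$ stops at $\LCS_k$. As $\bar H$ is residually nilpotent, this means $\LCS_k(\bar H) = \LCS_\infty(\bar H) = \{1\}$, i.e.\ $\bar H$ is $(k-1)$-nilpotent. This sets up the key step: the surjection $\bar G \twoheadrightarrow \bar H$ restricts to a surjection $\LCS_k(\bar G) \twoheadrightarrow \LCS_k(\bar H) = \{1\}$, so $\LCS_k(\bar G) \subseteq A$. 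Since $A$ is central in $\bar G$,
\[ \LCS_{k+1}(\bar G) = [\bar G, \LCS_k(\bar G)] \subseteq [\bar G, A] = \{1\}. \]
Thus $\bar G$ is $k$-nilpotent, so its LCS stops at $\LCS_{k+1}$ or earlier; transporting back along the same Lie-ring isomorphism, the LCS of $G$ stops at $\LCS_{k+1}$ or earlier.

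For the lower bound I would work directly on $G$, bypassing the residue. If $G$ were to stop before $\LCS_k$, that is if $\LCS_{k-1}(G) = \LCS_k(G)$, then Lemma~\ref{lem:stationary_quotient} would force $\LCS_{k-1}(G/A) = \LCS_k(G/A)$, contradicting the minimality of $k$. Hence the LCS of $G$ stops no earlier than $\LCS_k$, and combined with the upper bound it stops exactly at $\LCS_k$ or $\LCS_{k+1}$.

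The step I expect to be the crux is the central-extension collapse: everything hinges on forcing $\LCS_k(\bar G)$ into the central kernel $A$, which kills one extra bracket. The genuine work is done by Proposition~\ref{Extensions_and_residues}, which is exactly the device that lets me replace $G$ and $G/A$ by residually nilpotent groups, so that the input ``stops at $\LCS_k$'' becomes the usable equality $\LCS_k(\bar H) = \{1\}$. I would also remark that the upper bound can be obtained without passing to the residue at all: the identity $[g, xa] = [g,x]$ for central $a$ gives $[G, \LCS_k(G)\,A] = [G, \LCS_k(G)]$, and since $\LCS_k(G/A) = \LCS_{k+1}(G/A)$ translates into $\LCS_k(G)\,A = \LCS_{k+1}(G)\,A$, one gets $\LCS_{k+2}(G) = [G, \LCS_{k+1}(G)] = [G, \LCS_k(G)] = \LCS_{k+1}(G)$ by a one-line computation; the residue formalism is what makes the argument conceptually clean and fits it into the paper's framework of partial converses to Lemma~\ref{lem:stationary_quotient}.
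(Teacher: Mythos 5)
Your proof is correct and follows essentially the same route as the paper's: apply Proposition~\ref{Extensions_and_residues} with $N = A$ to obtain the central extension $A \hookrightarrow G/\LCS_\infty \twoheadrightarrow (G/A)/\LCS_\infty$, and then use that a central extension of a nilpotent group increases the nilpotency class by at most one, with Lemma~\ref{lem:stationary_quotient} supplying the lower bound. Your closing remark is also correct and worth noting: the direct computation $[G,\LCS_{k+1}(G)] = [G,\LCS_{k+1}(G)A] = [G,\LCS_k(G)A] = [G,\LCS_k(G)] = \LCS_{k+1}(G)$ establishes the upper bound without invoking the residue at all, and in particular shows that the injectivity hypothesis on $A \to G/\LCS_i(G)$ is not needed for that half of the conclusion.
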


\begin{proof}
The extension $A \hookrightarrow G/\LCS_\infty \twoheadrightarrow (G/A)/\LCS_\infty$ from Proposition~\ref{Extensions_and_residues} is a central one. As a consequence, if  $(G/A)/\LCS_\infty$ is $k$-nilpotent, then $G/\LCS_\infty$ is nilpotent of class $k$ or $k+1$.
\end{proof}

\begin{remark}
In explicit examples, the hypothesis of Corollary~\ref{Quotient_by_central_subgroup} is most easily checked when $i=2$, since we typically have a complete understanding of~$G^{\ab}$; see for example the proof of Proposition \ref{LCS_B2m(S2)}. 
\end{remark}

\section{Lie rings of lower central series}\label{Lie_rings}

We now recall the definition and basic properties of a key tool for studying the LCS of a group, namely its associated Lie ring. We refer the reader to \cite[Chap.~1]{Lazard} for further details.

Note that, for all $i\geq 1$, $[\LCS_i(G),\LCS_i(G)] \subseteq [G,\LCS_{i}(G)]\subseteq \LCS_{i+1}(G)\subseteq \LCS_{i}(G)$. Thus, $\LCS_{i}(G)$ is a normal subgroup of $G$, and the quotient $\Lie_i(G):=\LCS_{i}(G)/\LCS_{i+1}(G)$ is an abelian group. Moreover, one can show that $[\LCS_i(G),\LCS_j(G)] \subseteq \LCS_{i+j}(G)$ for all $i,j \geq 1$, which is the crucial property allowing us to define the Lie ring associated with $\LCS_*(G)$:
\begin{proposition}[{\cite[Th.~2.1]{Lazard}}]\label{prop:lazard}
The graded abelian group defined by
$\Lie(G):=\bigoplus_{i \geq 1}\Lie_{i}(G)$
is a Lie ring, with the Lie bracket induced by the commutator map of $G$.
\end{proposition}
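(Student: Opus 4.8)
The plan is to define the bracket on homogeneous elements and extend it biadditively, then verify the four axioms of a Lie ring. Given classes $\bar{x} \in \Lie_i(G)$ and $\bar{y} \in \Lie_j(G)$ represented by $x \in \LCS_i(G)$ and $y \in \LCS_j(G)$, I set $[\bar x, \bar y] := \overline{[x,y]}$, the class in $\Lie_{i+j}(G) = \LCS_{i+j}(G)/\LCS_{i+j+1}(G)$ of the group commutator $[x,y] = xyx^{-1}y^{-1}$. This lands in the correct degree precisely because of the containment $[\LCS_i(G), \LCS_j(G)] \subseteq \LCS_{i+j}(G)$ recalled above. It then remains to check that the bracket is well defined (independent of the chosen representatives), that it is biadditive, that it is alternating, and that it satisfies the Jacobi identity. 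Since everything is defined on homogeneous pieces and extended biadditively, it suffices to verify each identity on homogeneous elements.

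The engine for well-definedness and biadditivity is the pair of elementary commutator identities
\[ [xy, z] = {}^x[y,z]\cdot[x,z], \qquad [x, yz] = [x,y]\cdot {}^y[x,z], \]
where ${}^g h = ghg^{-1}$, together with the observation that conjugation acts trivially on the associated graded. For well-definedness I would replace $x$ by $xa$ with $a \in \LCS_{i+1}(G)$ and compute $[xa,y] = {}^x[a,y]\cdot[x,y]$; here $[a,y] \in [\LCS_{i+1}(G), \LCS_j(G)] \subseteq \LCS_{i+j+1}(G)$, and since $\LCS_{i+j+1}(G)$ is normal its conjugate ${}^x[a,y]$ remains in $\LCS_{i+j+1}(G)$, whence $[xa,y] \equiv [x,y] \pmod{\LCS_{i+j+1}(G)}$; the second variable is symmetric. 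For additivity in the first variable I would use $[xx',y] = {}^x[x',y]\cdot[x,y]$ and note that ${}^x[x',y] = [x,[x',y]]\cdot[x',y]$ with $[x,[x',y]] \in [\LCS_i(G), \LCS_{i+j}(G)] \subseteq \LCS_{2i+j}(G) \subseteq \LCS_{i+j+1}(G)$; as $\Lie_{i+j}(G)$ is abelian this gives $\overline{[xx',y]} = \overline{[x',y]} + \overline{[x,y]}$, which is additivity since $\overline{xx'} = \bar x + \bar x'$ in $\Lie_i(G)$. Additivity in the second variable is identical.

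The alternating law is immediate: $[\bar x, \bar x] = 0$ because $[x,x] = 1$, and $[y,x] = [x,y]^{-1}$ gives $[\bar y, \bar x] = -[\bar x, \bar y]$ on homogeneous elements; biadditivity then promotes these to $[a,a]=0$ throughout $\Lie(G)$. The substantial point, and the one I expect to be the main obstacle, is the Jacobi identity. For this I would invoke the Hall--Witt identity, in the form appropriate to our commutator convention,
\[ {}^{x}\!\bigl[[x^{-1},y],z\bigr]\cdot {}^{z}\!\bigl[[z^{-1},x],y\bigr]\cdot {}^{y}\!\bigl[[y^{-1},z],x\bigr] = 1, \]
applied to representatives $x \in \LCS_i(G)$, $y \in \LCS_j(G)$, $z \in \LCS_k(G)$, and reduce it modulo $\LCS_{i+j+k+1}(G)$. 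Each outer conjugation can be dropped by the same normality argument as above, since the conjugating element lies in some $\LCS_r(G)$ while the triple commutator lies in $\LCS_{i+j+k}(G)$, so the correction contributes only to $\LCS_{i+j+k+1}(G)$; and, by the biadditivity already established, each inner commutator $[x^{-1},y]$ is congruent to $[x,y]^{-1}$ modulo $\LCS_{i+j+1}(G)$, so its outer bracket with $z$ agrees with that of $[x,y]^{-1}$ in $\Lie_{i+j+k}(G)$. What survives is the congruence $[[x,y],z]\cdot[[z,x],y]\cdot[[y,z],x] \equiv 1 \pmod{\LCS_{i+j+k+1}(G)}$, which, passing to the additive notation of $\Lie_{i+j+k}(G)$ and using antisymmetry, is exactly the Jacobi identity $[[\bar x,\bar y],\bar z] + [[\bar z,\bar x],\bar y] + [[\bar y,\bar z],\bar x] = 0$. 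The care required is purely in the bookkeeping: tracking which conjugations and inner inverses contribute only to $\LCS_{i+j+k+1}(G)$ and hence vanish in the associated graded.
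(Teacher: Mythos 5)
Your proof is correct and is the standard argument for this classical result: the paper itself gives no proof, simply citing Lazard, and the route you take (bracket defined on homogeneous pieces via $[\LCS_i(G),\LCS_j(G)]\subseteq\LCS_{i+j}(G)$, well-definedness and biadditivity from the identities $[xy,z]={}^x[y,z]\cdot[x,z]$ and ${}^xw=[x,w]\cdot w$, Jacobi from Hall--Witt with conjugations and inner inverses absorbed into $\LCS_{i+j+k+1}(G)$) is precisely the classical one. The bookkeeping you describe all checks out under the paper's convention $[x,y]=xyx^{-1}y^{-1}$, so there is nothing to add.
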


\begin{convention}
Let $g$ be an element of $G$. If there is an integer $d$ such that $g \in \LCS_d (G) - \LCS_{d+1}(G)$, it is obviously unique. We then call $d$ the \emph{degree} of $g$ with respect to $\LCS_{*}(G)$. The notation $\overline g$ denotes the class of $g$ in some quotient $\Lie_i(G)$. If the integer $i$ is not specified, it is assumed that $i = d$, which means that $\overline g$ denotes the only non-trivial class induced by $g$ in $\Lie(G)$. If such a $d$ does not exist (that is, if $g \in \bigcap \LCS_i(G)$), we say that $g$ has degree $\infty$ and we put $\overline g = 0$.
\end{convention}

With this convention, the Lie bracket $[-,-]$ of $\Lie(G)$ is given by the collection of bilinear maps $\Lie_i(G) \times \Lie_j(G) \rightarrow \Lie_{i+j}(G)$ defined by:
\[ \forall x \in \Lie_i(G),\ \forall y \in \Lie_j(G),\ [\overline x, \overline y]:= \overline{[x,y]} \in \Lie_{i+j}(G).\]

The following lemma, which will be used several times in the sequel to identify $G/\LCS_\infty$ for some group $G$, is one illustration of the use of Lie rings in studying the LCS: 

\begin{lemma}\label{Quotient_by_residue}
Let $p \colon G \twoheadrightarrow Q$ be a surjective group morphism. If $Q$ is a residually nilpotent group, then the following conditions are equivalent:
\begin{itemize}
\item $\Lie(p) \colon \Lie(G) \twoheadrightarrow \Lie(Q)$ is an isomorphism.
\item $p$ induces an isomorphism $G/\LCS_\infty \cong Q$.
\end{itemize}
\end{lemma}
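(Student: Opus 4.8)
The plan is to prove the two implications separately, using the general fact (recalled just before the statement) that for any surjection $p \colon G \twoheadrightarrow Q$ one has $\LCS_k(Q) = p(\LCS_k(G))$ for all $k$, and hence $p$ induces a surjective morphism of Lie rings $\Lie(p) \colon \Lie(G) \twoheadrightarrow \Lie(Q)$.

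For the implication from the second condition to the first, suppose $p$ induces an isomorphism $\bar p \colon G/\LCS_\infty(G) \xrightarrow{\cong} Q$. As noted in the passage on the ``quotient by the residue'' in the introduction, the projection $G \twoheadrightarrow G/\LCS_\infty(G)$ induces an isomorphism $\Lie(G) \cong \Lie(G/\LCS_\infty(G))$, since each $\LCS_i(G)$ is the preimage of $\LCS_i(G/\LCS_\infty(G))$. Composing this with the isomorphism $\Lie(\bar p) \colon \Lie(G/\LCS_\infty(G)) \cong \Lie(Q)$ shows that $\Lie(p)$ is an isomorphism.

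For the converse, the key point is to exploit the residual nilpotence of $Q$. Assume $\Lie(p)$ is an isomorphism; I want to show $\bar p \colon G/\LCS_\infty(G) \to Q$ is an isomorphism. Since $\LCS_\infty(G)$ is contained in the kernel of $p$ (indeed $p(\LCS_\infty(G)) \subseteq \bigcap_k p(\LCS_k(G)) = \LCS_\infty(Q) = \{1\}$, using residual nilpotence of $Q$), the map $\bar p$ is well defined and surjective; it remains to prove injectivity, i.e.\ that $\ker(p) \subseteq \LCS_\infty(G)$. The strategy is an induction showing $\ker(p) \subseteq \LCS_k(G)$ for every $k$. The base case $k=1$ is trivial. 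For the inductive step, take $x \in \ker(p)$; by the induction hypothesis $x \in \LCS_k(G)$, so $x$ represents a class $\overline x \in \Lie_k(G)$. Since $p(x) = 1$, this class lies in $\ker(\Lie(p))$ in degree $k$, which is trivial because $\Lie(p)$ is injective; hence $\overline x = 0$ in $\Lie_k(G)$, which means precisely that $x \in \LCS_{k+1}(G)$. This completes the induction, giving $\ker(p) \subseteq \bigcap_k \LCS_k(G) = \LCS_\infty(G)$, so $\bar p$ is injective, hence an isomorphism.

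I expect the main subtlety to lie in the converse direction, specifically in justifying that $\bar p$ is well defined and that the induction correctly translates injectivity of $\Lie(p)$ degree by degree into the containment $\ker(p) \subseteq \LCS_\infty(G)$. The hypothesis that $Q$ is residually nilpotent is used in two essential places: to ensure $\LCS_\infty(G) \subseteq \ker(p)$ so that $\bar p$ exists, and implicitly to guarantee that the two conditions are genuinely about $Q$ rather than about $Q/\LCS_\infty(Q)$. The forward direction, by contrast, is a formal consequence of the already-established functoriality of $\Lie$ and the identification $\Lie(G) \cong \Lie(G/\LCS_\infty(G))$.
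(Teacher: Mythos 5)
Your proof is correct and follows essentially the same route as the paper's: the forward direction via the identification $\Lie(G) \cong \Lie(G/\LCS_\infty)$, and the converse by showing that injectivity of $\Lie(p)$ in each degree forces $\ker(p)$ into every term of the lower central series, hence into $\LCS_\infty(G)$. The paper phrases the converse contrapositively after first reducing to the case where $G$ itself is residually nilpotent (a nontrivial kernel element then has finite degree and yields a nonzero class killed by $\Lie(p)$), but your degree-by-degree induction is the same mechanism unwound.
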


\begin{proof}
If $Q$ is residually nilpotent, $p$ induces a map $G/\LCS_\infty \twoheadrightarrow Q$ between two residually nilpotent groups. Since $G \twoheadrightarrow G/\LCS_\infty$ induces an isomorphism between the associated Lie rings, the statement for $G$ can be deduced from the statement for $G/\LCS_\infty$. Thus, we can assume that $G$ is residually nilpotent and, under this hypothesis, we need to show that $p$ is an isomorphism if and only if $\Lie(p)$ is. Clearly, if $p$ is an isomorphism, then $\Lie(p)$ is too. Conversely, if $p$, which is surjective, is not an isomorphism, then there is some non-trivial element $x$ in its kernel. Since $G$ is residually nilpotent, $x$ induces a non-trivial class $\overline x$ in $\Lie(G)$, which is sent to $0$ by $\Lie(p)$. This implies that $\Lie(p)$ is not injective, which concludes our proof. 
\end{proof}

\section{Computing abelianisations from decompositions}

Let us recall some classical tools for computing the abelianisation from some decomposition of a given group. The abelianisation functor $G \mapsto G^{\ab}$ is a left adjoint, hence right exact. In order to compute the abelianisation of an extension, one can say more. Given a short exact sequence of groups $H \hookrightarrow G \twoheadrightarrow K$, let us denote by $(H^{\ab})_K$ the coinvariants of $H^{\ab}$ with respect to the action of $K$ on $H^{\ab}$ induced by conjugation in $G$.
\begin{lemma}\label{abelianization_from_coinv}
The short exact sequence $H \hookrightarrow G \twoheadrightarrow K$ induces the following exact sequence of abelian groups:
\[(H^{\ab})_K \rightarrow G^{\ab} \rightarrow K^{\ab} \rightarrow 0.\]
\end{lemma}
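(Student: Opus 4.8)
The plan is to build the sequence in two stages: first establish the classical right-exact sequence $H^{\ab} \to G^{\ab} \to K^{\ab} \to 0$ (which is just right-exactness of the abelianisation functor made explicit), and then observe that the leftmost map factors through the coinvariants $(H^{\ab})_K$.

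First I would pin down the kernel of the surjection $G^{\ab} \twoheadrightarrow K^{\ab}$. Write $\pi \colon G \twoheadrightarrow K$ for the quotient map, so that $\ker \pi = H$. Since $\pi$ is surjective we have $\pi(\LCS_2(G)) = \LCS_2(K)$, and the full preimage of $\LCS_2(K)$ is then $\pi^{-1}(\LCS_2(K)) = H \cdot \LCS_2(G)$ (the general fact $\pi^{-1}(\pi(S)) = S \cdot \ker\pi$ applied to $S = \LCS_2(G)$). Hence $\pi$ induces an isomorphism $G/(H \cdot \LCS_2(G)) \xrightarrow{\ \cong\ } K/\LCS_2(K) = K^{\ab}$. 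Read as a statement about $G^{\ab} = G/\LCS_2(G)$, this says that the induced map $G^{\ab} \to K^{\ab}$ is surjective with kernel the image of $H$ in $G^{\ab}$, namely $H\cdot\LCS_2(G)/\LCS_2(G)$. This image is precisely the image of the natural map $H^{\ab} \to G^{\ab}$ coming from the inclusion $H \hookrightarrow G$, so the sequence $H^{\ab} \to G^{\ab} \to K^{\ab} \to 0$ is already exact at $G^{\ab}$ and $K^{\ab}$.

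Second I would check that $H^{\ab} \to G^{\ab}$ factors through $(H^{\ab})_K$. The key point is that inner automorphisms act trivially on any abelianisation: for $h \in H$ and $g \in G$ one has $\overline{ghg^{-1}} = \overline{h}$ in $G^{\ab}$. By definition the $K$-action on $H^{\ab}$ is induced by conjugation in $G$ (well-defined on $H^{\ab}$ because $\Inn(H)$ already acts trivially there, so the action depends only on the class $k = gH$). The displayed identity therefore states exactly that the map $H^{\ab} \to G^{\ab}$ annihilates every element of the form $\overline h - k \cdot \overline h$. Since $(H^{\ab})_K$ is the quotient of $H^{\ab}$ by the subgroup generated by all such elements, the map descends to a map $(H^{\ab})_K \to G^{\ab}$ with the same image.

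Combining the two stages, the image of $(H^{\ab})_K \to G^{\ab}$ coincides with the image of $H^{\ab} \to G^{\ab}$, which equals the kernel of $G^{\ab} \to K^{\ab}$, yielding the asserted exact sequence. No step is genuinely difficult here; the only points requiring care are the identification $\pi^{-1}(\LCS_2(K)) = H \cdot \LCS_2(G)$ and the bookkeeping around the well-definedness of the $K$-action and the universal property of the coinvariants, which together make the factorisation through $(H^{\ab})_K$ precise.
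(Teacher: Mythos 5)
Your proof is correct and follows essentially the same route as the paper's: establish (or invoke) the standard right-exact sequence $H^{\ab} \to G^{\ab} \to K^{\ab} \to 0$, then observe that conjugation acts trivially on $G^{\ab}$, so the map $H^{\ab} \to G^{\ab}$ factors through the coinvariants. The only difference is that you spell out the right-exactness via the identification $\pi^{-1}(\LCS_2(K)) = H\cdot\LCS_2(G)$, whereas the paper simply cites it as the known right-exactness of the abelianisation functor.
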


\begin{proof}
The conjugation action of $G$ on $H^{\ab}$ factors through $G/H = K$, hence $(H^{\ab})_G = (H^{\ab})_K$. Since we have an exact sequence $H^{\ab} \rightarrow G^{\ab} \rightarrow K^{\ab} \rightarrow 0$, it suffices to show that the morphism $H^{\ab} \rightarrow G^{\ab}$ factors through $(H^{\ab})_K$. It is equivariant with respect to the action of $G$ induced by conjugation (which is obviously trivial on $G^{\ab}$), whence the result.
\end{proof}

For split exact sequences, we can say even more:

\begin{lemma}\label{lem:abelianization semidirect}
The abelianisation of a semidirect product $H\rtimes K$ is isomorphic to the product $(H^{\ab})_K \times K^{\ab}$.
\end{lemma}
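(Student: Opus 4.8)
The plan is to prove that the abelianisation of a semidirect product $H \rtimes K$ is isomorphic to $(H^{\ab})_K \times K^{\ab}$ by combining the right-exact sequence from Lemma~\ref{abelianization_from_coinv} with the extra structure provided by the splitting. Since the sequence $H \hookrightarrow H \rtimes K \twoheadrightarrow K$ is split, I expect the surjection $(H \rtimes K)^{\ab} \to K^{\ab}$ to be split as well (by abelianising the section $K \hookrightarrow H \rtimes K$), which will upgrade the right-exact sequence of Lemma~\ref{abelianization_from_coinv} to a split short exact sequence and thereby identify the abelianisation as a direct product.

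First I would recall from Lemma~\ref{abelianization_from_coinv} the exact sequence
\[(H^{\ab})_K \rightarrow (H \rtimes K)^{\ab} \rightarrow K^{\ab} \rightarrow 0.\]
The splitting $s \colon K \hookrightarrow H \rtimes K$ of the original extension induces, after abelianisation, a map $s^{\ab} \colon K^{\ab} \to (H \rtimes K)^{\ab}$ which is a section of the surjection on the right (functoriality of abelianisation applied to $\pi \circ s = \mathrm{id}_K$). This shows the right-hand surjection is split, so $(H \rtimes K)^{\ab} \cong \ima\!\big((H^{\ab})_K \to (H \rtimes K)^{\ab}\big) \oplus K^{\ab}$ as abelian groups, and it remains only to identify the first summand with $(H^{\ab})_K$ itself, i.e.\ to show that the map $(H^{\ab})_K \to (H \rtimes K)^{\ab}$ is injective.

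For injectivity I would construct an explicit retraction. The projection $H \rtimes K \twoheadrightarrow K$ and the section give a conjugation-equivariant splitting at the level of sets; more usefully, one can define a homomorphism $H \rtimes K \to (H^{\ab})_K$ by sending $(h,k)$ to the class of $h$ in the coinvariants $(H^{\ab})_K$. I would check this is a homomorphism: given $(h_1,k_1)(h_2,k_2) = (h_1 \cdot {}^{k_1}h_2,\, k_1 k_2)$, the $H$-component $h_1 \cdot {}^{k_1}h_2$ maps to $\overline{h_1} + \overline{{}^{k_1}h_2} = \overline{h_1} + \overline{h_2}$ in $(H^{\ab})_K$, where the last equality uses precisely that we have passed to the $K$-coinvariants. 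Since the target is abelian, this homomorphism factors through $(H \rtimes K)^{\ab}$ and provides a left inverse to the map $(H^{\ab})_K \to (H \rtimes K)^{\ab}$, establishing its injectivity. Combining this with the split surjection onto $K^{\ab}$ yields the desired direct-product decomposition.

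The routine verifications are the homomorphism property of the retraction and the bookkeeping with the semidirect-product multiplication; the one genuinely load-bearing point—the step I would single out as the heart of the argument—is the passage to $K$-coinvariants in that computation, which is exactly what makes $\overline{{}^{k_1}h_2} = \overline{h_2}$ hold and hence what makes the retraction well-defined on $(H^{\ab})_K$. Everything else is formal consequence of the universal property of abelianisation and the splitting.
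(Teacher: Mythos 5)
Your proof is correct, but it takes a different route from the one in the paper. The paper argues directly with commutators: using the identity $[x,yz]=[x,y](x[x,z]x^{-1})$ it observes that $[H\rtimes K, H\rtimes K]$ is normally generated by $[H,H]$, $[H,K]$ and $[K,K]$, and then quotients by these three families in succession, passing through $H^{\ab}\rtimes K$ and $(H^{\ab})_K\times K$ before arriving at $(H^{\ab})_K\times K^{\ab}$. You instead start from the right-exact sequence of Lemma~\ref{abelianization_from_coinv}, use functoriality of abelianisation on the section $K\hookrightarrow H\rtimes K$ to split the surjection onto $K^{\ab}$, and then establish injectivity of $(H^{\ab})_K\to (H\rtimes K)^{\ab}$ via the explicit retraction $(h,k)\mapsto\overline{h}$ --- and you correctly identify that the well-definedness of this retraction rests exactly on passing to $K$-coinvariants, so that $\overline{{}^{k_1}h_2}=\overline{h_2}$. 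Both arguments are complete; the paper's is more self-contained and exhibits the intermediate quotients explicitly (which it reuses elsewhere, e.g.\ in presentations of extensions), whereas yours is the cleaner ``split short exact sequence of abelian groups'' argument and makes transparent precisely which map needs to be shown injective and why. Note also that your retraction is essentially the degree-one piece of the relative lower central series decomposition $\Lie_1^K(H)=(H^{\ab})_K$ discussed in Appendix~\ref{sec:appendix_2}, so your viewpoint is consonant with how the lemma is generalised later in the memoir.
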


\begin{proof}
As a consequence of the usual formula $[x, yz] = [x, y] (x [x,z] x^{-1})$, one sees that the commutator subgroup $[H \rtimes K, H \rtimes K]$ is normally generated by $[H,H]$, $[H,K]$ and $[K,K]$. We can take the quotient by these three sets of relations successively: $(H \rtimes K)/[H,H]$ is isomorphic to $H^{\ab} \rtimes K$, then killing $[H,K]$ gives $(H^{\ab})_K \times K$ and finally, $((H^{\ab})_K \times K)/[K,K] \cong (H^{\ab})_K \times K^{\ab}$.
\end{proof}

\chapter{Strategy and first examples}
\label{sec:strategy}

In this chapter, we present some general ideas used to decide whether the LCS stops or not. As a first example, we then apply these ideas to Artin groups.

\section{Generation in degree one -- first consequences}\label{subsec:gen_deg_one_first_consequences}

The Lie ring associated to the LCS has the following fundamental property:
\begin{proposition}\label{generationproperty}
The Lie ring $\Lie(G)$ is generated in degree one. That is, it is generated by the abelianisation $\Lie_1(G) = G^{\ab}$ as a Lie algebra over $\Z$.
\end{proposition}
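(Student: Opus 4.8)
The plan is to prove by induction on $i \geq 1$ that each homogeneous piece $\Lie_i(G)$ is contained in the Lie subring $\mathfrak{h} \subseteq \Lie(G)$ generated over $\Z$ by the degree-one part $\Lie_1(G) = G^{\ab}$. The base case $i = 1$ holds by definition of $\mathfrak h$. For the inductive step, I would exploit the defining relation $\LCS_{i+1}(G) = [G, \LCS_i(G)]$ recalled in \Spar\ref{sec_def_LCS}: by construction this is the subgroup of $G$ generated, \emph{as a group}, by the commutators $[x, y]$ with $x \in G = \LCS_1(G)$ and $y \in \LCS_i(G)$.

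The key observation is that group-theoretic generation of $\LCS_{i+1}(G)$ becomes $\Z$-module generation of the abelian quotient $\Lie_{i+1}(G) = \LCS_{i+1}(G)/\LCS_{i+2}(G)$: the image of a generating set of a group is a generating set of any abelian quotient, so $\Lie_{i+1}(G)$ is generated as an abelian group by the classes of the commutators $[x,y]$ above. I would then invoke the description of the Lie bracket from Proposition~\ref{prop:lazard}, namely $\overline{[x,y]} = [\overline x, \overline y]$, where $\overline x \in \Lie_1(G)$ is the image of $x \in \LCS_1(G)$ and $\overline y \in \Lie_i(G)$ is the class of $y \in \LCS_i(G)$. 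By the inductive hypothesis $\overline y \in \mathfrak h$, while $\overline x \in \Lie_1(G) \subseteq \mathfrak h$; since $\mathfrak h$ is closed under the bracket, $[\overline x, \overline y] = \overline{[x,y]} \in \mathfrak h$. As $\mathfrak h$ is in particular an additive subgroup of $\Lie(G)$, it then contains the subgroup generated by all these classes, which is exactly $\Lie_{i+1}(G)$, completing the induction and hence the proof.

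The argument is essentially formal once these pieces are assembled, so I do not expect a serious obstacle; the one point deserving care is precisely the passage from generation of $\LCS_{i+1}(G)$ as a subgroup to generation of $\Lie_{i+1}(G)$ as a $\Z$-module. Concretely, I must use the honest generating set of $[G,\LCS_i(G)]$ — commutators whose \emph{first} entry ranges over all of $G$, so that $\overline x$ genuinely lands in the degree-one part $\Lie_1(G)$, and whose second entry ranges over $\LCS_i(G)$ — and then note that brackets of the form $[\Lie_1, \Lie_i]$ are what generate $\Lie_{i+1}$. With this care, the compatibility $\overline{[x,y]} = [\overline x, \overline y]$ does all the work, and the fact that $\mathfrak h$ is closed both under the bracket and under addition supplies the rest.
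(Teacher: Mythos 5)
Your proof is correct and is exactly the paper's argument, spelled out in full: the paper's one-line proof says that $\Lie_k(G) = [\Lie_1(G), \Lie_{k-1}(G)]$ is obtained from $\LCS_k(G) = [\LCS_1(G), \LCS_{k-1}(G)]$ by passing to the quotients, and your induction with the compatibility $\overline{[x,y]} = [\overline x, \overline y]$ is precisely the careful version of that passage. No gap.
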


\begin{proof}
It is a direct consequence of the definitions: the equality $\Lie_k(G) = [\Lie_1(G), \Lie_{k-1}(G)]$ is obtained directly from $\LCS_k(G) = [\LCS_1(G), \LCS_{k-1}(G)]$, by passing to the appropriate quotients.
\end{proof}

A first consequence of this is the following:
\begin{corollary}\label{cyclic_abelianization}
Let $G$ be a group. If $G^{\ab}$ is cyclic, then $\LCS_2 G = \LCS_3 G$.
\end{corollary}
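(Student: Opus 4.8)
The plan is to deduce everything from the generation-in-degree-one property established in Proposition~\ref{generationproperty}. The statement $\LCS_2(G) = \LCS_3(G)$ is equivalent to the vanishing of the degree-two piece $\Lie_2(G) = \LCS_2(G)/\LCS_3(G)$ of the associated Lie ring, so the entire problem reduces to showing that this single graded piece is trivial. This is the reformulation I would start from, since it translates a statement about the LCS into a computation inside $\Lie(G)$, where the hypothesis on $G^{\ab}$ can be used directly.

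Next I would invoke Proposition~\ref{generationproperty} in the precise form appearing in its proof: $\Lie_2(G) = [\Lie_1(G), \Lie_1(G)]$, where $\Lie_1(G) = G^{\ab}$. Now I would use the hypothesis that $G^{\ab}$ is cyclic: pick a generator $\overline g$ of $\Lie_1(G)$ as an abelian group. By bilinearity of the Lie bracket $\Lie_1(G) \times \Lie_1(G) \to \Lie_2(G)$, the subgroup $[\Lie_1(G), \Lie_1(G)]$ is generated by the single element $[\overline g, \overline g]$.

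The final step is to observe that $[\overline g, \overline g] = 0$. This is the one point that genuinely requires the structure of $\Lie(G)$ rather than purely formal manipulation: the bracket is antisymmetric because it is induced by the group commutator, and indeed $[\overline g, \overline g] = \overline{[g,g]} = \overline{g g g^{-1} g^{-1}} = \overline 1 = 0$. Hence $\Lie_2(G)$ is generated by $0$, so $\Lie_2(G) = 0$ and therefore $\LCS_2(G) = \LCS_3(G)$, as desired.

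I do not expect any serious obstacle here; the only substantive ingredient is Proposition~\ref{generationproperty}, and the only subtlety worth flagging explicitly is the antisymmetry of the Lie bracket, which is what forces the unique generator $[\overline g, \overline g]$ of $\Lie_2(G)$ to vanish. Everything else is bookkeeping in passing between $\LCS_*(G)$ and $\Lie(G)$.
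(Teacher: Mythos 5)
Your proof is correct and follows essentially the same route as the paper: both arguments rest on Proposition~\ref{generationproperty}, the paper phrasing the conclusion via ``$\Lie(G)$ is a quotient of the free Lie ring on a cyclic group, which is abelian,'' while you unpack exactly why that free Lie ring is abelian, namely $[\overline g,\overline g]=\overline{[g,g]}=0$ together with bilinearity. No gaps; the two write-ups differ only in how explicitly the alternating property of the bracket is invoked.
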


\begin{proof}
Proposition~\ref{generationproperty} implies that the Lie ring $\mathcal L(G)$ is a quotient of the free Lie ring on $G^{\ab}$. Since $G^{\ab}$ is cyclic, the latter is the abelian Lie ring consisting only of $G^{\ab}$. As a consequence,  $\LCS_2 G / \LCS_3 G = \Lie_2(G) = \{0\}$.
\end{proof}

\begin{example}[Braids]
\label{eg:braids}
Directly from their usual presentations, one computes the abelianisation of the braid groups: $\B_n^{\ab} \cong \Z$ for $n \geq 2$. Thus $\LCS_2 (\B_n) = \LCS_3 (\B_n)$. This fact is originally due to Gorin and Lin \cite{GorinLin}, who proved it by different methods. This property of $\B_n$, which is also true for any quotient of $\B_n$ (such as the symmetric group $\Sym_n$), may also be seen as a particular case of the computations for Artin groups below (Proposition~\ref{LCS_Artin_stops}).
\end{example}

\begin{example}[Knot groups]
For any knot, the knot group has (infinite) cyclic abelianisation, thus its LCS stops at $\LCS_2$. This generalises readily to the enveloping groups of any connected quandle; see for instance~\cite[Prop.~3.3]{BardakovNasybullovSingh}.
\end{example}

\begin{example}[Automorphisms of free groups]\label{eg_Aut(Fn)}
Consider the automorphism group $\Aut(\F_n)$ of the free group on $n$ letters. The kernel $IA_n$ of the projection from $\Aut(\F_n)$ onto $\Aut(\F_n^{\ab}) \cong \mathrm{GL}_n(\Z)$ is generated by the usual $K_{ij}$ and $K_{ijk}$ from \cite[\Spar3.5]{MagnusKarrassSolitar}, which are easily seen to be commutators of automorphisms. Thus $\Aut(\F_n)^{\ab} \cong \mathrm{GL}_n(\Z)^{\ab}$. Whenever $n \geq 3$, this group is cyclic of order two, so the LCS of $\Aut(\F_n)$ stops at $\LCS_2$, and so does the one of $\mathrm{GL}_n(\Z)$. 
\end{example}

An easy generalisation of Corollary~\ref{cyclic_abelianization} is:
\begin{corollary}\label{commuting_representatives}
Let $G$ be a group. Let $S$ be a generating set of $G^{\ab}$. Suppose that, for each pair $(s,t) \in S^2$, we can find representatives $\tilde s, \tilde t \in G$ of $s$ and $t$ such that $\tilde s$ and $\tilde t$ commute. Then $\LCS_2 G = \LCS_3 G$.
\end{corollary}

\begin{proof}
The Lie ring $\Lie(G)$ is generated by $S$. Moreover, the fact that $\left[\tilde s,\tilde t \right] = 1$ in $G$ readily implies that $[s, t] = 0$ in $\Lie(G)$. Since the brackets $[s, t]$ for $(s,t) \in S^2$ generate $\Lie_2(G) = [\Lie_1(G), \Lie_1(G)]$, we see that $\LCS_2 G /\LCS_3 G = \Lie_2(G)= \{0\}$. In fact, $\Lie(G)$ is an abelian Lie ring, reduced only to $\Lie_1(G) = G^{\ab}$.
\end{proof}

We have not made any effort to make the above corollary as general as possible. In particular, $\tilde s$ and $\tilde t$ may commute only up to an element of $\LCS_3 G$, and the conclusion still holds. Also, one may think of similar statements showing that $\LCS_3 G =\LCS_4 G$, and so on. Weak as it may seem, our statement is already very useful. In particular, when applied to groups whose elements have a geometrical interpretation, it will often happen that $\tilde s$ and $\tilde t$ can be chosen \emph{with disjoint support} (whatever this means, depending on the context -- see for instance \Spar\ref{subsec_support} for precise definitions in certain cases), which readily implies that they commute. We will sometimes need a more refined version of the above, but we will discuss it in each particular situation.

\begin{example}[Automorphisms of $\F_2$]
As an example of a case where Corollary~\ref{commuting_representatives} does not work, but the same kind of technique does apply, let us consider $\Aut(\F_2)$. We have mentioned that $\Aut(\F_n)^{\ab} \cong \mathrm{GL}_n(\Z)^{\ab}$; see Example~\ref{eg_Aut(Fn)}. For $n = 2$, this is no longer cyclic, but isomorphic to $(\Z/2)^2$, generated by the (equivalences classes of the) automorphisms $\sigma$ and $\tau$ acting as follows (fixing free generators $x$ and $y$ of $\F_2$):
\[
\sigma(x) = y, \qquad \sigma(y) = x, \qquad\qquad \tau(x) = x^{-1}, \qquad \tau(y) = y.
\]
It follows that $\Lie_2(\Aut(\F_2))$ is generated by the (equivalence class of the) automorphism $\iota = [\sigma,\tau]$ acting by $\iota(x) = x^{-1}$ and $\iota(y) = y^{-1}$. It is easy to check that $\iota$ commutes with both $\sigma$ and $\tau$, so $\Lie_3(\Aut(\F_2)) = 0$. Thus, the LCS of $\Aut(\F_2)$ stops at $\LCS_3$, as does that of $\mathrm{GL}_2(\Z)$.
\end{example}

Let us spell out another useful consequence of Proposition~\ref{generationproperty}:
\begin{corollary}\label{torsion}
Let $G$ be a group and $d \geq 1$ be an integer. If $\Lie_k(G)= \LCS_k G / \LCS_{k+1} G$ is a $d$-torsion abelian group for some $k \geq 1$, then $\Lie_l(G) = \LCS_l G / \LCS_{l+1} G$ is too for all integers $l \geq k$.
\end{corollary}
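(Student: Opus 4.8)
The plan is to argue by induction on $l \geq k$, the base case $l = k$ being exactly the hypothesis. The engine of the induction is Proposition~\ref{generationproperty}: since $\Lie(G)$ is generated in degree one, we have $\Lie_{l+1}(G) = [\Lie_1(G), \Lie_l(G)]$ for every $l \geq 1$, so every element of $\Lie_{l+1}(G)$ is a finite $\Z$-linear combination of brackets $[\overline{x}, \overline{y}]$ with $\overline{x} \in \Lie_1(G)$ and $\overline{y} \in \Lie_l(G)$.

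Assuming inductively that $\Lie_l(G)$ is $d$-torsion, I would exploit the bilinearity of the Lie bracket of $\Lie(G)$: for such generators,
\[
d \cdot [\overline{x}, \overline{y}] = [\overline{x}, d \cdot \overline{y}] = [\overline{x}, 0] = 0,
\]
because $d \cdot \overline{y} = 0$ by the inductive hypothesis. Hence every generating bracket of $\Lie_{l+1}(G)$ is annihilated by $d$.

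The only point deserving a word of justification — and it is a mild one — is the passage from \emph{the generators are $d$-torsion} to \emph{the whole group is $d$-torsion}. Here I would simply recall that each $\Lie_i(G)$ is an abelian group, so that any $\Z$-linear combination of elements annihilated by $d$ is again annihilated by $d$; thus $\Lie_{l+1}(G)$ is $d$-torsion. This closes the induction. There is no genuine obstacle in this argument: the statement is a direct consequence of degree-one generation (Proposition~\ref{generationproperty}) combined with the bilinearity of the bracket, in exactly the same spirit as Corollary~\ref{cyclic_abelianization}.
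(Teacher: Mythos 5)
Your proof is correct and is essentially the paper's own argument: induction on $l$ using $\Lie_{l+1}(G) = [\Lie_1(G), \Lie_l(G)]$ from Proposition~\ref{generationproperty} together with bilinearity of the bracket, $d\cdot[\overline{x},\overline{y}] = [\overline{x}, d\cdot\overline{y}] = 0$. The remark that a sum of $d$-torsion elements in an abelian group is again $d$-torsion is left implicit in the paper but is the same (trivial) point.
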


\begin{proof}
If an element $x \in \Lie(G)$ is of $d$-torsion, then for all $y \in \Lie(G)$, the bracket $[x,y]$ is too, because $d \cdot [x,y] = [d \cdot x,y] = 0$. Since $\Lie_{l+1}(G) = [\Lie_1(G), \Lie_l(G)]$, we get our result by induction on $l$.
\end{proof}

\begin{example}[Virtual and welded braids]\label{G^ab_equals_Z+Z/d}
Let $G$ be a group such that $G^{\ab} \cong \Z \times (\Z/d)$, where the factors are generated, respectively, by $u$ and $v$. Then $\Lie_2(G)$ is generated, as an abelian group, by $[u,v]$. Since $d \cdot [u,v] = [u, d \cdot v] = 0$, $\Lie_2(G)$ is of $d$-torsion, and then all the $\Lie_k(G)$, for $k \geq 2$, must be too. This applies, for instance, to the groups $\vB_2$ and $\wB_2$ (which are both isomorphic to $\Z*\Z/2$), with $d = 2$; see Chapter~\ref{sec:virtual_welded_braids}. 
\end{example}

\section{Artin groups}\label{subsec:Artin_groups}

Let $S$ be a set, and $M = (m_{s,t})_{s \in S}$ be a Coxeter matrix, i.e. a symmetric matrix with coefficients in $\mathbb N \cup \{\infty\}$, with $m_{s,s} = 1$, and $m_{s,t} \geq 2$ if $s \neq t$. Let $A_M$ be the associated Artin group, defined by the presentation:
\[A_M = \left\langle S\ \middle| \underbrace{ststs \cdots}_{m_{s,t}} = \underbrace{tstst \cdots}_{m_{s,t}}\ \ (s,t \in S,\ m_{s,t} \neq \infty) \right\rangle.\]

Let us consider the graph $\mathcal G$ whose incidence matrix is $M$ modulo $2$. Namely, $\mathcal G$ is obtained by taking $S$ as its set of vertices and by drawing an edge between $s$ and $t$ whenever $m_{s,t}$ is an odd integer.

\begin{lemma}
The group $A_M^{\ab}$ is free abelian on the set $\pi_0(\mathcal G)$ of connected components of $\mathcal G$.
\end{lemma}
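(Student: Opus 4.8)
The plan is to compute $A_M^{\ab}$ directly from the given presentation. Recall that for any group presented as $\langle S \mid R \rangle$, the abelianisation is the free abelian group $\Z^{(S)}$ on $S$ modulo the subgroup generated by the images of the relators (this is exactly the right exactness of abelianisation noted in the excerpt). So I would first reduce the problem to understanding what each defining relation of $A_M$ becomes after passing to $\Z^{(S)}$, and then identify the resulting quotient.

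The heart of the argument is the abelianisation of a single relation $\underbrace{ststs\cdots}_{m_{s,t}} = \underbrace{tstst\cdots}_{m_{s,t}}$. Writing $m := m_{s,t}$ and denoting by $\overline{s}, \overline{t} \in \Z^{(S)}$ the images of the generators, the main computation is to count occurrences of each letter in an alternating word of length $m$. The word $ststs\cdots$ contributes $\lceil m/2 \rceil \overline{s} + \lfloor m/2 \rfloor \overline{t}$, while $tstst\cdots$ contributes $\lfloor m/2 \rfloor \overline{s} + \lceil m/2 \rceil \overline{t}$. The key case distinction is then on the parity of $m$: if $m$ is even, both sides equal $\tfrac{m}{2}(\overline{s} + \overline{t})$, so the relation becomes trivial in $\Z^{(S)}$ and imposes nothing; if $m$ is odd, the relation reduces, after cancelling the common terms, to $\overline{s} = \overline{t}$. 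By construction of $\mathcal{G}$, the pairs $(s,t)$ with $m_{s,t}$ odd (and necessarily finite) are precisely those joined by an edge.

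Therefore $A_M^{\ab}$ is the quotient of $\Z^{(S)}$ by the relations $\overline{s} = \overline{t}$ ranging over all edges $\{s,t\}$ of $\mathcal{G}$. To finish, I would identify this quotient: imposing $\overline{s} = \overline{t}$ along every edge forces all generators lying in a common connected component of $\mathcal{G}$ to become equal, while imposing no relation between generators in distinct components. Hence the quotient is free abelian with exactly one basis element per connected component, that is, free abelian on $\pi_0(\mathcal{G})$, as claimed.

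The computation is entirely elementary, and I do not expect any genuine obstacle. The only place requiring a little care is the bookkeeping of parities in the alternating words — in particular, verifying cleanly that every relation with $m_{s,t}$ even contributes nothing, so that only the odd (equivalently, modulo $2$) data recorded in $\mathcal{G}$ survives in the abelianisation.
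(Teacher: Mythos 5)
Your proposal is correct and follows essentially the same route as the paper, which also just abelianises the presentation and observes that each braid-type relation becomes $\bar s = \bar t$ when $m_{s,t}$ is odd and trivial when $m_{s,t}$ is even or infinite. You simply spell out the parity bookkeeping that the paper leaves as ``clear from the presentation.''
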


\begin{proof}
This is clear from the presentation: in $A_M^{\ab}$, the relation between $s$ and $t$ becomes $\bar s = \bar t$ if $m_{s,t}$ is odd, and it becomes trivial if $m_{s,t}$ is even or $m_{s,t} = \infty$.
\end{proof}

Let us now study the LCS of $A_M$. Suppose first that $\mathcal G$ is connected. Then  $A_M^{\ab}$ is cyclic, and Corollary~\ref{cyclic_abelianization} applies: the LCS of $A_M$ stops at $\LCS_2$. This holds in particular for the classical braid group; see Example~\ref{eg:braids}. Now, if $\mathcal G$ has several connected components, we need to study the interactions between the corresponding generators of the Lie ring of $A_M$. The simplest case happens when all the even $m_{s,t}$ are equal to $2$. Then $A_M$ splits into the (restricted) direct product of the Artin groups corresponding to the connected components of our graph, thus $\mathcal L(A_M)$ is a direct sum of copies of $\Z$ (concentrated in degree one). Thus, we obtain a first result (which recovers \cite[Prop.~1]{BellingeriGervaisGuaschi} for spherical Artin groups):

\begin{proposition}\label{LCS_Artin_stops}
If all the coefficients of $M$ are finite, and are either odd or equal to $2$, then the LCS of $A_M$ stops at $\LCS_2$.
\end{proposition}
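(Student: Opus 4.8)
The plan is to reduce the general statement to the two cases already understood in the text: the case where the graph $\mathcal G$ is connected (handled by Corollary~\ref{cyclic_abelianization}) and the case where all even coefficients equal $2$ and the group splits as a direct product. The key observation is that the hypothesis---all coefficients finite and either odd or equal to $2$---is exactly what makes the second mechanism work. So the proof should exploit the splitting into a direct product indexed by the connected components of $\mathcal G$.

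First I would make the splitting precise. Under the hypothesis, whenever $s$ and $t$ lie in different connected components of $\mathcal G$, the coefficient $m_{s,t}$ is even, hence by assumption equal to $2$, which means $s$ and $t$ commute in $A_M$. Therefore generators belonging to distinct components commute, and the relations of $A_M$ split along the partition $\pi_0(\mathcal G)$: writing $A_{\mathcal C}$ for the Artin subgroup generated by the vertices of a component $\mathcal C$, one gets an isomorphism
\[
A_M \;\cong\; {\prod_{\mathcal C \in \pi_0(\mathcal G)}}' A_{\mathcal C},
\]
a (restricted) direct product. Each $A_{\mathcal C}$ is itself an Artin group whose associated graph is connected, so by the Lemma its abelianisation is infinite cyclic and Corollary~\ref{cyclic_abelianization} gives $\LCS_2(A_{\mathcal C}) = \LCS_3(A_{\mathcal C})$; in fact, its Lie ring is reduced to the degree-one part $\Z$, since a cyclic abelianisation forces the Lie ring to be abelian and generated in degree one by Proposition~\ref{generationproperty}.

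Next I would pass to Lie rings. The LCS of a direct product is the direct product of the LCS's, so $\Lie$ is additive on (restricted) direct products: $\Lie(A_M) \cong \bigoplus_{\mathcal C} \Lie(A_{\mathcal C})$. Since each summand is concentrated in degree one, the whole Lie ring $\Lie(A_M)$ is concentrated in degree one, i.e.\ $\Lie_k(A_M) = 0$ for all $k \geq 2$. In particular $\Lie_2(A_M) = \LCS_2(A_M)/\LCS_3(A_M) = 0$, which is exactly the statement that the LCS stops at $\LCS_2$ (one checks separately that $\LCS_2 \neq \LCS_1$ unless $A_M$ is abelian, so the stopping index is genuinely $2$). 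Alternatively, and perhaps more cleanly, I would verify the hypotheses of Corollary~\ref{commuting_representatives} directly: take $S$ itself as a generating set of $A_M^{\ab}$ and observe that any two generators either commute in $A_M$ (when $m_{s,t} = 2$) or have equal image in the abelianisation (when $m_{s,t}$ is odd), so that every pair of generators of $A_M^{\ab}$ admits commuting representatives, giving $\LCS_2 = \LCS_3$ in one stroke.

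The only genuinely delicate point is justifying the direct-product decomposition rigorously from the presentation rather than waving at it: one must check that the commuting relations between distinct components, together with the relations internal to each component, give a complete set of relations for a direct product, with no hidden interactions. This is a routine but slightly fiddly presentation argument---essentially verifying that the obvious map from the direct product to $A_M$ and the map induced by the universal property are mutually inverse---and it is the step I would write out most carefully. Everything downstream (cyclicity of each factor, additivity of $\Lie$, vanishing in degrees $\geq 2$) is then immediate from the results already established in the excerpt.
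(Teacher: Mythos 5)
Your proof is correct and follows essentially the same route as the paper: the text preceding Proposition~\ref{LCS_Artin_stops} argues exactly as you do, splitting $A_M$ as the restricted direct product of the Artin groups of the connected components of $\mathcal G$ (possible because generators in distinct components have $m_{s,t}$ even, hence equal to $2$), each factor having cyclic abelianisation and therefore Lie ring reduced to $\Z$ in degree one. Your alternative via Corollary~\ref{commuting_representatives} is also valid and has the minor advantage of bypassing the direct-product decomposition you flag as the fiddly step, since a pair of generators of $A_M^{\ab}$ either lifts to a single repeated element of $S$ or to two generators with $m_{s,t}=2$.
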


In order to get a step further, we need to study more closely the interactions between the generators of the Lie ring of $A_M$ corresponding to the connected components of $\mathcal G$.
\begin{lemma}
Let $s,t \in S$. If $m_{s,t} = 2k$ for some integer $k$, then $k \cdot [\bar s, \bar t] = 0$ in $\Lie(A_M)$.
\end{lemma}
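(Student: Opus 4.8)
The plan is to translate the even braid relation into a relation in degree two of the Lie ring by working in the class-two nilpotent quotient $\overline G := A_M/\LCS_3(A_M)$, in which every commutator is central.

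First I would rewrite the defining relation in a convenient form. Since $m_{s,t} = 2k$ is even, the alternating word $\underbrace{ststs\cdots}_{2k}$ equals $(st)^k$ and $\underbrace{tstst\cdots}_{2k}$ equals $(ts)^k$, so the relation between $s$ and $t$ reads simply $(st)^k = (ts)^k$. The point of passing to $\overline G$ is that it suffices to prove $[s,t]^k = 1$ there: indeed $[s,t]^k = 1$ in $\overline G$ means exactly $[s,t]^k \in \LCS_3(A_M)$, which in $\Lie_2(A_M) = \LCS_2/\LCS_3$ reads $k\cdot[\bar s,\bar t] = \overline{[s,t]^k} = 0$.

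Next I would apply the collection formula valid in a group where commutators are central, namely $(ab)^n = a^n b^n [b,a]^{\binom{n}{2}}$ (a one-line induction on $n$ using $ba = [b,a]ab$ together with Pascal's identity $\binom{n-1}{2} + (n-1) = \binom{n}{2}$). Applying it in $\overline G$ with $(a,b) = (s,t)$ and with $(a,b) = (t,s)$ yields
\[(st)^k = s^k t^k [t,s]^{\binom{k}{2}}, \qquad (ts)^k = t^k s^k [s,t]^{\binom{k}{2}}.\]
Using centrality we have $[t,s] = [s,t]^{-1}$ and $[s^k,t^k] = [s,t]^{k^2}$, so that $t^k s^k = s^k t^k [s,t]^{-k^2}$. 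Substituting these into the relation $(st)^k = (ts)^k$ and cancelling the common factor $s^k t^k$ leaves $[s,t]^{-\binom{k}{2}} = [s,t]^{\binom{k}{2} - k^2}$, that is $[s,t]^{\,k^2 - 2\binom{k}{2}} = 1$ in $\overline G$.

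The final step is the arithmetic simplification of the exponent: since $2\binom{k}{2} = k(k-1)$, we get $k^2 - 2\binom{k}{2} = k$, so that the collected relation collapses to exactly $[s,t]^k = 1$ in $\overline G$, giving $k\cdot[\bar s, \bar t] = 0$ as desired. There is no serious obstacle in this argument; the entire content is the bookkeeping in the collection formula, and the only delicate point is that the exponent must simplify to precisely $k$ rather than $k^2$ or $k(k-1)$, which is what makes the statement sharp. One should also record that the computation is legitimate because it uses only that $\overline G$ has nilpotency class at most $2$, so all the central-commutator manipulations are valid modulo $\LCS_3(A_M)$.
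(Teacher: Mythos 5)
Your proof is correct and follows essentially the same strategy as the paper's: both reduce modulo $\LCS_3(A_M)$, where commutators are central, and show that the relation $(st)^k=(ts)^k$ forces $[s,t]^k=1$ there. The paper extracts $[s,t]^k$ by telescoping $(st)^k(ts)^{-k}$ one factor of $[s,t]=(st)(ts)^{-1}$ at a time, whereas you collect both sides separately via $(ab)^n=a^nb^n[b,a]^{\binom{n}{2}}$; the bookkeeping differs, but the content is the same.
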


\begin{proof}
If $m_{s,t} = 2k$, then $(st)^k = (ts)^k$ in $A_M$. We can write this relation as $(st)^k(ts)^{-k} = 1$. We recall that modulo $\LCS_3(A_M)$, commutators commute with any other element. Hence, modulo $\LCS_3(A_M)$, we have:
\[1 = (st)^k(ts)^{-k} = (st)^{k-1}[s,t](ts)^{-(k-1)} = [s,t](st)^{k-1}(ts)^{-(k-1)} = \cdots = [s,t]^k.\]
Thus, in $\mathcal L_2(A_M)$, we have $0 = k \cdot \overline{[s,t]} = k \cdot [\bar s,\bar t]$, as claimed.
\end{proof}

From this, we deduce that for $x,y \in \pi_0(\mathcal G)$, we have $d_{x,y} \cdot [x,y] = 0$ in $\mathcal L_2(A_M)$, where $d_{x,x} = 1$ and $d_{x,y} = \gcd \left\{\frac{m_{s,t}}{2}\ \middle|\ s,t \in S,\ \bar s = x \text{ and } \bar t = y \right\}$ if $x \neq y$. In particular, this proves:
\begin{proposition}\label{LCS_Artin_torsion}
If all the coefficients of $M$ are finite, then $\LCS_2(A_M)/\LCS_3(A_M)$ is of $d$-torsion, where $d = \lcm \left\{d_{x,y}\ |\ x,y \in \pi_0(\mathcal G)\right\}$.
\end{proposition}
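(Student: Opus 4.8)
The plan is to package the torsion estimate already obtained for each individual bracket into a statement about the whole group $\Lie_2(A_M)$, using the generation-in-degree-one property as the structural bridge. The key input is Proposition~\ref{generationproperty}: since $\Lie(A_M)$ is generated in degree one, one has $\Lie_2(A_M) = [\Lie_1(A_M), \Lie_1(A_M)]$, and $\Lie_1(A_M) = A_M^{\ab}$ is free abelian on $\pi_0(\mathcal{G})$. By bilinearity of the Lie bracket, it follows that $\Lie_2(A_M)$ is generated, as an abelian group, by the finite (or at least well-controlled) family of brackets $[x,y]$ with $x, y$ ranging over $\pi_0(\mathcal{G})$. So the task reduces to annihilating each of these generators by a single integer.

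Next I would invoke, for each pair, the relation $d_{x,y} \cdot [x,y] = 0$ in $\Lie_2(A_M)$ deduced above from the lemma on even entries $m_{s,t} = 2k$ (the diagonal case $[x,x] = 0$ holding because $\Lie(A_M)$ is a genuine Lie ring by Proposition~\ref{prop:lazard}, with $d_{x,x} = 1$). Since all the $m_{s,t}$ are finite and any pair of generators lying in distinct components $x \neq y$ necessarily has $m_{s,t}$ even — an odd value would put them in the same component of $\mathcal{G}$ — each $d_{x,y}$ is a well-defined positive integer, hence so is $d = \lcm\{d_{x,y}\}$. As $d$ is a common multiple of every $d_{x,y}$, we have $d_{x,y} \mid d$ and therefore $d \cdot [x,y] = 0$ for every generator $[x,y]$. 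An integer that annihilates a generating set of an abelian group annihilates the whole group, so $d \cdot \Lie_2(A_M) = 0$, which is precisely the assertion that $\LCS_2(A_M)/\LCS_3(A_M)$ is of $d$-torsion.

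Because the substantive work has already been carried out in the preceding lemmas (the passage from the relations $(st)^k = (ts)^k$ to $k \cdot [\bar s, \bar t] = 0$, and thence to $d_{x,y} \cdot [x,y] = 0$), I do not expect a genuine obstacle here: the proposition is essentially a repackaging. The only points deserving care are the two bookkeeping facts used above, namely that the brackets $[x,y]$ really do span $\Lie_2(A_M)$ over $\Z$ (which is exactly generation in degree one combined with bilinearity) and that each relevant $d_{x,y}$ is finite and positive, so that the least common multiple $d$ is meaningful.
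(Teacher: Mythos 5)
Your proof is correct and follows exactly the route the paper takes (the paper treats the proposition as an immediate consequence of the displayed relation $d_{x,y}\cdot[x,y]=0$, which you rederive and then combine with generation in degree one and bilinearity to annihilate all of $\Lie_2(A_M)$ by $d$). The two bookkeeping points you flag — that the brackets $[x,y]$ span $\Lie_2(A_M)$ and that each $d_{x,y}$ is a finite positive integer because generators in distinct components of $\mathcal G$ have even $m_{s,t}$ — are exactly the right ones and are handled correctly.
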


\begin{remark}
Proposition~\ref{LCS_Artin_torsion} is more general than Proposition~\ref{LCS_Artin_stops}, which is recovered as a particular case where $d = 1$. In fact, for $d$ to be equal to $1$ (which implies that the LCS stops at $\LCS_2$), we do not need the even $m_{s,t}$ to be equal to $2$, but only all the $d_{x,y}$ to be equal to $1$.
\end{remark}

\begin{remark}
Proposition~\ref{LCS_Artin_torsion} does not say anything when at least one of the $d_{x,y}$ is infinite. For instance, when all the $m_{s,t}$ are infinite, $A_M$ is the free group on $S$, whose Lie ring is without torsion. More generally, Right-Angled Artin Groups (where all the $m_{s,t}$ are infinite or equal to $2$) have torsion-free Lie rings~\cite{Duchamp}. Another example is the subgroup $KB_n$ of the virtual braid group $\vB_n$ given by the normal closure of $\B_n \subset \vB_n$ (denoted by $H_n$ in~\cite{BardakovBellingeri2009}). This is an Artin group with all $m_{s,t}$ infinite or equal to $2$ or $3$ \cite[Prop.~17]{BardakovBellingeri2009}; its abelianisation is free abelian and its LCS stops at $\LCS_2$ for $n\geq 3$ \cite[Prop.~19]{BardakovBellingeri2009}.
\end{remark}

\chapter{Partitioned braids}
\label{sec:partitioned_braids}

This chapter is devoted to the study of the LCS of the group of \emph{partitioned braids}; see Definition~\ref{def_partitioned_braids} below. Our main results are summarised in Theorem~\ref{thm:partitioned-braids}. The group of partitioned braids is a subgroup of $\B_n$, which has already been studied notably by Manfredini \cite{Manfredini} and by Bellingeri, Godelle and Guaschi \cite{BellingeriGodelleGuaschi}. The former gave a presentation of this group, using the Reidemeister-Schreier method, which may be applied to this finite-index subgroup of Artin's braid group. 

\begin{remark}
One could use the aforementioned presentation of the partitioned braid group \cite{Manfredini} to get generators, a calculation of the abelianisation, and (with a little bit of work) the stable case in the study of the LCS. However, we will avoid using this presentation altogether, for several reasons. Firstly, we want the present memoir to be as self-contained as possible. Secondly, even if a presentation is of some help in the study of the LCS of a group, there is only so much that one can deduce directly from it; in fact, not much more than a computation of the abelianisation, and that $\LCS_2 = \LCS_3$ when it holds. In particular, most of our non-stable results would not be simplified by using the presentation. Thirdly, and perhaps most importantly, to the best of our knowledge, nobody has written down a presentation of the other partitioned groups that we study later on. This could certainly be done using the Reidemeister-Schreier technique (at least when we have a presentation of the non-partitioned group), but this would require a fair amount of work, which we intend to avoid. We will do so precisely by generalising proofs that do not depend on the use of Manfredini's presentation.
\end{remark}

\section{Reminders: braids}

We recall that the standard generators $\sigma_i$ and $A_{ij}$ of the braid group $\B_n$ and of the pure braid group $\PB_n$ respectively, are the braids drawn in Figure~\ref{fig:partitioned-braid-generators}. In Figure~\ref{fig:partitioned-braids}, they also appear in a ``bird's eye view'' as paths of configurations.

\begin{figure}[ht]
\centering
\includegraphics[scale=0.67]{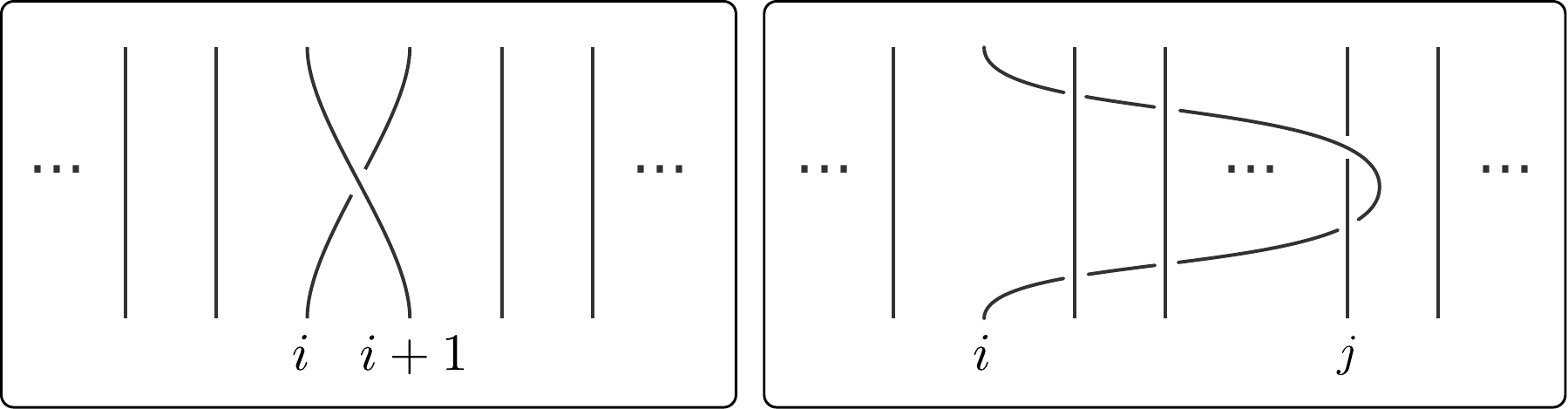}
\caption{The standard generators $\sigma_i$ and $A_{ij}$ of, respectively, the braid group and the pure braid group.}
\label{fig:partitioned-braid-generators}
\end{figure}

\section{Basic theory of partitioned braids}

For a partition $\lambda = (n_1, \ldots , n_l)$ of an integer $n$, we denote by $\Sym_\lambda$ the subgroup $\Sym_{n_1} \times \cdots \times \Sym_{n_l}$ of the symmetric group $\Sym_n$. Let us now consider the braid group $\B_n$ on $n$ strands, and the usual projection $\pi \colon \B_n \twoheadrightarrow \Sym_n$.

\begin{definition}\label{def_partitioned_braids}
Let $n \geq 1$ be an integer, and let $\lambda = (n_1, \ldots , n_l)$ be a partition of $n$. The corresponding \emph{partitioned braid group} (also referred to as the group of \emph{$\lambda$-partitioned braids}) is:
\[\B_{\lambda} := \pi^{-1}(\Sym_\lambda) = \pi^{-1}\left(\Sym_{n_1} \times \cdots \times \Sym_{n_l}\right)\ \subseteq \B_n.\]
\end{definition}

\begin{lemma}\label{generators_of_partitioned_braids}
Let $\lambda = (n_1, \ldots , n_l)$ be a partition of an integer $n \geq 1$. Then $\B_\lambda$ is the subgroup of $\B_n$ generated by:
\begin{itemize}
\item The $\sigma_\alpha$ for $1 \leq \alpha \leq n$ such that $\alpha$ and $\alpha + 1$ are in the same block of $\lambda$.
\item The $A_{\alpha\beta}$ for $1 \leq \alpha < \beta \leq n$ such that $\alpha$ and $\beta$ do not belong to the same block of $\lambda$.
\end{itemize}
\end{lemma}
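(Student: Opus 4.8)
The plan is to exploit the restriction of the projection $\pi$ to $\B_\lambda$, which gives the short exact sequence
\[ \PB_n \longhookrightarrow \B_\lambda \overset{\pi}{\twoheadrightarrow} \Sym_\lambda, \]
and to recognise $\B_\lambda$ as generated by a lift of a generating set of $\Sym_\lambda$ together with the whole kernel $\PB_n$. Write $H \subseteq \B_n$ for the subgroup generated by the two families in the statement. First I would dispatch the easy inclusion $H \subseteq \B_\lambda$: each listed $\sigma_\alpha$ projects to the transposition $(\alpha\ \alpha+1)$, which lies in $\Sym_\lambda$ precisely because $\alpha$ and $\alpha+1$ belong to a common block, while each $A_{\alpha\beta}$ is a pure braid and so projects to the identity. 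Hence every generator of $H$ lies in $\B_\lambda$.

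For the reverse inclusion I would establish two facts. \emph{(1) The images of the $\sigma_\alpha$ generate $\Sym_\lambda$.} Indeed $\pi(\sigma_\alpha) = (\alpha\ \alpha+1)$, ranging over those $\alpha$ with $\alpha,\alpha+1$ in a common block, are exactly the adjacent transpositions internal to each block $b_j(\lambda) = \{t_{j-1}+1, \ldots, t_j\}$; since adjacent transpositions generate each symmetric group, these images generate $\Sym_{n_1} \times \cdots \times \Sym_{n_l} = \Sym_\lambda$. \emph{(2) $H$ contains the kernel $\PB_n$.} Granting both, the standard extension argument finishes the proof: for $g \in \B_\lambda$ we have $\pi(g) \in \Sym_\lambda$, so by (1) there is a word $h \in H$ in the $\sigma_\alpha$ with $\pi(h) = \pi(g)$; then $g h^{-1} \in \ker\pi = \PB_n \subseteq H$ by (2), whence $g \in H$.

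The substance is in (2). I would start from the classical fact that $\PB_n$ is generated by all the pure braid generators $A_{ij}$ with $1 \leq i < j \leq n$. Those with $i$ and $j$ in different blocks are listed generators, hence in $H$ outright. For $i$ and $j$ in the \emph{same} block I would use that each block is an interval of consecutive integers: then $i, i+1, \ldots, j$ all lie in that block, so $\sigma_i, \ldots, \sigma_{j-1}$ are all listed generators. The classical expression
\[ A_{ij} = (\sigma_{j-1} \cdots \sigma_{i+1})\, \sigma_i^2 \, (\sigma_{i+1}^{-1} \cdots \sigma_{j-1}^{-1}) \]
involves only the $\sigma_k$ with $i \leq k \leq j-1$, so $A_{ij} \in H$ as well. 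Thus $H$ contains every $A_{ij}$ and therefore all of $\PB_n$, establishing (2) and so $\B_\lambda \subseteq H$.

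The only point requiring care is the index bookkeeping in the formula for $A_{ij}$ — keeping the $\sigma$'s that appear confined to the range $[i, j-1]$ so that they stay within a single block — but this is routine once the block-as-interval structure is invoked. Everything else is a direct application of the extension-generation principle, and no presentation of $\B_\lambda$ is needed.
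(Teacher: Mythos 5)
Your proposal is correct and follows essentially the same route as the paper: show the easy inclusion, observe that the chosen $\sigma_\alpha$ surject onto $\Sym_\lambda$, show the subgroup contains all of $\PB_n$, and conclude by the standard extension argument. The only cosmetic difference is that you justify the containment of the within-block $A_{ij}$ via the explicit formula $A_{ij} = (\sigma_{j-1}\cdots\sigma_{i+1})\sigma_i^2(\sigma_{i+1}^{-1}\cdots\sigma_{j-1}^{-1})$, whereas the paper simply notes that these elements lie in $\B_{n_1}\times\cdots\times\B_{n_l}$, which is generated by the chosen $\sigma_\alpha$ — the same fact made explicit.
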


\begin{proof}
Consider the subgroup $G$ of $\B_n$ generated by these elements. Clearly, $G \subseteq \B_\lambda$, and we need to show that $G$ contains $\B_\lambda$. First, we see that $G$ contains $\B_{n_1} \times \cdots \times \B_{n_l}$, which is generated by the chosen $\sigma_\alpha$. As a consequence, $\pi(G) = \Sym_{n_1} \times \cdots \times \Sym_{n_l}$. Then, $G$ also contains the $A_{\alpha\beta}$, for all $1 \leq \alpha < \beta \leq n$. Indeed, the $A_{\alpha\beta}$ missing in the list of the statement are the ones with $\alpha$ and $\beta$ in the same block of $\lambda$, which are exactly the ones belonging to $\B_{n_1} \times \cdots \times \B_{n_l}$. Thus, the pure braid group $\PB_n$ is contained in $G$. Now, if $\beta$ is any $\lambda$-partitioned braid, then $\pi(\beta) \in \Sym_{n_1} \times \cdots \times \Sym_{n_l}$, hence we can choose $g \in G$ such that $\pi(g) = \pi(\beta)$. Then $g^{-1}\beta$ is a pure braid, thus $g^{-1}\beta \in G$, which implies that $\beta \in G$.
\end{proof}

\begin{remark}
The generating set of $\B_\lambda$ described in Lemma~\ref{generators_of_partitioned_braids} is clearly redundant, since $A_{\alpha\beta}$ is conjugate to $A_{\gamma\delta}$ if $\alpha$ (resp.~$\beta$) is in the same block as $\gamma$ (resp.~$\delta$). For instance, in $\B_{2,2}$, we have $\sigma_{1}A_{13}\sigma_{1}^{-1}=A_{23}$. Since it is more convenient for our purpose, we prefer to keep it that way. Notice however that it is easy to extract a minimal set of generators: using the computation of $\B_\lambda^{\ab}$ below, it is easy to show that keeping all the $\sigma_\alpha$ and choosing only one lift $A_{\alpha\beta}$ of each $a_{ij}$ (that is, one $A_{\alpha\beta}$ for each pair of blocks) does give a minimal set of generators of $\B_\lambda$. A similar remark (with a similar method for choosing minimal sets of generators) applies to the generating sets described later in the memoir for variants of partitioned braid groups.
\end{remark}

We now compute $\B_\lambda^{\ab}$, using the above generating set, together with the split projections corresponding to forgetting blocks of strands. These projections can be seen as particular cases of the projections from Proposition~\ref{Fadell-Neuwirth} below, applied to braids on the disc.

\begin{proposition}\label{partitioned_B^ab}
Let $\lambda = (n_1, \ldots , n_l)$ be a partition of an integer $n \geq 1$. The abelianisation  $\B_\lambda^{\ab}$ is free abelian on the following basis:
\begin{itemize}
\item For each $i \in \{1, \ldots , l\}$ such that $n_i \geq 2$, one generator $s_i$: this is the common class of the  $\sigma_\alpha$ for $\alpha$  and $\alpha + 1$ in the $i$-th block of $\lambda$.
\item For each $i, j \in \{1, \ldots , l\}$ such that $i < j$, one generator $a_{ij}$: this is the common class of the  $A_{\alpha\beta}$ for $\alpha$ in the $i$-th block of $\lambda$ and $\beta$ in the $j$-th one.
\end{itemize}
\end{proposition}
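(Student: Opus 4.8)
The plan is to compute $\B_\lambda^{\ab}$ as the cokernel of the map sending the commutator subgroup into the free abelian group on the given generating set, exploiting the redundancy already noted and a collection of block-forgetting projections to pin down the relations exactly. Concretely, the generating set from Lemma~\ref{generators_of_partitioned_braids} surjects onto $\B_\lambda^{\ab}$, and the stated conjugacy relations (an $A_{\alpha\beta}$ conjugate to $A_{\gamma\delta}$ whenever $\alpha,\gamma$ lie in one block and $\beta,\delta$ in another) become equalities after abelianisation; likewise all $\sigma_\alpha$ with $\alpha,\alpha+1$ in a common block become equal. This immediately yields a surjection from the free abelian group on $\{s_i : n_i \geq 2\} \cup \{a_{ij} : i<j\}$ onto $\B_\lambda^{\ab}$. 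The content of the proposition is that this surjection is an \emph{isomorphism}, i.e.\ that no further relations are imposed and the listed elements are $\Z$-linearly independent.

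First I would fix the upper bound: abelianising the classical braid relations and pure braid relations restricted to $\B_\lambda$ produces no identifications beyond those already described. The braid relations $\sigma_\alpha \sigma_{\alpha+1}\sigma_\alpha = \sigma_{\alpha+1}\sigma_\alpha\sigma_{\alpha+1}$ abelianise to $\bar\sigma_\alpha = \bar\sigma_{\alpha+1}$ within a block (giving a single $s_i$ per block of size $\geq 2$) and impose nothing across blocks, while the relations expressing each $A_{\alpha\beta}$ as a product of $\sigma$'s and the standard pure-braid relations (e.g.\ the conjugation formulas for the $A_{ij}$) abelianise to the $a_{ij}$ identifications together with expressions of the within-block $A_{\alpha\beta}$ in terms of the $s_i$. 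This shows $\B_\lambda^{\ab}$ is generated by the $s_i$ and $a_{ij}$ as claimed, so it is a quotient of the target free abelian group.

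The main obstacle, and the heart of the argument, is proving \textbf{linear independence} of the proposed basis, for which I would use the block-forgetting projections rather than manipulate a presentation. As indicated in the paragraph preceding the statement, for each block one has a split surjection $\B_\lambda \twoheadrightarrow \B_{n_i}$ (forgetting all strands outside the $i$-th block), and for each pair of blocks a split surjection onto a partitioned braid group on $n_i + n_j$ strands; these arise from the Fadell--Neuwirth fibrations (Proposition~\ref{Fadell-Neuwirth}) and hence are genuinely split, so they induce \emph{surjections} on abelianisations. To detect $s_i$, compose with $\B_\lambda \twoheadrightarrow \B_{n_i}$ and use $\B_{n_i}^{\ab} \cong \Z$ (Example~\ref{eg:braids}), under which $\bar\sigma_\alpha \mapsto 1$ and every $a_{jk}$, involving strands outside the block, maps to $0$. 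To detect a fixed $a_{ij}$, I would construct a homomorphism $\B_\lambda \to \Z$ recording the total winding of strands of block $i$ around strands of block $j$ (equivalently, abelianise a projection onto the two-block partitioned group $\B_{(n_i,n_j)}$ and then onto the $\Z$ generated by $a_{ij}$); this sends $a_{ij}\mapsto 1$, kills all $s_k$ and all $a_{kl}$ with $\{k,l\}\neq\{i,j\}$.

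Assembling these, the collection of maps to $\Z$ realises each proposed generator as the unique basis element detected by its corresponding projection, giving a homomorphism $\B_\lambda^{\ab} \to \Z^{\{s_i\}\sqcup\{a_{ij}\}}$ that is a one-sided inverse to the surjection above. Since a surjection of finitely generated abelian groups admitting a section onto a free abelian group of the same rank is an isomorphism, this forces $\B_\lambda^{\ab}$ to be free abelian on exactly the stated basis. The only point requiring care is verifying that the winding-number homomorphisms are well defined on all of $\B_\lambda$ (not merely on $\PB_n$) and that they vanish on the intra-block $\sigma_\alpha$; both follow from the naturality of the block-forgetting projections and the computation $\B_m^{\ab}\cong\Z$, so I expect no essential difficulty beyond careful bookkeeping of which strands each generator moves.
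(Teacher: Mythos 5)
Your proposal is correct and follows essentially the same route as the paper: generation of $\B_\lambda^{\ab}$ by the $s_i$ and $a_{ij}$ via conjugacy of the generators of Lemma~\ref{generators_of_partitioned_braids}, and linear independence via the block-forgetting projections. The only cosmetic difference is that the paper avoids having to justify your mixed winding-number homomorphism as a map on all of $\B_\lambda$: it first kills the coefficients of the $s_i$ using the projections $\B_\lambda^{\ab}\twoheadrightarrow\B_{n_i}^{\ab}\cong\Z$, and only then applies the composite $\B_\lambda^{\ab}\twoheadrightarrow\B_{n_i,n_j}^{\ab}\rightarrow\B_{n_i+n_j}^{\ab}\cong\Z$ (under which $a_{ij}\mapsto 2$), so it never needs a single homomorphism that simultaneously kills the $s_k$ and detects $a_{ij}$.
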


\begin{proof}
The abelianisation $\B_\lambda^{\ab}$ is generated by the classes of the generators from Lemma~\ref{generators_of_partitioned_braids}. Moreover, we note that for any $i$, all the $\sigma_\alpha$ with $\alpha$ and $\alpha + 1$ in the $i$-th block of $\lambda$ (which exist only if $n_i \geq 2$) are conjugate to one another: for instance, if $n_1 \geq 3$, then $\sigma_2 = (\sigma_2 \sigma_1)^{-1} \sigma_1 (\sigma_2 \sigma_1)$ is a conjugation relation inside $\B_\lambda$. Similarly, for each choice of $i < j$, all the $A_{\alpha\beta}$ with $\alpha$ in the $i$-th block and $\beta$ in the $j$-th one are conjugate to one another. Thus, the family described in the statement is well-defined and generates $\B_\lambda^{\ab}$. Let us show that it is linearly independent, by using the projections obtained by forgetting strands. 

Suppose that $\sum k_i s_i + \sum k_{ij} a_{ij} = 0$ for some integers $k_i$ ($i \leq l$) and $k_{ij}$ ($i < j \leq l$). Let us fix $i$ such that $n_i \geq 2$, and let us apply the canonical projection $\B_\lambda^{\ab} \twoheadrightarrow \B_{n_i}^{\ab} \cong \Z$ to the above relation. Since this projection kills all the generators save $s_i$, we get $k_i = 0$. This holds for all $i$, so we are left with the relation $\sum k_{ij} a_{ij} = 0$. To which, for any choice of $i < j$, we can apply the morphism $\B_\lambda^{\ab} \twoheadrightarrow \B_{n_i, n_j}^{\ab} \rightarrow \B_{n_i + n_j}^{\ab} \cong \Z$. This kills all the $a_{kl}$, save $a_{ij}$ (which is sent to $2$), hence $k_{ij} = 0$, whence the result.
\end{proof}

\section{The lower central series}

This section is devoted to the proof of the following result, which states exactly when the LCS of the partitioned braid group stops. The proof requires different techniques for a number of different cases. We consider each case in turn and then combine the individual results into a proof of Theorem \ref{thm:partitioned-braids} at the very end of this section.

\begin{theorem}
\label{thm:partitioned-braids}
Let $n \geq 1$ be an integer, let $\lambda = (n_1, \ldots , n_l)$ be a partition of $n$. The LCS of the partitioned braid group $\B_{\lambda}$:
\begin{itemize}
\item \emph{stops at $\LCS_2$} if $n_i \geq 3$ for all $i$, save at most two indices for which $n_i = 1$. 
\item \emph{does not stop} in all the other cases, except for $\B_2 \cong \Z$.
\end{itemize}
\end{theorem}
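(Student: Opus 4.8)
The plan is to split the theorem into its two halves—the stopping cases and the non-stopping cases—and to handle them with the two complementary strategies laid out in the introduction: disjoint-support arguments for stopping, and surjections onto groups with non-stopping LCS for the negative results. Throughout I would use the computation of $\B_\lambda^{\ab}$ from Proposition~\ref{partitioned_B^ab}, which tells me that the degree-one part of $\Lie(\B_\lambda)$ is free abelian on the classes $s_i$ (one for each block of size $\geq 2$) and $a_{ij}$ (one for each pair of blocks). Since $\Lie(\B_\lambda)$ is generated in degree one (Proposition~\ref{generationproperty}), everything reduces to understanding the brackets among these generators.

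\medskip\noindent\textbf{The stopping cases.} Assume $n_i \geq 3$ for all blocks except at most two singleton blocks. By Corollary~\ref{commuting_representatives}, it suffices to produce, for each pair of generators of $\B_\lambda^{\ab}$, representatives with disjoint support (as braids in the punctured disc). The generators $\sigma_\alpha$ and $A_{\alpha\beta}$ each have support concentrated on the strands they involve, so the task is purely combinatorial: given any two generators among $\{s_i, a_{ij}\}$, I must choose concrete lifts $\sigma_\alpha$, $A_{\alpha\beta}$ whose strand-sets are disjoint. The hypothesis that each non-singleton block has size at least $3$ is exactly what guarantees enough room: within a block of size $\geq 3$ I can realise $s_i$ using one adjacent pair while keeping other strands of that block free to carry an $a_{ij}$ into a different block. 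The two allowed singleton blocks never carry an $s_i$ generator, so the only constraints they impose are on the $a_{ij}$'s, and with at most two of them the disjointness can always be arranged. I would organise this as a short case analysis over the types of pairs ($s_i$ vs.\ $s_j$, $s_i$ vs.\ $a_{jk}$, $a_{ij}$ vs.\ $a_{kl}$), exhibiting disjoint lifts in each; this yields $\LCS_2 = \LCS_3$, and since $\Lie(\B_\lambda)$ is then abelian and generated in degree one it collapses to $\B_\lambda^{\ab}$, so the LCS stops at $\LCS_2$.

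\medskip\noindent\textbf{The non-stopping cases.} Here I would invoke the contrapositive of Lemma~\ref{lem:stationary_quotient}: to show the LCS does not stop, it suffices to exhibit a quotient of $\B_\lambda$ whose LCS does not stop. The informative cases are those excluded above, namely partitions with at least three singleton blocks, or with a block of size $2$ together with at least one other block. For three or more singleton blocks, the pure braid factor produces a quotient of the form anticipated in the introduction (a semidirect product of an abelian group with $\Z$, or a free product of abelian groups), whose LCS is computed in the appendix and does not stop; the classical input is that $\PB_3$, and hence $\B_{1,1,1,\nu}$, surjects onto such a group. For a size-$2$ block, the relevant $\sigma_\alpha$ within that block, together with the $A_{\alpha\beta}$ crossing into another block, generate upon abelianising a copy of $\Z \times \Z$ on which the crossing relations force a non-trivial bracket that never dies; concretely I would map $\B_\lambda$ onto a wreath-type or free-product-type quotient from Appendix~\ref{sec:appendix_2} and read off that its LCS is infinite.

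\medskip The main obstacle is the negative direction for the small-block cases, specifically identifying the right target quotient and verifying its LCS does not stop. For the stopping half the geometry does all the work once the combinatorics of strand-placement is set up, and the size-$\geq 3$ hypothesis makes disjoint support essentially automatic. But to prove non-stopping I must construct an explicit surjection onto a tractable group and then genuinely compute (or cite) that its Lie ring is nontrivial in all degrees; the delicate point is that the quotient must retain a bracket of the form $[s_i, a_{ij}]$ or $[a_{ij}, a_{ik}]$ that survives to arbitrarily high degree, which is precisely where a size-$2$ block or an extra singleton provides just enough non-commutativity. I expect this to require the separate sub-lemmas enumerated in the table (cases \ref{LCS_B111}, \ref{LCS_B2mu}, \ref{LCS_B22mu}, \ref{LCS_B12mu}), each tailored to a particular shape of partition, rather than a single uniform argument.
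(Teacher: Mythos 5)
Your outline of the non-stopping half matches the paper's strategy (surjections onto $\PB_3$, onto quotients of Klein type $\Z^{2(l-1)} \rtimes \Z$, onto $\B_{2,2}$ and onto the Artin group $\B_{1,2}$, combined with Lemma~\ref{lem:stationary_quotient}), and although your sketch is vague about which quotient goes with which shape of partition, the approach is the right one.

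The stopping half, however, has a genuine gap. You claim that with at most two singleton blocks ``the disjointness can always be arranged,'' but this is false. If $\{1\}$ is a singleton block and $i \neq j$ are two other blocks, then \emph{every} lift of $a_{1i}$ is some $A_{1\beta}$ with $\beta$ in block $i$, and every lift of $a_{1j}$ is some $A_{1\gamma}$ with $\gamma$ in block $j$: both necessarily involve strand $1$, so their supports always intersect. The same problem occurs for the pairs $(a_{12}, a_{1i})$ and $(a_{12}, a_{2i})$ when there are two singleton blocks $\{1\},\{2\}$. Corollary~\ref{commuting_representatives} therefore cannot be applied directly once $l > 2$, and the paper says so explicitly in the proof of Proposition~\ref{LCS_B1mu}. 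The missing ingredient is Lemma~\ref{P3^ab}: the quotient of $\PB_3$ by the single relation $[A_{13},A_{23}]=1$ is already abelian. One then maps $\PB_3$ into $\B_\lambda/\LCS_\infty$ via strands $1,\alpha,\beta$ (with $\alpha$ in block $i$, $\beta$ in block $j$); since the image of $[A_{13},A_{23}]$ is $[a_{1j},a_{ij}]$, which vanishes by a genuine disjoint-support argument, the lemma forces the whole image to be abelian, and in particular $a_{1i}$ and $a_{1j}$ commute modulo $\LCS_\infty$. Without this (or some substitute for it), your case analysis over pairs of generators does not close, precisely on the pairs that share the singleton strand.
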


\subsection{The stable case: a disjoint support argument}

\begin{proposition}\label{LCS_stable_partitioned_braids}
Let $n \geq 1$ be an integer, let $\lambda = (n_1, \ldots , n_l)$ be a partition of~$n$. Suppose that all the $n_i$ are at least $3$. Then the LCS of $\B_{\lambda}$ stops at $\LCS_2$.
\end{proposition}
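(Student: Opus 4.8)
The plan is to apply the disjoint support argument encapsulated in Corollary~\ref{commuting_representatives} to the generating set of $\B_\lambda^{\ab}$ computed in Proposition~\ref{partitioned_B^ab}. By that result, $\B_\lambda^{\ab}$ is free abelian on the generators $s_i$ (for blocks of size $n_i \geq 2$) and $a_{ij}$ (for $i < j$). Since $\Lie(\B_\lambda)$ is generated in degree one by these classes (Proposition~\ref{generationproperty}), it suffices to exhibit, for each pair of generators, representatives in $\B_\lambda$ that commute; then $\Lie_2(\B_\lambda) = 0$, hence $\Lie_k(\B_\lambda) = 0$ for all $k \geq 2$, which is exactly the assertion that the LCS stops at $\LCS_2$. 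The hypothesis $n_i \geq 3$ for all $i$ is precisely what guarantees enough room inside each block to realise all the needed representatives with pairwise disjoint supports.

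The key step is therefore to check commuting representatives for each of the three types of pairs. For two generators $s_i$ and $s_j$ with $i \neq j$: since the blocks are disjoint, representatives $\sigma_\alpha$ (with $\alpha, \alpha+1$ in block $i$) and $\sigma_\gamma$ (with $\gamma, \gamma+1$ in block $j$) involve strands in disjoint blocks, and hence commute by the standard far-commutativity relation of $\B_n$. For a pair $a_{ij}$ and $a_{kl}$: each $a_{ij}$ has representatives $A_{\alpha\beta}$ with $\alpha$ in block $i$ and $\beta$ in block $j$, and because every block has size at least $3$, one can choose the four (or fewer) strand indices involved so that the two generators use disjoint quadruples of strands, again forcing commutativity. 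The genuinely delicate pairings are the mixed ones, $s_i$ with some $a_{jk}$: here I would split into the subcase $i \notin \{j, k\}$, where disjointness is immediate, and the subcase where $i$ coincides with $j$ or $k$. In the latter, the braid $A_{\alpha\beta}$ necessarily touches the block containing the two strands crossed by $\sigma_\gamma$, so one must use the extra room afforded by $n_i \geq 3$ to pick the crossing $\sigma_\gamma$ on a pair of strands of block $i$ disjoint from the single strand $\alpha$ that $A_{\alpha\beta}$ uses inside that block.

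The main obstacle I expect is precisely this last configuration, $s_i$ versus $a_{ij}$ (say), where a generator of type $A$ and a generator of type $\sigma$ must coexist inside the same block. The whole force of the ``stable'' hypothesis $n_i \geq 3$ is used here: a block of size $\geq 3$ contains at least three strands, so after devoting one strand to the pure generator $A_{\alpha\beta}$, at least two strands remain on which to place the crossing $\sigma_\gamma$, giving genuinely disjoint geometric supports. Concretely, if block $i$ is $\{t_{i-1}+1, \dots, t_i\}$ with $t_i - t_{i-1} = n_i \geq 3$, I would take $A_{\alpha\beta}$ with $\alpha = t_{i-1}+1$ reaching into block $j$, and $\sigma_\gamma$ with $\gamma = t_i - 1$, so that the strands $\{t_{i-1}+1\}$ and $\{t_i - 1, t_i\}$ are disjoint; realised by homeomorphisms of the punctured disc with disjoint support, these commute. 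Once all three families of pairs are dispatched, Corollary~\ref{commuting_representatives} yields $\LCS_2(\B_\lambda) = \LCS_3(\B_\lambda)$ and the abelianness of the full Lie ring, completing the proof.
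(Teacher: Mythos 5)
Your proposal is correct and follows essentially the same route as the paper: apply Corollary~\ref{commuting_representatives} to the generating set of $\B_\lambda^{\ab}$ from Proposition~\ref{partitioned_B^ab}, finding for each pair of generators lifts with disjoint support as mapping classes of the punctured disc (the paper's Figure~\ref{fig:partitioned-braids} depicts exactly your three cases). The only point to be slightly careful about is that for two pure generators $a_{ij}$, $a_{kl}$ whose blocks interleave, disjointness of the index sets alone is not enough for the \emph{standard} generators $A_{\alpha\beta}$ to commute -- one must, as you indicate elsewhere, choose geometric representatives (arcs) with genuinely disjoint supports.
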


\begin{proof}
Consider the generating set for $\B_{\lambda}^{\ab}$ described in Corollary~\ref{partitioned_B^ab}. For any pair of such generators, it is possible to find lifts in $\B_{\lambda}$ having disjoint support (as mapping classes of the punctured disc), and thus commuting -- see Figure~\ref{fig:partitioned-braids} for the three cases that may arise. Then, we can apply Corollary~\ref{commuting_representatives} to prove our claim.
\end{proof}

\begin{figure}[ht]
\centering
\includegraphics[scale=0.9]{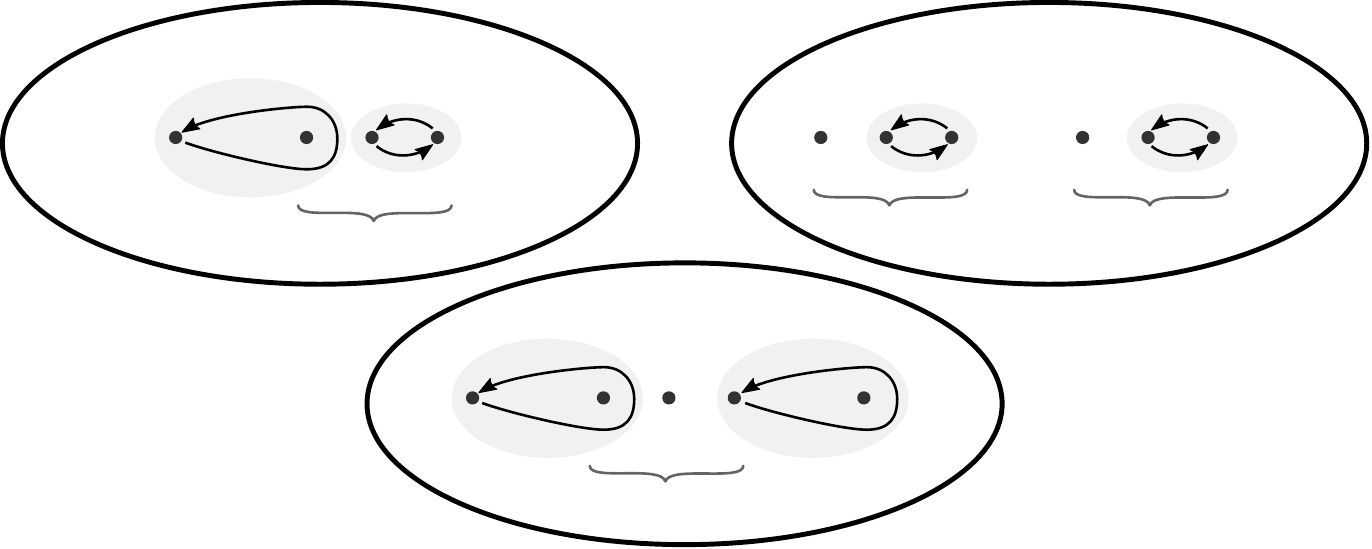}
\caption{Choosing representatives with disjoint support for pairs of generators of the abelianisation of the partitioned braid group. Braces indicate points that lie in the same block of the partition.}
\label{fig:partitioned-braids}
\end{figure}

\subsection{Blocks of size $1$}

\begin{lemma}\label{LCS_B111}
If at least three blocks of the partition $\lambda$ are of size $1$, then the LCS of $\B_{\lambda}$ does not stop.
\end{lemma}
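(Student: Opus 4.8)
The plan is to find a quotient of $\B_\lambda$ whose LCS does not stop, and then invoke Lemma~\ref{lem:stationary_quotient} (more precisely its contrapositive). The natural strategy here is to exploit the three blocks of size $1$: say these blocks consist of the single strands indexed $p$, $q$, $r$. The pure braid generators $A_{pq}$, $A_{pr}$, $A_{qr}$ all lie in $\B_\lambda$, and under abelianisation they give three \emph{distinct} basis elements $a_{ij}$ of $\B_\lambda^{\ab}$ by Proposition~\ref{partitioned_B^ab}, since each pairs up two different size-$1$ blocks. The obstruction to the LCS stopping should come from the fact that these three strands can braid around one another essentially as in the pure braid group $\PB_3$, whose LCS is well known not to stop.

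Concretely, I would construct a surjection $\B_\lambda \twoheadrightarrow Q$ where $Q$ is a group built from the three special strands, chosen so that its LCS is understood and provably does not stop. The cleanest candidate is to forget all strands except the three singletons $p,q,r$, i.e.\ to use the strand-forgetting projection $\B_\lambda \to \B_3$ (the composite of forgetting all other strands; such projections are split and appear in the proof of Proposition~\ref{partitioned_B^ab}). However $\B_3$ itself has cyclic abelianisation and its LCS stops, so forgetting down to $\B_3$ is too lossy. Instead the image should retain the \emph{purity} of the three strands: since $p,q,r$ each form their own block, the three strands are never permuted among themselves, so the projection actually lands in $\PB_3 \subseteq \B_3$, and more to the point we can map onto a quotient of $\PB_3$. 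I would therefore aim for a surjection $\B_\lambda \twoheadrightarrow H$ where $H$ is a quotient of $\PB_3$ of the form described in the Methods section --- a free product of abelian groups, or a semidirect product $\Z^2 \rtimes \Z$ --- whose LCS is computed in the appendix (Appendix~\ref{sec:appendix_2}) and shown not to stop.

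The key steps, in order, are: (1) identify the strand-forgetting map collapsing $\B_\lambda$ onto the subgroup generated by the braiding of the three singleton strands, landing in a copy of $\PB_3$; (2) further quotient to kill the interactions that obscure the non-stopping behaviour, producing a target group $H$ whose LCS is one of the model computations from the appendix; (3) verify that $a_{pq}, a_{pr}, a_{qr}$ survive as independent generators in $H^{\ab}$ so that the bracketing structure is genuinely nonabelian; and (4) conclude by Lemma~\ref{lem:stationary_quotient} that $\B_\lambda$ cannot have stopping LCS. The main obstacle I anticipate is step (2): choosing the right quotient $H$. One must quotient enough to land in a group whose LCS is explicitly known, yet not so much as to abelianise away the essential commutator $[\![a_{pq},a_{pr}]\!]$ that witnesses non-stopping. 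The three pure-braid generators on three strands satisfy only the well-understood relations of $\PB_3 \cong \F_2 \rtimes \Z$ (or its pure structure as $\Z \times \F_2$), and the simplest sufficient target is likely the free product $\Z * \Z$ or a wreath-type quotient; identifying precisely which model from Appendix~\ref{sec:appendix_2} applies, and checking the three generators map to independent elements, is where the real work lies. Once $H$ is pinned down, the non-stopping of its LCS is quoted from the appendix and the conclusion is immediate.
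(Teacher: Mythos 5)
Your proposal is correct and follows essentially the same route as the paper: forget all blocks except the three singletons to obtain a surjection $\B_\lambda \twoheadrightarrow \B_{1,1,1} = \PB_3$, then apply Lemma~\ref{lem:stationary_quotient}. The ``real work'' you anticipate in choosing a further quotient $H$ is in fact unnecessary: since $\PB_3 \cong \F_2 \rtimes \Z$ is an almost-direct product (equivalently, $\PB_3 \cong \Z \times \F_2$), its LCS restricts to that of $\F_2$, which does not stop, so $\PB_3$ itself already serves as the target.
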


\begin{proof}
Under the hypothesis, there is a surjection $\B_{\lambda} \twoheadrightarrow \B_{1,1,1} \cong \mathbf{P}_3$ obtained by forgetting all the blocks save three blocks of size $1$. The LCS of $\PB_3 \cong\F_2 \rtimes \Z$ does not stop, since it is an almost-direct product, and the LCS of $\F_2$ does not stop; see for instance~\cite{FalkRandell1}. As a consequence, the one of $\B_{\lambda}$ does not either by Lemma~\ref{lem:stationary_quotient}.
\end{proof}

The cases with one or two blocks of size $1$ (and still no blocks of size $2$) are more difficult to handle. In both cases, we will have to use the following observation:

\begin{lemma}\label{P3^ab}
The quotient of $\PB_3$ by the relation $[A_{13}, A_{23}] = 1$ is $\PB_3^{\ab} \cong \Z^3$.
\end{lemma}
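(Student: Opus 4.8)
The plan is to exploit the almost-direct product structure $\PB_3 \cong \F_2 \rtimes \Z$ already recalled in the proof of Lemma~\ref{LCS_B111}. Here the normal factor is the free group $\F_2 = \langle A_{13}, A_{23}\rangle$ (the kernel of the Fadell--Neuwirth projection $\PB_3 \twoheadrightarrow \PB_2 \cong \Z$ that forgets the third strand), and the quotient $\Z$ is generated by the image of $A_{12}$, acting on $\F_2$ by conjugation. The key feature of this being an \emph{almost}-direct product is that the induced action of $A_{12}$ on the abelianisation $\F_2^{\ab} \cong \Z^2$ is trivial; this is precisely the property that makes $\PB_3$ residually nilpotent (Falk--Randell), and it is what I would use as the engine of the proof.

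First I would identify the normal closure $N$ of $[A_{13},A_{23}]$ inside $\PB_3$ with the commutator subgroup $[\F_2,\F_2]$. On one hand, $[\F_2,\F_2]$ is characteristic in $\F_2$, hence normal in $\PB_3$ since $\F_2$ is normal; as it contains $[A_{13},A_{23}]$, we get $N \subseteq [\F_2,\F_2]$. On the other hand, the normal closure of the single commutator $[A_{13},A_{23}]$ computed inside $\F_2$ alone is already all of $[\F_2,\F_2]$ (this is the standard fact that $\F_2/\langle\langle[A_{13},A_{23}]\rangle\rangle \cong \Z^2$), and this is contained in $N$. Hence $N = [\F_2,\F_2]$, so the quotient in the statement is exactly $\PB_3/[\F_2,\F_2]$.

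It then remains to compute this quotient. Killing $[\F_2,\F_2]$ turns the normal factor into $\F_2^{\ab} \cong \Z^2$, giving $\PB_3/[\F_2,\F_2] \cong \Z^2 \rtimes \Z$, where the action of the $\Z$-factor is the one induced on $\F_2^{\ab}$ by $A_{12}$. By the almost-direct product property this induced action is trivial, so the semidirect product collapses to the direct product $\Z^2 \times \Z \cong \Z^3$. Since this quotient is abelian we have $[\PB_3,\PB_3] \subseteq [\F_2,\F_2]$, and the reverse inclusion is automatic, so in fact $[\PB_3,\PB_3] = [\F_2,\F_2]$ and the quotient is exactly $\PB_3^{\ab} \cong \Z^3$, as claimed.

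I expect the only genuinely delicate point to be the passage from the normal closure computed inside $\F_2$ to the normal closure inside the larger group $\PB_3$; this is handled cleanly by the ``characteristic in normal'' observation, which is where the argument could otherwise go wrong if one forgot that conjugates of $[A_{13},A_{23}]$ by powers of $A_{12}$ must also be killed. Everything else is formal once the almost-direct product structure is in hand.
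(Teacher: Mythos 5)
Your proof is correct, but it takes a somewhat different route from the paper's. The paper works directly in the quotient $\PB_3/N$: it checks the two explicit relations $A_{12}A_{13}A_{12}^{-1} = A_{23}^{-1}A_{13}A_{23}$ and $A_{12}^{-1}A_{23}A_{12} = A_{13}A_{23}A_{13}^{-1}$ in $\PB_3$, observes that modulo $[A_{13},A_{23}]=1$ these collapse to commutation relations, and concludes that the three generators commute, so the quotient is abelian and hence equals $\PB_3^{\ab}$. You instead first identify the normal closure $N$ with $[\F_2,\F_2]$ --- correctly handling the passage from the normal closure in $\F_2$ to the normal closure in $\PB_3$ via the ``characteristic in normal'' observation --- and then compute $\PB_3/[\F_2,\F_2] \cong \Z^2 \rtimes \Z$, using the almost-direct product property to see that the action is trivial. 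The two arguments rest on the same computational kernel: the triviality of the $A_{12}$-action on $\F_2^{\ab}$ is exactly the statement that the paper's two conjugation relations become trivial modulo $[\F_2,\F_2]$, so you are outsourcing to Falk--Randell what the paper verifies by hand. What your version buys is a cleaner structural picture (and an explicit identification of $N$ as $[\F_2,\F_2]$, which the paper never needs); what the paper's version buys is self-containedness and brevity --- it needs no splitting, no identification of the normal closure, and no appeal to the almost-direct product property, only two relations that can be drawn as isotopies.
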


\begin{proof}
Let $N$ be the normal closure of $[A_{13}, A_{23}]$ in $\PB_3$. We want to show that $N = \LCS_2(\PB_3)$. Clearly, $N \subseteq \LCS_2(\PB_3)$. To show the converse inclusion, we need to show that $\PB_3/N$ is abelian. We check that the relations $A_{12}A_{13}A_{12}^{-1} = A_{23}^{-1}A_{13}A_{23}$ and $A_{12}^{-1}A_{23}A_{12} = A_{13}A_{23}A_{13}^{-1}$ hold in $\PB_3$. As a consequence, modulo the relation $[A_{13}, A_{23}] = 1$ (that is, modulo $N$), we get $A_{12}A_{13}A_{12}^{-1} \equiv A_{13}$ and $A_{12}^{-1}A_{23}A_{12} \equiv A_{23}$. Thus $A_{12}$, $A_{13}$ and $A_{23}$ commute modulo $N$ and therefore $\PB_3/N$ is abelian.
\end{proof} 

\begin{proposition}\label{LCS_B1mu}
Let $n \geq 1$ be an integer, let $\lambda = (1, n_2, \ldots , n_l)$ be a partition of $n$, with $n_i \geq 3$ for $i \geq 2$. Then the LCS of $\B_{\lambda}$ stops at $\LCS_2$.
\end{proposition}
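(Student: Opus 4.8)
The plan is to show that the Lie ring $\Lie(\B_\lambda)$ reduces to its degree-one part $\B_\lambda^{\ab}$, so that $\LCS_2(\B_\lambda) = \LCS_3(\B_\lambda)$; since $\B_\lambda$ is non-abelian, the series cannot stop earlier, so it stops exactly at $\LCS_2$. By Proposition~\ref{generationproperty} the Lie ring is generated by $\B_\lambda^{\ab}$, hence it suffices to prove that $\Lie_2(\B_\lambda) = 0$, i.e.\ that every bracket of two elements of the basis of $\B_\lambda^{\ab}$ given in Proposition~\ref{partitioned_B^ab} vanishes. That basis consists of the classes $s_i$ (for $2 \leq i \leq l$, the size-$1$ block contributing no such generator) and the classes $a_{ij}$ (for $1 \leq i < j \leq l$). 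Following the strategy behind Corollary~\ref{commuting_representatives}, I would realise each relevant pair by representatives with disjoint support in the punctured disc, which forces the corresponding bracket to vanish.

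First I would dispatch every bracket amenable to a disjoint support argument. Whenever two basis elements can be represented by standard generators whose strands are pairwise distinct, those representatives commute. Concretely, two generators attached to disjoint sets of blocks are separated immediately; and if they share a single block $b$, then as soon as $b$ has size at least $3$ one may route the two generators through \emph{different} strands of $b$ (for a pair $s_i, a_{ij}$, one chooses a $\sigma_\alpha$ supported on two strands of block $i$ disjoint from the strand used by the relevant $A_{\alpha\beta}$). Since every block except the first has size $\geq 3$, the only pairs for which no such separation exists are those forcing the \emph{same} single strand of the size-$1$ block onto both generators, namely the pairs $(a_{1j}, a_{1k})$ with $2 \leq j < k \leq l$. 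Thus the whole argument reduces to showing that $[a_{1j}, a_{1k}] = 0$ in $\Lie_2(\B_\lambda)$ for $j \neq k$.

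The main obstacle is precisely this last family, since the isolated strand cannot be duplicated. To kill $[a_{1j}, a_{1k}]$ I would pass to a three-strand subgroup and invoke Lemma~\ref{P3^ab}. Choose a strand $\beta$ in block $j$ and a strand $\gamma$ in block $k$, so that $A_{1\beta}, A_{1\gamma}, A_{\beta\gamma}$ generate a copy of $\PB_3$ inside $\B_\lambda$ and represent, respectively, $a_{1j}, a_{1k}, a_{jk}$. The pure braid relation $A_{1\beta} A_{1\gamma} A_{1\beta}^{-1} = A_{\beta\gamma}^{-1} A_{1\gamma} A_{\beta\gamma}$ recalled in the proof of Lemma~\ref{P3^ab} rewrites as the identity
\[ [A_{1\beta}, A_{1\gamma}] = [A_{\beta\gamma}^{-1}, A_{1\gamma}] \]
in $\B_\lambda$. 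Passing to $\Lie_2(\B_\lambda)$, the left-hand side is $[a_{1j}, a_{1k}]$, while the right-hand side is $[-a_{jk}, a_{1k}] = [a_{1k}, a_{jk}]$. But $a_{1k}$ and $a_{jk}$ share the block $k$, which has size $\geq 3$, so they admit disjoint-support representatives (for instance $A_{1\gamma'}$ and $A_{\beta\gamma''}$ with $\gamma' \neq \gamma''$ in block $k$); hence $[a_{1k}, a_{jk}] = 0$, and therefore $[a_{1j}, a_{1k}] = 0$.

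With every basis bracket shown to vanish, $\Lie_2(\B_\lambda) = 0$, and by generation in degree one (Proposition~\ref{generationproperty}) every $\Lie_i(\B_\lambda)$ with $i \geq 2$ vanishes as well, i.e.\ $\LCS_2(\B_\lambda) = \LCS_3(\B_\lambda)$. The only genuinely non-formal step is the treatment of $[a_{1j}, a_{1k}]$: the isolated strand obstructs the naive disjoint support argument, and it is the $\PB_3$ relation underlying Lemma~\ref{P3^ab} that lets us trade this bracket for one supported on a large block, where disjoint support applies.
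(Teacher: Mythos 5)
Your proof is correct, and it reaches the same two key ingredients as the paper (disjoint-support commutation for all but the pairs $(a_{1j},a_{1k})$, and the pure-braid relation $A_{12}A_{13}A_{12}^{-1}=A_{23}^{-1}A_{13}A_{23}$ in $\PB_3$ for the remaining pairs), but you organise the argument differently and, arguably, more economically. The paper works in the quotient $\B_\lambda/\LCS_\infty$ and must first establish, via the factorisations $\B_\mu \to \B_\mu^{\ab} \to \B_\lambda/\LCS_\infty$ and $\B_{1,n_i} \to \B_{1,n_i}^{\ab} \to \B_\lambda/\LCS_\infty$, that the images of the generators depend only on blocks; it then invokes Lemma~\ref{P3^ab} (the quotient of $\PB_3$ by $[A_{13},A_{23}]=1$ is abelian) to kill $[a_{1i},a_{1j}]$. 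You instead compute directly in $\Lie_2(\B_\lambda)$, where the well-definedness of the bracket $\Lie_1\times\Lie_1\to\Lie_2$ on abelianisation classes (Proposition~\ref{prop:lazard}) gives the block-independence for free, and you use the $\PB_3$ relation itself to trade $[a_{1j},a_{1k}]$ for $[a_{1k},a_{jk}]$, which dies by disjoint support; this bypasses both the residue quotient and the ``abelian quotient'' formulation of Lemma~\ref{P3^ab}. One cosmetic point: to conclude that the LCS stops \emph{exactly} at $\LCS_2$ you need $\LCS_1\neq\LCS_2$, i.e.\ that $\B_\lambda$ is not \emph{perfect} (its abelianisation is free abelian of positive rank), rather than merely non-abelian as you wrote; this is immediate from Proposition~\ref{partitioned_B^ab} but is the correct justification.
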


\begin{proof}
The case $l = 2$ (i.e.~$\lambda = (1,m)$ for some integer $m \geq 3$) works exactly like the stable case of Proposition~\ref{LCS_stable_partitioned_braids}. Namely, the two generators of the abelianisation do have lifts with disjoint supports, so that $\LCS_2(\B_{1,m}) = \LCS_3(\B_{1,m})$ by Corollary~\ref{commuting_representatives}. This however does not work for $l > 2$.

Let us denote by $\mu$ the partition $(n_2, \ldots , n_l)$ of $n-1$, so that  $\lambda = (1, \mu)$. We are going to show that $\B_{\lambda}/\LCS_\infty$ is abelian, which implies that $\LCS_\infty = \LCS_2$ for $\B_{\lambda}$.

It follows from Proposition~\ref{LCS_stable_partitioned_braids} that $\LCS_\infty = \LCS_2$ for $\B_{\mu}$. As a consequence, the obvious morphism $\B_{\mu} \rightarrow \B_\lambda/\LCS_\infty$ factors through $\B_{\mu}/\LCS_2 \cong \B_{\mu}^{\ab}$. Given the description of $\B_{\mu}^{\ab}$ from Proposition~\ref{partitioned_B^ab}, we have that modulo $\LCS_\infty$, $\sigma_\alpha \equiv \sigma_\alpha'$ and $A_{\alpha\beta} \equiv A_{\alpha'\beta'}$ if $\alpha$ and $\alpha'$ (resp.~$\beta$ and $\beta'$) are in the same block of $\mu$. Moreover, the corresponding classes $s_i$ and $a_{ij}$ commute with one another. In the same way, we can deduce from the case of $\B_{1,m}$ for $m \geq 3$ treated above that the class of a generator $A_{1\alpha}$ depends only on the block of $\alpha$. More precisely, we use the fact that the obvious morphism  $\B_{1,n_i} \rightarrow \B_{1, \mu}/\LCS_\infty$ factors through $\B_{1,n_i}^{\ab}$, where all the $A_{1\alpha}$ are identified. Moreover, the corresponding $a_{1i}$ commutes with all the $s_p$ and $a_{pq}$ coming from $\B_\mu$, since they have lifts in $\B_\lambda$ with disjoint support.

We are left with showing that $a_{1i}$ commutes with $a_{1j}$ when $i < j$. This uses Lemma~\ref{P3^ab}. Let us choose any $\alpha$ in the $i$-th block and any $\beta$ in the $j$-th one, and consider the morphism $\PB_3 \rightarrow \B_\lambda$ induced by $1 \mapsto 1$, $2 \mapsto \alpha$ and $3 \mapsto \beta$. If we compose it with the projection onto $\B_\lambda/\LCS_\infty$, we get a morphism $f$ sending $A_{12}$ to $a_{1i}$, $A_{13}$ to $a_{1j}$, and $A_{23}$ to $a_{ij}$. Since $a_{ij}$ commutes with $a_{1j}$, $f$ sends $[A_{13}, A_{23}]$ to $1$. Lemma~\ref{P3^ab} then implies that it factors through $\PB_3^{\ab}$, so that all the elements in its image commute, including $a_{1i}$ and $a_{1j}$. This finishes the proof that $\B_\lambda/\LCS_\infty$ is abelian, whence equal to $\B_\lambda/\LCS_2$.
\end{proof}

\begin{proposition}\label{LCS_B11mu}
Let $n \geq 1$ be an integer, let $\lambda = (1, 1, n_3, \ldots , n_l)$ be a partition of $n$, with $n_i \geq 3$ for $i \geq 3$. Then the LCS of $\B_{\lambda}$ stops at $\LCS_2$.
\end{proposition}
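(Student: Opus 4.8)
The plan is to show, exactly as in the proof of Proposition~\ref{LCS_B1mu}, that $\B_\lambda/\LCS_\infty$ is abelian; this means $\LCS_\infty = \LCS_2$, and since $\B_\lambda$ is non-abelian when $\mu \neq \varnothing$, it forces the LCS to stop precisely at $\LCS_2$. Write $\mu = (n_3, \ldots, n_l)$, so $\lambda = (1,1,\mu)$ with every block of $\mu$ of size at least $3$; the degenerate case $\lambda = (1,1)$ is just $\PB_2 \cong \Z$, which stops at $\LCS_2$, so assume $\mu \neq \varnothing$. By Proposition~\ref{partitioned_B^ab}, $\B_\lambda^{\ab}$ is free abelian on the $s_i$ ($i \geq 3$), the $a_{ij}$ ($3 \leq i < j$), the $a_{1j}, a_{2j}$ ($j \geq 3$) and $a_{12}$, so it suffices to check that each pair of these classes commutes modulo $\LCS_\infty$.

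First I would dispose of the generators internal to $\mu$. Since $\LCS_\infty = \LCS_2$ for $\B_\mu$ by Proposition~\ref{LCS_stable_partitioned_braids}, and since $\LCS_\infty(\B_\mu) \subseteq \LCS_\infty(\B_\lambda)$ for the inclusion $\B_\mu \hookrightarrow \B_\lambda$, the morphism $\B_\mu \to \B_\lambda/\LCS_\infty$ factors through $\B_\mu^{\ab}$. Hence the $s_i$ and $a_{ij}$ are well defined and pairwise commute modulo $\LCS_\infty$. Next I would bring in the two singletons one at a time. The subgroup $H_1 \leq \B_\lambda$ of braids not involving the second strand is isomorphic to $\B_{1,\mu}$, and likewise the subgroup $H_2$ for the first strand. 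Proposition~\ref{LCS_B1mu} gives $\LCS_\infty = \LCS_2$ for each $H_k$, so the same factoring argument (using $\LCS_\infty(H_k) \subseteq \LCS_\infty(\B_\lambda)$) shows that, modulo $\LCS_\infty$, the class of $A_{1\alpha}$ (resp.\ $A_{2\alpha}$) depends only on the block of $\alpha$, that the $a_{1j}$ commute with one another and with the $\mu$-generators, and symmetrically for the $a_{2j}$.

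At this point the only remaining pairs are those in which the two singletons interact: the pairs $(a_{1i}, a_{2j})$ for $i,j \geq 3$, and the pairs of $a_{12}$ with the other generators. The pairs $(a_{12}, s_i)$ and $(a_{12}, a_{ij})$ are immediate from Corollary~\ref{commuting_representatives}, since the block $\{1,2\}$ is disjoint from the blocks involved, so $a_{12}$ has a representative with support disjoint from those of $s_i$ and $a_{ij}$. For $(a_{1i}, a_{2j})$ I would again use a disjoint support argument: pick strands $\alpha$ in block $i$ and $\beta$ in block $j$ with $\alpha \neq \beta$ (possible even when $i = j$, as the blocks of $\mu$ have size at least $3$), and realise $a_{1i}$ by a braid moving only strands $1$ and $\alpha$ in a disc $D_1$, and $a_{2j}$ by one moving only strands $2$ and $\beta$ in a disc $D_2$ disjoint from $D_1$; these commute, whence $[a_{1i}, a_{2j}] = 0$ modulo $\LCS_\infty$ by Corollary~\ref{commuting_representatives}.

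The genuinely resistant interactions — and the main obstacle — are those of $a_{12}$ with a singleton it shares, namely $(a_{12}, a_{1j})$ and $(a_{12}, a_{2j})$, for which disjoint support is unavailable. These I would settle exactly as the $a_{1i}$–$a_{1j}$ step of Proposition~\ref{LCS_B1mu}, via Lemma~\ref{P3^ab}. Fix $j \geq 3$ and a strand $\gamma$ in block $j$, and consider the morphism $\PB_3 \to \B_\lambda$ sending $1,2,3$ to the strands $1,2,\gamma$; composing with the projection onto $\B_\lambda/\LCS_\infty$ gives a morphism $f$ with $f(A_{12}) = a_{12}$, $f(A_{13}) = a_{1j}$ and $f(A_{23}) = a_{2j}$. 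By the previous paragraph $[a_{1j}, a_{2j}] = 0$, so $f([A_{13},A_{23}]) = 1$, and Lemma~\ref{P3^ab} then shows that $f$ factors through $\PB_3^{\ab}$; hence $a_{12}$, $a_{1j}$ and $a_{2j}$ pairwise commute modulo $\LCS_\infty$. Having checked that all pairs of generators of $\B_\lambda^{\ab}$ commute modulo $\LCS_\infty$, we conclude that $\B_\lambda/\LCS_\infty$ is abelian, so $\LCS_\infty = \LCS_2$ and the LCS of $\B_\lambda$ stops at $\LCS_2$.
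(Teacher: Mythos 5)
Your proposal is correct and follows essentially the same route as the paper's proof: reduce to showing $\B_\lambda/\LCS_\infty$ is abelian, use the two embedded copies of $\B_{1,\mu}$ (via Proposition~\ref{LCS_B1mu}) and of $\B_\mu$ to handle most pairs of generators, dispose of $(a_{1i},a_{2j})$ and the pairs involving $a_{12}$ with $\mu$-generators by disjoint support, and settle the remaining pairs $(a_{12},a_{1j})$, $(a_{12},a_{2j})$ via the morphism $\PB_3 \to \B_\lambda$ ($1\mapsto 1$, $2\mapsto 2$, $3\mapsto\alpha$) and Lemma~\ref{P3^ab}. The only cosmetic difference is your explicit treatment of the degenerate case $\lambda=(1,1)$ and a slightly off-target citation of Corollary~\ref{commuting_representatives} where the bare fact that commuting representatives yield commuting classes in the quotient suffices.
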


\begin{proof}
Let $\mu$ denote the partition $(n_3, \ldots , n_l)$ of $n-2$, so that  $\lambda = (1,1, \mu)$. It follows from Proposition~\ref{LCS_B1mu} that $\LCS_\infty = \LCS_2$ for $\B_{1, \mu}$, hence both the obvious maps $\B_{1, \mu} \rightarrow \B_\lambda/\LCS_\infty$ factor through $\B_{1, \mu}^{\ab}$. From this, we deduce that $\B_\lambda/\LCS_\infty$ is generated by elements $s_i$ ($3 \leq i \leq l$) and $a_{ij}$ ($1 \leq i < j \leq l$), and that the $a_{1j}$ (resp.~the $a_{2j}$), the $s_p$ ($p \geq 1$) and the $a_{pq}$ ($q > p \geq 3$) commute with one another. Moreover, for all $i,j \geq 3$,  $a_{1i}$ and $a_{2j}$ have lifts with disjoint support in $\B_\lambda$, so they commute (even when $i = j$).

We are left with showing that the class $a_{12}$ of $A_{12}$ commutes with all the other generators. A disjoint support argument shows that it commutes with the $s_k$ and $a_{kl}$ for $l > k  \geq 3$. Now, for $i \geq 3$, let us choose $\alpha$ in the $i$-th block, and let us consider the morphism $\PB_3 \rightarrow \B_\lambda$ induced by $1 \mapsto 1$, $2 \mapsto 2$ and $3 \mapsto \alpha$. If we compose it with the projection onto $\B_\lambda/\LCS_\infty$, we get a morphism $f$ sending $A_{12}$ to $a_{12}$, $A_{13}$ to $a_{1i}$, and $A_{23}$ to $a_{2i}$. Since $a_{1i}$ commutes with $a_{2i}$, $f$ sends $[A_{13}, A_{23}]$ to $1$. Lemma~\ref{P3^ab} then implies that it factors through $\PB_3^{\ab}$, so that all the elements in its image commute, showing that $a_{12}$ commutes with $a_{1i}$ and with $a_{2i}$. Thus, we have proved that $\B_\lambda/\LCS_\infty$ is abelian, whence $\LCS_\infty(\B_\lambda) = \LCS_2(\B_\lambda)$.
\end{proof}

\subsection{Blocks of size $2$}

When there is exactly one block of size $2$ and no block of size $1$, we get a complete description of the quotient of $\B_{\lambda}$ by its residue:

\begin{proposition}\label{LCS_B2mu}
Let $n \geq 1$ be an integer, let $\lambda = (2, n_2, \ldots , n_l)$ be a partition of $n$, with $n_i \geq 3$ if $i \geq 2$. Then $\B_{\lambda}/\LCS_\infty$ decomposes as a direct product of $l(l-1)/2$ copies of $\Z$ with $\Z^{2(l-1)} \rtimes \Z$, where $\Z$ acts via the involution exchanging the elements $e_{2i}$ and $e_{2i+1}$ of a basis of $\Z^{2(l-1)}$. In particular, if $l \geq 2$, then the LCS of $\B_{\lambda}$ does not stop. 
\end{proposition}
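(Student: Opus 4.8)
The plan is to realise $\B_\lambda/\LCS_\infty$ as a quotient of $\B_\lambda$ by the residue of a well-chosen finite-index subgroup whose LCS we already control. Write $\mu = (n_2,\ldots,n_l)$, so that every block of $\mu$ has size at least $3$, and let $H := \B_{(1,1,\mu)}$ be the subgroup of $\B_\lambda$ consisting of those partitioned braids that do not exchange the two strands of the first block; equivalently $H = \pi^{-1}(\{1\}\times\Sym_\mu)$, a normal subgroup of index $2$ with $\B_\lambda/H\cong\Z/2$ generated by the class of $\sigma_1$. By Proposition~\ref{LCS_B11mu} the LCS of $H$ stops at $\LCS_2$, so $\LCS_\infty(H)=\LCS_2(H)$ and $H/\LCS_2(H)=H^{\ab}$ is the free abelian group described in Proposition~\ref{partitioned_B^ab}, with basis the classes $a_{12}$, the $a_{1j},a_{2j}$ ($2\le j\le l$), the $s_i$ ($2\le i\le l$) and the $a_{jk}$ ($2\le j<k\le l$).

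Since $\LCS_2(H)$ is characteristic in $H$ and $H$ is normal in $\B_\lambda$, it is normal in $\B_\lambda$ as well; set $\widetilde Q := \B_\lambda/\LCS_2(H)$. I would first pin down the isomorphism type of $\widetilde Q$. It sits in an extension $H^{\ab}\hookrightarrow\widetilde Q\twoheadrightarrow\Z/2$ in which the generator $\sigma_1$ of $\Z/2$ acts on $H^{\ab}$ by conjugation and satisfies $\sigma_1^2=A_{12}$. The conjugation action is the crucial computation: using the pure-braid relation $\sigma_1A_{1\beta}\sigma_1^{-1}=A_{2\beta}$ together with disjoint support, one checks that on $H^{\ab}$ it is exactly the involution exchanging $a_{1j}\leftrightarrow a_{2j}$ and fixing $a_{12}$, the $s_i$ and the $a_{jk}$. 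The relation $\sigma_1^2=A_{12}\equiv a_{12}$ then shows that the image of $\sigma_1$ has infinite order. Feeding this data into the extension, I would exhibit mutually inverse homomorphisms between $\widetilde Q$ and $Q:=\Z^{l(l-1)/2}\times(\Z^{2(l-1)}\rtimes\Z)$ sending $\sigma_1\mapsto t$, $a_{1j}\mapsto e_{2j}$ and $a_{2j}\mapsto e_{2j+1}$ (so that $a_{12}\mapsto t^2$), and the remaining $s_i,a_{jk}$ to a basis of the central factor $\Z^{l(l-1)/2}$; the swap relation matches the prescribed $\Z$-action and $\sigma_1^2=a_{12}$ matches $t^2=t^2$, giving an isomorphism $\widetilde Q\cong Q$.

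It then remains to identify $\widetilde Q$ with $\B_\lambda/\LCS_\infty$. Here I would use that $Q$, hence $\widetilde Q$, is residually nilpotent: its only non-abelian factor $\Z^{2(l-1)}\rtimes\Z$ is one of the groups whose LCS is computed in Appendix~\ref{sec:appendix_2} (one finds $\LCS_k=2^{k-2}\LCS_2$ for $k\ge2$, so $\LCS_\infty=\{1\}$, and in particular the LCS does not stop). On one hand, $\LCS_2(H)=\LCS_\infty(H)\subseteq\LCS_\infty(\B_\lambda)$, because $H\le\B_\lambda$ forces $\LCS_k(H)\subseteq\LCS_k(\B_\lambda)$ for all $k$. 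On the other hand, residual nilpotence of $\widetilde Q=\B_\lambda/\LCS_2(H)$ gives $\LCS_\infty(\B_\lambda)\subseteq\LCS_2(H)$, since the projection $\B_\lambda\twoheadrightarrow\widetilde Q$ carries $\LCS_\infty(\B_\lambda)$ into $\LCS_\infty(\widetilde Q)=\{1\}$. Thus $\LCS_\infty(\B_\lambda)=\LCS_2(H)$ and $\B_\lambda/\LCS_\infty=\widetilde Q\cong Q$. Finally, for $l\ge2$ the factor $\Z^{2(l-1)}\rtimes\Z$ is non-nilpotent, so $\B_\lambda/\LCS_\infty$ is non-nilpotent, which is exactly the condition for the LCS of $\B_\lambda$ not to stop.

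The main obstacle, I expect, is the determination of the conjugation action of $\sigma_1$ on $H^{\ab}$ — that is, the fact that the two winding classes $a_{1j}$ and $a_{2j}$ are genuinely \emph{swapped} rather than identified — together with the bookkeeping needed to turn the extension data into the clean semidirect-product description $Q$. The residual-nilpotence input is then a routine appeal to Appendix~\ref{sec:appendix_2}, and it is also what simultaneously yields the ``does not stop'' conclusion.
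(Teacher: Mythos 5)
Your proposal is correct and follows essentially the same route as the paper: both pass to the index-two normal subgroup $\B_{1,1,\mu}$, invoke Proposition~\ref{LCS_B11mu} to identify $\LCS_\infty(\B_{1,1,\mu})=\LCS_2(\B_{1,1,\mu})$, present the quotient $\B_\lambda/\LCS_2(\B_{1,1,\mu})$ as an extension of $\Sym_2$ by $\B_{1,1,\mu}^{\ab}$ with the swap action and the lifted relation $\sigma_1^2=A_{12}$, identify it with $\Z^{l(l-1)/2}\times(\Z^{2(l-1)}\rtimes\Z)$, and conclude via the residual nilpotence computed in Appendix~\ref{sec:appendix_2}. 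The only differences are cosmetic (indexing conventions and your statement of the two inclusions giving $\LCS_\infty(\B_\lambda)=\LCS_2(\B_{1,1,\mu})$, which the paper leaves more implicit).
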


\begin{proof}
Let $\mu$ denote the partition $(n_2, \ldots , n_l)$ of $n-2$, so that  $\lambda = (2, \mu)$. Then the canonical projection $\B_{2, \mu} \twoheadrightarrow \Sym_{2, \mu} \twoheadrightarrow \Sym_2$ has $\B_{1,1, \mu}$ as its kernel. Moreover, $\LCS_\infty(\B_{2, \mu})$ contains $\LCS_\infty(\B_{1,1, \mu})$, which is equal to $\LCS_2(\B_{1,1, \mu})$ by Proposition~\ref{LCS_B11mu}. We show that these are in fact equal. In order to do this, it is enough to show that $\B_{2, \mu}/\LCS_2(\B_{1,1, \mu})$ is residually nilpotent. In fact, we are going to compute it completely.

First, let us remark that it makes sense to consider this quotient: $\LCS_2(\B_{1,1, \mu})$ is a characteristic subgroup of $\B_{1,1, \mu}$, which is normal (of index $2$) in $\B_{2, \mu}$, hence it is a normal subgroup of $\B_{2, \mu}$. Next, we can write $\B_{2, \mu}/\LCS_2(\B_{1,1, \mu})$ as an extension:
\begin{equation}\label{ext1}
\begin{tikzcd}
\B_{1,1, \mu}^{\ab} \ar[r, hook]
&\B_{2, \mu}/\LCS_2(\B_{1,1, \mu})\ar[r, two heads] 
&\Sym_2. 
\end{tikzcd}
\end{equation}
We can use the method from Appendix~\ref{section_pstation_of_ext} to get a presentation of the group $G := \B_{2, \mu}/\LCS_2(\B_{1,1, \mu})$. Namely, we have a presentation of the kernel: $\B_{1,1, \mu}^{\ab}$ is free abelian on the $s_i$ and the $a_{ij}$ indexed by the blocks of $(1,1,\mu)$. We also know the action of $\Sym_2$ on $\B_{1,1, \mu}^{\ab}$ induced by conjugation by $\sigma_1$ in $\B_{2, \mu}$: it exchanges the $a_{1j}$ with the $a_{2j}$ with $j \geq 3$ and it acts trivially on all the other generators. Finally, we can lift the only relation defining $\Sym_2$ to $\sigma_1^2 = A_{12}$ in $\B_{2, \mu}$. As a consequence, we get the presentation:
\begin{equation*}
\resizebox{\hsize}{!}{$
G = \sbox0{$
\begin{array}{l|ll}
&\forall i,j,p,q,u,v,\ &[s_i, s_j] = [s_i, a_{pq}] = [a_{pq}, a_{uv}] = 1, \\
s,&\forall i \geq 1,\ &[s, s_i] = 1, \\
s_i\ (3 \leq i \leq l+1), &\forall j > i \geq 3,\ &[s, a_{ij}] = 1, \\
a_{ij}\ (1 \leq i < j \leq l+1). &\forall j \geq 3,\ &s a_{1j} s^{-1} =  a_{2j} \text{ and } s a_{2j} s^{-1} =  a_{1j}, \\
&&s^2 = a_{12}.
\end{array}$
}
\mathopen{\resizebox{1.2\width}{1.1\ht0}{$\Bigg\langle$}}
\usebox{0}
\mathclose{\resizebox{1.2\width}{1.1\ht0}{$\Bigg\rangle$}}
$}
\end{equation*}
One can deduce from this presentation that $G$ decomposes as $\Z^N \times (\Z^{2(l-1)} \rtimes \Z)$ where the first factor is free abelian on the $s_i$ ($i \geq 3$) and the $a_{ij}$ ($j > i \geq 3$) (hence $N = l(l-1)/2$), and the action of $\Z$ (free on $s$) on $\Z^{2(l-1)}$ (free abelian on the  $a_{1j}$ and the $a_{2j}$) is given by $s$ exchanging the $a_{1j}$ and the $a_{2j}$. Checking that this holds is a matter of writing the appropriate split projections from the presentations of $G_l$ and its factors.

Finally, the decomposition of $G$ allows us to apply Proposition~\ref{LCS_Klein_tau} to compute its LCS. Namely, we apply it to $A = \langle a_{1j}, a_{2j} \rangle_{j \geq 3}$ (which is free abelian on these generators) endowed with the involution $\tau$ exchanging $a_{1j}$ with $a_{2j}$ for all $j$. Then $V = \ima(\tau - 1)$ is the free abelian subgroup generated by the $a_{1j} - a_{2j}$, and for $k \geq 2$, we have $\LCS_k(G_l) = 2^{k-1}V$. In particular, this LCS does not stop. However its intersection is trivial: the group $G_l$ is residually nilpotent, which implies that $\LCS_\infty(\B_{2, \mu}) = \LCS_\infty(\B_{1,1, \mu})$, and finishes our proof.
\end{proof}

\begin{remark}
The Lie ring of $G$ (which identifies with the Lie ring of $\B_{2, \mu}$) can be completely computed, using Corollary~\ref{Lie_Klein_tau}. Namely, it identifies with $\Z^N \times (L \rtimes \Z)$, where $L = \Z^{l-1} \oplus (\Z/2)^{l-1} \oplus (\Z/2)^{l-1} \oplus \cdots$ and the action of the generator $t$ of $\Z$ on $L$ is via the degree-one map $\Z^{l-1} \twoheadrightarrow (\Z/2)^{l-1} \cong (\Z/2)^{l-1} \cong \cdots$. In other words, as a Lie ring, $\Lie(\B_{2, \mu})$ admits the presentation via generators $t, X_1, \ldots , X_{l-1}, Y_1, \ldots , Y_N$ and relations:
\[\begin{cases}
[Y_i, Y_j] = [Y_i, X_k] = [Y_i, t] = [X_k, X_l] = 0, \\
2[t, X_i] = 0.
\end{cases}\] 
\end{remark}

Let us now turn our attention to the case when there are two blocks of size $2$ in the partition.

\begin{proposition}\label{LCS_B22}
The LCS of $\B_{2,2}$ does not stop.
\end{proposition}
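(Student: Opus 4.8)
The plan is to exhibit a quotient of $\B_{2,2}$ whose LCS does not stop and then invoke Lemma~\ref{lem:stationary_quotient}. The key observation is that $\B_{2,2}$ is precisely the boundary case $l=2$ of Proposition~\ref{LCS_B2mu} with $\mu = (2)$: the construction of the auxiliary quotient carried out there never uses the hypothesis $n_i \geq 3$, except to invoke Proposition~\ref{LCS_B11mu} when identifying the residue. Since here we only want non-stopping (and not the exact computation of $\B_{2,2}/\LCS_\infty$), we may run the same argument verbatim and simply omit that last identification.

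First I would set up the relevant extension. The kernel of the projection $\B_{2,2} \twoheadrightarrow \Sym_2$ onto the first factor of $\Sym_2 \times \Sym_2$ is $\B_{1,1,2}$, so that $\LCS_2(\B_{1,1,2})$, being characteristic in $\B_{1,1,2}$ and hence normal in $\B_{2,2}$, yields a quotient $G := \B_{2,2}/\LCS_2(\B_{1,1,2})$ sitting in a short exact sequence
\[\B_{1,1,2}^{\ab} \hookrightarrow G \twoheadrightarrow \Sym_2.\]
By Proposition~\ref{partitioned_B^ab}, $\B_{1,1,2}^{\ab} \cong \Z^4$ is free on $s_3$, $a_{12}$, $a_{13}$ and $a_{23}$. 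Conjugation by $\sigma_1$ (the chosen lift of the generator of $\Sym_2$) acts on this group by exchanging $a_{13}$ and $a_{23}$ while fixing $s_3$ and $a_{12}$, and the defining relation of $\Sym_2$ lifts to $\sigma_1^2 = A_{12}$, whose class is $a_{12}$.

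Feeding this data into the method of Appendix~\ref{section_pstation_of_ext} (exactly as in Proposition~\ref{LCS_B2mu}), one obtains
\[G \cong \Z \times \left(\Z^2 \rtimes \Z\right),\]
where the first factor is free on $s_3$, the base $\Z^2$ of the semidirect product is free on $a_{13}$ and $a_{23}$, and the acting copy of $\Z$ is generated by the image $t$ of $\sigma_1$ (with $t^2 = a_{12}$), acting via the involution $\tau$ exchanging $a_{13}$ and $a_{23}$. Projecting away the central factor $\langle s_3 \rangle$ gives a surjection of $G$ — and hence of $\B_{2,2}$ — onto $\Z^2 \rtimes \Z$. I would then apply Proposition~\ref{LCS_Klein_tau} to the abelian group $A = \langle a_{13}, a_{23} \rangle$ equipped with $\tau$: writing $V = \ima(\tau - 1) = \langle a_{13} - a_{23} \rangle \cong \Z$, it gives $\LCS_k(\Z^2 \rtimes \Z) = 2^{k-1}V$ for $k \geq 2$, a strictly decreasing chain. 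Thus the LCS of $\Z^2 \rtimes \Z$ does not stop, and neither does that of $\B_{2,2}$ by Lemma~\ref{lem:stationary_quotient}.

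There is essentially no hard analytic step here: the only thing to watch is the conceptual point that the $n_i \geq 3$ assumption in Proposition~\ref{LCS_B2mu} was needed solely to identify $\LCS_\infty(\B_{1,1,\mu})$ with $\LCS_2(\B_{1,1,\mu})$ via Proposition~\ref{LCS_B11mu}, which fails for a size-two block but is irrelevant to the non-stopping conclusion. Everything else — the computation of $\B_{1,1,2}^{\ab}$, the swap action, and the structure of $G$ — is the literal $l=2$ specialisation of the computation already performed, so the main (and only) obstacle is recognising $\B_{2,2}$ as this boundary case.
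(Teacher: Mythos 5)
Your proof is correct, but it takes a genuinely different route from the paper's. The paper proves Proposition~\ref{LCS_B22} by observing that $\B_{2,2}$ surjects onto $\B_{2,2}(\S^2)$ and deferring to Proposition~\ref{LCS_B22(S2)} (a forward reference to the surface-braid chapter), mentioning as an alternative the symmetric quotient $\B_{2,2}/\langle A_{12}, A_{34}, \LCS_2(\PB_4)\rangle \cong (\Z^2)^{\otimes 2} \rtimes (\Sym_2)^2$. You instead specialise the argument of Proposition~\ref{LCS_B2mu} to $\mu=(2)$, breaking the symmetry between the two blocks: you use only one of the two projections onto $\Sym_2$, quotient by $\LCS_2(\B_{1,1,2})$, and identify the result as $\Z \times (\Z^2\rtimes\Z)$ with the swap action, which surjects onto the group $\Z^2\rtimes\Z$ handled by Proposition~\ref{LCS_Klein_tau}. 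Your key observation --- that the hypothesis $n_i\geq 3$ in Proposition~\ref{LCS_B2mu} is used only to identify the residue via Proposition~\ref{LCS_B11mu}, not to construct the quotient or compute its LCS --- is accurate, and all the inputs you need ($\B_{1,1,2}^{\ab}\cong\Z^4$ from Proposition~\ref{partitioned_B^ab}, the relation $\sigma_1 A_{13}\sigma_1^{-1}=A_{23}$, the lift $\sigma_1^2=A_{12}$) hold without that hypothesis. What your route buys is self-containedness: it avoids the forward reference to the sphere and reuses only machinery already available at this point of the chapter; what it gives up, relative to the paper's alternative, is the complete description of a quotient in which both blocks play the same role. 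One small slip: Proposition~\ref{LCS_Klein_tau} gives $\LCS_k(\Z^2\rtimes\Z)=2^{k-2}V$ for $k\geq 2$, not $2^{k-1}V$ (the proof of Proposition~\ref{LCS_B2mu} contains the same off-by-one), but this is immaterial since either way the chain is strictly decreasing and the LCS does not stop.
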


\begin{proof}
Since $\B_{2,2}$ surjects onto $\B_{2,2}(\S^2)$, this is a direct consequence of Proposition~\ref{LCS_B22(S2)} below, by an application of Lemma~\ref{lem:stationary_quotient}. Alternatively, one can adapt the proof of  Proposition~\ref{LCS_B22(S2)} to this case, getting that:
\[\B_{2,2}/\langle A_{12}, A_{34}, \LCS_2(\PB_4)\rangle \cong (\Z^2)^{\otimes 2} \rtimes (\Sym_2)^2,\]
where $\Sym_2$ acts on $\Z^2$ by permutation of the factors. Then the methods of Appendix~\ref{section_pstation_of_ext} can be used to compute completely the LCS of this group.
\end{proof}

\begin{corollary}\label{LCS_B22mu}
If at least two blocks of the partition $\lambda$ are of size $2$, then the LCS of $\B_{\lambda}$ does not stop.
\end{corollary}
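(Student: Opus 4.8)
The plan is to deduce this directly from the case of two blocks of size two already treated in Proposition~\ref{LCS_B22}, using the contrapositive of Lemma~\ref{lem:stationary_quotient}: it suffices to produce a quotient of $\B_\lambda$ whose LCS does not stop.

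So I would argue as follows. Suppose the $i$-th and $j$-th blocks of $\lambda$ both have size $2$. Exactly as in the proof of Lemma~\ref{LCS_B111}, the split projection that forgets all strands lying outside these two blocks restricts to a surjection
\[
\B_\lambda \twoheadrightarrow \B_{2,2}.
\]
Indeed, these forgetting-strand projections (special cases of the maps from Proposition~\ref{Fadell-Neuwirth}, already used to compute $\B_\lambda^{\ab}$ in Proposition~\ref{partitioned_B^ab}) are compatible with the projections to the symmetric groups, so forgetting the strands outside blocks $i$ and $j$ sends $\Sym_\lambda$ onto $\Sym_2 \times \Sym_2$, and hence $\B_\lambda$ onto $\B_{2,2}$.

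It then remains only to invoke Proposition~\ref{LCS_B22}, according to which the LCS of $\B_{2,2}$ does not stop. By Lemma~\ref{lem:stationary_quotient}, a group admitting a quotient whose LCS does not stop cannot have a stopping LCS either, so the LCS of $\B_\lambda$ does not stop. There is no real obstacle here: all the work has already been carried out in Proposition~\ref{LCS_B22}, and this corollary is a routine reduction to it by forgetting strands, exactly parallel to the passage from $\B_{1,1,1}$ to an arbitrary $\lambda$ with three blocks of size one in Lemma~\ref{LCS_B111}.
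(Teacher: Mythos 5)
Your proof is correct and is essentially identical to the paper's: the paper also obtains the result by surjecting $\B_\lambda$ onto $\B_{2,2}$ via forgetting strands and then applying Lemma~\ref{lem:stationary_quotient} together with Proposition~\ref{LCS_B22}. Your added justification of why the forgetting-strands map restricts to a surjection onto $\B_{2,2}$ is a harmless elaboration of a step the paper leaves implicit.
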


\begin{proof}
Under the hypothesis, there is a surjection $\B_{\lambda} \twoheadrightarrow \B_{2,2}$. Thus, this corollary is obtained from a direct application of Lemma~\ref{lem:stationary_quotient}.
\end{proof}

\subsection{Blocks of size $1$ and $2$: study of $\B_{1,2}$}

We use that $\B_{1,2}$ is isomorphic to the Artin group of type $\B_2$, a classical fact of which we give a proof, for the sake of completeness. 

\begin{lemma}\label{LCS_B12}
The group $\B_{1,2}$ is isomorphic to the Artin group of type $B_2$, that is, to $G= \langle \sigma, x\ |\ (\sigma x)^2 = (x \sigma)^2 \rangle$. As a consequence, it is residually nilpotent, but not nilpotent. In particular, its LCS does not stop.
\end{lemma}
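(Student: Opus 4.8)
The plan is to establish three separate claims contained in the statement: first, that $\B_{1,2}$ is isomorphic to the Artin group $G = \langle \sigma, x \mid (\sigma x)^2 = (x\sigma)^2\rangle$ of type $B_2$; second, that this group is residually nilpotent; and third, that it is not nilpotent. The last point, combined with the characterisation of residual nilpotence, will immediately imply that the LCS does not stop, since a residually nilpotent group whose LCS stops would have to be nilpotent.

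For the isomorphism, I would proceed concretely via generators and relations. By Lemma~\ref{generators_of_partitioned_braids}, the group $\B_{1,2}$ is generated by $\sigma_2$ (the generator internal to the block $\{2,3\}$) together with lifts of the $A_{\alpha\beta}$ linking the two blocks, say $A_{12}$. The cleanest approach is to exhibit an explicit isomorphism rather than manipulate presentations. Setting $\sigma := \sigma_2$ and $x := A_{12} = \sigma_1^2$ as elements of $\B_3$, I would verify directly from the braid relations $\sigma_1 \sigma_2 \sigma_1 = \sigma_2 \sigma_1 \sigma_2$ that $(\sigma x)^2 = (x \sigma)^2$ holds in $\B_{1,2}$; this is a short commutator-calculus computation. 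To see that no further relations are needed, one can either invoke the known index-$2$ structure and compute directly, or appeal to the standard fact that the fibre of $\B_3 \to \B_1 \cong \{1\}$ over the appropriate subgroup has exactly this presentation. This isomorphism is a classical fact (it identifies the two-strand partitioned braid group with the $B_2$ Artin group), so the main burden is simply checking that the chosen generators satisfy the defining relation and generate the whole group.

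Once the identification with the Artin group of type $B_2$ is in place, residual nilpotence and non-nilpotence follow from the Artin-group analysis already developed in Section~\ref{subsec:Artin_groups}. The Coxeter matrix for type $B_2$ has a single off-diagonal entry $m_{\sigma,x} = 4$, which is even. Consequently the graph $\mathcal G$ has two vertices and no edge, so by the lemma computing $A_M^{\ab}$ we get $G^{\ab} \cong \Z^2$, generated by the classes $\bar\sigma$ and $\bar x$. Since $m_{\sigma,x} = 4 = 2\cdot 2$, the torsion lemma gives $2\cdot[\bar\sigma,\bar x] = 0$ in $\Lie(G)$, hence by Proposition~\ref{LCS_Artin_torsion} the piece $\LCS_2(G)/\LCS_3(G)$ is $2$-torsion and, by Corollary~\ref{torsion}, every graded piece $\Lie_k(G)$ for $k \geq 2$ is $2$-torsion. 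This already shows the Lie ring is nonabelian precisely when $[\bar\sigma,\bar x] \neq 0$, which is what drives non-nilpotence.

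The main obstacle, and the part requiring genuine input rather than the soft arguments above, is proving that $G$ is in fact residually nilpotent with non-stopping LCS — that is, that the $2$-torsion classes $[\bar\sigma,\bar x]$ and their iterated brackets never die, so that $\LCS_k(G) \supsetneq \LCS_{k+1}(G)$ for all $k$. The cleanest route I would take is to find an explicit tower of nilpotent quotients detecting arbitrarily deep commutators, or better, to identify $G$ with a semidirect product whose LCS is computed in the appendix. Indeed $B_2$-type Artin groups are known to be of the form $\F_2 \rtimes \Z$ (or a closely related iterated extension), and the appendix results on semidirect products of the shape $A \rtimes \Z$ with a nontrivial action give a complete LCS computation exhibiting $2$-power torsion in each degree, analogous to the computation for $\B_{2,\mu}$ in Proposition~\ref{LCS_B2mu}. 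Establishing that such a quotient (or $G$ itself) is residually nilpotent while having strictly decreasing LCS is precisely what guarantees the LCS does not stop; the delicate point is verifying that the residue is trivial, i.e.\ that the intersection of the $2$-power torsion subgroups arising at each stage is $\{1\}$, which is exactly the kind of explicit computation the appendix is designed to supply.
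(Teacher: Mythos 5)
Your overall architecture (isomorphism with the $B_2$ Artin group, then residual nilpotence plus non-nilpotence) matches the paper's, and the first step is essentially fine: $\sigma_2$ and $A_{12}=\sigma_1^2$ do generate $\B_{1,2}$ and satisfy $(\sigma x)^2=(x\sigma)^2$ (both sides equal the full twist $(\sigma_1\sigma_2)^3$). Two small corrections there: $\B_{1,2}$ has index $3$ in $\B_3$, not $2$, and the completeness of the presentation is the part that actually needs an argument --- the paper supplies one by rewriting $G$ as $\langle \sigma,x,y\mid \sigma x\sigma^{-1}=y,\ \sigma y\sigma^{-1}=y^{-1}xy\rangle$ and matching it against the split extension $\F_2\hookrightarrow\B_{1,2}\twoheadrightarrow\B_2$ with the Artin action, via the Five Lemma.

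The genuine gap is in the second half: you assert residual nilpotence but never prove it, and the strategy you gesture at cannot deliver it. Writing $G\cong\F_2\rtimes\Z$ and appealing to the appendix results on $A\rtimes\Z$ does not apply directly, since those results (Propositions~\ref{LCS_Klein_A} and~\ref{LCS_Klein_tau}) require the kernel to be abelian. What you \emph{can} get this way is the weaker non-stopping claim: quotienting by $[\F_2,\F_2]$ gives $\Z^2\rtimes_\tau\Z$ with $\tau$ the swap, Proposition~\ref{LCS_Klein_tau} shows its LCS does not stop, and Lemma~\ref{lem:stationary_quotient} transfers this back to $G$. But residual nilpotence does \emph{not} pass from a quotient to the group, and here the kernel $[\F_2,\F_2]$ sits inside $\LCS_2(G)$, so the hypothesis of Proposition~\ref{Extensions_and_residues} fails badly in that direction. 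The paper's actual mechanism is different and is the missing idea: the element $\delta^2=(\sigma x)^2$ is central in $G$ and injects into $G^{\ab}$, the quotient $G/\delta^2$ is $\Z*(\Z/2)$, residual nilpotence of $\Z*(\Z/2)$ is proved by a Magnus-type embedding into $\mathbb{F}_2\llangle X,Y\rrangle/(Y^2=1)$ (Proposition~\ref{Lie(Z*Z/2)}), and Proposition~\ref{Extensions_and_residues} --- whose hypothesis $\langle\delta^2\rangle\cap\LCS_2(G)=\{1\}$ now \emph{does} hold --- lifts residual nilpotence back to $G$. Without some version of this (or another argument with genuine content, such as Falk--Randell-type control of the action on the LCS of the kernel), the residual nilpotence claim in your write-up remains unproved.
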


\begin{proof}
On the one hand, we can re-write the presentation of $G$ as:
\[G= \langle \sigma, x, y\ |\ (\sigma x)^2 = (x \sigma)^2,\ y = \sigma x \sigma^{-1} \rangle.\]
Then, modulo the second relation (which we conveniently rewrite $y \sigma = \sigma x$), the first one is equivalent to $(y \sigma)^2 = x (y \sigma) \sigma$, and in turn to $\sigma y \sigma^{-1} = y^{-1} x y$. Thus:
\[G= \langle \sigma, x, y\ |\ \sigma x \sigma^{-1} = y,\ \sigma y \sigma^{-1} = y^{-1} x y \rangle.\]

On the other hand, the projection $\B_{1,2} \twoheadrightarrow \B_2 \cong \Z$ splits, and its kernel identifies with the fundamental group of the disc minus two points, which is free on two generators $x$ and $y$. In fact, the corresponding action of $\B_2 \cong \langle \sigma \rangle$ on $\F_2$ is the usual Artin action. Which means exactly that the above relations are true in $\B_{1,2}$. Whence a surjection of $G$ onto $\B_{1,2}$, which induces a diagram with (split) short exact rows:
\[\begin{tikzcd} 
\langle x,y \rangle \ar[r, hook] \ar[d, two heads]
&G \ar[r, two heads] \ar[d]
&\Z \ar[d, two heads] \\
\F_2 \ar[r, hook]
&\B_{1,2} \ar[r, two heads]
&\B_2 \cong \Z.
\end{tikzcd}\]
Using the freeness of their target, one sees that the left and right vertical maps must be isomorphisms, hence so is the middle map, by the Five Lemma. The rest of the statement is then a reformulation of Proposition~\ref{Artin_B2}. 
\end{proof}

\begin{remark}
A more precise result is given by Proposition~\ref{Lie(Artin_B2)}, which describes the Lie ring of the group $G$ (hence of $\B_{1,2}$). Notice, however, that this difficult calculation is not needed if one only  wants to see that its LCS does not stop; see Remarks~\ref{LCS_Z/2*Z_weak} and~\ref{LCS_Artin_B2_weak}.
\end{remark}

\begin{corollary}\label{LCS_B12mu}
If the partition $\lambda$ has both a block of size $1$ and a block of size $2$, then the LCS of $\B_{\lambda}$ does not stop.
\end{corollary}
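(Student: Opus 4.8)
The plan is to follow exactly the template used for Corollary~\ref{LCS_B22mu} and Lemma~\ref{LCS_B111}: exhibit a surjection from $\B_{\lambda}$ onto a group whose LCS is already known not to stop, and then invoke the contrapositive of Lemma~\ref{lem:stationary_quotient}. Here the target group will be $\B_{1,2}$, which we have just identified with the Artin group of type $B_2$ in Lemma~\ref{LCS_B12}, and whose LCS therefore does not stop (it is residually nilpotent but not nilpotent).

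Concretely, under the hypothesis the partition $\lambda$ contains at least one block of size~$1$ and at least one block of size~$2$. First I would select one such block of each size. Then I would use the split projection obtained by \emph{forgetting strands}, namely forgetting all the blocks of $\lambda$ except for the chosen block of size~$1$ and the chosen block of size~$2$. As recalled before Proposition~\ref{partitioned_B^ab}, these forgetting-blocks maps are particular cases of the (split) Fadell--Neuwirth-type projections of Proposition~\ref{Fadell-Neuwirth} applied to braids on the disc, and they are visibly surjective. This yields a surjection $\B_{\lambda} \twoheadrightarrow \B_{1,2}$.

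Finally, since the LCS of $\B_{1,2}$ does not stop by Lemma~\ref{LCS_B12}, Lemma~\ref{lem:stationary_quotient} (in its contrapositive form, as used for the earlier corollaries) immediately implies that the LCS of $\B_{\lambda}$ does not stop either.

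There is no genuine obstacle in this argument; it is a direct quotient argument of the same shape as Corollary~\ref{LCS_B22mu}. The only point requiring any care is the existence and surjectivity of the block-forgetting projection onto $\B_{1,2}$, but this is already guaranteed by the split projections discussed in the basic theory of partitioned braids, so no new computation is needed.
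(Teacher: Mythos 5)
Your proposal is correct and is essentially identical to the paper's proof: the paper also applies Lemma~\ref{lem:stationary_quotient} to the block-forgetting surjection $\B_{\lambda} \twoheadrightarrow \B_{1,2}$, relying on Lemma~\ref{LCS_B12} for the fact that the LCS of $\B_{1,2}$ does not stop.
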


\begin{proof}
Apply Lemma~\ref{lem:stationary_quotient} to a surjection $\B_{\lambda} \twoheadrightarrow \B_{1,2}$.
\end{proof}

\begin{proof}[Proof of Theorem~\ref{thm:partitioned-braids}]
The first statement consists of Propositions \ref{LCS_stable_partitioned_braids}, \ref{LCS_B1mu} and \ref{LCS_B11mu}. The second one consists of the cases where $\lambda$ has at least three blocks of size $1$ (Lemma \ref{LCS_B111}), exactly one block of size $2$ together with blocks of size at least $3$ (Proposition \ref{LCS_B2mu}), at least two blocks of size $2$ (Corollary \ref{LCS_B22mu}) or at least one block of size $1$ and one block of size $2$ (Corollary \ref{LCS_B12mu}).
\end{proof}

\chapter{Virtual and welded braids}
\label{sec:virtual_welded_braids}

In this chapter, we study the LCS of the \emph{virtual}, \emph{welded} and \emph{extended welded} braid groups, which are generalisations of the classical braid groups. We first review the definitions and properties of these three groups. Then we discuss the notion of support for their elements. Finally, we come to the study of the LCS of each of these groups, whose description is given, respectively, in Propositions~\ref{LCS_of_vBn}, \ref{LCS_of_wBn} and \ref{LCS_of_exwBn}. We also study the LCS for the \emph{partitioned} versions of each of these groups; the corresponding results are summed up as Theorem~\ref{LCS_of_partitioned_v(w)B}. 

\section{Recollections}

Before diving into the study of their LCS, we review the definitions of the groups involved and recall their basic properties.

\begin{notation}\label{notation_commutation}
For clarity, in group presentations, we write $a\ \rightleftarrows\ b$ for the relation saying that $a$ and $b$ commute.
\end{notation}

\subsection{Virtual braids}

Virtual knots were introduced by Kauffman at the end of the 1990s \cite{Kauffman1}. Of course, the definition of virtual braids came with it, and the defining relations for the virtual braid group first appeared in \cite{Kauffman2}.

\subsubsection{Presentations by generators and relations}\label{par_pstation_vB} Virtual braids are best defined in the spirit of knot theory, using diagrams. However, the diagrammatic approach is nicely encoded by a group presentation, which makes for a convenient definition, which is both concise and well-suited to our purposes:

\begin{definition}[{\cite[\Spar 2]{Vershinin}}]
\label{def:presentation_virtual}
The virtual braid group $\vB_n$ is defined by generators $\sigma_{1},\ldots,\sigma_{n-1}, \tau_{1},\ldots,\tau_{n-1}$, and relations:
\begin{equation*}
\begin{cases}
\text{relations of the classical braid group $\B_n$ for the $\sigma_i$},\\
\text{relations of the symmetric group $\Sym_n$ for the $\tau_i$},\\
\text{mixed relations $\mathrm{(R1)}$ and $\mathrm{(R2)}$,}
\end{cases}
\end{equation*}
where the latter are:
\begin{equation}\label{relations virtual braid groups}
\begin{cases}
\mathrm{(R1)}\textrm{ }\sigma_{i}\ \rightleftarrows\ \tau_{k} & \text{if } \lvert i-k \rvert \geq 2;\\
\mathrm{(R2)}\textrm{ }\tau_{i}\tau_{i+1}\sigma_{i}=\sigma_{i+1}\tau_{i}\tau_{i+1} & \text{if } 1 \leq i \leq n-2.
\end{cases}
\end{equation}
\end{definition}

From the presentation, one sees immediately that there is a well-defined projection $\pi \colon \vB_n \twoheadrightarrow \Sym_n$ sending both $\sigma_i$ and $\tau_i$ to the transposition $(i, i+1) \in \Sym_n$. The kernel of this projection is denoted by $\vP_n$ and called the \emph{pure virtual braid group}. Furthermore, this projection is split, via $(i, i+1) \mapsto \tau_i$. Thus, the $\tau_i$ generate a copy of $\Sym_n$ inside $\vB_n$, and we get a semi-direct product decomposition:
\[\vB_n \cong \vP_n \rtimes \Sym_n.\]
Using the Reidemeister-Schreier method, Bardakov gave the following presentation of the finite-index subgroup $\vP_n$ of $\vB_n$:
\begin{proposition}[{\cite{Bardakov2004}}]
\label{vP_presentation}
The pure virtual braid group $\vP_n$ admits the presentation given by generators $\chi_{ij}$, for $1 \leq i \neq j \leq n$, subject to the following relations, for $i,j,k,l$ pairwise distinct:
\begin{equation}\label{relations pure virtual braid groups}
\begin{cases}
\mathrm{(vPR1)}\textrm{ }\chi_{ij}\ \rightleftarrows\ \chi_{kl} ;\\
\mathrm{(vPR2)}\textrm{ }\chi_{ij} \chi_{ik} \chi_{jk} = \chi_{jk} \chi_{ik} \chi_{ij}.
\end{cases}
\end{equation}
\end{proposition}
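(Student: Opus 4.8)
The statement presents the kernel $\vP_n$ of the split surjection $\pi \colon \vB_n \twoheadrightarrow \Sym_n$, a subgroup of index $n! < \infty$, so the natural tool is the Reidemeister--Schreier method applied to the presentation of $\vB_n$ given in Definition~\ref{def:presentation_virtual}. The plan is to choose a Schreier transversal adapted to the splitting, read off the induced generators and relations, and then show that the relations reduce to (vPR1) and (vPR2).

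First I would fix the transversal. The splitting $(i,i+1) \mapsto \tau_i$ embeds $\Sym_n$ into $\vB_n$ as the subgroup $\langle \tau_1, \ldots, \tau_{n-1} \rangle$, and I would take as transversal $T = \{ \widehat{w} \mid w \in \Sym_n \}$, where $\widehat{w}$ is a reduced word in the $\tau_i$ representing $w$, the words being chosen (e.g.\ via ShortLex normal forms) so that $T$ is prefix-closed, hence Schreier; every $\widehat{w}$ then evaluates into the splitting copy of $\Sym_n$. I would define the elements $\chi_{ij} \in \vP_n$ explicitly, as conjugates of a single standard crossing by transversal permutations: $\chi_{i,i+1}$ and $\chi_{i+1,i}$ are built from $\sigma_i$ and $\tau_i$, and the general $\chi_{ij}$ is obtained by conjugating one of these by a $\tau$-word carrying $\{i,i+1\}$ to $\{i,j\}$. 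The crucial structural feature to record at this stage is that conjugation by $\tau_k$ permutes the whole family $\{\chi_{ij}\}$ exactly according to the transposition $(k,k+1)$ acting on the indices; this $\Sym_n$-equivariance will later cut down the number of relations one must check.

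For the generators, the semidirect-product structure gives an immediate simplification. Since $T$ lies in the subgroup $\langle \tau_i \rangle \cong \Sym_n$, which is closed under right multiplication by each $\tau_i$, the element $\widehat{w}\,\tau_i$ and its coset representative $\overline{\widehat{w}\tau_i}$ coincide in $\vB_n$; hence every Reidemeister--Schreier generator of the form $\widehat{w}\,\tau_i\,\overline{\widehat{w}\tau_i}^{\,-1}$ is trivial. Thus $\vP_n$ is generated by the $\sigma$-type generators $\widehat{w}\,\sigma_i\,\overline{\widehat{w}\sigma_i}^{\,-1}$ alone, and using the mixed relations (R1) and (R2) --- in particular $\tau_i\tau_{i+1}\sigma_i = \sigma_{i+1}\tau_i\tau_{i+1}$ --- together with the equivariance above, each of these rewrites as one of the $\chi_{ij}$. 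This establishes that the $\chi_{ij}$ generate $\vP_n$.

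It then remains to identify the relations. I would first check directly, from the defining relations of $\vB_n$, that (vPR1) and (vPR2) hold among the $\chi_{ij}$; this is a short computation with the braid relation and the mixed relations. The substance of the argument is the converse: by the Reidemeister rewriting process, a complete set of relators for $\vP_n$ is obtained by rewriting each defining relator of $\vB_n$ --- the classical braid relations among the $\sigma_i$, the $\Sym_n$-relations among the $\tau_i$, and the mixed relations (R1), (R2) --- over every element of $T$, and one must verify that each resulting relation among the $\chi_{ij}$ is a consequence of (vPR1) and (vPR2). The main obstacle is precisely this final, purely combinatorial, verification, which in its raw form involves $n!$ cosets and several relator families. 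The way I would tame it is to invoke the $\Sym_n$-equivariance established above: the conjugation action permutes both the generators and the rewritten relators by permuting indices, so it suffices to rewrite each relator family over a single, well-chosen coset representative and then propagate by symmetry. After this reduction one checks that the braid relations among the $\sigma_i$ collapse to instances of (vPR2), while the $\Sym_n$-relations and the mixed relations (R1), (R2) produce only instances of (vPR1) or relations already implied by them, leaving no further relators.
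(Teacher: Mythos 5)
Your outline is sound and follows precisely the route the paper itself indicates: the memoir gives no proof of this proposition but cites it to Bardakov, explicitly noting that it is obtained ``using the Reidemeister--Schreier method'' applied to the finite-index subgroup $\vP_n = \ker(\pi)$ with the transversal supplied by the splitting $\Sym_n \hookrightarrow \vB_n$. Your observations that the $\tau$-type Schreier generators vanish, that the $\sigma$-type ones are the $\chi_{ij}$, and that $\Sym_n$-equivariance reduces the relator check to one coset per relator family are all correct; the only caveat is that the final rewriting of the braid, symmetric and mixed relators into consequences of (vPR1) and (vPR2) — which is where essentially all the work of Bardakov's argument lies — is asserted rather than carried out.
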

Here, the generator $\chi_{12}$ is $\tau_1 \sigma_1$, and $\chi_{ij} = \sigma \chi_{12} \sigma^{-1}$, where $\sigma \in \Sym_n \subset \vB_n$ is any permutation sending $i$ to $1$ and $j$ to $n$. In particular, it is easy to deduce from the relations that $\chi_{i, i+1} = \tau_i \sigma_i$.
\begin{corollary}\label{vPn^ab}
$\vP_n^{\ab}$ is free on the classes of the $\chi_{ij}$ for $1 \leq i \neq  j\leq n$.
\end{corollary}

\begin{corollary}\label{vBn_in_vBn+1}
The morphism $\iota \colon \vB_n \rightarrow \vB_{n+1}$ sending $\sigma_i$ to $\sigma_i$ and $\tau_i$ to $\tau_i$ is injective.
\end{corollary}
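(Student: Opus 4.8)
The plan is to exploit the semidirect product decomposition $\vB_n \cong \vP_n \rtimes \Sym_n$ and reduce the injectivity of $\iota$ to injectivity of its restriction to the pure virtual braid group, which can then be settled by a retraction built from the presentation in Proposition~\ref{vP_presentation}. First I would observe that $\pi(\iota(\sigma_i)) = \pi(\iota(\tau_i)) = (i,i+1)$, so that $\pi \circ \iota$ agrees with the composite $\vB_n \overset{\pi}{\twoheadrightarrow} \Sym_n \hookrightarrow \Sym_{n+1}$; in particular $\iota$ carries $\vP_n = \ker(\pi)$ into $\vP_{n+1}$. Writing $\iota_P := \iota|_{\vP_n}$, this yields a commutative ladder of short exact sequences
\[\begin{tikzcd}
\vP_n \ar[r, hook] \ar[d, "\iota_P"] & \vB_n \ar[r, two heads, "\pi"] \ar[d, "\iota"] & \Sym_n \ar[d, hook] \\
\vP_{n+1} \ar[r, hook] & \vB_{n+1} \ar[r, two heads, "\pi"] & \Sym_{n+1}
\end{tikzcd}\]
in which the right-hand vertical map is the standard inclusion of symmetric groups, hence injective. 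By the Five Lemma, $\iota$ is injective as soon as $\iota_P$ is.

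Next I would pin down $\iota_P$ on the standard generators $\chi_{ij}$. From the relation $\chi_{i,i+1} = \tau_i \sigma_i$ recalled after Proposition~\ref{vP_presentation}, one gets immediately $\iota_P(\chi_{i,i+1}) = \tau_i \sigma_i = \chi_{i,i+1}$ in $\vP_{n+1}$. Combining this with the fact that $\iota$ fixes the subgroup $\Sym_n \subset \vB_n$ elementwise, together with the index-permutation formula $\rho \chi_{ij} \rho^{-1} = \chi_{\rho(i)\rho(j)}$ for $\rho \in \Sym_n$ (which is immediate from the definition $\chi_{ij} = \sigma \chi_{12} \sigma^{-1}$), I would conclude that $\iota_P(\chi_{ij}) = \chi_{ij}$ for \emph{all} $i \neq j$ with $i,j \leq n$: any such $\chi_{ij}$ is a $\Sym_n$-conjugate of some $\chi_{k,k+1}$, and both $\iota_P$ and the ambient conjugation action are compatible under $\iota$.

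Finally, I would construct the ``forget the last strand'' retraction $p \colon \vP_{n+1} \to \vP_n$ defined on generators by $p(\chi_{ij}) = \chi_{ij}$ when $i,j \leq n$ and $p(\chi_{i,n+1}) = p(\chi_{n+1,j}) = 1$. That $p$ is well defined is a routine check against the presentation of Proposition~\ref{vP_presentation}: relations $\mathrm{(vPR1)}$ and $\mathrm{(vPR2)}$ involving no index equal to $n+1$ map to themselves, while any relation involving the index $n+1$ has at least one (for $\mathrm{(vPR1)}$) or two (for $\mathrm{(vPR2)}$) of its letters sent to $1$, so that it collapses to a trivially valid identity. Since $p \circ \iota_P$ fixes every generator $\chi_{ij}$ of $\vP_n$, we get $p \circ \iota_P = \mathrm{id}_{\vP_n}$, so $\iota_P$ is a split monomorphism and hence injective; by the first step, $\iota$ is injective as well. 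The only genuinely delicate point is the second step, namely reconciling the (a priori differently normalised) standard generators of $\vP_n$ and $\vP_{n+1}$ under $\iota$; once the index-permutation action is invoked this becomes bookkeeping, and the verification that $p$ respects the relations is entirely mechanical.
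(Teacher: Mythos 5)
Your argument is correct and is essentially the paper's own proof: reduce to the restriction $\iota_P \colon \vP_n \to \vP_{n+1}$ via the semidirect product decomposition (equivalently, the ladder of short exact sequences over $\Sym_n \hookrightarrow \Sym_{n+1}$), then split $\iota_P$ by the ``forget the last strand'' retraction, whose well-definedness is checked against the presentation of Proposition~\ref{vP_presentation}. You merely spell out the details (the Five Lemma step, the identification $\iota_P(\chi_{ij}) = \chi_{ij}$, and the verification of the relations) that the paper leaves implicit.
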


\begin{proof}
The morphism $\iota$ is clearly well-defined. To show that it is injective, we notice that it restricts to the morphism from $\vP_n$ to $\vP_{n+1}$ sending $\chi_{ij}$ to $\chi_{ij}$. Proposition~\ref{vP_presentation} ensures that this morphism has a well-defined retraction, which kills the $\chi_{i, n+1}$ and the $\chi_{n+1, i}$ and sends $\chi_{ij}$ to $\chi_{ij}$ if $i, j \leq n$. From this, one infers easily that $\iota$ itself is an injection from $\vB_n = \vP_n \rtimes \Sym_n$ into  $\vB_{n+1} = \vP_{n+1} \rtimes \Sym_{n+1}$.
\end{proof}

\begin{remark}\label{vBn_in_vBn+1_diag}
With the diagrammatic interpretation introduced just below, $\iota$ adds a strand away from the other ones, whilst the retraction appearing in the proof corresponds to forgetting the last strand of pure (virtual) braid diagrams.
\end{remark}

\subsubsection{A diagrammatic approach}\label{par_diag_vB} Let us now review how the above group presentation relates to the usual virtual braid diagrams.

A \emph{virtual braid diagram} is made of $n$ curves in the plane, which are monotonous with respect to one direction (which we choose to be the vertical one), with fixed endpoints and only a finite number of singularities, which are transverse double points, plus some crossing information at the double points. Namely, the different types of crossings are depicted in Figure \ref{fig:diagrams-crossings}: at the \emph{classical crossings}, one gives over/under information, and no crossing information is associated with \emph{virtual crossings}.

\begin{definition}
A \emph{diagrammatic virtual braid} is a virtual braid diagram up to planar isotopies and:
\begin{itemize}
\item the classical and virtual Reidemeister-II and Reidemeister-III moves from Figures~\ref{R-II_moves} and~\ref{R-III_moves};
\item the mixed Reidemeister-III move $\mathrm{(V)}$ from Figure~\ref{Virtual_move}.
\end{itemize}    
\end{definition}

The product of two diagrammatic virtual braids is induced by stacking diagrams on top of one another. This makes the set of diagrammatic virtual braids into a monoid. This monoid is easily seen to be generated by the elements $\sigma_i^{\pm 1}$ and $\tau_i$ depicted in Figure~\ref{Gen_of_vBn}: every virtual braid diagram  is isotopic to a stacking of diagrams with only one crossing. Then the Reidemeister-II moves (Figure~\ref{R-II_moves}) say that $\sigma_i^{-1}$ is indeed inverse to $\sigma_i$ and that $\tau_i^2 = 1$, so that the monoid is in fact a group. Finally, the relations of the presentation of $\vB_n$ correspond exactly to the Reidemeister-III moves (Figures~\ref{R-III_moves} and \ref{Virtual_move}) and to planar isotopy of diagrams. It is in fact not very hard to turn this argument into a real proof that the group of diagrammatic virtual braids identifies with $\vB_n$. 

\begin{figure}[tb]
	\centering
	\includegraphics[scale=0.17]{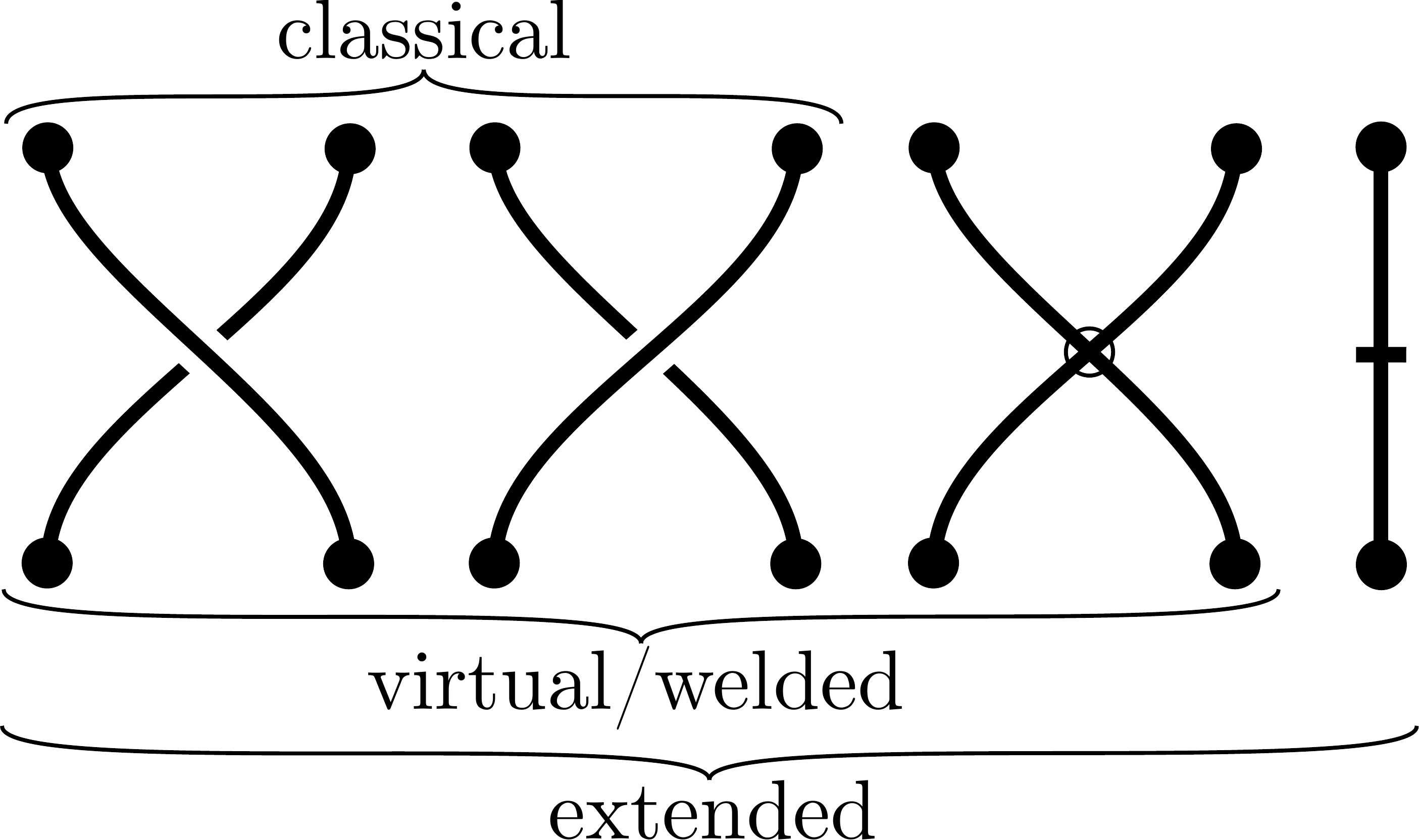}
	\caption{Crossings and decorations in braid diagrams.}
	\label{fig:diagrams-crossings}
\end{figure}

\begin{convention}
We fix the convention that reading a braid word from left to right corresponds to reading a braid diagram from top to bottom. 
\end{convention}

\begin{figure}[ht]
\centering
\begin{subfigure}[b]{.5\textwidth}
  \centering
  \includegraphics[width=.7\linewidth]{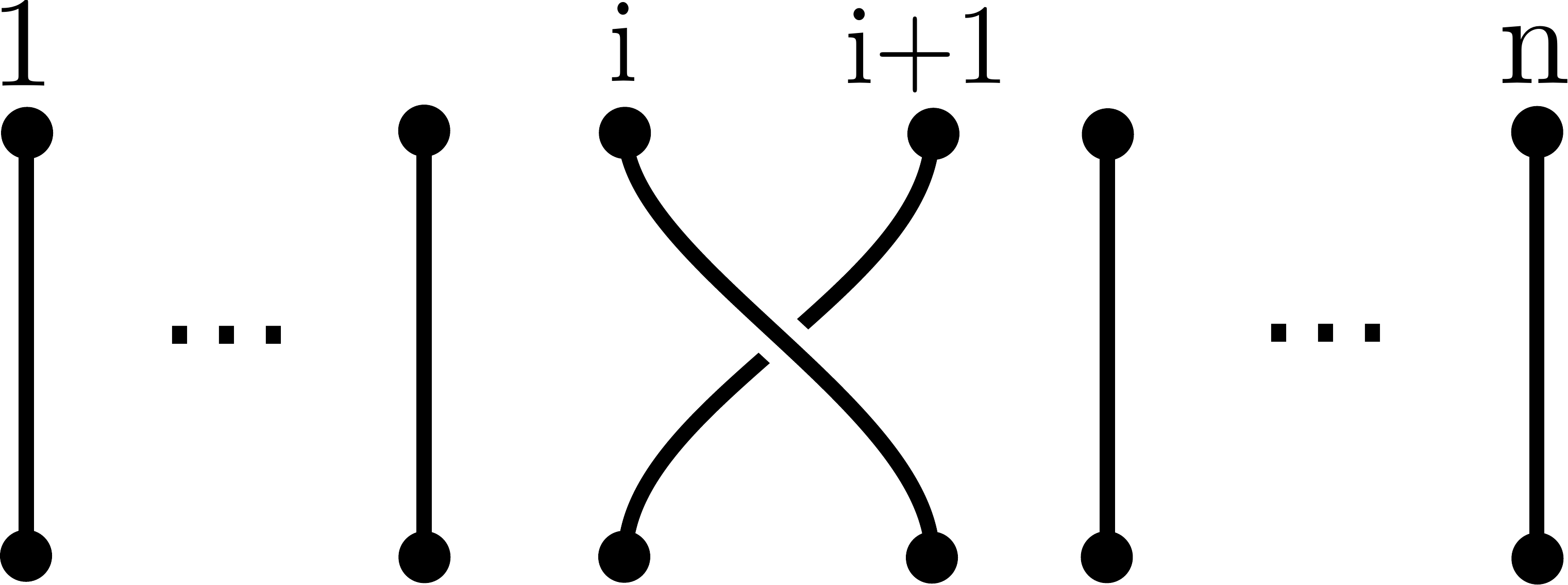}
  \caption{The generator $\sigma_i$}
\end{subfigure}%
\begin{subfigure}[b]{.5\textwidth}
  \centering
  \includegraphics[width=.7\linewidth]{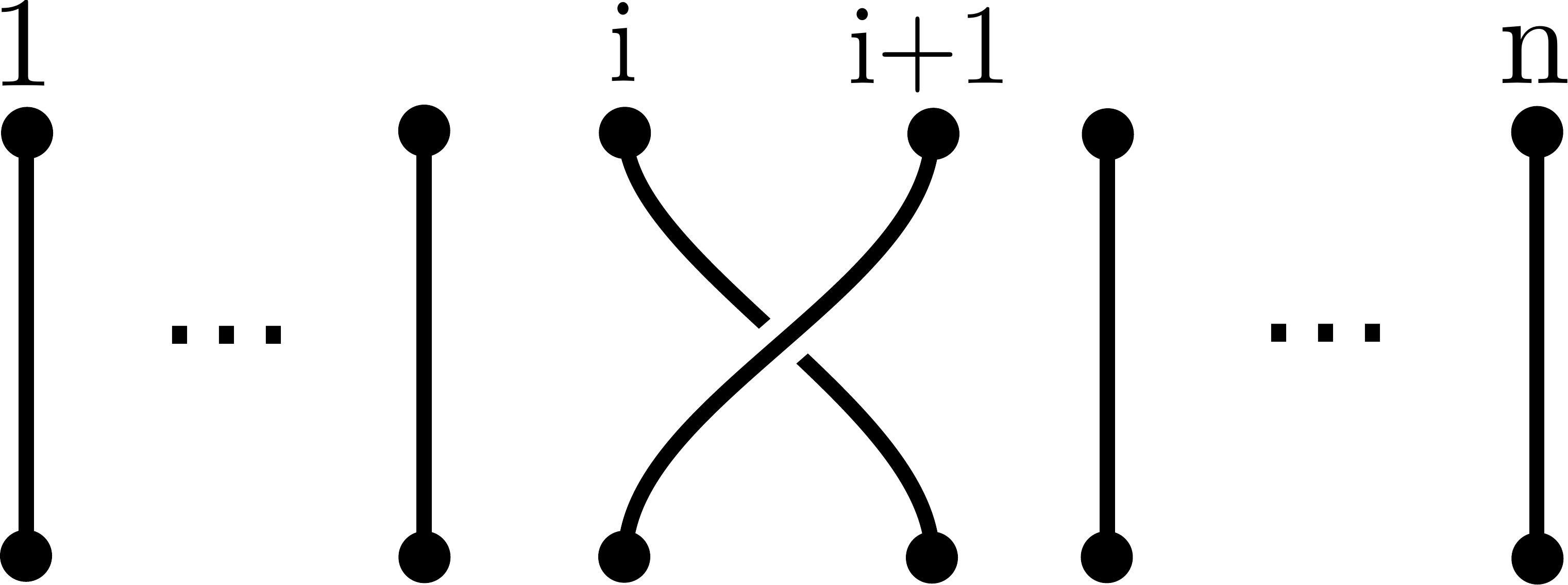}
  \caption{The generator $\sigma_i^{-1}$}
\end{subfigure}
\\[2ex]
\begin{subfigure}[b]{1\textwidth}
  \centering
  \includegraphics[width=.35\linewidth]{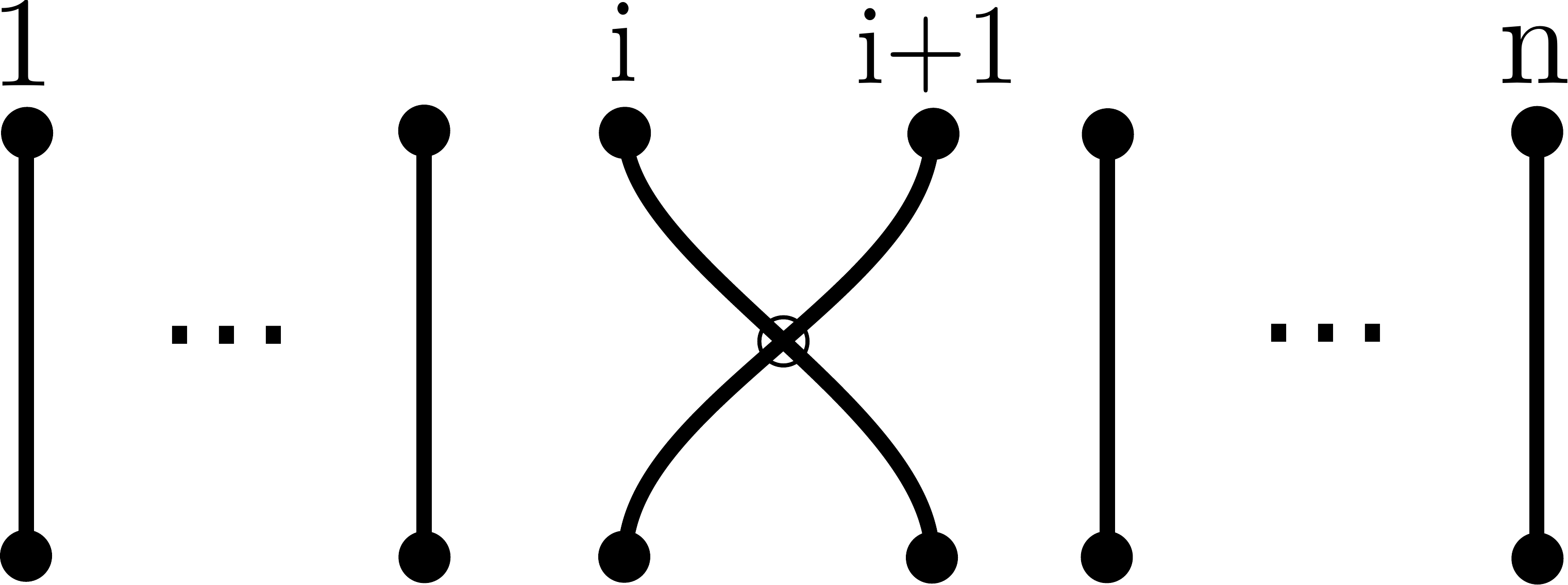}
  \caption{The generator $\tau_i$}
\end{subfigure}
\caption{Diagrams for generators of $\vB_n$.}
\label{Gen_of_vBn}
\end{figure}

\begin{figure}[ht]
\centering
\renewcommand{\thesubfigure}{$\mathrm{R_{II}}$}
\begin{subfigure}[b]{.5\textwidth}
  \centering
  \includegraphics[scale=.14]{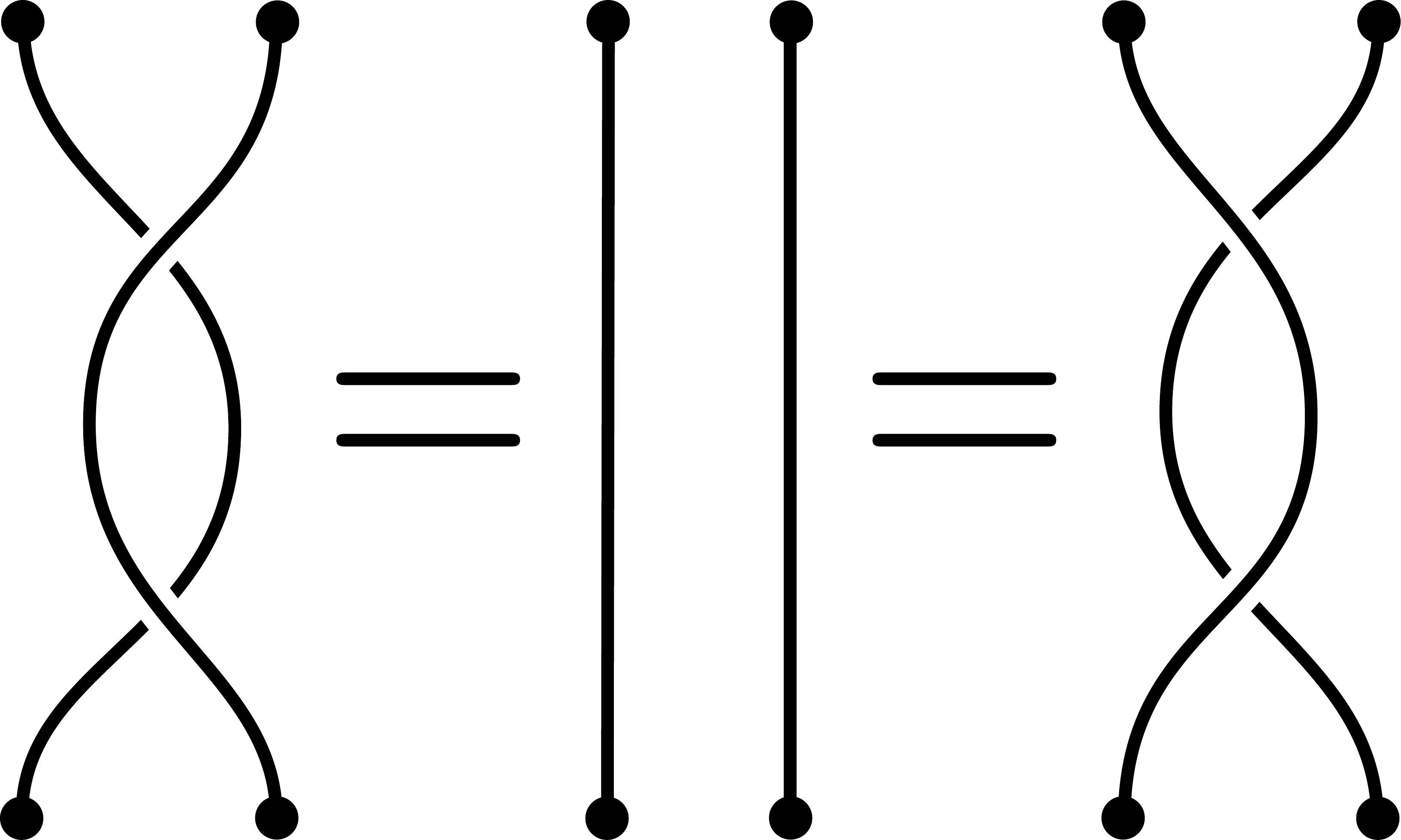}
  \caption{$\sigma_i^{-1} \sigma_i = 1 = \sigma_i \sigma_i^{-1}$}
\end{subfigure}%
\renewcommand{\thesubfigure}{$\mathrm{vR_{II}}$}%
\begin{subfigure}[b]{.5\textwidth}
  \centering
  \includegraphics[scale=.14]{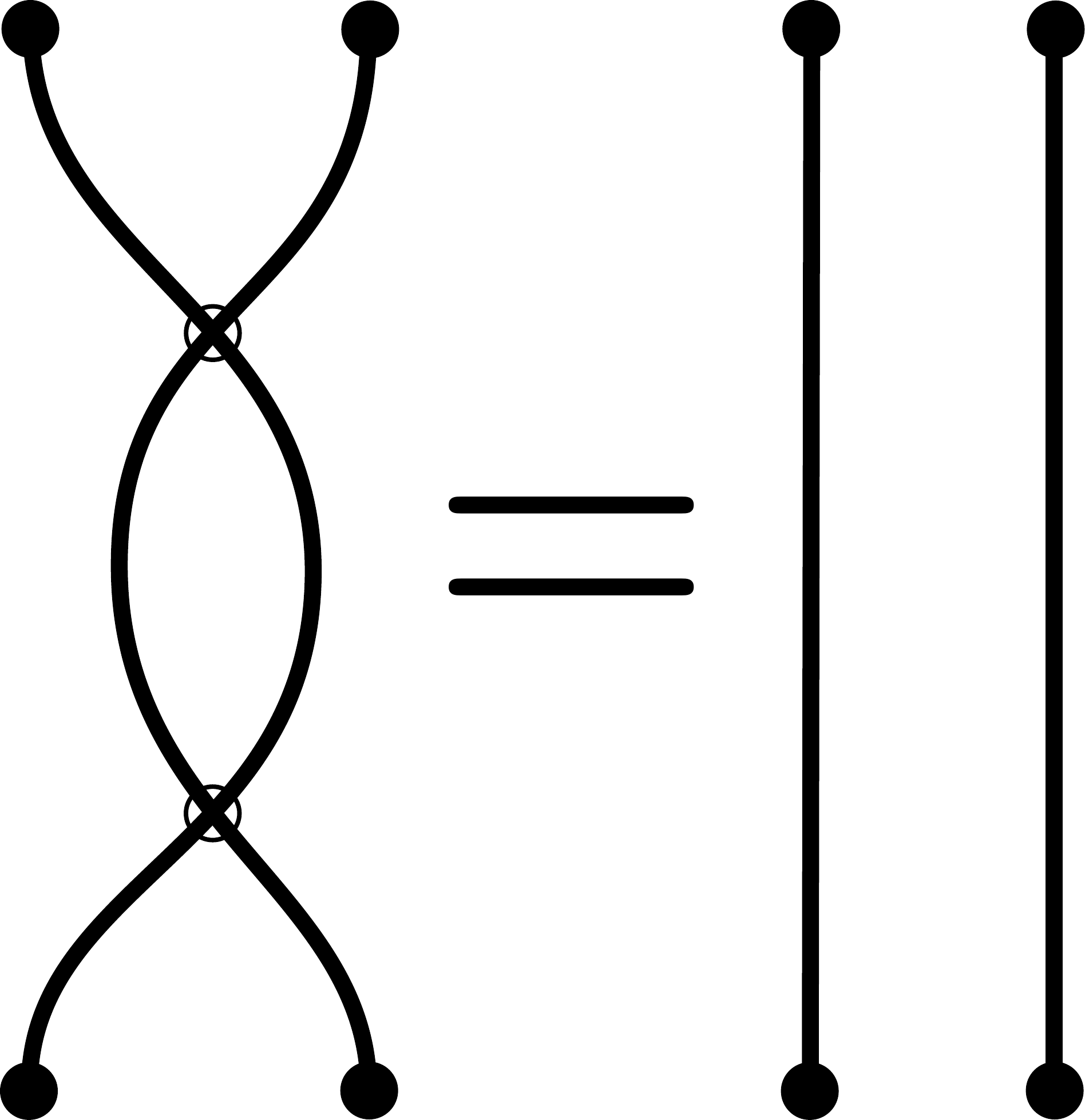}
  \caption{$\tau_i^2 = 1$}
\end{subfigure}
\caption{Reidemeister-II moves.}
\label{R-II_moves}
\end{figure}

\begin{figure}[ht]
\centering
\renewcommand{\thesubfigure}{$\mathrm{R_{III}}$}
\begin{subfigure}[b]{.5\textwidth}
  \centering
  \includegraphics[scale=.14]{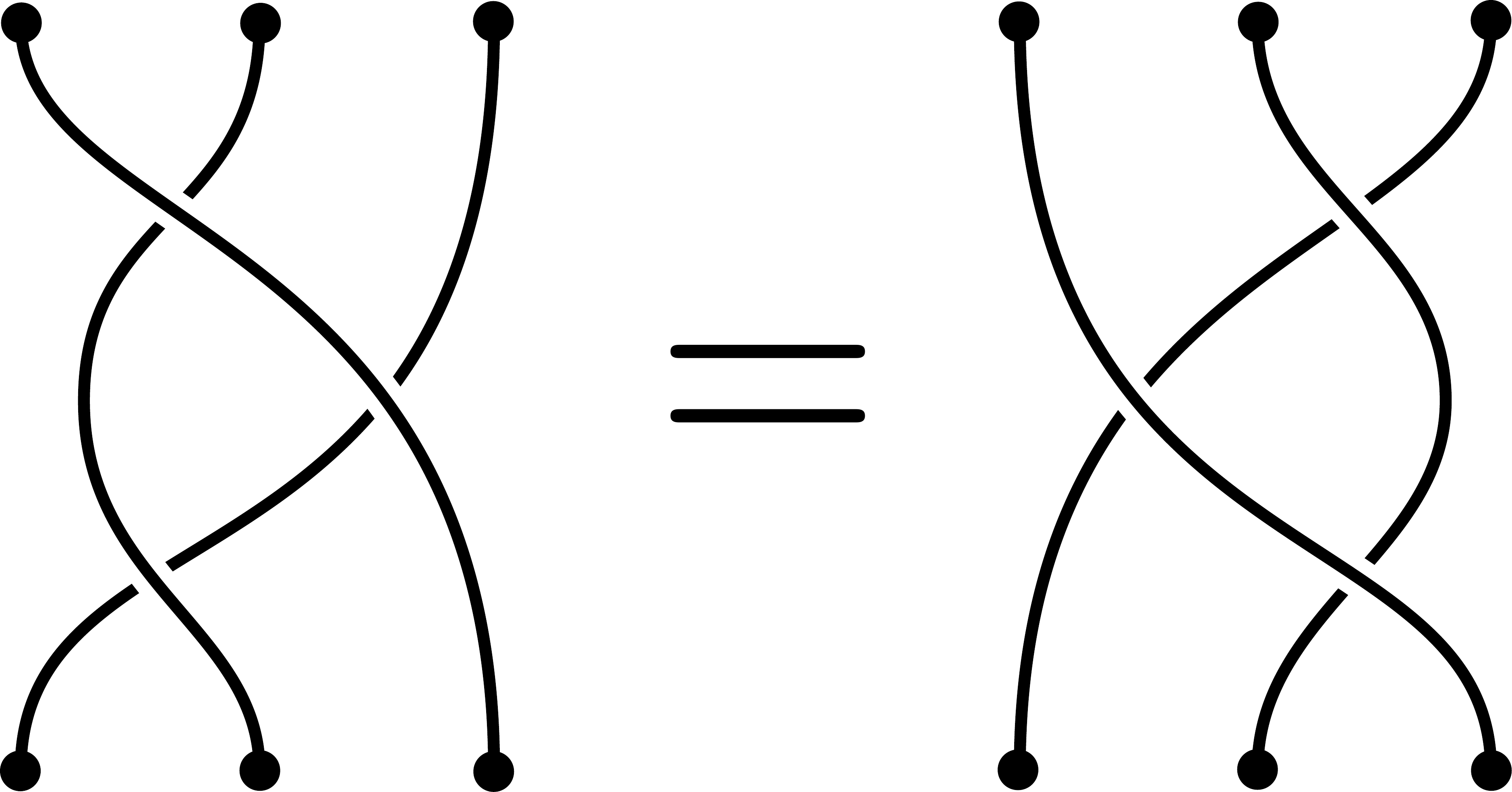}
  \caption{$\sigma_i \sigma_{i+1} \sigma_i = \sigma_{i+1} \sigma_i \sigma_{i+1}$}
\end{subfigure}%
\renewcommand{\thesubfigure}{$\mathrm{vR_{III}}$}%
\begin{subfigure}[b]{.5\textwidth}
  \centering
  \includegraphics[scale=.14]{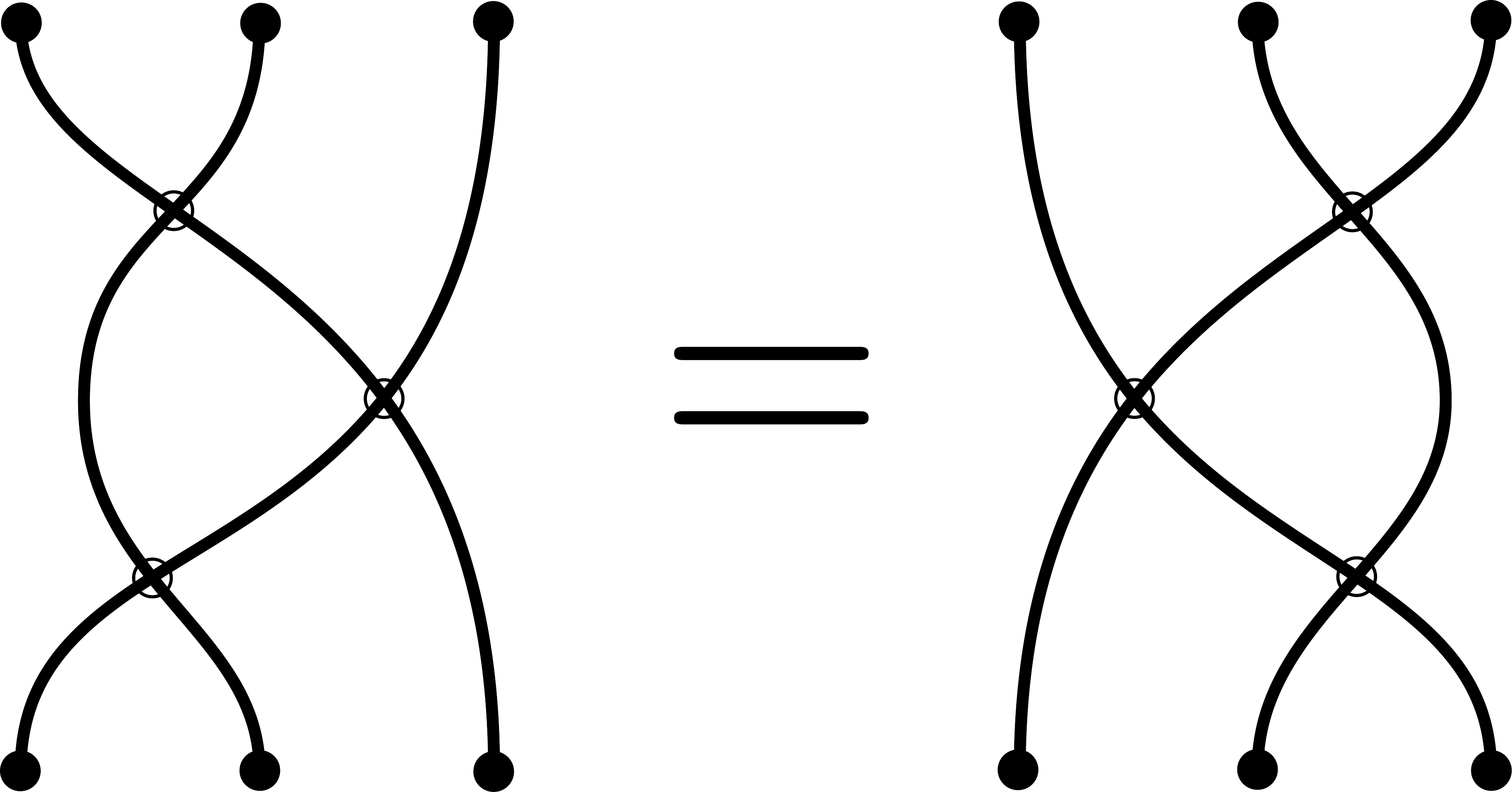}
  \caption{$\tau_i \tau_{i+1} \tau_i = \tau_{i+1} \tau_i \tau_{i+1}$}
\end{subfigure}
\caption{Reidemeister-III moves.}
\label{R-III_moves}
\end{figure}

\begin{figure}[ht]
\centering
\renewcommand{\thesubfigure}{$\mathrm{V}$}
\begin{subfigure}[b]{.5\textwidth}
  \centering
  \includegraphics[scale=.14]{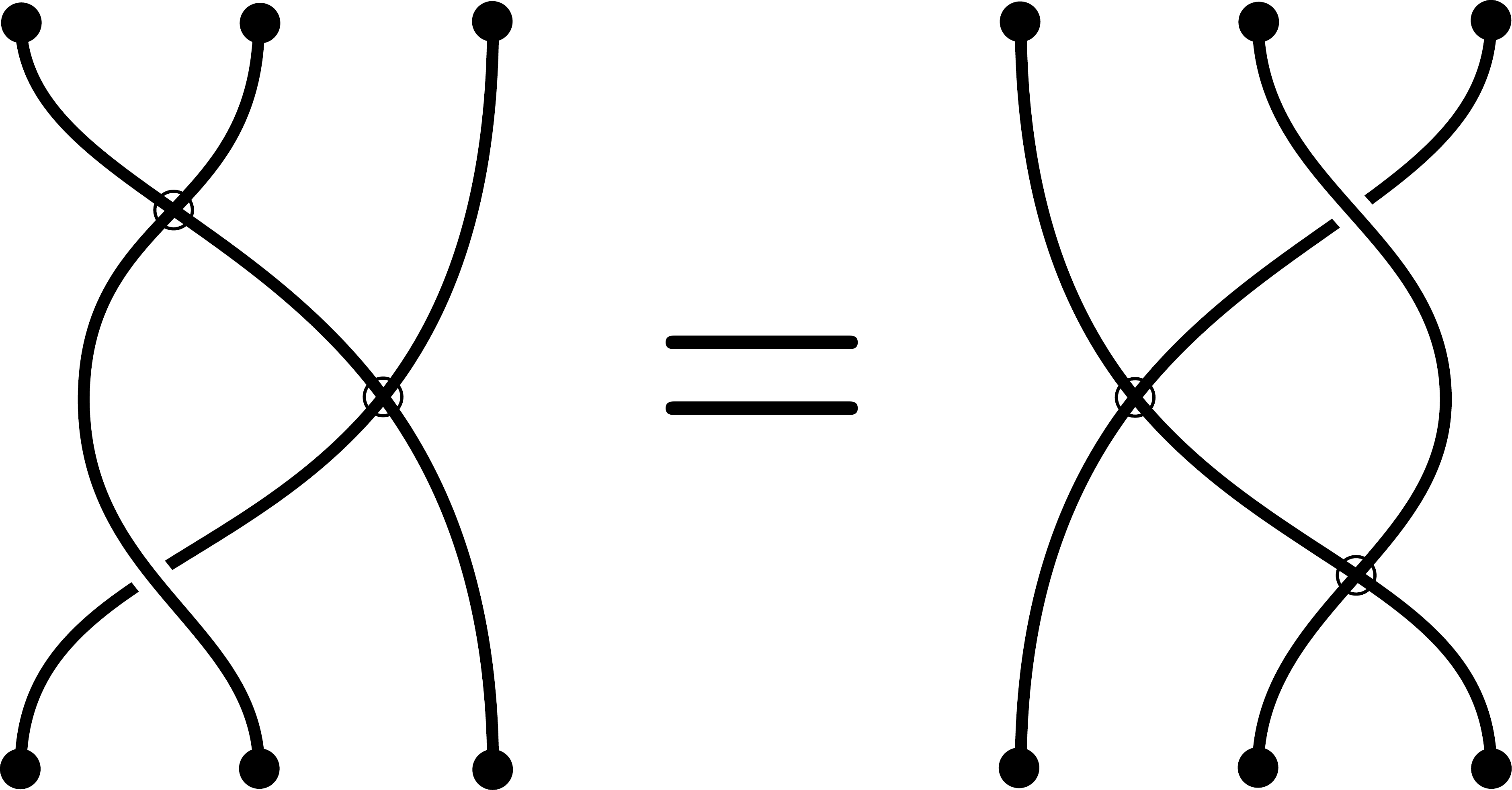}
  \caption{$\tau_i \tau_{i+1} \sigma_i = \sigma_{i+1} \tau_i \tau_{i+1}$}
\end{subfigure}
\caption{The mixed Reidemeister-III move.}
\label{Virtual_move}
\end{figure}

\begin{remark}\label{symmetries_of_v_moves}
There are variants of the above Reidemeister moves, which are consequences of the ones that we wrote. For example, the variant of $\mathrm{(V)}$ where the classical crossing is reversed follows from $\mathrm{(V)}$ and two instances of $\mathrm{(R_{II})}$. Note that such deductions can also be seen as implications between relations in the group $\vB_n$. In fact, the set of moves that do not change the virtual braid is stable under some symmetries that correspond to transformations of the group. Namely, it is stable under top-bottom mirror image (which corresponds to passing to the inverses in $\vB_n$), under left-right mirror image (which corresponds to the automorphism of $\vB_n$ defined by $\sigma_i \mapsto \sigma_{n-i}$ and $\tau_i \mapsto \tau_{n-i}$) and under flipping classical crossings (which corresponds to the automorphism sending $\sigma_i$ to $\sigma_i^{-1}$ and fixing the $\tau_i$).
\end{remark}

\begin{remark}\label{rmk:detour-move}
The move $\mathrm{(V)}$ from Figure~\ref{Virtual_move} and its variants alluded to in the previous remark can be encompassed in a general~\emph{detour move} that says that a piece of strand with only virtual crossings can be changed into any other such piece of strand with the same endpoints.
\end{remark}

\begin{figure}[ht]
\centering
\begin{subfigure}[b]{.5\textwidth}
  \centering
  \includegraphics[scale=.13, left]{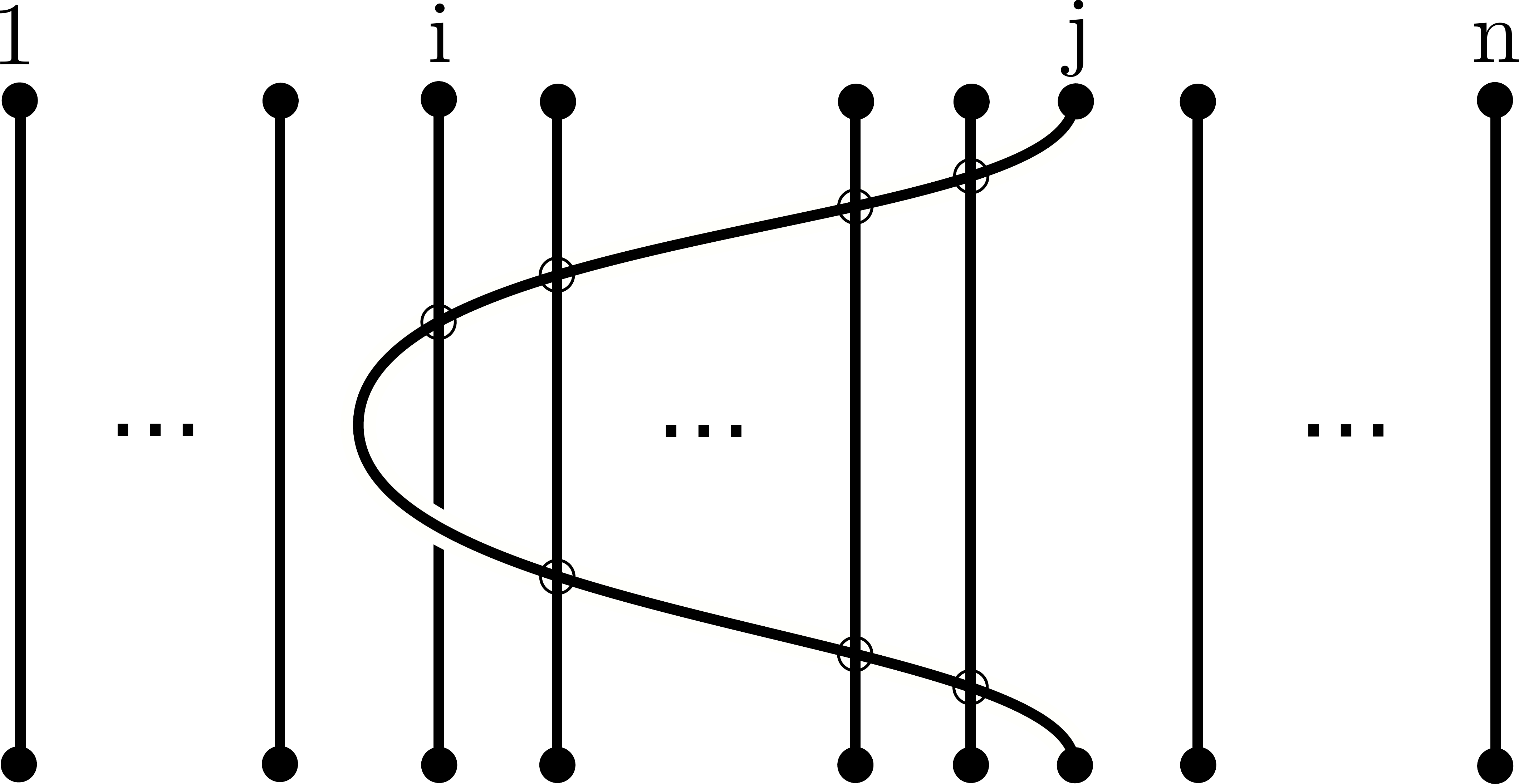}
  \caption{$\chi_{ij}$ if $i < j$}
\end{subfigure}%
\begin{subfigure}[b]{.5\textwidth}
  \centering
  \includegraphics[scale=.13, right]{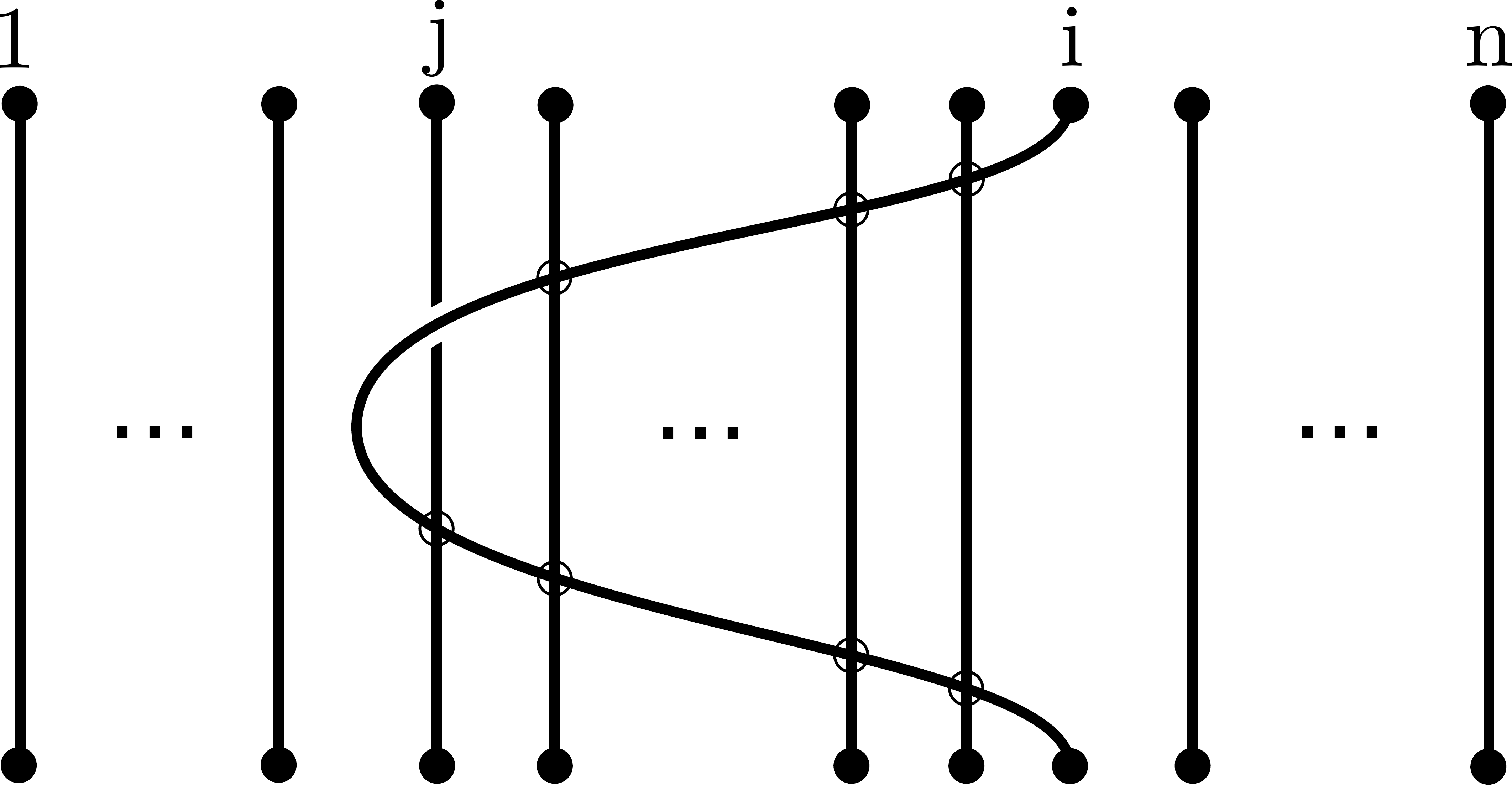}
  \caption{$\chi_{ij}$ if $i > j$}
\end{subfigure}
\caption{The standard generators $\chi_{ij}$ of pure virtual and welded braid groups. Notice that there are many other possible diagrams for them. For example, when $i<j$, any diagram with only one classical crossing where the $j$-th strand passes over the $i$-th strand from the left is a diagram of $\chi_{ij}$; see Remark \ref{rmk:detour-move}.}
\label{fig:pure-virtual-welded-braid-generators}
\end{figure}

\subsection{Welded braids} Welded braids were introduced at roughly the same time as virtual braids, by Fenn, Rim\'{a}nyi and Rourke in \cite{fenn_rimanyi_rourke}, where it was shown that they could encode basis-conjugating automorphisms of the free group. However, they had appeared before under different guises. Their first appearance can be traced back to~\cite{Goldsmith},  where they appeared as motions of unknotted circles in space and the link with basis-conjugating automorphisms of the free group was made. As basis-conjugating automorphisms of the free group, they were also studied by McCool in~\cite{McCool}. More recent important contributions to their study include~\cite{BaezWiseCrans}, \cite{BrendleHatcher2013Configurationspacesrings} and the survey~\cite{Damiani} by Damiani, which gives a nice overview of the different interpretations of this group. 

\subsubsection{Presentations by generators and relations}\label{par_pstation_wB} We can give a definition of the welded braid group similar to Definition~\ref{def:presentation_virtual} above. In fact, we can see it as a quotient of the virtual braid group by one additional relation:
\begin{definition}[{\cite{fenn_rimanyi_rourke}}]\label{def:presentation_welded}
The welded braid group $\wB_n$ is the quotient of the virtual braid group $\vB_n$ by the additional mixed relation:
\begin{equation}\label{relations welded braid groups}
\mathrm{(R3)}\textrm{ }\sigma_{i}\sigma_{i+1}\tau_{i}=\tau_{i+1}\sigma_{i}\sigma_{i+1} \ \ \text{if } 1 \leq i \leq n-2.
\end{equation}
\end{definition}

The projection $\pi \colon \vB_n \twoheadrightarrow \Sym_n$ from \Spar\ref{par_pstation_vB} factorises through the quotient by this relation. In other words, the assignment $\sigma_i, \tau_i \mapsto (i, i+1) \in \Sym_n$ also defines a projection from $\wB_n$ onto $\Sym_n$. The kernel of this projection is denoted by $\wP_n$ and called the \emph{pure welded braid group}. As in the case of virtual braids, $(i, i+1) \mapsto \tau_i$ defines a section of this projection, so that we get a semi-direct product decomposition:
\[\wB_n \cong \wP_n \rtimes \Sym_n.\]
By a slight adaptation of the Reidemeister-Schreier method from~\cite{Bardakov2004} (or an argument very similar to the proof of~\cite[Th.~2.7]{BBD}), one gets a presentation of $\wP_n$. In fact, the additional relation $\mathrm{(R3)}$ adds one relation to Bardakov's presentation of $\vP_n$ (Proposition~\ref{vP_presentation}), saying that $\chi_{ik}$ and $\chi_{jk}$ commute with each other. Modulo this relation, one can re-write the relation $\mathrm{(vPR2)}$ as a commutation relation $\mathrm{(wPR3)}$, to get:
\begin{proposition}
\label{McCool_presentation}
The pure welded braid group $\wP_n$ admits the presentation given by generators $\chi_{ij}$, for $1 \leq i \neq j \leq n$, subject to the following relations, for $i,j,k,l$ pairwise distinct:
\begin{equation}\label{relations pure welded braid groups}
\begin{cases}
\mathrm{(wPR1)}\textrm{ }\chi_{ij}\ \rightleftarrows\ \chi_{kl} ;\\
\mathrm{(wPR2)}\textrm{ }\chi_{ik}\ \rightleftarrows\ \chi_{jk} ;\\
\mathrm{(wPR3)}\textrm{ }\chi_{ij} \ \rightleftarrows\  \chi_{ik}\chi_{jk}.
\end{cases}
\end{equation}
\end{proposition}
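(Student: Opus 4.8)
The plan is to leverage the presentation of $\vP_n$ from Proposition~\ref{vP_presentation} together with the fact that $\wB_n$ is defined as the quotient $\vB_n/\langle\langle\mathrm{(R3)}\rangle\rangle$. First I would observe that the relator $r_i := \sigma_i\sigma_{i+1}\tau_i\sigma_{i+1}^{-1}\sigma_i^{-1}\tau_{i+1}^{-1}$ associated with $\mathrm{(R3)}$ is already \emph{pure}: applying $\pi$ and using that adjacent transpositions in $\Sym_n$ satisfy the braid relation $s_is_{i+1}s_i = s_{i+1}s_is_{i+1}$, one checks $\pi(r_i)=1$, so $r_i \in \vP_n$. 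Let $N$ be the normal closure of $\{r_i\}$ in $\vB_n$. Writing $\vB_n = \vP_n\rtimes\Sym_n$ and decomposing any $g = p\tau$ with $p\in\vP_n$, $\tau\in\Sym_n$, one sees $g r_i g^{-1} = p(\tau r_i\tau^{-1})p^{-1}$ with $\tau r_i\tau^{-1}\in\vP_n$; hence $N\subseteq\vP_n$ and $N$ is in fact the normal closure inside $\vP_n$ of the $\Sym_n$-conjugates of the $r_i$. Since $N$ is normal in $\vB_n$, we get $\wB_n = (\vP_n/N)\rtimes\Sym_n$, so that $\wP_n\cong\vP_n/N$. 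A presentation of $\wP_n$ is therefore obtained by adjoining to the presentation of $\vP_n$ the relators obtained by rewriting $r_i$ and its $\Sym_n$-conjugates in terms of the Schreier generators $\chi_{ij}$.

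The heart of the argument is then to carry out this rewriting. Using $\sigma_i = \tau_i\chi_{i,i+1}$ (equivalently $\chi_{i,i+1}=\tau_i\sigma_i$) I would substitute into $r_i$, push all of the $\tau$'s to the right via the conjugation rule $\tau\,\chi_{ij}\,\tau^{-1} = \chi_{\tau(i)\tau(j)}$, and simplify the resulting word in the $\chi$'s using only relations $\mathrm{(vPR1)}$ and $\mathrm{(vPR2)}$ already available in $\vP_n$. The claim to establish is that, after this reduction, $r_i$ becomes a single commutator of the form $[\chi_{a,c},\chi_{b,c}]$ among the three strands involved, i.e.\ a commutation relation between two generators sharing their \emph{second} index. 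Conjugating by all of $\Sym_n$ (which permutes indices freely) then produces exactly the family $\mathrm{(wPR2)}$: $\chi_{ik}\rightleftarrows\chi_{jk}$ for all pairwise distinct $i,j,k$. This bookkeeping is the main obstacle: one must track the $\tau$-conjugations carefully and verify that no relation beyond $\mathrm{(wPR2)}$ is created and that the whole $\Sym_n$-orbit of $r_i$ collapses to precisely this family (and not to some proper or larger subfamily). Alternatively, one can sidestep the substitution and run the Reidemeister–Schreier procedure directly on the presentation of $\wB_n$ with the transversal $\Sym_n$, exactly as in \cite{Bardakov2004}; the relations coming from the classical, symmetric and $\mathrm{(R1)}$--$\mathrm{(R2)}$ relators reproduce $\mathrm{(vPR1)}$ and $\mathrm{(vPR2)}$, while the new relator $\mathrm{(R3)}$ contributes $\mathrm{(wPR2)}$.

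Once $\mathrm{(wPR2)}$ is identified as the single added relation, the last step is a short algebraic manipulation turning $\mathrm{(vPR2)}$ into $\mathrm{(wPR3)}$ modulo $\mathrm{(wPR2)}$. Starting from
\[
\chi_{ij}\chi_{ik}\chi_{jk} = \chi_{jk}\chi_{ik}\chi_{ij},
\]
I would use $\mathrm{(wPR2)}$ to rewrite the right-hand side as $\chi_{jk}\chi_{ik}\chi_{ij} = \chi_{ik}\chi_{jk}\chi_{ij}$, whence
\[
\chi_{ij}(\chi_{ik}\chi_{jk}) = (\chi_{ik}\chi_{jk})\chi_{ij},
\]
which is precisely $\mathrm{(wPR3)}$: $\chi_{ij}\rightleftarrows\chi_{ik}\chi_{jk}$. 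Conversely, $\mathrm{(wPR3)}$ together with $\mathrm{(wPR2)}$ recovers $\mathrm{(vPR2)}$ by reversing this computation, so the two relation sets are equivalent over the remaining generators and relations.

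Combining these steps: the presentation of $\wP_n$ has the same generators $\chi_{ij}$ as $\vP_n$; relation $\mathrm{(vPR1)}$ is unchanged (giving $\mathrm{(wPR1)}$); the new relator from $\mathrm{(R3)}$ supplies $\mathrm{(wPR2)}$; and $\mathrm{(vPR2)}$ rewrites as $\mathrm{(wPR3)}$ modulo $\mathrm{(wPR2)}$. This yields exactly the presentation of Proposition~\ref{McCool_presentation}. I expect the only genuinely delicate point to be the index bookkeeping in the conjugation/rewriting step; everything else is formal.
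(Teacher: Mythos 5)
Your proposal is correct and follows essentially the same route as the paper, which likewise obtains $\wP_n$ by adapting the Reidemeister--Schreier computation of Bardakov's presentation of $\vP_n$, observing that the extra relator $\mathrm{(R3)}$ contributes exactly the commutation $\mathrm{(wPR2)}$ of $\chi_{ik}$ and $\chi_{jk}$ (indeed, pushing the $\tau$'s through $\sigma_i\sigma_{i+1}\tau_i = \tau_{i+1}\sigma_i\sigma_{i+1}$ reduces it, via the braid relation for the $\tau_i$, to $[\chi_{i,i+2},\chi_{i+1,i+2}]=1$, whose $\Sym_n$-orbit is precisely $\mathrm{(wPR2)}$), and then rewriting $\mathrm{(vPR2)}$ as $\mathrm{(wPR3)}$ modulo that relation. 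Your write-up simply makes explicit the bookkeeping that the paper leaves as a remark before the proposition statement.
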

This presentation first appeared in the work of McCool \cite{McCool}, which uses geometric group theory to show that it defines the subgroup of basis-conjugating automorphisms of the free group (see also~\Spar\ref{section_Aut(Fn)} below). 

We can deduce directly from this presentation that the abelianisation of $\wP_n$ is the same as the abelianisation of $\vP_n$ from Corollary~\ref{vPn^ab}:
\begin{corollary}\label{v(w)Pn^ab}
The canonical projection $\vP_n \twoheadrightarrow \wP_n$ induces an isomorphism $\vP_n^{\ab} \cong \wP_n^{\ab}$; both are free abelian on the classes $\overline \chi_{ij}$ for $1 \leq i \neq  j\leq n$.
\end{corollary}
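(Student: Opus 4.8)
The plan is to compute both abelianisations directly from the presentations already at hand and to observe that the projection matches the two resulting free abelian groups basis-to-basis. First I would recall that the canonical projection $p \colon \vP_n \twoheadrightarrow \wP_n$ is the one sending each generator $\chi_{ij}$ to $\chi_{ij}$; being surjective, it induces a surjection $p^{\ab} \colon \vP_n^{\ab} \twoheadrightarrow \wP_n^{\ab}$ sending $\overline\chi_{ij}$ to $\overline\chi_{ij}$. By Corollary~\ref{vPn^ab}, the source $\vP_n^{\ab}$ is free abelian on the classes $\overline\chi_{ij}$, so it will suffice to prove that $\wP_n^{\ab}$ is free abelian on the same classes, with no further relations; the map $p^{\ab}$ is then a surjection between free abelian groups carrying a basis onto a basis, hence an isomorphism.

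The computation of $\wP_n^{\ab}$ is immediate from Proposition~\ref{McCool_presentation}: abelianising a group given by generators and relations yields the free abelian group on the generators modulo the images of the relators, and each of the three families of relations (wPR1), (wPR2) and (wPR3) is a \emph{commutation relation} $a \rightleftarrows b$, that is $[a,b] = 1$. Since every commutator maps to $0$ in any abelian group, all of these relators become trivial in $\wP_n^{\ab}$. The only one requiring a moment's thought is (wPR3), namely $\chi_{ij} \rightleftarrows \chi_{ik}\chi_{jk}$, but it too is of the form $[a,b]=1$ with $b = \chi_{ik}\chi_{jk}$, so it also dies upon abelianisation. Hence $\wP_n^{\ab}$ is free abelian on the classes $\overline\chi_{ij}$, as required.

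There is essentially no obstacle here: the whole point is that passing from $\vP_n$ to $\wP_n$ only adds relations that are commutators. Indeed, as recorded just before Proposition~\ref{McCool_presentation}, the extra relation $\mathrm{(R3)}$ translates into the commutation relation (wPR2) and allows (vPR2) to be rewritten as the commutation relation (wPR3); such relations are invisible to the abelianisation functor. The conclusion that $p^{\ab}$ is an isomorphism then follows formally from the comparison of the two bases, both of which consist of the classes $\overline\chi_{ij}$ for $1 \leq i \neq j \leq n$.
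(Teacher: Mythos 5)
Your proof is correct and follows essentially the same route as the paper: the paper also deduces the result directly from the McCool presentation (Proposition~\ref{McCool_presentation}), noting that all of its relations are commutation relations and hence vanish upon abelianisation, so that $\wP_n^{\ab}$ is free abelian on the $\overline\chi_{ij}$ just as $\vP_n^{\ab}$ is. (The paper additionally records, in a remark, an alternative presentation-free argument via the forgetful projections $\wP_n \twoheadrightarrow \wP_2 \cong \F_2$, but that is not its main proof.)
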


\begin{remark}
Anticipating a bit the sequel, Corollary~\ref{v(w)Pn^ab} can also be proved directly, without using the presentations from Theorems~\ref{vP_presentation} and~\ref{McCool_presentation}: showing that the $\chi_{ij}$ generate $\vP_n$ (hence also $\wP_n$) is not difficult. Then, the linear independence of the classes of the $\chi_{ij}$ in $\wP_n^{\ab}$ (hence also in $\vP_n^{\ab}$) readily follows from the following fact: for any choice of $k \neq l$, there is a projection $\pi_{k,l} \colon \wP_n^{\ab} \twoheadrightarrow \wP_2^{\ab} \cong \Z^2$ (using $\wP_2 \cong \Inn(\F_2) \cong\F_2$ below) killing all the $\chi_{ij}$, save $\chi_{kl}$ and $\chi_{lk}$, which are sent to a basis of $\Z^2$. Such a projection is induced by the map $\wP_n \twoheadrightarrow \wP_2$ obtained by forgetting all the strands except the $j$-th and the $k$-th ones.
\end{remark}

\begin{remark}
Proposition~\ref{McCool_presentation} can be used to show that the morphism from $\wB_n$ to $\wB_{n+1}$ sending $\sigma_i$ to $\sigma_i$ and $\tau_i$ to $\tau_i$ is injective, by checking that its restriction to welded pure braids is a split injection, as in the proof of Corollary~\ref{vBn_in_vBn+1}. This can also be deduced from the interpretation of welded braids as automorphisms of free groups (Corollary~\ref{wBn_in_wBn+1}).
\end{remark}

\subsubsection{A diagrammatic approach}\label{par_diag_wB} Welded braids can be represented as diagrams in exactly the same way as virtual braids above. The only thing that changes is that we add one Reidemeister move, called the \emph{welded move} $\mathrm{(W)}$, saying that a strand may pass \emph{above} a virtual crossing (Figure~\ref{Welded_move}). But strands may not pass below virtual crossings, which should be thought of as \emph{welded} to the plane below the diagram, whence the name of these braids.

\begin{remark}
It is well-known \cite{Nelson, Kanenobu} that adding the move $(F)$ to welded knot theory makes the resulting knot theory trivial. However, the resulting link theory (the theory of \emph{fused links} from~\cite{KL}) is not quite trivial; this is connected to the fact that the quotient of $\wB_n$ by the relation $(F)$ is not trivial either. It is however much less rich (and much easier to understand) than $\wB_n$. This quotient is studied in~\cite{BBD}, where its structure is completely determined.
\end{remark}

\begin{remark}\label{symmetries_of_w_moves}
The variants of the move $\mathrm{(W)}$ from Figure~\ref{Welded_move} obtained by mirror images also hold, with the same arguments as in Remark~\ref{symmetries_of_v_moves}. However, the variant obtained by changing the crossings does not hold, because its vertical mirror image would then hold too, and this would be the forbidden move $\mathrm{(F)}$, which does not hold (see Remark~\ref{rk_forbidden_move} below).
\end{remark}

\begin{figure}[ht]
\centering
\renewcommand{\thesubfigure}{$\mathrm{W}$}
\begin{subfigure}[b]{.5\textwidth}
  \centering
  \includegraphics[scale=.14]{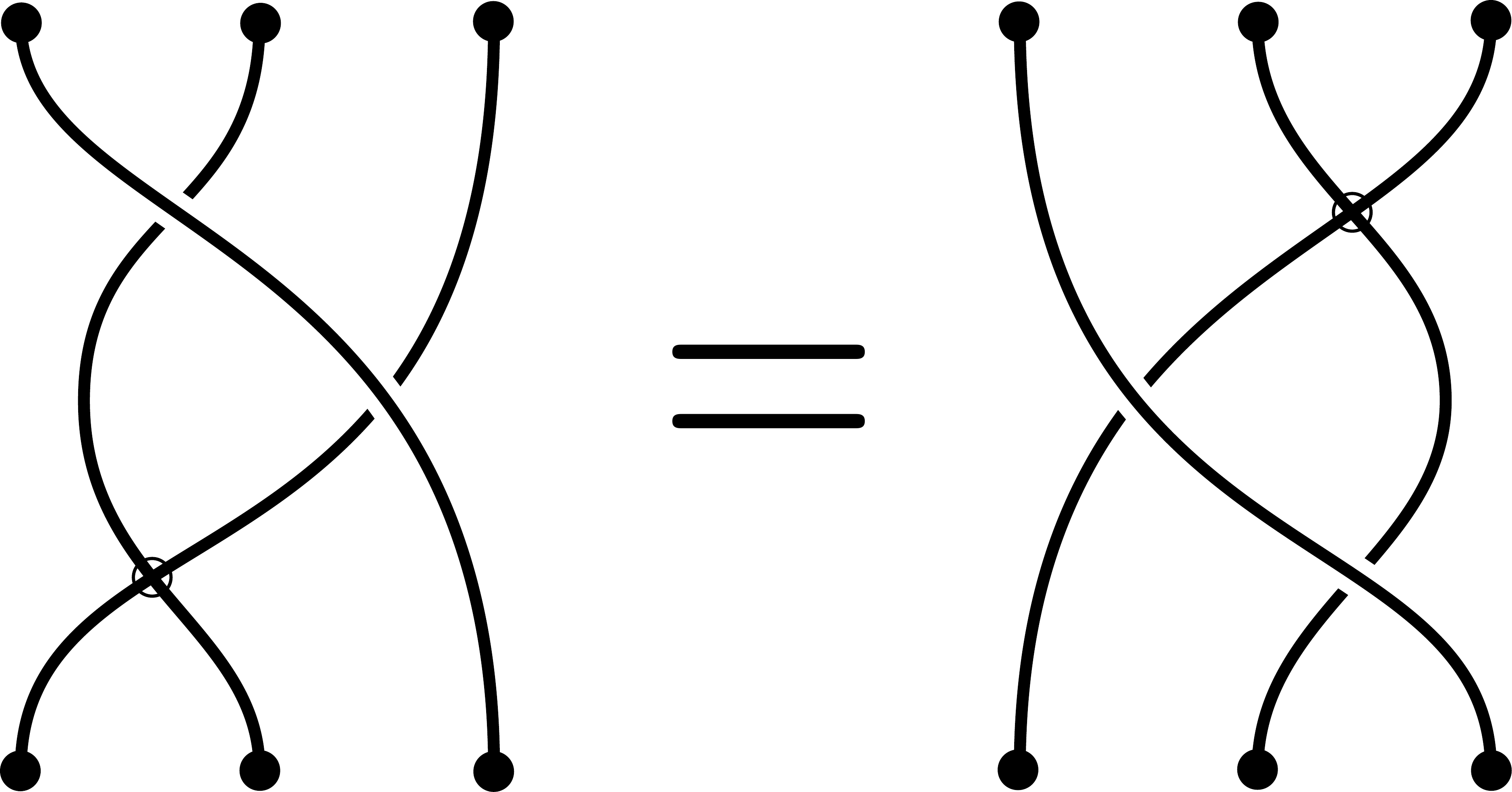}
  \caption{$\sigma_{i}\sigma_{i+1}\tau_{i} = \tau_{i+1}\sigma_{i}\sigma_{i+1}$}
\end{subfigure}%
\renewcommand{\thesubfigure}{$\mathrm{F}$}%
\begin{subfigure}[b]{.5\textwidth}
  \centering
  \includegraphics[scale=.14]{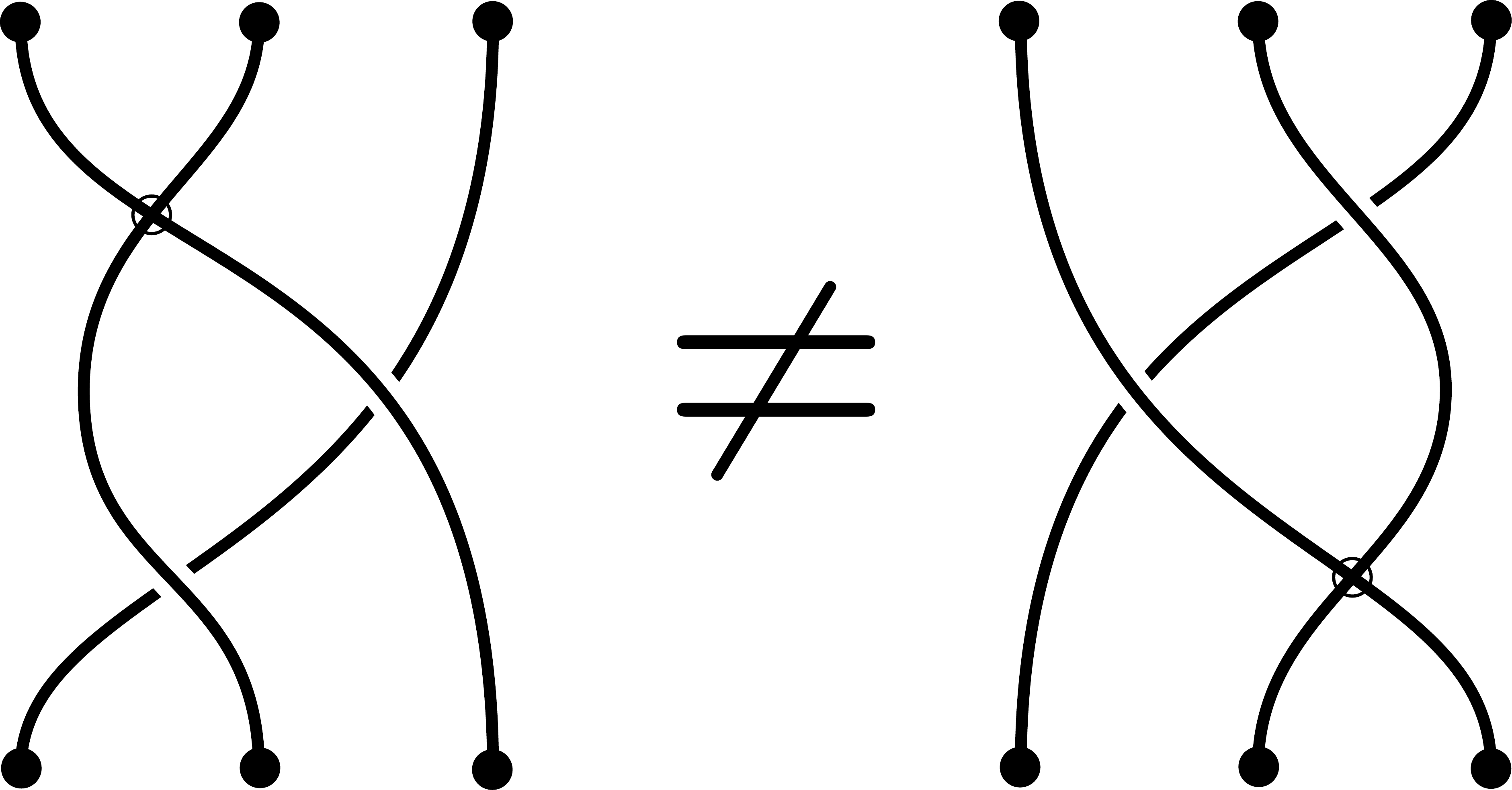}
  \caption{$\tau_i\sigma_{i+1}\sigma_i \neq  \sigma_{i+1}\sigma_i\tau_{i+1}$}
\end{subfigure}
\caption{The welded move (W) and the forbidden move (F).}
\label{Welded_move}
\end{figure}

\subsubsection{Welded braids as automorphisms of the free group}\label{section_Aut(Fn)} One of the important features of welded braids, which we will use a lot in the sequel, is their interpretation as basis-conjugating automorphisms of the free group. 

We use the following notations for automorphisms of the free group $\F_n$ on $n$ generators $x_1, \ldots , x_n$:
\[\tau_i \colon
\left\{
\begin{array}{lll}
x_i &\longmapsto &x_{i+1} \\
x_{i+1} &\longmapsto &x_i \\
\end{array}
\right. \ \ \ \text{and}\ \ \ \ 
\sigma_i \colon
\left\{
\begin{array}{lll}
x_i &\longmapsto &x_i x_{i+1} x_i^{-1} \\
x_{i+1} &\longmapsto &x_i \\
\end{array}
\right.
\]
where it is implicit that the $x_k$ are fixed, for $k \neq i, i+1$.

\begin{remark}
Our definition of $\sigma_i$ disagrees with the one in \cite{Damiani}; instead, it corresponds to the \emph{inverse} of the element called $\sigma_i$ by Damiani. This is because our $\mathrm{Aut}(\F_n)$ is the opposite group of the group that she calls $\mathrm{Aut}(\F_n)$, since we view composition of automorphisms of $\F_n$ from right to left, whereas she takes composition of automorphisms to go from left to right; see \cite[Rk.~6.3]{Damiani}. On the other hand, these considerations about the direction of composition do not affect the elements $\tau_i$ and $\rho_i$, since they are self-inverse. (We warn the reader, however, that our $\tau_i$ and $\rho_i$ correspond, respectively, to the elements denoted by $\rho_i$ and $\tau_i$ in \cite{Damiani}.)
\end{remark}

It is easy to check that these elements satisfy the defining relations for $\wB_n$. The morphism from $\wB_n$ to $\Aut(\F_n)$ that we obtain is in fact injective, as was shown by Fenn, Rim\'{a}nyi and Rourke towards the end of the century:
\begin{theorem}[{\cite{fenn_rimanyi_rourke}}]\label{wB_in_Aut(Fn)}
The morphism $\wB_n \rightarrow \Aut(\F_n)$ sending $\tau_i$ and $\sigma_i$ to the corresponding automorphisms is a well-defined isomorphism onto its image, which consists of all automorphisms of the form:
\[x_i \longmapsto w_i x_{\sigma(i)}w_i^{-1}, \ \ \ \text{with}\ w_1,\ldots, w_n \in\F_n\ \text{and}\ \sigma \in \Sym_n.\]
\end{theorem}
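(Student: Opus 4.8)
The plan is to realise this morphism as an isomorphism of split short exact sequences, using the decomposition $\wB_n \cong \wP_n \rtimes \Sym_n$ recalled above together with McCool's identification of the pure welded braid group (Proposition~\ref{McCool_presentation}). Well-definedness is precisely the assertion that the assigned automorphisms satisfy the relations of $\wB_n$, which one checks directly on the free generators $x_1, \dots, x_n$: relation $\mathrm{(R1)}$ holds because the automorphisms attached to $\sigma_i$ and $\tau_k$ involve disjoint generators when $|i-k| \geq 2$, and $\mathrm{(R2)}$, $\mathrm{(R3)}$, together with the braid and symmetric-group relations, are verified by evaluating both sides on each $x_j$.

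I would then introduce the set $\mathrm{PC}_n \subseteq \Aut(\F_n)$ of automorphisms of the form $x_i \mapsto w_i x_{\sigma(i)} w_i^{-1}$ and show it is a subgroup: closure under composition and inversion is a direct manipulation of the conjugating words, the permutations composing accordingly. Sending such an automorphism to its underlying permutation $\sigma$ yields a homomorphism $\mathrm{PC}_n \twoheadrightarrow \Sym_n$; this is well-defined because conjugation is trivial on $\F_n^{\ab} \cong \Z^n$, so the induced map on the abelianisation is the permutation matrix of $\sigma$, from which $\sigma$ can be read off. Its kernel is the group $K_n$ of basis-conjugating automorphisms (those with $\sigma = \mathrm{id}$), and the coordinate-permutation automorphisms split the projection, giving $\mathrm{PC}_n \cong K_n \rtimes \Sym_n$. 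Since $\sigma_i$ and $\tau_i$ visibly lie in $\mathrm{PC}_n$, the image of our morphism is contained in $\mathrm{PC}_n$.

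The conclusion then follows from the commutative diagram of split short exact sequences
\[
\begin{tikzcd}
\wP_n \ar[r, hook] \ar[d] & \wB_n \ar[r, two heads] \ar[d] & \Sym_n \ar[d, "="] \\
K_n \ar[r, hook] & \mathrm{PC}_n \ar[r, two heads] & \Sym_n,
\end{tikzcd}
\]
in which the right-hand map is the identity (both $\sigma_i$ and $\tau_i$ project to $(i,i+1)$) and the left-hand map sends each $\chi_{ij}$ to the basis-conjugating automorphism $x_i \mapsto x_j x_i x_j^{-1}$ (up to the conventions on composition). Once the left-hand map is known to be an isomorphism, the Five Lemma forces the middle map $\wB_n \to \mathrm{PC}_n$ to be an isomorphism as well, establishing simultaneously injectivity and surjectivity onto the claimed image.

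The main obstacle is exactly the assertion that the left-hand map $\wP_n \to K_n$ is an isomorphism. This is the substance of McCool's theorem: Proposition~\ref{McCool_presentation} identifies $\wP_n$ with the group defined by the McCool presentation, and McCool's work shows that this presentation describes precisely the basis-conjugating automorphism group $K_n$, i.e.\ that the $\chi_{ij}$ generate it and that the relations $\mathrm{(wPR1)}$--$\mathrm{(wPR3)}$ are complete. All of the generation and faithfulness content of the theorem is concentrated in this single step; the remaining verifications are either formal (the Five Lemma argument) or routine evaluations on generators.
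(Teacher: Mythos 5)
Your proposal is correct and follows essentially the route the paper itself indicates: the paper cites Fenn--Rim{\'a}nyi--Rourke for this statement rather than proving it, but the remark immediately following the theorem explains that it is ``quite directly equivalent to McCool's theorem'' via exactly your comparison of split extensions over $\Sym_n$ (using the projection $\Aut(\F_n) \to \mathrm{GL}_n(\Z)$ to read off the permutation), and the paper's proof of the extended version (Theorem~\ref{exwB_in_Aut(Fn)}) is precisely your Five Lemma argument applied to $W_n$ in place of $\Sym_n$. You correctly isolate the one non-formal ingredient, namely that $\wP_n \to K_n$ is an isomorphism, which is McCool's theorem combined with the computation that $\chi_{ij} = \tau_i\sigma_i$ is sent to the basis-conjugating automorphism $x_i \mapsto x_j x_i x_j^{-1}$.
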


\begin{corollary}\label{wBn_in_wBn+1}
The morphism from $\wB_n$ to $\wB_{n+1}$ sending $\sigma_i$ to $\sigma_i$ and $\tau_i$ to $\tau_i$ is injective, and so is the morphism from $\B_n$ to $\wB_n$ (resp.~to $\vB_n$) sending $\sigma_i$ to $\sigma_i$.
\end{corollary}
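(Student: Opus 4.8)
The plan is to deduce everything from the interpretation of welded braids as automorphisms of free groups, i.e.\ from the Fenn--Rim\'anyi--Rourke embedding $\Phi_n \colon \wB_n \hookrightarrow \Aut(\F_n)$ of Theorem~\ref{wB_in_Aut(Fn)}. Write $\iota \colon \wB_n \to \wB_{n+1}$ for the map sending $\sigma_i \mapsto \sigma_i$ and $\tau_i \mapsto \tau_i$; it is well-defined because the defining relations of $\wB_n$ are among those of $\wB_{n+1}$. Since $\Phi_{n+1}$ is injective, to prove $\iota$ injective it suffices to show that the composite $\Phi_{n+1} \circ \iota \colon \wB_n \to \Aut(\F_{n+1})$ is injective, and for this I would compare it with the embedding $\Phi_n$ via the retraction that forgets the last strand.

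Concretely, let $q \colon \F_{n+1} \twoheadrightarrow \F_n$ be the homomorphism fixing $x_1, \dots, x_n$ and sending $x_{n+1} \mapsto 1$. For each generator $g \in \{\sigma_i, \tau_i : 1 \le i \le n-1\}$, the automorphism $\Phi_{n+1}(\iota(g))$ involves only $x_1, \dots, x_n$ and fixes $x_{n+1}$, so one checks on generators of $\F_{n+1}$ the intertwining identity $q \circ \Phi_{n+1}(\iota(g)) = \Phi_n(g) \circ q$. This relation is multiplicative in $g$ (push $q$ through a product of automorphisms using the identity one factor at a time), hence holds for all $g \in \wB_n$. Now if $\Phi_{n+1}(\iota(g)) = \mathrm{id}_{\F_{n+1}}$, then $\Phi_n(g) \circ q = q$, and since $q$ is surjective this forces $\Phi_n(g) = \mathrm{id}_{\F_n}$, whence $g = 1$ by injectivity of $\Phi_n$. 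This proves the first assertion. (Alternatively, one could mirror the proof of Corollary~\ref{vBn_in_vBn+1}: restrict $\iota$ to $\wP_n \to \wP_{n+1}$, use Proposition~\ref{McCool_presentation} to build a retraction killing the $\chi_{i,n+1}$ and $\chi_{n+1,i}$, verify it respects the relations $\mathrm{(wPR1)}$--$\mathrm{(wPR3)}$, and conclude via the semidirect-product decomposition $\wB_n \cong \wP_n \rtimes \Sym_n$.)

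For the second assertion, consider the composite $\B_n \to \wB_n \xrightarrow{\Phi_n} \Aut(\F_n)$, which sends $\sigma_i$ to the automorphism $x_i \mapsto x_i x_{i+1} x_i^{-1}$, $x_{i+1} \mapsto x_i$ (fixing the other generators). This is precisely the classical Artin representation of $\B_n$, which is faithful by Artin's theorem; since the composite is injective, the map $\B_n \to \wB_n$ is injective. For the virtual case, recall from Definition~\ref{def:presentation_welded} that $\wB_n$ is a quotient of $\vB_n$, and that the composite $\B_n \to \vB_n \twoheadrightarrow \wB_n$ sends $\sigma_i \mapsto \sigma_i \mapsto \sigma_i$, hence coincides with the injection $\B_n \to \wB_n$ just established; therefore the first map $\B_n \to \vB_n$ is injective as well.

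The only genuinely nonroutine ingredients are the faithfulness of the Artin representation (a classical external input) and the verification of the intertwining identity $q \circ \Phi_{n+1} \circ \iota = \Phi_n \circ q$ together with its multiplicativity; these are the steps where one must be careful about the direction of composition of automorphisms (cf.\ the convention fixed before Theorem~\ref{wB_in_Aut(Fn)}), but no serious obstacle is expected.
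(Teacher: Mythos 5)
Your proof is correct and follows essentially the same route as the paper: both deduce all three injectivities from the Fenn--Rim\'anyi--Rourke embedding into $\Aut(\F_n)$, the faithfulness of the classical Artin representation, and the factorisation of $\B_n \to \vB_n$ through the quotient onto $\wB_n$. The only (harmless) difference is that for the stabilisation map you verify an intertwining identity with the retraction $q\colon \F_{n+1}\twoheadrightarrow\F_n$ killing $x_{n+1}$, whereas the paper identifies the map directly with a restriction of the injection $\Aut(\F_n)\hookrightarrow\Aut(\F_{n+1})$ extending automorphisms by fixing $x_{n+1}$.
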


\begin{proof}
All these morphisms are clearly well-defined. Theorem~\ref{wB_in_Aut(Fn)} identifies the first morphism with a restriction of the injection $\Aut(\F_n) \hookrightarrow \Aut(\F_{n+1})$ that extends automorphisms by making them fix $x_{n+1}$, so this morphism is injective. Then the classical Artin representation identifies $\B_n$ with the stabiliser of $x_1 \cdots x_n \in \F_n$ in $\wB_n \subset \Aut(\F_n)$ (see for instance \cite[Cor.~1.8.3]{Birman}), identifying the second morphism with an inclusion of subgroups of $\Aut(\F_n)$, so the second one is also injective. Finally, the morphism from $\B_n$ to $\vB_n$ can be composed with the quotient onto $\wB_n$ to give the previous morphism, so it must be injective too.
\end{proof}

\begin{remark}
Like in Remark~\ref{vBn_in_vBn+1_diag}, all the morphisms of Corollary~\ref{wBn_in_wBn+1} and of its proof admit obvious diagrammatic interpretations. 
\end{remark}

\begin{remark}\label{rk_forbidden_move}
The above morphism from $\wB_n$ to $\Aut(\F_n)$ can also be used to show that the forbidden move $\mathrm{(F)}$ from Figure~\ref{Welded_move} (or, equivalently, the associated relation) does not hold in $\wB_n$: it suffices to compute the images of both sides in $\Aut(\F_n)$ and to check that they are distinct. This implies that the welded move $\mathrm{(W)}$ (also from Figure~\ref{Welded_move}) does not hold in $\vB_n$; in other words, $\wB_n$ is different from $\vB_n$. Indeed, one can see that relations $\mathrm{(W)}$ and $\mathrm{(F)}$ are exchanged by the last automorphism of $\vB_n$ defined in Remark~\ref{symmetries_of_v_moves} (flipping classical crossings), followed by the previous one (left-right mirror image). Thus $\mathrm{(W)}$ holds in $\vB_n$ if and only if $\mathrm{(F)}$ does. And the latter does not, since it does not hold in the quotient $\wB_n$.
\end{remark}

One can check easily that the morphism from Theorem~\ref{wB_in_Aut(Fn)} identifies the generators $\chi_{ij}$ of the pure welded braid group $\wP_n$ with the automorphisms:
\[\chi_{ij} \colon x_k \mapsto 
\begin{cases}
x_j x_i x_j^{-1} &\text{if } k = i \\
x_k &\text{else.}
\end{cases}\]
Given the presentation of $\wP_n$ from Proposition~\ref{McCool_presentation}, one can see that Theorem~\ref{wB_in_Aut(Fn)} is in fact quite directly equivalent to McCool's theorem~\cite{McCool} saying that the subgroup of automorphisms of the form $x_i \mapsto w_i x_iw_i^{-1}$ admits the presentation from Proposition~\ref{McCool_presentation}. The equivalence between the two statements boils down to the fact that  $\wB_n/\wP_n \cong \Sym_n$ is isomorphic to the quotient between the two subgroups of $\Aut(\F_n)$, the latter being isomorphic to the group of permutation matrices via the canonical morphism $\Aut(\F_n) \rightarrow \Aut(\F_n^{\ab}) \cong GL_n(\Z)$. The same kind of argument will be used later to deduce from the above statement a similar one for extended welded braids (see the proof of Theorem~\ref{exwB_in_Aut(Fn)}).

\subsubsection{A geometric interpretation}\label{par_geom_wB} Another very important feature of welded braids is their geometric interpretation as \emph{loop braids}, which we describe now. These are motions of circles in $3$-dimensional space, and can also be seen as tube braids in $4$-dimensional space. For details, the reader in referred to~\cite{Damiani} and to the references therein (see in particular~\cite{Satoh, BrendleHatcher2013Configurationspacesrings}).

Let us consider the space $F_{n\S^1}(\D^3)$ of configurations of $n$ unlinked, oriented circles in the unit $3$-disc $\D^3$. By \emph{oriented} circles, we mean that a configuration is defined up to \emph{positive} re-parametrisation of the circles. The group $\Sym_n$ acts on it by permuting the copies of $\S^1$. Then the fundamental groups of $F_{n\S^1}(\D^3)$ and $F_{n\S^1}(\D^3)/\Sym_n$ identify respectively with $\wP_n$ and $\wB_n$. Precisely, the correspondence between loops in $F_{n\S^1}(\D^3)/\Sym_n$ and the above generators of $\wB_n$ is given as follows:
\begin{itemize}
\item $\tau_i$ corresponds to swapping the circles $i$ and $i+1$ without either of them passing through the other;
\item $\sigma_i$ corresponds to swapping the circles $i$ and $i+1$ while the $(i+1)$-th one goes through the $i$-th one in the direction determined by the orientation of the latter.
\end{itemize}

In fact, this identification is classically proven with automorphisms of free groups as an intermediate object. Namely, on the one hand, Theorem~\ref{wB_in_Aut(Fn)} identifies the combinatorial $\wB_n$ (with our definition via generators and relations, or via diagrams) to a subgroup of $\Aut(\F_n)$. On the other hand, there is a morphism from the geometric $\wB_n$ to $\Aut(\F_n)$, which is the Artin representation, adapted to this context. Precisely, it is a natural action of $\pi_1 \left(F_{n\S^1}(\D^3)/\Sym_n\right)$ on the fundamental group of $\R^3$ with $n$ unlinked circles removed. This morphism is in fact an isomorphism onto its image \cite[Th.~5.3]{Goldsmith}, which is the same subgroup of $\Aut(\F_n)$, whence the result.

\begin{remark}\label{precise_def_geom_wB}
We have been a little imprecise about what we mean by configurations of $n$ unlinked, oriented circles. One natural meaning is the space of all embeddings of $n$ disjoint copies of $S^1$ into the interior of $\bD^3$ whose image is an unlink, equipped with the Whitney topology, quotiented by the action of orientation-preserving diffeomorphisms of the circles. Another natural meaning is the subspace of this space consisting of all such embedded unlinks where the embedded circles are \emph{rigid}, in the sense that they are rotations, dilations and translations of the equator $S^1 \subset S^2 = \partial \bD^3$. The latter space has the advantage of being a finite-dimensional manifold, and it makes sense to speak of the unit vector normal to a circle of a configuration in this context. We will freely move between these two viewpoints in light of the theorem of Brendle and Hatcher \cite[Th.~4.1]{BrendleHatcher2013Configurationspacesrings}, which says that these two models are homotopy equivalent.
\end{remark}

\subsection{Extended welded braids} The group $\exwB_n$ of \emph{extended welded braids} is slightly bigger than $\wB_n$, which it contains as a finite-index subgroup. 

\subsubsection{Presentation by generators and relations}\label{par_pstation_exwB} Again, the definition of extended welded braids is motivated by both geometry and algebra, but we choose to define them using the group presentation. Precisely, it is a quotient of $\wB_n * (\Z/2)^n$ by some relations.

\begin{definition}
The extended welded braid group $\exwB_n$ admits a presentation with generators and relations of $\wB_n$ plus additional generators $\{\rho_{1},\ldots,\rho_n\}$ satisfying the relations of the abelian group $(\Z/2\Z)^n$ and five extra mixed relations:
\begin{equation}\label{relations extended welded braid groups}
\begin{cases}
\mathrm{(R4)}\textrm{ }\sigma_i\ \rightleftarrows\ \rho_k & \text{if } k \not\in \left\{i, i+1\right\} ;\\
\mathrm{(R5)}\textrm{ }\tau_i\ \rightleftarrows\ \rho_k & \text{if } k \not\in \left\{i, i+1\right\} ;\\
\mathrm{(R6)}\textrm{ }\sigma_i\rho_i=\rho_{i+1}\sigma_i & \text{if } i\in\left\{ 1,\ldots,n-1\right\};\\
\mathrm{(R7)}\textrm{ }\tau_i\rho_i=\rho_{i+1}\tau_i & \text{if } i\in\left\{ 1,\ldots,n-1\right\};\\
\mathrm{(R8)}\textrm{ }\sigma_i\rho_{i+1}=\rho_i\tau_i\sigma_i^{-1}\tau_i & \text{if } i\in\left\{ 1,\ldots,n-1\right\} .
\end{cases}
\end{equation}
\end{definition}

The assignments $\sigma_i, \tau_i \mapsto (0, (i, i+1))$ and $\rho_i \mapsto (e_i, id)$ define a projection from $\wB_n$ onto the hyperoctahedral group $W_n := (\Z/2) \wr \Sym_n = (\Z/2)^n \rtimes \Sym_n$. One can adapt again the use of the Reidemeister-Schreier method from \cite{Bardakov2004} to get a presentation of the kernel. In fact, no additional generator or relation appears in the process, compared to the case of welded braids: one gets exactly the presentation of Theorem~\ref{relations pure welded braid groups}, so this kernel is exactly $\wP_n$, and we get a decomposition:
\[\exwB_n \cong \wP_n \rtimes W_n.\]

\begin{remark}\label{wBn_in_exwBn}
This is one way to show that the obvious morphism from $\wB_n$ to $\exwB_n$ is injective using the group presentations (recall that $\wB_n \cong \wP_n \rtimes \Sym_n$). Alternatively, one could apply the Reidemeister-Schreier method directly to the image $G$ of $\wB_n$ in $\exwB_n$: using that relations $\mathrm{(R4)}$ to $\mathrm{(R8)}$ allow one to ``push the $\rho_i$ to the left" in words, and using the above projection to $W_n$ to distinguish them, one can see that the words $\rho_1^{\epsilon_1} \cdots \rho_n^{\epsilon_n}$ (with $\epsilon_i \in \{0,1\}$) are a set of representatives for the quotient $\exwB_n/G$ (which is not a group). Moreover, this set of representatives satisfies the Schreier condition, and by a very easy application of the Reidemeister-Schreier method, one gets that $G$ has the same presentation as $\wB_n$.
\end{remark}

\subsubsection{A diagrammatic approach}\label{par_diag_exwB} Extended welded braids can be represented by the same diagrams as $\wB_n$, with additional decorations (marked points) on the strands (see Figure~\ref{fig:diagrams-crossings}). To the Reidemeister moves listed above for $ \wB_n$, one adds a list of moves saying how the decorations interact with other decorations and with the different types of crossings: the four \emph{extended moves} depicted in Figures~\ref{Extended_moves} and~\ref{Extended_move_4}. Exactly the same reasoning as above shows that the group of such diagrams up to planar isotopy and this list of moves is isomorphic to $\exwB_n$, where the generator~$\rho_i$ corresponds to the diagram from Figure~\ref{Gen_exwB}.

\begin{remark}
As above, the set of moves that do not change the underlying extended welded braids is stable by taking mirror images, for the same reasons as in Remark~\ref{symmetries_of_w_moves} (the automorphism encoding left-right mirror image being extended by $\rho_i \mapsto \rho_{n + 1 -i}$). 
\end{remark}

\begin{figure}[tb]
	\centering
	\includegraphics[scale=0.15]{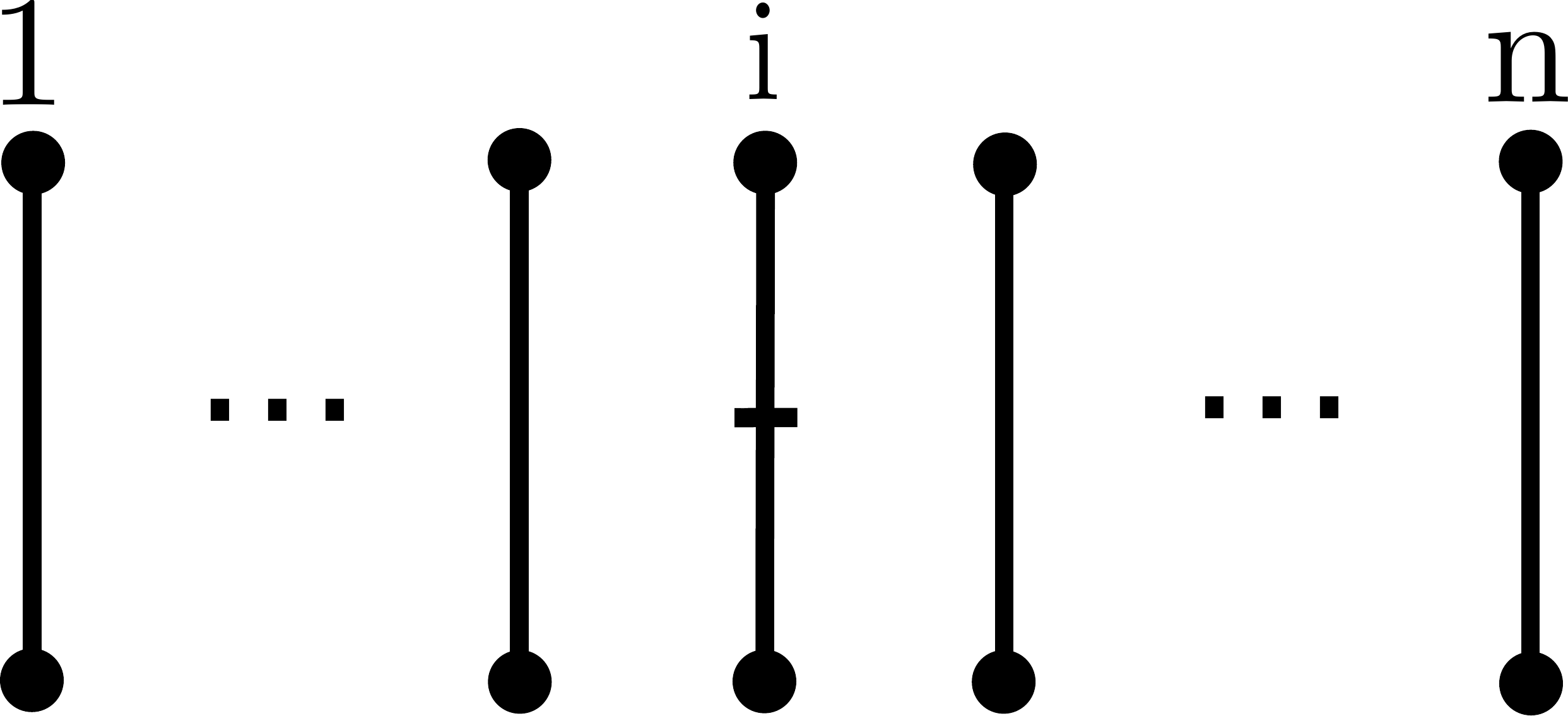}
	\caption{The generator $\rho_i$}
	\label{Gen_exwB}
\end{figure}

\begin{figure}[ht]
\centering
\renewcommand{\thesubfigure}{$\mathrm{E_I}$}
\begin{subfigure}[b]{.3\textwidth}
  \centering
  \includegraphics[scale=.14]{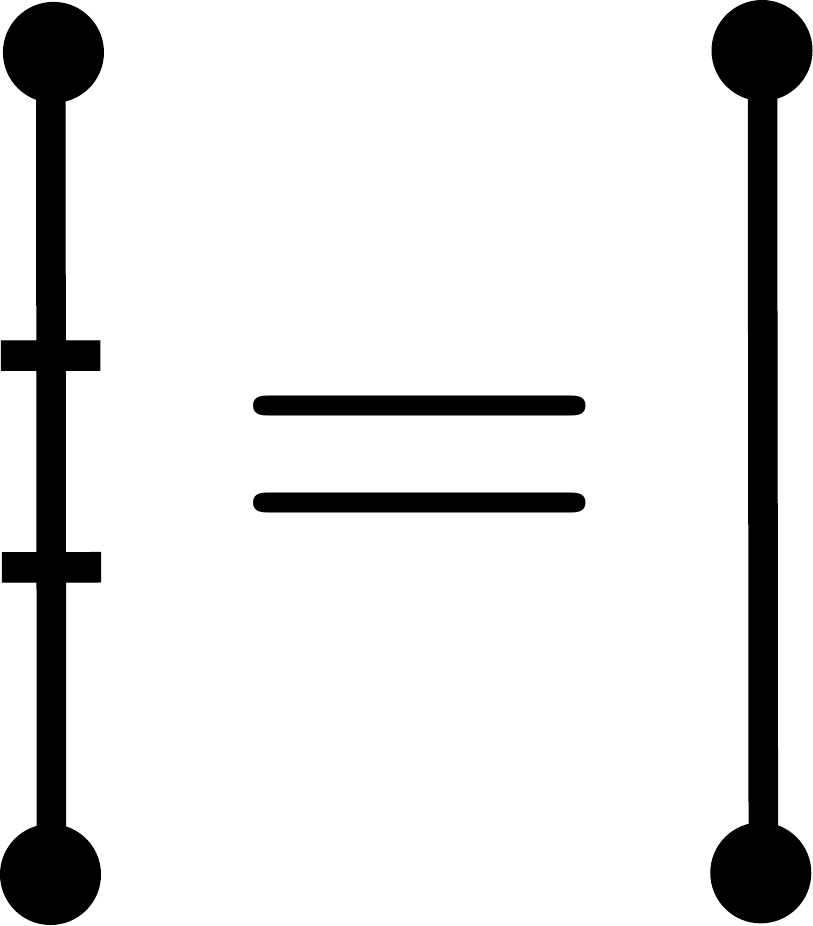}
  \caption{$\rho_i^2 = 1$}
\end{subfigure}%
\renewcommand{\thesubfigure}{$\mathrm{E_{II}}$}%
\begin{subfigure}[b]{.3\textwidth}
  \centering
  \includegraphics[scale=.14]{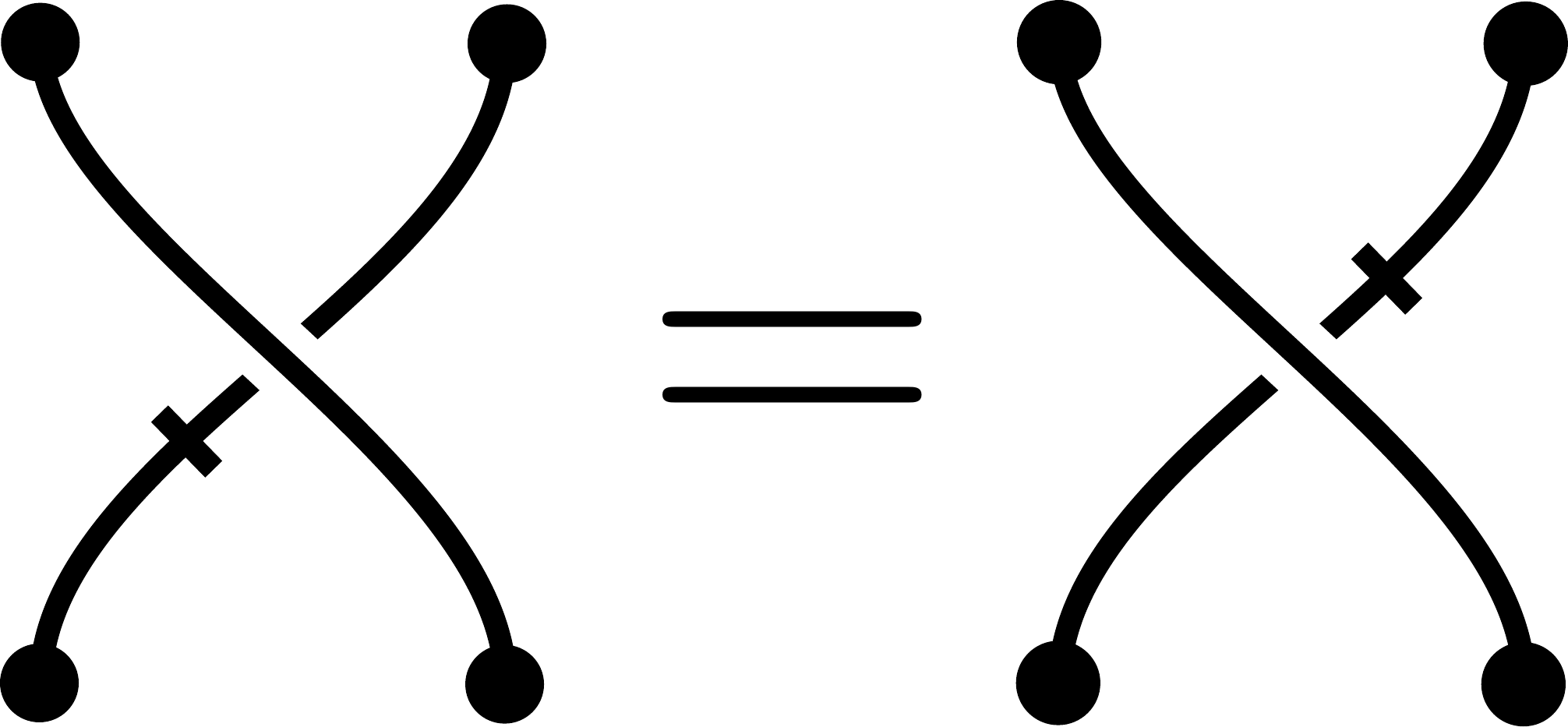}
  \caption{$\sigma_i\rho_i=\rho_{i+1}\sigma_i$}
\end{subfigure}%
\renewcommand{\thesubfigure}{$\mathrm{E_{III}}$}%
\begin{subfigure}[b]{.4\textwidth}
  \centering
  \includegraphics[scale=.14]{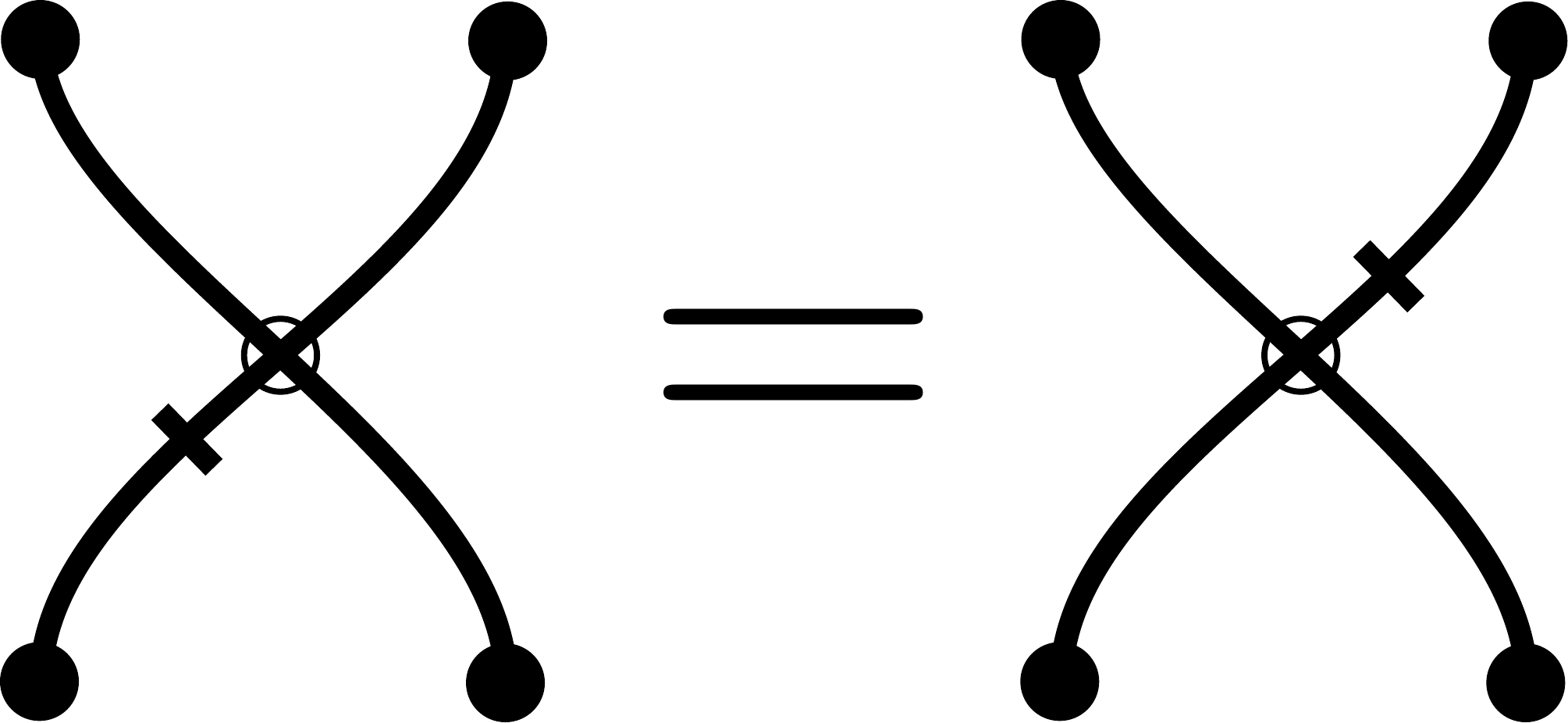}
  \caption{$\tau_i\rho_i=\rho_{i+1}\tau_i$}
\end{subfigure}
\caption{Three of the four extended welded moves}
\label{Extended_moves}
\end{figure}

\begin{figure}[tb]
\renewcommand{\thesubfigure}{$\mathrm{E_{IV}}$}
\begin{subfigure}[b]{.7\textwidth}
  \centering  
  \includegraphics[scale=.14]{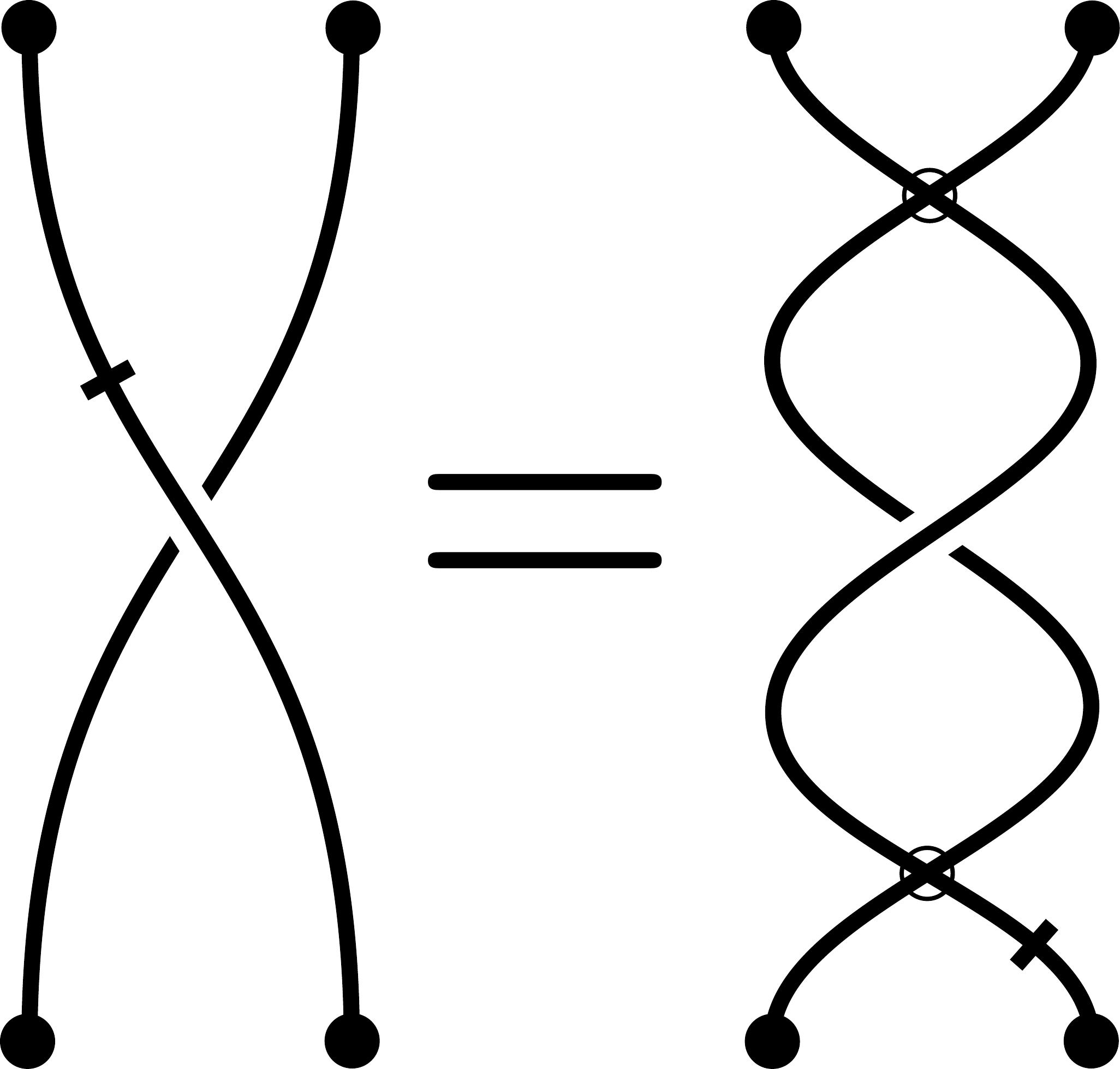}
  \caption{$\sigma_i\rho_{i+1} = \rho_i\tau_i\sigma_i^{-1}\tau_i$}
\end{subfigure}
\caption{The fourth extended welded move}
\label{Extended_move_4}
\end{figure}

\subsubsection{Extended welded braids as automorphisms of the free group}\label{section_ext_Aut(Fn)}

The above morphism from $\wB_n$ to $\Aut(\F_n)$ can be extended to $\exwB_n$ by sending $\rho_i \in \exwB_n$ to the automorphism $\rho_i \colon x_i \mapsto x_i^{-1}$ of $\F_n$ (fixing the $x_k$ if $k \neq i$). The following result (see~\cite[Th.~4.1]{Damiani}) is easily deduced from Theorem~\ref{wB_in_Aut(Fn)}:

\begin{theorem}\label{exwB_in_Aut(Fn)}
The morphism $\exwB_n \rightarrow \Aut(\F_n)$ sending $\tau_i$, $\sigma_i$ and $\rho_i$ to the corresponding automorphisms (as defined above) is a well-defined isomorphism onto its image, which consists of all automorphisms of the form:
\[x_i \longmapsto w_i x_{\sigma(i)}^{\pm 1} w_i^{-1}, \ \ \ \text{with}\ w_1,\ldots, w_n \in\F_n\ \text{and}\ \sigma \in \Sym_n.\]
\end{theorem}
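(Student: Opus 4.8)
The plan is to deduce the statement from Theorem~\ref{wB_in_Aut(Fn)} by running the same McCool-type comparison of semidirect-product decompositions alluded to at the end of \Spar\ref{section_Aut(Fn)}, using that $\wB_n \cong \wP_n \rtimes \Sym_n$ and $\exwB_n \cong \wP_n \rtimes W_n$ share the \emph{same} kernel $\wP_n$. First I would check that the assignment is well-defined, i.e.\ that the images of $\tau_i,\sigma_i,\rho_i$ satisfy the defining relations of $\exwB_n$. The relations of $\wB_n$ hold by Theorem~\ref{wB_in_Aut(Fn)}, so only the relations of $(\Z/2)^n$ (the automorphisms $\rho_i \colon x_i \mapsto x_i^{-1}$ are commuting involutions) and the five mixed relations $\mathrm{(R4)}$--$\mathrm{(R8)}$ remain. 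These are routine direct computations with the explicit automorphisms; for instance one checks that $\sigma_i\rho_i$ and $\rho_{i+1}\sigma_i$ both send $x_i \mapsto x_i x_{i+1}^{-1} x_i^{-1}$ and $x_{i+1}\mapsto x_i$, confirming $\mathrm{(R6)}$, and similarly for the others.

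Next I would let $P_n \subseteq \Aut(\F_n)$ denote the set of automorphisms of the form $x_i \mapsto w_i x_{\sigma(i)}^{\pm 1} w_i^{-1}$. A short calculation (using $(a b^{\varepsilon} a^{-1})^{-1} = a b^{-\varepsilon} a^{-1}$ and that composing two such automorphisms again produces one of the same shape) shows that $P_n$ is a subgroup of $\Aut(\F_n)$. Since the images of all the generators visibly lie in $P_n$, the image of $\exwB_n$ is contained in $P_n$. To pin it down exactly, consider the canonical map $\Aut(\F_n) \to \Aut(\F_n^{\ab}) \cong GL_n(\Z)$: it restricts to a surjection of $P_n$ onto the group $SW_n$ of signed permutation matrices, since an element $x_i \mapsto w_i x_{\sigma(i)}^{\pm1} w_i^{-1}$ abelianises to $\bar x_i \mapsto \pm\, \bar x_{\sigma(i)}$, and all permutations (from the $\sigma_i,\tau_i$) and all sign changes (from the $\rho_i$) are attained. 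The kernel of $P_n \twoheadrightarrow SW_n$ consists precisely of those automorphisms with $\sigma = \mathrm{id}$ and all signs $+$, that is, the basis-conjugating automorphisms $x_i \mapsto w_i x_i w_i^{-1}$, which is McCool's subgroup $M_n$, equal to the image of $\wP_n$ under Theorem~\ref{wB_in_Aut(Fn)}.

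This produces a commutative ladder of short exact sequences
\[\begin{tikzcd}
\wP_n \ar[r, hook] \ar[d, "\cong"] & \exwB_n \ar[r, two heads] \ar[d] & W_n \ar[d, "\cong"] \\
M_n \ar[r, hook] & P_n \ar[r, two heads] & SW_n,
\end{tikzcd}\]
in which the top row is the decomposition $\exwB_n \cong \wP_n \rtimes W_n$, the left vertical arrow is the isomorphism of Theorem~\ref{wB_in_Aut(Fn)} (restricted to pure welded braids), and the right vertical arrow is the canonical isomorphism $W_n \cong SW_n$ between the hyperoctahedral group and the signed permutation matrices. By the Five Lemma, the middle vertical arrow $\exwB_n \to P_n$ is an isomorphism, which is exactly the assertion of the theorem.

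The main obstacle is the verification that makes the left-hand square commute and have an isomorphism for its left edge: one must confirm that the restriction of $\exwB_n \to \Aut(\F_n)$ to the kernel $\wP_n$ coincides with the McCool map of Theorem~\ref{wB_in_Aut(Fn)}, and that the kernel of $P_n \twoheadrightarrow SW_n$ is \emph{exactly} $M_n$ and nothing larger. This rests on the compatibility of the two projections to $W_n$ (the algebraic one from the decomposition $\exwB_n \cong \wP_n \rtimes W_n$ and the geometric one via $GL_n(\Z)$), so that the right-hand square commutes under the identification $W_n \cong SW_n$. Once this compatibility is in place, well-definedness, the closure of $P_n$ under composition, and the surjectivity onto $SW_n$ are all routine, and the Five Lemma finishes the proof.
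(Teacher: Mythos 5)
Your proposal is correct and follows essentially the same route as the paper: identify the target subgroup, project to $GL_n(\Z)$ to land on the signed permutation matrices, recognise the kernel as McCool's basis-conjugating subgroup via Theorem~\ref{wB_in_Aut(Fn)}, and compare the two semidirect-product decompositions (the paper phrases this as matching $K \rtimes W_n$ with $\wP_n \rtimes W_n$ rather than invoking the Five Lemma on the ladder, but the content is identical). Your explicit verification of $\mathrm{(R6)}$ is also correct.
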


\begin{proof}
Let $G$ denote the subgroup of $\Aut(\F_n)$ consisting of automorphisms of the form described in the statement. It is easy to see that the morphism $f$ under scrutiny is well-defined and takes values in $G$. The canonical morphism $\Aut(\F_n) \rightarrow \Aut(\F_n^{\ab}) \cong GL_n(\Z)$ sends $G$ onto the group of signed permutation matrices, which is isomorphic to $W_n$, and this projection is split, $\langle \tau_i, \rho_i \rangle$ being the image of the obvious splitting. Moreover, the kernel $K$ of this projection is the subgroup of automorphisms of the form $x_i \mapsto w_i x_i w_i^{-1}$, and the morphism $f$ restricts to the isomorphism between $\wP_n$ and $K$ from Theorem~\ref{wB_in_Aut(Fn)}. Since $f$ also induces an isomorphism $\exwB_n/\wP_n \cong W_n \cong G/K$, it is an isomorphism between  $G = K \rtimes W_n$ and $\exwB_n = \wP_n \rtimes W_n$.
\end{proof}

\subsubsection{A geometric interpretation}\label{par_geom_exwB} The geometric interpretation of welded braids from~\Spar\ref{par_geom_wB} holds for extended welded braids as well: they also correspond to motions of unknotted circles in space, if we allow the orientation of the circles to be flipped at the end of the motion. Precisely, the group $(\Z/2)^n$ acts on $F_{n\S^1}(\D^3)$, by changing the orientations of the different copies of $\S^1$. This action combines with the action of $\Sym_n$ from~\Spar\ref{par_geom_wB} into an action of the hyperoctahedral group $W_n = (\Z/2) \wr \Sym_n$. Then the fundamental group of $F_{n\S^1}(\D^3)/W_n$ identifies with $\exwB_n$, where the correspondence between loops in $F_{n\S^1}(\D^3)/W_n$ and the above generators of $\exwB_n$ is given as before for $\tau_i$ and $\sigma_i$, and $\rho_i$ corresponds to a loop that rotates the $i$-th circle by 180 degrees. 

As in~\Spar\ref{par_geom_wB}, this identification is classically proven with automorphisms of free group as an intermediate object. In fact, it can be seen as a slight enhancement of the identification of $\pi_1 \left(F_{n\S^1}(\D^3)/\Sym_n \right)$ with the combinatorial $\wB_n$, in the same way as Theorem~\ref{exwB_in_Aut(Fn)} above is an enhancement of Theorem~\ref{wB_in_Aut(Fn)}. Precisely, the Artin representation extends naturally to an action of $\pi_1 \left(F_{n\S^1}(\D^3)/W_n \right) =: G$ on $\F_n$. This gives a morphism $\alpha$ from $G$ to $\exwB_n$, the latter being identified to a subgroup of $\Aut(\F_n)$ via Theorem~\ref{exwB_in_Aut(Fn)}. The action of $W_n$ on $F_{n\S^1}(\D^3)$ is clearly free, so the quotient by $W_n$ is a covering, whence $\pi_1 \left(F_{n\S^1}(\D^3)\right) =: H$ is a subgroup of $G$, and $G/H \cong W_n$. The restriction of $\alpha$ to $H$ is the isomorphism from  \cite[Th.~5.3]{Goldsmith} (see~\Spar\ref{par_geom_wB}) between $H$ and $\wP_n \subset \Aut(\F_n)$, and $\alpha$ is easily seen to induce an isomorphism between $G/H \cong W_n$ and $\exwB_n/\wP_n \cong W_n$ (see the proof of Theorem~\ref{exwB_in_Aut(Fn)} for the last isomorphism). Thus $\alpha$ is an isomorphism, as required. 

Notice that the covering mentioned above is part of a commutative square of coverings:
\begin{equation}\label{square_of_coverings}
\begin{tikzcd}
F_{n\S^1}(\D^3) \ar[r, two heads] \ar[d, two heads] 
&F_{n\S^1}(\D^3)/\Sym_n  \ar[d, two heads]  \\
F_{n\S^1}(\D^3)/(\Z/2)^n \ar[r, two heads]
&F_{n\S^1}(\D^3)/W_n.
\end{tikzcd}
\end{equation}
The vertical maps in the diagram \eqref{square_of_coverings} can be thought of as ``forgetting the orientation of the circles''. If $\exwP_n$ denotes the fundamental group of $F_{n\S^1}(\D^3)/(\Z/2)^n$, (the group of \emph{pure} extended welded braids), by taking fundamental groups of \eqref{square_of_coverings}, we get, directly from the geometric interpretation of these groups, a square of \emph{injections}:
\begin{equation}\label{square_of_pi1}
\begin{tikzcd}
\wP_n \ar[r, hook] \ar[d, hook] 
&\wB_n  \ar[d, hook]  \\
\exwP_n \ar[r, hook]
&\exwB_n.
\end{tikzcd}
\end{equation}

\begin{remark}
Recalling that $\Z/2 \cong \pi_1(\Proj)$ we can give a geometric interpretation of the projection $\exwB_n \twoheadrightarrow W_n = (\Z/2) \wr \Sym_n$ corresponding to the quotient by $\wP_n$: a loop braid is sent to the collection of paths in $\Proj$ described by the unit vector normal to each circle (see Remark~\ref{precise_def_geom_wB}), together with the permutation it induces. 
\end{remark}

\section{Notion of support of an element}\label{subsec_support}

In order to apply disjoint support arguments, we first need to discuss the notion of support for elements of these groups. We will use the following notion of \emph{support} of an automorphism of $\F_n$.

\begin{definition}
\label{d:support-autFn}
Let $\F_n$ be the free group on $n$ (fixed) generators $x_1, \ldots , x_n$, and let $\varphi \in \Aut(\F_n)$. The \emph{support} $\Supp(\varphi)$ of $\varphi$ is:
\[\Supp(\varphi) := \{i \in \{1,\ldots,n\} \mid \varphi(x_i) \neq x_i \ \text{or}\ x_i\ \text{appears in some}\ \varphi(x_j) \text{ for } j \neq i\},\]
where $x_i$ is said to \emph{appear} in $w \in\F_n$ if $x_i^{\pm 1}$ is a letter in the reduced expression of $w$.
\end{definition}

\begin{example}
Using notations from \Spar\ref{section_Aut(Fn)} and \Spar\ref{section_ext_Aut(Fn)}, we have:
\begin{itemize}
\item $\Supp(\rho_\alpha) = \{\alpha\}$
\item $\Supp(\sigma_\alpha) = \Supp(\tau_\alpha) = \{\alpha, \alpha + 1\}$
\item $\Supp(\chi_{\alpha \beta}) = \{\alpha, \beta\}$
\end{itemize}
\end{example}

\begin{fact}
\label{fact_support_1}
If $\varphi, \psi \in \Aut(\F_n)$ have disjoint support, then they commute.
\end{fact}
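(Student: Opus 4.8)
The plan is to unwind Definition~\ref{d:support-autFn} and check commutation one generator at a time. Write $A := \Supp(\varphi)$ and $B := \Supp(\psi)$, so that $A \cap B = \varnothing$ by hypothesis. Since an automorphism of $\F_n$ is determined by its values on the free generators $x_1, \ldots, x_n$, it suffices to show $\varphi\psi(x_i) = \psi\varphi(x_i)$ for every $i$.

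The key preliminary step is to pin down a normal form for the images of the generators. I claim that for any index $i$, every letter $x_j^{\pm 1}$ occurring in the reduced word $\varphi(x_i)$ has $j \in A \cup \{i\}$, and moreover $\varphi(x_i) = x_i$ whenever $i \notin A$. Indeed, if $j \neq i$ and $x_j$ appears in $\varphi(x_i)$, then $j \in \Supp(\varphi) = A$ by the second clause of the definition of support; and if $i \notin A$, then $\varphi(x_i) = x_i$ by the first clause. In particular, when $i \in A$ the word $\varphi(x_i)$ lies in the subgroup of $\F_n$ generated by $\{x_j : j \in A\}$. The symmetric statements hold for $\psi$ with $B$ in place of $A$.

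Then I would verify the equality $\varphi\psi(x_i) = \psi\varphi(x_i)$ by distinguishing three exhaustive cases, which are moreover disjoint outside the last one because $A \cap B = \varnothing$. If $i \in A$, then $i \notin B$, so $\psi(x_i) = x_i$ and hence $\varphi\psi(x_i) = \varphi(x_i)$; on the other hand $\varphi(x_i)$ is a word in the generators $x_j$ with $j \in A$, each of which satisfies $j \notin B$ and is therefore fixed by $\psi$, so applying the homomorphism $\psi$ letter by letter gives $\psi\varphi(x_i) = \varphi(x_i)$ as well. The case $i \in B$ is identical after exchanging the roles of $\varphi$ and $\psi$. Finally, if $i \notin A \cup B$, then both automorphisms fix $x_i$ and the equality is immediate. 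As the two automorphisms agree on all free generators, they coincide, i.e.\ $\varphi\psi = \psi\varphi$.

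There is no genuine obstacle in this argument; the only point demanding a little care is the bookkeeping in the normal-form step. Specifically, one must use \emph{both} clauses of the definition of support together: the clause "$x_j$ appears in $\varphi(x_i)$ for some $j \neq i$ forces $j \in A$" controls the letters that can occur in $\varphi(x_i)$, while the clause "$i \notin A \Rightarrow \varphi(x_i) = x_i$" handles the generators outside $A$. Combining them pins the images down precisely enough that $\psi$ fixes every letter of $\varphi(x_i)$ when $i \in A$, which is exactly what makes the commutation go through.
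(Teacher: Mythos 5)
Your proof is correct and follows essentially the same route as the paper's: both first observe that $\varphi(x_i)=x_i$ for $i \notin \Supp(\varphi)$ while $\varphi(x_i)$ is a word in $\{x_j \mid j \in \Supp(\varphi)\}$ for $i \in \Supp(\varphi)$, and then verify commutation on generators via the same three-case split ($i$ in $\Supp(\varphi)$, in $\Supp(\psi)$, or in neither). Your write-up simply spells out the bookkeeping that the paper leaves as ``it is easy to see.''
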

\begin{proof}
Observing that $\varphi(x_i)=x_i$ for $i \notin \Supp(\varphi)$, and that $\varphi(x_i)$ is a word in $\{x_j \mid j \in \Supp(\varphi)\}$ for $i \in \Supp(\varphi)$, it is easy to see that
\[
\psi(\varphi(x_i)) = \varphi(\psi(x_i)) = \begin{cases}
\varphi(x_i) &\text{if }\ i \in \Supp(\varphi) \\
\psi(x_i)    &\text{if }\ i \in \Supp(\psi) \\
x_i          &\text{if }\ i \notin \Supp(\varphi) \cup \Supp(\psi),
\end{cases}
\]
for all $i \in \{1,\ldots,n\}$.
\end{proof}

The groups $\wB_n$ and $\exwB_n$ are naturally subgroups of $\Aut(\F_n)$ (see \Spar\ref{section_Aut(Fn)} and \Spar\ref{section_ext_Aut(Fn)}), so the above definition of support can be applied to (extended) welded braids. On the contrary, $\vB_n$ does not naturally embed into $\Aut(\F_n)$, so we need to use another definition of support for virtual braids. We do so using the diagrammatic description of virtual braids as equivalence classes of braid diagrams with both classical and virtual crossings (\Spar\ref{par_diag_vB}) . 

\begin{definition}
\label{d:support-diagram}
Let $\varphi \in \vB_n$. The \emph{support} $\Supp(\varphi) \subseteq \{1,\ldots,n\}$ of $\varphi$ is the set of $i \in \{1,\ldots,n\}$ such that either
\begin{itemize}
\item $i$ is not fixed by the permutation of $\{1,\ldots,n\}$ induced by $\varphi$, or
\item in every $n$-strand diagram representing $\varphi$, there is a classical crossing on the $i$-th strand.
\end{itemize}
\end{definition}

Welded braids also admit a similar diagrammatic description (see \Spar\ref{par_diag_wB}), so one may define $\Supp(\varphi)$, for $\varphi \in \wB_n \subset \Aut(\F_n)$, either as in Definition \ref{d:support-autFn} or as in Definition \ref{d:support-diagram}.

\begin{lemma}
\label{lem:two-def-support}
These two definitions of $\Supp(\varphi)$ for $\varphi \in \wB_n \subset \Aut(\F_n)$ agree.
\end{lemma}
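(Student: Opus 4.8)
The plan is to write $\Supp_{\mathrm a}$ for the support of Definition~\ref{d:support-autFn} and $\Supp_{\mathrm d}$ for that of Definition~\ref{d:support-diagram}, and to compare the two conditions one index at a time. Write $\sigma = \pi(\varphi) \in \Sym_n$ for the underlying permutation, so that by Theorem~\ref{wB_in_Aut(Fn)} we have $\varphi(x_i) = w_i x_{\sigma(i)} w_i^{-1}$ for suitable $w_i \in \F_n$. First I would dispose of the permutation part: if $\sigma(i) \neq i$ then $i \in \Supp_{\mathrm d}(\varphi)$ by the first clause of Definition~\ref{d:support-diagram}, and also $\varphi(x_i) = w_i x_{\sigma(i)} w_i^{-1} \neq x_i$, being a conjugate of a generator distinct from $x_i$, so $i \in \Supp_{\mathrm a}(\varphi)$. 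Hence it suffices to treat indices $i$ with $\sigma(i) = i$ and to prove the equivalence: $\varphi$ \emph{involves} $x_i$ (meaning $\varphi(x_i) \neq x_i$, or $x_i$ occurs in some $\varphi(x_j)$ with $j \neq i$) if and only if every diagram of $\varphi$ has a classical crossing on the $i$-th strand. I would establish the two contrapositive implications separately.

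For the implication ``some diagram with no classical crossing on strand $i$ $\Rightarrow$ $\varphi$ does not involve $x_i$'', which is the technical heart, I would recover the automorphism from the diagram by the usual Artin bookkeeping. Read a diagram $D$ of $\varphi$ from top to bottom as a word in the generators $\sigma_k^{\pm 1}, \tau_k$, and maintain an $n$-tuple $(u_1, \dots, u_n)$ of elements of $\F_n$, initialised at $(x_1, \dots, x_n)$: a letter $\tau_k$ swaps $u_k \leftrightarrow u_{k+1}$, while a letter $\sigma_k^{\pm 1}$ replaces $(u_k, u_{k+1})$ by the pair of conjugates prescribed by the generator automorphisms of \Spar\ref{section_Aut(Fn)}, which lie in $\langle u_k, u_{k+1}\rangle$ and hence involve only the letters already present in $u_k$ and $u_{k+1}$. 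Letting $p$ denote the current position of strand $i$ (which changes only at virtual crossings, since by hypothesis strand $i$ meets no classical crossing), the key invariant is that at every height one has $u_p = x_i$ and the letter $x_i$ occurs in no $u_k$ with $k \neq p$. This holds initially and is preserved: a virtual crossing merely permutes the $u_k$, carrying the singleton $x_i$ along with $p$; a classical crossing $\sigma_k^{\pm 1}$ has $p \notin \{k,k+1\}$, so it only conjugates two $x_i$-free words into two $x_i$-free words and leaves $u_p = x_i$ fixed. Since $\sigma(i) = i$, strand $i$ returns to position $i$ at the bottom, so the final tuple (recording the images of the generators under the automorphism computed from $D$) exhibits $x_i \mapsto x_i$ with no $x_i$ in its other entries; that is, $\varphi$ does not involve $x_i$.

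For the converse, suppose $\varphi$ does not involve $x_i$; I would build a diagram with no classical crossing on strand $i$. As $\sigma(i) = i$, each $w_j$ is $x_i$-free and $\sigma(j) \neq i$ for $j \neq i$, so $\varphi$ preserves $\F' = \langle x_j : j \neq i\rangle$ and restricts there to a welded-braid automorphism $\varphi'$; by Theorem~\ref{wB_in_Aut(Fn)} on the $n-1$ strands indexed by $\{1,\dots,n\}\setminus\{i\}$, this $\varphi'$ is realised by a diagram $D'$. Copying $D'$ with the omitted strand placed as a straight vertical line at the far right yields a diagram for the image of $\varphi'$ under the inclusion $\wB_{n-1}\hookrightarrow\wB_n$ of Corollary~\ref{wBn_in_wBn+1}, in which the last strand has no crossing at all; conjugating by the virtual (permutation) braid that slides this straight strand into position $i$ adds only virtual crossings to it and realises the relabelling carrying that image to $\varphi$. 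The resulting diagram represents $\varphi$ and still has no classical crossing on strand $i$, so $i \notin \Supp_{\mathrm d}(\varphi)$.

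The main obstacle is the bookkeeping at the heart of the argument: one must fix the update rule for $\sigma_k^{\pm 1}$ consistently with the conventions of \Spar\ref{par_diag_vB} and \Spar\ref{section_Aut(Fn)}, and check whether the tuple computes the images under $\varphi$, under $\varphi^{-1}$, or under a permuted labelling. This ambiguity is harmless, however, since $\Supp$ is unchanged under inversion and the relevant index $i$ is fixed by $\sigma$, so every convention delivers the same conclusion ``$x_i$ is inert''. The relabelling step in the converse likewise needs care, but is routine once the insertion of a trivial strand is set up.
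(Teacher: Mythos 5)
Your proof is correct, and it reaches the same dictionary as the paper — classical crossings on strand $i$ correspond to occurrences of $x_i$ in the Artin action — but it is organised quite differently. The paper's proof is a direct high-level matching: it splits ``no classical crossing on strand $i$'' into ``none with $i$ as upper strand'' and ``none with $i$ as lower strand'' and identifies these respectively with ``no $x_j$ appears in $\varphi(x_i)$'' and ``$x_i$ appears in no $\varphi(x_j)$'', leaving the diagram-to-automorphism translation implicit. You instead prove the two implications by separate mechanisms: for ``crossing-free diagram $\Rightarrow$ $x_i$ inert'' you propagate an explicit invariant ($u_p=x_i$ and $x_i$ absent elsewhere) through the word read off the diagram, which makes the translation fully rigorous; for the converse you restrict $\varphi$ to $\F'=\langle x_j : j\neq i\rangle$, realise the restriction as an $(n-1)$-strand diagram via Theorem~\ref{wB_in_Aut(Fn)}, and reinsert a crossing-free strand using Corollary~\ref{wBn_in_wBn+1} and a detour move. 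The trade-off is that the paper's version is shorter and also records the finer upper/lower-crossing correspondence (which it reuses to characterise the first clause of Definition~\ref{d:support-autFn}), while yours is more self-contained and makes the existence of the crossing-free diagram constructive. Two small points to tighten: the convention ambiguity you flag is indeed harmless, since for automorphisms in the image of $\wB_n$ one has $\Supp(\varphi)=\Supp(\varphi^{-1})$ (if $\varphi$ fixes $x_i$ and preserves $\F'$ then so does $\varphi^{-1}$); and in the converse direction you should justify that $\varphi|_{\F'}$ is \emph{onto} $\F'$ before invoking Theorem~\ref{wB_in_Aut(Fn)} for $n-1$ strands — this follows in one line by applying the retraction $\F_n\twoheadrightarrow\F'$ killing $x_i$ to the equality $\langle \varphi(\F'), x_i\rangle=\F_n$.
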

\begin{proof}
According to Definition \ref{d:support-diagram}, we have $i \notin \Supp(\varphi)$ if and only if $i$ is fixed by the permutation of $\{1,\ldots,n\}$ induced by $\varphi$ and there is an $n$-strand diagram representing $\varphi$ with no classical crossings on the $i$-th strand. The first condition is equivalent to saying that $\varphi(x_i)$ is conjugate to $x_i$. The second condition splits into two statements: there is an $n$-strand diagram of $\varphi$ with no classical crossings involving the $i$-th strand as the \emph{upper} strand and no classical crossings involving the $i$-th strand as the \emph{lower} strand. These two statements corresponds to saying that no $x_j$ (for $j\neq i$) appears in $\varphi(x_i)$ and that $x_i$ does not appear in any $\varphi(x_j)$ (for $j\neq i$). Combining the first of these statements with the condition that $\varphi(x_i)$ is conjugate to $x_i$, we obtain the condition that $\varphi(x_i) = x_i$. Hence $i \notin \Supp(\varphi)$ if and only if $\varphi(x_i) = x_i$ and $x_i$ does not appear in any $\varphi(x_j)$ (for $j\neq i$). This is precisely Definition \ref{d:support-autFn}.
\end{proof}

\begin{remark}
Similarly, \emph{extended} welded braids admit a diagrammatic description (\Spar\ref{par_diag_exwB}), and so in this case one also has two possible definitions of $\mathrm{Supp}(\varphi)$ using either Definition \ref{d:support-diagram} or Definition \ref{d:support-autFn}. As in Lemma~\ref{lem:two-def-support} for the non-extended case, these two definitions agree.
\end{remark}

\begin{fact}
\label{fact_support_2}
If two virtual braids have disjoint support, then they commute.
\end{fact}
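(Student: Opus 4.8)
The plan is to prove the statement diagrammatically, by showing that $\varphi$ and $\psi$ can be represented by words in the generators $\sigma_i,\tau_i$ whose indices are confined to two disjoint intervals, and then to read off commutativity from the defining relations. Throughout I may replace the pair $(\varphi,\psi)$ by $(g\varphi g^{-1}, g\psi g^{-1})$ for a common $g\in\vB_n$, since this does not affect whether $\varphi$ and $\psi$ commute. First I would record that support is equivariant under conjugation by a permutation braid: if $g\in\Sym_n\subset\vB_n$ has underlying permutation $\bar g$, then conjugating any diagram by $g$ relabels its strands by $\bar g$, carrying classical crossings on strand $i$ to classical crossings on strand $\bar g(i)$, so that $\Supp(g\varphi g^{-1})=\bar g(\Supp(\varphi))$ directly from Definition~\ref{d:support-diagram}. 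Writing $S=\Supp(\varphi)$, $T=\Supp(\psi)$ with $S\cap T=\varnothing$ and $s=|S|$, I choose $\bar g$ with $\bar g(S)=\{1,\dots,s\}$ and $\bar g(T)\subseteq\{s+1,\dots,n\}$; after conjugating, I may thus assume $\Supp(\varphi)\subseteq\{1,\dots,s\}$ and $\Supp(\psi)\subseteq\{s+1,\dots,n\}$.

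The heart of the argument is a structural lemma: if $n\notin\Supp(\varphi)$, then $\varphi$ lies in the image of the inclusion $\vB_{n-1}\hookrightarrow\vB_n$ of Corollary~\ref{vBn_in_vBn+1}. Indeed, by Definition~\ref{d:support-diagram} the last strand is fixed by the permutation and admits a diagram $D$ of $\varphi$ in which it carries no classical crossing, hence meets the other strands only in virtual crossings. Because its two endpoints already sit at the rightmost top and bottom positions, the detour move (Remark~\ref{rmk:detour-move}) lets me replace the whole arc of this strand by one running down the far right of the diagram, to the right of every excursion of the other strands, and so crossing nothing at all; the result exhibits $\varphi$ in the image of that inclusion, coming from a braid on the first $n-1$ strands. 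Since appending a trivial strand changes neither the permutation nor the classical crossings of the remaining strands, the support is unaffected, so I can iterate: stripping strands $n,n-1,\dots,s+1$ in turn shows that $\varphi$ is a word in $\{\sigma_i,\tau_i : i\le s-1\}$. Applying the left--right mirror image of this lemma (Remark~\ref{symmetries_of_v_moves}) to $\psi$, and stripping strands $1,\dots,s$ from the left, shows that $\psi$ is a word in $\{\sigma_j,\tau_j : j\ge s+1\}$.

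It then only remains to observe that these two alphabets commute letterwise. For $i\le s-1$ and $j\ge s+1$ one has $j-i\ge 2$, so the classical braid relations, the relations of $\Sym_n$, and the mixed relation $\mathrm{(R1)}$ of Definition~\ref{def:presentation_virtual} give $\sigma_i\rightleftarrows\sigma_j$, $\tau_i\rightleftarrows\tau_j$ and $\sigma_i\rightleftarrows\tau_j$. Hence every letter of $\varphi$ commutes with every letter of $\psi$, and therefore $\varphi\psi=\psi\varphi$.

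The main obstacle is the structural lemma, and specifically the detour-move step: one must argue carefully that a strand carrying only virtual crossings and having its endpoints at the extreme position can be combed entirely off to the side so that it becomes genuinely uncrossed (not merely free of classical crossings), so that the braid really lies in the strictly smaller subgroup rather than merely being conjugate into it. The equivariance of $\Supp$ under conjugation by permutation braids is geometrically transparent but should likewise be checked against Definition~\ref{d:support-diagram} rather than taken for granted.
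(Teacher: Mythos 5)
Your proof is correct and follows essentially the same route as the paper's: conjugate by a virtual (permutation) braid to place the two supports in disjoint intervals, use detour moves to comb the inactive strands into trivial vertical strands so that each braid is supported in its own block of the diagram, and then commute the two blocks. The only cosmetic difference is the final step, where you read off commutativity letterwise from the relations $\mathrm{(R1)}$ and the braid/symmetric relations instead of sliding the two subdiagrams vertically past one another, but this is the same observation in algebraic rather than diagrammatic form.
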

\begin{proof}
Denote the given two virtual braids by $\varphi$ and $\psi$. It will suffice to show that they commute after conjugating them both by the same element of $\vB_n$. By conjugating with an appropriate element (involving only virtual crossings), we may therefore arrange that $\Supp(\varphi) = \{1,\ldots,r\}$ and $\Supp(\psi) = \{r+1,\ldots,r+s\}$ for some $r+s\leq n$. By the virtual Reidemeister moves (specifically the \emph{detour moves}), we may then arrange that we have diagrams $D_\varphi$, $D_\psi$ for $\varphi$, $\psi$ such that
\[
D_\varphi = D'_\varphi \otimes \mathbb{I}_{n-r} \qquad\text{and}\qquad D_\psi = \mathbb{I}_r \otimes D'_\psi \otimes \mathbb{I}_{n-r-s},
\]
where $\mathbb{I}_k$ denotes the trivial $k$-strand diagram and $\otimes$ denotes placing virtual braid diagrams side-by-side. By ``vertically sliding the subdiagrams $D'_\varphi$ and $D'_\psi$'', we see that
\begin{align*}
D_\varphi \circ D_\psi
&= \bigl( D'_\varphi \otimes \mathbb{I}_{s} \otimes \mathbb{I}_{n-r-s} \bigr) \circ \bigl( \mathbb{I}_r \otimes D'_\psi \otimes \mathbb{I}_{n-r-s} \bigr) \\
&= \bigl( \mathbb{I}_r \otimes D'_\psi \otimes \mathbb{I}_{n-r-s} \bigr) \circ \bigl( D'_\varphi \otimes \mathbb{I}_{s} \otimes \mathbb{I}_{n-r-s} \bigr) \\
&= D_\psi \circ D_\varphi . \qedhere
\end{align*}
\end{proof}

There is another interpretation of the notion of \emph{support} in the case of \emph{welded} and \emph{extended welded} braids, where we view elements as loops of configurations of unlinked, unknotted circles. This interpretation is closer to a usual geometric notion of support. In fact, it is exactly the same if one interprets these groups as mapping class groups, as in~\cite{Damiani}. Notice that we will often talk of \emph{the} support of a mapping class, even if it is in fact not well-defined: only \emph{a} support is, as the support of a homeomorphism in the mapping class. However, \emph{having disjoint support} is well-defined for mapping classes (meaning that they have representatives with disjoint support), and we will often focus on the supports of particular representatives (the ones used to show that the mapping classes under scrutiny have disjoint support). For example, Figures \ref{fig:welded-braids-commuting} and \ref{fig:virtual-braids-commuting-2} depict (representatives of) the same pair of elements, $\chi_{\beta,\alpha'}$ and $\sigma_\alpha$, representing generators $x_{ji}$ and $s_i$ of the abelianisation of the welded braid group. In each case the supports are shaded dark and light grey respectively: in the former case these are (disjoint) compact subsets of $\bR^3$, in the latter case they are (disjoint) subcollections of strands.

\begin{figure}[ht]
\centering
\includegraphics[scale=0.6]{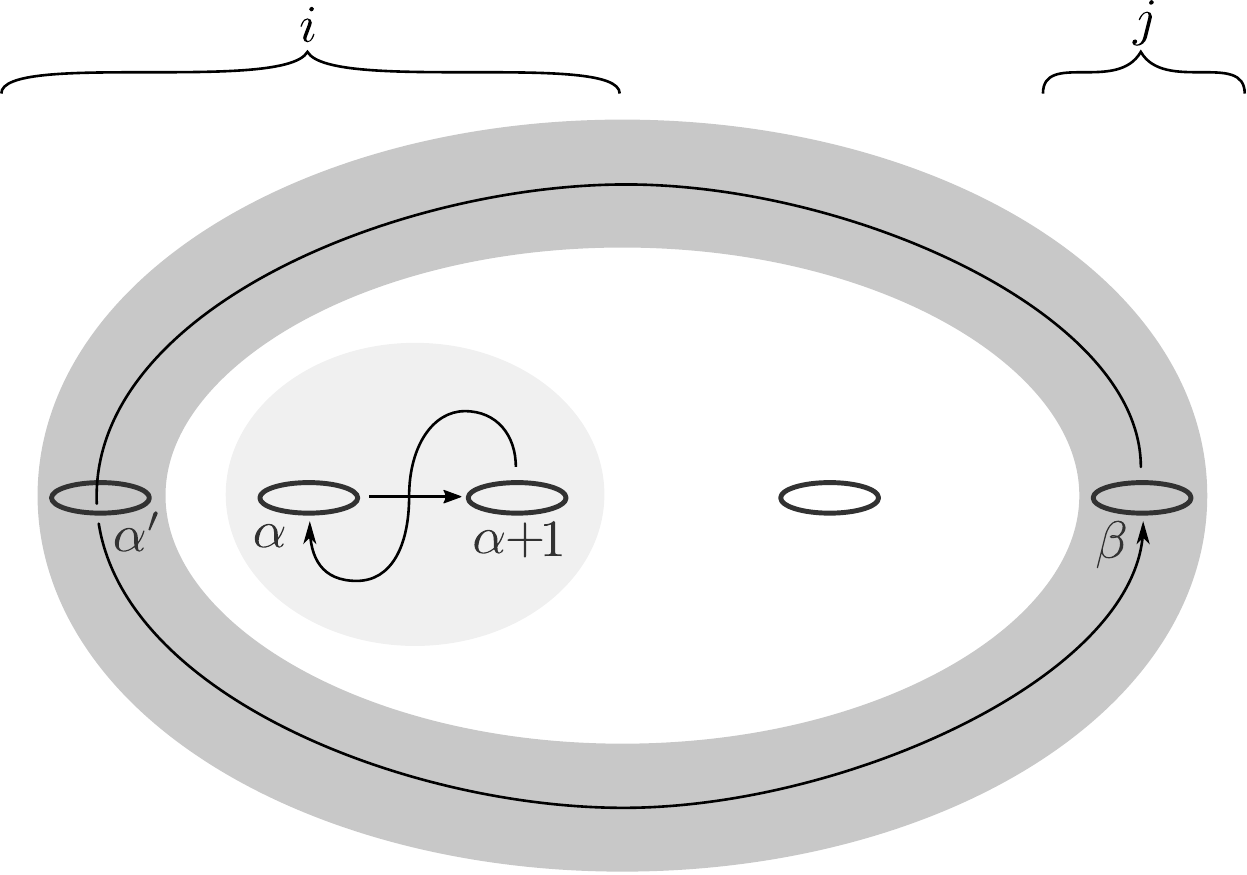}
\caption{The supports of $\sigma_\alpha$ and $\chi_{\beta,\alpha'}$ interpreted as loops of configurations of unlinks.}
\label{fig:welded-braids-commuting}

\includegraphics[scale=0.6]{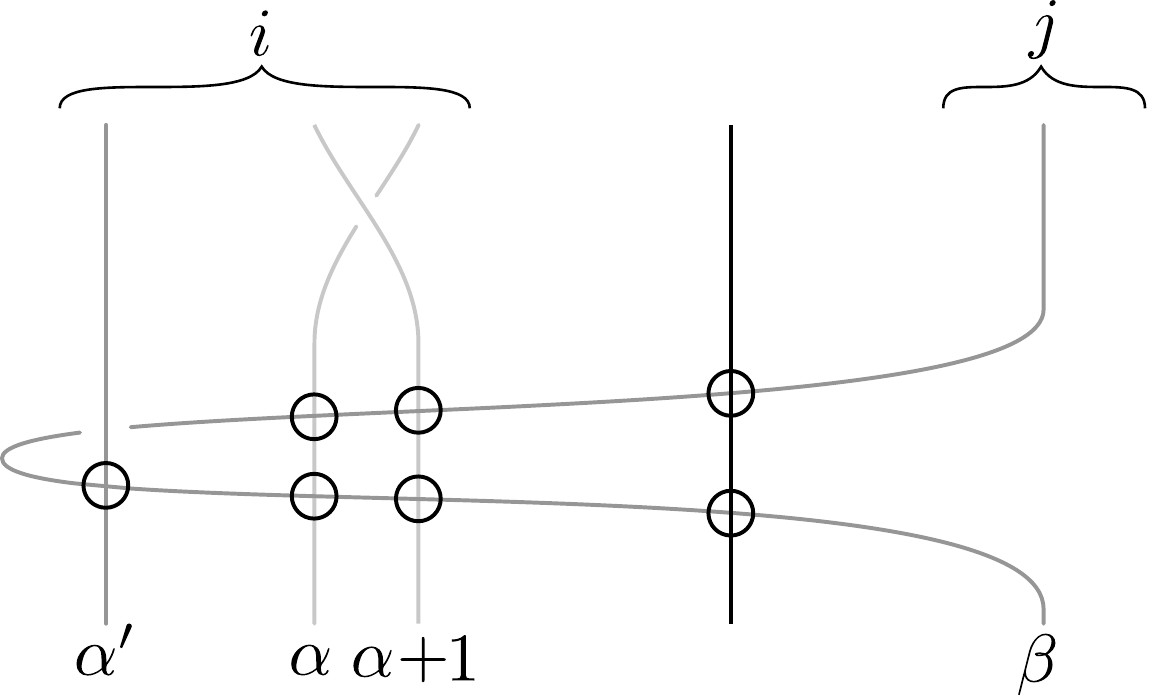}
\caption{The supports of $\sigma_\alpha$ and $\chi_{\beta,\alpha'}$ interpreted diagrammatically.}
\label{fig:virtual-braids-commuting-2}
\end{figure}

\section{Virtual braids}

We now set out to study the lower central series of $\vB_n$. Let us first remark that we can compute $\vB_n^{\ab}$ easily, using the presentation of Definition~\ref{def:presentation_virtual}. Namely, for $n\geq 2$, $\vB_n^{\ab} \cong\Z\oplus(\Z/2)$, where the first factor is generated by the common class $\sigma$ of the braid generators $\sigma_i$, and the second one is generated by the common class $\tau$ of the symmetric generators $\tau_i$.

\begin{proposition}\label{LCS_of_vBn}
The LCS of the virtual braid group $\vB_n$:
\begin{itemize}
\item \emph{stops at $\LCS_2$} if $n \geq 4$. 
\item \emph{does not stop} if $n = 2,3$.
\end{itemize}
Moreover, $\vB_2$ is residually nilpotent, whereas $\vB_3$ is not.
\end{proposition}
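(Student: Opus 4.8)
The plan is to split into the three regimes $n\geq 4$, $n=2$ and $n=3$, using throughout the computation recorded just above that $\vB_n^{\ab}\cong \Z\oplus(\Z/2)$, with $\Z$ generated by the common class $\sigma$ of the $\sigma_i$ and $\Z/2$ by the common class $\tau$ of the $\tau_i$. For $n\geq 4$ I would run the disjoint support argument of Corollary~\ref{commuting_representatives} on the generating set $S=\{\sigma,\tau\}$ of $\vB_n^{\ab}$: the elements $\sigma_1,\sigma_3$ both represent $\sigma$ and commute by the far braid relation; $\tau_1,\tau_3$ both represent $\tau$ and commute inside the copy of $\Sym_n$; and $\sigma_1,\tau_3$ represent $\sigma,\tau$ and commute by relation $\mathrm{(R1)}$ (equivalently $\Supp(\sigma_1)=\{1,2\}$ and $\Supp(\tau_3)=\{3,4\}$ are disjoint, so Fact~\ref{fact_support_2} applies). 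All three elements exist precisely once $n\geq 4$. Hence $\Lie(\vB_n)$ is abelian, reduced to $\vB_n^{\ab}$, so $\LCS_2=\LCS_3$; as $\vB_n^{\ab}\neq 0$, the LCS stops exactly at $\LCS_2$.

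For $n=2$, reading the presentation off directly gives $\vB_2=\langle \sigma_1,\tau_1\mid \tau_1^2=1\rangle\cong\Z*(\Z/2)$, since the mixed relations $\mathrm{(R1)}$ and $\mathrm{(R2)}$ are vacuous. I would then invoke the complete computation of the LCS of free products of abelian groups from Appendix~\ref{sec:appendix_2}: it shows at once that $\Lie_k(\vB_2)\neq 0$ for all $k$, so the LCS does not stop, and that $\vB_2$ is residually nilpotent. For the residual nilpotence one may alternatively argue that, by a theorem of Gruenberg, a free product of two residually-(finite $2$-group)s is again residually a finite $2$-group; both $\Z$ and $\Z/2$ have this property, hence so does $\vB_2$, so $\LCS_\infty(\vB_2)=\{1\}$.

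For $n=3$ there are two statements. For the \emph{non-stopping}, observe that because $\mathrm{(R1)}$ is vacuous when $n=3$, the assignment $\sigma_1,\sigma_2\mapsto\sigma_1$ and $\tau_1,\tau_2\mapsto\tau_1$ defines a surjection $\vB_3\twoheadrightarrow\vB_2$: one checks that the braid relation, the $\Sym_3$ relations and $\mathrm{(R2)}$ are all sent to trivial identities (e.g.\ $\mathrm{(R2)}$ becomes $\tau_1^2\sigma_1=\sigma_1\tau_1^2$, i.e.\ $\sigma_1=\sigma_1$). Since the LCS of $\vB_2$ does not stop, neither does that of $\vB_3$, by Lemma~\ref{lem:stationary_quotient}. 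For the failure of \emph{residual nilpotence}, I would exhibit a nontrivial element of $\LCS_\infty(\vB_3)$. Take $c:=\tau_1\tau_2$: inside the section $\langle\tau_1,\tau_2\rangle\cong\Sym_3$ it is a $3$-cycle, so $c\neq 1$ and $c^3=1$, while its image in $\vB_3^{\ab}$ is $2\tau=0$, giving $c\in\LCS_2$. Now Example~\ref{G^ab_equals_Z+Z/d} together with Corollary~\ref{torsion} shows that $\Lie_k(\vB_3)$ is $2$-torsion for every $k\geq 2$. Hence if $c\in\LCS_i$ for some $i\geq 2$, its class $\bar c\in\Lie_i(\vB_3)$ satisfies both $2\bar c=0$ and $3\bar c=\overline{c^3}=0$, so $\bar c=3\bar c-2\bar c=0$, i.e.\ $c\in\LCS_{i+1}$. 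By induction $c\in\bigcap_i\LCS_i=\LCS_\infty(\vB_3)$, a nontrivial element, so $\vB_3$ is not residually nilpotent.

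The case $n\geq 4$ is routine; the real content lies in the two small cases, which cannot be reached by disjoint supports. The residual nilpotence of $\Z*\Z/2$ needs genuine external input (the free-product computation of Appendix~\ref{sec:appendix_2}, or Gruenberg's theorem), and the delicate point for $n=3$ is the arithmetic clash between the $3$-torsion of the cycle $c$ and the uniform $2$-torsion of all higher Lie quotients, which is exactly what forces $c$ into the residue and thereby distinguishes $\vB_3$ from $\vB_2$.
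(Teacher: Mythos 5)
Your proof is correct, and for three of the four claims it follows the paper's own route: disjoint supports plus Corollary~\ref{commuting_representatives} for $n\geq 4$, the identification $\vB_2\cong\Z*(\Z/2)$ together with Proposition~\ref{Lie(Z*Z/2)} for $n=2$, and the surjection $\vB_3\twoheadrightarrow\vB_2$ plus Lemma~\ref{lem:stationary_quotient} for the non-stopping at $n=3$. The one place you genuinely diverge is the failure of residual nilpotence of $\vB_3$: the paper simply observes that the split injection $\Sym_3\hookrightarrow\vB_3$ exhibits a subgroup that is not residually nilpotent (and residual nilpotence passes to subgroups, since $\LCS_i(H)\subseteq\LCS_i(G)$ for $H\leq G$), whereas you produce an explicit element of the residue by playing the $3$-torsion of the $3$-cycle $\tau_1\tau_2$ against the uniform $2$-torsion of $\Lie_k(\vB_3)$ for $k\geq 2$ coming from Example~\ref{G^ab_equals_Z+Z/d} and Corollary~\ref{torsion}. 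Both arguments land on the same witness $\tau_1\tau_2\in\LCS_\infty(\vB_3)$; the paper's is shorter, while yours is self-contained inside the Lie ring and generalises to any torsion element of order prime to $d$ in a group with abelianisation $\Z\times(\Z/d)$. One small overstatement to fix: Appendix~\ref{sec:appendix_2} does not give a ``complete computation'' of the LCS of $\Z*(\Z/2)$ --- Proposition~\ref{Lie(Z*Z/2)} proves residual nilpotence and non-nilpotence and defers the full Lie ring to Labute --- but that is exactly what you need, since residually nilpotent together with non-nilpotent already forces $\Lie_k\neq 0$ for all $k$.
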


\begin{proof}
From the presentation of the virtual braid group $\vB_n$ (Definition~\ref{def:presentation_virtual}), we deduce that in $\vB_n^{\ab}$ all the $\sigma_i$ have the same class $\sigma$ and all the $\tau_i$ have the same class $\tau$.

\ul{Case 1: $n \geq 4$.} Then $\sigma_1$ and $\tau_3$ represent respectively $\sigma$ and $\tau$, and they commute, since they have disjoint support ; see Figure~\ref{fig:virtual-braids} for an illustration. Hence we can apply Corollary~\ref{commuting_representatives} to conclude that $\LCS_2(\vB_n) = \LCS_3(\vB_n)$.

\ul{Case 2: $n = 2$.} The group $\vB_2 \cong \Z * \Z/2$ is residually nilpotent, but not nilpotent (Proposition~\ref{Lie(Z*Z/2)}). In particular, its LCS does not stop (see also Remark~\ref{LCS_Z/2*Z_weak}).

\ul{Case 3: $n = 3$.} One can see directly from the presentation of $\vB_3$ that there is a well-defined surjection $\pi \colon \vB_3 \twoheadrightarrow \vB_2$, sending $\sigma_1$ and $\sigma_2$ to $\sigma_1$, and $\tau_1$ and $\tau_2$ to $\tau_1$. This induces a surjection $\Lie(\pi) \colon \Lie(\vB_3) \twoheadrightarrow \Lie(\vB_2)$, hence the LCS of $\vB_3$ does not stop by Lemma~\ref{lem:stationary_quotient}.

Finally, the canonical map $\Sym_3 \rightarrow \vB_3$ sending $\tau_i$ to $\tau_i$ ($i\in\{1,2\}$) splits, is thus injective, and $\Sym_3$ is not residually nilpotent. This implies that $\vB_3$ is not residually nilpotent. 
\end{proof}

We can describe precisely the behaviour of the LCS of $\vB_3$:
\begin{proposition}\label{LCS_of_vB3}
The surjection $\pi \colon \vB_3 \twoheadrightarrow \vB_2$ described in the proof of Proposition~\ref{LCS_of_vBn} induces an isomorphism $\vB_3/\LCS_\infty \cong \vB_2$. In particular, $\Lie(\pi)$ is an isomorphism between the associated Lie rings.
\end{proposition}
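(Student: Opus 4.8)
The plan is to show that $\ker\pi$ is exactly the residue $\LCS_\infty(\vB_3)$; the isomorphism $\vB_3/\LCS_\infty \cong \vB_2$ is then immediate, and the statement about $\Lie(\pi)$ follows from Lemma~\ref{Quotient_by_residue}, since $\vB_2$ is residually nilpotent by Proposition~\ref{LCS_of_vBn}.

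First I would identify $\ker\pi$. Reading off the presentation of $\vB_3$ (Definition~\ref{def:presentation_virtual}), one checks that imposing $\sigma_1 = \sigma_2$ and $\tau_1 = \tau_2$ collapses every defining relation to one holding in $\vB_2 = \langle \sigma, \tau \mid \tau^2 = 1\rangle$: the braid and symmetric-group relations become trivial, $\mathrm{(R1)}$ is vacuous for $n=3$, and $\mathrm{(R2)}$ reduces to $\tau^2\sigma = \sigma\tau^2$, which holds because $\tau^2 = 1$. Thus, setting $\sigma_1 = \sigma_2$ and $\tau_1 = \tau_2$ yields precisely $\vB_2$, so that $\ker\pi$ is the normal closure $M$ of $\{\sigma_1\sigma_2^{-1},\, \tau_1\tau_2^{-1}\}$.

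The heart of the argument is to prove that both normal generators of $\ker\pi$ lie in $\LCS_\infty(\vB_3)$. Here I would use the elementary but crucial fact that any homomorphism $f \colon H \to G$ satisfies $f(\LCS_\infty(H)) \subseteq \LCS_\infty(G)$, since $f(\LCS_i H) \subseteq \LCS_i G$ for every $i$. For $\sigma_1\sigma_2^{-1}$, I would apply this to the inclusion $\B_3 \hookrightarrow \vB_3$ of Corollary~\ref{wBn_in_wBn+1}: in $\B_3$ one has $\LCS_\infty(\B_3) = \LCS_2(\B_3)$ (Example~\ref{eg:braids}), and $\sigma_1\sigma_2^{-1}$ lies in $\LCS_2(\B_3)$ because it is trivial in $\B_3^{\ab} \cong \Z$. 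For $\tau_1\tau_2^{-1} = \tau_1\tau_2$, I would instead use the splitting $\Sym_3 \hookrightarrow \vB_3$, $\tau_i \mapsto \tau_i$: the element $\tau_1\tau_2$ is a $3$-cycle, hence lies in $A_3 = \LCS_\infty(\Sym_3)$ (for $\Sym_3$ one computes $\LCS_i(\Sym_3) = A_3$ for all $i \geq 2$). The conceptually interesting point, and the step I expect to be the main obstacle, is exactly this one: $\tau_1\tau_2^{-1}$ has finite order, so one might expect it to be too ``shallow'' to lie in the residue, but the non-residual-nilpotence of $\Sym_3$ is precisely what places it in $\LCS_\infty$.

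Finally, since $M = \ker\pi \subseteq \LCS_\infty(\vB_3)$ and $\vB_3/M \cong \vB_2$ is residually nilpotent, the reverse inclusion is automatic: the projection $q \colon \vB_3 \twoheadrightarrow \vB_3/M$ sends $\LCS_\infty(\vB_3)$ into $\bigcap_i q(\LCS_i(\vB_3)) = \LCS_\infty(\vB_2) = \{1\}$, whence $\LCS_\infty(\vB_3) \subseteq M$. Therefore $\LCS_\infty(\vB_3) = M = \ker\pi$, so $\pi$ induces the isomorphism $\vB_3/\LCS_\infty \cong \vB_2$; Lemma~\ref{Quotient_by_residue} then upgrades this to the statement that $\Lie(\pi)$ is an isomorphism.
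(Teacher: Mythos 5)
Your proof is correct, but it takes a genuinely different route from the paper's. The paper exploits the fact that $\pi$ splits: the section $\iota \colon \vB_2 \to \vB_3$ (adding a strand) induces a surjection on abelianisations, hence — since the Lie ring of a lower central series is generated in degree one (Proposition~\ref{generationproperty}) — a surjection $\Lie(\iota)$; combined with $\Lie(\pi)\circ\Lie(\iota) = \mathrm{id}$ this forces $\Lie(\pi)$ to be an isomorphism, and Lemma~\ref{Quotient_by_residue} is then applied in the direction opposite to yours, deducing the group-level isomorphism $\vB_3/\LCS_\infty \cong \vB_2$ from the Lie-level one. You instead identify $\ker\pi$ explicitly as the normal closure of $\sigma_1\sigma_2^{-1}$ and $\tau_1\tau_2$ via the presentation, push residues forward along $\B_3 \to \vB_3$ and $\Sym_3 \hookrightarrow \vB_3$ to place these generators in $\LCS_\infty(\vB_3)$, and obtain the reverse inclusion from the residual nilpotence of $\vB_2 \cong \Z * \Z/2$; all of these steps check out. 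Your argument is in fact precisely the strategy the paper deploys for $\wB_3$ (Proposition~\ref{LCS_of_wB3}), where the splitting trick is unavailable because the surjection $\vB_3 \twoheadrightarrow \vB_2$ does not descend to the welded quotient. So your proof buys generality — it is the template that survives passage to $\wB_3$ — at the cost of a presentation computation and an explicit set of normal generators for the residue, both of which the paper's splitting argument sidesteps; conversely, the paper's argument is shorter here but is special to the virtual case.
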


\begin{proof}
We first show that  $\Lie(\pi)$ is an isomorphism. The surjection $\pi$ splits, a splitting $\iota$ being given by $\sigma_1 \mapsto \sigma_1$ and  $\tau_1 \mapsto \tau_1$ and corresponds to adding a third strand. This splitting induces a surjection between abelianisations. Since the Lie rings associated to LCS are generated by their degree one, this implies that $\Lie(\iota)$ is surjective. Since $\Lie(\pi) \circ \Lie(\iota) = \Lie(\pi \circ \iota) = id$, we infer that $\Lie(\pi)$ and $\Lie(\iota)$ are isomorphisms $\Lie(\vB_3) \cong \Lie(\vB_2)$. Now, since $\vB_2 \cong \Z * \Z/2$ is residually nilpotent (Proposition~\ref{Lie(Z*Z/2)}), the result follows directly from Lemma~\ref{Quotient_by_residue}.
\end{proof}

\begin{figure}[ht]
\centering
\includegraphics{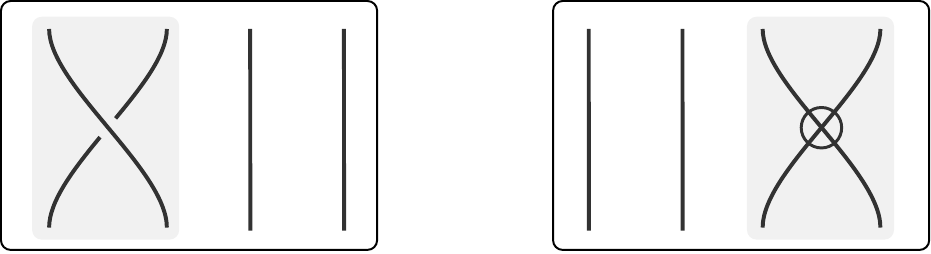}
\caption{Disjoint-support representatives for the two generators of the abelianisation of the virtual braid group $\vB_n$ for $n\geq 4$. The supports are shaded in light grey.}
\label{fig:virtual-braids}
\end{figure}

\section{Welded braids}

The definition of $\wB_n$ as a quotient of $\vB_n$ (Definition~\ref{def:presentation_welded}) allows us to deduce some results about the LCS of $\wB_n$ from their counterparts for $\vB_n$. This begins with noticing that $\wB_n^{\ab} \cong \vB_n^{\ab} \cong\Z\oplus(\Z/2)$.

\begin{proposition}\label{LCS_of_wBn}
The LCS of the welded braid group $\wB_n$:
\begin{itemize}
\item \emph{stops at $\LCS_2$} if $n \geq 4$. 
\item \emph{does not stop} if $n = 2,3$.
\end{itemize}
Moreover, $\wB_{2}$ is residually nilpotent, whereas $\wB_{3}$ is not.
\end{proposition}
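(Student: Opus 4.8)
The plan is to follow the same three-case division ($n\geq 4$, $n=2$, $n=3$) used in the proof of Proposition~\ref{LCS_of_vBn}, exploiting throughout that $\wB_n^{\ab}\cong\Z\oplus(\Z/2)$ is generated by the common class $\sigma$ of the $\sigma_i$ and the common class $\tau$ of the $\tau_i$. Two of the three cases transfer almost verbatim from the virtual setting; the real work is in $n=3$, where the welded relation $\mathrm{(R3)}$ obstructs the argument that worked for $\vB_3$.

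For $n\geq 4$ I would run a disjoint-support argument. Viewing $\wB_n$ as a subgroup of $\Aut(\F_n)$ via Theorem~\ref{wB_in_Aut(Fn)}, the elements $\sigma_1$ and $\tau_3$ represent $\sigma$ and $\tau$ and have supports $\{1,2\}$ and $\{3,4\}$, which are disjoint as soon as $n\geq 4$. By Fact~\ref{fact_support_1} these two elements commute, so Corollary~\ref{commuting_representatives} gives $\LCS_2(\wB_n)=\LCS_3(\wB_n)$ (indeed $\Lie(\wB_n)$ collapses onto $\wB_n^{\ab}$). For $n=2$, the range $1\leq i\leq n-2$ defining $\mathrm{(R3)}$ is empty, so $\wB_2=\vB_2\cong\Z*\Z/2$; by Proposition~\ref{Lie(Z*Z/2)} this group is residually nilpotent but not nilpotent, so its LCS does not stop.

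The main obstacle is the case $n=3$, because the proof for $\vB_3$ does not carry over: the surjection $\vB_3\twoheadrightarrow\vB_2$ does \emph{not} descend to $\wB_3$, since under $\sigma_i\mapsto\sigma_1$, $\tau_i\mapsto\tau_1$ the relation $\mathrm{(R3)}$ becomes $\sigma_1^2\tau_1=\tau_1\sigma_1^2$, which is false in $\Z*\Z/2$. The key observation that unlocks the case is that $\mathrm{(R3)}$ becomes \emph{vacuous} as soon as the $\sigma_i$ and $\tau_i$ are sent to involutions. I would therefore define a map $\wB_3\to\Z/2*\Z/2$ by $\sigma_i\mapsto s$ and $\tau_i\mapsto t$, where $s,t$ are the two order-two generators of the infinite dihedral group. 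One checks that every defining relation of $\wB_3$ holds: the braid and symmetric relations reduce to $s^3=s^3$ and $t^3=t^3$, relation $\mathrm{(R1)}$ is empty for $n=3$, $\mathrm{(R2)}$ reduces to $t^2 s=s t^2$, and $\mathrm{(R3)}$ reduces to $s^2 t=t s^2$, all trivially valid once $s^2=t^2=1$. This map is visibly surjective, and $\Z/2*\Z/2$ is a free product of abelian groups whose LCS does not stop (Appendix~\ref{sec:appendix_2}); hence Lemma~\ref{lem:stationary_quotient} shows that the LCS of $\wB_3$ does not stop.

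Finally, for the residual nilpotence statements: $\wB_2\cong\Z*\Z/2$ is residually nilpotent by Proposition~\ref{Lie(Z*Z/2)}, while for $\wB_3$ the splitting $\tau_i\mapsto(i,i+1)$ of the projection $\wB_3\twoheadrightarrow\Sym_3$ embeds $\Sym_3$ into $\wB_3$; since $\Sym_3$ is not residually nilpotent (its residue is the nontrivial subgroup $A_3$), a subgroup of $\wB_3$ fails to be residually nilpotent, so $\wB_3$ is not residually nilpotent either. I expect the only genuinely new ingredient, compared with the virtual case, to be the replacement of the quotient $\vB_2$ by the free product $\Z/2*\Z/2$ in the $n=3$ analysis.
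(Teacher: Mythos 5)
Your proof is correct and follows essentially the same route as the paper: a disjoint-support argument for $n \geq 4$, the identification $\wB_2 = \vB_2 \cong \Z * \Z/2$, and for $n = 3$ a surjection onto $\Z/2 * \Z/2$ combined with Lemma~\ref{lem:stationary_quotient}. The only real difference is cosmetic: you build the surjection $\wB_3 \twoheadrightarrow \Z/2 * \Z/2$ directly by checking the defining relations, whereas the paper extracts the same map from the finer computation of $\wB_3/\LCS_\infty$ in Proposition~\ref{LCS_of_wB3} (which additionally identifies the residue and the centre); for the statement at hand your shortcut suffices.
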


\begin{proof}
\ul{Case 1: $n \geq 4$.} The fact that $\LCS_2(\vB_n) = \LCS_3(\vB_n)$ directly implies that $\LCS_2(\wB_n) = \LCS_3(\wB_n)$. In fact, one can easily see that our disjoint support argument for $\vB_n$ can be used in $\wB_n$ too. See Figure~\ref{fig:welded-braids} for an illustration from the point of view of ``loop braids''; alternatively, see Figure~\ref{fig:virtual-braids} again, interpreted now as a diagram of welded braids instead of virtual braids.

\ul{Case 2: $n = 2$.} There is no additional relation: $\wB_{2} \cong \vB_2 \cong \Z * \Z/2$.

\ul{Case 3: $n = 3$.} We treat this tricky case as a separate result. Namely, Proposition~\ref{LCS_of_wB3} below gives a surjection from $\wB_3$ onto $\Z/2 * \Z/2$, whose LCS does not stop by Proposition~\ref{LCS_of_Z/2*Z/2}. The result thus follows from Lemma~\ref{lem:stationary_quotient}.
\end{proof}

\begin{figure}[ht]
\centering
\includegraphics{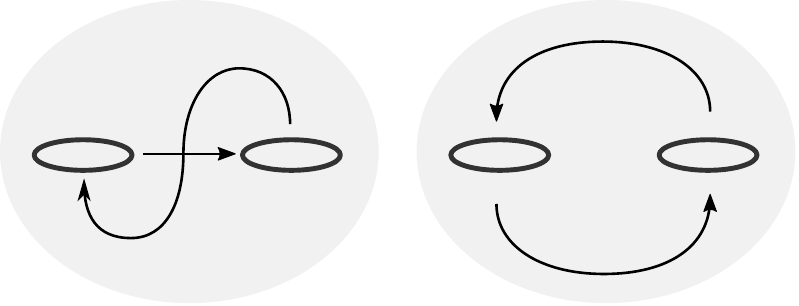}
\caption{Disjoint-support representatives for the two generators of the abelianisation of the welded braid group $\wB_n$ for $n\geq 4$. The supports are shaded in light grey.}
\label{fig:welded-braids}
\end{figure}

The group $\wB_3$ is a non-trivial quotient of $\vB_3$, and our result on $\vB_3$ cannot be transferred to $\wB_3$, nor can we directly adapt the reasoning. Indeed, the surjection $\vB_3 \twoheadrightarrow \vB_2 \cong \wB_2$ does not factor through the quotient by our additional relation, leaving us with no obvious surjection from $\wB_3$ onto $\wB_2$. However, we are still able to compute $\wB_3/\LCS_\infty$, and the situation is very similar to the case of virtual braids. In particular, the LCS does not stop, but $\wB_3$ is not residually nilpotent.
\begin{proposition}\label{LCS_of_wB3}
The residue $\LCS_\infty(\wB_3)$ is normally generated by $\sigma_1\sigma_2^{-1}$ and $\tau_1\tau_2$. The quotient $G = \wB_3/\LCS_\infty$ has the following presentation:
\[G = \left\langle \sigma, \tau\ \middle| \ \tau^2 = 1,\ \sigma^2\tau = \tau\sigma^2 \right\rangle.\]
Its centre $\mathcal ZG$ is infinite cyclic, generated by $\sigma^2$, and:
\[G/\mathcal ZG \cong \Z/2 * \Z/2.\]
\end{proposition}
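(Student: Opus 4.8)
The plan is to identify the residue $\LCS_\infty(\wB_3)$ with the normal closure $N$ of $\sigma_1\sigma_2^{-1}$ and $\tau_1\tau_2$, to compute $\wB_3/N$ explicitly, and to show that this quotient is residually nilpotent. First I would check that $N\subseteq\LCS_\infty(\wB_3)$. For $\sigma_1\sigma_2^{-1}$: the inclusion $\B_3\hookrightarrow\wB_3$ of Corollary~\ref{wBn_in_wBn+1} maps $\LCS_\infty(\B_3)$ into $\LCS_\infty(\wB_3)$, as any homomorphism sends residue into residue; since $\B_3^{\ab}\cong\Z$ is cyclic, the LCS of $\B_3$ stops at $\LCS_2$ by Corollary~\ref{cyclic_abelianization}, so $\sigma_1\sigma_2^{-1}\in\LCS_2(\B_3)=\LCS_\infty(\B_3)$. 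For $\tau_1\tau_2$: the splitting $\Sym_3\hookrightarrow\wB_3$, $(i,i+1)\mapsto\tau_i$, sends $\LCS_\infty(\Sym_3)=A_3$ into $\LCS_\infty(\wB_3)$, and $\tau_1\tau_2$ is the image of the $3$-cycle $(1\,2\,3)\in A_3$. Since $\LCS_\infty(\wB_3)$ is normal, $N\subseteq\LCS_\infty(\wB_3)$ follows.

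Next I would compute $P:=\wB_3/N$ by Tietze transformations on the presentation of $\wB_3$. Imposing $\sigma_1=\sigma_2=:\sigma$ and, using $\tau_i^2=1$, deducing $\tau_1=\tau_2=:\tau$ from $\tau_1\tau_2=1$, one sees the braid relation, the $\Sym_3$-relations and $\mathrm{(R2)}$ all collapse to triviality, while $\mathrm{(R3)}$ becomes $\sigma^2\tau=\tau\sigma^2$. Hence $P=\langle\sigma,\tau\mid\tau^2=1,\ \sigma^2\tau=\tau\sigma^2\rangle$, which is exactly the claimed presentation of $G$.

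The crux is to show that $P$ is residually nilpotent. The element $z:=\sigma^2$ commutes with $\sigma$ and, by the defining relation, with $\tau$, so it is central; moreover $P/\langle z\rangle=\langle\sigma,\tau\mid\sigma^2=\tau^2=1\rangle\cong\Z/2\ast\Z/2$, the infinite dihedral group, which is residually nilpotent (Proposition~\ref{LCS_of_Z/2*Z/2}; its lower central series is $2^{k-1}\Z$, with trivial intersection). Since $P^{\ab}\cong\Z\oplus\Z/2$ with $z\mapsto 2\sigma$ of infinite order, we have $\langle z\rangle\cap\LCS_2(P)=\{1\}$. Applying Proposition~\ref{Extensions_and_residues} with $N=\langle z\rangle$ and $i=2$ gives an isomorphism $\LCS_\infty(P)\cong\LCS_\infty(\Z/2\ast\Z/2)=\{1\}$, so $P$ is residually nilpotent. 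This disjoint-central-subgroup criterion is what makes the otherwise delicate residual nilpotence tractable, and is the main obstacle of the argument. To conclude the first two assertions, note that the projection $p\colon\wB_3\twoheadrightarrow P$ sends $\LCS_\infty(\wB_3)$ into $\LCS_\infty(P)=\{1\}$, so $\LCS_\infty(\wB_3)\subseteq\ker p=N$; combined with the previous paragraph this yields $\LCS_\infty(\wB_3)=N$ and $G=\wB_3/\LCS_\infty\cong P$.

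Finally I would read off the centre and the quotient. The central element $z=\sigma^2$ has infinite order (from $G^{\ab}\cong\Z\oplus\Z/2$), so $\langle z\rangle\cong\Z$. Since $G/\langle z\rangle\cong\Z/2\ast\Z/2$ is the infinite dihedral group, whose centre is trivial, any central element of $G$ maps to $1$ in $G/\langle z\rangle$, giving $\mathcal ZG\subseteq\langle z\rangle$; together with the centrality of $z$ this proves $\mathcal ZG=\langle z\rangle\cong\Z$. Consequently $G/\mathcal ZG=G/\langle\sigma^2\rangle\cong\Z/2\ast\Z/2$, completing the proof.
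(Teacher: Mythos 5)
Your proof is correct and follows essentially the same route as the paper: the same two canonical maps from $\B_3$ and $\Sym_3$ to get $N\subseteq\LCS_\infty$, the same presentation computation, and the same residual-nilpotence argument via the central element $\sigma^2$ with $G/\langle\sigma^2\rangle\cong\Z/2*\Z/2$ and $\langle\sigma^2\rangle\cap\LCS_2=\{1\}$. The only cosmetic difference is that you invoke Proposition~\ref{Extensions_and_residues} by name where the paper spells out the same argument inline.
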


\begin{proof}
The canonical map $\B_3 \rightarrow \wB_3$ sends $\LCS_\infty(\B_3)$ to $\LCS_\infty(\wB_3)$. Since $\sigma_1\sigma_2^{-1} \in \LCS_2(\B_3) = \LCS_\infty(\B_3)$, we deduce that $\sigma_1\sigma_2^{-1} \in \LCS_\infty(\wB_3)$. The same holds with $\tau_1\tau_2$, which is the image of $\tau_1\tau_2 \in \LCS_2(\Sym_3) = \LCS_\infty(\Sym_3)$ by the canonical map $\Sym_3 \rightarrow \wB_3$. Thus, $\sigma_1\sigma_2^{-1}, \tau_1\tau_2 \in \LCS_\infty(\wB_3)$, implying that $\wB_3/\LCS_\infty$ is a quotient of the group $G$ obtained from $\wB_3$ by adding the relations $\sigma_1 = \sigma_2$ and $\tau_1 = \tau_2$. We deduce from the usual presentation of $\wB_3$ that $G$ is indeed described by the presentation of the statement. We are going to show that $G$ is residually nilpotent, which implies that $G$ is indeed equal to $\wB_3/\LCS_\infty$ (it is the biggest residually nilpotent quotient of $\wB_3$).
We first deduce from its presentation that the abelianisation of $G$ is given by:
\[G^{\ab} \cong \Z\{\bar\sigma, \bar\tau\}/(2\bar\tau) \cong \Z \oplus \Z/2.\]
In particular, $\bar\sigma$ is not a torsion element of $G^{\ab}$, which means that no non-trivial power of $\sigma$ belongs to $\LCS_2(G)$.

Now, consider the element $\sigma^2  \in G$. It commutes with $\sigma$ and $\tau$, thus it is central. In particular, $\langle \sigma^2 \rangle$ is a normal subgroup. Moreover, the quotient $G/\langle \sigma^2 \rangle$ is described by:
\[G/\langle \sigma^2 \rangle\ \cong\
\left\langle \sigma,\ \tau\ \middle| \ \sigma^2 = \tau^2 = 1,\ \sigma^2\tau = \tau\sigma^2 \right\rangle\ \cong\
\left\langle \sigma, \tau\ \middle| \ \sigma^2 = \tau^2 = 1\right\rangle,\]
which is exactly $\Z/2 * \Z/2$. Proposition~\ref{LCS_of_Z/2*Z/2} then implies that $G$ is residually nilpotent. Indeed, if $x \in \LCS_\infty(G)$, then its image in $G/\langle \sigma^2 \rangle \cong \Z/2 * \Z/2$ is in $\LCS_\infty(\Z/2 * \Z/2) = \{1\}$. Thus $x \in \langle \sigma^2 \rangle$. Since $\bar \sigma$ is not a torsion element in $G^{\ab}$, we have $\langle \sigma^2 \rangle \cap \LCS_2G = \{1\}$, whence $x = 1$.

Finally, we remark that $\Z/2 * \Z/2$ is centreless, implying that $\mathcal ZG$ consists only of $\langle \sigma^2 \rangle$. As $\bar \sigma$ is not a torsion element in $G^{\ab}$, $\langle \sigma^2 \rangle$ has to be infinite cyclic, and all our claims are proved.
\end{proof}

\begin{remark}
Both $\vB_3$ and $\wB_3$ have non-stopping LCS, without being residually nilpotent. In this situation, one may continue the LCS to infinite ordinals by defining $\LCS_{\alpha+1}(G) = [G,\LCS_\alpha(G)]$ and $\LCS_\lambda(G) = \bigcap_{\alpha<\lambda} \LCS_\alpha(G)$ for limit ordinals $\lambda$. In particular, the first infinite term $\LCS_\omega(G)$ is what we denote by $\LCS_\infty(G)$. A result of Mal'cev \cite{Malcev1949} says that groups may have LCS of any length: for any ordinal $\alpha$ there is a group $G_\alpha$ such that $\LCS_*(G_\alpha)$ stops precisely at $\alpha$. Such examples are in general ``artificial'', but there are also naturally occurring groups $G$ whose LCS are \emph{long}, meaning that $\LCS_{\omega+1}(G) \neq \LCS_\omega(G)$. For example, Levine \cite{Levine1991} gave an example of a finitely presented group whose LCS is long, and Cochran and Orr \cite{CochranOrr1998} gave many examples of $3$-manifold groups whose LCS are long. In light of these results, one may wonder whether the LCS of $\vB_3$ and of $\wB_3$ are long.

In fact, in both cases, we have $[G, \LCS_\omega(G)] = \LCS_\omega(G)$, so their LCS are not long (they stop at $\omega$). Indeed, in both cases, $\LCS_\omega(G)$ is normally generated by $\tau_1\tau_2^{-1}$ and $\sigma_1\sigma_2^{-1}$, belonging respectively to $\LCS_2(\Sym_3) = \LCS_\omega(\Sym_3)$ and $\LCS_2(\B_3) = \LCS_\omega(\B_3)$. Then:
\[\tau_1\tau_2^{-1} \in \LCS_3(\Sym_3) = [\Sym_3, \LCS_2(\Sym_3)] = [\Sym_3, \LCS_\omega(\Sym_3)] \subseteq [G, \LCS_\omega(G)].\]
The same calculation implies that $[G, \LCS_\omega(G)]$ contains $\sigma_1\sigma_2^{-1}$, so it must contain $\LCS_\omega(G)$, as claimed. The same argument will lead to the same conclusion for $G = \exwB_3$; see Proposition~\ref{LCS_of_exwBn} and Remark~\ref{Gen_for_residue_exwB3}.
\end{remark}

\section{Extended welded braids}

Let us now turn our attention to the LCS of $\exwB_n$. We first compute its abelianisation: 
\begin{lemma}\label{exwB^ab}
For $n\geq 2$, the abelianisation $\exwB_n^{\ab}$ is isomorphic to $(\Z/2)^3$. A basis over $\Z/2$ is given by $\sigma, \tau$ and $\rho$, which are respectively the common class of the $\sigma_i$, the $\tau_i$ and the $\rho_i$.
\end{lemma}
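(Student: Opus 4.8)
The plan is to read $\exwB_n^{\ab}$ directly off the presentation and then certify that no unexpected relations survive by exhibiting an explicit surjection onto $(\Z/2)^3$. Throughout, $n \geq 2$ guarantees that the generators $\sigma_1, \tau_1, \rho_1, \rho_2$ and the relation $\mathrm{(R8)}$ all exist.

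First I would abelianise. Passing to $\exwB_n^{\ab}$ kills every commutator, so the commutation relations $\mathrm{(R1)}$, $\mathrm{(R4)}$, $\mathrm{(R5)}$ and $\rho_i \rightleftarrows \rho_j$ become vacuous. The braid relation $\sigma_i\sigma_{i+1}\sigma_i = \sigma_{i+1}\sigma_i\sigma_{i+1}$ abelianises to $2\sigma_i + \sigma_{i+1} = \sigma_i + 2\sigma_{i+1}$, forcing $\sigma_i = \sigma_{i+1}$, so all $\sigma_i$ share a single class $\sigma$; likewise the $\Sym_n$-relations give all $\tau_i$ a common class $\tau$, and $\mathrm{(R6)}$ (equivalently $\mathrm{(R7)}$) gives all $\rho_i$ a common class $\rho$. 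The relations $\tau_i^2 = 1$ and $\rho_i^2 = 1$ then read $2\tau = 0$ and $2\rho = 0$.

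The crucial point, and the one place where the computation genuinely differs from that of $\wB_n^{\ab} \cong \Z \oplus \Z/2$, is relation $\mathrm{(R8)}$, namely $\sigma_i\rho_{i+1} = \rho_i\tau_i\sigma_i^{-1}\tau_i$ from \eqref{relations extended welded braid groups}. Abelianising and using $\rho_i = \rho_{i+1} = \rho$ together with $2\tau = 0$, it becomes $\sigma + \rho = \rho + 2\tau - \sigma = \rho - \sigma$, i.e. $2\sigma = 0$. Thus $\sigma$ is $2$-torsion as well, and $\exwB_n^{\ab}$ is generated by the three commuting elements $\sigma, \tau, \rho$, each of order dividing $2$; hence it is a quotient of $(\Z/2)^3$.

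It remains to show that $\sigma, \tau, \rho$ are $\Z/2$-independent, i.e. that no further collapse occurs. For this I would define a homomorphism $f \colon \exwB_n \to (\Z/2)^3$ sending every $\sigma_i$, $\tau_i$, $\rho_i$ to the first, second and third standard basis vectors $s, t, r$ respectively, and check that it respects all defining relations. Every relation holds trivially in $(\Z/2)^3$ except $\mathrm{(R8)}$, whose two sides map to $s + r$ and $r + 2t - s$; these agree because $2t = 0$ and $-s = s$ in $(\Z/2)^3$. Thus $f$ is well-defined and visibly surjective, so $\exwB_n^{\ab}$ surjects onto $(\Z/2)^3$; combined with the previous paragraph this yields $\exwB_n^{\ab} \cong (\Z/2)^3$ with basis $\{\sigma, \tau, \rho\}$. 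Alternatively one could use the decomposition $\exwB_n \cong \wP_n \rtimes W_n$ and Lemma~\ref{lem:abelianization semidirect}, computing $W_n^{\ab} \cong (\Z/2)^2$ (producing $\tau$ and $\rho$) and the coinvariants $(\wP_n^{\ab})_{W_n}$ (producing $\sigma$); the key input there is that conjugation by $\rho_j$ sends $\chi_{ij}$ to $\chi_{ij}^{-1}$, so the coinvariants are $2$-torsion. In either route the only genuine obstacle is isolating the single relation, $\mathrm{(R8)}$, responsible for $\sigma$ becoming $2$-torsion and confirming that it survives in the candidate quotient $(\Z/2)^3$; everything else is routine bookkeeping.
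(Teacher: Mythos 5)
Your proposal is correct and follows essentially the same route as the paper, which simply asserts that the result can be read off the presentation (or obtained as a special case of Proposition~\ref{partitioned_exwB^ab}); you have carried out the first of these in full, correctly isolating $\mathrm{(R8)}$ as the relation forcing $2\sigma = 0$ and certifying independence via the explicit surjection onto $(\Z/2)^3$. Your closing remark about the semidirect-product alternative is precisely the paper's second suggested route.
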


\begin{proof}
On can get this directly from the presentation of Definition~\ref{def:presentation_welded}. One can also see this as a particular case of Proposition~\ref{partitioned_exwB^ab} below.
\end{proof}

\begin{proposition}\label{LCS_of_exwBn}
The LCS of the extended welded braid group $\exwB_n$:
\begin{itemize}
\item \emph{stops at $\LCS_2$} if $n \geq 4$. 
\item \emph{does not stop} if $n = 2,3$.
\end{itemize} 
\end{proposition}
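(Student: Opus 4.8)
The plan is to follow exactly the pattern already used for $\vB_n$ and $\wB_n$, splitting into the stable range $n\geq 4$ and the small cases $n=2,3$. Throughout I use that $\exwB_n^{\ab}\cong(\Z/2)^3$ is generated by the classes $\sigma,\tau,\rho$ of the $\sigma_i$, $\tau_i$, $\rho_i$ (Lemma~\ref{exwB^ab}), so that by Proposition~\ref{generationproperty} the Lie ring $\Lie(\exwB_n)$ is generated by these three classes, and the whole question reduces to understanding the three brackets $[\sigma,\tau]$, $[\sigma,\rho]$, $[\tau,\rho]$ in $\Lie_2$.

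For $n\geq 4$ I would run the disjoint-support argument of Corollary~\ref{commuting_representatives}, viewing $\exwB_n\subseteq\Aut(\F_n)$ and using the supports recorded after Definition~\ref{d:support-autFn}, namely $\Supp(\sigma_\alpha)=\Supp(\tau_\alpha)=\{\alpha,\alpha+1\}$ and $\Supp(\rho_\alpha)=\{\alpha\}$. It suffices to produce, \emph{for each pair} of distinct generators separately, representatives with disjoint support, which then commute by Fact~\ref{fact_support_1}: take $\sigma_1,\tau_3$ for the pair $(\sigma,\tau)$ (this is where $n\geq 4$ is needed, so that $\tau_3$ exists and $\{1,2\}\cap\{3,4\}=\varnothing$), $\sigma_1,\rho_3$ for $(\sigma,\rho)$, and $\tau_1,\rho_3$ for $(\tau,\rho)$. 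This forces $\Lie_2(\exwB_n)=0$, hence $\Lie(\exwB_n)=\exwB_n^{\ab}$ is abelian and concentrated in degree one; since $\exwB_n^{\ab}\neq 0$, the LCS stops exactly at $\LCS_2$.

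For $n=2$ the pair $(\sigma,\tau)$ can no longer be separated. Here I would produce a quotient with non-stopping LCS and invoke Lemma~\ref{lem:stationary_quotient}. Concretely, the assignment $\sigma_1\mapsto ab$, $\tau_1\mapsto a$, $\rho_1,\rho_2\mapsto 1$ defines a surjection $\exwB_2\twoheadrightarrow \Z/2*\Z/2=\langle a,b\mid a^2=b^2=1\rangle$: all defining relations are checked directly, the only non-trivial one being $\mathrm{(R8)}$, which becomes $ab = a(ab)^{-1}a = ab$. Since the LCS of $\Z/2*\Z/2$ does not stop (Proposition~\ref{LCS_of_Z/2*Z/2}), neither does that of $\exwB_2$.

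The case $n=3$ is the main obstacle, and I would attack it in the spirit of the welded case (Proposition~\ref{LCS_of_wB3}) by analysing $\exwB_3/\LCS_\infty$. One first locates elements forced into the residue: $\sigma_1\sigma_2^{-1}$ and $\tau_1\tau_2$ are images of $\LCS_\infty(\B_3)=\LCS_2(\B_3)$ and of $\LCS_\infty(\Sym_3)=A_3$ under the canonical inclusions $\B_3,\Sym_3\hookrightarrow\exwB_3$, and one must then pin down how the extended generators $\rho_i$ behave modulo $\LCS_\infty$. The delicate point, and what genuinely separates this from the welded computation, is the interplay between the extended relations $\mathrm{(R4)}$–$\mathrm{(R8)}$ — above all $\mathrm{(R8)}$, which couples $\rho_i$ to $\sigma_i^{-1}$ — and the braid and welded relations once one passes to the residue. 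The goal is to identify $\exwB_3/\LCS_\infty$ from the presentation precisely enough to exhibit inside it (or inside a further central quotient, as in the passage $G\mapsto G/\mathcal{Z}G$ for $\wB_3$) a subquotient whose LCS does not stop; determining exactly which commutators survive into $\LCS_\infty$ is the crux of the argument, and is the content flagged in Remark~\ref{Gen_for_residue_exwB3}.
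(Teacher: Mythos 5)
Your treatment of $n\geq 4$ is correct and is exactly the paper's argument (the paper phrases the supports geometrically, as $3$-balls containing one or two circles, but this is the same disjoint-support computation). Your $n=2$ argument is also correct and essentially the paper's: the paper quotients by $\rho_1=\rho_2=1$ and obtains $\langle\sigma,\tau\mid\tau^2=1,\ \sigma=\tau\sigma^{-1}\tau\rangle\cong\Z\rtimes(\Z/2)$, which is isomorphic to your target $\Z/2*\Z/2$ (Proposition~\ref{LCS_of_Z/2*Z/2}), and your verification of $\mathrm{(R8)}$ checks out.

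The genuine gap is $n=3$: you correctly name the strategy (pass to the quotient by $\sigma_2\sigma_1^{-1}$ and $\tau_2\tau_1$, which lie in $\LCS_\infty(\exwB_3)$ via the inclusions of $\B_3$ and $\Sym_3$, then understand the $\rho_i$), but you stop precisely at the point where the proof has to be done, declaring the interplay of $\mathrm{(R4)}$--$\mathrm{(R8)}$ to be ``the crux'' without resolving it. A plan is not a proof, and here the resolution is short and concrete, so the omission matters. In the quotient $G$ by those two relations, $\mathrm{(R7)}$ gives $\rho_2=\tau_1\rho_1\tau_1$ and $\rho_3=\tau_2\rho_2\tau_2$, and combined with $\mathrm{(R5)}$ (which for $n=3$ says $\tau_2\rightleftarrows\rho_1$ and $\tau_1\rightleftarrows\rho_3$) this identifies all three $\rho_i$ with a single class $\rho$; relations $\mathrm{(R4)}$ and $\mathrm{(R5)}$ then say $\rho$ commutes with $\sigma$ and $\tau$, i.e.\ $\rho$ is central; and once $\rho$ is central, $\mathrm{(R8)}$ collapses to $\sigma=\tau\sigma^{-1}\tau$. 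Hence
\[
G\ \cong\ \langle\sigma,\tau,\rho\mid\rho^2=\tau^2=1;\ \sigma,\tau\rightleftarrows\rho;\ \sigma=\tau\sigma^{-1}\tau\rangle\ \cong\ \Z/2\times\bigl(\Z\rtimes(\Z/2)\bigr),
\]
whose LCS does not stop by Corollary~\ref{Lie_Klein_A/center}, and Lemma~\ref{lem:stationary_quotient} finishes the case. Without this computation your argument for $n=3$ establishes nothing; with it, your proposal coincides with the paper's proof.
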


\begin{proof}
\ul{Case 1: $n \geq 4$.} As in the proof of Proposition~\ref{LCS_of_vBn}, we show that each pair of generators of $\exwB_n^{\ab}$ may be represented by elements of $\exwB_n$ with disjoint support. For the first two generators in the list, this is possible since each of them may be supported inside a $3$-ball in $\D^3$ containing two of the circles; see Figure~\ref{fig:welded-braids}. For one of the first two generators together with the third generator, this is possible even for $n \geq 3$, since the third generator may be supported in a $3$-ball containing a single circle; see Figure~\ref{fig:extended-welded-braids}.

\ul{Case 2: $n = 2$.} For short, we denote $\sigma_1$ (resp.~$\tau_1$) by $\sigma$ (resp.~$\tau$). From the presentation of $\exwB_2$, we can see that the quotient by the relations $\rho_1 = \rho_2 = 1$ admits the presentation:
\[\exwB_2/(\rho_1,\rho_2) = \langle \sigma, \tau \ |\ \tau^2=1,\ \sigma = \tau \sigma^{-1} \tau  \rangle.\]
But this is a presentation of $\Z \rtimes (\Z/2)$, whose LCS, computed in Corollary~\ref{Lie_Klein_A/center}, does not stop.  

\ul{Case 3: $n = 3$.} It is known that $\exwB_3$ contains $\B_3$ as a subgroup (see for instance Remark~\ref{wBn_in_exwBn} and Corollary~\ref{wBn_in_wBn+1}), hence it cannot be residually nilpotent. In fact, $\LCS_\infty(\exwB_3)$ contains $\LCS_\infty(\Sym_3) = \LCS_2(\Sym_3)$ and $\LCS_\infty(\B_3) = \LCS_2(\B_3)$, which contain respectively $\tau_2 \tau_1$ and $\sigma_2 \sigma_1^{-1}$. Let us consider the quotient $G$ of $\exwB_3$ by these two relations, and let us denote by $\tau$ (resp.~$\sigma$) the common image of $\tau_1$ and $\tau_2$ (resp.~$\sigma_1$ and $\sigma_2$) in $G$. We remark that in $G$, $\rho_2 = \tau_1 \rho_1 \tau_1$ and $\rho_3 = \tau_2 \rho_2 \tau_2$ are both identified with $\rho_1 = \tau_2 \rho_1 \tau_2 = \tau_1 \rho_2 \tau_1$, so we can also speak of their common class $\rho$. From the presentation of $\exwB_3$, we get a presentation of $G$:
\[G = \langle \sigma, \tau, \rho \ |\ \rho^2 = \tau^2=1;\ \sigma, \tau \rightleftarrows \rho;\ \sigma \rho = \rho \tau \sigma^{-1} \tau  \rangle.\]
The second group of relations, saying $\rho$ is central, may be seen as \emph{disjoint support} relations. Since $\rho$ is central, the last relation is equivalent to $\sigma = \tau \sigma^{-1} \tau$. As a consequence, this is a presentation of $\Z/2 \times ( \Z \rtimes (\Z/2))$, whose LCS does not stop, again thanks to Corollary~\ref{Lie_Klein_A/center}.
\end{proof}

\begin{remark}\label{Gen_for_residue_exwB3}
In the third part of the proof, the fact that the quotient $G$ is residually nilpotent implies that it is the quotient by the whole of $\LCS_\infty(\exwB_3)$, which is thus normally generated by $\tau_2 \tau_1$ and $\sigma_2 \sigma_1^{-1}$. This also means that $\Lie(\exwB_3) \cong \Lie(G)$, which is easy to compute completely; see Corollary~\ref{Lie_Klein_A/center}.
\end{remark}

\begin{figure}[ht]
\centering
\includegraphics{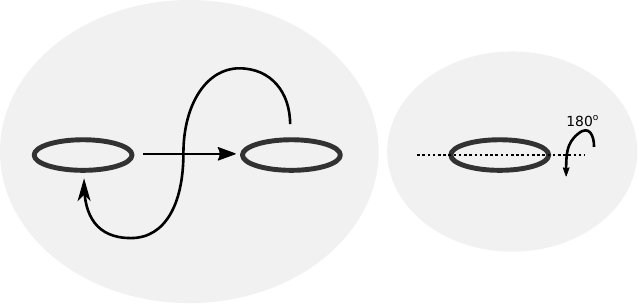}
\caption{Disjoint-support representatives for two of the generators of the abelianisation of the extended welded braid group $\exwB_n$ for $n\geq 3$. The supports are shaded in light grey.}
\label{fig:extended-welded-braids}
\end{figure}

\section{Partitioned virtual, welded and extended welded braids}

We now turn our attention to partitioned virtual and welded braids, which are defined exactly in the same way as partitioned braids have been in Definition~\ref{def_partitioned_braids}. 

\subsection{Basic theory}\label{subsec:basic_theory}

Let us consider the groups $\vB_n$, $\wB_n$ and $\exwB_n$. Each one is endowed with a canonical projection $\pi$ onto $\Sym_n$ (we use the same notation in the three cases). One can think of it as the replacement of every crossing in a diagram by virtual ones (and forgetting the mark points for extended welded braids); in terms of group presentations, it sends both $\sigma_i$ and $\tau_i$ to the transposition $(i, i+1)$, for each $i$ (and it kills the $\rho_i$). 

\begin{definition}
Let $n \geq 1$ be an integer and let $\lambda = (n_1, \ldots , n_l)$ be a partition of $n$. The corresponding \emph{partitioned virtual braid group} is:
\[\vB_\lambda := \pi^{-1}(\Sym_{\lambda}) = \pi^{-1}\left(\Sym_{n_1} \times \cdots \times \Sym_{n_l}\right)\ \subseteq \vB_n.\]
We define similarly the \emph{partitioned welded braid group} $\wB_\lambda := \pi^{-1}(\Sym_{\lambda})$ and the group of \emph{$\lambda$-partitioned extended welded braids} $\exwB_\lambda = \pi^{-1}(\Sym_{\lambda})$.
\end{definition}

Recalling that the elements $\chi_{\alpha \beta}$ introduced in~\Spar\ref{par_pstation_vB} (see Figure \ref{fig:pure-virtual-welded-braid-generators})  generate $\vP_n$ (resp.~$\wP_n$), one can easily adapt the proof of Lemma~\ref{generators_of_partitioned_braids} to the present context, giving a list of generators for $\vB_\lambda$ (resp.~$\wB_\lambda$ and $\exwB_\lambda$):
\begin{lemma}\label{generators_partitioned_v(w)B}\label{generators_partitioned_exwB}
Let $\lambda = (n_1, \ldots , n_l)$ be a partition of an integer $n \geq 1$. Then $\vB_\lambda$ (resp.~$\wB_\lambda$) is the subgroup of $\vB_n$ (resp.~$\wB_n$) generated by:
\begin{itemize}
\item The $\sigma_\alpha$ and the $\tau_\alpha$ for $1 \leq \alpha \leq n$ such that $\alpha$ and $\alpha + 1$ are in the same block of $\lambda$.
\item The $\chi_{\alpha\beta}$, for $1 \leq \alpha \neq \beta \leq n$ such that $\alpha$ and $\beta$ do not belong to the same block of $\lambda$.
\end{itemize}
Also, $\exwB_\lambda$ is the subgroup of $\exwB_n$ generated by the same set of generators, together with the $\rho_\alpha$ for all $\alpha \leq n$.
\end{lemma}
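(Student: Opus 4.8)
The plan is to imitate the proof of Lemma~\ref{generators_of_partitioned_braids} almost verbatim, replacing the standard generators $A_{\alpha\beta}$ of $\PB_n$ by the generators $\chi_{\alpha\beta}$ of $\vP_n$ (resp.\ $\wP_n$), and $\B_{n_i}$ by $\vB_{n_i}$ (resp.\ $\wB_{n_i}$). Write $G$ for the subgroup generated by the elements listed in the statement. First I would check the easy inclusion $G\subseteq\vB_\lambda$: each listed generator lies in $\pi^{-1}(\Sym_\lambda)$, since the $\sigma_\alpha,\tau_\alpha$ with $\alpha,\alpha+1$ in a common block map under $\pi$ to a transposition internal to that block, while every $\chi_{\alpha\beta}$ (and, in the extended case, every $\rho_\alpha$) is killed by $\pi$.

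For the reverse inclusion I would proceed in three steps. (i) The chosen $\sigma_\alpha,\tau_\alpha$ generate the block-diagonal subgroup $\vB_{n_1}\times\cdots\times\vB_{n_l}$ (recall from Definition~\ref{def_partitions} that blocks are intervals, so a single block involves only consecutive strands); in particular $\pi(G)=\Sym_{n_1}\times\cdots\times\Sym_{n_l}=\Sym_\lambda$. (ii) This block-diagonal subgroup already contains every $\chi_{\alpha\beta}$ with $\alpha,\beta$ in the \emph{same} block, since such an element lies in $\vP_{n_i}\subseteq\vB_{n_i}$; together with the listed $\chi_{\alpha\beta}$ for $\alpha,\beta$ in distinct blocks, $G$ therefore contains all the $\chi_{\alpha\beta}$, which generate $\vP_n$ by Proposition~\ref{vP_presentation}. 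Hence $\vP_n=\ker\pi\subseteq G$. (iii) Finally, for an arbitrary $x\in\vB_\lambda$ we have $\pi(x)\in\Sym_\lambda=\pi(G)$, so we may pick $g\in G$ with $\pi(g)=\pi(x)$; then $g^{-1}x\in\vP_n\subseteq G$, and thus $x\in G$. The welded case is identical, using Proposition~\ref{McCool_presentation} in place of Proposition~\ref{vP_presentation}.

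The only point requiring genuine care is the extended case $\exwB_\lambda$, where I would replace $\vP_n$ by the kernel of $\pi\colon\exwB_n\twoheadrightarrow\Sym_n$. Using the decomposition $\exwB_n\cong\wP_n\rtimes W_n$ with $W_n=(\Z/2)^n\rtimes\Sym_n$, this kernel is the pure extended welded braid group $\exwP_n=\wP_n\rtimes(\Z/2)^n$, generated by the $\chi_{\alpha\beta}$ together with the $\rho_\alpha$. Steps (i)--(iii) then go through verbatim once one observes that $G$ contains all the $\chi_{\alpha\beta}$ (as above) and all the $\rho_\alpha$ (listed), hence the whole of $\ker\pi$, while the block-diagonal $\sigma_\alpha,\tau_\alpha$ still surject onto $\Sym_\lambda$. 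The main obstacle is thus not a deep one but a matter of correctly identifying $\ker\pi$ and checking that the chosen generators do generate it; everything else is bookkeeping inherited from Lemma~\ref{generators_of_partitioned_braids}.
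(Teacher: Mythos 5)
Your proof is correct and follows exactly the route the paper intends: the paper does not write out a separate argument for this lemma but explicitly says to adapt the proof of Lemma~\ref{generators_of_partitioned_braids}, using that the $\chi_{\alpha\beta}$ generate $\vP_n$ (resp.\ $\wP_n$), which is precisely your three-step scheme. Your treatment of the extended case, identifying $\ker(\pi\colon\exwB_n\twoheadrightarrow\Sym_n)$ as $\wP_n\rtimes(\Z/2)^n$ generated by the $\chi_{\alpha\beta}$ and the $\rho_\alpha$, is also the intended adaptation and is correct.
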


In fact, the situation is even easier than the case of partitioned braids: for virtual (or welded) braids, the canonical projection onto $\Sym_n$ splits, so that $\vB_n \cong \vP_n \rtimes \Sym_n$ (resp.~$\wB_n \cong \wP_n \rtimes \Sym_n$); see~\Spar\ref{par_pstation_vB} and~\Spar\ref{par_pstation_wB}. And this works for partitioned virtual/welded braids too, since this splitting restricts obviously to a splitting of $\mathbf{vB}_\lambda \twoheadrightarrow \Sym_\lambda$ (resp.~of $\mathbf{wB}_\lambda \twoheadrightarrow \Sym_\lambda$), so that:
\begin{equation}\label{dec_of_vB}
\vB_\lambda \cong \vP_n \rtimes \Sym_\lambda,
\end{equation}
and likewise for welded braids. Extended welded braids behave much in the same way: the decomposition $\exwB_n \cong \wP_n \rtimes W_n$ from\Spar\ref{par_pstation_exwB} restricts to a decomposition: 
\begin{equation}\label{dec_of_exwB}
\exwB_\lambda \cong \wP_n \rtimes W_\lambda,
\end{equation}
where $W_\lambda$ denotes the subgroup $(\Z/2) \wr \Sym_\lambda$ of $W_n = (\Z/2) \wr \Sym_n$.

We can use this, together with the classical calculation of $\vP_n^{\ab} \cong \wP_n^{\ab}$ from Corollary~\ref{v(w)Pn^ab}, to compute the abelianisations of $\vB_\lambda$, $\wB_\lambda$ and $\exwB_\lambda$:
\begin{proposition}\label{partitioned_v(w)B^ab}\label{partitioned_exwB^ab}
Let $\lambda = (n_1, \ldots , n_l)$ be a partition of an integer $n \geq 1$ and let $l'$ be the number of $i$ such that $n_i \geq 2$. Then:
\[\vB_\lambda^{\ab} \cong \wB_\lambda^{\ab} \cong (\Z/2)^{l'} \times \Z^{l'}  \times \Z^{l(l-1)}\ \text{ and }\ \exwB_\lambda^{\ab}
\cong (\Z/2)^{l+2l'+l(l-1)}.\]
Precisely, the abelian group $\vB_\lambda^{\ab}$ ($\cong \wB_\lambda^{\ab}$) is generated by:
\begin{itemize}
\item For each $i \in \{1, \ldots , l\}$ such that $n_i \geq 2$, one generator $s_i$ (resp.~$t_i$), which is the common class of the  $\sigma_\alpha$ (resp.~the $\tau_\alpha$) for $\alpha$ and $\alpha +1$ in the $i$-th block of $\lambda$. 
\item For each $i, j \in \{1, \ldots , l\}$ such that $i \neq j$, one generator $x_{ij}$, which is the common class of the  $\chi_{\alpha\beta}$ for $\alpha$ in the $i$-th block of $\lambda$ and $\beta$ in the $j$-th one.
\end{itemize}
The $t_i$ are a $\Z/2$-basis of the first factor, the $s_i$ are a $\Z$-basis of the second one, and the $x_{ij}$ are a $\Z$-basis of the third one.  

A $(\Z/2)$-basis of $\exwB_\lambda^{\ab} \cong (\wB_\lambda^{\ab} \otimes \Z/2) \times (\Z/2)^l$ is given by the same list of generators (which are of $2$-torsion in $\exwB_\lambda^{\ab}$), together with:
\begin{itemize}
\item For each $i \in \{1, \ldots , l\}$, one generator $r_i$, which is the common class of the  $\rho_\alpha$ for $\alpha$ in the $i$-th block of $\lambda$. 
\end{itemize}
\end{proposition}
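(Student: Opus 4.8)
The plan is to exploit the semidirect product decompositions $\vB_\lambda \cong \vP_n \rtimes \Sym_\lambda$, $\wB_\lambda \cong \wP_n \rtimes \Sym_\lambda$ and $\exwB_\lambda \cong \wP_n \rtimes W_\lambda$ together with the formula for the abelianisation of a semidirect product from Lemma~\ref{lem:abelianization semidirect}, namely $(H \rtimes K)^{\ab} \cong (H^{\ab})_K \times K^{\ab}$. This reduces each of the three computations to two ingredients: the abelianisation of the acting group, and the coinvariants of $\vP_n^{\ab} \cong \wP_n^{\ab}$, which by Corollary~\ref{v(w)Pn^ab} is free abelian on the classes $\overline\chi_{\alpha\beta}$ for $1 \leq \alpha \neq \beta \leq n$.

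First I would record the easy factors. Since $\Sym_m^{\ab} \cong \Z/2$ for $m \geq 2$ and is trivial for $m = 1$, we get $\Sym_\lambda^{\ab} \cong (\Z/2)^{l'}$, generated by the classes of the transpositions $\tau_\alpha$, i.e.\ by the $t_i$. For the extended case, $W_\lambda = (\Z/2) \wr \Sym_\lambda$ has $W_\lambda^{\ab} \cong (\Z/2)^l \times (\Z/2)^{l'}$, the first factor being the $\Sym_\lambda$-coinvariants of $(\Z/2)^n$ (one copy of $\Z/2$ per block, generated by the $r_i$) and the second being $\Sym_\lambda^{\ab}$.

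The core of the argument is the coinvariants computation. The conjugation action of $\Sym_\lambda$ on $\vP_n^{\ab}$ permutes generators by $w \cdot \overline\chi_{\alpha\beta} = \overline\chi_{w\alpha, w\beta}$, so $(\vP_n^{\ab})_{\Sym_\lambda}$ is free abelian on the orbits of ordered pairs of distinct indices. These orbits are of two kinds: pairs lying in two distinct blocks, one orbit per ordered pair $(i,j)$ with $i \neq j$, giving $l(l-1)$ generators $x_{ij}$; and pairs lying in a single block of size at least $2$ (on which $\Sym_{n_i}$ acts transitively), giving $l'$ "same-block" classes $c_i$. This yields $\vB_\lambda^{\ab} \cong \Z^{l(l-1)} \oplus \Z^{l'} \oplus (\Z/2)^{l'}$, and $\wB_\lambda^{\ab}$ is identical since $\wP_n^{\ab} \cong \vP_n^{\ab}$. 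For $\exwB_\lambda$ I would additionally compute, inside $\Aut(\F_n)$ via Theorem~\ref{exwB_in_Aut(Fn)}, that $\rho_\beta \chi_{\alpha\beta} \rho_\beta^{-1} = \chi_{\alpha\beta}^{-1}$ (sending $x_\alpha \mapsto x_\beta^{-1} x_\alpha x_\beta$), while $\rho_\alpha$ and the $\rho_\gamma$ for $\gamma \neq \alpha,\beta$ commute with $\chi_{\alpha\beta}$. The resulting relation $\overline\chi_{\alpha\beta} \sim -\overline\chi_{\alpha\beta}$ in the coinvariants forces $2\overline\chi_{\alpha\beta} = 0$, so every orbit class becomes of order two and $(\wP_n^{\ab})_{W_\lambda} \cong (\Z/2)^{l(l-1)+l'}$; combining with $W_\lambda^{\ab}$ gives $(\Z/2)^{l+2l'+l(l-1)}$, and identifies it with $(\wB_\lambda^{\ab} \otimes \Z/2) \times (\Z/2)^l$.

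The last, and most delicate, step is to replace the natural same-block generators $c_i$ by the stated generators $s_i = \overline{\sigma_\alpha}$. Using the identity $\chi_{\alpha,\alpha+1} = \tau_\alpha \sigma_\alpha$ (valid when $\alpha, \alpha+1$ lie in block $i$), passing to the abelianisation gives $c_i = \overline\chi_{\alpha,\alpha+1} = t_i + s_i$, whence $s_i = c_i + t_i$ since $2t_i = 0$. The endomorphism fixing the $t_i$ and the $x_{ij}$ and sending $c_i \mapsto c_i + t_i = s_i$ is an automorphism of $\Z^{l'} \oplus (\Z/2)^{l'}$ (it is unipotent, indeed an involution), so $\{s_i, t_i, x_{ij}\}$ is again a basis, with the $s_i$ spanning the free part $\Z^{l'}$ and the $t_i$ the torsion part $(\Z/2)^{l'}$; over $\F_2$ the same change of basis handles the extended case. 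I expect this bookkeeping---keeping straight which generators are free versus $2$-torsion, and verifying that the substitution is a genuine change of basis rather than a mere surjection---to be the main obstacle, more so than any single computation, since everything else is an assembly of standard facts about coinvariants of (signed) permutation modules.
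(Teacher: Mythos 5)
Your proposal is correct and follows essentially the same route as the paper: apply Lemma~\ref{lem:abelianization semidirect} to the decompositions $\vP_n \rtimes \Sym_\lambda$ and $\wP_n \rtimes W_\lambda$, compute the coinvariants of the free abelian group on the $\overline\chi_{\alpha\beta}$ as orbits of ordered pairs (with the $\rho_\gamma$ forcing $2$-torsion in the extended case), and finish with the change of basis $s_i = x_i - t_i$ coming from $\chi_{\alpha,\alpha+1} = \tau_\alpha\sigma_\alpha$. The only cosmetic difference is that you verify the action of $\rho_\beta$ on $\chi_{\alpha\beta}$ inside $\Aut(\F_n)$ rather than from the presentation, and you spell out explicitly that the substitution $c_i \mapsto c_i + t_i$ is an involution, hence a genuine change of basis.
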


\begin{proof}
We show our first statement for virtual braids; the exact same proof works for $\wB_\lambda$. 

Applying Lemma~\ref{lem:abelianization semidirect} to the decomposition \eqref{dec_of_vB}, we deduce that $\vB_\lambda^{\ab} \cong (\vP_n^{\ab})_{\Sym_\lambda} \times \Sym_\lambda^{\ab}$.
On the one hand, $\Sym_{n_i}^{\ab} \cong \Z/2$ (generated by the class of any transposition) for all $i$ such that $n_i \geq 2$, so that $\Sym_\lambda^{\ab} \cong (\Z/2)^l$ admits the $t_i$ as a $\Z/2$-basis. On the other hand, $\vP_n^{\ab}$ is freely generated by the set $X$ of the $\overline \chi_{\alpha \beta}$ with $1 \leq \alpha \neq \beta \leq n$, on which $\Sym_\lambda \subseteq \Sym_n$ acts by permuting the indices. Thus $(\vP_n^{\ab})_{\Sym_\lambda}$ is free on the set $X/\Sym_\lambda$. The latter is exactly the set of generators $x_{ij}$ described in the statement, plus one additional element for each $i$ such that $n_i \geq 2$, which is the common class $x_i$ of the $\chi_{\alpha \beta}$ for $\alpha$ and $\beta$ in the $i$-th block. Thus we get the decomposition of the statement, with a slightly different basis (the $x_i$ instead of the $s_i$). The relation $\chi_{\alpha, \alpha +1} = \tau_\alpha \sigma_\alpha$ implies that $s_i =  x_i - t_i$, which is the change of basis we need to finish proving our first statement.

Let us now describe $\exwB_\lambda^{\ab}$, using a similar computation. We deduce from the decomposition \eqref{dec_of_exwB} that $\exwB_\lambda^{\ab} \cong (\wP_n^{\ab})_{W_\lambda} \times W_\lambda^{\ab}$.
On the one hand, the group $W_\lambda = (\Z/2) \wr \Sym_\lambda$ is isomorphic to the product of the $(\Z/2) \wr \Sym_{n_i}$, whose abelianisation is $\Z/2 \times \Z/2$ (generated by $r_i$ and $s_i$) if $n_i \geq 2$, and $\Z/2$ (generated by $r_i$) if $n_i = 1$.
Thus $W_\lambda^{\ab} \cong (\Z/2)^{l + l'}$ gives a first factor in our formula. On the other hand the action of the generator $\rho_\gamma$ of $(\Z/2)^n \subset W_\lambda$ on $\wP_n^{\ab}$ is by fixing all the $\overline \chi_{\alpha \beta}$ for $\beta \neq \gamma$ and sending all $\overline \chi_{\alpha \gamma}$ to $-\overline \chi_{\alpha \gamma}$. One can check this from the presentation of Proposition~\ref{vP_presentation}: for instance, $\chi_{12} = \tau_1 \sigma_1$, and $\rho_1 \chi_{12} \rho_1^{-1} = \sigma_1^{-1} \tau_1 = \chi_{12}^{-1}$. As a consequence, $(\vP_n^{\ab})_{(\Z/2)^n} \cong \vP_n^{\ab} \otimes \Z/2$ is a free $\Z/2$-vector space on the $\overline \chi_{\alpha \beta}$, on which $\Sym_n$ acts by permutation of the indices. Since $(\wP_n^{\ab})_{W_\lambda} = (\wP_n^{\ab})_{(\Z/2) \wr \Sym_\lambda} = ((\vP_n^{\ab})_{(\Z/2)^n})_{\Sym_\lambda}$, the rest of the proof works exactly as for the previous case, except that we work with coefficients in $\Z/2$. 
\end{proof}

\subsection{The LCS}

The remainder of the section is devoted to the proof of the following theorem. Its proof splits into several cases, which we examine in the next few propositions; Theorem \ref{LCS_of_partitioned_v(w)B} is then deduced from these at the very end of this section.
\begin{theorem}\label{LCS_of_partitioned_v(w)B}\label{LCS_of_partitioned_exwBn}
Let $n \geq 1$ be an integer, and $\lambda = (n_1, \ldots , n_l)$ be a partition of $n$.  The LCS of the partitioned virtual (resp.~welded) braid group $\vB_\lambda$ (resp.~$\wB_\lambda$):
\begin{itemize}
\item \emph{stops at $\LCS_2$} if $n_j = 1$ for at most one $j$, and $n_i \geq 4$ for all $i \neq j$.
\item \emph{does not stop} in all the other cases.
\end{itemize}
The LCS of the partitioned extended welded braid group $\exwB_\lambda$:
\begin{itemize}
\item \emph{stops at $\LCS_2$} if $n_i \geq 4$ for all $i$. 
\item \emph{does not stop} in all the other cases, except if $\lambda = (1)$, for $\exwB_1 \cong \Z/2$.
\end{itemize}
\end{theorem}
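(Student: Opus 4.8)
The plan is to treat the three families in parallel, separating the partitions for which the LCS stops from those for which it does not. For the non-stopping cases the strategy is uniform: produce a surjection onto a small group whose LCS is already known not to stop and invoke Lemma~\ref{lem:stationary_quotient}. The retractions that forget blocks of strands (which exist for all three families, as in the retraction used in the proof of Corollary~\ref{vBn_in_vBn+1}) send a block of size~$k$ onto the full unpartitioned group on $k$ strands, and send two singleton blocks onto the pure group on two strands. For virtual and welded braids this gives surjections onto $\vB_3$ (resp.~$\wB_3$), onto $\vB_2 \cong \wB_2 \cong \Z * \Z/2$, and onto $\vP_2 \cong \wP_2 \cong \F_2$, none of whose LCS stop (Propositions~\ref{LCS_of_vBn} and~\ref{LCS_of_wBn}, and freeness of $\F_2$); this disposes of the partitions $(3,\nu)$, $(2,\nu)$ and $(1,1,\nu)$. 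For extended welded braids the blocks of size $2$ and $3$ are handled in the same way via $\exwB_2$ and $\exwB_3$ (Proposition~\ref{LCS_of_exwBn}).

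The genuinely new non-stopping case is $(1,\nu)$ with $\nu \neq \varnothing$ for $\exwB_\lambda$. Here I would forget all strands except the singleton block $\{1\}$ and one further strand $\gamma$ lying in a block of $\nu$, obtaining a surjection onto $\exwB_{1,1}$. The key extra ingredient is the relation $\rho_1 \chi_{\gamma 1}\rho_1 = \chi_{\gamma 1}^{-1}$ (the sign action of the $\rho_i$ on the $\chi_{\alpha\beta}$ computed in the proof of Proposition~\ref{partitioned_v(w)B^ab}): killing $\chi_{1\gamma}$ and $\rho_\gamma$ then collapses $\exwB_{1,1}$ onto $\langle \chi_{\gamma 1}, \rho_1 \mid \rho_1^2 = 1,\ \rho_1 \chi_{\gamma 1}\rho_1 = \chi_{\gamma 1}^{-1}\rangle \cong \Z \rtimes \Z/2$, whose LCS does not stop by Corollary~\ref{Lie_Klein_A/center}. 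This is precisely the mechanism, absent in $\vB_\lambda$ and $\wB_\lambda$, by which a single isolated strand fails to be harmless for $\exwB_\lambda$; together with the size-$2$ and size-$3$ reductions it accounts for all non-stopping extended cases, leaving only $\exwB_1 \cong \Z/2$ among partitions with a small block.

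For the stable stopping cases (every block of size $\geq 4$, for all three families) I would apply Corollary~\ref{commuting_representatives} to the generating set of $\vB_\lambda^{\ab}$, $\wB_\lambda^{\ab}$ and $\exwB_\lambda^{\ab}$ given in Proposition~\ref{partitioned_v(w)B^ab}. Most pairs of generators admit representatives with disjoint support, hence commute by Facts~\ref{fact_support_1} and~\ref{fact_support_2}. The binding constraint is the pair $s_i, t_i$ in a common block: their representatives $\sigma_\alpha$ and $\tau_\beta$ require four strands in that block to be placed on disjoint adjacent pairs, which is exactly the origin of the threshold~$4$. It is important that for blocks of size $\geq 2$ the two \emph{directed} generators $x_{ij}$ and $x_{ji}$ also have disjoint representatives $\chi_{\alpha\beta}$, $\chi_{\gamma\delta}$ with $\alpha \neq \delta$ and $\beta \neq \gamma$; and the generators $r_i$ of $\exwB_\lambda$, being supported on a single strand, cause no difficulty. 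Thus $\Lie$ is concentrated in degree one and the LCS stops at $\LCS_2$.

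The remaining case, and the main obstacle, is $\lambda = (1,\mu)$ for $\vB_\lambda$ and $\wB_\lambda$ (one singleton block, all others of size $\geq 4$). I would mirror the proof of Proposition~\ref{LCS_B1mu} and show that $\vB_\lambda/\LCS_\infty$ is abelian. Deleting the singleton block, the stable case already gives that $\vB_\mu \to \vB_\lambda/\LCS_\infty$ factors through $\vB_\mu^{\ab}$, so all $\mu$-generators commute in the quotient. The difficulty is that the new generators $x_{1i}, x_{i1}$ attached to the isolated strand $1$ cannot be separated by support, since each necessarily involves strand~$1$. To prove $[x_{1i}, x_{1j}] = 0$ I would pull back the pure relation $\chi_{12} \rightleftarrows \chi_{13}\chi_{23}$ (relation $\mathrm{(wPR3)}$, or its virtual analogue $\mathrm{(vPR2)}$) along a map $\vP_3 \to \vB_\lambda$ sending $\{1,2,3\}$ to strand~$1$ and to strands in blocks $i$ and $j$; since $x_{1i}$ commutes with $x_{ij}$ by a disjoint-support argument (block $i$ has size $\geq 2$), the identity $x_{1i} \rightleftarrows x_{1j}\,x_{ij}$ then forces $[x_{1i}, x_{1j}] = 0$. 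Analogous pullbacks handle $[x_{1i}, x_{i1}]$ and the commutation of $x_{1i}, x_{i1}$ with the $\mu$-generators. The delicate points here are the bootstrapping (the case $(1,m)$ must be established first and fed into the general case) and the bookkeeping of directed generators; the virtual family needs slightly more care than the welded one because $\mathrm{(vPR2)}$ is not literally a commutation relation, so the descended identity must be massaged modulo $\LCS_3$.
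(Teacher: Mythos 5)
Most of your outline coincides with the paper's own proof: the stable case is the disjoint-support argument of Proposition~\ref{LCS_wB_stable} (with the same explanation of the threshold $4$ via the pair $s_i,t_i$); the blocks of size $2$ and $3$ and the pairs of singleton blocks are dispatched by the same block-forgetting surjections onto $\vB_2$, $\vB_3$, $\vP_2\cong\F_2$, $\exwB_2$, $\exwB_3$ and $\exwB_{1,1}$; and your treatment of $(1,\mu)$ for $\vB_\lambda$ and $\wB_\lambda$ via the relation $\mathrm{(vPR2)}$ pulled back along maps $\vP_3\to\vB_\lambda$ is precisely Lemma~\ref{vB111^ab} and Proposition~\ref{LCS_vB_1mu} (and, as in the paper, the welded case follows from the virtual one by Lemma~\ref{lem:stationary_quotient}, so no extra care is actually needed there).

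There is, however, a genuine gap in your extended welded case $\lambda=(1,\nu)$ with $\nu\neq\varnothing$. You claim a surjection of $\exwB_{1,\nu}$ onto $\exwB_{1,1}$ obtained by ``forgetting all strands except the singleton block and one further strand $\gamma$ lying in a block of $\nu$''. No such homomorphism exists when the block containing $\gamma$ has size greater than $1$: elements of $\exwB_{1,\nu}$ permute the strands of that block, so singling one of them out is not an equivariant operation — the forgetting maps (cf.\ Remark~\ref{vBn_in_vBn+1_diag}) are only defined for unions of whole blocks, or on the subgroup of pure braids. The paper's Proposition~\ref{LCS_of_exwB_1mu} replaces this step by a different construction: after the legitimate block-forgetting reduction to $\exwB_{1,m}$, one collapses the free group $\F_{1+m}\twoheadrightarrow \Z*(\Z/2)$ by identifying the $m$ generators of the block \emph{and} killing their squares; the squares are forced, because $\rho_\alpha(x_\alpha x_{\alpha+1}^{-1})=x_\alpha^{-1}x_{\alpha+1}^{-1}$ does not lie in the normal closure of the differences alone, so without them the $\rho_\alpha$ do not descend. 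The image of $\exwB_{1,m}$ in $\Aut(\Z*(\Z/2))$ is then $(\Z*\Z/2)\rtimes(\Z/2)$, which is not $\exwB_{1,1}$, and only after a further quotient does one reach $\Z\rtimes(\Z/2)$. Your identification of the sign action $\rho_1\chi_{\gamma 1}\rho_1=\chi_{\gamma 1}^{-1}$ as the mechanism, and of $\Z\rtimes(\Z/2)$ as the final target, is correct; what is missing is a valid intermediate quotient, and ``forget down to two strands'' cannot supply it.
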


\subsubsection{The stable case} The simplest case in the case when the size of every block is at least $4$:

\begin{proposition}\label{LCS_wB_stable}
Let $\lambda$ be a partition of an integer $n$ into blocks of size at least $4$. Then the LCS of $\vB_\lambda$ (resp.~of $\wB_\lambda^{\ab}$, resp.~of $\exwB_\lambda^{\ab}$) stops at $\LCS_2$.
\end{proposition}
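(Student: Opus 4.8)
The plan is to run the \emph{disjoint support argument} of Corollary~\ref{commuting_representatives}, exactly as in the stable case for classical braids (Proposition~\ref{LCS_stable_partitioned_braids}), but now using the notions of support discussed in~\Spar\ref{subsec_support}. Fix the generating set of the abelianisation described in Proposition~\ref{partitioned_v(w)B^ab}: for $\vB_\lambda$ and $\wB_\lambda$ these are the classes $s_i$ and $t_i$ (one of each for every block, since $n_i \geq 4 \geq 2$) together with the $x_{ij}$ for $i \neq j$; for $\exwB_\lambda$ we adjoin the classes $r_i$. These classes are represented by elements of explicit support, namely $\Supp(\sigma_\alpha) = \Supp(\tau_\alpha) = \{\alpha, \alpha+1\}$, $\Supp(\chi_{\alpha\beta}) = \{\alpha,\beta\}$ and $\Supp(\rho_\alpha) = \{\alpha\}$. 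By Facts~\ref{fact_support_1} and~\ref{fact_support_2}, two such elements with disjoint support commute (in the automorphism sense for $\wB_\lambda$ and $\exwB_\lambda$, diagrammatically for $\vB_\lambda$), so it suffices to find, for each unordered pair of distinct generators, lifts whose supports are disjoint.

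Since a generator indexed by a single block $i$ (one of $s_i, t_i, r_i$) can be represented inside that block, while a generator $x_{ij}$ can be represented using one index in block $i$ and one in block $j$, two generators indexed by disjoint sets of blocks automatically admit disjoint-support representatives. The only pairs requiring care are those whose block-indices overlap, and here we use that every block $b_i(\lambda)$ has at least four elements. For two $x$-generators sharing a block one simply chooses distinct elements of the shared block(s); for $s_i$ (or $t_i$, or $r_i$) against some $x_{ij}$ one places the $\chi$-representative at one end of block $i$ and the $\sigma/\tau/\rho$-representative at the other; and for $r_i$ against $s_i$ or $t_i$ one separates a single index from an adjacent pair inside block $i$. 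Each of these arrangements needs only $n_i \geq 3$.

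The binding case, and the genuine reason for the hypothesis $n_i \geq 4$, is the pair $(s_i, t_i)$: both live in the same block $i$ and are represented by $\sigma$- and $\tau$-type generators, each of whose support is a pair of \emph{adjacent} indices. Making these disjoint requires two disjoint adjacent pairs inside one block; writing $a+1 = \min b_i(\lambda)$, the pairs $\{a+1,a+2\}$ and $\{a+3,a+4\}$ are available precisely when $n_i \geq 4$, so representing $s_i$ by $\sigma_{a+1}$ and $t_i$ by $\tau_{a+3}$ yields the required disjoint-support lifts. Having verified the hypotheses of Corollary~\ref{commuting_representatives} for every pair, we conclude that $\LCS_2 = \LCS_3$ for each of the three groups; since the abelianisations computed in Proposition~\ref{partitioned_v(w)B^ab} are non-trivial, $\LCS_1 \neq \LCS_2$, so the LCS stops exactly at $\LCS_2$. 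I expect no real obstacle beyond the bookkeeping of these overlapping cases; the one essential point is the size-$4$ requirement forced by the $(s_i, t_i)$ pair, which is exactly what separates the virtual/welded threshold from the size-$3$ threshold for classical braids, where no $\tau$-type generator is present.
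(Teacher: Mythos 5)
Your proposal is correct and follows essentially the same route as the paper: a disjoint-support argument applied to the generating set of Proposition~\ref{partitioned_v(w)B^ab}, invoking Facts~\ref{fact_support_1} and~\ref{fact_support_2} and Corollary~\ref{commuting_representatives}, with the pair $(s_i,t_i)$ correctly identified as the one forcing $n_i\geq 4$ (the paper's proof gives exactly your lifts $\sigma_\alpha$ and $\tau_{\alpha+2}$ for this case).
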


\begin{proof}
As in the proofs of Propositions~\ref{LCS_of_vBn} and \ref{LCS_of_wBn}, we show that any pair of generators of $\vB_\lambda^{\ab}$ (resp.~of $\wB_\lambda^{\ab}$, resp.~of $\exwB_\lambda^{\ab}$) may be represented by a pair of elements of the group having \emph{disjoint support} (in the sense of Definition~\ref{d:support-diagram} for virtual braids, and of the equivalent sense of Definitions~\ref{d:support-autFn} and \ref{d:support-diagram} for welded braids), which therefore commute by Facts \ref{fact_support_1} and \ref{fact_support_2}. The generators we need to consider are listed in Proposition~\ref{partitioned_v(w)B^ab}: they are the $s_i$, the $t_i$, the $x_{ij}$ and (in the case of extended welded braids) the $r_i$. In each case, it is clear that the hypothesis $n_i \geq 4$ allows us to choose lifts with disjoint support among the generators of $\vB_\lambda$ (resp.~of $\wB_\lambda$, resp.~of $\exwB_\lambda$) from Lemma~\ref{generators_partitioned_v(w)B}. Indeed, the class of any such generator depends only on the blocks to which the indices involved belong, and the support of each generator contains at most two indices. For instance, if $\alpha$ denotes the minimum of the $i$-th block, $s_i$ and $t_i$ can be lifted respectively to $\sigma_\alpha$ (whose support is $\{\alpha, \alpha + 1\}$) and to $\tau_{\alpha+2}$ (whose support is $\{\alpha + 2, \alpha + 3\}$).
\end{proof}

\begin{remark}[Blocks of size $3$]
If $\lambda$ has blocks of size $3$ (say, $n_1 = 3$), the LCS of $G_\lambda$ (with $G = \vB$, $\wB$ or $\exwB$) does not stop: the disjoint support argument fails, since $t_1$ and $s_1$ do not have lifts with disjoint support. However, this failure is limited, and the argument can still be applied to understand the LCS of $G_{3, \mu}$ from the one of $G_\mu$ (where $\mu$ is any partition). Precisely, the generators $t_1$ and $s_1$ of the Lie ring do not commute with each other, but they do commute with every other generator.  Using the canonical split injection of $G_3$ into $G_{3, \mu}$, we see that $t_1$ and $s_1$ (together with $r_1$ for extended welded braids) generate a copy of $\Lie(G_3)$, which is then a direct factor of $\Lie(G_{3, \mu})$. If all the blocks of $\mu$ have size at least $3$, we can be even more precise: then the $2(l-1)$ elements $x_{1i}$ and $x_{i1}$ must be central. In this case, we get an isomorphism of Lie rings:
\[\Lie(G_{3, \mu}) \cong \Lie(G_3) \times \Z^{2(l-1)} \times \Lie(G_\mu).\]
Moreover, $\Lie(G_3)$ is well-understood (see Propositions~\ref{LCS_of_vB3} and \ref{LCS_of_wB3}, and the proof of Proposition~\ref{LCS_of_exwBn}), so this allows us to compute $\Lie(G_\lambda)$ completely if all the blocks of $\lambda$ have size at least~$3$.
\end{remark}

\subsubsection{Adding one block of size $1$} This is the tricky case, where the behaviour of $\exwB_\lambda$ will differ from the other ones. For $\wB_\lambda$ and $\vB_\lambda$, we will show that the behaviour does not really differ from the stable case, using the following analogue of Lemma~\ref{P3^ab}:
\begin{lemma}\label{vB111^ab}
Let $G$ be the subgroup of $\vP_3$ generated by the elements $\chi_{12}$, $\chi_{13}$ and $\chi_{23}$. Then any two of the three relations
\[
[\chi_{12},\chi_{23}]=1 \qquad [\chi_{13},\chi_{23}]=1 \qquad [\chi_{12},\chi_{13}]=1
\]
imply the third. In other words, the quotient of $G$ by any two of these relations is abelian.
\end{lemma}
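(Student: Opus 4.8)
The plan is to reduce the whole statement to a single relation among the three generators that already holds in $\vP_3$, and then to carry out three short, symmetric commutator manipulations. For brevity write $a := \chi_{12}$, $b := \chi_{13}$ and $c := \chi_{23}$. The key observation is that the relation (vPR2) of Proposition~\ref{vP_presentation}, specialised to the triple $(i,j,k) = (1,2,3)$, reads
\[
\chi_{12}\chi_{13}\chi_{23} = \chi_{23}\chi_{13}\chi_{12},
\]
that is, $abc = cba$. This relation involves only the three generators $a$, $b$, $c$, so it holds in $G$, and hence in every quotient of $G$.

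With this relation in hand, I would verify directly that $abc = cba$ together with any two of the commutations forces the third. If $a$ and $b$ both commute with $c$, then $cba = bca = bac$, so $abc = bac$ and cancelling $c$ on the right gives $ab = ba$. If instead $a$ commutes with both $b$ and $c$, then $abc = bac = bca$, and comparison with $cba$ (cancelling $a$ on the right) yields $bc = cb$. Finally, if $b$ commutes with both $a$ and $c$, then $abc = bac$ while $cba = bca$, so cancelling $b$ on the left gives $ac = ca$. Each of these is a one-line cancellation once $abc = cba$ is available.

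To obtain the reformulation in the statement, note that if any two (hence, by the above, all three) of the commutation relations hold, then the generators $a$, $b$, $c$ of $G$ pairwise commute, so the corresponding quotient of $G$ is abelian. I do not expect a genuine obstacle here: the only point requiring a little care is to single out the \emph{correct} instance of (vPR2), namely the one whose three letters are exactly $\chi_{12}$, $\chi_{13}$, $\chi_{23}$, rather than one of the instances involving the other generators $\chi_{21}$, $\chi_{31}$, $\chi_{32}$ of $\vP_3$. This mirrors the role played by the single relation $[A_{13},A_{23}]=1$ in Lemma~\ref{P3^ab}, with the cyclic relation $abc = cba$ here playing the unifying role.
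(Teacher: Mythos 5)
Your proposal is correct and follows essentially the same route as the paper: both rest on the single relation $\chi_{12}\chi_{13}\chi_{23}=\chi_{23}\chi_{13}\chi_{12}$ (your instance of $\mathrm{(vPR2)}$ is precisely the relation the paper quotes from Bardakov's presentation), and both conclude by noting that imposing two of the commutations makes one generator central, after which this cyclic relation reduces to the remaining commutation. Your three explicit cancellations are all valid, so there is nothing to correct.
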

\begin{proof}
The elements $\chi_{12}$, $\chi_{13}$ and $\chi_{23}$ satisfy the relation
\begin{equation}
\label{eq:pure-virtual-relation}
\chi_{12} \chi_{13} \chi_{23} \chi_{12}^{-1} \chi_{13}^{-1} \chi_{23}^{-1} = 1
\end{equation}
in $\vP_3$; see \cite[Th.~1]{Bardakov2004}. If we impose any two of the three relations above, then one of the three generators becomes central, and \eqref{eq:pure-virtual-relation} becomes the relation stating that the other two generators commute.
\end{proof}

\begin{proposition}\label{LCS_vB_1mu}
Let $\mu$ be a partition of an integer $n$ into blocks of size at least $4$. Then the LCS of $\vB_{1, \mu}$ stops at $\LCS_2$, and so does the LCS of $\wB_{1, \mu}$.
\end{proposition}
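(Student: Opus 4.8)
The plan is to show that $\vB_{1,\mu}/\LCS_\infty$ is abelian, which forces $\LCS_\infty = \LCS_2$ and hence that the LCS stops at $\LCS_2$; the welded statement then comes for free, since $\wB_{1,\mu}$ is a quotient of $\vB_{1,\mu}$ (the projection $\vB_n \twoheadrightarrow \wB_n$ carries the generators of Lemma~\ref{generators_partitioned_v(w)B} of $\vB_{1,\mu}$ onto those of $\wB_{1,\mu}$), so Lemma~\ref{lem:stationary_quotient} transfers the conclusion. This mirrors the strategy of Proposition~\ref{LCS_B1mu}, the one genuinely new feature being that each pair of distinct blocks now contributes \emph{two} abelianisation generators $x_{ij}$ and $x_{ji}$, whose natural representatives $\chi_{1\alpha}$ and $\chi_{\beta 1}$ always share the strand $1$ and so can never be separated by a disjoint-support argument.

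Write $G := \vB_{1,\mu}/\LCS_\infty$ and index blocks so that block $1$ is the singleton and blocks $2,\dots,l$ are those of $\mu$. First I would record that, by Proposition~\ref{LCS_wB_stable}, $\LCS_\infty(\vB_\mu) = \LCS_2(\vB_\mu)$; since the inclusion $\vB_\mu \hookrightarrow \vB_{1,\mu}$ sends $\LCS_\infty(\vB_\mu)$ into $\LCS_\infty(\vB_{1,\mu})$, the induced map $\vB_\mu \to G$ factors through $\vB_\mu^{\ab}$. Consequently the generators $s_i,t_i,x_{ij}$ with indices in $\{2,\dots,l\}$ pairwise commute in $G$, and within-block $\chi_{\alpha\beta}$ of a common block are identified there. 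Next I would handle the generators touching block $1$, the classes $\chi_{1\epsilon}$ and $\chi_{\epsilon 1}$. Because every block has size at least $4$, each such element admits, relative to any one of $s_p,t_p,x_{pq}$ ($p,q\ge 2$), a representative with disjoint support (there is always room to place a consecutive pair, or a cross-block $\chi$, inside a block of size $\ge 4$ while avoiding the single index $\epsilon$); hence $\chi_{1\epsilon}$ and $\chi_{\epsilon 1}$ commute in $G$ with all the $\vB_\mu$-generators, by Fact~\ref{fact_support_2}. I would then upgrade this to the statement that, for $\epsilon$ ranging over a fixed block $i$, all the $\chi_{1\epsilon}$ coincide in $G$ (call the common value $x_{1i}$), and likewise all $\chi_{\epsilon 1}$ coincide ($x_{i1}$): conjugating $\chi_{1\epsilon}$ by a transposition $\tau_\alpha$ inside block $i$ sends it to $\chi_{1\epsilon'}$ with $\epsilon'$ a neighbour of $\epsilon$, and since $\overline{\tau_\alpha} = t_i$ has just been shown to commute with $\chi_{1\epsilon}$ in $G$, the two classes are equal; running through block $i$ gives the claim.

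The crux is to show that the block-$1$ generators commute with one another, where disjoint support is unavailable; this is where Lemma~\ref{vB111^ab} does the work. For each pair I would choose three strands and a \emph{transitive} triple of $\chi$'s (the one associated to a linear order of the three strands) containing both generators in question, arranged so that the third member of the triple is already known to be central among the three; Lemma~\ref{vB111^ab} then forces the remaining two to commute. Concretely: for $[x_{1i},x_{1j}]$ with $i\ne j$ use $\{\chi_{1\alpha},\chi_{1\beta},\chi_{\alpha\beta}\}$ ($\alpha$ in block $i$, $\beta$ in block $j$), the central member being $\chi_{\alpha\beta}$ of class $x_{ij}$; for $[x_{i1},x_{j1}]$ use $\{\chi_{\alpha 1},\chi_{\beta 1},\chi_{\alpha\beta}\}$; and for the mixed pairs $[x_{1i},x_{j1}]$ (including the same-block case $j=i$, where one takes $\alpha\ne\beta$ in block $i$) use the triple $\{\chi_{\beta 1},\chi_{\beta\alpha},\chi_{1\alpha}\}$ coming from the order $\beta<1<\alpha$, whose third member $\chi_{\beta\alpha}$ has class $x_{ji}$ (or a within-block class when $j=i$) and is central by the previous step. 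In each case the two auxiliary commutators vanish in $G$ by the disjoint-support step, and the relation \eqref{eq:pure-virtual-relation} underlying Lemma~\ref{vB111^ab} (a relabelling of $\mathrm{(vPR2)}$) passes to $G$, delivering the desired commutator. The main obstacle, and the point needing the most care, is exactly this bookkeeping for the mixed-orientation pairs $x_{1i}$ versus $x_{j1}$: one must check that a transitive triple simultaneously containing $\chi_{1\alpha}$ and $\chi_{\beta 1}$ exists (it does, via $\beta<1<\alpha$) with its third edge of a type already known to be central, and must keep the argument non-circular by using the identifications of the second step only through the centrality of $t_i$, which itself rests solely on disjoint support.

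Once all generators of $G$ commute, $G$ is abelian, so $\LCS_\infty(\vB_{1,\mu}) = \LCS_2(\vB_{1,\mu})$ and the LCS stops at $\LCS_2$; the welded case follows from Lemma~\ref{lem:stationary_quotient}, or equally by repeating the argument verbatim, since \eqref{eq:pure-virtual-relation} also holds in $\wP_3$.
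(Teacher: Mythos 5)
Your proof is correct and follows essentially the same route as the paper's: reduce to showing $\vB_{1,\mu}/\LCS_\infty$ is abelian, use the stable case for $\vB_\mu$ and disjoint supports for most pairs, identify the classes $\overline{\chi_{1\epsilon}}$ within a block via conjugation by $t_i$, and invoke Lemma~\ref{vB111^ab} for the pairs of block-$1$ generators, with the welded case following from Lemma~\ref{lem:stationary_quotient}. The only (cosmetic) difference is that you treat the two mixed-orientation cases $[x_{1i},x_{j1}]$ for $j\neq i$ and $j=i$ uniformly via the single triple built from the order $\beta<1<\alpha$, where the paper separates them into two sub-cases.
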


\begin{proof}
Notice first that the second statement follows directly from the first one, using Lemma~\ref{lem:stationary_quotient}, since $\wB_{1, \mu}$ is a quotient of $\vB_{1, \mu}$. Let us prove the first statement. The proof is similar to that of Proposition~\ref{LCS_B1mu}, with Lemma~\ref{vB111^ab} replacing Lemma~\ref{P3^ab}.  

Let us denote by $\lambda = (1, n_2, \ldots , n_l)$ the partition $(1, \mu)$ of $n+1$. The canonical morphism $\vB_\mu \hookrightarrow \vB_{1, \mu}$ induces a morphism:
\[\vB_\mu^{\ab} = \vB_\mu/\LCS_\infty \rightarrow \vB_{1, \mu}/\LCS_\infty,\]
where the equality on the left comes from the fact that all the blocks of $\mu$ have size at least $4$ (Proposition~\ref{LCS_wB_stable}). This implies that among the generators of $\vB_{1, \mu}$ from Proposition~\ref{generators_partitioned_v(w)B}, the ones coming from $\vB_\mu$ pairwise commute modulo $\LCS_\infty$. Moreover, this also implies that $\chi_{\alpha\beta}$ and $\chi_{\alpha'\beta'}$ have the same image $x_{ij}$ in $\vB_{1, \mu}/\LCS_\infty$ if $\alpha$ and $\alpha'$ are in the $i$-th block of $\lambda$ and $\beta$ and $\beta'$ are in the $j$-th one, if $i,j \geq 2$. Similarly, $\sigma_\alpha$ and  $\sigma_{\alpha'}$ (resp.~$\tau_\alpha$ and  $\tau_{\alpha'}$) have the same image $s_i$ (resp.~$t_i$) in this quotient if $\alpha$ and $\alpha'$ are in the $i$-th block of $\lambda$, with $i \geq 2$. Finally, a generator of the form $\chi_{1\alpha}$ (resp.~$\chi_{\alpha 1}$) is also sent to a class $x_{1i}$ (resp.~$x_{i1}$) modulo $\Gamma_\infty$  depending only on the index $i \geq 2$ of the block containing $\alpha$. Indeed, let us suppose that $\alpha$ and $\alpha+1$ are in the $i$-th block of $\lambda$. This block has size at least $4$, so $t_i$ has a representative $\tau_\beta$ in $\vB_{1, \mu}$ whose support does not contain $\alpha$, which then commutes with $\chi_{1\alpha}$. But $t_i$ is also the class of $\tau_\alpha$, and $\tau_\alpha \chi_{1\alpha}  \tau_\alpha = \chi_{1,\alpha+1}$. Thus $\overline \chi_{1\alpha} = t_i \overline \chi_{1\alpha} t_i = \overline \chi_{1, \alpha+1}$. The same argument shows that $\overline \chi_{\alpha 1}  = \overline \chi_{\alpha+1, 1}$.

In order to prove our statement, we need to show that the $x_{1i}$ and the $x_{j1}$ commute with each other, and with all the other generators of $\vB_{1, \mu}/\LCS_\infty$. The latter assertion is deduced from the fact that the pairs of generators involved have lifts to $\vB_{1, \mu}$ with disjoint support. The former one is a bit trickier, and requires the use of Lemma~\ref{vB111^ab}. Precisely, let $G$ denote the subgroup of the pure virtual braid group $\vP_3$ generated by the elements $\chi_{12}$, $\chi_{13}$ and $\chi_{23}$. In each case, we define a homomorphism $\theta \colon G \rightarrow \vB_{1,\mu}/\LCS_\infty$
such that two of the elements $\chi_{12}$, $\chi_{13}$ and $\chi_{23}$ are sent to the elements of $\vB_{1,\mu}/\LCS_\infty$ that we wish to show commute, and the third is sent to an element that commutes with both of them. It then follows from Lemma~\ref{vB111^ab} that the first two elements commute. In detail, the four cases are as follows, for $i,j \geq 2$ with $i \neq j$: 
\begin{itemize}
\item We wish to show that $x_{i1}$ and $x_{j1}$ commute. Let $\theta$ be induced by $1 \mapsto \alpha$, $2 \mapsto \beta$ and $3 \mapsto 1$, with $\alpha$ in the $i$-th block and $\beta$ in the $j$-th block of $\lambda$. The image of $\chi_{12}$ is $x_{ij}$, which commutes with $x_{i1} = \theta(\chi_{13})$ and $x_{j1} = \theta(\chi_{23})$, as we have deduced above from a disjoint support argument.
\item The case of $x_{1j}$ and $x_{1i}$ is similar to the previous one, the roles of $1$ and $3$ being exchanged.
\item Both $x_{i1}$ and $x_{1j}$ commute with $x_{ij}$, and again the same argument implies that they commute.
\item Finally, $x_{i1}$ and $x_{1i}$ are the classes of $\chi_{\alpha 1}$ and $\chi_{1, \alpha + 1}$ respectively, with $\alpha$ and $\alpha + 1$ both in the $i$-th block of $\lambda$. Since $x_{i1}$ and $x_{1i}$ both commute with $\overline \chi_{\alpha, \alpha + 1} = t_i s_i$, we can apply once again the same argument, $\theta$ being induced by $1 \mapsto 1$, $2 \mapsto \alpha$ and $3 \mapsto \alpha + 1$.
\end{itemize}
We have therefore shown that all generators in a generating set for $\vB_{1,\mu}/\LCS_\infty$ pairwise commute. Hence this quotient is abelian, in other words, $\LCS_\infty = \LCS_2$ for $\vB_{1,\mu}$.
\end{proof}

On the contrary, for extended welded braids, adding one block of size $1$ already prevents the LCS from stopping:

\begin{proposition}\label{LCS_of_exwB_1mu}
For any partition $\mu$ of an integer $n \geq 1$, the LCS of the partitioned extended welded braid group $\exwB_{1, \mu}$ does not stop.
\end{proposition}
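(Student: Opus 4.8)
The plan is to exhibit, for every $\mu$, a quotient of $\exwB_{1,\mu}$ isomorphic to the infinite dihedral group $\Z \rtimes (\Z/2)$, whose LCS does not stop by Corollary~\ref{Lie_Klein_A/center}; the conclusion then follows from Lemma~\ref{lem:stationary_quotient}. The geometric point driving the argument is that the orientation-reversing generator $\rho_1$ of the isolated circle acts by \emph{inversion} on the classes $\chi_{\alpha 1}$ (with $\alpha$ in a block of $\mu$), via $\rho_1\chi_{\alpha 1}\rho_1 = \chi_{\alpha 1}^{-1}$, so that $\rho_1$ and the image of $\chi_{\alpha 1}$ should generate a copy of $\Z \rtimes (\Z/2)$. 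This is exactly the feature absent for $\vB_{1,\mu}$ and $\wB_{1,\mu}$, and explains the contrast with Proposition~\ref{LCS_vB_1mu}.

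First I would reduce to the case $\mu = (m)$ of a single block: forgetting all blocks of $\mu$ except one, of size $m$, gives a surjection $\exwB_{1,\mu} \twoheadrightarrow \exwB_{1,m}$, so by Lemma~\ref{lem:stationary_quotient} it suffices to treat $\exwB_{1,m}$ for every $m \geq 1$. Using the decomposition \eqref{dec_of_exwB}, $\exwB_{1,m} \cong \wP_{m+1}\rtimes W_{(1,m)}$ with $W_{(1,m)} = (\Z/2)\wr(\Sym_1\times\Sym_m)$, I would then build a surjection onto $\Z\rtimes(\Z/2)$ out of a compatible pair of homomorphisms. Define $g\colon W_{(1,m)}\twoheadrightarrow \Z/2$ to be the projection onto the $\rho_1$-coordinate (sending $\rho_1\mapsto 1$ and $\Sym_m$ together with the $\rho_\gamma$, $\gamma\geq 2$, to $0$); this is well defined because the first coordinate of $(\Z/2)^{m+1}$ is $\Sym_m$-invariant. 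Using that $\wP_{m+1}^{\ab}$ is free abelian on the $\overline\chi_{ij}$ (Corollary~\ref{v(w)Pn^ab}), define $f\colon \wP_{m+1}\to\Z$ by $\chi_{\alpha 1}\mapsto 1$ for $\alpha$ in the block of size $m$, and every other $\chi_{ij}\mapsto 0$.

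The heart of the matter is the equivariance $f(w\chi w^{-1}) = g(w)\cdot f(\chi)$, where $\Z/2$ acts on $\Z$ by inversion. This reduces to the three facts recorded in the proof of Proposition~\ref{partitioned_exwB^ab}: that $\rho_1$ inverts each $\chi_{\alpha 1}$ (so $f(\rho_1\chi_{\alpha 1}\rho_1)=-1=g(\rho_1)\cdot f(\chi_{\alpha 1})$), that $\Sym_m$ permutes the $\chi_{\alpha 1}$ while $\rho_\gamma$ ($\gamma\geq 2$) fixes them, and that the remaining generators $\chi_{ij}$ are sent to $\pm\chi_{i'j'}$ with $j'\neq 1$, hence stay in $\ker f$. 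Granting this, $(p,w)\mapsto (f(p),g(w))$ is a homomorphism $\exwB_{1,m}\to \Z\rtimes(\Z/2)$, and it is onto since $\chi_{\alpha 1}$ maps to a generator of $\Z$ and $\rho_1$ to the generator of $\Z/2$.

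Finally, $\Z\rtimes(\Z/2)$ with the inversion action is the infinite dihedral group, with $\LCS_k = 2^{k-1}\Z$, so its LCS does not stop (Corollary~\ref{Lie_Klein_A/center}); Lemma~\ref{lem:stationary_quotient} then yields the claim for $\exwB_{1,m}$, and hence for $\exwB_{1,\mu}$. I expect the only genuinely delicate step to be this equivariance verification, which is what makes $(f,g)$ descend to a homomorphism out of the semidirect product; the reduction to a single block and the action of $W_\lambda$ on $\wP_n^{\ab}$ are either standard forgetting-of-blocks surjections or already established earlier in the chapter.
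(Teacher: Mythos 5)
Your proof is correct, and it rests on exactly the same two pillars as the paper's: the reduction to a single block $\exwB_{1,m}$ by forgetting strands, and the observation that $\rho_1$ inverts $\chi_{\alpha 1}$ while everything else in sight either fixes it or permutes such classes among themselves, so that $\rho_1$ and $\chi_{\alpha 1}$ generate an infinite dihedral quotient. Where you differ is in how the quotient map is built. The paper works inside $\Aut(\F_{1+m})$, checks that the generators preserve the normal subgroup of $\F_{1+m}$ collapsing the big block to a $\Z/2$, identifies the image in $\Aut(\Z * \Z/2)$ as $(\Z * \Z/2) \rtimes (\Z/2)$ via inner automorphisms, and then kills $\chi_{12}$. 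You instead use the decomposition $\exwB_{1,m} \cong \wP_{m+1} \rtimes W_{(1,m)}$ of \eqref{dec_of_exwB} and assemble the quotient from a character $f$ of $\wP_{m+1}^{\ab}$ (supported on the $\overline\chi_{\alpha 1}$) and the $\rho_1$-coordinate $g$ of $W_{(1,m)}$, with the equivariance $f(w\cdot p) = g(w)\cdot f(p)$ reducing to the description of the $W_\lambda$-action on $\wP_n^{\ab}$ already recorded in the proof of Proposition~\ref{partitioned_exwB^ab}; since both sides of the equivariance identity are homomorphisms in each variable separately, checking it on the generators $\rho_\gamma$, $\tau_\alpha$, $\chi_{ij}$ as you do is indeed sufficient. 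Your version is slightly more economical in that it never leaves the abelianisation of the pure part and avoids verifying that a normal subgroup of $\F_{1+m}$ is preserved; the paper's version has the advantage that the same induced-action-on-$\Aut(\Z * \Z/2)$ device is reused almost verbatim for the tripartite cases in Proposition~\ref{tripartite_wB_isolated_S}, which your character construction would have to be redone for. Citing Corollary~\ref{Lie_Klein_A/center} for the non-stopping of the LCS of $\Z \rtimes (\Z/2)$ is fine (the paper cites Proposition~\ref{LCS_of_Z/2*Z/2}, which gives the same group).
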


\begin{proof}
Forgetting every block of $\mu$ save one induces a surjection from $\exwB_{1, \mu}$ onto $\exwB_{1, m}$ for some $m \geq 1$, so by Lemma~\ref{lem:stationary_quotient}, it is enough to show that the LCS of $\exwB_{1, m}$ does not stop. Let us consider $\exwB_{1, m}$ as a subgroup of $\Aut(\F_{1+m})$. One can check on the generators of $\exwB_{1, m}$ that the automorphisms in $\exwB_{1, m}$ preserve the normal subgroup $N$ of $\F_{1+m}$ generated by all the elements $x_\alpha x_{\alpha+1}^{-1}$ for $2 \leq \alpha \leq m$, together with the $x_\alpha^2$ for $2 \leq \alpha \leq m+1$. For instance, $\tau_{3}(x_2 x_3^{-1}) = x_2 x_4^{-1} = (x_2 x_3^{-1}) (x_3 x_4^{-1}) \in N$, $\rho_2(x_2 x_3^{-1}) = x_2^{-1} x_3^{-1} = (x_2^2)^{-1} (x_2 x_3^{-1}) \in N$, etc. As a consequence, the quotient of $\F_{1+m} \cong \Z *\F_m$ onto $\Z * \Z/2$ induces a well-defined morphism $\exwB_{1, m} \rightarrow \Aut(\Z * \Z/2)$. Let us denote again by $x_1$ and $x_2$ the generators of $\Z * \Z/2$, which are the images of $x_1, x_2 \in\F_{1+m}$. The image $G$ of $\exwB_{1, m}$ in $\Aut(\Z * \Z/2)$ is generated by $\rho_1$, $\chi_{12}$ and $\chi_{21}$, which are defined by the same formulas as the corresponding elements of $\Aut(\F_2)$, only with $x_2$ having square $1$. We note that $\chi_{21}$ and $\chi_{12}$ are conjugation by $x_1$ and by $x_2$ on $\Z * \Z/2$ respectively, so they generate $\Inn(\Z * \Z/2) \cong \Z * \Z/2$, which is normal in $\Aut(\Z * \Z/2)$, whence also in $G$. Moreover, $\rho_1$ is not an inner automorphism and, since $\rho_1^2 = 1$, we have:
\[G = \langle \chi_{21}, \chi_{12} \rangle \rtimes \langle \rho_1 \rangle \cong (\Z * \Z/2) \rtimes (\Z/2).\]
Conjugation by $\rho_1$ fixes $\chi_{12}$ and sends $\chi_{21}$ to its inverse. Hence the quotient of $G$ by $\chi_{12}$ is isomorphic to $\Z \rtimes (\Z/2)$, whose LCS does not stop by Proposition~\ref{LCS_of_Z/2*Z/2}.
\end{proof}

\begin{remark}\label{exwB11}
If $m = 1$, then the same kind of calculation can be applied directly in $\Aut(\F_2)$ to see that $\exwB_{1,1} = \exwP_2 \cong\F_2 \rtimes (\Z/2)^2$, where each generator of $(\Z/2)^2$ fixes one of the generators of $\F_2$, and sends the other one to its inverse. This group surjects onto $(\Z \rtimes (\Z/2)) \times (\Z/2)$ (by killing one of the generators of $\F_2$), so its LCS does not stop.
\end{remark}

Having dealt with all the tricky cases, we can finally prove the main result of this section:
\begin{proof}[Proof of Theorem \ref{LCS_of_partitioned_v(w)B}]
The stable case, where we have $n_i \geq 4$ for all $i$, is dealt with in Proposition~\ref{LCS_wB_stable}. 

If $\lambda$ has one block of size $2$ or $3$, then there is a surjection (defined by forgetting all strands except for those corresponding to this block) from $\vB_\lambda$ onto $\vB_2$ or $\vB_3$, thus we can use Lemma~\ref{lem:stationary_quotient} to deduce from Proposition~\ref{LCS_of_vBn} that its LCS does not stop. The same argument applies to welded braids (resp.~to extended welded braids), using Proposition~\ref{LCS_of_wBn} (resp.~Proposition~\ref{LCS_of_exwBn}) instead of Proposition~\ref{LCS_of_vBn}.

Similarly, if there are two blocks of size $1$, then both $\vB_\lambda$ and $\wB_\lambda$ surject onto $\vB_{1,1} = \wB_{1,1} = \vP_2 \cong\F_2$, whose LCS does not stop. Under this hypothesis, $\exwB_\lambda$ surjects onto $\exwB_{1,1} \cong\F_2 \rtimes (\Z/2)^2$, whose LCS does not stop either; see Remark~\ref{exwB11}.

Finally, the tricky cases where there is exactly one block of size $1$ and no blocks of size $2$ or $3$ are dealt with in Propositions~\ref{LCS_vB_1mu} and~\ref{LCS_of_exwB_1mu}.
\end{proof}

\chapter{Variants on partitioned welded braids}

This chapter is devoted to generalising the results of the previous chapter about (extended) welded braids. Precisely, we consider the configuration space of points, oriented circles and unoriented circles in $3$-dimensional space; its fundamental group then consists of welded braids with three kinds of strands that interact with each other. We call them tripartite welded braids. Notice that if there is only one kind of strand, we recover the group of welded braids, or of extended welded braids (or the symmetric group if we keep only strands corresponding to motions of points, which is less interesting). We can consider partitioned versions of these groups, defined in the obvious way (see Definition~\ref{def_partitioned_tripartite} for details). Our goal is to understand when the LCS of a partitioned tripartite welded braid group stops, a goal that is fully reached in Theorem~\ref{LCS_of_partitioned_exwBn_2}. This result contains the results of the previous chapter about the LCS of $\wB_\lambda$ and $\exwB_\lambda$; we have kept these separate only for the sake of clarity. Indeed, considering this larger family of groups requires heavier notations (mostly to distinguish between the three different types of strands), and the cases to consider are more numerous. However, if one looks beyond the complexity of notations and the increased number of cases, one will observe that this generalisation does not fundamentally increase the complexity of the problem; in particular, all the methods that we use to solve it have already been used in the previous chapters.

Our first aim is to show that every tripartite welded braid group can in fact be identified with a subgroup of the group of extended welded braids. We first prove this identification for a finite-index subgroup consisting, in a certain sense, of pure braids (\Spar\ref{par_pure_bipartite}). We then extend it to the whole group (\Spar\ref{par_tripartite}). It then applies to the subgroups of partitioned tripartite welded braids, which are introduced in~\Spar\ref{par_partitioned_tripartite}. This identification allows us to treat tripartite welded braids as automorphisms of the free group, an interpretation that we put to good use in our study of the LCS of partitioned tripartite welded braid groups (\Spar\ref{s:LCS_partitinoned_welded}).

\section{Pure bipartite welded braids}\label{par_pure_bipartite}

We begin with introducing the pure version of the group of tripartite welded braids, obtained from configurations of points and oriented circles. Namely, let $k$ and $m$ be integers. The \emph{pure bipartite} welded braid group $\wP(k,m)$ is defined as the fundamental group of the configuration space $F_{k \sqcup m \S^1}(\D^3)$ of $k$ points and $m$ oriented circles in the $3$-ball $\D^3$ (notations being as in~\Spar\ref{par_geom_wB}). In order to study it, we are going to identify this group with a subgroup of $\wP_{k+m}$ ($\subset \Aut(\F_{k+m})$). This is done by considering maps between configuration spaces. Let us first consider the map forgetting the $k$ points of each configuration.

\begin{lemma}\label{dec_bipartite_wP}
The canonical map $F_{k \sqcup m \S^1}(\D^3) \rightarrow F_{m \S^1}(\D^3)$ induces a split short exact sequence:
\[\begin{tikzcd} 
\PB_k(\bD^3_m) \ar[r, hook] 
&\wP(k,m) \ar[r, two heads] 
&\wP_m, \ar[l, bend right, dashed]
\end{tikzcd}\]
where $\bD^3_n$ is the complement of an $n$-component unlink in the interior of the unit $3$-disc. The kernel $\PB_k(\bD^3_m) = \pi_1(F_k(\bD^3_m))$ identifies with $k$ copies of the free group $\pi_1(\D^3_m) \cong\F_m$.
\end{lemma}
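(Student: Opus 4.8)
The plan is to realise this short exact sequence as the one associated to a Fadell--Neuwirth-type fibration, exactly as the splitting projections for braids arise from Proposition~\ref{Fadell-Neuwirth}. First I would check that the forgetful map $p \colon F_{k \sqcup m\S^1}(\D^3) \to F_{m\S^1}(\D^3)$ is a fibration whose fibre over a configuration $C$ of $m$ circles is the space $F_k(\D^3 \setminus C)$ of configurations of $k$ points in the complement of $C$. Local triviality follows from the parametrised isotopy extension theorem: any two nearby circle configurations are ambiently isotopic, and such an isotopy carries complements to complements, trivialising the bundle over a neighbourhood. Since any $m$-component unlink is isotopic to the standard one, the fibre is homeomorphic to $F_k(\D^3_m)$, and it is path-connected because $\D^3_m$ is a connected manifold of dimension $\geq 2$.

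The crucial point is to produce a section, and here I would use a shrinking trick to get around the fact that the circles are not confined to a fixed sub-ball. Radially scaling everything towards the centre defines an isotopy $r_t$ of $\D^3$ with $r_1$ the contraction onto the concentric ball of radius $\tfrac12$; applied fibrewise, $C \mapsto r_t(C)$ gives a homotopy from the identity of $F_{m\S^1}(\D^3)$ to the self-map $r_1$, whose image consists of configurations contained in the half-ball. I then define $s \colon F_{m\S^1}(\D^3) \to F_{k \sqcup m\S^1}(\D^3)$ by $s(C) = r_1(C) \cup \{q_1, \dots, q_k\}$, where $q_1, \dots, q_k$ are fixed points chosen in the annular region $\{\tfrac12 < |x| < 1\}$. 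These points are automatically disjoint from $r_1(C)$, so $s$ is well defined and continuous, and $p \circ s = r_1 \simeq \mathrm{id}$. Thus $s$ is a section of $p$ up to homotopy.

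With the fibration and the (homotopy) section in hand, the long exact sequence of homotopy groups does the rest. The relation $p_* s_* = \mathrm{id}$ shows that $p_*$ is surjective on every $\pi_n$, which forces every connecting homomorphism $\partial \colon \pi_n(\text{base}) \to \pi_{n-1}(\text{fibre})$ to vanish; in particular $\partial \colon \pi_2(F_{m\S^1}(\D^3)) \to \pi_1(F_k(\D^3_m))$ is zero, so the inclusion of the fibre is $\pi_1$-injective. Combined with connectedness of the fibre this yields the split short exact sequence $\pi_1(F_k(\D^3_m)) \hookrightarrow \wP(k,m) \twoheadrightarrow \wP_m$, where $\wP(k,m)$ and $\wP_m$ are the fundamental groups by definition and by the geometric interpretation of \Spar\ref{par_geom_wB} respectively.

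Finally I would identify the kernel. Since $\D^3_m$ is a $3$-manifold, the fat diagonal in $(\D^3_m)^k$ has codimension $3$, so the inclusion $F_k(\D^3_m) \hookrightarrow (\D^3_m)^k$ is $2$-connected and in particular induces an isomorphism on $\pi_1$; hence $\PB_k(\D^3_m) \cong \pi_1(\D^3_m)^k$. As the complement of an $m$-component unlink has free fundamental group on the meridians, $\pi_1(\D^3_m) \cong \F_m$, giving $\PB_k(\D^3_m) \cong \F_m^{\,k}$, i.e.\ $k$ copies of $\F_m$. The main obstacle is the careful verification of the fibration property and, above all, the construction of a genuine (homotopy) section; once the shrinking trick supplies the section, the vanishing of the connecting maps, and hence both exactness and the injectivity of the fibre inclusion, come for free.
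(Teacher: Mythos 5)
Your proposal is correct and follows essentially the same route as the paper: a Fadell--Neuwirth-type fibration whose fibre is $F_k(\D^3_m)$, a homotopy section obtained by shrinking the circle configuration into a sub-ball and adjoining $k$ fixed points in the complementary region, the resulting splitting of the long exact sequence, and the identification of the kernel with $\F_m^{\,k}$ via the triviality of point-braids in dimension $3$. The only differences are in the justifications (you argue local triviality via isotopy extension where the paper cites Palais, and you identify the kernel by the codimension-$3$ argument where the paper cites its Proposition~\ref{Bn(M)}), and both are sound.
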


\begin{proof}
By \cite[Th.~C]{Palais}, the canonical map $p \colon F_{k \sqcup m \S^1}(\D^3) \rightarrow F_{m \S^1}(\D^3)$ is a locally trivial fibration. Its fibre identifies with $F_k(\D^3_m)$, where $\D^3_m$ denotes the (open) $3$-ball with $n$ unlinked, unknotted circles removed. Moreover, this fibration splits up to homotopy, that is, there exists $s \colon F_{m \S^1}(\D^3) \rightarrow F_{k \sqcup m \S^1}(\D^3)$ such that $p \circ s \simeq id$. Namely, one can choose a (smooth) isotopy equivalence $g$ between $\D^3$ and a proper subspace $D$ of $\D^3$ (explicitly, one can take $g \colon x \mapsto \frac12 x$ and $D = \frac12 \D^3$). This is an inverse up to isotopy of the inclusion $i_D \colon D \hookrightarrow \D^3$. Then one can fix a configuration $c_0$ of $k$ points outside of $D$, and let $s$ send a configuration $c$ of circles to $g(c) \sqcup c_0$. The isotopy $i_D \circ g \simeq id$ then induces a homotopy $p \circ s \simeq id$, as required. As a consequence, the long exact sequence in homotopy breaks into split short exact sequences, and the one between fundamental groups is exactly the one of the lemma. Moreover, the identification of the kernel comes directly from the classical triviality of braids on manifolds of dimension at least $3$: a pure braid with $k$ strands is just a collection of $k$ (homotopy classes of) loops; see for instance Proposition~\ref{Bn(M)} below.
\end{proof}

Let us now consider the map from the configuration space $F_{k \sqcup m \S^1}(\D^3)$ to $F_{(k+m)\S^1}(\D^3)$ obtained by replacing each point in a configuration by a small (oriented) circle.Such a map can be defined explicitly: one can replace each point $P$ of a given configuration by the horizontal circle of centre $P$ whose diameter equals half the distance between $P$ and the rest of the points and circles of the configuration.

\begin{proposition}\label{bipartite_wP_as_subgroup_of_wP}
The above map identifies $\wP(k,m)$ with the subgroup of $\wP_{k+m}$ generated by the $\chi_{\alpha\beta}$, for $1 \leq \alpha \neq \beta \leq n$ such that  $\beta > k$.
\end{proposition}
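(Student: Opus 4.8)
The plan is to set $n=k+m$, label the $k$ points by $1,\dots,k$ and the $m$ circles by $k+1,\dots,k+m$, and write $H$ for the subgroup of $\wP_{k+m}$ generated by the $\chi_{\alpha\beta}$ with $\beta>k$. First I would check that the homomorphism $\iota\colon\wP(k,m)\to\wP_{k+m}$ induced by the point-to-circle map has image exactly $H$, by evaluating it on the generators provided by the split short exact sequence of Lemma~\ref{dec_bipartite_wP}. Geometrically, replacing the $k$ points by tiny circles turns the loop ``point $\alpha$ travels around circle $\beta$'' (a free generator of the $\alpha$-th copy of $\F_m\cong\pi_1(\D^3_m)$ in the kernel) into ``circle $\alpha$ threads through circle $\beta$'', that is, $\chi_{\alpha\beta}$ with $\alpha\le k<\beta$ (up to orientation, which does not affect the subgroup generated); and it carries the section copy of $\wP_m$ isomorphically onto $\langle\chi_{\gamma\delta}:\gamma,\delta>k\rangle$. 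Since every generator $\chi_{\alpha\beta}$ of $H$ has $\beta>k$ and either $\alpha>k$ (hence lies in the $\wP_m$-part) or $\alpha\le k$ (hence is the image of a kernel generator), this shows $\operatorname{im}\iota=H$.

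It then remains to prove injectivity, which I would do with the five lemma. Let $\phi\colon\wP_{k+m}\twoheadrightarrow\wP_m$ be the strand-forgetting homomorphism deleting strands $1,\dots,k$. One checks from the definitions that $\phi\circ\iota$ is the projection of Lemma~\ref{dec_bipartite_wP} (both amount to forgetting the first $k$ objects), that $\iota$ carries the splitting of that sequence to the evident section $s_0\colon\wP_m\to H$, $\chi_{\gamma\delta}\mapsto\chi_{\gamma\delta}$, and that $\iota$ sends the kernel $(\F_m)^k$ into $K_0:=\langle\chi_{\alpha\beta}:\alpha\le k<\beta\rangle$. Thus $\iota$ is a morphism from the split sequence of Lemma~\ref{dec_bipartite_wP} to the sequence $K_0\hookrightarrow H\xrightarrow{\phi}\wP_m$, and since the right-hand map is the identity it suffices to show (a) the bottom row is exact, and (b) the induced map $(\F_m)^k\to K_0$ on kernels is an isomorphism.

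For (b), surjectivity is clear since $K_0$ is generated by the images of the kernel generators. For injectivity I would use further strand-forgetting retractions: for each $\alpha_0\le k$, forgetting every point-strand except $\alpha_0$ gives $\wP_{k+m}\to\wP_{m+1}$ killing all $\chi_{\alpha\beta}$ with $\alpha\neq\alpha_0$ and sending $\langle\chi_{\alpha_0\beta}:\beta>k\rangle$ onto $\langle\chi_{1,\beta'}:\beta'\geq 2\rangle\subset\wP_{m+1}$. Via Theorem~\ref{wB_in_Aut(Fn)} the latter is the image of the injective map $w\mapsto(x_1\mapsto wx_1w^{-1},\ x_i\mapsto x_i)$ from $\langle x_2,\dots,x_{m+1}\rangle\cong\F_m$, which is free of rank $m$; moreover relations $\mathrm{(wPR1)}$ and $\mathrm{(wPR2)}$ of Proposition~\ref{McCool_presentation} show that distinct copies commute, so $K_0$ is the direct product of these $k$ free factors and the retractions separate them, giving injectivity of $(\F_m)^k\to K_0$. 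For (a), the only nonformal point is that $K_0$ is normal in $H$, i.e.\ stable under conjugation by $s_0(\wP_m)$; an explicit computation in $\Aut(\F_{k+m})$ (using $\chi_{ij}\colon x_i\mapsto x_jx_ix_j^{-1}$) gives, for $\alpha\le k<\gamma,\delta$,
\[
\chi_{\gamma\delta}\,\chi_{\alpha\gamma}\,\chi_{\gamma\delta}^{-1}\colon\ x_\alpha\longmapsto (x_\delta x_\gamma x_\delta^{-1})\,x_\alpha\,(x_\delta x_\gamma x_\delta^{-1})^{-1},
\]
fixing all other generators, which equals $\chi_{\alpha\delta}\chi_{\alpha\gamma}\chi_{\alpha\delta}^{-1}\in K_0$; the remaining index configurations (where $\beta\notin\{\gamma,\delta\}$ or $\beta=\delta$) are immediate from $\mathrm{(wPR1)}$ and $\mathrm{(wPR2)}$. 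With $K_0\trianglelefteq H$ and the splitting $s_0$, the bottom row is a split short exact sequence, and the five lemma concludes that $\iota$ is an isomorphism onto $H$.

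I expect the main obstacle to be injectivity — concretely, pinning down that each copy of $\F_m$ maps to a genuinely free subgroup and that $\ker(\phi|_H)$ is no larger than $K_0$. Both reduce to controlled computations in $\Aut(\F_{k+m})$ via Theorem~\ref{wB_in_Aut(Fn)} and the McCool relations of Proposition~\ref{McCool_presentation}, so the difficulty is bookkeeping rather than conceptual; the one place demanding genuine care is the geometric identification of $\iota$ on generators in the first paragraph, where one must match the two fibration descriptions (point-and-circle versus all-circle configurations) and keep track of orientations.
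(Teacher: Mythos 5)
Your proof is correct and follows essentially the same route as the paper's: both reduce, via the Five Lemma applied to a morphism from the split short exact sequence of Lemma~\ref{dec_bipartite_wP} into a corresponding sequence inside $\wP_{k+m}$, to checking that the $k$ copies of $\F_m$ spanned by the $\chi_{\alpha\beta}$ with $\alpha\le k<\beta$ are free and pairwise commuting via the $\Aut(\F_{k+m})$ interpretation, and then identify the image by inspecting generators (your retractions and the normality check for $K_0$ just make explicit what the paper leaves terse). One cosmetic remark: with the convention $\chi_{ij}\colon x_i\mapsto x_jx_ix_j^{-1}$ your displayed conjugate $\chi_{\gamma\delta}\chi_{\alpha\gamma}\chi_{\gamma\delta}^{-1}$ is conjugation of $x_\alpha$ by $x_\delta x_\gamma x_\delta^{-1}$ and hence equals $\chi_{\alpha\delta}^{-1}\chi_{\alpha\gamma}\chi_{\alpha\delta}$ rather than $\chi_{\alpha\delta}\chi_{\alpha\gamma}\chi_{\alpha\delta}^{-1}$, but either expression lies in $K_0$, so your normality argument is unaffected.
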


\begin{proof}
Using Lemma~\ref{dec_bipartite_wP}, we get a commutative diagram with (split) short exact rows:
\[\begin{tikzcd} 
\PB_k(\bD^3_m) \ar[r, hook] \ar[d] \ar[rd, dashed]
&\wP(k,m) \ar[r, two heads] \ar[d]
&\wP_m \ar[d, -, double line with arrow={-,-}] \\
\bullet \ar[r, hook]
&\wP_{k+m} \ar[r, two heads]
&\wP_m,
\end{tikzcd}\]
where the right-hand square is induced by the obvious commutative square at the level of configuration spaces. We want to show that the middle vertical map is injective. Using the Five Lemma, we see that we only need to show that the left vertical map is injective or, equivalently, that the dashed one is. Recall that $\PB_k(\D^3_m) \cong (\F_m)^k$ via $\pi_{\D^3_m}$; see for instance Proposition~\ref{Bn(M)} below. Moreover, by unravelling the definitions, we see that our map $\PB_k(\bD^3_m) \rightarrow \wP_{k+m}$ sends the generator $x_j$ of the $i$-th copy of $\F_n$ (corresponding to the $i$-th point passing through the $j$-th circle, with $i \leq k$ and $k < j \leq k+m$) to $\chi_{ij} \in  \wP_{k+m}$. It follows directly from the interpretation of $\wP_{k+m}$ as a group of automorphism of the free group $\F_{k+m}$ (see Theorem~\ref{exwB_in_Aut(Fn)}) that for each fixed $i \leq k$, the $\chi_{ij}$ for all $k < j \leq k+m$ generate a free group, and that these copies of $\F_m$ commute in $\wP_{k+m}$, so that our map must be an isomorphism onto its image. As a consequence the map $\wP(k, m) \rightarrow \wP_{k+m}$ under scrutiny is injective.

Now, let $G$ be the subgroup of $\wP_{k+m}$ generated by the $\chi_{\alpha\beta}$, for $1 \leq \alpha \neq \beta \leq n$ such that  $\beta > k$. All these elements are seen to be images of elements of $\wP(k, m)$, so that $G \subseteq \wP(k, m)$. Moreover, $G$ contains the image of $\PB_k(\bD^3_m)$ (described explicitly in the above reasoning), and all of $\wP_m$, so that $G$ is equal to the whole of $\wP(k, m) = \PB_k(\bD^3_m) \rtimes \wP_m$, and the proposition is proved. 
\end{proof}

\begin{remark}
In the semi-direct product decomposition $\wP(k, m) = (\F_m)^k \rtimes \wP_m$ from Lemma~\ref{dec_bipartite_wP}, the action of $\wP_m$ on $(\F_m)^k$ is diagonal, the action on each copy of $\F_m$ being the canonical one (corresponding to the injection $\wP_m \subset \Aut(F_m)$. One may see this by unravelling the geometric definitions of these objects; however, it is much easier to deduce this from Proposition~\ref{bipartite_wP_as_subgroup_of_wP}, which allows us to see this action as a restriction of conjugation in $\wP_{k+m} \subset \Aut(\F_{k+m})$, and to compute it by writing explicit automorphisms of the free group.
\end{remark}

\begin{corollary}\label{bipartite_wP_in_Aut(Fn)}
The Artin representation $\wP_{k+m} \hookrightarrow \Aut(\F_{m+k})$ identifies $\wP(k,m)$ with the group of automorphisms of $\F_{k+m}$ of the form $x_i \mapsto w_i x_i w_i^{-1}$, with $w_i \in \langle x_{k+1}, \ldots , x_{k+m} \rangle$ for all $i$. 
\end{corollary}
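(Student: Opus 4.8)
The plan is to leverage the identification already obtained in Proposition~\ref{bipartite_wP_as_subgroup_of_wP}, which realises $\wP(k,m)$ as the subgroup $G' \subseteq \wP_{k+m} \subseteq \Aut(\F_{k+m})$ generated by the $\chi_{\alpha\beta}$ with $\beta > k$, and then to check that $G'$ coincides with the set $H$ of automorphisms of the stated form. Write $P = \{1,\ldots,k\}$, $C = \{k+1,\ldots,k+m\}$ and $W = \langle x_{k+1},\ldots,x_{k+m}\rangle$, so that the claim is exactly $H = G'$, where $H = \{\varphi \colon x_i\mapsto w_i x_i w_i^{-1},\ w_i\in W\ \forall i\}$.

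First I would establish $G' \subseteq H$. Recall that $\chi_{\alpha\beta}$ acts by $x_\alpha \mapsto x_\beta x_\alpha x_\beta^{-1}$, fixing the other generators; when $\beta > k$ we have $x_\beta \in W$, so both $\chi_{\alpha\beta}$ and its inverse (which conjugates $x_\alpha$ by $x_\beta^{-1} \in W$) lie in $H$. It then suffices to check that $H$ is closed under composition: if $\varphi,\psi\in H$ with $\varphi(x_i)=w_ix_iw_i^{-1}$ and $\psi(x_i)=v_ix_iv_i^{-1}$, a direct computation gives $(\varphi\psi)(x_i)=\varphi(v_i)\,w_i\,x_i\,w_i^{-1}\,\varphi(v_i)^{-1}$, and since $v_i\in W$ while $\varphi$ maps each $x_j$ ($j\in C$) into $W$, we get $\varphi(v_i)w_i\in W$. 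Thus every product of the $\chi_{\alpha\beta}^{\pm1}$ ($\beta>k$) stays in $H$, giving $G'\subseteq H$ by Proposition~\ref{bipartite_wP_as_subgroup_of_wP}.

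The substantive direction is $H \subseteq G'$, which I would prove by factoring an arbitrary $\varphi\in H$ into a ``circle part'' and a ``point part''. Since $\varphi(W)\subseteq W$ and $\varphi$ is an automorphism, the restriction $\varphi|_W$ is an injective endomorphism of $W$; composing with the retraction $r\colon \F_{k+m}\to W$ that kills the $x_i$ ($i\in P$) exhibits $\varphi|_W$ as a split injection, so its image is a retract, hence a free factor, of $W$, of full rank $m$, forcing $\varphi|_W$ to be an automorphism of $W$. This restriction is basis-conjugating, so by McCool's theorem (Theorem~\ref{wB_in_Aut(Fn)}, resp.~Proposition~\ref{McCool_presentation}, applied to $W\cong\F_m$) there is some $\psi\in\langle\chi_{\alpha\beta}:\alpha,\beta\in C\rangle\subseteq G'$ with $\psi|_W=\varphi|_W$ and $\psi$ fixing every $x_i$ with $i\in P$. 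Then $\psi^{-1}\varphi\in H$ fixes all $x_j$ ($j\in C$) and conjugates each $x_i$ ($i\in P$) by a word in $W$; as these conjugations involve disjoint point-generators and commute, $\psi^{-1}\varphi$ is a product of the elementary conjugations $\chi_{ij}$ ($i\in P$, $j\in C$), all lying in $G'$. Hence $\varphi=\psi\,(\psi^{-1}\varphi)\in G'$.

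The main obstacle is the step asserting that $\varphi|_W$ is an automorphism of $W$: a priori it is only a basis-conjugating endomorphism, and I cannot yet invoke $\varphi^{-1}\in H$, since that is essentially what is being proved. The retract / free-factor argument above circumvents this circularity. Alternatively, one could appeal directly to the semidirect product decomposition $\wP(k,m)\cong(\F_m)^k\rtimes\wP_m$ of Lemma~\ref{dec_bipartite_wP}, matching the point part with $(\F_m)^k$ and the circle part with $\wP_m$, which makes the surjectivity of $\varphi|_W$ automatic once one knows that the associated ``forget the points'' projection sends $\varphi$ into $\wP_m$.
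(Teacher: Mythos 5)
Your proof is correct and follows essentially the same route as the paper: both identify the stated set of automorphisms with $\wP(k,m)$ by means of the split ``forget the points'' restriction to $W=\langle x_{k+1},\ldots,x_{k+m}\rangle$, with image $\wP_m$ and kernel $(\F_m)^k=\langle\chi_{ij}\rangle_{i\le k<j}$, and then invoke Proposition~\ref{bipartite_wP_as_subgroup_of_wP}. You are in fact more careful than the paper on one point: the paper tacitly treats the stated set as a subgroup whose elements restrict to automorphisms of $W$, whereas you isolate and prove the surjectivity of $\varphi|_W$, which is exactly where the circularity you mention would otherwise lurk. One sub-step should be rephrased, however: you should not lean on the assertion that a retract of a free group is a free factor (for rank at least $3$ this is a delicate matter), and you do not need it. From $s\circ\varphi|_W=\mathrm{id}_W$, where $s$ is the restriction to $W$ of $r\circ\varphi^{-1}$, the map $s|_W$ is a surjective endomorphism of $W\cong\F_m$, hence an automorphism because finitely generated free groups are Hopfian, and therefore $\varphi|_W=(s|_W)^{-1}$ is an automorphism of $W$ directly, with no mention of retracts or free factors.
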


\begin{proof}
Let $G$ be the subgroup of $\Aut(\F_{k+m})$ described by the condition of the statement. If we look only at its action of $x_{k+1}, \ldots , x_{k+m}$, we see that every element of $G$ restricts to a basis-conjugating automorphism of $\langle x_{k+1}, \ldots , x_{k+m} \rangle \cong \F_m$. This defines a morphism from $G$ to $\wP_m$, which admits a section, given by extending automorphisms by $x_i \mapsto x_i$ if $i \leq k$. The kernel of this split projection is easily seen to be the copy of $(\F_m)^k$ generated by the $\chi_{ij}$ with $i \leq k < j$ already described in the proof of Proposition~\ref{bipartite_wP_as_subgroup_of_wP}. Moreover, since $\wP_m$ is (classically) generated by the $\chi_{ij}$ (for $1 \leq i \neq j \leq m$), the image of this section is generated by the $\chi_{ij}$, for $k < i \neq j \leq k+m$. Thus $G$ is the semi-direct product of $\langle \chi_{ij} \rangle_{i \leq k < j} \cong (\F_m)^k$ and $\langle \chi_{ij} \rangle_{k < i,j} \cong \wP_m$, so Proposition~\ref{bipartite_wP_as_subgroup_of_wP} implies that it is equal to $\wP(k,m)$.
\end{proof}

\begin{corollary}\label{bipartite_wP^ab}
The abelianisation $\wP(k,m)^{\ab}$ is free abelian on the classes of the $\chi_{\alpha\beta}$, for $1 \leq \alpha \neq \beta \leq n$ such that  $\beta > k$.
\end{corollary}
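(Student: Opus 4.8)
The plan is to read off the abelianisation directly from the semidirect product decomposition $\wP(k,m) \cong (\F_m)^k \rtimes \wP_m$ established in Lemma~\ref{dec_bipartite_wP}, rather than working inside $\wP_{k+m}$ (where a subgroup's abelianisation need not embed into that of the ambient group). First I would invoke the remark following Proposition~\ref{bipartite_wP_as_subgroup_of_wP}, which identifies the $\wP_m$-action on $(\F_m)^k$ as the diagonal action, each copy of $\F_m$ carrying the canonical action coming from $\wP_m \subset \Aut(\F_m)$. Applying Lemma~\ref{lem:abelianization semidirect} then gives
\[
\wP(k,m)^{\ab} \cong \bigl( ((\F_m)^k)^{\ab} \bigr)_{\wP_m} \times \wP_m^{\ab}.
\]

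The second factor is handled by Corollary~\ref{v(w)Pn^ab}: $\wP_m^{\ab}$ is free abelian on the classes $\overline\chi_{ij}$, which under the indexing of Proposition~\ref{bipartite_wP_as_subgroup_of_wP} are exactly the $\chi_{\alpha\beta}$ with $k < \alpha \neq \beta \leq k+m$ (both indices among the circle strands). For the first factor, the key observation is that $\wP_m$ acts \emph{trivially} on $((\F_m)^k)^{\ab} \cong \Z^{mk}$: since every element of $\wP_m$ restricts to a basis-conjugating automorphism $x \mapsto wxw^{-1}$ of each $\F_m$, it acts trivially after abelianising. Hence the coinvariants are all of $\Z^{mk}$, free abelian on the images of the generators; by the description of the kernel in the proof of Proposition~\ref{bipartite_wP_as_subgroup_of_wP}, the generator $x_j$ of the $i$-th copy of $\F_m$ maps to $\chi_{ij}$ with $i \leq k < j$, so this basis is exactly $\{\chi_{\alpha\beta} : \alpha \leq k < \beta\}$.

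Combining the two factors, $\wP(k,m)^{\ab}$ is free abelian of rank $mk + m(m-1) = m(k+m-1)$, with basis the union of the two families above, that is, precisely the classes of the $\chi_{\alpha\beta}$ with $\beta > k$ and $\alpha \neq \beta$, as claimed. The only step carrying genuine content is the vanishing of the $\wP_m$-action on the abelianised fibre; everything else is bookkeeping of indices, so the main point to watch is that the two bases partition the asserted generating set according to whether $\alpha \leq k$ or $\alpha > k$, with no overlap and no omission.
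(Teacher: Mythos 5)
Your proof is correct, but it takes a genuinely different route from the paper's. The paper argues entirely inside the ambient group: by Proposition~\ref{bipartite_wP_as_subgroup_of_wP} the classes of the $\chi_{\alpha\beta}$ with $\beta>k$ generate $\wP(k,m)^{\ab}$, and the canonical map $\wP(k,m)^{\ab}\to\wP_{k+m}^{\ab}$ sends them to distinct basis elements of the free abelian group $\wP_{k+m}^{\ab}$; a generating family whose image under a homomorphism is linearly independent is itself a basis. Note that this sidesteps the worry you raise in your opening sentence: one does not need the map of abelianisations to be injective, only that these particular generators have linearly independent images. Your argument instead computes $\wP(k,m)^{\ab}$ intrinsically from the splitting $(\F_m)^k\rtimes\wP_m$ via Lemma~\ref{lem:abelianization semidirect}, with the key input being that basis-conjugating automorphisms act trivially on $\F_m^{\ab}$, so the coinvariants $\bigl(((\F_m)^k)^{\ab}\bigr)_{\wP_m}$ are all of $\Z^{mk}$. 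This is a few lines longer and leans on the remark identifying the action as the diagonal canonical one (which the paper states but only sketches), but it has the advantage of exhibiting the two sub-families of generators as coming from the kernel and the quotient respectively, and of making the freeness visible without reference to $\wP_{k+m}$. Both proofs are complete; your index bookkeeping and the rank count $m(k+m-1)$ are correct.
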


\begin{proof}
By Proposition~\ref{bipartite_wP_as_subgroup_of_wP}, these classes generate $\wP(k,m)^{\ab}$. Moreover, the canonical map $\wP(k,m)^{\ab} \rightarrow \wP_{k+m}^{\ab}$ sends these generators to linearly independent elements of $\wP_{k+m}^{\ab}$ (see Proposition~\ref{McCool_presentation}), so that they are a basis of $\wP(k,m)^{\ab}$.
\end{proof}

\section{Tripartite welded braids}\label{par_tripartite}

Given three integers $n_P$, $n_{S_+}$ and $n_S$, we define the group of \emph{tripartite welded braids} by:
\[\wB(n_P, n_{S_+}, n_S) := \pi_1\left(
F_{n_P \sqcup (n_{S_+} + n_S)\S^1}(\D^3)/(\Sym_{n_P} \times \Sym_{n_{S_+}}\!\times W_{n_S}) 
\right),\]
where the action by which we quotient is defined similarly to the ones in \Spar\ref{par_geom_wB} and \Spar\ref{par_geom_exwB}. In other words, we consider the fundamental group of the configuration space of $n_P$ points (unordered), $n_{S_+}$ oriented circles (unordered) and $n_S$ unoriented ones (unordered too), where all the circles are supposed unlinked and unknotted.

In order to study it, we identify this group with a subgroup of $\exwB_n$, where $n =  n_P + n_{S_+} + n_S$. Since $\exwB_n$ identifies with a subgroup of $\Aut(\F_n)$ (see Theorem~\ref{exwB_in_Aut(Fn)}), we will then be able to identify tripartite welded braids with automorphisms of $\F_n$, and this point of view will come in handy for doing explicit computations.   

Let us consider the map from $F_{n_P \sqcup (n_{S_+} + n_S) \S^1}(\D^3)$ to $F_{n\S^1}(\D^3)$ defined, as above, by replacing each point in a configuration by a small oriented circle. By factorisation through quotients by the appropriate group actions, we get a map between the configuration spaces of which $\wB(n_P, n_{S_+}, n_S)$ and $\exwB_n$ are the fundamental groups (this boils down to replacing points in configurations by small circles, and forgetting orientations of circles).
\begin{proposition}\label{tripartite_wB_as_subgroup_of_exwB}
The above map identifies $\wB(n_P, n_{S_+}, n_S)$ with the subgroup of $\exwB_n$ with $n =  n_P + n_{S_+} + n_S$ generated by:
\begin{itemize}
\item the $\tau_\alpha$ for $\alpha \neq n_P, n_P + n_{S_+}$ and $1 \leq \alpha < n$;
\item the $\rho_\alpha$ for $n_P + n_{S_+} < \alpha \leq n$;
\item the $\chi_{\alpha\beta}$, for $1 \leq \alpha \neq \beta \leq n$ such that  $\beta > n_P$.
\end{itemize}
\end{proposition}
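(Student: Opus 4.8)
The plan is to mirror the strategy used for the pure bipartite case (Proposition~\ref{bipartite_wP_as_subgroup_of_wP}), promoting it from the pure group to the full tripartite group by means of the covering induced by the quotient by $\Sym_{n_P} \times \Sym_{n_{S_+}} \times W_{n_S}$. First I would record the pure ingredient: before quotienting, the configuration space is $F_{n_P \sqcup m\S^1}(\D^3)$ with $m = n_{S_+} + n_S$, since the distinction between oriented and unoriented circles is invisible at this level (it enters only through the group by which we quotient). Hence its fundamental group is the pure bipartite group $\wP(n_P, m)$, which by Proposition~\ref{bipartite_wP_as_subgroup_of_wP} is identified with the subgroup of $\wP_n \subset \exwB_n$ generated by the $\chi_{\alpha\beta}$ with $\beta > n_P$. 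This is precisely the third family in the generating set of the statement.

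Next comes the quotient. As in \Spar\ref{par_geom_exwB}, the action of $\Sym_{n_P}\times\Sym_{n_{S_+}}\times W_{n_S}$ on $F_{n_P \sqcup m\S^1}(\D^3)$ is free, so the quotient map is a covering and yields a (split) short exact sequence with kernel $\wP(n_P, m)$ and quotient $\Sym_{n_P}\times\Sym_{n_{S_+}}\times W_{n_S}$. I would compare this with the short exact sequence $\wP_n \hookrightarrow \exwB_n \twoheadrightarrow W_n$ from \Spar\ref{par_pstation_exwB}, forming a commutative ladder in which the configuration-space map (replace each point by a small oriented circle, then forget orientations) induces the middle vertical arrow. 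The left arrow is the inclusion $\wP(n_P, m) \hookrightarrow \wP_n$ just recalled, while the right arrow is the inclusion of $\Sym_{n_P}\times\Sym_{n_{S_+}}\times W_{n_S}$ into $W_n = (\Z/2)^n \rtimes \Sym_n$ as the subgroup permuting internally each of the three index-blocks $\{1,\ldots,n_P\}$, $\{n_P+1,\ldots,n_P+n_{S_+}\}$, $\{n_P+n_{S_+}+1,\ldots,n\}$ and allowing sign changes only on the last block. Commutativity of the ladder is geometrically transparent, as passing to a small circle and forgetting orientation is equivariant for all these actions.

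Since the two outer vertical arrows are injective, a diagram chase (the injective four lemma, a special case of the five lemma) shows that the middle arrow is injective, so $\wB(n_P, n_{S_+}, n_S)$ embeds into $\exwB_n$. To pin down the image, let $G$ be the subgroup generated by the three listed families. The image contains $G$: the $\chi_{\alpha\beta}$ with $\beta > n_P$ lie in the image of the pure part, while the $\tau_\alpha$ (for $\alpha\neq n_P, n_P+n_{S_+}$) and the $\rho_\alpha$ (for $\alpha > n_P+n_{S_+}$) are lifts of the adjacent-transposition and sign generators of $\Sym_{n_P}\times\Sym_{n_{S_+}}\times W_{n_S}$, realised by the evident loops of the tripartite configuration space (swapping two like strands, respectively flipping one unoriented circle). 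Conversely $G$ contains the kernel $\wP(n_P, m)$ and surjects onto $\Sym_{n_P}\times\Sym_{n_{S_+}}\times W_{n_S}$, which forces $G$ to coincide with the image, exactly as at the end of the proof of Proposition~\ref{bipartite_wP_as_subgroup_of_wP}.

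The care point, and the only genuinely new input beyond the bipartite case, is verifying that the right-hand vertical map has image \emph{exactly} $\Sym_{n_P}\times\Sym_{n_{S_+}}\times W_{n_S}$ in $W_n$, and correspondingly that the excluded generators really are excluded: that $\tau_{n_P}$ and $\tau_{n_P+n_{S_+}}$ (which would mix strands of different types) and the $\rho_\alpha$ with $\alpha \leq n_P + n_{S_+}$ (which would flip a point or an oriented circle) are not loops of the tripartite configuration space. This is precisely where the geometric meaning of the three strand-types is needed; everything else is a formal consequence of the covering and the five lemma, built on the pure bipartite identification already in hand.
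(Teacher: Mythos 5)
Your proposal is correct and follows essentially the same route as the paper: the same commutative ladder of short exact sequences induced by the two coverings, the Five Lemma to get injectivity of the middle map from injectivity of the outer maps (the left one being Proposition~\ref{bipartite_wP_as_subgroup_of_wP}), and the same closing argument that the subgroup $G$ generated by the listed elements contains the kernel and surjects onto the quotient, hence equals the image. The "care point" you flag about the image of the right-hand vertical map is handled in the paper simply by observing that this map is obviously injective onto $\Sym_{n_P}\times\Sym_{n_{S_+}}\times W_{n_S}\subseteq W_n$.
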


\begin{proof}
Consider the following commutative square of maps between configuration spaces:
\[\begin{tikzcd} 
F_{n_P, (n_{S_+} + n_S)\S^1}(\D^3) \ar[r, two heads] \ar[d]
&F_{n_P, (n_{S_+} + n_S)\S^1}(\D^3)/(\Sym_{n_P} \times \Sym_{n_{S_+}}\!\times W_{n_S})  \ar[d] \\
F_{n \S^1}(\D^3) \ar[r, two heads]
&F_{n \S^1}(\D^3)/W_n,
\end{tikzcd}\]
where the horizontal maps are regular coverings. These induce the left square of the following commutative diagram, whose rows are exact (thanks to the usual theory of coverings):
\[\begin{tikzcd} 
\wP(n_P, n_{S_+} + n_S) \ar[r, hook] \ar[d]
&\wB(n_P, n_{S_+}, n_S)  \ar[d] \ar[r, two heads]
&\Sym_{n_P} \times \Sym_{n_{S_+}}\!\times W_{n_S} \ar[d] \\
\wP_n \ar[r, hook]
&\exwB_n \ar[r, two heads]
&W_n.
\end{tikzcd}\]
The vertical map on the right is obviously injective. By Lemma~\ref{bipartite_wP_as_subgroup_of_wP}, the one on the left is also injective. So, by the Five Lemma, the middle vertical map  $\wB(n_P, n_{S_+}, n_S) \rightarrow \exwB_n$ is too.  

Now, let $G$ be the subgroup of $\exwB_n$ generated by the elements listed in the statement. First, one easily sees that each element in the list is the image of some element of $\wB(n_P, n_{S_+}, n_S)$, so that $G \subseteq \wB(n_P, n_{S_+}, n_S)$. Moreover, $G$ contains $\wP(n_P, n_{S_+} + n_S)$ by Proposition~\ref{bipartite_wP_as_subgroup_of_wP}, and it also contains $\Sym_{n_P} \times \Sym_{n_{S_+}} \times W_{n_S}$, which is generated by the $\tau_\alpha$ and the $\rho_\alpha$ contained in $G$. Thus, $G$ is all of their semi-direct product $\wB(n_P, n_{S_+}, n_S)$, and our statement is proved.
\end{proof}

\begin{corollary}\label{tripartite_wB_in_Aut(Fn)}
The Artin representation $\exwB_n \hookrightarrow \Aut(\F_n)$ identifies the subgroup $\wB(n_P, n_{S_+}, n_S)$ with the group of automorphisms of $\F_n$ of the form 
\[x_i \longmapsto w_i x_{\sigma(i)}^{\epsilon_i} w_i^{-1},\] 
with $\sigma \in \Sym_{n_P} \times \Sym_{n_{S_+}}\!\times \Sym_{n_S} \subseteq \Sym_n$, $w_i \in \langle x_{n_P+1}, \ldots , x_n \rangle$ and $\epsilon_i \in \{\pm 1\}$ for all $i$, such that $\epsilon_i = 1$ if $i \leq n_P + n_{S_+}$.
\end{corollary}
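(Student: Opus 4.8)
The plan is to mimic the proof of Theorem~\ref{exwB_in_Aut(Fn)}, exploiting the semidirect-product decomposition of $\wB(n_P, n_{S_+}, n_S)$ already furnished by Proposition~\ref{tripartite_wB_as_subgroup_of_exwB}. Let $G$ denote the subgroup of $\Aut(\F_n)$ consisting of the automorphisms described in the statement. The first task is to verify that $G$ is indeed a subgroup: composing two automorphisms of the prescribed shape again yields one of that shape, with the permutations multiplying inside $\Sym_{n_P} \times \Sym_{n_{S_+}} \times \Sym_{n_S}$, the conjugating words staying in $\langle x_{n_P+1}, \ldots , x_n \rangle$ (since this free factor is preserved by all the relevant automorphisms), and the signs $\epsilon_i$ combining so that the constraint $\epsilon_i = 1$ for $i \leq n_P + n_{S_+}$ persists. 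This last point uses that $\sigma$ preserves the three blocks, so that no sign can migrate out of the last block.

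Next I would analyse $G$ through the canonical projection $p \colon \Aut(\F_n) \twoheadrightarrow \Aut(\F_n^{\ab}) \cong GL_n(\Z)$. An automorphism $x_i \mapsto w_i x_{\sigma(i)}^{\epsilon_i} w_i^{-1}$ abelianises to the signed permutation $e_i \mapsto \epsilon_i e_{\sigma(i)}$, so $p$ maps $G$ onto the group of signed permutation matrices respecting the block decomposition $\{1,\ldots,n_P\} \sqcup \{n_P+1,\ldots,n_P+n_{S_+}\} \sqcup \{n_P+n_{S_+}+1,\ldots,n\}$ and carrying signs only on the last block, a group canonically isomorphic to $\Sym_{n_P} \times \Sym_{n_{S_+}} \times W_{n_S}$, where $W_{n_S} = (\Z/2) \wr \Sym_{n_S}$. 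I would then exhibit a splitting of $p|_G$ using the permutation/reflection generators $\tau_\alpha$ (for $\alpha \neq n_P, n_P+n_{S_+}$) and $\rho_\alpha$ (for $\alpha > n_P+n_{S_+}$), checking that each of these lies in $G$: the $\tau_\alpha$ preserve the blocks with no signs, and the $\rho_\alpha$ introduce a sign only within the last block.

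The kernel $K$ of $p|_G$ consists precisely of the automorphisms $x_i \mapsto w_i x_i w_i^{-1}$ with $w_i \in \langle x_{n_P+1},\ldots,x_n \rangle$; by Corollary~\ref{bipartite_wP_in_Aut(Fn)} (applied with $k = n_P$ and $m = n_{S_+}+n_S$) this is exactly the image of $\wP(n_P, n_{S_+}+n_S)$. Hence $G = K \rtimes (\Sym_{n_P} \times \Sym_{n_{S_+}} \times W_{n_S})$, which is precisely the semidirect-product decomposition of $\wB(n_P, n_{S_+}, n_S)$ coming from Proposition~\ref{tripartite_wB_as_subgroup_of_exwB}. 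Comparing the two factors — the kernels agree by Corollary~\ref{bipartite_wP_in_Aut(Fn)}, and the two sections are generated by the same $\tau_\alpha$ and $\rho_\alpha$ — gives $G = \wB(n_P, n_{S_+}, n_S)$, as claimed.

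I expect no genuine obstacle here: the argument is a routine reprise of the pattern already used for Theorem~\ref{exwB_in_Aut(Fn)} and Corollary~\ref{bipartite_wP_in_Aut(Fn)}. The only point requiring mild care is the bookkeeping around the signs $\epsilon_i$ — confirming that the defining constraint ``$\epsilon_i = 1$ for $i \leq n_P + n_{S_+}$'' is stable under composition and matches exactly the image $\Sym_{n_P} \times \Sym_{n_{S_+}} \times W_{n_S}$ inside $GL_n(\Z)$, with reflections confined to the unoriented-circle block.
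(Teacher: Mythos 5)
Your proposal is correct and follows essentially the same route as the paper's own proof: both project $G$ to the signed permutation matrices in $GL_n(\Z)$, split this projection via the $\tau_\alpha$ and $\rho_\alpha$, and identify the kernel with $\wP(n_P, n_{S_+}+n_S)$ using Corollary~\ref{bipartite_wP_in_Aut(Fn)}. Your extra verification that $G$ is closed under composition (and the sign bookkeeping) is a detail the paper leaves implicit, but it does not change the argument.
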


\begin{proof}
Let $G$ be the subgroup of $\Aut(F_n)$ defined by the conditions of the statement. The canonical map from $\Aut(\F_n)$ to $\Aut(\F_n^{\ab}) \cong GL_n(\Z)$ restricts to a projection of $G$ onto a group of signed permutation matrices isomorphic to $\Sym_{n_P} \times \Sym_{n_{S_+}}\!\times W_{n_S}$. This projection is split, and the image of the obvious splitting is generated by the $\tau_\alpha$ and the $\rho_\alpha$ listed in the statement of Proposition~\ref{tripartite_wB_as_subgroup_of_exwB}. Moreover, Corollary~\ref{bipartite_wP_in_Aut(Fn)} identifies the kernel of this projection as $\wP(n_P,  n_{S_+} + n_S)$. Proposition~\ref{bipartite_wP_as_subgroup_of_wP} says that the latter is generated by the $\chi_{\alpha\beta}$ listed in Proposition~\ref{tripartite_wB_as_subgroup_of_exwB}. Thus, $G$ is generated by the elements listed in the statement of Proposition~\ref{tripartite_wB_as_subgroup_of_exwB}, so it is equal to $\wB(n_P, n_{S_+}, n_S)$.
\end{proof}

\section{Partitioned tripartite welded braids}\label{par_partitioned_tripartite}

Let us now introduce a partitioned version of this group. In order to do this, we use the canonical projection $\pi$ from $\wB(n_P, n_{S_+}, n_S)$ onto $\Sym_{n_P} \times \Sym_{n_{S_+}} \times \Sym_{n_S}$, which identifies (using Proposition~\ref{tripartite_wB_as_subgroup_of_exwB}) with a restriction of $\pi \colon \exwB_n \twoheadrightarrow \Sym_n$.
\begin{definition}\label{def_partitioned_tripartite}
Let $\lambda_P$, $\lambda_{S_+}$ and $\lambda_S$ be partitions of integers $n_P$, $n_{S_+}$ and $n_S$ respectively. Let $\lambda =\lambda_P \lambda_{S_+} \lambda_S$ be their concatenation, which is a partition of $n =  n_P + n_{S_+} + n_S$. The blocks of $\lambda$ will often be identified with blocks of $\lambda_P$, $\lambda_{S_+}$ and $\lambda_S$. The associated group of \emph{tripartite welded braids} $\wB(\lambda_P, \lambda_{S_+}, \lambda_S)$ is defined by:
\begin{equation*}
\resizebox{\hsize}{!}{$\wB(\lambda_P, \lambda_{S_+}, \lambda_S) = \pi^{-1}(\Sym_\lambda) =  \pi^{-1}(\Sym_{\lambda_P} \times \Sym_{\lambda_{S_+}} \times \Sym_{\lambda_S}) \subseteq \wB(n_P, n_{S_+}, n_S) \subset \exwB_n.
$}
\end{equation*}
\end{definition}

A direct adaptation of the proof of Lemma~\ref{generators_of_partitioned_braids} to this context (using Proposition~\ref{bipartite_wP_as_subgroup_of_wP} to supply generators of the subgroup of pure braids) gives:
\begin{lemma}\label{generators_partitioned_tripartite}
As a subgroup of $\exwB_n$, $\wB(\lambda_P, \lambda_{S_+}, \lambda_S)$ is generated by:
\begin{itemize}
\item the $\tau_\alpha$ for $1 \leq \alpha < n$ such that $\alpha$ and $\alpha + 1$ are in the same block of $\lambda$;
\item the $\sigma_\alpha$ for $n_P < \alpha < n$ such that $\alpha$ and $\alpha + 1$ are in the same block of $\lambda$;
\item the $\rho_\alpha$ for $n_P + n_{S_+} < \alpha \leq n$;
\item the $\chi_{\alpha\beta}$, for $1 \leq \alpha \neq \beta \leq n$ with  $\beta > n_P$, such that $\alpha$ and $\beta$ are not in the same block of $\lambda$.
\end{itemize}
\end{lemma}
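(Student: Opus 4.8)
The plan is to adapt, almost verbatim, the proof of Lemma~\ref{generators_of_partitioned_braids}. Let $G \leq \exwB_n$ be the subgroup generated by the four families listed in the statement. The inclusion $G \subseteq \wB(\lambda_P, \lambda_{S_+}, \lambda_S)$ is the easy direction. First I would check that each listed generator already lies in $\wB(n_P, n_{S_+}, n_S)$: for the $\tau_\alpha$, $\rho_\alpha$ and $\chi_{\alpha\beta}$ this is immediate from the generating set of Proposition~\ref{tripartite_wB_as_subgroup_of_exwB}, once one observes that the concatenation $\lambda = \lambda_P \lambda_{S_+} \lambda_S$ forces every block to lie entirely within one of the three index ranges, so that $\alpha$ and $\alpha+1$ lying in a common block rules out $\alpha \in \{n_P,\, n_P + n_{S_+}\}$. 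For the $\sigma_\alpha$ (which occur only for $\alpha > n_P$) one rewrites $\sigma_\alpha = \tau_\alpha \chi_{\alpha,\alpha+1}$, using $\chi_{\alpha,\alpha+1} = \tau_\alpha \sigma_\alpha$ and $\tau_\alpha^2 = 1$; this is a product of two elements of $\wB(n_P, n_{S_+}, n_S)$, since $\alpha + 1 > n_P$. Applying $\pi$, each generator maps into $\Sym_\lambda$ (the $\tau_\alpha$ and $\sigma_\alpha$ to the within-block transposition $(\alpha,\alpha+1)$, the $\rho_\alpha$ and $\chi_{\alpha\beta}$ into $\ker\pi$), whence $G \subseteq \pi^{-1}(\Sym_\lambda) = \wB(\lambda_P, \lambda_{S_+}, \lambda_S)$.

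For the reverse inclusion I would follow the three steps of Lemma~\ref{generators_of_partitioned_braids}. First, $G$ contains the whole kernel $\ker \pi = \wP(n_P, n_{S_+} + n_S)$: by Proposition~\ref{bipartite_wP_as_subgroup_of_wP} this kernel is generated by all $\chi_{\alpha\beta}$ with $\beta > n_P$, and the only ones absent from our list are those with $\alpha$ and $\beta$ in a common block $b$. Such a block lies in the $S_+$ or $S$ range, and the corresponding $\chi_{\alpha\beta}$ sit in the copy of $\wB_{|b|}$ inside $G$ generated by the $\sigma_\gamma, \tau_\gamma$ with $\gamma, \gamma+1 \in b$ (whose pure subgroup $\wP_{|b|}$ is generated by exactly these $\chi_{\alpha\beta}$); hence they too belong to $G$. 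Second, $\pi(G) = \Sym_\lambda$, since the images $\pi(\tau_\alpha) = (\alpha,\alpha+1)$ for adjacent pairs within blocks generate $\Sym_{\lambda_P} \times \Sym_{\lambda_{S_+}} \times \Sym_{\lambda_S}$. Third, given any $g \in \wB(\lambda_P, \lambda_{S_+}, \lambda_S)$, one has $\pi(g) \in \Sym_\lambda = \pi(G)$, so there is $h \in G$ with $\pi(h) = \pi(g)$; then $h^{-1}g \in \ker\pi \subseteq G$, giving $g \in G$. This yields $\wB(\lambda_P, \lambda_{S_+}, \lambda_S) \subseteq G$ and closes the argument.

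The only genuine work is the bookkeeping in the first step: confirming that the block structure of the concatenated partition never straddles the boundaries at $n_P$ and $n_P + n_{S_+}$, so that every $\tau_\alpha$ in the list is a legitimate generator of $\wB(n_P, n_{S_+}, n_S)$ and every $\sigma_\alpha$ can be expressed through $\chi_{\alpha,\alpha+1}$ with $\alpha + 1 > n_P$. Everything else is a direct transcription of the partitioned-braid proof, with Proposition~\ref{bipartite_wP_as_subgroup_of_wP} playing the role that the generation of the pure braid group by the $A_{\alpha\beta}$ played there, and the single-block welded braid groups $\wB_{|b|}$ replacing the single-block braid groups that appeared in Lemma~\ref{generators_of_partitioned_braids}.
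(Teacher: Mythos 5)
Your proposal is correct and is essentially the proof the paper intends: the paper dispatches this lemma in a single line as a direct adaptation of the proof of Lemma~\ref{generators_of_partitioned_braids}, using Proposition~\ref{bipartite_wP_as_subgroup_of_wP} to supply generators of the pure subgroup, which is exactly your argument with the details (the boundary bookkeeping at $n_P$ and $n_P+n_{S_+}$, and the rewriting $\sigma_\alpha = \tau_\alpha\chi_{\alpha,\alpha+1}$) filled in. One small correction: the kernel of $\pi\colon \wB(n_P,n_{S_+},n_S)\twoheadrightarrow \Sym_{n_P}\times\Sym_{n_{S_+}}\times\Sym_{n_S}$ is $\wP(n_P,n_{S_+}+n_S)\rtimes(\Z/2)^{n_S}$ rather than $\wP(n_P,n_{S_+}+n_S)$ alone, but since the $\rho_\alpha$ with $\alpha>n_P+n_{S_+}$ already belong to your generating set, the inclusion $\ker\pi\subseteq G$ that your final step requires still holds.
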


We can compute the abelianisation of the group $\wB(\lambda_P, \lambda_{S_+}, \lambda_S)$, like we did for partitioned groups before:
\begin{proposition}\label{tripartite_wB^ab}
Let $\lambda_P$, $\lambda_{S_+}$ and $\lambda_S$ be partitions of integers $n_P$, $n_{S_+}$ and $n_S$ respectively, of respective length $l_P$, $l_{S_+}$ and $l_S$. Let us denote by $l'_P$, $l'_{S_+}$ and $l'_S$ the number of blocks of size at least two in each of these partitions. Let also $\lambda_P \lambda_{S_+} \lambda_S = \lambda = (n_1, \ldots , n_l)$ be their concatenation, of length $l = l_P + l_{S_+} + l_S$, and with $l' = l'_P + l'_{S_+} + l'_S$ blocks of size at least two. Then:
\[\wB(\lambda_P, \lambda_{S_+}, \lambda_S)^{\ab} \cong \Z^N \times (\Z/2)^M,\]
where $N = l'_{S_+} + l_{S_+}(l - 1)$ and $M = l' + l'_S + l_S l$.

Let us denote by $I_P$ the set $\{1, \ldots , l_P\}$ of indices corresponding to blocks of $\lambda_P$, by $I_{S_+}$ the set $\{l_P + 1, \ldots , l_P + l_{S_+}\}$ and by $I_S$ the set $\{l_P + l_{S_+} + 1, \ldots , l\}$. With these notations, a basis of the first factor is given by:
\begin{itemize}
\item for each $i \in I_{S+}$ such that $n_i \geq 2$, one generator $s_i$, 
\item for each $j \in I_{S+}$ and each $i \in \{1, \ldots , l\}$ such that $i \neq j$, one generator~$x_{ij}$.
\end{itemize}
and a $\Z/2$-basis of the second factor by:
\begin{itemize}
\item for each $i \in I_S$ such that $n_i \geq 2$, one generator $s_i$, 
\item for each $i \in \{1, \ldots , l\}$ such that $n_i \geq 2$, one generator $t_i$, 
\item for each $i \in I_S$, one generator $r_i$, 
\item for each $j \in I_S$ and each $i \in \{1, \ldots , l\}$ such that $i \neq j$, one generator $x_{ij}$.
\end{itemize}
The generators are obtained as follows:
\begin{itemize}
\item $s_i$ (resp.~$t_i$) is the common class of the  $\sigma_\alpha$ (resp.~$\tau_\alpha$) for $\alpha$ and $\alpha +1$ in the $i$-th block of $\lambda$,
\item $r_i$ is the common class of the  $\rho_\alpha$ for $\alpha$ in the $i$-th block of $\lambda$, 
\item $x_{ij}$ is the common class of the  $\chi_{\alpha\beta}$ for $\alpha$ in the $i$-th block of $\lambda$ and $\beta$ in the $j$-th one.
\end{itemize}
\end{proposition}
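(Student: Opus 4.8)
The plan is to reuse the method of Proposition~\ref{partitioned_v(w)B^ab}: exploit a semidirect product decomposition and then apply Lemma~\ref{lem:abelianization semidirect}. First I would note that the decomposition $\wB(n_P, n_{S_+}, n_S) \cong \wP(n_P, n_{S_+}+n_S) \rtimes (\Sym_{n_P} \times \Sym_{n_{S_+}} \times W_{n_S})$ from the proof of Proposition~\ref{tripartite_wB_as_subgroup_of_exwB} restricts to
\[
\wB(\lambda_P, \lambda_{S_+}, \lambda_S) \cong \wP(n_P, n_{S_+}+n_S) \rtimes K, \qquad K := \Sym_{\lambda_P} \times \Sym_{\lambda_{S_+}} \times W_{\lambda_S}.
\]
Indeed, $\wB(\lambda_P, \lambda_{S_+}, \lambda_S) = \pi^{-1}(\Sym_\lambda)$ contains the whole normal factor $\wP(n_P, n_{S_+}+n_S)$ (pure braids have trivial image under $\pi$), and meets the complementary factor in precisely $K$, the orientation-reversals $\rho_\gamma$ being unconstrained by the partition since they lie in $\ker\pi$. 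Lemma~\ref{lem:abelianization semidirect} then yields
\[
\wB(\lambda_P, \lambda_{S_+}, \lambda_S)^{\ab} \cong \bigl(\wP(n_P, n_{S_+}+n_S)^{\ab}\bigr)_{K} \times K^{\ab},
\]
so the work reduces to identifying the two factors.

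The second factor is immediate. One has $\Sym_{\lambda_P}^{\ab} \cong (\Z/2)^{l'_P}$ and $\Sym_{\lambda_{S_+}}^{\ab} \cong (\Z/2)^{l'_{S_+}}$, generated by the classes $t_i$ of the transpositions in the blocks of size at least two, while $W_{\lambda_S} \cong \prod_i \bigl((\Z/2) \wr \Sym_{n_i}\bigr)$ has abelianisation $\Z/2$ (generated by $r_i$) for every block of $\lambda_S$, plus an extra $\Z/2$ (generated by $t_i$) for each such block of size at least two. Hence $K^{\ab}$ supplies all the $r_i$ for $i \in I_S$ and all the $t_i$ with $n_i \geq 2$, each of $2$-torsion.

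For the coinvariants, recall from Corollary~\ref{bipartite_wP^ab} that $\wP(n_P, n_{S_+}+n_S)^{\ab}$ is free abelian on the $\overline\chi_{\alpha\beta}$ with $\beta > n_P$. The group $K$ acts on this basis in two ways: the factors $\Sym_\lambda$ permute the indices inside each block, and the generators $\rho_\gamma$ of $(\Z/2)^{n_S} \subset W_{\lambda_S}$ act by $\overline\chi_{\alpha\gamma} \mapsto -\overline\chi_{\alpha\gamma}$ while fixing $\overline\chi_{\alpha\beta}$ for $\beta \neq \gamma$, exactly as computed in the proof of Proposition~\ref{partitioned_v(w)B^ab}. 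Passing to coinvariants, the permutation action collapses the $\overline\chi_{\alpha\beta}$ with $\alpha$ in block $i$ and $\beta$ in block $j \neq i$ to a single class $x_{ij}$, and those with $\alpha,\beta$ in a common block $i$ (necessarily a circle block, since no $\chi_{\alpha\beta}$ has both indices $\leq n_P$) to a class $x_i$; the sign action simultaneously forces $2x_{ij}=0$ and $2x_i=0$ exactly when the block indexing the second coordinate lies in $I_S$. Thus the coinvariants are free abelian on the $x_{ij}$ and $x_i$ whose circle index lies in $I_{S_+}$, and a $\Z/2$-vector space on those whose circle index lies in $I_S$.

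It remains to reorganise and count. Using $\chi_{\alpha,\alpha+1} = \tau_\alpha\sigma_\alpha$, which gives $x_i = s_i + t_i$ in the abelianisation, I would trade each diagonal generator $x_i$ for $s_i = x_i - t_i$, observing that $s_i$ has the same torsion type as $x_i$ (as $t_i$ is always $2$-torsion). The free summand $\Z^N$ is then spanned by the $x_{ij}$ with $j \in I_{S_+}$, of which there are $l_{S_+}(l-1)$, together with the $s_i$ for $i \in I_{S_+}$ with $n_i \geq 2$, of which there are $l'_{S_+}$, whence $N = l'_{S_+} + l_{S_+}(l-1)$. The torsion summand $(\Z/2)^M$ is spanned by the $t_i$ with $n_i \geq 2$ ($l'$ of them), the $r_i$ with $i \in I_S$ ($l_S$), the $x_{ij}$ with $j \in I_S$ ($l_S(l-1)$), and the $s_i$ with $i \in I_S$ and $n_i \geq 2$ ($l'_S$), whence $M = l' + l_S + l_S(l-1) + l'_S = l' + l'_S + l_S l$. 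The only genuine obstacle is bookkeeping: one must track exactly which classes acquire $2$-torsion upon taking coinvariants — namely those indexed by an \emph{unoriented} circle block, because of the orientation-reversing automorphisms $\rho_\gamma$ — and verify that the diagonal-versus-off-diagonal tally reproduces precisely the stated ranks $N$ and $M$.
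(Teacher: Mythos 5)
Your proposal is correct and follows essentially the same route as the paper: the same semidirect product decomposition $\wB(\lambda_P, \lambda_{S_+}, \lambda_S) \cong \wP(n_P, n_{S_+}+n_S) \rtimes (\Sym_{\lambda_P} \times \Sym_{\lambda_{S_+}} \times W_{\lambda_S})$, Lemma~\ref{lem:abelianization semidirect}, Corollary~\ref{bipartite_wP^ab} for the coinvariants, and the change of basis $s_i = x_i - t_i$. The bookkeeping of which classes acquire $2$-torsion under the $\rho_\gamma$ and the resulting counts of $N$ and $M$ all check out.
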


\begin{proof}
The proof is very similar to that of Proposition~\ref{partitioned_exwB^ab}, which is a particular case of the present general statement. Namely, we apply Lemma~\ref{lem:abelianization semidirect} to the decomposition:
\[\wB(\lambda_P, \lambda_{S_+}, \lambda_S) \cong \wP(n_P, n_{S_+} + n_S) \rtimes (\Sym_{\lambda_P} \times \Sym_{\lambda_{S_+}} \times W_{\lambda_S}).\]
The abelianisation of the second factor identifies with $(\Z/2)^{l' + l_S}$, generated by the generators $t_i$ and $r_i$ of the statement. The abelianisation of $\wP(n_P, n_{S_+} + n_S)$ is free on the classes of the $\chi_{\alpha \beta}$ with $1 \leq \alpha \neq \beta \leq n$ such that  $\beta > n_P$ by Corollary~\ref{bipartite_wP^ab}, and the action of $\Sym_{\lambda_P} \times \Sym_{\lambda_{S_+}} \times W_{\lambda_S}$ is by permutation of the indices. We note that this latter group identifies with $(\Z/2)^{n_S} \rtimes \Sym_\lambda$, where $\Sym_\lambda = \Sym_{\lambda_P} \times \Sym_{\lambda_{S_+}} \times \Sym_{\lambda_S}$ acts on $(\Z/2)^{n_S}$ through the projection on its third factor (whose action permutes the factors). Thus, the coinvariants we need to compute are:
\[\left(\wP(n_P, n_{S_+} + n_S)^{\ab}\right)_{(\Z/2)^{n_S} \rtimes \Sym_\lambda} 
\cong \left(\left(\wP(n_P, n_{S_+} + n_S)^{\ab}\right)_{(\Z/2)^{n_S}}\right)_{\Sym_\lambda}.\]
The computation then continues exactly as in the proof of Proposition~\ref{partitioned_v(w)B^ab}. In particular, $\left(\wP(n_P, n_{S_+} + n_S)^{\ab}\right)_{(\Z/2)^{n_S}}$ is a product of factors $\Z$ and $\Z/2$ generated by the $\overline \chi_{\alpha \beta}$, respectively with $n_P < \beta \leq n_P + n_{S_+}$ and with  $\beta > n_P + n_{S_+}$. The classes of these generators in the coinvariants give the generators $x_{ij}$ of the statement, together with the generators $s_i$, after the same change of basis as in the proof of Proposition~\ref{partitioned_exwB^ab}.
\end{proof}

From a geometrical point of view, the generators described in Proposition~\ref{tripartite_wB^ab} correspond to the motions of points and circles in the $3$-ball represented in Figure~\ref{fig:tripartite-braids}.

\begin{figure}[ht]
\centering
\includegraphics[scale=0.45]{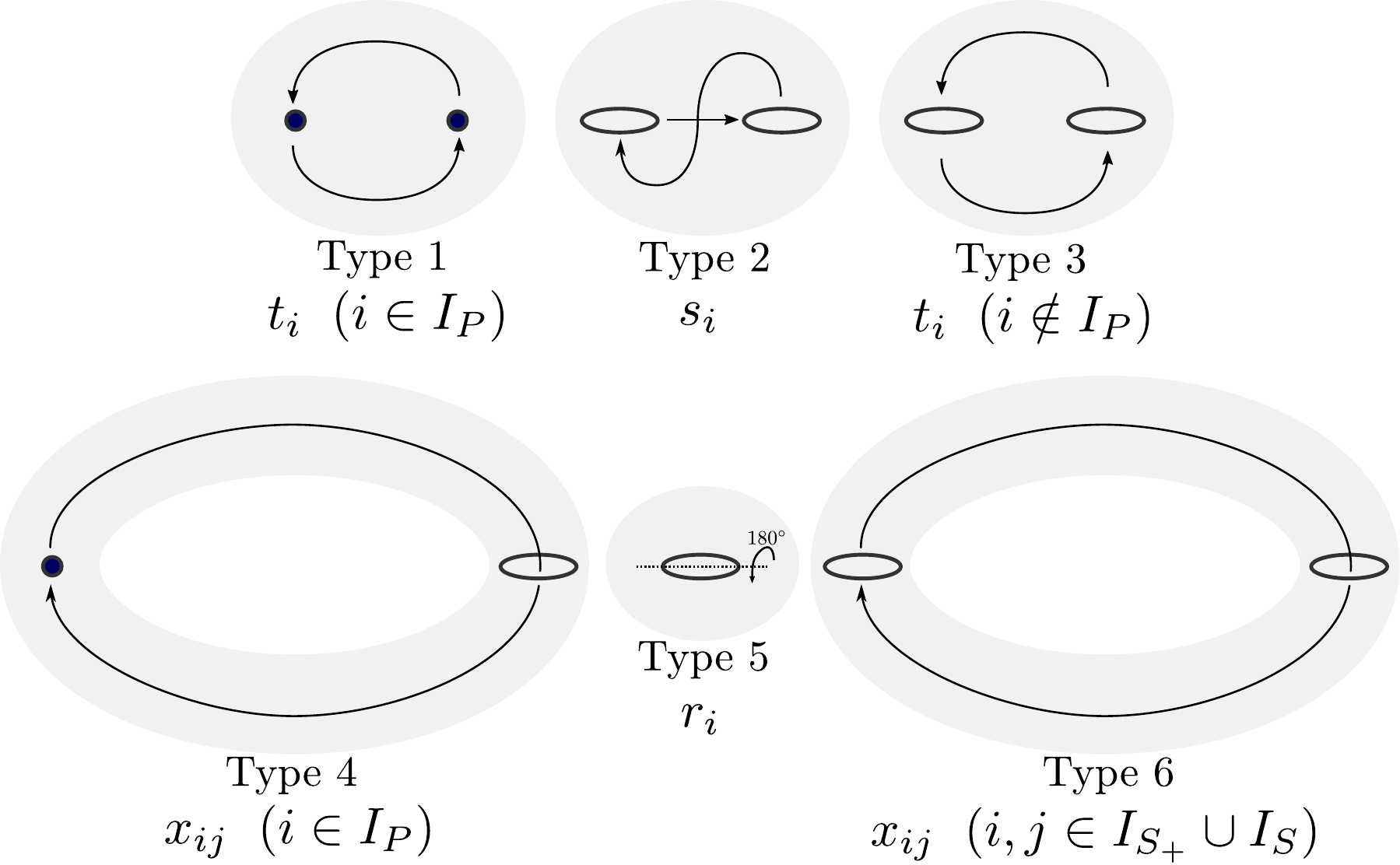}
\caption{The six types of generators for $\wB(\lambda_P, \lambda_{S_+}, \lambda_S)^{\ab}$ from Proposition \ref{tripartite_wB^ab}. Types $2$--$4$ each have two sub-types depending on whether the circles are (both) oriented or (both) unoriented. The circle in Type $5$ must be unoriented. Type $6$ has four sub-types, depending on which of the circles are oriented. Type $2$ generates a $\Z$ summand if both circles are oriented. Types $4$ and $6$ generate a $\Z$ summand if the non-moving circle is oriented. All other types generate a $\Z/2$ summand.}
\label{fig:tripartite-braids}
\end{figure}

\section{The lower central series}\label{s:LCS_partitinoned_welded}

We now study the LCS of $\wB(\lambda_P, \lambda_{S_+}, \lambda_S)$. In order to do so, we need to apply disjoint support arguments. These arguments are best understood by thinking of motions having disjoint support in the $3$-ball, and the reader is advised to keep this point of view in mind. However, writing down precise arguments and explicit calculations is much easier when dealing with automorphisms of free groups, so we mainly identify elements of $\wB(\lambda_P, \lambda_{S_+}, \lambda_S)$ with automorphisms of free groups in our proofs, using Proposition~\ref{tripartite_wB_as_subgroup_of_exwB} and Theorem~\ref{exwB_in_Aut(Fn)}.

The remainder of this section is devoted to the proof of the following theorem. Sections \Spar\ref{subsubsec:tripartite_stable_LCS}--\ref{subsubsec:tripartite_block_two_points_LCS} study each particular case of its statement; its proof is then deduced from these at the very end of \Spar\ref{s:LCS_partitinoned_welded}.

\begin{theorem}\label{LCS_of_partitioned_exwBn_2}
Let $\lambda_P$, $\lambda_{S_+}$ and $\lambda_S$ denote partitions of integers $n_P$, $n_{S_+}$ and $n_S$ respectively, of respective lengths $l_P$, $l_{S_+}$ and $l_S$. Let also $\lambda_P \lambda_{S_+} \lambda_S = \lambda = (n_1,\ldots, n_l)$ be their concatenation, of length $l = l_P + l_{S_+} + l_S$. We denote by $I_P$ the set $\{1,\ldots, l_P\}$ of indices corresponding to blocks of $\lambda_P$, by $I_{S_+}$ the set $\{l_P + 1,\ldots, l_P + l_{S_+}\}$ and by $I_S$ the set $\{l_P + l_{S_+} + 1,\ldots, l\}$. 

Let us suppose that $\wB(\lambda_P, \lambda_{S_+}, \lambda_S)$ is not trivial, i.e.~$(\lambda_P, \lambda_{S_+}, \lambda_S)$ is not among $(\varnothing,\varnothing,\varnothing)$, $((1, 1, \ldots , 1), \varnothing, \varnothing)$ and $(\varnothing,1,\varnothing)$. Then the LCS of the group $\wB(\lambda_P, \lambda_{S_+}, \lambda_S)$:
\begin{itemize}
\item \emph{stops at $\LCS_2$} if $n_i \neq 2$ for all $i \in I_P$ and $n_i \geq 4$ for all $i \in I_{S_+}\cup I_S$, save for at most one $i \in I_{S_+}$ such that $n_i = 1$;
\item \emph{stops at $\LCS_3$} if $n_i = 2$ for at least one $i \in I_P$, $I_{S_+} = \varnothing$, $I_{S} \neq \varnothing$ and $n_i \geq 4$ for all $i \in I_S$;
\item \emph{does not stop} in all the other cases, except for $\wB(\varnothing, \varnothing, 1) = \exwB_1 \cong \Z/2$.
\end{itemize}
\end{theorem}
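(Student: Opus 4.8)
The plan is to follow the same template as the proofs of Theorems~\ref{thm:partitioned-braids} and~\ref{LCS_of_partitioned_v(w)B}: rather than attack $\wB(\lambda_P, \lambda_{S_+}, \lambda_S)$ head-on, I would isolate each regime of the trichotomy in a dedicated proposition and then assemble the theorem by a purely combinatorial exhaustiveness argument. Throughout I would work inside $\exwB_n \subset \Aut(\F_n)$ via Proposition~\ref{tripartite_wB_as_subgroup_of_exwB} and Corollary~\ref{tripartite_wB_in_Aut(Fn)}, so that ``disjoint support'' has the precise meaning of Definition~\ref{d:support-autFn} and commutation is supplied by Fact~\ref{fact_support_1}. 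The generators of the abelianisation listed in Proposition~\ref{tripartite_wB^ab} (the $s_i$, $t_i$, $r_i$ and $x_{ij}$) are exactly the objects whose pairwise brackets in $\Lie(\wB)$ must be controlled.

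For the cases stopping at $\LCS_2$, the core is the stable proposition~\ref{stable_LCS_of_partitioned_exwBn_2}: when every point block has size $\geq 3$ and every circle block size $\geq 4$, each pair of generators admits lifts with disjoint support — there is always enough room, since each generator touches at most two indices — so Corollary~\ref{commuting_representatives} applies. To descend to the stated $\LCS_2$ hypotheses I would add isolated blocks one regime at a time, exactly as in Proposition~\ref{LCS_vB_1mu}: show $\wB/\LCS_\infty$ is abelian by combining disjoint-support commutations with the pure virtual/welded relation of Lemma~\ref{vB111^ab} to force the problematic generators $x_{1i}, x_{i1}$ attached to a size-$1$ block to commute. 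This handles isolated point blocks and a single isolated oriented-circle block (Propositions~\ref{tripartite_wB_isolated_P}, \ref{tripartite_wB_isolated_S+}, \ref{tripartite_wB_isolated_P_and_S+}). The single genuinely $\LCS_3$ regime — a size-$2$ point block with no oriented circles and only unoriented circles of size $\geq 4$ (Proposition~\ref{prop:welded_points_block_size_two}) — I would treat by computing the relevant nilpotent quotient and checking that $\Lie_2 \neq 0$ while $\Lie_3 = [\Lie_1,\Lie_2] = 0$, the vanishing above degree two being forced by generation in degree one (Proposition~\ref{generationproperty}).

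For the non-stopping regimes I would invoke Lemma~\ref{lem:stationary_quotient} together with the block-forgetting surjections: a size-$2$ or size-$3$ circle block (oriented or not) surjects onto $\wB_2, \wB_3, \exwB_2$ or $\exwB_3$, whose LCS does not stop by Propositions~\ref{LCS_of_wBn} and~\ref{LCS_of_exwBn}; two isolated oriented circles surject onto $\wB(\varnothing,(1,1),\varnothing) = \wP_2 \cong \F_2$; and the remaining triggers — an isolated unoriented circle alongside any other block, and a size-$2$ point block alongside an oriented circle — reduce to the dedicated Propositions~\ref{tripartite_wB_isolated_S} and~\ref{tripartite_wB_P2_and_S+}, themselves proved by exhibiting quotients such as $\Z \rtimes \Z/2$ or $\F_2 \rtimes (\Z/2)^2$ as in Remark~\ref{exwB11}.

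The main obstacle is not any individual case but the exhaustiveness bookkeeping: I must verify that every non-trivial triple falls into exactly one branch and that the boundaries line up with the dedicated propositions. The delicate point is separating the $\LCS_3$ branch from the surrounding non-stopping ones — one has to check that a size-$2$ point block is harmless \emph{precisely} when there are no oriented circles and all unoriented circles are large, and lethal otherwise — while correctly accounting for the degenerate groups: the trivial triples, the exception $\wB(\varnothing,\varnothing,1) \cong \Z/2$, and the point-only configurations, which reduce to (products of) symmetric groups and must be disposed of separately. Getting this combinatorial partition airtight, rather than proving any single inclusion, is where the real care is needed.
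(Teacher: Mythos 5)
Your proposal follows essentially the same route as the paper: the same decomposition into a stable disjoint-support case, extensions to isolated blocks via the pure (virtual/)welded commutation lemma, the single $\LCS_3$ case handled by an explicit computation of $\Lie_2$ and $\Lie_3$, and the non-stopping cases via block-forgetting surjections onto the small groups already analysed, assembled by the same combinatorial exhaustion. The propositions you cite are exactly the ones the paper's proof of Theorem~\ref{LCS_of_partitioned_exwBn_2} invokes, so there is nothing substantive to add.
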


\subsection{The stable case}\label{subsubsec:tripartite_stable_LCS}

We begin with the cases where all the blocks of $\lambda$ are big enough to apply a disjoint support argument:

\begin{proposition}\label{stable_LCS_of_partitioned_exwBn_2}
The notations being as in Theorem~\ref{LCS_of_partitioned_exwBn_2}, suppose that $n_i \geq 3$ for all $i \in I_P$ and $n_i \geq 4$ for all $i \in I_{S_+}\cup I_S$. Then the LCS of the group $\wB(\lambda_P, \lambda_{S_+}, \lambda_S)$ stops at $\LCS_2$.
\end{proposition}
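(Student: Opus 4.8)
The plan is to run the same disjoint-support argument used in the stable cases already settled (Propositions~\ref{LCS_stable_partitioned_braids} and~\ref{LCS_wB_stable}), now for the larger family $\wB(\lambda_P, \lambda_{S_+}, \lambda_S)$, which by Corollary~\ref{tripartite_wB_in_Aut(Fn)} sits inside $\exwB_n \subset \Aut(\F_n)$. By Proposition~\ref{generationproperty} the Lie ring $\Lie(\wB(\lambda_P, \lambda_{S_+}, \lambda_S))$ is generated by its degree-one part, which is the abelianisation computed in Proposition~\ref{tripartite_wB^ab}. Hence, by Corollary~\ref{commuting_representatives}, it suffices to exhibit, for each pair drawn from the generators $s_i, t_i, r_i, x_{ij}$ of $\wB(\lambda_P, \lambda_{S_+}, \lambda_S)^{\ab}$, lifts in $\wB(\lambda_P, \lambda_{S_+}, \lambda_S)$ with disjoint support; these lifts commute by Fact~\ref{fact_support_1} (using the support of Definition~\ref{d:support-autFn} via the embedding into $\Aut(\F_n)$), so that $\LCS_2 = \LCS_3$. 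Since $\wB(\lambda_P, \lambda_{S_+}, \lambda_S)^{\ab}$ is a nontrivial abelian group, the LCS then stops exactly at $\LCS_2$.

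First I would record the supports of the available lifts from Lemma~\ref{generators_partitioned_tripartite} and the example following Definition~\ref{d:support-autFn}: one has $\Supp(\tau_\alpha) = \Supp(\sigma_\alpha) = \{\alpha, \alpha+1\}$, $\Supp(\rho_\alpha) = \{\alpha\}$ and $\Supp(\chi_{\alpha\beta}) = \{\alpha, \beta\}$. The class of each such generator in the abelianisation depends only on the block or blocks containing the indices involved, so for any target pair one is free to choose representatives within the relevant blocks. The problem then becomes purely combinatorial: to place the at most two indices required by one lift and the at most two required by the other into disjoint positions inside the blocks they occupy.

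Then I would check the pairs block by block, the only constraints arising when two generators must draw indices from a common block $i$. If $i \in I_P$, the only generators touching block $i$ are $t_i$ (whose lift $\tau_\alpha$ uses a consecutive pair in block $i$) and the $x_{i\cdot}$ (whose lifts $\chi_{\gamma\beta}$ use a single index $\gamma$ of block $i$, since $\beta$ necessarily lies in a circle block); separating a two-index lift from a one-index lift needs one spare index, so $n_i \geq 3$ suffices. If $i \in I_{S_+} \cup I_S$, the generators touching block $i$ may include both $s_i$ and $t_i$, whose lifts $\sigma_\alpha$ and $\tau_\beta$ each occupy a consecutive pair, and placing two \emph{disjoint} consecutive pairs inside block $i$ requires $n_i \geq 4$ — precisely the hypothesis. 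All remaining pairs ($x_{ij}$ with $x_{ik}$, any of $s_i, t_i, r_i$ against a $\chi$ touching block $i$, or two generators supported on disjoint blocks) reduce to separating at most a two-index lift from a one-index lift in each shared block, and so impose no stronger condition.

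The main obstacle is organisational rather than conceptual: one must confirm that the two tight thresholds — $n_i \geq 3$ on point blocks and $n_i \geq 4$ on circle blocks — genuinely cover every pair, and in particular that the interaction between the consecutive pair of a $\sigma$/$\tau$ lift and the single index of a $\rho$ or $\chi$ lift never forces a fourth index inside a point block. I expect this to follow from the observation that a point block never carries an $s_i$ generator (no $\sigma_\alpha$ with $\alpha \leq n_P$ is available in Lemma~\ref{generators_partitioned_tripartite}), so at most one consecutive pair is ever needed there, exactly matching the bound $n_i \geq 3$. A picture in the spirit of Figure~\ref{fig:tripartite-braids}, displaying disjoint motions of points and circles in the $3$-ball, makes each individual choice of representatives transparent.
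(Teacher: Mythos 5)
Your proposal is correct and follows essentially the same route as the paper: apply Corollary~\ref{commuting_representatives} to the generating set of the abelianisation from Proposition~\ref{tripartite_wB^ab} and exhibit disjoint-support lifts, with the thresholds $n_i \geq 3$ on point blocks and $n_i \geq 4$ on circle blocks arising exactly as you describe. The paper leaves the block-by-block check to ``direct inspection,'' so your explicit accounting (in particular the observation that point blocks never carry an $s_i$, which is why $n_i \geq 3$ suffices there) simply fills in what the paper treats as routine.
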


\begin{proof}
The proof is an extension of that of Theorem~\ref{LCS_of_partitioned_v(w)B}. By Corollary~\ref{commuting_representatives}, we just have to show that any pair of the generators of $\wB(\lambda_P, \lambda_{S_+}, \lambda_S)^{\ab}$ listed in Proposition~\ref{tripartite_wB^ab} (and illustrated in Figure~\ref{fig:tripartite-braids}) has a pair of lifts in $\wB(\lambda_P, \lambda_{S_+}, \lambda_S)$ with disjoint support. The assumption that $n_i \geq 3$ for all $i \in I_P$ and $n_i \geq 4$ for all $i \in I_{S_+}\cup I_S$ is exactly what we need for that, as one sees by direct inspection. For instance, a generator $x_{ij}$ has a lift  whose support is $\{\alpha, \beta\}$ (namely, $\chi_{\alpha \beta}$), for any choice of $\alpha$ in the $i$-th block and $\beta$ in the $j$-th block of $\lambda$. Thus any $x_{kl}$ has a lift whose support is disjoint from some lift of $x_{ij}$, provided that the $i$-th and $j$-th blocks of $\lambda$ both have size at least $2$.
\end{proof}

\subsection{Blocks of points and oriented circles of size 1} The result in the stable case can be extended to the cases where we add blocks of size $1$ to $\lambda_P$, or one block of size $1$ to $\lambda_{S_+}$, or both. These three cases correspond respectively to Propositions~\ref{tripartite_wB_isolated_P}, \ref{tripartite_wB_isolated_S+} and \ref{tripartite_wB_isolated_P_and_S+} below. In each case, the strategy of proof is very similar to the proof of Proposition~\ref{LCS_vB_1mu}: most of the pairs of generators are already known to commute, using the previous cases and disjoint support arguments; the remaining ones are dealt with using Lemma~\ref{vB111^ab} which, for welded braids, takes the simpler form:
\begin{lemma}\label{wB111^ab}
Let $G$ be the subgroup of $\wP_3$ generated by the elements $\chi_{12}$, $\chi_{13}$ and $\chi_{23}$. Then $\LCS_2(G)$ is normally generated by $[\chi_{12},\chi_{23}]$ (resp.~by $[\chi_{12},\chi_{13}]$). In other words, the quotient of $H$ by the single relation $[\chi_{12},\chi_{23}] = 1$ (resp.~$[\chi_{12},\chi_{13}] = 1$) is abelian. 
\end{lemma}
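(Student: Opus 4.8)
The plan is to exploit the fact that, in the \emph{welded} (as opposed to virtual) setting, one of the three commutation relations appearing in Lemma~\ref{vB111^ab} already holds automatically, so that only a single extra relation is needed to collapse $G$ to its abelianisation. Concretely, relation $\mathrm{(wPR2)}$ of Proposition~\ref{McCool_presentation}, taken with $(i,j,k)=(1,2,3)$, reads $[\chi_{13},\chi_{23}]=1$, and this is a relation of $\wP_3$ involving only the three chosen generators, hence holds in $G$.

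From here I would reduce to the virtual case. The canonical projection $\vP_3\twoheadrightarrow\wP_3$ of Corollary~\ref{v(w)Pn^ab} sends each $\chi_{ij}$ to $\chi_{ij}$, so it restricts to a surjection $\widetilde G\twoheadrightarrow G$, where $\widetilde G\le\vP_3$ is the subgroup generated by $\chi_{12},\chi_{13},\chi_{23}$. Since $[\chi_{13},\chi_{23}]=1$ already holds in $G$, the quotient of $G$ by the normal closure of $[\chi_{12},\chi_{23}]$ is itself a quotient of $\widetilde G$ by the normal closure of the \emph{pair} $[\chi_{13},\chi_{23}]$ and $[\chi_{12},\chi_{23}]$. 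By Lemma~\ref{vB111^ab} this latter quotient of $\widetilde G$ by two of the three relations is abelian, and a quotient of an abelian group is abelian; hence $\LCS_2(G)$ is contained in, and therefore equal to, the normal closure of $[\chi_{12},\chi_{23}]$. The \emph{resp.} statement is obtained identically, using instead the pair $[\chi_{13},\chi_{23}]$ and $[\chi_{12},\chi_{13}]$.

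Alternatively, to keep the argument entirely within the welded setting, I would argue directly from $\mathrm{(wPR3)}$. With $(i,j,k)=(1,2,3)$ this reads $\chi_{12}\rightleftarrows\chi_{13}\chi_{23}$, that is $\chi_{12}\chi_{13}\chi_{23}\chi_{12}^{-1}=\chi_{13}\chi_{23}$. Imposing $[\chi_{12},\chi_{23}]=1$ rewrites the left-hand side as $(\chi_{12}\chi_{13}\chi_{12}^{-1})\chi_{23}$, whence $\chi_{12}\chi_{13}\chi_{12}^{-1}=\chi_{13}$, i.e.\ $[\chi_{12},\chi_{13}]=1$; together with the ever-present $[\chi_{13},\chi_{23}]=1$ all three generators then pairwise commute. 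Symmetrically, imposing $[\chi_{12},\chi_{13}]=1$ and running the same manipulation yields $[\chi_{12},\chi_{23}]=1$. There is no genuine obstacle here beyond bookkeeping: the only point needing care is to confirm that $\mathrm{(wPR2)}$ and $\mathrm{(wPR3)}$ are available as relations inside the subgroup $G$ (they are, since they involve only $\chi_{12},\chi_{13},\chi_{23}$), and that the reduction to Lemma~\ref{vB111^ab} matches up the generators correctly.
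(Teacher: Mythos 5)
Your first argument is exactly the paper's proof: the paper observes that $G$ is a quotient of the subgroup $\langle \chi_{12},\chi_{13},\chi_{23}\rangle$ of $\vP_3$ in which $[\chi_{13},\chi_{23}]$ is already killed (this is relation $\mathrm{(wPR2)}$), and then invokes Lemma~\ref{vB111^ab} with two of the three relations imposed, just as you do. Your alternative direct manipulation of $\mathrm{(wPR3)}$ is also correct and gives a self-contained verification within the welded setting, but it is not needed beyond the reduction you already carried out.
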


\begin{proof}
This is a direct corollary of Lemma~\ref{vB111^ab}, noticing that $G$ is a quotient of the subgroup $\langle \chi_{12}, \chi_{13}, \chi_{23} \rangle$ of $\vP_3$ where $[\chi_{13}, \chi_{23}]$ is already killed. 
\end{proof}

Let us first consider the case where blocks of size $1$ are added to $\lambda_P$:

\begin{proposition}\label{tripartite_wB_isolated_P}
The notations being those of Theorem~\ref{LCS_of_partitioned_exwBn_2}, if $n_i \geq 4$ for all $i \in I_{S_+}\cup I_S$ and $\lambda_P$ has no blocks of size $2$, then the LCS of $\wB(\lambda_P, \lambda_{S_+}, \lambda_S)$ stops at $\LCS_2$. 
\end{proposition}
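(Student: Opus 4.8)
The plan is to show that $\wB(\lambda_P, \lambda_{S_+}, \lambda_S)/\LCS_\infty$ is abelian; by generation in degree one (Proposition~\ref{generationproperty}) this is equivalent to $\LCS_\infty = \LCS_2$, i.e.\ to the LCS stopping at $\LCS_2$. Write $G := \wB(\lambda_P, \lambda_{S_+}, \lambda_S)$ and let $\mu_P$ be $\lambda_P$ with its size-one blocks deleted. Since $\lambda_P$ has no block of size $2$, every block of $\mu_P$ has size $\geq 3$, while every block of $\lambda_{S_+}$ and $\lambda_S$ has size $\geq 4$; hence $\wB(\mu_P, \lambda_{S_+}, \lambda_S)$ falls under the stable case of Proposition~\ref{stable_LCS_of_partitioned_exwBn_2}, so its quotient by $\LCS_\infty$ is its (abelian) abelianisation. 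Following the strategy of Proposition~\ref{LCS_vB_1mu}, I would first use the canonical inclusion $\wB(\mu_P, \lambda_{S_+}, \lambda_S) \hookrightarrow G$ to deduce that all generators of $G$ coming from the stable core pairwise commute modulo $\LCS_\infty$, and that the class modulo $\LCS_\infty$ of each $\sigma_\alpha$, $\tau_\alpha$, $\rho_\alpha$, $\chi_{\alpha\beta}$ with all indices outside the isolated point-blocks depends only on the blocks involved, giving well-defined classes $s_i$, $t_i$, $r_i$, $x_{ij}$.

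Next I would treat the generators $\chi_{\alpha\beta}$ for which $\alpha$ lies in a size-one block of $\lambda_P$ (a single point-strand) and $\beta$ lies in a circle block $j$. First I would show such a generator has a well-defined class $x_{ij}$ modulo $\LCS_\infty$ depending only on $j$: since block $j$ has size $\geq 4$, the class $t_j$ admits a representative $\tau_\gamma$ whose support is disjoint from $\{\alpha,\beta\}$, so $\tau_\gamma$ commutes with $\chi_{\alpha\beta}$ modulo $\LCS_\infty$; but $t_j$ is also the class of $\tau_\beta$, and $\tau_\beta \chi_{\alpha\beta}\tau_\beta^{-1} = \chi_{\alpha,\beta+1}$, whence $\overline{\chi_{\alpha\beta}} = \overline{\chi_{\alpha,\beta+1}}$ --- exactly the argument used in Proposition~\ref{LCS_vB_1mu}. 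I would then check that such an $x_{ij}$ commutes with every generator not involving the point $\alpha$: if the other generator is localised away from block $j$ this is immediate by disjoint support, and if it involves block $j$ it still has a disjoint-support lift because block $j$ has four or more strands. The commutations among the point-free core generators are covered by the stable case, and all these commutations use Facts~\ref{fact_support_1} and~\ref{fact_support_2}.

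The main obstacle is the commutation of two generators $x_{ij}$ and $x_{ij'}$ sharing the same size-one point-block $i$ with distinct circle blocks $j \neq j'$: their natural lifts $\chi_{\alpha\beta}$ and $\chi_{\alpha\beta'}$ both involve the single strand $\alpha$, so no disjoint-support lift exists and this case genuinely requires Lemma~\ref{wB111^ab}. Here I would choose $\beta$ in block $j$ and $\beta'$ in block $j'$ and consider the homomorphism $\theta \colon \langle \chi_{12}, \chi_{13}, \chi_{23}\rangle \subseteq \wP_3 \to G/\LCS_\infty$ induced by the strand inclusion $1 \mapsto \alpha$, $2 \mapsto \beta$, $3 \mapsto \beta'$, so that $\chi_{12}\mapsto x_{ij}$, $\chi_{13}\mapsto x_{ij'}$ and $\chi_{23}\mapsto x_{jj'}$. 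Since block $j$ has size $\geq 4$, the classes $x_{ij}$ and $x_{jj'}$ commute modulo $\LCS_\infty$ (they admit representatives $\chi_{\alpha\beta_1}$ and $\chi_{\beta_2\beta'}$ with $\beta_1 \neq \beta_2$ in block $j$, hence with disjoint support), so $\theta$ kills $[\chi_{12},\chi_{23}]$; Lemma~\ref{wB111^ab} then forces the whole image of $\theta$ to be abelian, giving $[x_{ij}, x_{ij'}] = 0$. Finally, pairs $x_{ij}, x_{i'j'}$ attached to distinct point-blocks $i \neq i'$ always admit disjoint-support lifts (the points are distinct strands and the circle blocks leave room). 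Thus every pair of generators commutes modulo $\LCS_\infty$, so $G/\LCS_\infty$ is abelian and the LCS of $G$ stops at $\LCS_2$.
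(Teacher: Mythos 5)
Your proposal is correct and follows essentially the same route as the paper's proof: reduce to showing $G/\LCS_\infty$ is abelian, use the stable case (Proposition~\ref{stable_LCS_of_partitioned_exwBn_2}) for the core generators, dispose of most remaining pairs by disjoint support, and handle the genuinely problematic pairs $x_{ij}, x_{ij'}$ attached to a common size-one point-block via Lemma~\ref{wB111^ab} applied to the image of $\langle \chi_{12},\chi_{13},\chi_{23}\rangle$. The only cosmetic difference is that you establish well-definedness of the classes $x_{ij}$ directly by the conjugation-by-$\tau_\beta$ argument from Proposition~\ref{LCS_vB_1mu}, where the paper instead invokes the auxiliary subgroups $\wB((1,\ldots,1),n_i,\varnothing)$ and $\wB((1,\ldots,1),\varnothing,n_i)$.
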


\begin{proof}
We argue like in the proof of Proposition~\ref{LCS_vB_1mu}.

Let us write $\lambda_P = (1,\ldots,1,\mu_P)$, where there are $k$ ones and each block of $\mu_P$ has size at least~$3$. There is an obvious homomorphism $\wB(\mu_P,\lambda_{S_+},\lambda_S) \to \wB(\lambda_P,\lambda_{S_+},\lambda_S)$, which induces a homomorphism
\[\wB(\mu_P,\lambda_{S_+},\lambda_S)^{\ab} = \wB(\mu_P,\lambda_{S_+},\lambda_S) / \LCS_\infty \longrightarrow \wB(\lambda_P,\lambda_{S_+},\lambda_S) / \LCS_\infty .
\]
The equality on the left-hand side comes from Proposition~\ref{stable_LCS_of_partitioned_exwBn_2}. Similarly, for each block $i$ of the partition $\lambda_{S_+} \cup \lambda_S$, there is an obvious homomorphism from $\wB((1,\ldots,1),n_i,\varnothing)$ or $\wB((1,\ldots,1),\varnothing,n_i)$ to $\wB(\lambda_P,\lambda_{S_+},\lambda_S)$, inducing a homomorphism
\begin{align*}
\begin{split}
\label{eq:wB_induced_on_LCSinfty-2}
\wB((1,\ldots,1),n_i,\varnothing)^{\ab} = \wB((1,\ldots,1),n_i,\varnothing) / \LCS_\infty &\longrightarrow \wB(\lambda_P,\lambda_{S_+},\lambda_S) / \LCS_\infty \\
\text{or} \ \ \ \wB((1,\ldots,1),\varnothing,n_i)^{\ab} = \wB((1,\ldots,1),\varnothing,n_i) / \LCS_\infty &\longrightarrow \wB(\lambda_P,\lambda_{S_+},\lambda_S) / \LCS_\infty .
\end{split}
\end{align*}
The equality on the left-hand side comes from the fact that the disjoint support argument used in the proof of Proposition~\ref{stable_LCS_of_partitioned_exwBn_2} also works for showing that $\LCS_2 = \LCS_\infty$ in this particular case where there is only one block of circles.

Let us consider the generators $\tau_\alpha , \sigma_\alpha , \rho_\alpha , \chi_{\alpha\beta}$ of $\wB(\lambda_P,\lambda_{S_+},\lambda_S)$ introduced in Lemma~\ref{generators_partitioned_tripartite}. Using Proposition~\ref{tripartite_wB^ab}, we deduce from the existence of the morphisms obtained above that their classes modulo $\LCS_\infty$ depend only on the blocks to which $\alpha$ and $\beta$ belong. In other words, if $\alpha$ and $\alpha'$ belong to the same block of the partition, then $\sigma_\alpha = \sigma_{\alpha'}$ in $\wB(\lambda_P,\lambda_{S_+},\lambda_S) / \LCS_\infty$, etc. Once we know this, it is easy to use a disjoint support argument to see that modulo $\LCS_\infty$, all of these generators commute pairwise, with the possible exception of $\chi_{\alpha\beta}$ and  $\chi_{\alpha\beta'}$, where $1\leq \alpha \leq k$ (so it corresponds to a block of points of size $1$) and where $\beta$ and $\beta'$ lie in distinct blocks of $\lambda_{S_+} \cup \lambda_S$. We deal with these as follows. Let us suppose that $\beta$ and $\beta'$ both lie in $\lambda_{S_+}$ (the argument in the other cases is similar). Consider the ``adding strands'' homomorphism
\begin{equation}\label{eq:wB_induced_on_LCSinfty-3}
\wB(1,(1,1),\varnothing) \longrightarrow \wB(\lambda_P,\lambda_{S_+},\lambda_S) \twoheadrightarrow \wB(\lambda_P,\lambda_{S_+},\lambda_S) / \LCS_\infty
\end{equation}
induced by $1 \mapsto \alpha$, $2 \mapsto \beta$ and $3 \mapsto \beta'$. We know that the images of the elements $\chi_{12}$ and $\chi_{23}$ of the domain commute in $\wB(\lambda_P,\lambda_{S_+},\lambda_S) / \LCS_\infty$. Thus the homomorphism \eqref{eq:wB_induced_on_LCSinfty-3} factors through the quotient of the left-hand side by $[\chi_{12},\chi_{23}]$. Then Lemma~\ref{wB111^ab} implies that the image $\chi_{\alpha \beta}$ of $\chi_{12}$ commutes with the image $\chi_{\alpha \beta'}$ of $\chi_{13}$ in $\wB(\lambda_P,\lambda_{S_+},\lambda_S) / \LCS_\infty$. We have now shown that all the generators of $\wB(\lambda_P,\lambda_{S_+},\lambda_S) / \LCS_\infty$ commute pairwise, so it is abelian. Thus $\LCS_2 = \LCS_\infty$ for $\wB(\lambda_P,\lambda_{S_+},\lambda_S)$.
\end{proof}

The result in the stable case (Proposition~\ref{stable_LCS_of_partitioned_exwBn_2}) can also be extended to the case where $\lambda_{S_+}$ has one  block of size $1$, thus generalising the case of $\wB_{1, \mu}$ from Proposition~\ref{LCS_vB_1mu}:

\begin{proposition}\label{tripartite_wB_isolated_S+}
The notations being as in Theorem~\ref{LCS_of_partitioned_exwBn_2}, suppose that $n_i \geq 3$ for all $i \in I_P$ and $n_i \geq 4$ for all $i \in I_{S_+}\cup I_S$, save for one $i \in I_{S_+}$ such that $n_i = 1$. Then the LCS of the group $\wB(\lambda_P, \lambda_{S_+}, \lambda_S)$ stops at $\LCS_2$.
\end{proposition}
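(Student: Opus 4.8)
The plan is to show that the quotient $\wB(\lambda_P, \lambda_{S_+}, \lambda_S)/\LCS_\infty$ is abelian, which is equivalent to $\LCS_2 = \LCS_\infty$ for this group. The argument is a direct extension of the proofs of Propositions~\ref{LCS_vB_1mu} and~\ref{tripartite_wB_isolated_P}. Write the exceptional block of size one in $\lambda_{S_+}$ as the $j_0$-th block, consisting of a single oriented circle indexed by $\gamma$, and let $\mu_{S_+}$ be $\lambda_{S_+}$ with this block removed. Since all the remaining blocks satisfy the hypotheses of the stable case, Proposition~\ref{stable_LCS_of_partitioned_exwBn_2} gives $\LCS_2 = \LCS_\infty$ for $\wB(\lambda_P, \mu_{S_+}, \lambda_S)$, so the obvious ``adding a strand'' homomorphism induces a map
\[\wB(\lambda_P, \mu_{S_+}, \lambda_S)^{\ab} = \wB(\lambda_P, \mu_{S_+}, \lambda_S)/\LCS_\infty \longrightarrow \wB(\lambda_P, \lambda_{S_+}, \lambda_S)/\LCS_\infty.\]
Using this, together with the generators of Lemma~\ref{generators_partitioned_tripartite} and the description of the abelianisation in Proposition~\ref{tripartite_wB^ab}, I would first record that, modulo $\LCS_\infty$, the class of each generator $\tau_\alpha,\sigma_\alpha,\rho_\alpha,\chi_{\alpha\beta}$ \emph{not} involving $\gamma$ depends only on the blocks of its indices, and that any two such classes commute.

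Next I would treat the generators that do involve $\gamma$, which (since $\gamma$ is an oriented circle forming an isolated block) are exactly the classes $x_{i,j_0}$ of the $\chi_{\alpha\gamma}$ (other strands passing through $\gamma$) and the classes $x_{j_0,j}$ of the $\chi_{\gamma\beta}$ ($\gamma$ passing through a circle $\beta$). The first task is to check that $\overline{\chi_{\alpha\gamma}}$ (resp.~$\overline{\chi_{\gamma\beta}}$) depends only on the block of $\alpha$ (resp.~of $\beta$): as every block other than $j_0$ is large enough, the class $t_i$ admits a representative $\tau_\delta$ whose support avoids $\gamma$ and (for a suitable choice of representative $\alpha$) avoids $\alpha$, and the identity $\tau_\alpha \chi_{\alpha\gamma}\tau_\alpha = \chi_{\alpha+1,\gamma}$ together with a short induction along the block then yields $\overline{\chi_{\alpha\gamma}} = \overline{\chi_{\alpha'\gamma}}$, exactly as in Proposition~\ref{LCS_vB_1mu}. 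Disjoint-support arguments (Fact~\ref{fact_support_1}), made possible by the freedom in choosing representatives inside the big blocks, then show that these $\gamma$-classes commute with every generator not involving $\gamma$. Moreover, two classes of the form $\chi_{\alpha\gamma}$ and $\chi_{\alpha'\gamma}$ commute for free, since they share the second index $\gamma$: this is precisely relation $\mathrm{(wPR2)}$ of Proposition~\ref{McCool_presentation}, valid here because tripartite welded braids are genuine welded braids. This is the feature that makes the present welded situation strictly easier than the virtual case of Proposition~\ref{LCS_vB_1mu}.

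The main obstacle, and the only place where a genuine argument beyond disjoint supports is needed, is the commutation of the remaining pairs: two classes $\chi_{\gamma\beta}, \chi_{\gamma\beta'}$ sharing $\gamma$ as their first index, and pairs $\chi_{\alpha\gamma},\chi_{\gamma\beta}$ of opposite types (including the sub-case where $\alpha$ and $\beta$ lie in the same block). For each such pair I would invoke Lemma~\ref{wB111^ab}: choosing the three relevant indices and a strand-adding homomorphism $\theta$ from the three-strand group $\langle\chi_{12},\chi_{13},\chi_{23}\rangle$ into $\wB(\lambda_P, \lambda_{S_+}, \lambda_S)/\LCS_\infty$ so that two of the three standard generators map to the classes we wish to commute, while the third maps to a class of the form $\chi_{\alpha\alpha'}$ or $\chi_{\beta\beta'}$ already known (by the disjoint-support facts just established, after moving representatives inside the big blocks) to commute with one of them. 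Lemma~\ref{wB111^ab} then forces $\theta$ to factor through the abelianisation of the three-strand group, giving the desired commutation.

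Assembling all of these commutations shows that every pair of generators of $\wB(\lambda_P, \lambda_{S_+}, \lambda_S)/\LCS_\infty$ commutes, so this quotient is abelian and $\LCS_2 = \LCS_\infty$, as required. The remaining work — checking in each of the finitely many configurations of block types that representatives with disjoint support can be found inside the large blocks, and that the indices $\alpha,\beta,\gamma$ can be ordered so that Lemma~\ref{wB111^ab} applies — is fiddly but entirely routine, and I would keep it compressed rather than spelling out every sub-case.
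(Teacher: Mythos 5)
Your proposal is correct and takes essentially the same route as the paper: the paper's own proof simply removes the size-one block of $\lambda_{S_+}$, invokes Proposition~\ref{stable_LCS_of_partitioned_exwBn_2} to obtain the morphism from the abelianised stable group, and declares the generalisation of the proof of Proposition~\ref{LCS_vB_1mu} (with Lemma~\ref{wB111^ab} in place of Lemma~\ref{vB111^ab}) to be straightforward — which is precisely what you carry out in detail. Your observation that relation $\mathrm{(wPR2)}$ disposes of the pairs $\chi_{\alpha\gamma},\chi_{\alpha'\gamma}$ for free is a correct and welcome simplification relative to the virtual case, but it does not change the overall strategy.
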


\begin{proof}
Up to some permutation of the blocks, we can suppose that $\lambda_{S_+} = (1, \mu)$ for some partition $\mu$ whose blocks have size at least $4$. Then, as in the proof of Proposition~\ref{LCS_vB_1mu}, we have a morphism:
\[\wB(\lambda_P, \mu, \lambda_S)^{\ab} = \wB(\lambda_P, \mu, \lambda_S)/\LCS_\infty \rightarrow \wB(\lambda_P, \lambda_{S_+}, \lambda_S)/\LCS_\infty,\] 
where the first equality follows from Proposition~\ref{stable_LCS_of_partitioned_exwBn_2}. Starting from this, the generalisation of the proof of Proposition~\ref{LCS_vB_1mu} to this context is straightforward.
\end{proof}

In fact, the same result holds when there are both some blocks of points of size $1$ and one block of oriented circles of size $1$, the size of the other blocks being in the stable range:

\begin{proposition}\label{tripartite_wB_isolated_P_and_S+}
The notations being as in Theorem~\ref{LCS_of_partitioned_exwBn_2}, suppose that $n_i \neq 2$ for all $i \in I_P$, and $n_i \geq 4$ for all $i \in I_{S_+}\cup I_S$, save for one $i \in I_{S_+}$ such that $n_i = 1$. Then the LCS of the group $\wB(\lambda_P, \lambda_{S_+}, \lambda_S)$ stops at $\LCS_2$.
\end{proposition}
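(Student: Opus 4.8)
The plan is to combine the arguments of Propositions~\ref{tripartite_wB_isolated_P} and~\ref{tripartite_wB_isolated_S+}, both of which are modelled on the proof of Proposition~\ref{LCS_vB_1mu}. Writing $\lambda_P = (1,\ldots,1,\mu_P)$ with $\mu_P$ having blocks of size at least $3$, and (after permuting blocks) $\lambda_{S_+} = (1,\mu)$ with $\mu$ having blocks of size at least $4$, the goal is to show that $Q := \wB(\lambda_P,\lambda_{S_+},\lambda_S)/\LCS_\infty$ is abelian, which is equivalent to $\LCS_\infty = \LCS_2$. First I would feed in the stable case: Proposition~\ref{stable_LCS_of_partitioned_exwBn_2} gives $\LCS_2 = \LCS_\infty$ for $\wB(\mu_P,\mu,\lambda_S)$ and for the one-circle-block configurations, so the ``adding strands'' homomorphisms from these groups into $Q$ factor through their abelianisations. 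By Proposition~\ref{tripartite_wB^ab}, this forces the class modulo $\LCS_\infty$ of each generator $\tau_\alpha,\sigma_\alpha,\rho_\alpha,\chi_{\alpha\beta}$ to depend only on the blocks containing the indices involved, and a routine disjoint support argument (Facts~\ref{fact_support_1} and~\ref{fact_support_2}) then shows that all pairs of these classes commute in $Q$, with the sole exception of pairs $\chi_{\alpha\beta},\chi_{\alpha\beta'}$ sharing an index lying in a block of size~$1$.

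Next I would dispose of the pairs that turn out to be automatic or routine. Two generators $x_{ij}$ and $x_{i'j}$ sharing the same \emph{target} block commute directly from the relation $\mathrm{(wPR2)}$ of Proposition~\ref{McCool_presentation}. The pair $x_{i1}$ versus $x_{1i}$ type, and in particular the opposite-direction pair $x_{C_0 C}$ versus $x_{C C_0}$ for a big circle block $C$, I would handle by choosing three distinct indices $c_1,c_2,c_3$ in $C$ (possible since $\lvert C\rvert\geq 3$): the relation $\mathrm{(wPR3)}$ gives $\chi_{c_1 c_0} \rightleftarrows \chi_{c_1 c_2}\chi_{c_0 c_2}$, and since $\overline{\chi_{c_1 c_0}} = \overline{\chi_{c_3 c_0}}$ commutes with the within-block element $\overline{\chi_{c_1 c_2}}$ by disjoint support, one reads off that $x_{CC_0} = \overline{\chi_{c_1 c_0}}$ commutes with $x_{C_0 C} = \overline{\chi_{c_0 c_2}}$. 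The remaining cases with a source in a size-$1$ block but \emph{both} targets large (source $=$ an isolated point, or source $= C_0$) go exactly as in Proposition~\ref{tripartite_wB_isolated_P}: a three-strand ``adding strands'' map produces a triple to which Lemma~\ref{wB111^ab} applies once the auxiliary bracket is killed by disjoint support in the large blocks.

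The hard part — the only genuinely new phenomenon compared with the two special cases already treated — is the pair $x_{PC_0}$ (isolated point $P$ to isolated circle $C_0$) versus $x_{PC}$ (isolated point $P$ to big circle $C$), together with the equivalent pair $x_{PC_0}$ versus $x_{C_0 C}$. A naive three-strand argument on $\{a,c_0,c\}$ is \emph{circular}: inside $\langle\chi_{ac_0},\chi_{ac},\chi_{c_0c}\rangle$ the two brackets available to Lemma~\ref{wB111^ab} are precisely these two unknown commutators. I would break the circularity by bringing in the \emph{reversed} generator $\chi_{cc_0}$, whose free endpoint $c$ lies in the big block $C$. Relabelling strands by $1\mapsto a$, $2\mapsto c$, $3\mapsto c_0$, the triple $(\chi_{ac},\chi_{ac_0},\chi_{cc_0})$ has the form required by Lemma~\ref{wB111^ab}, and its distinguished bracket $[\chi_{ac},\chi_{cc_0}] = [x_{PC}, x_{CC_0}]$ vanishes by disjoint support (relift $x_{CC_0}$ to $\chi_{c'c_0}$ with $c'\neq c$ in $C$, whose support $\{c',c_0\}$ is disjoint from $\{a,c\}$). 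Hence $x_{PC}$ commutes with $x_{PC_0}$; feeding this back through $\mathrm{(wPR3)}$ (or the standard triple in Lemma~\ref{wB111^ab}) then yields that $x_{PC_0}$ commutes with $x_{C_0 C}$ as well.

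Finally I would observe that such a big circle block $C$ is present exactly when $x_{PC}$ is defined, so when no big circle exists these cross-pairs simply do not arise: the isolated-point generators then all share the target $c_0$ and commute by $\mathrm{(wPR2)}$. Once all of the above pairs are cleared, every pair of generators of $Q$ commutes, so $Q$ is abelian, and therefore the LCS of $\wB(\lambda_P,\lambda_{S_+},\lambda_S)$ stops at $\LCS_2$. I expect the only real work to be the bookkeeping of the (few) problematic pairs linking the isolated point block and the isolated oriented circle block; the decisive trick is the use of the reversed generator $\chi_{cc_0}$ to produce a disjoint-support bracket, everything else being a direct transcription of the methods of Propositions~\ref{tripartite_wB_isolated_P} and~\ref{tripartite_wB_isolated_S+}.
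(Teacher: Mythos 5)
Your proof is correct and its decisive step is exactly the paper's: to commute $x_{PC_0}$ with $x_{PC}$, map the triple $(\chi_{12},\chi_{13},\chi_{23})$ to $(x_{PC},x_{PC_0},x_{CC_0})$, kill the auxiliary bracket $[x_{PC},x_{CC_0}]$ by relifting $x_{CC_0}$ to $\chi_{c'c_0}$ with $c'\neq c$ in the large block, and invoke Lemma~\ref{wB111^ab}; the deduction that $x_{PC_0}$ then also commutes with $x_{C_0C}$ is likewise identical. The only real difference is organisational. The paper feeds in Propositions~\ref{tripartite_wB_isolated_P} and~\ref{tripartite_wB_isolated_S+} directly, obtaining two abelian images of $\wB(\mu_P,\lambda_{S_+},\lambda_S)$ and $\wB(\lambda_P,\mu_{S_+},\lambda_S)$ in the quotient by $\LCS_\infty$ --- one containing every generator not involving an isolated point, the other every generator not involving $C_0$ --- so that the only pairs left are precisely your ``hard'' ones. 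You instead start from the fully stable group $\wB(\mu_P,\mu,\lambda_S)$ and re-derive the easier commutations by hand; this costs you one case that your enumeration does not cover, namely $x_{CC_0}$ versus $x_{C_0C'}$ for two \emph{distinct} large circle blocks $C\neq C'$ (they share the index $c_0$, are not ``same target'', are not ``both targets large'', and are not the opposite-direction pair on a single block). The omission is harmless --- the triple on $(c,c_0,c')$ works, since $[x_{CC_0},x_{CC'}]$ vanishes by disjoint support after relifting and Lemma~\ref{wB111^ab} then forces $[x_{CC_0},x_{C_0C'}]=0$ --- but you should either add this case or, more economically, quote Propositions~\ref{tripartite_wB_isolated_P} and~\ref{tripartite_wB_isolated_S+} as the paper does, which disposes of every pair not linking an isolated point to $C_0$ in one stroke.
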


\begin{proof}
Let us denote by $\mu_P$ the partition obtained from $\lambda_P$ by removing blocks of size $1$, and by $\mu_{S_+}$ the one obtained from $\lambda_{S_+}$ by removing the block of size $1$. Then we have two morphisms:
\begin{align*}
\wB(\mu_P, \lambda_{S_+}, \lambda_S)^{\ab} = \wB(\mu_P, \lambda_{S_+}, \lambda_S)/\LCS_\infty \longrightarrow \wB(\lambda_P, \lambda_{S_+}, \lambda_S)/\LCS_\infty,
\\ \wB(\lambda_P, \mu_{S_+}, \lambda_S)^{\ab} = \wB(\lambda_P, \mu_{S_+}, \lambda_S)/\LCS_\infty \longrightarrow \wB(\lambda_P, \lambda_{S_+}, \lambda_S)/\LCS_\infty,
\end{align*}
where the equalities on the left come from Propositions~\ref{tripartite_wB_isolated_P} and \ref{tripartite_wB_isolated_S+}. As a consequence, the classes of the $\chi_{\alpha \beta}$, the $\tau_\alpha$, the $\sigma_\alpha$ and the $\rho_\alpha$ generating $\wB(\lambda_P, \lambda_{S_+}, \lambda_S)$ in the quotient by $\LCS_\infty$ depend only on the blocks containing $\alpha$ and $\beta$. Indeed, this is obvious when $\alpha$ and $\beta$ both belong to blocks of size one, which is the only case where it cannot directly be deduced from the above description of the abelianisations (Proposition~\ref{tripartite_wB^ab}). As usual, when $\alpha$ is in the $i$-th block and $\beta$ in the $j$-th one, we denote these classes by $x_{ij}$, $t_i$, $s_i$ and $r_i$ respectively.

Now, let us consider two generators among these. Most of the time, both of them belong to the image of one of the morphisms above, whence they commute. When this does not hold, a disjoint support argument shows that they still commute, except when one of them is $x_{ij}$ for $i \in I_P$ and $j \in I_{S_+}$ such that $n_i = n_j = 1$, and the other one is either $x_{ik}$, $x_{jk}$ or $x_{kj}$, for some $k \in I_{S_+} \cup I_S$ different from $j$. We now show that these commute too, reasoning as in the proof of Proposition~\ref{LCS_vB_1mu}. Let us denote by $\alpha$ (resp.~$\beta$) the only element of the $i$-th block (resp.~the $j$-th one), and by $\gamma, \gamma'$ two distinct elements of the $k$-th block (which must be of size at least $4$). 
\begin{itemize}[leftmargin=20pt]
\item $x_{ij}$ and $x_{kj}$ always commute, since $\chi_{\alpha \beta}$ commutes with $\chi_{\gamma \beta}$.
\item  $x_{ij}$ and $x_{ik}$ are the images of $\chi_{13}$ and $\chi_{12}$ by a morphism from $\langle \chi_{12}, \chi_{23}, \chi_{13} \rangle$ sending $\chi_{23}$ to $x_{kj}$. Since $x_{ik}$ and $x_{kj}$ are the classes of $\chi_{\alpha \gamma}$ and  $\chi_{\gamma' \beta}$, whose support is disjoint, they commute. Hence we can apply Lemma~\ref{wB111^ab} to conclude that $x_{ij}$ and $x_{ik}$ commute.
\item $x_{ij}$ and $x_{jk}$ are also in such a homomorphic image, and we know from the previous case that $x_{ij}$ and $x_{ik}$ commute, so we can apply Lemma~\ref{wB111^ab} again to deduce that  $x_{ij}$ and $x_{jk}$ commute too.
\end{itemize}
Finally, we have shown that all the generators of $\wB(\lambda_P, \lambda_{S_+}, \lambda_S)/\LCS_\infty$ commute pairwise. Thus this group is abelian, which means exactly that $\LCS_\infty = \LCS_2$ for $\wB(\lambda_P, \lambda_{S_+}, \lambda_S)$.
\end{proof}

\subsection{Small blocks of unoriented circles}\label{tripartite_small_S}

We already know that if $\lambda_{S}$ has at least one block of size $2$ or $3$, then $\wB(\lambda_P, \lambda_{S_+}, \lambda_S)$ surjects onto $\exwB_2$ or $\exwB_3$, whose LCS does not stop. The next proposition, which generalises Proposition~\ref{LCS_of_exwB_1mu}, deals with the case where $\lambda_{S}$ has at least one block of size $1$:
\begin{proposition}\label{tripartite_wB_isolated_S}
Suppose that $\lambda_{S}$ has at least one block of size $1$, and there is at least another block in $\lambda = \lambda_P \lambda_{S_+} \lambda_S$. Then the LCS of $\wB(\lambda_P, \lambda_{S_+}, \lambda_S)$ does not stop.
\end{proposition}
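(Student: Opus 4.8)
The plan is to exhibit a quotient of $\wB(\lambda_P, \lambda_{S_+}, \lambda_S)$ whose LCS does not stop and then to invoke Lemma~\ref{lem:stationary_quotient}; this mirrors the strategy of Proposition~\ref{LCS_of_exwB_1mu}, of which the present statement is a generalisation. First I would use the forgetful surjections (forgetting entire blocks, and then forgetting all but one strand of the remaining non-isolated block) to reduce to a two-strand group. Keeping the isolated unoriented circle, whose strand I call $c$, together with a single strand $d$ of some other block, the target is one of $\wB(1, \varnothing, 1)$, $\wB(\varnothing, 1, 1)$ or $\wB(\varnothing, \varnothing, (1,1))$, according to whether the other block consists of points, of oriented circles or of unoriented circles. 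By Lemma~\ref{lem:stationary_quotient} it then suffices to treat these three two-strand groups $H$.

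In each case I would work inside $\Aut(\F_2)$ via Proposition~\ref{tripartite_wB_as_subgroup_of_exwB}, Corollary~\ref{tripartite_wB_in_Aut(Fn)} and Theorem~\ref{exwB_in_Aut(Fn)}, writing $\F_2 = \langle x_c, x_d\rangle$. The key observation is uniform: among the generators listed in Lemma~\ref{generators_partitioned_tripartite}, the flip $\rho_c$ (which exists because $c$ is unoriented) acts on $\chi_{dc}$ (which exists because $c$ is a circle) by $\rho_c \chi_{dc} \rho_c = \chi_{dc}^{-1}$, as one checks directly from $\chi_{dc}\colon x_d \mapsto x_c x_d x_c^{-1}$ and $\rho_c\colon x_c \mapsto x_c^{-1}$, while $\rho_c^2 = 1$. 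To dispose of the remaining generators I would pass to the quotient $\F_2 \twoheadrightarrow P := \Z * (\Z/2)$ that imposes $x_d^2 = 1$. Every element of $H$ sends $x_d$ to a conjugate of $x_d^{\pm 1}$, hence preserves the normal closure $\langle\langle x_d^2 \rangle\rangle$, so this quotient is equivariant and induces a homomorphism $H \to \Aut(P)$. In $\Aut(P)$ the element $\chi_{dc}$ becomes the inner automorphism by $\bar x_c$, which has infinite order since $Z(P)$ is trivial; the generator $\chi_{cd}$ (when $d$ is a circle) becomes the inner automorphism by $\bar x_d$; and $\rho_d$ (when $d$ is an unoriented circle) becomes trivial because $\bar x_d^2 = 1$.

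Consequently the image of $H$ lands in $\Inn(P) \rtimes \langle \rho_c \rangle$, with $\rho_c$ inverting the inner automorphism by $\bar x_c$ and fixing the one by $\bar x_d$. Composing with the $\rho_c$-equivariant surjection $\Inn(P) \cong \Z * (\Z/2) \twoheadrightarrow \Z$ that kills the inner automorphism by $\bar x_d$, I obtain a surjection of $H$ onto $\Z \rtimes (\Z/2) \cong (\Z/2) * (\Z/2)$ with $\rho_c$ acting by inversion, whose LCS does not stop by Proposition~\ref{LCS_of_Z/2*Z/2}. (For the last case $\wB(\varnothing, \varnothing, (1,1)) = \exwB_{1,1}$ one may alternatively quote Remark~\ref{exwB11} directly.) I expect the main obstacle to be the bookkeeping in the middle step: one must confirm the equivariance of $\F_2 \twoheadrightarrow P$, that the surviving inner automorphism by $\bar x_c$ genuinely has infinite order, and that the final projection onto the infinite dihedral group is a well-defined homomorphism respecting the $\rho_c$-action. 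Once these routine verifications are in place, Lemma~\ref{lem:stationary_quotient} concludes the proof.
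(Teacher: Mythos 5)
Your analysis of the three two-strand groups is essentially correct, but the reduction that precedes it contains a genuine gap. You propose to reach $\wB(1,\varnothing,1)$, $\wB(\varnothing,1,1)$ or $\wB(\varnothing,\varnothing,(1,1))$ by ``forgetting all but one strand of the remaining non-isolated block''. Forgetting an \emph{entire} block does give a well-defined surjection (this is the welded analogue of the Fadell--Neuwirth projections), but forgetting a proper, non-empty subset of a block of size $m\geq 2$ is not a group homomorphism: the strands of a block are unordered, and an element of the group may permute them, so there is no distinguished ``remaining strand''. Concretely, in the $\Aut(\F_n)$ picture the map you would need is induced by killing the generators $x_\alpha$ for all but one index $\alpha$ in the block, and the normal closure of those generators is not preserved by the block's permutation generators $\tau_\alpha$, which send $x_\alpha$ to $x_{\alpha+1}$. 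So your argument only covers the situation where some block other than the isolated unoriented circle has size $1$; when every other block has size $m\geq 2$, the proposed quotient does not exist and those cases are not addressed.

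The paper's proof keeps the whole block: it reduces, by forgetting entire blocks only, to $\wB(m,0,1)$, $\wB(0,m,1)$ or $\wB(0,0,(m,1))$ for some $m\geq 1$, and then, instead of deleting $m-1$ strands, it \emph{identifies} them, passing to the quotient of $\F_{m+1}$ by the normal closure of the elements $x_\alpha x_{\alpha+1}^{-1}$ for $\alpha$ ranging over the block (together with the $x_\alpha^2$ when the block consists of unoriented circles, as in Proposition~\ref{LCS_of_exwB_1mu}); the induced subgroup of $\Aut(\F_2)$, resp.\ $\Aut(\Z*(\Z/2))$, is then generated by the images of $\rho$, $\chi_{12}$ and possibly $\chi_{21}$, and is exactly the group you analyse. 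Your observations that $\rho_c$ inverts $\chi_{dc}$, that the image lands in $\Inn(\Z*(\Z/2))\rtimes\langle\rho_c\rangle$, and that one can project onto $\Z\rtimes(\Z/2)\cong(\Z/2)*(\Z/2)$ and invoke Proposition~\ref{LCS_of_Z/2*Z/2} are all correct and amount to the $m=1$ instance of the paper's computation. What is missing is a homomorphism carrying the general case onto this one; ``forgetting strands inside a block'' does not supply it, whereas ``identifying the strands of the block'' (checked generator by generator to be equivariant) is the device the paper uses in its place.
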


\begin{proof}
The group $\wB(\lambda_P, \lambda_{S_+}, \lambda_S)$ surjects either onto $\wB(m, 0, 1)$, onto $\wB(0, m, 1)$ or onto $\wB(0, 0, (m,1))$, for some $m \geq 1$. We need to show that the LCS of each of these three groups does not stop. The case of $\wB(0, 0, (m,1)) = \exwB_{1,m}$ has already been dealt with; see Proposition~\ref{LCS_of_exwB_1mu}. The method used there can be adapted to the other two cases. Indeed, both $\wB(m, 0, 1)$ and $\wB(0, m, 1)$ identify with a subgroup of $\Aut(\F_{m + 1})$ consisting of automorphisms preserving the normal closure $N$ of the $x_\alpha x_{\alpha+1}^{-1}$ for $1 \leq \alpha \leq m-1$, so they both project onto a subgroup $G$ of $\Aut(\F_{m + 1}/N) = \Aut(\F_2)$. In the first case, $G$ is generated by $\rho_2$ and $\chi_{12}$, and is isomorphic to $\Z \rtimes (\Z/2)$, whose LCS does not stop by Proposition~\ref{LCS_of_Z/2*Z/2}. In the second case, $G$ is generated by $\rho_2$, $\chi_{12}$ and $\chi_{21}$. Then $\langle \chi_{12}, \chi_{21} \rangle = \Inn(\F_2) \cong\F_2$ is a normal subgroup of $G$, and:
\[G = \langle \chi_{12}, \chi_{21} \rangle \rtimes \langle \rho_2 \rangle \cong\F_2 \rtimes (\Z/2),\]
where the action of $\rho_2$ fixes $\chi_{21}$ and sends $\chi_{12}$ to its inverse. Then the quotient of $G$ by $\chi_{21}$ is isomorphic to $\Z \rtimes (\Z/2)$, whose LCS does not stop by Proposition~\ref{LCS_of_Z/2*Z/2}.
\end{proof}

\subsection{Blocks of points of size 2 with oriented circles}

We now turn to the cases with blocks of points of size $2$ which have not already been covered. The following result takes care of a large part of them:

\begin{proposition}\label{tripartite_wB_P2_and_S+}
Suppose that $\lambda_P$ has at least one block of size $2$, and that $\lambda_{S_+} \neq \varnothing$. Then the LCS of $\wB(\lambda_P, \lambda_{S_+}, \lambda_S)$ does not stop.
\end{proposition}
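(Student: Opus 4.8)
The plan is the standard one for the non-stopping cases: I would exhibit a quotient of $\wB(\lambda_P, \lambda_{S_+}, \lambda_S)$ whose LCS is already known not to stop, and then conclude via Lemma~\ref{lem:stationary_quotient}. Since $\lambda_P$ has a block of size $2$ and $\lambda_{S_+} \neq \varnothing$, the map forgetting all strands except for one such block of two points and a single oriented circle (belonging to any block of $\lambda_{S_+}$) gives a surjection
\[\wB(\lambda_P, \lambda_{S_+}, \lambda_S) \twoheadrightarrow \wB((2), (1), \varnothing).\]
As in the proofs of Propositions~\ref{tripartite_wB_isolated_S} and~\ref{LCS_of_exwB_1mu}, these forgetting maps are induced by (split) fibrations of configuration spaces and descend to the partitioned quotients, so they are genuine surjections. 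It therefore suffices to show that the LCS of the small group $\wB((2),(1),\varnothing)$ does not stop.

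Next I would identify this group explicitly. Writing $\F_3 = \langle x_1, x_2, x_3\rangle$ with $x_1, x_2$ corresponding to the two points and $x_3$ to the circle, Corollary~\ref{tripartite_wB_in_Aut(Fn)} identifies $\wB((2),(1),\varnothing)$ with the automorphisms $x_i \mapsto w_i x_{\sigma(i)} w_i^{-1}$ where $\sigma \in \Sym_2$ permutes $x_1,x_2$ and $w_i \in \langle x_3\rangle$. Equivalently, via the decomposition $\wB((2),(1),\varnothing) \cong \wP(2,1) \rtimes \Sym_2$ together with Corollary~\ref{bipartite_wP_in_Aut(Fn)}, the pure part $\wP(2,1)$ is free abelian of rank two on $\chi_{13}$ and $\chi_{23}$ (the two points looping around the circle), while the factor $\Sym_2$ is generated by $\tau_1$ and swaps $\chi_{13}$ with $\chi_{23}$. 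Hence
\[\wB((2),(1),\varnothing) \cong \Z^2 \rtimes (\Z/2),\]
where the generator of $\Z/2$ acts by exchanging the two basis elements $e_1 = \chi_{13}$ and $e_2 = \chi_{23}$ of $\Z^2$.

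Finally I would pass to the quotient of $\Z^2$ by the diagonal subgroup $\langle e_1 + e_2\rangle$, which is preserved by the swap; the induced action on $\Z^2/\langle e_1+e_2\rangle \cong \Z$ (via $e_1 \mapsto 1$, $e_2 \mapsto -1$) is multiplication by $-1$. This yields a surjection
\[\wB((2),(1),\varnothing) \twoheadrightarrow \Z \rtimes (\Z/2) \cong \Z/2 * \Z/2,\]
whose LCS does not stop by Proposition~\ref{LCS_of_Z/2*Z/2}. Composing with the forgetting surjection and applying Lemma~\ref{lem:stationary_quotient} finishes the argument.

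I do not expect a genuine obstacle here: the whole proof is a chain of reductions, and the only points needing care are (i) checking that the forgetting map is well defined and surjective onto the \emph{partitioned} group $\wB((2),(1),\varnothing)$, which follows from the same configuration-space fibrations used throughout the chapter, and (ii) verifying that the swap action on $\Z^2$ descends to the sign action on $\Z$, so that the infinite dihedral group $\Z/2 * \Z/2$ really appears. Alternatively, one could avoid the last quotient and compute the LCS of $\Z^2 \rtimes (\Z/2)$ directly from Proposition~\ref{LCS_Klein_tau}, obtaining the strictly decreasing chain $\ima(\tau-1) \supsetneq 2\,\ima(\tau-1) \supsetneq 4\,\ima(\tau-1) \supsetneq \cdots$ in degrees $\geq 2$; this is exactly the computation underlying Proposition~\ref{LCS_B2mu}.
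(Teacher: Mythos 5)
Your reduction to the small group $\Z^2 \rtimes (\Z/2)$ and the final LCS computation are correct and match the paper's endgame (the paper identifies $\wB(2,1,0) \cong \Z^2 \rtimes (\Z/2) = \Z \wr \Sym_2$ and invokes Corollary~\ref{LCS_G_wr_S2}; your alternative via $\Z \rtimes (\Z/2) \cong \Z/2 * \Z/2$ or via Proposition~\ref{LCS_Klein_tau} works equally well). However, the very first step has a genuine gap: the claimed surjection $\wB(\lambda_P, \lambda_{S_+}, \lambda_S) \twoheadrightarrow \wB((2),(1),\varnothing)$ obtained by ``forgetting all strands except \ldots{} a single oriented circle belonging to any block of $\lambda_{S_+}$'' does not exist unless $\lambda_{S_+}$ happens to have a block of size exactly $1$. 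The forgetting maps used throughout the memoir (e.g.\ in Propositions~\ref{tripartite_wB_isolated_S} and~\ref{LCS_of_exwB_1mu}, and in the Fadell--Neuwirth sequences of Proposition~\ref{Fadell-Neuwirth}) always forget or retain \emph{entire blocks} of the partition: an element of the partitioned group may permute the circles within a block, so there is no well-defined homomorphism that remembers just one circle out of a block of size $m \geq 2$ --- the trajectory of the chosen circle is then a path between two different basepoints, not a loop. Since the hypothesis is only $\lambda_{S_+} \neq \varnothing$, you cannot assume a size-one block is available.

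The paper repairs exactly this point by a different construction. It first forgets whole blocks to surject onto $\wB(2,m,0)$, where $m \geq 1$ is the size of some block of $\lambda_{S_+}$, and then produces a further quotient $\wB(2,m,0) \twoheadrightarrow \wB(2,1,0)$ that is \emph{not} a forgetting map: viewing $\wB(2,m,0) \subset \Aut(\F_{2+m})$ via Corollary~\ref{tripartite_wB_in_Aut(Fn)}, one checks that every generator preserves the normal subgroup $N$ generated by the elements $x_\alpha x_{\alpha+1}^{-1}$ for $3 \leq \alpha \leq m+1$, so the action descends to $\F_{2+m}/N \cong \F_3$; the image of the resulting homomorphism to $\Aut(\F_3)$ is $\langle \tau_1, \chi_{13}, \chi_{23}\rangle = \wB(2,1,0)$. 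Once you insert this collapsing step (which requires the explicit verification on generators, e.g.\ $\sigma_\alpha(x_{\alpha-1}x_\alpha^{-1}) \in N$), the rest of your argument goes through verbatim.
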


\begin{proof}
Since $\lambda_{S_+}$ is non-empty, it contains some block of size $m \geq 1$. Then $\wB(\lambda_P, \lambda_{S_+}, \lambda_S)$ surjects onto $\wB(2, m, 0)$, so it it enough to show that the LCS of $\wB(2, m, 0)$ does not stop. The latter is the subgroup of $\Aut(\F_{2+m})$ generated by $\tau_\alpha$ and $\sigma_\alpha$ for $3 \leq \alpha \leq m+1$, $\chi_{1\alpha}$ and  $\chi_{2\alpha}$ for $3 \leq \alpha \leq m+2$, and $\tau_1$. All these automorphisms preserve the subgroup $N$ normally generated by the $x_\alpha x_{\alpha+1}^{-1}$ for $3 \leq \alpha \leq m+1$, as one can check by direct inspection: for instance, $\sigma_\alpha(x_{\alpha-1} x_\alpha^{-1}) = x_{\alpha-1} (x_\alpha x_{\alpha + 1} x_\alpha^{-1})^{-1}
= (x_{\alpha-1}x_\alpha^{-1}) \cdot x_{\alpha} (x_{\alpha} x_{\alpha + 1}^{-1})x_\alpha^{-1}$. As a consequence, they induce automorphisms of the quotient $\F_{2+m}/N \cong\F_3$. This defines a morphism from $\wB(2, m, 0)$ to $\Aut(\F_3)$ sending $\tau_\alpha$ and $\sigma_\alpha$ to the identity, all the $\chi_{1\alpha}$ to $\chi_{13}$, all the $\chi_{2\alpha}$ to $\chi_{23}$, and $\tau_1$ to $\tau_1$. The image of this projection is thus $\langle \tau_1, \chi_{13}, \chi_{23} \rangle = \wB(2, 1, 0)$. Moreover, since $\chi_{23} = \tau_2 \sigma_2$ and thus $\tau_1\chi_{23}\tau_1=\chi_{13}$ (see \Spar\ref{section_Aut(Fn)}), one easily sees that 
\[\wB(2, 1, 0) = \langle \chi_{13}, \chi_{23} \rangle \rtimes \langle \tau_1 \rangle \cong \Z^2 \rtimes (\Z/2),\]
where the action of $\Sym_2$, which is conjugation by $\tau_1$, exchanges the two generators $\chi_{13}$ and $\chi_{23}$ of $\Z^2$. Thus, this semi-direct product identifies with $\Z  \wr \Sym_2$, whose LCS does not stop (by Corollary~\ref{LCS_G_wr_S2}), so neither does the LCS of $\wB(2, m, 0)$, as claimed.
\end{proof}

\begin{remark}
One can give a more conceptual argument for the preservation of $N$ by $\wB(2, m, 0)$. Namely, these automorphisms act on $\F_{2+m} \cong\F_2 *\F_m$ in a block triangular fashion (they stabilise $\F_m$), and the automorphisms they induce on $\F_m$ preserve the characteristic subgroup $\LCS_2(\F_m)$. Then their action on the quotient $\F_m^{\ab} \cong \Z^m$ factors through the canonical action of $\Sym_m$, so the projection onto $\Z$ identifying all the generators of $\F_m$ is equivariant. Whence a well-defined induced action of $\wB(2, m, 0)$ on $\F_2 * \Z \cong\F_3$.
\end{remark}

\begin{remark}
Concretely, the non-stopping of the LCS in this case is witnessed by iterated commutators of elements of Type 4 and Type 1 from Figure~\ref{fig:tripartite-braids}. Here, we are viewing the pictures in Figure~\ref{fig:tripartite-braids} as elements of the group $G = \wB(\lambda_P,\lambda_{S_+},\lambda_S)$ itself, rather than as elements of its abelianisation $G^{\ab} = \Lie_1$.
\end{remark}

\subsection{Blocks of points of size 2 without oriented circles}\label{subsubsec:tripartite_block_two_points_LCS}

We now consider the LCS of $\wB(\lambda_P, \varnothing, \lambda_S)$. If there are no blocks in $\lambda_S$, then we are considering $\wB(\lambda_P, \varnothing, \varnothing) \cong \Sym_{\lambda_P}$, whose LCS stops at $\LCS_2$ (except if $\lambda_P = (1, \ldots , 1)$, for which $\wB(\lambda_P, \varnothing, \varnothing) = \{1\}$). On the contrary, if there are no blocks in $\lambda_P$, then we are considering $\wB(\varnothing, \varnothing, \lambda_S) \cong \exwB_{\lambda_P}$, whose LCS has been studied above (in Proposition~\ref{LCS_of_partitioned_exwBn}). 

Suppose now that there is at least one block in both $\lambda_P$ and $\lambda_S$. If $\lambda_S$ has a block of size $1$, $2$ or $3$, we already know that the LCS of $\wB(\lambda_P, \varnothing, \lambda_S)$ does not stop (see~\Spar\ref{tripartite_small_S}); we can thus assume that the blocks of $\lambda_S$ have size at least $4$. Under this hypothesis, Proposition~\ref{tripartite_wB_isolated_P} takes care of the case where $\lambda_P$ has no block of size $2$ (then the LCS stops). The remaining case is covered by the following.

\begin{proposition}\label{prop:welded_points_block_size_two}
Let $\lambda_P$ be any partition having at least one block of size $2$ and let $\lambda_S$ be a non-empty partition whose blocks have size at least $4$. Then the LCS of $\wB(\lambda_P, \varnothing, \lambda_S)$ stops at $\LCS_3$. 

Moreover, $\Lie_2(\wB(\lambda_P, \varnothing, \lambda_S))$ has a $(\Z/2)$-basis consisting of the $[t_i, x_{ij}]$, for $i \in I_P$ with $n_i = 2$, and $j \in I_S$, and all the other brackets of two of the generators of $\wB(\lambda_P, \varnothing, \lambda_S)^{\ab}$ described in Proposition~\ref{tripartite_wB^ab} are trivial in the Lie ring.
\end{proposition}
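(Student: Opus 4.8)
The plan is to exploit that $\Lie(\wB(\lambda_P,\varnothing,\lambda_S))$ is generated in degree one (Proposition~\ref{generationproperty}) and that the whole ring is $2$-torsion: since $\lambda_{S_+}=\varnothing$, Proposition~\ref{tripartite_wB^ab} gives $\wB(\lambda_P,\varnothing,\lambda_S)^{\ab}\cong(\Z/2)^M$, so every $\Lie_k$ is $2$-torsion by Corollary~\ref{torsion}. First I would bound $\Lie_2$ from above. Using the generators of Proposition~\ref{tripartite_wB^ab} together with the explicit lifts of Lemma~\ref{generators_partitioned_tripartite}, a disjoint-support inspection (Facts~\ref{fact_support_1} and~\ref{fact_support_2}) shows that every pair of these generators admits commuting representatives, with the sole exception of the pairs $(t_i,x_{ij})$ for $i\in I_P$ with $n_i=2$ and $j\in I_S$. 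Indeed the only block too small to separate two of the involved indices is a size-$2$ block of $\lambda_P$, and $t_i$ uses both of its points while $x_{ij}$ uses one of them. Hence $\Lie_2$ is spanned by the classes $[t_i,x_{ij}]$, and these are $2$-torsion because $x_{ij}$ is.

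Next I would prove these classes are $\Z/2$-linearly independent, which gives the asserted basis. For a fixed admissible pair $(i_0,j_0)$, forgetting every block except the point block $i_0$ and the circle block $j_0$ defines a surjection onto the model group $\wB((2),\varnothing,(n_{j_0}))$ that annihilates every $[t_i,x_{ij}]$ involving a forgotten block and sends $[t_{i_0},x_{i_0j_0}]$ to the bracket generating the target's degree-two part. So independence reduces to showing this single bracket is nonzero in $\Lie_2(\wB((2),\varnothing,(m)))$ for $m\geq4$. For this I would pass, exactly as in the proof of Proposition~\ref{LCS_of_exwB_1mu}, to the quotient of $\F_{2+m}$ obtained by identifying all circle generators and killing their squares; the induced action (Corollary~\ref{tripartite_wB_in_Aut(Fn)}) identifies the image with $\Z/2\wr\Sym_2\cong D_4$, in which the bracket maps to the nontrivial central element. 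As $\Lie_2(D_4)=\Z/2\neq0$, functoriality of $\Lie_2$ shows $[t_{i_0},x_{i_0j_0}]\neq0$ upstairs, and running this over all pairs yields the basis.

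It then remains to prove $\Lie_3=0$. By generation in degree one it suffices to show $[g,[t_i,x_{ij}]]=0$ for each degree-one generator $g$; the Jacobi identity, combined with the disjoint-support vanishing established above, reduces this to the three expressions $[t_i,[t_i,x_{ij}]]$, $[[t_i,x_{ij}],x_{ij}]$ and $[[t_i,x_{ij'}],x_{ij}]$ with $j'\neq j$. For the first I would represent $t_i$ by the transposition $\tau$ of the two points and $x_{ij}$ by $\chi_{\alpha_1\beta}$; a direct computation in $\Aut(\F_n)$ gives $c:=[\tau,\chi_{\alpha_1\beta}]=\chi_{\alpha_2\beta}\chi_{\alpha_1\beta}^{-1}$ with $\tau c\tau^{-1}=c^{-1}$, whence $[t_i,[t_i,x_{ij}]]=c^{-2}$. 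Unorientedness now enters decisively: one checks $\rho_\beta c\rho_\beta^{-1}=c^{-1}$, so $[\rho_\beta,c]=c^{-2}$ represents the Lie bracket $[r_j,[t_i,x_{ij}]]$; but computing that same bracket with a disjoint representative $\rho_{\beta''}$ ($\beta''\neq\beta$ in block $j$, available since $n_j\geq4$) gives $0$, forcing $c^2\in\LCS_4$ and hence $[t_i,[t_i,x_{ij}]]=0$. The second expression vanishes outright, since $c$ and $\chi_{\alpha_1\beta}$ are conjugations of $x_{\alpha_1}$ by commuting powers of $x_\beta$ and so commute in $\Aut(\F_n)$.

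The main obstacle is the mixed triple commutator $[[t_i,x_{ij'}],x_{ij}]$ with $j'\neq j$, which by Jacobi and $[x_{ij'},x_{ij}]=0$ equals the class of the same-first-index commutator $\theta=[\chi_{\alpha\beta'},\chi_{\alpha\beta}]$, a conjugation of $x_\alpha$ by $[x_\beta,x_{\beta'}]$. Here the genuinely welded relation $\mathrm{(wPR3)}$ (Proposition~\ref{McCool_presentation}, Lemma~\ref{wB111^ab}) is essential: it rewrites $\theta$, up to conjugacy, in terms of the circle–circle commutator $[\chi_{\alpha\beta},\chi_{\beta\beta'}]$, trading a point index for the generator $\chi_{\beta\beta'}$. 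Since block $j$ has size at least $4$, one may then separate the two occurrences of block $j$ by placing representatives on distinct circles, and iterating this rewriting pushes $\theta$ into $\LCS_4$; I expect this bookkeeping, rather than any conceptual difficulty, to be the delicate point. As a fallback I would instead compute the residue quotient of the two-circle-block model $\wB((2),\varnothing,(m_1,m_2))$ completely and identify it, via the wreath-product calculations of the appendix, as a class-$2$ nilpotent group. Either route yields $\Lie_3=0$, and since $\Lie_2\neq0$ the LCS stops exactly at $\LCS_3$.
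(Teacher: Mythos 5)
Your overall strategy is the paper's: identify $\zeta_{\alpha\beta}=[\tau_\alpha,\chi_{\alpha\beta}]=\chi_{\alpha+1,\beta}\chi_{\alpha\beta}^{-1}$ explicitly, use $\tau_\alpha\zeta_{\alpha\beta}\tau_\alpha^{-1}=\rho_\beta\zeta_{\alpha\beta}\rho_\beta^{-1}=\zeta_{\alpha\beta}^{-1}$ together with a disjoint-support representative of $r_j$ to kill $[t_i,z_{ij}]$ and get $2$-torsion, note that $\zeta_{\alpha\beta}$ and $\chi_{\alpha\beta}$ commute as conjugations by powers of $x_\beta$, and detect non-triviality via the projection of $\wB(2,\varnothing,m)$ onto $(\Z/2)\wr\Sym_2$. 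All of that is correct and matches the paper. However, there is a gap already in your upper bound for $\Lie_2$: the claim that disjoint support handles every pair of degree-one generators except $(t_i,x_{ij})$ with $n_i=2$ fails when $\lambda_P$ has blocks of size $1$, which the hypotheses allow. For a point block $\{\alpha\}$ of size $1$, the classes $x_{ij}$ and $x_{ik}$ ($j\neq k$ in $I_S$) are represented only by automorphisms $\chi_{\alpha\beta}$, $\chi_{\alpha\gamma}$ sharing the index $\alpha$, so no disjoint-support lifts exist. These brackets do vanish, but proving it requires the separate argument via Proposition~\ref{tripartite_wB_isolated_P} (equivalently Lemma~\ref{wB111^ab}); without it, the asserted basis of $\Lie_2$ and the ``all other brackets are trivial'' clause are unproved, and your Jacobi reduction of $\Lie_3$ is also incomplete.

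The more serious gap is the mixed triple commutator $[[t_i,x_{ij'}],x_{ij}]$, which you correctly identify as the delicate point but do not actually dispatch. The relation $\mathrm{(wPR3)}$ turns $[\chi_{\alpha\beta'},\chi_{\alpha\gamma}]$ into a conjugate of $[\chi_{\alpha\beta'},\chi_{\beta'\gamma}]^{-1}$, but the new commutator still shares the index $\beta'$, is only visibly in $\LCS_3$ (its $\Lie_2$ class vanishes by disjoint support inside the size-$\geq 4$ block $j'$), and applying $\mathrm{(wPR3)}$ to it with the same triple of indices simply returns the original commutator — so the proposed iteration does not terminate, and you never land in $\LCS_4$. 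The fallback of computing $\wB((2),\varnothing,(m_1,m_2))/\LCS_\infty$ would suffice but is only named, and amounts to re-proving the proposition in that model case. The missing ingredient is the one the paper uses: $\chi_{\alpha\beta'}$ and $\chi_{\alpha\gamma}$ both lie in a partitioned tripartite subgroup with no point blocks of size $2$ (refine $\{\alpha,\alpha+1\}$ into singletons), for which $\LCS_2=\LCS_\infty$ by Proposition~\ref{tripartite_wB_isolated_P}; hence their commutator lies in $\LCS_\infty\subseteq\LCS_4$ of the ambient group and its $\Lie_3$ class is zero. Inserting this fact (and using it likewise for the size-$1$ point-block brackets above) closes both gaps and makes your argument coincide with the paper's.
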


\begin{proof}
Generators of $\Lie_1 = \wB(\lambda_P,\varnothing, \lambda_S)^{\ab}$ are described in Proposition~\ref{tripartite_wB^ab} and Figure~\ref{fig:tripartite-braids}. Most of the pairs of such generators can be lifted to elements of $\wB(\lambda_P, \varnothing, \lambda_S)$ with disjoint support. The only ones for which this is not possible are:
\begin{itemize}
\item  $t_i$ and $x_{ij}$ for $j \in I_S$ and $i \in I_P$ such that $n_i = 2$. 
\item $x_{ij}$ and $x_{ik}$ for $j,k \in I_S$ and $i \in I_P$ such that $n_i = 1$. 
\end{itemize} 

We first use Proposition~\ref{tripartite_wB_isolated_P} to show that $[x_{ij}, x_{ik}] = 0$ in the Lie ring. In order to do this, let us denote by $\mu_P$ the partition obtained from $\lambda_P$ by removing the blocks of size $2$. Proposition~\ref{tripartite_wB_isolated_P} implies that $\Lie(\wB(\mu_P, \varnothing, \lambda_S))$ is abelian. Since $x_{ij}$ and $x_{ik}$ are in the image of this Lie ring inside $\Lie(\wB(\lambda_P, \varnothing, \lambda_S))$, their bracket must be trivial.

We now know that $\Lie_2$ is generated by the commutator $z_{ij} := [t_i, x_{ij}]$, for $j \in I_S$ and $i \in I_P$ such that $n_i = 2$ (note that this also works when $\lambda_P$ does not have blocks of size $2$; then there are no $z_{ij}$, hence $\Lie_2 = 0$, and we recover the case $I_{S_+} = \varnothing$ of Proposition~\ref{tripartite_wB_isolated_P}). Before going further in our analysis of the Lie ring, let us examine the element $z_{ij}$ (with $i,j$ as above). If the $i$-th block is $\{\alpha, \alpha + 1\}$ and $\beta$ is in the $j$-th block, $z_{ij}$ is the class of the commutator $\zeta_{\alpha \beta} := [\tau_\alpha, \chi_{\alpha \beta}]$, which is the automorphism:
\[\zeta_{\alpha \beta} \colon
\left\{
\setlength\arraycolsep{3pt}
\begin{array}{clclclclc}
x_\alpha 
&\longmapsto &x_\beta^{-1}x_\alpha x_\beta
&\longmapsto &x_\beta^{-1}x_{\alpha+1}x_\beta
&\longmapsto &x_\beta^{-1}x_{\alpha+1}x_\beta
&\longmapsto &x_\beta^{-1}x_\alpha x_\beta,  \\
x_{\alpha+1}
&\longmapsto &x_{\alpha+1}
&\longmapsto &x_\alpha
&\longmapsto &x_\beta x_\alpha x_\beta^{-1}
&\longmapsto &x_\beta x_{\alpha+1} x_\beta^{-1}
\end{array}  \right.\]
where all the $x_\gamma$ with $\gamma \notin \{\alpha, \alpha +1\}$ are fixed. The corresponding motion is drawn in Figure~\ref{fig:tripartite-braids-L2}. We note that $\zeta_{\alpha \beta} = \chi_{\alpha+1, \beta} \chi_{\alpha \beta}^{-1}$ (which can be seen directly, or deduced from $\tau_\alpha \chi_{\alpha \beta} \tau_\alpha^{-1} = \chi_{\alpha+1, \beta}$). 

Let us turn our attention to $\Lie_3$. It is generated by the commutators of the generators $z_{ij} = \overline {\zeta_{\alpha \beta}}$ of $\Lie_2$ with the generators of $\Lie_1$. Since $\Supp(\zeta_{\alpha \beta}) = \{\alpha, \alpha + 1, \beta\}$ (where $\beta$ can be chosen anywhere in the $j$-th block for $\zeta_{\alpha \beta}$ to represent $z_{ij}$), one can choose a lift having disjoint support with $\zeta_{\alpha \beta}$ for each generator of $\Lie_1$, save $x_{ik}$ (for $k \in I_S$) and $t_i$. If $k = j$, we remark that $x_{ij}$ does have a lift commuting with $\zeta_{\alpha \beta}$, albeit not for reasons of support. Indeed, it is the class of $\chi_{\alpha \beta}$, which clearly commutes with $\zeta_{\alpha \beta}$, since both are automorphisms conjugating $x_\alpha$ and $x_{\alpha + 1}$ by powers of the same element $x_\beta$. Thus the only generators of $\Lie_3$ which are possibly non-trivial are the $[t_i, z_{ij}]$ and the $[x_{ik}, z_{ij}]$ for $k \neq j$.

We first show that $[x_{ik}, z_{ij}]$ vanishes, when $k \neq j$. Let $\gamma$ be in the $k$-th block of $\lambda$. Then $[x_{ik}, z_{ij}]$ is the class of the commutator $[\chi_{\alpha \gamma}, \zeta_{\alpha \beta}]$. Since $\zeta_{\alpha \beta} = \chi_{\alpha+1, \beta} \chi_{\alpha \beta}^{-1}$ and $\chi_{\alpha \gamma}$ commutes with $\chi_{\alpha+1, \beta}$, this commutator is equal to $[\chi_{\alpha \gamma}, \chi_{\alpha \beta}^{-1}]$. We have seen above that $\chi_{\alpha \gamma}$ commutes with $\chi_{\alpha \beta}$ modulo $\LCS_\infty$. Thus, the class of $[\chi_{\alpha \gamma}, \chi_{\alpha \beta}^{-1}]$ is trivial in the Lie ring, which means that $[x_{ik}, z_{ij}] = 0$.

We now show that $[t_i, z_{ij}] = 0$, using explicit computations. We have:
\[\tau_\alpha \zeta_{\alpha \beta} \tau_\alpha^{-1} \colon
\left\{\begin{array}{clclclc}
x_\alpha 
&\longmapsto &x_{\alpha+1}
&\longmapsto &x_\beta x_{\alpha+1} x_\beta^{-1}
&\longmapsto &x_\beta x_\alpha x_\beta^{-1},\\
x_{\alpha+1}
&\longmapsto &x_\alpha
&\longmapsto &x_\beta^{-1}x_\alpha x_\beta
&\longmapsto &x_\beta^{-1}x_{\alpha+1} x_\beta
\end{array}  \right.\]
which means that $\tau_\alpha \zeta_{\alpha \beta} \tau_\alpha^{-1} = \zeta_{\alpha \beta}^{-1}$. We also have:
\[\rho_\beta \zeta_{\alpha \beta} \rho_\beta^{-1} \colon
\left\{\begin{array}{clclclc}
x_\alpha 
&\longmapsto &x_\alpha 
&\longmapsto &x_\beta^{-1} x_\alpha x_\beta
&\longmapsto &x_\beta x_\alpha x_\beta^{-1},\\
x_{\alpha+1}
&\longmapsto &x_{\alpha+1}
&\longmapsto &x_\beta x_{\alpha+1} x_\beta^{-1}
&\longmapsto &x_\beta^{-1} x_{\alpha+1} x_\beta \\
x_\beta
&\longmapsto &x_\beta^{-1}
&\longmapsto &x_\beta^{-1}
&\longmapsto &x_\beta
\end{array}  \right.\]
whence $\rho_\beta \zeta_{\alpha \beta} \rho_\beta^{-1} = \zeta_{\alpha \beta}^{-1} = \tau_\alpha \zeta_{\alpha \beta} \tau_\alpha^{-1}$. Finally:
\[[t_i, z_{ij}] = \overline{[\tau_\alpha, \zeta_{\alpha \beta}]} = \overline{[\rho_\beta, \zeta_{\alpha \beta}]} = [r_j, z_{ij}] = 0,\]
where the last equality comes from the above disjoint support argument. This finishes the proof that $\Lie_3 = 0$.
Notice that the above calculation also implies $\LCS_3 \ni [\tau_\alpha, \zeta_{\alpha \beta}] = \zeta_{\alpha \beta}^{-2}$, so that $z_{ij} = \overline{\zeta_{\alpha \beta}}$ is of order $2$ in $\Lie_2$. Thus, in order to prove our last claim, we need to show that the $z_{ij}$ are linearly independant over $\Z/2$. For each fixed choice of $j \in I_S$ and $i \in I_P$ such that $n_i = 2$, we have a projection from $\wB(\lambda_P, \varnothing, \lambda_S)$ onto $\wB(2, \varnothing, n_j)$ which induces a morphism between the $\Lie_2$ killing all the $z_{kl}$ save $z_{ij}$, which is sent to $z_{12}$. Thus, we only need to show that for $n \geq 4$, the element $z_{12}$ is non-trivial in $\Lie_2(\wB(2, \varnothing, n))$, which is then isomorphic to $\Z/2$. We do this by constructing one further projection. Elements of $\wB(2, \varnothing, n)$ are automorphisms of $\F_{2 + n}$ preserving the normal subgroup generated by the $x_\alpha x_{\alpha + 1}$ together with the $x_\alpha^2$ ($\alpha \geq 3$). Whence a well-defined morphism $\wB(2, \varnothing, n) \rightarrow \Aut(\F_2 * (\Z/2))$. The image of this morphism is easily seen to be isomorphic to $W_2 = (\Z/2) \wr \Sym_2$ (where the images of $\chi_{13}$ and $\chi_{23}$ generate $(\Z/2)^2$ and the image of $\tau_1$ generates $\Sym_2$), and the element $\zeta_{13} = \chi_{23}\chi_{13}^{-1}$ is sent to the generator $(1,1,\mathrm{id})$ of $\LCS_2(W_2)$ (see Proposition~\ref{LCS_Klein_tau} for the calculation of $\LCS_*(W_2)$). Finally, the morphism induced between the Lie rings sends $z_{12}$ to the generator of $\Lie_2(W_2) \cong \Z/2$, whence the result.
\end{proof}

\begin{figure}[ht]
\centering
\includegraphics[scale=0.7]{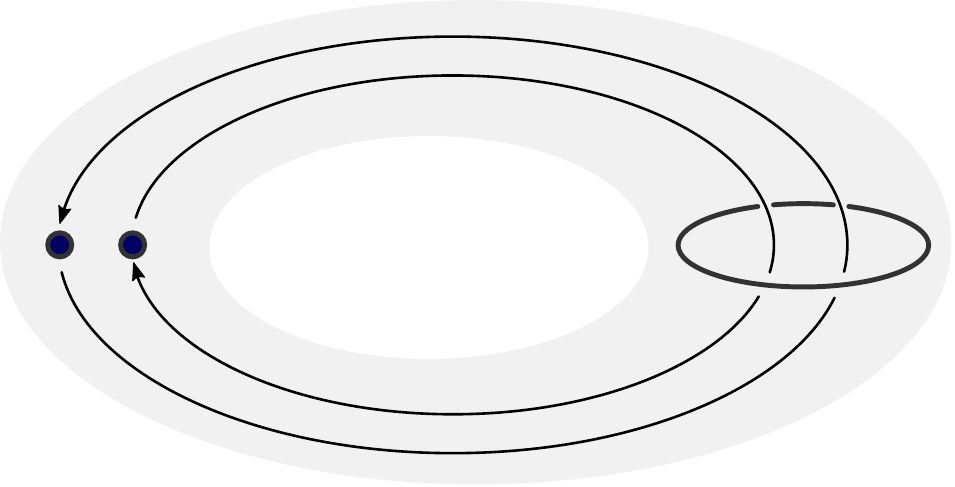}
\caption{Generators of $\Lie_2 = \LCS_2 / \LCS_3$ for $G = \wB(\lambda_P,\varnothing,m)$, where each block of $\lambda_P$ has size $\geq 2$ and $m\geq 4$. There is one generator corresponding to the commutator $\zeta_{\alpha,\beta}$, represented by a loop of the form pictured, for each block of $\lambda_P$ of size exactly $2$. This generalises to $G = \wB(\lambda_P,\varnothing,\lambda_S)$ where each block of $\lambda_S$ has size $\geq 4$, and the generators are indexed by $(b,x)$ where $b$ is a block of points of size exactly $2$ and $x$ is a block of circles.}
\label{fig:tripartite-braids-L2}
\end{figure}

\begin{proof}[Proof of Theorem \ref{LCS_of_partitioned_exwBn_2}]
The first statement is Proposition~\ref{tripartite_wB_isolated_P_and_S+}, which is a generalisation of Propositions~\ref{tripartite_wB_isolated_S+} and \ref{tripartite_wB_isolated_P}, which each in turn generalise Proposition~\ref{stable_LCS_of_partitioned_exwBn_2}.

Among the remaining cases, here are the ones for which we can deduce our conclusion directly from the calculations above (Propositions~\ref{LCS_of_wBn} and \ref{LCS_of_exwBn}), by exhibiting a quotient of $\wB(\lambda_P, \lambda_{S_+}, \lambda_S)$ whose LCS does not stop:
\begin{itemize}
\item If $\lambda_{S}$ has one block of size $2$ or $3$, then it surjects onto $\exwB_2$ or $\exwB_3$.
\item If $\lambda_{S}$ has at least two blocks of size $1$, then it surjects onto $\exwB_{1,1} = \exwP_2 \cong\F_2 \rtimes (\Z/2)^2$.
\item If $\lambda_{S_+}$ has one block of size $2$ or $3$, then it surjects onto $\wB_2$ or $\wB_3$.
\item If $\lambda_{S_+}$ has at least two blocks of size $1$, then it surjects onto $\wB_{1,1} = \wP_2$.
\end{itemize}
In all of these cases, Lemma~\ref{lem:stationary_quotient} allows us to conclude that the LCS of the partitioned tripartite welded braid group $\wB(\lambda_P, \lambda_{S_+}, \lambda_S)$ does not stop.

Moreover, if $\lambda_S$ has one block of size $1$ which is not the only block of $\lambda$, then Proposition~\ref{tripartite_wB_isolated_S} ensures that the LCS of $\wB(\lambda_P, \lambda_{S_+}, \lambda_S)$ does not stop. Then the only remaining case is the one where $\lambda_P$ has at least one block of size $2$ and the blocks of $\lambda_S$ are of size at least $4$. This last case is covered by Proposition~\ref{tripartite_wB_P2_and_S+} when $I_{S_+}$ is not empty and by Proposition~\ref{prop:welded_points_block_size_two} when $I_{S_+}$ is empty.
\end{proof}

\chapter{Braids on surfaces}
\label{sec:braids_on_surfaces}

In this chapter, we study the LCS of surface braid groups and their partitioned versions. This may be seen as a generalisation of results from \Spar\ref{sec:partitioned_braids}, where we studied classical Artin braids, that is, braids on the disc. Contrary to what is usually done in the literature, we choose to consider braids on any surface. In particular, our surfaces may be non-compact, and they may have (countably) infinite genus or boundary components. For braids, which are always compactly supported and can be pushed away from the boundary, this level of generality does not really complicate things. We also do not assume that our surfaces are orientable, because the techniques that we use work very similarly for orientable surfaces and for non-orientable ones. 

We first recall what we need from Richards' classification of surfaces (\Spar\ref{sec:surfaces}), then we introduce the tools that we need from the general theory of braids on surfaces (\Spar\ref{sec_braids_general}) and we review presentations of braid groups on compact surfaces (\Spar\ref{sec_pstations_B(S)}). We then find ourselves ready to tackle the study of LCS. We do this first for the whole braid group $\B_n(S)$: we show that the LCS stops if $n \geq 3$, and we completely compute the Lie ring in this case (\Spar\ref{sec:LCS_undiscrete_partition}). We then generalise these results to partitioned surface braid groups $\B_\lambda(S)$ in \Spar\ref{sec:partitioned_braids_surfaces_stable}, whose LCS stops if the blocks of the partition have size at least $3$, a stability hypothesis under which we can compute the associated Lie ring. Finally, we study the unstable cases in \Spar\ref{sec:surface_braid_unstable}. There, four cases stand out from the crowd: the braid groups on the sphere $\S^2$, on the torus $\Tor$, on the  Möbius strip $\Moeb$ and on the projective plane $ \Proj$.

\section{Surfaces}\label{sec:surfaces}

Recall that a \emph{surface} is a (second countable) $2$-manifold, which we will \emph{not} suppose compact or orientable in general. Such manifolds are well-understood: the classification of surfaces without boundary has been achieved by Richards \cite{Richards}, and the classification of surfaces with boundary, which is more complicated, was completed more than fifteen years later by Brown and Messer \cite{Brown-Messer}. 

\begin{remark}
By a \emph{manifold}, we mean a \emph{locally Euclidean, Hausdorff} space. Moreover, all our manifolds are assumed \emph{second countable}. For connected manifolds, this is equivalent to either metrisability or paracompactness, and it implies triangulability (see \cite{Rado1925} for the latter implication). 
\end{remark}

For studying braids, we will in fact only need to consider Richards' classification. Indeed, let $S$ be any connected surface (possibly with boundary). If we glue a copy of $\partial S \times [0,\infty)$ to $S$ by identifying $\partial S \times 0$ with $\partial S$, we obtain a surface $S'$ without boundary, and it is easy to show that the canonical injection $S \hookrightarrow S'$ is an isotopy equivalence. Moreover, one can show that an isotopy equivalence induces homotopy equivalences between configuration spaces and, in turn, isomorphisms between braid groups. In the sequel, we thus identify braids on $S$ with braids on $S'$ (or on $S - \partial S$). For instance, braids on the closed disc are identified with braids on the plane. This holds in particular for braids on one strand, that is, for fundamental groups, whose computation is recalled below in Proposition~\ref{pi_1(S)}.

Let us briefly recall Richards' construction of all surfaces, up to homeomorphism; for a detailed account, the reader is referred to \cite{Richards}, in particular to \Spar 5 and \Spar 6, especially Theorem 3, therein. 

\begin{proposition}\label{Richards_classification}
Let $S$ be a connected surface. Then $S$ is homeomorphic to a surface constructed in the following way:
\begin{itemize}
\item Consider the Cantor set $K$ embedded in the sphere $\S^2$ in the usual manner. Choose some closed subset $X$ of $K$, and remove it from $\S^2$.
\item Choose a finite or countably infinite sequence of pairwise disjoint closed $2$-discs in $\S^2 - X$, which has no accumulation point outside of $X$.
\item Along each of these discs, perform a connected sum operation with either $\Tor$ or $\Proj$.
\end{itemize}
\end{proposition}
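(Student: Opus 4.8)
The plan is to recover the surface from two pieces of data: its \emph{space of ends}, together with the distinguished subspaces of ends at which genus or non-orientability accumulates, and the total amount of handles and cross-caps. First I would invoke triangulability (Radó, as recalled in the Remark) to produce a \emph{compact exhaustion} $S_1 \subseteq S_2 \subseteq \cdots$ of the (boundaryless) surface $S$ by compact connected subsurfaces with $\bigcup_i S_i = S$, arranged so that each $S_i$ lies in the interior of $S_{i+1}$ and no component of $S - \Int(S_i)$ is relatively compact. Since each $S_i$ is a compact surface with boundary, the classical Kerékjártó classification applies to it; the only subtlety is how these compact pieces are glued along an ever-growing family of boundary circles as $i \to \infty$.

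Next I would define the space of ends $E(S) := \varprojlim_i \pi_0(S - S_i)$, topologised as an inverse limit of finite discrete sets. A standard argument shows $E(S)$ is compact, totally disconnected, separable and metrizable, hence homeomorphic to a closed subset $X$ of the Cantor set $K \subset \S^2$; this produces the set $X$ of the statement. Within $E(S)$ I would single out the closed subset $E_g(S)$ of ends that are \emph{non-planar} (every neighbourhood contains a handle or a cross-cap) and the subset $E_{no}(S)$ of \emph{non-orientable} ends. The remaining, discretely occurring genus is recorded by the sequence of discs in the construction: a disc carrying a connected sum with $\Tor$ contributes an orientable handle, one carrying $\Proj$ contributes a cross-cap, and the hypothesis that the discs have no accumulation point outside $X$ encodes exactly that isolated genus may accumulate only towards the ends lying in $X$.

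With this dictionary in place, the construction of Proposition~\ref{Richards_classification} visibly realises \emph{every} admissible tuple of invariants: choosing $X$ realises any prescribed end space, and distributing $\Tor$- and $\Proj$-summands among the chosen discs realises any prescribed orientable and non-orientable genus with the prescribed accumulation behaviour. The essential content, and the hard part, is therefore the \emph{uniqueness} direction: two surfaces with homeomorphic end data (as triples $(E, E_g, E_{no})$) and equal genus must themselves be homeomorphic. I would prove this by building the homeomorphism inductively along the two exhaustions, extending a homeomorphism of $S_i$ onto a subsurface of the target across each successive annular or handle-bearing collar, using the compact classification at each stage to match boundary patterns and to absorb infinite genus accumulating at a single end. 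The delicate point is ensuring that these partial homeomorphisms converge to a single global homeomorphism; this is where the careful bookkeeping of collars and the totally disconnected structure of $E(S)$ are indispensable. Once uniqueness is established, every surface is homeomorphic to one built by the three-step recipe, which completes the proof.
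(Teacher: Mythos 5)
The paper does not actually prove this proposition: it is stated as a recollection of Richards' classification theorem, and the reader is referred to \cite{Richards} (\Spar 5--6, Theorem~3 therein) for the argument. Your outline is, in substance, exactly the strategy of that reference -- compact exhaustion, the end space $E(S)$ as an inverse limit realised as a closed subset of the Cantor set, the distinguished closed subspaces of non-planar and non-orientable ends, and an inductive back-and-forth construction of a homeomorphism between two surfaces with the same invariants. So you have correctly identified the right proof, but you have not given one: everything you write before the final paragraph is the (comparatively easy) \emph{realisation} direction, and you explicitly defer the \emph{uniqueness} direction -- ``two surfaces with homeomorphic end data and equal genus are homeomorphic'' -- to ``careful bookkeeping''. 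That step is the entire content of the theorem, and a referee would not accept it as written. In particular, the inductive extension across collars requires choosing the exhaustions compatibly on both sides so that the induced maps on $\pi_0(S - S_i)$ match the chosen homeomorphism of end triples, and one must show the partial homeomorphisms stabilise on each compact set; neither is automatic.

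Two smaller points of care. First, your invariant ``total amount of handles and cross-caps'' is not well defined in the non-orientable case: by Dyck's theorem three cross-caps equal a handle plus a cross-cap, so the correct discrete invariant is the (reduced) genus together with the orientability class (orientable, non-orientable of even or odd type, or infinitely non-orientable), and the triple $(E, E_g, E_{no})$ must satisfy $E_{no} \subseteq E_g \subseteq E$ with both subsets closed. Second, the surfaces produced by the three-step recipe of the statement have empty boundary; the reduction of the general (possibly bounded) case to this one is handled elsewhere in the paper by gluing $\partial S \times [0,\infty)$, so your proof should either restrict to boundaryless surfaces or say explicitly that it does. If you want a self-contained argument at the level of detail the paper would require, you should either carry out the uniqueness induction or, as the paper does, simply cite \cite{Richards}.
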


\begin{remark}
At the second step, one can in fact choose an explicit sequence of discs depending only on $X$ together with the subset $X_{np} \subseteq X$ of accumulation points of the sequence of discs; see \cite{Richards}.
\end{remark}

As a direct corollary of Richards' classification, one can compute fundamental groups of surfaces:
\begin{proposition}\label{pi_1(S)}
Let $S$ be a connected surface without boundary. Then $\pi_1(S)$ is a free group, except when $S$ is closed. Moreover, it is of finite type if and only if $S$ is obtained (up to homeomorphism) from a closed surface by removing a finite number of points.
\end{proposition}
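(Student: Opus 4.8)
The plan is to deduce both statements directly from Richards' construction recalled in Proposition~\ref{Richards_classification}. The key observation is that a surface $S$ without boundary deformation retracts onto a graph (a $1$-complex) precisely when it is not closed, and that the fundamental group of any connected graph is free.

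First I would dispose of the closed case. If $S$ is closed (compact without boundary), then it is one of the standard closed surfaces, whose fundamental groups are the well-known one-relator surface groups; these are not free (except for $\S^2$, which is simply connected, and this can be regarded as a degenerate free group on zero generators, or simply excluded as closed). So the interesting direction is to show that every \emph{non-closed} connected surface without boundary has free fundamental group. Here I would argue that such a surface is an open surface (non-compact, since a compact surface without boundary is closed), and an open surface admits a handle decomposition with no $2$-handles, equivalently deformation retracts onto a $1$-dimensional CW-complex. Concretely, using Richards' construction: we start from $\S^2$ minus a nonempty closed set $X$ (nonempty because otherwise $S$ would be built from $\S^2$ by connected sums, hence closed), and then perform connected sums with copies of $\Tor$ and $\Proj$ along a sequence of discs. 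Removing a nonempty closed subset of $\S^2$ yields an open subsurface of $\S^2$, which is homotopy equivalent to a wedge of circles (a graph); each connected-sum operation with a torus or projective plane, performed in the complement of $X$, only attaches $1$-handles up to homotopy once we have already punctured the surface, so the homotopy type remains that of a graph. Hence $\pi_1(S)$ is free.

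For the second statement, I would characterise finite generation of $\pi_1(S)$. The fundamental group of a graph is free of finite rank if and only if the graph has finite first Betti number, i.e.\ the surface has finitely generated homology. Using Richards' description, the rank is controlled by the ``complexity at infinity'' encoded in the set $X$ together with the (possibly infinite) sequence of handles/crosscaps. The group is finitely generated exactly when $X$ is finite and only finitely many connected-sum operations are performed. This is precisely the statement that $S$ is obtained, up to homeomorphism, from a closed surface $\Sigma$ by removing a finite set of points: removing finitely many points from a closed surface both makes it non-closed (hence free $\pi_1$) and adds only finitely many free generators to the surface group relation, which becomes vacuous after puncturing. Conversely, if $X$ is infinite or infinitely many handles appear, the first Betti number is infinite and $\pi_1(S)$ is not finitely generated.

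The main obstacle I anticipate is making the ``connected sum along discs in the complement of $X$ contributes only $1$-handles up to homotopy'' step fully rigorous, and correctly bookkeeping the rank of the free group as a function of the combinatorial data $(X, \text{number of tori}, \text{number of projective planes})$. The cleanest route is probably to avoid an explicit rank computation altogether and instead argue homotopy-theoretically: a connected non-closed surface without boundary is homotopy equivalent to a connected graph (this is the genuinely geometric input, which one can cite or derive from the existence of a proper Morse function with no index-$2$ critical points on an open surface), so $\pi_1$ is free; and then finite generation is equivalent to finiteness of $H_1(S)$, which by Richards' classification holds if and only if $S$ is a closed surface with finitely many punctures. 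I would state the homotopy-equivalence-to-a-graph fact as the key lemma and reduce everything to it, since the finite-type dichotomy then follows from comparing $H_1$ of the open surface with $H_1$ of its closed ``filling''.
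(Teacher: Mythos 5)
Your proposal is correct, and it actually supplies more than the paper does: the paper states this proposition with no written proof at all, presenting it as a ``direct corollary'' of Richards' classification (Proposition~\ref{Richards_classification}), so there is nothing to compare line by line. The route you settle on in your final paragraph --- cite that a connected non-compact surface without boundary is homotopy equivalent to a graph (so $\pi_1$ is free), then detect finite generation via $H_1$ and read off from Richards' construction that $H_1$ is finitely generated exactly when $X$ is finite and only finitely many connected sums are performed --- is the right way to make the assertion precise. You are also right to distrust your first sketch: the claim that each connected sum ``only attaches $1$-handles up to homotopy'' is exactly the step that cannot be waved through, since Seifert--van Kampen presents $\pi_1$ of a connected sum as an amalgamated product of free groups over $\Z$, and such amalgams are not free in general (the torus itself is the cautionary example); one genuinely needs either the global statement about open surfaces or an argument that the attaching circle extends to a free basis of the punctured piece. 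Two small wording points: ``finiteness of $H_1(S)$'' should read ``finite generation of $H_1(S)$'' (the relevant $H_1$ is typically $\Z^k$, infinite but finitely generated), and in the closed case note that $\S^2$ has trivial, hence technically free, fundamental group --- the ``except when $S$ is closed'' clause is about where the argument applies, not a claim of non-freeness.
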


\begin{remark}
We see that closed surfaces (that is, compact surfaces without boundary) are singled out here, as will also be the case later in our study of braid groups; see for instance Proposition~\ref{Fadell-Neuwirth} and Theorem~\ref{Lie_ring_partitioned_B(S)}.
\end{remark}

Fundamental groups are in fact almost all homotopy groups of surfaces:
\begin{corollary}\label{pi_2(S)}
Let $S$ be a connected surface without boundary. Then $\pi_{>2}(S)$ is trivial, except when $S$ is the sphere or the projective plane.
\end{corollary}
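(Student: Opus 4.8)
The plan is to pass to the universal cover $p \colon \tilde S \to S$ and use that $p$ induces isomorphisms $\pi_k(S) \cong \pi_k(\tilde S)$ for all $k \geq 2$. Since $\tilde S$ is once more a connected, simply connected surface without boundary, the statement reduces entirely to identifying the simply connected surfaces without boundary and reading off their higher homotopy groups.

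First I would determine those models. Imposing $\pi_1 = 1$ on Richards' construction (Proposition~\ref{Richards_classification}) rules out every handle and crosscap and permits at most a single removed point, so $\tilde S$ is homeomorphic to either $\S^2$ or $\R^2$. If $\tilde S \cong \R^2$, then $\tilde S$ is contractible, giving $\pi_k(S) \cong \pi_k(\tilde S) = 0$ for all $k \geq 2$, and a fortiori $\pi_{>2}(S) = 0$. For the non-closed surfaces one can reach the same conclusion more directly: Proposition~\ref{pi_1(S)} shows $\pi_1(S)$ is free, so $S$ is homotopy equivalent to a graph and hence aspherical. This disposes of every surface whose universal cover is $\R^2$.

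It remains to show that $\tilde S \cong \S^2$ occurs exactly for $S \cong \S^2$ and $S \cong \Proj$, and that these are genuine exceptions. Since $\S^2$ is compact, the deck group $\Gamma$ acts freely and properly discontinuously on a compact space, so it is finite and $S = \S^2/\Gamma$ is closed; multiplicativity of the Euler characteristic under finite covers yields $2 = \chi(\S^2) = |\Gamma| \cdot \chi(S)$, whence $|\Gamma| \in \{1,2\}$. Thus either $\Gamma$ is trivial and $S \cong \S^2$, or $|\Gamma| = 2$, $\chi(S) = 1$ and $S \cong \Proj$, the unique closed surface of Euler characteristic $1$. In both cases $\pi_k(S) \cong \pi_k(\S^2)$, which is non-trivial in infinitely many degrees $>2$ (for instance $\pi_3(\S^2) \cong \Z$), so the exception clause is sharp. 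The one point requiring a little care is extracting the two simply connected models cleanly from Richards' classification; granting that, the remainder is routine covering-space theory together with the Euler characteristic count.
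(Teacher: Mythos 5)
Your proof is correct and follows essentially the same route as the paper's: pass to the universal cover, observe that the only simply connected surfaces without boundary are $\S^2$ and $\R^2$, and treat the two cases (the paper pins down the spherical case by noting $\pi_1(S)$ must be finite, where you use the Euler characteristic count — a cosmetic difference). Your added remarks that non-closed surfaces are aspherical via free $\pi_1$ and that $\pi_3(\S^2)\cong\Z$ makes the exceptions genuine are fine but not needed.
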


\begin{proof}
The universal covering $\widetilde S$ of $S$ is a simply connected surface without boundary. Proposition~\ref{pi_1(S)} implies that such a surface must be of finite type, and the explicit computation of fundamental groups of surfaces of finite type shows that $\widetilde S$ must be homeomorphic either to the sphere or to the plane. If $\widetilde S \cong \S^2$, then $S$ is compact, and the fibers in the covering must be finite, so $\pi_1(S)$ is finite, and $S$ is either the sphere itself, or the projective plane. In all the other cases, $\pi_{>2}(S) \cong \pi_{>2}(\widetilde S) \cong \pi_{>2}(\R^2) = 0$.
\end{proof}

Another immediate corollary of Richards' classification is the following dichotomy:

\begin{proposition}\label{dichotomy}
Let $S$ be a connected surface without boundary. Then either $S$ can be embedded into the sphere $\S^2$, or it contains the $1$-punctured torus or the Möbius strip as an embedded subsurface.
\end{proposition}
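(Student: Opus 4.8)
The plan is to extract the dichotomy straight from Richards' construction in Proposition~\ref{Richards_classification}. First I would put the given (boundaryless) surface $S$ into Richards' normal form: start from $\S^2$, delete a closed subset $X$ of a Cantor set, and then, along a locally finite family of pairwise disjoint closed discs, perform a sequence of connected sums, each with a copy of either $\Tor$ or $\Proj$. The whole argument then hinges on a single case distinction, namely whether or not the list of connected-sum operations is empty.

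If no connected sum is performed, then $S \cong \S^2 \setminus X$ is by construction an open subset of the sphere, so it embeds into $\S^2$ and the first alternative holds. This case is essentially immediate.

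The substance is the other case, where at least one connected sum is carried out, say along a disc $D$. Here I would argue that the corresponding punctured summand appears as an embedded subsurface of $S$: the operation replaces $\Int(D)$ by a copy of the summand with an open disc removed, and since the remaining operations occur along discs disjoint from $D$, this copy survives untouched inside $S$. It then suffices to recognise these punctured summands. If the summand is $\Proj$, then $\Proj$ minus an open disc is homeomorphic to the Möbius strip $\Moeb$; if it is $\Tor$, then $\Tor$ minus an open disc is a once-holed torus, whose interior is homeomorphic to the $1$-punctured torus $\Tor - pt$ (each having a single annular end). Either way the second alternative holds, and as the two cases are exhaustive the proposition follows.

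I expect the main obstacle to be a matter of care rather than of depth: namely justifying that the summand really does sit inside $S$ as an embedded subsurface disjoint from the loci of all other connected-sum operations (which follows from the pairwise disjointness and local finiteness of the discs in Richards' construction), together with the two classical surface identifications, that $\Proj$ minus an open disc is $\Moeb$ and that $\Int(\Tor \setminus \overline{D}) \cong \Tor - pt$.
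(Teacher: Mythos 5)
Your proof is correct and is exactly the argument the paper intends: Proposition~\ref{dichotomy} is stated there as an ``immediate corollary'' of Richards' classification with no written proof, and your case split on whether the list of connected sums in the normal form is empty, together with the standard identifications ($\Proj$ minus an open disc is $\Moeb$, and $\Tor$ minus an open disc is a once-holed torus whose interior is the $1$-punctured torus) is precisely the intended justification.
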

In our study of braids, this appears as a trichotomy, between the following cases:
\begin{itemize}
    \item $S$ is \emph{planar}, i.e.~it embeds into the plane;
    \item $S$ is the sphere $\S^{2}$;
    \item $S$ contains an embedded $1$-punctured torus (a \emph{handle}) or an embedded Möbius strip (a \emph{crosscap}).
\end{itemize}
In this regard, see in particular Proposition~\ref{trichotomy}.

\begin{convention}\label{def_crosscaps}
    In the usual way, an embedded Möbius strip on a surface will be indicated in our figures by a \emph{crosscap}, that is, a circle drawn on the surface, bounding a disc on which is drawn a cross (see for instance Figure~\ref{Aij_as_bracket_crosscap}). In order to obtain the surface that is meant from the surface on which the crosscaps are drawn, one must, for each crosscap, remove the interior of the corresponding disc, and then glue together opposite points of the remaining circle. 
\end{convention}
The reader not familiar with this classical representation should check, as a good exercise, that a disc with one crosscap is a Möbius strip, that a sphere with one crosscap is a projective plane (so that adding a crosscap on a surface is the same as taking the connected sum with a projective plane), and that a sphere with two crosscaps is a Klein bottle. They should also keep in mind Dyck's theorem: three crosscaps on a surface is the same as a handle and a crosscap (see for instance \cite{Francis-Weeks}).

\section{Braids on surfaces: general theory}\label{sec_braids_general}

We gather here some fundamental results in the theory of braids on surfaces. The main tools that we need are Goldberg's theorem (\Spar\ref{subsec_gen_B(S)}), the Fadell-Neuwirth exact sequences (\Spar\ref{subsec_Fadell-Neuwirth}) and a little calculation showing that the usual pure braid generators become commutators on non-planar surfaces (\Spar\ref{subsec_A_as_com}). Goldberg's theorem \cite[Th.~1]{Goldberg} says that surface braid groups are generated by braids on the disc, together with braids obtained from loops on the surface; we give a new simple proof of this result, incidentally extending it to possibly non-compact surfaces. As for the Fadell-Neuwirth exact sequences, which are traditionally stated for pure braid groups, since they involve forgetting strands, we generalise them easily to partitioned braid groups on possibly non-compact surfaces, where the projections forget blocks of strands.

\subsection{Definitions, notations and conventions}

Let $S$ be a connected surface. Let us consider the configuration space \[F_n(S) = \{ (x_1, \ldots , x_n) \in S^n \ |\ \forall i \neq j,\ x_i \neq x_j  \} \subset S^n.\] 
The \emph{braid group on the surface $S$} on $n$ strings is the fundamental group $\B_n(S)$ of the unordered configuration space $C_n(S) = F_n(S)/\Sym_n$. When $S$ is the $2$-disc $\D$, this group is exactly Artin's braid group, that is $\B_n = \B_n(\D)$.

\medskip

Let us fix an embedded closed disc $\D \subset S$, together with a base configuration $c = (c_1, \ldots , c_n) \in F_n(S)$ of points $c_i \in \D$. Since the assignment $S \mapsto \B_n(S)$ is functorial with respect to embeddings of surfaces, we have a (not necessarily injective) group morphism:
\[\varphi \colon \B_n = \B_n(\D) \rightarrow \B_n(S).\]
In the sequel, we omit most mentions of $\varphi$: if $\beta \in \B_n$, we still denote by $\beta$ its image in $\B_n(S)$ (although we should denote it by $\varphi(\beta)$).

\medskip

We can also construct surface braids from curves on the surface. Precisely, for any $i \leq n$, let us define $\iota_i \colon S - \D \hookrightarrow F_n(S)$ by sending $x$ to the configuration $(c_1, \ldots , c_{i-1}, x, c_{i+1}, \ldots , c_n)$. This induces a morphism between fundamental groups:
\[\psi_i \colon \pi_1(S - \D) \rightarrow \PB_n(S).\]

\begin{remark}
The map $\iota_i$ cannot preserve basepoints, since the basepoint of $F_n(S)$ is not in its image. However, the induced map between fundamental groups can easily be defined using a chosen fixed path between $c_i$ and a point not in $\D$. The choice of such a path (for each $i$) is implicit in the sequel, and should be made as simply as possible. For instance, one can fix a segment from $c_i$ to a point on $\partial \D$, and extend it slightly outside $\D$, such that these paths are disjoint for different values of $i$.
\end{remark}

The canonical projection $\pi \colon \B_n(S) \twoheadrightarrow \Sym_n$, corresponding to the covering of $F_n(S)/\Sym_n$ by $F_n(S)$, can be enhanced to a projection:
\[\pi_S \colon \B_n(S) \twoheadrightarrow \pi_1(S) \wr \Sym_n\]
as follows. Given a braid $\beta \in \B_n(S)$, let us lift it to a path $ \gamma = (\gamma_1, \ldots , \gamma_n)$ in $F_n(S)$ from $(c_i)_i$ to $(c_{\sigma^{-1}(i)})_i$ where $\sigma = \pi(\beta)$. Then send $\beta$ to $((\overline \gamma_1, \ldots , \overline \gamma_n), \sigma)$, where $\overline \gamma_i$ is the image of $\gamma_i$ in $\pi_1(S/\D) \cong \pi_1(S)$. We note that $\pi_S$ is clearly surjective, since its image contains $\Sym_n$ (which is the image of $\varphi(\B_n)$ by $\pi_S$), and all the factors $\pi_1(S)$ (which are the images of the $\psi_i(\pi_1(S))$).

The kernel of $\pi_S$, which is contained in $\PB_n(S)$, obviously contains the group $\PB_n$. We denote it by $\PB_n^\circ(S)$ and we call its elements \emph{geometrically pure braids}.

\subsection{Generators of surface braid groups}\label{subsec_gen_B(S)}

The following result generalises one of Goldberg \cite[Th.~1]{Goldberg} to any connected surface. 
\begin{proposition}\label{generators_and_N(Pn)} The following statements hold for any connected surface $S$ and any integer $n \geq 1$:
\begin{itemize}
\item For any $i \leq n$, the group $\B_n(S)$ is generated by the images of $\varphi$ and $\psi_i$. 
\item Its subgroup $\PB_n(S)$ is generated by (the image of) $\PB_n$ and the images of $\psi_1, \ldots , \psi_n$.
\item The subgroup $\PB_n^\circ(S)$ of $\PB_n(S)$ is the normal closure of $\PB_n$. Since $\PB_n^\circ(S)$ is normal in $\B_n(S)$, it is also the normal closure of $\PB_n$ in $\B_n(S)$.
\end{itemize}
\end{proposition}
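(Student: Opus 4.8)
The plan is to establish the first assertion of the third bullet, namely that $\PB_n^\circ(S)$ coincides with the normal closure $\langle\!\langle \PB_n\rangle\!\rangle$ of $\PB_n$ inside $\PB_n(S)$; the parenthetical second assertion then follows formally. One inclusion is immediate: a classical pure braid is supported inside $\D$, so each of its strands is a null-homotopic loop of $S$, whence $\pi_S$ sends it to the identity and $\PB_n \subseteq \ker(\pi_S) = \PB_n^\circ(S)$. As $\PB_n^\circ(S)$ is a kernel, it is normal in $\B_n(S)$, so it contains $\langle\!\langle\PB_n\rangle\!\rangle$. The content is the reverse inclusion $\PB_n^\circ(S) \subseteq \langle\!\langle\PB_n\rangle\!\rangle$, which I would prove by induction on $n$ using the classical Fadell--Neuwirth fibration $p \colon F_n(S) \to F_{n-1}(S)$ forgetting the last point (the same fibration underlying the second bullet). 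The base case $n=1$ is trivial, since $\pi_S$ is then the identity of $\pi_1(S)$ and both sides are trivial.

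For the inductive step, write $p_* \colon \PB_n(S) \twoheadrightarrow \PB_{n-1}(S)$ for the induced surjection and record the compatibility $\pi_S^{(n-1)} \circ p_* = (\text{forget the last factor}) \circ \pi_S^{(n)}$, so that $p_*$ maps $\PB_n^\circ(S) = \ker \pi_S^{(n)}$ into $\PB_{n-1}^\circ(S) = \ker \pi_S^{(n-1)}$. Fix $x \in \PB_n^\circ(S)$. By induction $p_*(x) \in \langle\!\langle\PB_{n-1}\rangle\!\rangle$; since $p_*$ is surjective and $p_*(\PB_n) = \PB_{n-1}$, the image of a normal closure being the normal closure of the image gives $p_*(\langle\!\langle\PB_n\rangle\!\rangle) = \langle\!\langle\PB_{n-1}\rangle\!\rangle$, so there is $y \in \langle\!\langle\PB_n\rangle\!\rangle$ with $p_*(y) = p_*(x)$. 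Then $xy^{-1} \in \ker p_*$, and also $xy^{-1} \in \PB_n^\circ(S)$ because $y \in \langle\!\langle\PB_n\rangle\!\rangle \subseteq \PB_n^\circ(S)$. It therefore suffices to prove that $\ker p_* \cap \PB_n^\circ(S) \subseteq \langle\!\langle\PB_n\rangle\!\rangle$, for then $x = (xy^{-1})\,y \in \langle\!\langle\PB_n\rangle\!\rangle$.

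To analyse this intersection I would pass to the homotopy exact sequence of the fibration. The subgroup $\ker p_*$ is the image $q(\pi_1(\mathrm{fib}))$ of the fundamental group of the fiber $\mathrm{fib} = S \setminus \{c_1,\dots,c_{n-1}\}$ under the map $q$ induced by inclusion of the fiber. As used for the second bullet, this fiber group is generated by the global loops of $\pi_1(S-\D)$ (whose images are the values of $\psi_n$) together with the $n-1$ meridians around the removed points (whose images are the classical generators $A_{1n},\dots,A_{n-1,n} \in \PB_n$). The restriction $\pi_S^{(n)}|_{\ker p_*}$ precomposed with $q$ is exactly the inclusion-induced homomorphism $j \colon \pi_1(\mathrm{fib}) \to \pi_1(S)$ that fills in the punctures, landing in the $n$-th wreath factor (the other factors being trivial on the constant strands). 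Van Kampen identifies $\ker j$ with the normal closure of the meridians, and since $q$ is surjective the identity $q(\ker j) = \ker(\pi_S^{(n)}|_{\ker p_*})$ holds; thus $\ker(\pi_S^{(n)}|_{\ker p_*})$ is the normal closure in $\ker p_*$ of the $A_{in}$, in particular contained in $\langle\!\langle\PB_n\rangle\!\rangle$. As $\ker p_* \cap \PB_n^\circ(S) = \ker(\pi_S^{(n)}|_{\ker p_*})$, the induction closes.

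The main obstacle is the careful bookkeeping in this last step: pinning down $\ker p_*$ with a generating set split into ``global'' and ``meridian'' loops, and verifying that the meridians map precisely to the $A_{in} \in \PB_n$. A reassuring feature is that the possible failure of injectivity of $q$ (arising from $\pi_2(F_{n-1}(S))$ when $S$ is the sphere or the projective plane) causes no difficulty: working throughout with the image $\ker p_* = q(\pi_1(\mathrm{fib}))$ and invoking only surjectivity of $q$, via $q(\ker j) = \ker(\pi_S^{(n)}|_{\ker p_*})$, sidesteps it entirely. Finally, for the parenthetical claim one has $\langle\!\langle\PB_n\rangle\!\rangle_{\PB_n(S)} \subseteq \langle\!\langle\PB_n\rangle\!\rangle_{\B_n(S)} \subseteq \PB_n^\circ(S)$, the last inclusion because $\PB_n^\circ(S)$ is normal in $\B_n(S)$ and contains $\PB_n$; together with the equality just proved, all three subgroups coincide.
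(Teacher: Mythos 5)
Your argument for the third bullet is correct and is essentially the proof in the paper: the same induction on $n$ along the Fadell--Neuwirth fibration that forgets one point, the same Seifert--van Kampen identification of the kernel of the fill-in-the-punctures map $j$ with the normal closure of the meridians (whose images are the classical generators $A_{in} \in \PB_n$), and the same observation that only surjectivity of $q$ onto $\ker p_*$ is needed, so the possible non-injectivity coming from $\pi_2(F_{n-1}(S))$ for $S = \S^2$ or $\Proj$ is harmless. The only difference is organisational: you chase a single element $x$, correcting it by some $y$ in the normal closure so as to land in $\ker p_* \cap \PB_n^\circ(S)$, whereas the paper works with the subgroup $N = \llangle \PB_{n+1} \rrangle$ directly and shows $N = \PB_{n+1}^\circ(S)$ by checking that $N$ surjects onto $\PB_n^\circ(S)$ and contains $\ker p_\#$. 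These are the same argument.

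The one genuine shortfall is coverage: the proposition has three bullets and you prove only the third. Your step ``this fiber group is generated by the global loops of $\pi_1(S-\D)$ together with the $n-1$ meridians'' is exactly the input for the second bullet (it gives $\pi_1(S - Q_{n-1}) \cong \F_{n-1} *_{\Z} \pi_1(S-\D)$, whence the subgroup generated by $\PB_n$ and the images of the $\psi_i$ contains $\ker p_* = \ima(q)$ and, by induction, surjects onto $\PB_{n-1}(S)$, so it is everything), and the first bullet then follows because the $\psi_i$ are conjugate to one another by elements of the image of $\varphi$. Since you are already invoking this description of the fiber group, you should spell out these two short deductions to make the proof of the full statement complete; nothing in your argument would need to change.
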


\begin{proof}
Let us first remark that the $\psi_i$ are conjugate to each other by elements of the image of $\varphi$. Hence, \ul{the second statement implies the first one}.

We prove both the second and the third statement by induction on $n$. Both proofs use the tools that we introduce now. Consider the following commutative diagram of spaces:
\[\begin{tikzcd}
S-Q_n \ar[r, hook, "\iota"] \ar[d, hook] 
&F_{n+1}(S) \ar[r, two heads, "p"] \ar[d, hook] 
&F_n(S) \ar[d, hook]  \\
S \ar[r, hook, "\iota"]
&S^{n+1} \ar[r, two heads, "p"]
&S^n,
\end{tikzcd}\]
where $Q_n = \{c_1, \ldots , c_n\}$, $\iota$ sends $x$ to $(c_1, \ldots , c_n, x)$ and $p$ send $(x_1, \ldots , x_{n+1})$ to $(x_1, \ldots , x_n)$. It induces a commutative diagram of morphisms between fundamental groups:
\begin{equation}\label{diag_of_pi_1}
\begin{tikzcd}
\pi_1(S-Q_n) \ar[r, "\iota_*"] \ar[d, two heads, "u"] 
&\PB_{n+1}(S) \ar[r, two heads, "p_*"] \ar[d, two heads, "v"] 
&\PB_n(S) \ar[d, two heads, "w"]  \\
\pi_1(S) \ar[r, hook]
&\pi_1(S)^{n+1} \ar[r, two heads]
&\pi_1(S)^n,
\end{tikzcd}
\end{equation}
whose bottom line is obviously exact. The map $p$ is a (locally trivial) fibration by \cite[Th.~3]{Fadell-Neuwirth} (see also~\cite[Th.~1.2]{Birman}), and the first line is part of its exact sequence in homotopy. Thus it is exact (but $\iota_*$ need not be injective in general). 

\ul{Let us prove our second statement.} For $n=1$, $\psi_1$ is the canonical isomorphism $\pi_1(S) \cong \PB_1(S) = \B_1(S)$, and there is nothing to prove. 

Let us now suppose that the conclusion holds for some $n \geq 1$. By applying the Seifert-van Kampen theorem, we see that $\pi_1(S-Q_n) \cong \F_n*_{\Z} \pi_1(S - \D)$, where $\F_n= \pi_1(\D - Q_n)$ is free on $n$ generators. Then $\iota_* \colon \F_n*_{\Z} \pi_1(S - \D) \rightarrow \PB_n(S)$ identifies with the map induced by $\F_n\hookrightarrow \PB_n$ (the kernel of $\PB_n \twoheadrightarrow \PB_{n-1}$) and $\psi_{n+1} \colon \pi_1(S - \D) \rightarrow \PB_n(S)$. 

Now let $G$ be the subgroup of $\PB_{n+1}(S)$ generated by $\PB_{n+1}$ and the images of the $\psi_i$ for $i \leq n+1$. It contains the image of $\iota_*$, which is the kernel of $p_*$. Moreover, its image by $p_*$ contains the images of $\psi_1, \ldots , \psi_n$, and $\PB_n$, hence all of $\PB_n(S)$, by the induction hypothesis. As a consequence, $G = \PB_{n+1}(S)$, which was the desired conclusion.

\ul{Let us prove our third statement.} For $n=1$, $\pi_S$ is the canonical isomorphism $\B_1(S) = \PB_1(S) \cong \pi_1(S)$ (inverse to $\psi_1$). Then $\PB_n^\circ(S)$ and $\PB_n$ are trivial, and there is nothing to prove.

Let us now suppose that the conclusion holds for some $n \geq 1$. Consider the induced maps between kernels of the vertical morphisms in \eqref{diag_of_pi_1}. By definition, the kernels of $v$ and $w$ are respectively $\PB_{n+1}^\circ(S)$ and $\PB_n^\circ(S)$. Let us denote by $K$ the kernel of $u$. We get induced maps:
\[\begin{tikzcd}
K \ar[r, "\iota_\#"] 
&\PB_{n+1}^\circ(S) \ar[r, two heads, "p_\#"] 
&\PB_n^\circ(S)
\end{tikzcd}\] 
such that $p_\# \circ \iota_\# = 1$. An easy chase in the diagram (or an application of the Snake Lemma) shows that we can lift any element in the kernel of $p_\#$ to an element of $K$: the above sequence is exact. 

The morphism $u$ identifies with the projection $\F_n*_{\Z} \pi_1(S - \D) \twoheadrightarrow \{1\} *_{\Z} \pi_1(S - \D) \cong \pi_1(S)$ killing the first factor, hence $K$ identifies with the normal closure of $\F_n$ in $\F_n *_{\Z} \pi_1(S)$. Moreover, $\iota_*$ sends $\F_n = \pi_1(\D - Q_n)$ to a subgroup of $\PB_{n+1}$, so the image of $K$ in $\PB_{n+1}(S)$ is contained in the normal closure of $\PB_{n+1}$. 

Now, let $N$ be the normal closure of $\PB_{n+1}$ in $\PB_{n+1}(S)$. Since $v(\PB_{n+1}) = \{1\}$, we have $v(N) = \{1\}$, which means that $N \subseteq \PB_{n+1}^\circ(S)$. By the induction hypothesis, $p_\#(N) = \PB_n^\circ(S)$. Moreover, $N$ contains the image of $i_\#$, which is the kernel of $p_\#$. Thus $N = \PB_{n+1}^\circ(S)$, which was the desired conclusion.
\end{proof}

\begin{remark}
For a closed, oriented surface, the group $K$ appearing in the proof is exactly the group $F_{n+1}$ from~\cite[page~227]{Gonzalez-Meneses-Paris2004}, where they give a precise description of it in this particular case. However, in their paper, they were using the very result that we are recovering and generalising here, quoting \cite{Birman} for it~\cite[page~225]{Gonzalez-Meneses-Paris2004}. 
\end{remark}

The proof of Proposition~\ref{generators_and_N(Pn)} also works for manifolds in higher dimension, allowing us to recover the classical \cite[Th.~1]{Birman1969}:
\begin{proposition}\label{Bn(M)}
For any manifold $M$ of dimension at least $3$, the morphism $\pi_M \colon \B_n(M) \twoheadrightarrow \pi_1(M) \wr \Sym_n$ is an isomorphism.
\end{proposition}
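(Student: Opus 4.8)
The plan is to adapt almost verbatim the inductive argument proving Proposition~\ref{generators_and_N(Pn)}, the only input that genuinely changes being the computation of $\pi_1(M - Q_n)$. First I would observe that surjectivity of $\pi_M$ is obtained exactly as for $\pi_S$: the image contains $\Sym_n$ (the image of the disc braids under $\varphi$, whose $\pi_1(M)$-components are trivial) together with the $n$ factors $\pi_1(M)$ (the images of the maps $\psi_i$), and these generate $\pi_1(M) \wr \Sym_n$. Since $\pi_M$ is compatible with the canonical projections onto $\Sym_n$, whose kernels are $\PB_n(M)$ and $\pi_1(M)^n$ respectively, it remains to show that $\pi_M$ restricts to an \emph{isomorphism} $\PB_n(M) \to \pi_1(M)^n$. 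Writing $\PB_n^\circ(M) := \ker(\pi_M) \subseteq \PB_n(M)$ for the geometrically pure braids, this amounts to proving $\PB_n^\circ(M) = \{1\}$.

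Second, I would run the same induction on $n$ as in the proof of Proposition~\ref{generators_and_N(Pn)}. The Fadell--Neuwirth projection $p \colon F_{n+1}(M) \twoheadrightarrow F_n(M)$ with fibre $M - Q_n$ is still a locally trivial fibration in every dimension by \cite[Th.~3]{Fadell-Neuwirth}, so the diagram analogous to~\eqref{diag_of_pi_1} is available and yields, upon passing to the kernels of its vertical maps, an exact sequence
\[\begin{tikzcd} K \ar[r, "\iota_\#"] & \PB_{n+1}^\circ(M) \ar[r, two heads, "p_\#"] & \PB_n^\circ(M), \end{tikzcd}\]
where $K = \ker(u)$ and $u \colon \pi_1(M - Q_n) \to \pi_1(M)$ is induced by the inclusion. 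Granting the induction hypothesis $\PB_n^\circ(M) = \{1\}$, the map $p_\#$ is trivial, so $\PB_{n+1}^\circ(M) = \ker(p_\#) = \operatorname{im}(\iota_\#)$ is a quotient of $K$; hence it suffices to show $K = \{1\}$, i.e.\ that $u$ is injective.

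The main (and only genuinely new) point — which is where I expect the real content to lie — is the computation of $\pi_1(M - Q_n)$. In the surface case one had $\pi_1(S - Q_n) \cong \F_n *_{\Z} \pi_1(S - \D)$, and $u$ killed the free factor $\F_n$, so $K$ was large; this is precisely what makes surface braid groups interesting. In dimension $d \geq 3$, by contrast, a transversality / general-position argument shows that removing the finite set $Q_n$ does not change the fundamental group: any loop in $M$ may be homotoped off the (codimension $\geq 3$) set $Q_n$, and any homotopy of such loops may likewise be pushed off $Q_n$. Thus $u$ is an isomorphism, $K = \{1\}$, and the exact sequence forces $\PB_{n+1}^\circ(M) = \{1\}$. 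The base case $n = 1$ is immediate, as $\pi_M$ is then the canonical isomorphism $\B_1(M) \cong \pi_1(M)$. This completes the induction and hence the proof; equivalently, the underlying fact is the classical vanishing $\PB_n(\D^d) = \pi_1(F_n(\R^d)) = \{1\}$ for $d \geq 3$, which is the higher-dimensional replacement for the third statement of Proposition~\ref{generators_and_N(Pn)}.
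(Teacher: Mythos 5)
Your proof is correct and follows essentially the same route as the paper, which simply observes that the proof of Proposition~\ref{generators_and_N(Pn)} goes through verbatim with $M$ in place of $S$ and $\D^d$ in place of $\D$, the key input being that the relevant fundamental groups vanish in codimension $d \geq 3$. The only cosmetic difference is that the paper packages this as ``$F_n(\R^d)$ is simply connected, hence $\PB_n(\D^d)$ and therefore its normal closure $\PB_n^\circ(M) = \ker(\pi_M)$ are trivial,'' whereas you re-run the induction and locate the same general-position fact in the injectivity of $\pi_1(M - Q_n) \to \pi_1(M)$; both are the same argument.
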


\begin{proof}
One can directly check that the proof of Proposition~\ref{generators_and_N(Pn)} works if we replace the surface $S$ with a connected manifold $M$ of any dimension $d \geq 2$ and the disc $\D$ with a $d$-disc $\D^d$. Then $\PB_n$ gets replaced with $\PB_n(\D^d)$, which is trivial whenever $d \geq 3$ (the configuration space $F_n(\D^d) \cong F_n(\R^d)$ is obtained from $\R^{nd}$ by removing subspaces of codimension $d \geq 3$, so it is simply connected). Thus, the normal closure $\PB_n^\circ(M)$ of $\PB_n(\D^d)$ is trivial too, and the latter is exactly the kernel of $\pi_M$. 
\end{proof}

\begin{remark}
This means that \emph{braid groups on manifolds of dimension at least $3$ are exactly wreath products}, whose LCS is studied in Appendix~\ref{appendix:wreath-products}; see in particular Corollaries~\ref{LCS_wr_sym_stable_partitioned} and~\ref{LCS_G_wr_S2}.
\end{remark}

\subsection{The Fadell-Neuwirth exact sequences}\label{subsec_Fadell-Neuwirth}

The locally trivial fibrations used in the proof of Proposition~\ref{generators_and_N(Pn)} are particular instances of the Fadell-Neuwirth fibrations. These induce exact sequences between pure braid groups, that are in fact exact sequences between partitioned braid groups. We now recall how these work, and when these exact sequences split.

\begin{definition}\label{def_partitioned_surface_braids}
Let $S$ be a surface, let $n \geq 1$ be an integer, and let $\lambda = (n_1, \ldots , n_l)$ be a partition of $n$. The corresponding \emph{partitioned surface braid group} is:
\[\B_\lambda(S) := \pi^{-1}(\Sym_\lambda) = \pi^{-1}\left(\Sym_{n_1} \times \cdots \times \Sym_{n_l}\right)\ \subseteq \B_n(S).\]
\end{definition}

There are canonical surjections between partitioned braid groups, obtained by forgetting blocks. For most surfaces, these projections behave exactly as they do for the disc. However, their behaviour for closed surfaces is somewhat trickier, especially when it comes to the sphere and the projective plane. The latter are in fact the only ones whose braid groups admit non-trivial torsion elements, a fact that can be seen as a consequence of their second homotopy group being non-trivial.
\begin{proposition}[Fadell-Neuwirth exact sequences]\label{Fadell-Neuwirth}
Let $S$ be a connected surface, let $\mu$ be a partition of an integer $m \geq 1$, $\nu$ be a partition of an integer $n \geq 1$, and let us denote by $\mu \nu$ their concatenation, which is a partition of $m+n$. The following sequence of canonical maps is exact:
\[
\B_\mu(S - \{n\ \text{pts}\}) \longrightarrow \B_{\mu \nu}(S) \longrightarrow \B_\nu(S) \longrightarrow 1.
\]
Moreover, except when $S = \S^2$ and $n = 1,2$ or $S = \Proj$ and $n = 1$, this is in fact a short exact sequence:
\[
1 \longrightarrow \B_\mu(S - \{n\ \text{pts}\}) \longrightarrow \B_{\mu \nu}(S) \longrightarrow \B_\nu(S) \longrightarrow 1.
\]
Furthermore, if $S$ is not closed, the surjection $\B_{\mu\nu}(S) \twoheadrightarrow \B_\nu(S)$ splits.
\end{proposition}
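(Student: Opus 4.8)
The statement to prove is Proposition~\ref{Fadell-Neuwirth}, the Fadell--Neuwirth exact sequences for partitioned surface braid groups, including the final clause about splitting. Let me sketch how I would prove the whole proposition, with particular care for the final statement.

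\textbf{The plan.} The starting point is the classical Fadell--Neuwirth fibration theorem \cite[Th.~3]{Fadell-Neuwirth}, which says that the projection $p \colon F_{m+n}(S) \twoheadrightarrow F_n(S)$ forgetting the last $m$ points is a locally trivial fibration with fibre $F_m(S - \{n \text{ pts}\})$. The plan is to descend this to the level of partitioned configuration spaces. For a partition $\mu\nu$ of $m+n$, the group $\Sym_{\mu\nu} = \Sym_\mu \times \Sym_\nu$ acts on $F_{m+n}(S)$, and the map $p$ is equivariant for the projection $\Sym_{\mu\nu} \twoheadrightarrow \Sym_\nu$ (the blocks of $\mu$ are forgotten). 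Passing to quotients, I would obtain a map $C_{\mu\nu}(S) := F_{m+n}(S)/\Sym_{\mu\nu} \to F_n(S)/\Sym_\nu =: C_\nu(S)$ which is again a locally trivial fibration (the quotient by the free $\Sym_\nu$-action upstairs and downstairs preserves local triviality), with fibre $F_m(S - \{n\text{ pts}\})/\Sym_\mu = C_\mu(S - \{n\text{ pts}\})$. Since the total spaces have $\pi_1$ equal to the partitioned braid groups by definition, the long exact sequence in homotopy then gives the desired sequence of braid groups.

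\textbf{Exactness and the short exact sequence.} The long exact homotopy sequence of this fibration reads
\[
\cdots \to \pi_2(C_\nu(S)) \to \pi_1(C_\mu(S - \{n\text{ pts}\})) \to \pi_1(C_{\mu\nu}(S)) \to \pi_1(C_\nu(S)) \to \pi_0(\text{fibre}).
\]
The fibre is path-connected and the base too, so the right-hand map is surjective, giving exactness of the three-term sequence in all cases. To upgrade to a genuine short exact sequence, I need the connecting map $\pi_2(C_\nu(S)) \to \pi_1(\text{fibre})$ to be trivial, equivalently $\pi_2$ of the base to vanish. By Corollary~\ref{pi_2(S)}, $\pi_{>1}(S)$ is trivial unless $S$ is the sphere or the projective plane, and $\pi_2$ of a configuration space of such a surface can fail to vanish precisely in low strand numbers; this is exactly the origin of the stated exceptions ($S = \S^2$ with $n = 1,2$, and $S = \Proj$ with $n=1$). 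I would verify the exceptions by recalling that $F_n(\S^2)$ and $F_n(\Proj)$ have non-trivial higher homotopy in those ranges (coming from $\pi_2(\S^2) = \Z$ and $\pi_2(\Proj) = \Z$), and that outside these ranges the relevant configuration spaces are aspherical, so injectivity of the fibre inclusion on $\pi_1$ holds.

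\textbf{The splitting.} For the final statement, suppose $S$ is not closed. The idea is purely geometric: since $S$ has an end (it is non-compact, or equivalently deformation-retracts onto a proper subsurface), one can push all the blocks being forgotten ``off to infinity'' or into a collar, producing a section at the level of configuration spaces. Concretely, I would mimic the splitting construction already used in the proof of Lemma~\ref{dec_bipartite_wP}: choose an isotopy equivalence $g \colon S \to D$ onto a proper subsurface $D \subsetneq S$ together with a fixed configuration $c_0$ of $m$ points lying in the blocks of $\mu$ and supported in $S - D$, and define the section $s \colon C_\nu(S) \to C_{\mu\nu}(S)$ by sending a configuration $c$ to $g(c) \sqcup c_0$. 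Because $g$ is isotopic to the identity inclusion, $p \circ s$ is homotopic to the identity, hence induces a section on $\pi_1$, i.e.\ the surjection $\B_{\mu\nu}(S) \twoheadrightarrow \B_\nu(S)$ splits. The main obstacle in this step is ensuring the construction is compatible with the $\Sym_\mu \times \Sym_\nu$-quotients, so that the resulting section lands in the partitioned (rather than merely the pure) configuration space; this is handled by taking $c_0$ to be an \emph{unordered} configuration adapted to the block structure of $\mu$, and by choosing $g$ to commute with the relevant symmetric group actions, which is automatic since $g$ acts diagonally. The non-closedness is essential here: for a closed surface there is no room to park the forgotten points away from the others in a way that is natural in $c$, which is why the splitting clause is restricted to non-closed $S$.
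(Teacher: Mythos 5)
Your proposal is correct and follows essentially the same route as the paper: the (partitioned) Fadell--Neuwirth fibration $C_{\mu\nu}(S) \to C_\nu(S)$ with fibre $C_\mu(S - \{n\ \text{pts}\})$, its long exact homotopy sequence, the identification $\pi_2(C_\nu(S)) \cong \pi_2(F_n(S))$ (trivial outside the listed exceptions), and a section built by pushing configurations into a proper isotopy-equivalent subsurface and parking the $m$ forgotten points in the complement. The only cosmetic slip is the reference to $\pi_{>1}(S)$ where you mean the asphericity of the configuration spaces themselves (via $\pi_2(F_n(S))$), but the argument is the intended one.
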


\begin{remark}
Recall that base configurations, hence also the $n$ points removed from $S$, must not be on the boundary of $S$, if $S$ has a non-trivial boundary.
\end{remark}

\begin{proof}[Proof of Proposition~\ref{Fadell-Neuwirth}]
Let us first recall that in considering braid groups, we consider surfaces up to isotopy equivalence, so we can remove the boundary of $S$ if it is non-trivial, and assume that $\partial S = \varnothing$. Recall that if $\lambda$ is a partition of $N$, we denote by $C_\lambda(S)$ the configuration space $F_N(S)/\Sym_\lambda$. Forgetting the first $n$ points induces a map of configuration spaces $C_{\mu \nu}(S) \to C_\nu(S)$, which is a locally trivial fibration with fibres homeomorphic to $C_\mu(S - \{n\ \text{pts}\})$, by a slight adaptation of \cite[Th.~3]{Fadell-Neuwirth} (or~\cite[Th.~1.2]{Birman}), which works for any manifold (without boundary). Since its fibres are path-connected, part of its long exact sequence of homotopy groups is:
\[\pi_2(C_\nu(S)) \rightarrow \B_\mu(S - \{n\ pts\}) \longrightarrow \B_{\mu \nu}(S) \longrightarrow \B_\nu(S) \longrightarrow 1.\]
The map $F_n(S) \twoheadrightarrow C_\nu(S)$ is a covering, so that $\pi_2(C_\nu(S)) \cong \pi_2(F_n(S))$, which is trivial except when $S = \S^2$ and $n = 1,2$ or $S = \Proj$ and $n = 1$. When $S$ is not the sphere or the projective plane, this follows from~\cite[Prop.~1.3]{Birman}, using the fact that higher homotopy groups of surfaces different from $\S^2$ and $\Proj$ are trivial (see Corollary~\ref{pi_2(S)}). When $S=\S^2$ this is \cite[Cor.~p.~244]{Fadell-vanBuskirk1962} and when $S = \Proj$ it is \cite[Cor.~p.~82]{vanBuskirk1966}.

If $S$ is not closed, then there is an isotopy equivalence between $S$ and a proper subsurface $S'$ of $S$. Then one can choose a configuration of $m$ points in $S - S'$ and add them to each configuration of $n$ points of $S'$, getting a map $F_n(S') \rightarrow F_{m+n}(S)$. The induced map $\pi_1(C_\nu(S)) \cong \pi_1(C_\nu(S')) \rightarrow \pi_1(C_{\mu\nu}(S))$ is the required section.
\end{proof}

\begin{remark}
A weaker form of the asphericity statement for $\pi_2(F_n(S))$ used in the above proof may be found~\cite[Cor.~2.2]{Fadell-Neuwirth}. They ask that the surface be compact, but this is only in order to be able to use the classification of compact surfaces, which we easily replaced by the classification of all surfaces in our proof.
\end{remark}

\begin{remark}
When $S$ is not closed, the construction of the splitting in the proof of Proposition~\ref{Fadell-Neuwirth} can also be used to get a morphism $\iota$ from $\B_n(S)$ to $\B_{n+1}(S)$, corresponding to adding an $(n+1)$-st strand near a boundary or an end of $S$. The restriction of this morphism to pure braid groups is a split injection (it is the section in the particular case $\mu = (1)$ and $\nu = (1, \ldots , 1)$ in the above proof). Since it also induces an injection from $\B_n(S)/\PB_n(S) \cong \Sym_n$ into $\B_{n+1}(S)/\PB_{n+1}(S) \cong \Sym_{n+1}$, the morphism $\iota$ itself is injective (compare for instance with the proof of Corollary~\ref{vBn_in_vBn+1}). Notice that, thanks to the connectedness of $S$, $\iota$ depends on the choices made in its construction only up to conjugation. By contrast, if $S$ is closed, there is no obvious construction of such a map, and in fact it does not exist in general~\cite{Guaschidaciberg_Pn(S), GuaschidacibergFNses}.
\end{remark}

\subsection{Pure braid generators and commutators.}\label{subsec_A_as_com}

Some of the results below will hold for all surfaces $S$. However, in order to get more precise results, we need to get more specific and use the classification of surfaces recalled in \Spar\ref{sec:surfaces}. Recall that all the generators $\sigma_i$ of $\B_n$ are identified in $\B_n^{\ab} \cong \Z$ (see Example~\ref{eg:braids}), hence also in $\B_n(S)^{\ab}$. The next proposition deals notably with the order of their common class~$\sigma$. The trichotomy that appears here, which comes from Proposition~\ref{dichotomy}, will play an important role in all that follows.
\begin{proposition}\label{trichotomy}
Let $n \geq 2$. Let us consider the generator $A_{ij}$ of $\PB_n$ as an element of $\B_n(S)$.
\begin{itemize}
\item If $S$ is planar, then the class $\overline A_{ij} \in \B_n(S)^{\ab}$ has infinite order.
\item If $S \cong \S^2$, then the class $\overline A_{ij} \in \B_n(S)^{\ab}$ has order $n-1$. However, its class in $\PB_n(S)^{\ab}$ has infinite order.
\item In all the other cases, $A_{ij}$ is the commutator of two elements of $\PB_n(S)$.
\end{itemize}
\end{proposition}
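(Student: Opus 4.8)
The plan is to split along the trichotomy of Proposition~\ref{dichotomy} and exploit the two homomorphisms already in play: $\varphi\colon\B_n\to\B_n(S)$ and the point-pushing maps $\psi_j\colon\pi_1(S-\D)\to\PB_n(S)$. Throughout I will use that in $\B_n(S)^{\ab}$ the classes of all the $\sigma_i$ coincide (call it $\sigma$), so that $\overline A_{ij}=2\sigma$, since $A_{ij}$ is conjugate in $\B_n$ to $\sigma_k^2$ and $\B_n^{\ab}\cong\Z$ (Example~\ref{eg:braids}). I will also view $A_{ij}\in\B_n(S)$ as $\psi_j(\delta)$, where $\delta$ is a small loop encircling only the basepoint $c_i$; this is the geometric meaning of the pure braid generator.

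\emph{Planar case.} If $S$ embeds in the plane, hence in the interior of a disc $\D'$, then the chain of embeddings $\D\subset S\subset\D'$ realizes the composite $\B_n=\B_n(\D)\xrightarrow{\varphi}\B_n(S)\to\B_n(\D')=\B_n$ as the canonical isomorphism. Abelianizing, $\overline A_{ij}=2\sigma$ is sent to $2\in\B_n^{\ab}\cong\Z$, which has infinite order; hence so does $\overline A_{ij}$.

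\emph{Sphere.} Here I would use the standard presentation of $\B_n(\S^2)$ as the quotient of $\B_n$ by the single relation $\sigma_1\cdots\sigma_{n-1}\sigma_{n-1}\cdots\sigma_1=1$. Abelianizing collapses this to $2(n-1)\sigma=0$, so $\B_n(\S^2)^{\ab}\cong\Z/2(n-1)$ and $\overline A_{ij}=2\sigma$ has order exactly $n-1$. For the pure group I would compute $\PB_n(\S^2)^{\ab}$ from the presentation of $\PB_n(\S^2)$: it is the free abelian group on the $\overline A_{ij}$ ($i<j$) modulo the relations $\sum_{j\neq i}\overline A_{ij}=0$, one for each $i$. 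Exhibiting a homomorphism to $\Z$ that is nonzero on $\overline A_{ij}$ then shows it has infinite order; concretely, a symmetric array $(f_{kl})$ with vanishing row sums and $f_{ij}\neq0$ defines such a functional, which is easily produced.

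\emph{Handle or crosscap.} By Proposition~\ref{dichotomy}, the remaining surfaces contain either an embedded once-punctured torus (handle) or an embedded Möbius strip (crosscap). In the handle case the plan is to isotope/tube the handle so that its boundary circle is a small loop $\delta$ encircling $c_i$ and bounding an embedded once-punctured torus $T$ containing no other basepoint; since $S$ is connected and the basepoints are finite, the connecting tube can be routed to avoid them. With $\pi_1(T)=\langle a,b\rangle$ and $\partial T=[a,b]$, pushing strand $j$ along these curves gives $A_{ij}=\psi_j(\delta)=\psi_j([a,b])=[\psi_j(a),\psi_j(b)]$, a commutator of two elements of $\PB_n(S)$. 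The hard part will be the crosscap case: the same construction produces an embedded Möbius strip $M$ with $\partial M=\delta$ and core $m$, but now $\pi_1(M)=\langle m\rangle$ is abelian and $\partial M=m^2$, so one only gets $A_{ij}=\psi_j(m)^2$, a \emph{square} rather than a commutator. The boundary of any non-orientable subsurface is a product of squares, so the clean handle argument genuinely fails here. To convert the square into a commutator I would exhibit $d\in\PB_n(S)$ conjugating the crosscap slide $c:=\psi_j(m)$ to its inverse, $dcd^{-1}=c^{-1}$ (using that $m$ is one-sided, so sliding strand $j$ around an orientation-reversing loop reverses the sense of $c$); then $[c,d]=c^{2}=A_{ij}$. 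Producing this $d$ and verifying $dcd^{-1}=c^{-1}$ by an explicit motion or automorphism computation (as illustrated in Figure~\ref{Aij_as_bracket_crosscap}) is the crux of the argument, and the quaternionic structure $\PB_{1,1}(\Proj)\cong Q_8$, in which $A_{12}$ is the central commutator $[i,j]=-1$, is a reassuring low-complexity check.
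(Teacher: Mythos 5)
Your planar and sphere arguments are fine and essentially the paper's (embed $S$ in a disc; read $\B_n(\S^2)^{\ab}\cong\Z/2(n-1)$ off the presentation). The gap is in the third case, which is the substance of the proposition. You try to realise $A_{ij}=\psi_j(\delta)$, where $\delta$ is a small loop around $c_i$, and then write $\delta$ as the boundary of an embedded once-punctured torus (so $\delta=[a,b]$) or M{\"o}bius strip (so $\delta=m^2$) in the complement of the basepoints. No amount of tubing or isotopy can achieve this: $\delta$ bounds a disc containing exactly the one puncture $c_i$, so its class in $H_1(S-Q_j)$ (where $Q_j$ is the set of basepoints other than $c_j$) is a puncture class, which is nonzero whenever $S$ is non-closed or $n\geq 3$, whereas the boundary of an embedded subsurface disjoint from the punctures is nullhomologous there. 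More concretely, for non-closed $S$ the Fadell--Neuwirth sequence (Proposition~\ref{Fadell-Neuwirth}) identifies the subgroup of braids moving only strand $j$ with the \emph{free} group $\pi_1(S-Q_j)$, in which $\delta$ is a basis element; so $A_{ij}$ is neither a commutator of two such braids nor the square of one. Your own test case exposes this: in $\PB_2(\Moeb)=\langle A,\gamma_2\rangle\rtimes\langle\gamma_1\rangle$ (Lemma~\ref{P2(M)}), the subgroup $\langle A,\gamma_2\rangle$ is free, so $A=A_{12}\neq\gamma_2^2$; consequently no $d$ with $d\gamma_2 d^{-1}=\gamma_2^{-1}$ can help, since any such $d$ gives $[\gamma_2,d]=\gamma_2^{2}\neq A_{12}$. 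The quaternion check is misleading because $S=\Proj$ is precisely the one surface where $\delta$ really is a square ($\Proj-\mathrm{pt}\cong\Moeb$), and similarly the handle version of your argument only works for $S=\Tor$ with $n=2$.

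The missing idea --- which is what Figures~\ref{Aij_as_bracket_handle} and~\ref{Aij_as_bracket_crosscap} encode --- is that one must move \emph{both} strands $i$ and $j$. If $\alpha$ and $\beta$ are the two curves of a handle, or two parallel copies of the core of a M{\"o}bius strip (a one-sided curve meets its own push-off in one point), then the commutator of the push of $c_j$ along $\beta$ with the inverse of the push of $c_i$ along $\alpha$ equals $A_{ij}$: this is the relation $[\sigma_1 b_r\sigma_1^{-1},a_r^{-1}]=\sigma_1^2$, resp.\ $[\sigma_1 c_r\sigma_1^{-1},c_r^{-1}]=\sigma_1^2$, verified by an explicit isotopy. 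Both factors lie in $\PB_n(S)$, one in $\ima(\psi_j)$ and one in $\ima(\psi_i)$, and the crosscap case is then no harder than the handle case --- no conjugating element $d$ and no parity trick are needed. (Minor further point: your derivation of the infinite order of $\overline{A}_{ij}$ in $\PB_n(\S^2)^{\ab}$ via a symmetric functional with vanishing row sums only produces such a functional for $n\geq 4$; for $n=2,3$ the group $\PB_n(\S^2)$ is finite, so no such functional can exist.)
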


\begin{proof}
\ul{If $S$ is planar:} then $S$ can be embedded in a disc. Such an embedding induces a morphism $\B_n(S) \rightarrow \B_n(\D) = \B_n$, which in turn induces a morphism from $\B_n(S)^{\ab}$ to $\B_n^{\ab}$. The latter is infinite cyclic, generated by $\sigma$. Our element $\overline A_{ij}$ is sent to $\sigma^2$, hence it cannot be of finite order.

\ul{If $S$ is the sphere:} then from the usual presentation of $\B_n(\S^2)$ (see for instance Corollary~\ref{Braids_on_closed_surfaces} below), we get that $\B_n(\S^2)^{\ab} \cong \Z/(2(n-1))$, generated by $\sigma$. Again, $\overline A_{ij} = \sigma^2$, whose order is $n-1$.

\ul{If $S$ cannot be embedded in the sphere:} then $S$ contains a handle or a crosscap; see Proposition~\ref{dichotomy} and the remark following it. We can then use the explicit isotopies drawn in Figures~\ref{Aij_as_bracket_crosscap} and \ref{Aij_as_bracket_handle} to show that $A_{ij}$ is a bracket of two pure braids (which are respectively in the image of $\psi_j$ and in the image of $\psi_i$).
\end{proof}

\begin{figure}[ht]
  \centering
\begin{subfigure}[b]{\textwidth}
\centering
  \includegraphics[scale=0.3]{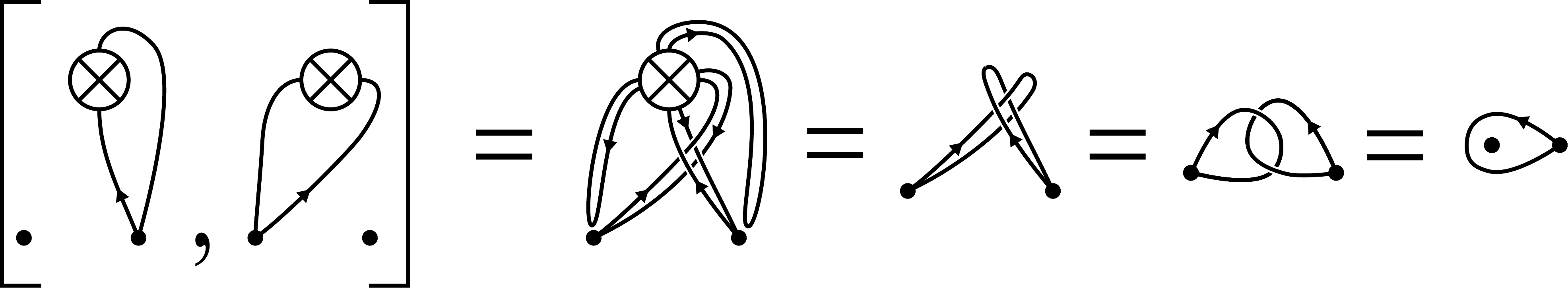}
  \caption{Pure braid generator as a commutator on a surface with a crosscap.}
  \label{Aij_as_bracket_crosscap}
\end{subfigure}
\\[2ex]
\begin{subfigure}[b]{\textwidth}
  \includegraphics[scale=0.27]{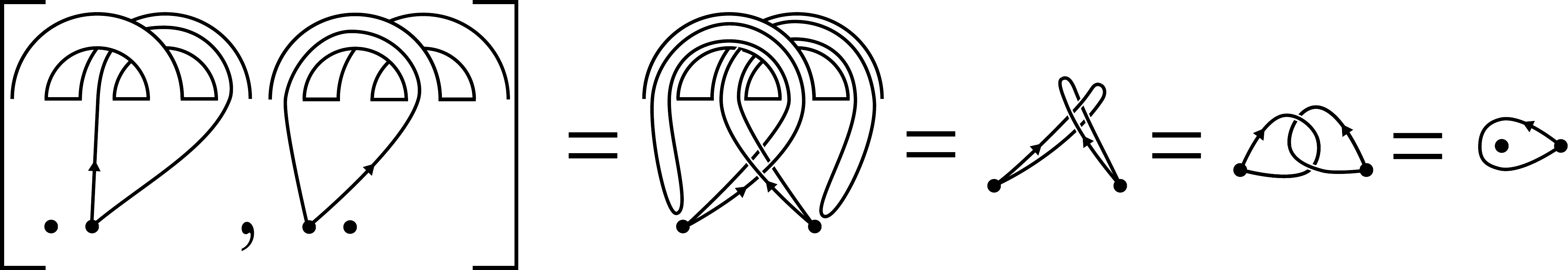}
  \caption{Pure braid generator as a commutator on a surface with a handle.}
  \label{Aij_as_bracket_handle}
\end{subfigure}
\caption{Explicit isotopies expressing a pure braid generator as a commutator. We use them in many different guises throughout the chapter, each time with respect to a different embedding of the crosscap or the handle in our surface, whose image contains exactly two points of the base configuration. For instance, with the notations from Figure~\ref{gen_of_Bn(S)} below, for any choice of $1 \leq i < j \leq n$ and of $1 \leq r \leq g$, the first one gives $A_{ij} = [c_j^{(r)}, (c_i^{(r)})^{-1}]$ in $\B_n(\mathcal N_g)$.}
\label{Aij_as_bracket}
\end{figure}

\section{Presentations of surface braid groups}\label{ss:appendix_braids_surfaces}\label{sec_pstations_B(S)}

In order to prove some of the results below, in particular to determine completely the Lie rings of partitioned braid groups in the stable case, we will need to use presentations of braids groups on compact surfaces. The main tool for determining presentations of surface braid groups (including braids on the disc, which are usual Artin braids) are the Fadell-Neuwirth exact sequences; see Proposition~\ref{Fadell-Neuwirth}. These were already used in the course of the proof of Proposition~\ref{generators_and_N(Pn)} to obtain generators of these groups. Let us now briefly explain how they may be used in order to determine defining relations on these generators. For $S$ a non-closed surface, these exact sequences give a decomposition of $\B_{1,n-1}(S)$ as a semi-direct product of $\B_{n-1}(S)$ with a free group $F$. Then suppose that one has a set of relations satisfied in $\B_n(S)$, defining a group $G_n$ and a well-defined surjection $\pi \colon G_n \twoheadrightarrow \B_n(S)$. One can consider the subgroup $G_{1,n-1} = \pi^{-1}(\B_{1,n-1}(S))$, use the relations to show that it is of index at most $n$ in $G_n$ (which implies that the induced surjection of $G_n/G_{1,n-1}$ onto $\B_n(S)/\B_{1,n-1}(S)$ is a bijection -- but of course not a group morphism, since these quotients do not bear a group structure), and determine a presentation of $G_{1,n-1}$ by using the Reidemeister-Schreier method. Then one shows that $G_{1,n-1}$ decomposes as a semi-direct product of a quotient isomorphic to $G_{n-1}$ with a kernel $K$ generated by a family of elements sent by $\pi$ to a basis of the free group $F$. The latter fact implies that this family must be a free basis of $K$, which means that $\pi \colon K \rightarrow F$ is an isomorphism. By induction, $\pi \colon G_{n-1} \rightarrow \B_{n-1}(S)$ is an isomorphism. Then $\pi \colon G_{1,n-1} \rightarrow \B_{1,n-1}(S)$ must be an isomorphism too. And since the induced surjection of $G_n/G_{1,n-1}$ onto $\B_n(S)/\B_{1,n-1}(S)$ is a bijection, $\pi \colon G_n \rightarrow \B_n(S)$ is an isomorphism (the reader can easily convince themselves that the latter implication works regardless of the existence of a group structure on the quotients).

This method can be used to get presentations of the braid groups of every non-closed surface of finite type; see \cite{Bellingeripresentations} for instance. It can also be adapted to the case of closed surfaces, replacing semi-direct product decompositions by non-split extensions, with some care for the exceptional cases where this is not even an extension. However, we prefer to deduce the case of closed surfaces from the non-closed one: we give here a direct general argument presenting $\B_n(S)$ as the quotient of $\B_n(S - \mathrm{pt})$ by one explicit relation, by applying the Seifert-van Kampen theorem to configuration spaces; see Proposition~\ref{Bn(S)_from_Bn(S-pt)}.

\subsection{Surfaces with one boundary component}

Presentations of braid groups of compact surfaces with one boundary component can be found in \cite[\Spar4]{LambropoulouOldenburg} and in \cite[Th.~1.1 and A.2]{Bellingeripresentations}. We re-write them with our own conventions, which we now explain. 

Let us denote by $\Sigma_{g,1}$ the orientable connected compact surface of genus $g$ with one boundary component, and by $\mathscr N_{g,1}$ the non-orientable connected compact surface of genus $g$ with one boundary component. We draw $\Sigma_{g,1}$ as a rectangle with $2g$ handles attached to it, and $\mathscr N_{g,1}$ as a rectangle with $g$ crosscaps (see Convention~\ref{def_crosscaps}). Our notations for braid generators are detailed in Figure~\ref{gen_of_Bn(S)}. Our drawings of braids are to be thought of as seen from above, and the left-to-right direction in products corresponds to the foreground-to-background direction in our drawings. For instance, with the notations of Figure~\ref{gen_of_Bn(S)}, we have that $\sigma_1 a_k^{(i)} \sigma_1^{-1} =  a_k^{(i+1)}$. As an illustration of these conventions, we draw different representations of the classical pure braid generator $A_{i, i+1}$ in Figure~\ref{pure_braid_generator}.

\begin{figure}[ht]
\centering
\begin{subfigure}[b]{.5\textwidth}
  \centering
  \includegraphics[width=.9\linewidth]{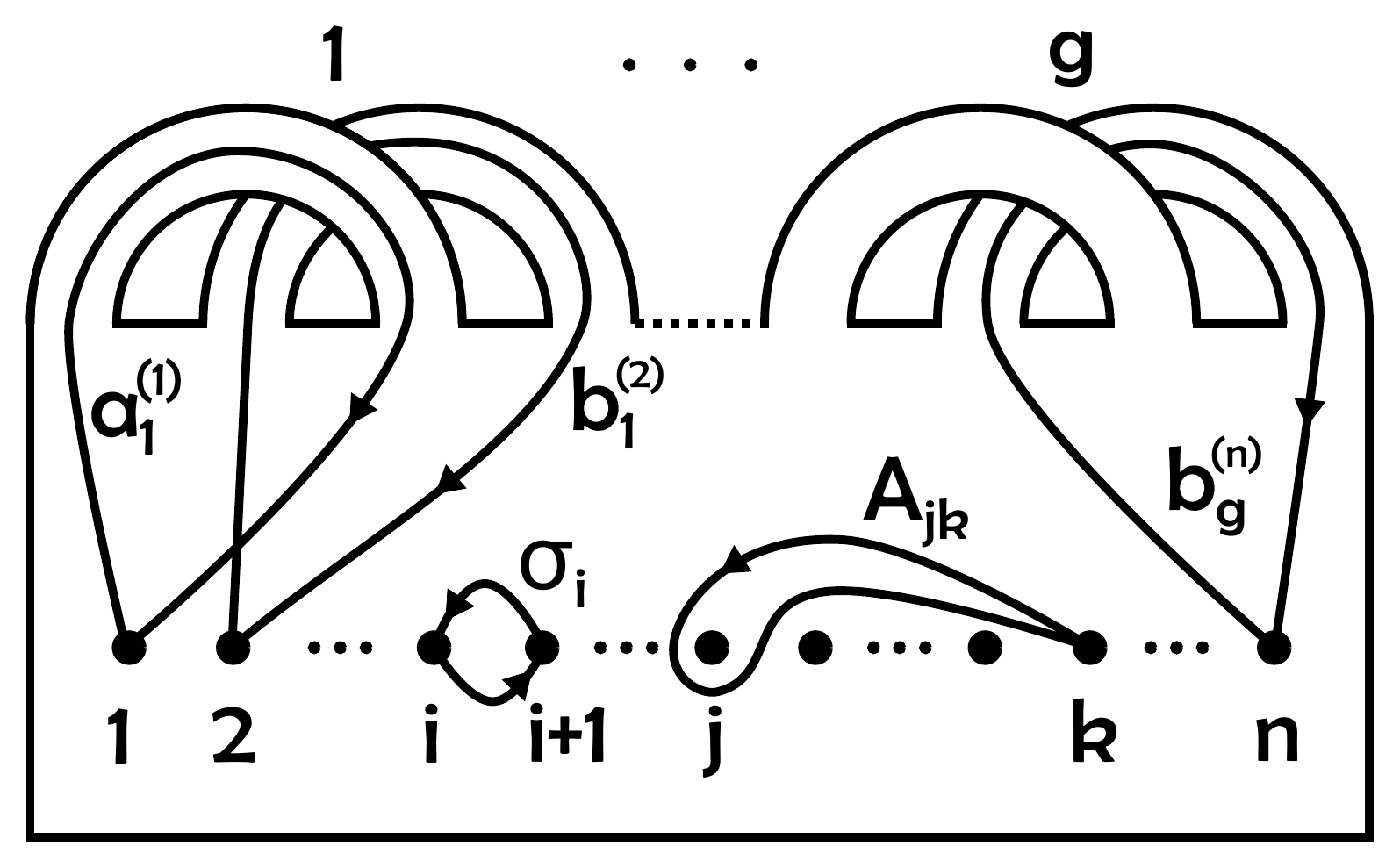}
  \caption{Generators of $\B_n(\Sigma_{g,1})$}
  \label{gen_of_Bn(Sg1)}
\end{subfigure}%
\begin{subfigure}[b]{.5\textwidth}
  \centering
  \includegraphics[width=.9\linewidth]{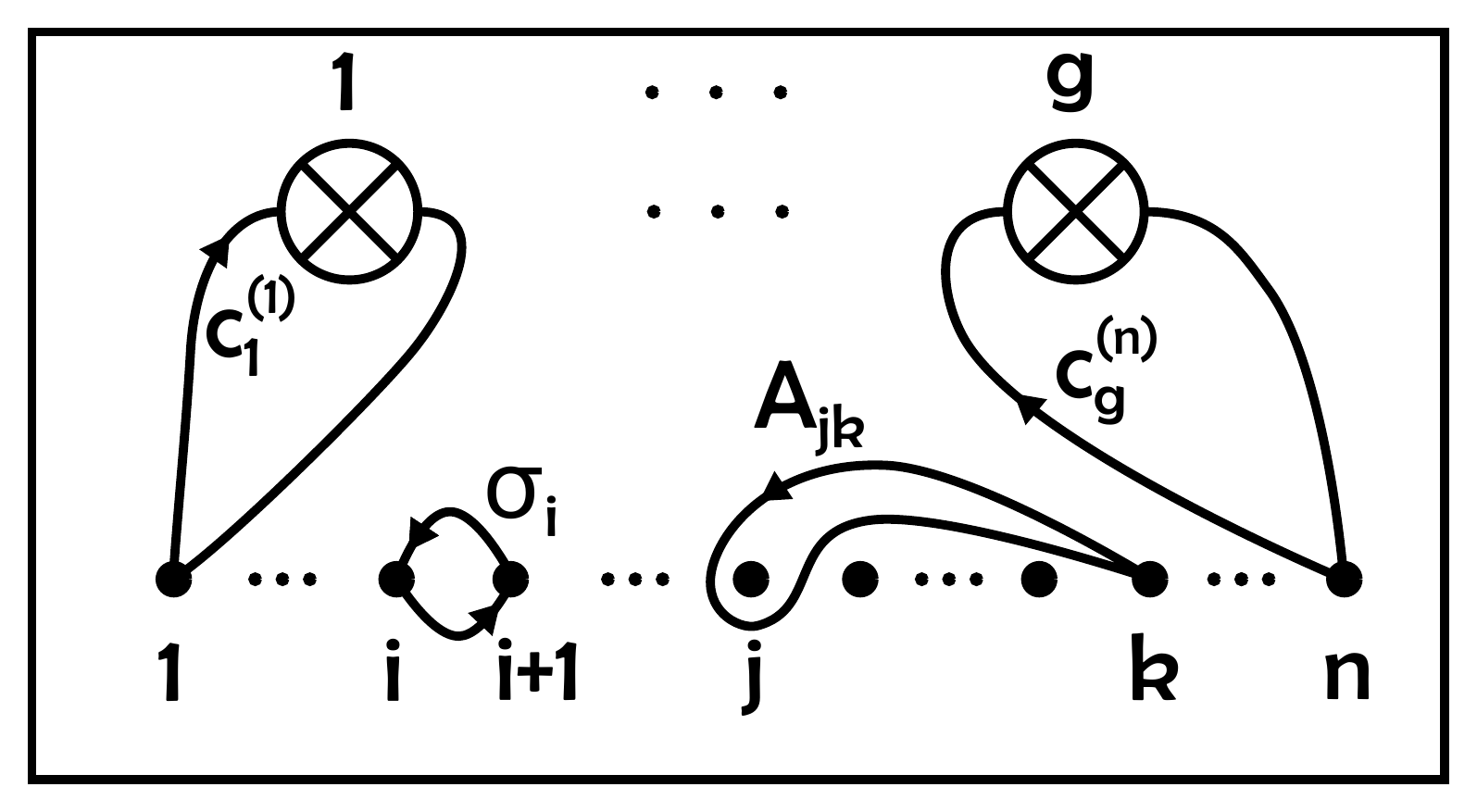}
  \caption{Generators of $\B_n(\mathscr N_{g,1})$}
  \label{gen_of_Bn(Ng1)}
\end{subfigure}
\caption{\textbf{Generators of surface braid groups.} For each generator (except for $\sigma_i$, where the points $i$ and $i+1$ move), only one point of the configuration moves, and the others stay put. We often denote $x_k^{(1)}$ by $x_k$, for $x = a,b,c$.}
\label{gen_of_Bn(S)}
\end{figure}

\begin{figure}[ht]
\centering
\includegraphics[scale=0.5]{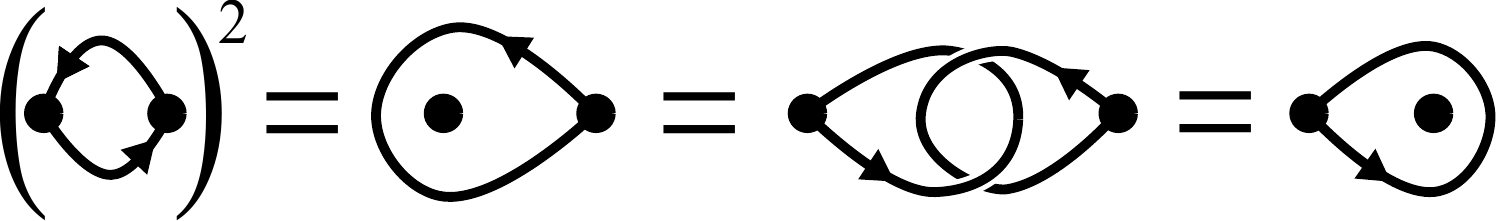}
\caption{The classical pure braid generator $A_{i, i+1} = \sigma_i^2$.}
\label{pure_braid_generator}
\end{figure}

\begin{proposition}[{\cite[Th.~1.1]{Bellingeripresentations}}]\label{Bn(Sg1)}
Let $g \geq 0$. A presentation of the braid group on $\Sigma_{g,1}$, generated by $\sigma_1, \ldots , \sigma_{n-1}$, $a_1, \ldots , a_g$, $b_1, \ldots , b_g$, is given by the braid relations for $\sigma_1, \ldots , \sigma_{n-1}$, to which are added the following four families of relations (where $x$ and $y$ denote either $a$ or $b$, and $1 \leq r, s \leq g$):
\begin{equation}\label{relations_braids_on_orientable_surface}
\begin{cases}
(BS1)\ \textrm{ }\sigma_i\ \rightleftarrows\ x_r & \textrm{for all $r$ and all $i \geq 2$}\ ;\\
(BS2)\ \textrm{ } x_r\ \rightleftarrows\ \sigma_1 y_s \sigma_1^{-1} & \textrm{for  $r < s$}\ ;\\
(BS3)\ \textrm{ }(\sigma_1 x_r)^2 = (x_r \sigma_1)^2 & \textrm{for all $r$}\ ;\\
(BS4)\ \textrm{ }[\sigma_1 b_r \sigma_1^{-1}, a_r^{-1}] = \sigma_1^2 & \textrm{for all $r$}.
\end{cases}
\end{equation}
Recall that $g\ \rightleftarrows\ h$ means that $g$ commutes with $h$ (Notation~\ref{notation_commutation}).
\end{proposition}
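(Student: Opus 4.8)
The plan is to apply the inductive Reidemeister--Schreier scheme outlined in \Spar\ref{ss:appendix_braids_surfaces}, with $\Sigma_{g,1}$ playing the role of the non-closed surface $S$. Write $G_n$ for the group defined by the presentation in the statement. First I would check that the listed relations genuinely hold in $\B_n(\Sigma_{g,1})$: the braid relations among the $\sigma_i$ are classical, $(BS1)$ and $(BS2)$ are disjoint-support commutations read off directly from Figure~\ref{gen_of_Bn(S)}, $(BS3)$ is the braid-type relation recording how the genus loop $x_r$ of the first strand interacts with the crossing $\sigma_1$, and $(BS4)$ is precisely the expression of $\sigma_1^2 = A_{1,2}$ as a commutator of genus loops given by the isotopy of Figure~\ref{Aij_as_bracket_handle} (cf.\ Proposition~\ref{trichotomy}). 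Together with Goldberg's theorem (Proposition~\ref{generators_and_N(Pn)}), which guarantees that $\B_n(\Sigma_{g,1})$ is generated by the $\sigma_i$ together with the genus loops $a_r,b_r$, this produces a well-defined surjection $\pi \colon G_n \twoheadrightarrow \B_n(\Sigma_{g,1})$.

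Injectivity is proved by induction on $n$. The base case $n=1$ is the assertion that $\B_1(\Sigma_{g,1}) = \pi_1(\Sigma_{g,1})$ is free on $a_1,\ldots,a_g,b_1,\ldots,b_g$, which holds because $\Sigma_{g,1}$ is non-closed (Proposition~\ref{pi_1(S)}); since every relation $(BS1)$--$(BS4)$ involves some $\sigma_i$, all of them are absent when $n=1$, so $G_1$ is likewise free on the same $2g$ generators. For the inductive step, set $G_{1,n-1} := \pi^{-1}(\B_{1,n-1}(\Sigma_{g,1}))$. Using only the braid relations one checks that the $n$ elements $1,\sigma_1,\sigma_1\sigma_2,\ldots,\sigma_1\sigma_2\cdots\sigma_{n-1}$ form a right transversal of $G_{1,n-1}$ in $G_n$, so that $[G_n:G_{1,n-1}]\leq n$; since $\pi$ carries these cosets onto the $n$ distinct cosets of $\B_{1,n-1}(\Sigma_{g,1})$ in $\B_n(\Sigma_{g,1})$, the induced map $G_n/G_{1,n-1}\to \B_n(\Sigma_{g,1})/\B_{1,n-1}(\Sigma_{g,1})$ is a bijection and the index equals $n$.

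Next I would run Reidemeister--Schreier on this transversal to obtain a presentation of $G_{1,n-1}$ and reorganise it into a semidirect-product form $K\rtimes Q$. The factor $Q$ is generated by the images of $\sigma_2,\ldots,\sigma_{n-1}$ and of the genus loops, and the relabelled relations should force $Q\cong G_{n-1}$ (the presentation of $\B_{n-1}(\Sigma_{g,1})$ after deleting the first strand), while $K$ is the normal subgroup generated by the $2g+(n-1)$ Schreier generators that $\pi$ sends to the standard free basis of $F := \pi_1(\Sigma_{g,1} - \{n-1\ \text{pts}\})$ occurring in the split Fadell--Neuwirth sequence (Proposition~\ref{Fadell-Neuwirth}). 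As $K$ is generated by $2g+(n-1)$ elements whose $\pi$-images freely generate the free group $F$ of that same rank, the universal property of free groups forces this family to be a free basis of $K$ and $\pi|_K\colon K \to F$ to be an isomorphism. Feeding the inductive isomorphism $\pi\colon G_{n-1}\xrightarrow{\sim}\B_{n-1}(\Sigma_{g,1})$ and $\pi|_K$ into the five lemma for the two split extensions yields $\pi\colon G_{1,n-1}\xrightarrow{\sim}\B_{1,n-1}(\Sigma_{g,1})$, and combined with the coset bijection above this gives $\pi\colon G_n\xrightarrow{\sim}\B_n(\Sigma_{g,1})$.

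The main obstacle is the middle step: performing the Reidemeister--Schreier rewriting and then recognising, among the \emph{a priori} numerous and unwieldy generators and relations it produces, exactly the decomposition $K\rtimes G_{n-1}$ with $K$ generated by the intended free-basis elements. This is where all the genuine bookkeeping lies --- verifying that the conjugation action of $Q$ on $K$ matches the Artin action on $F$, that no surplus relations survive to obstruct $K$ from being free of the correct rank, and that the relabelled relations defining $Q$ are precisely those of $G_{n-1}$. Everything else is either classical or a formal consequence of the general scheme.
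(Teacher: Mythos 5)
Your strategy is the right one, and it is precisely the scheme the paper itself sketches in \Spar\ref{sec_pstations_B(S)}: induct on $n$ via the split Fadell--Neuwirth sequence (Proposition~\ref{Fadell-Neuwirth}), bound the index of $G_{1,n-1}$ in $G_n$ by $n$ using the braid relations, run Reidemeister--Schreier on the prefix-closed transversal $1,\sigma_1,\sigma_1\sigma_2,\ldots$, identify $G_{1,n-1}$ as $K\rtimes G_{n-1}$ with $K$ mapping onto the free group $\pi_1(\Sigma_{g,1}-\{n-1\ \text{pts}\})$ of rank $2g+(n-1)$, and conclude by the universal property of free groups and the Five Lemma. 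Your base case ($G_1$ free on the $a_r,b_r$ because every listed relation involves some $\sigma_i$) and your reading of $(BS4)$ as the commutator expression of $A_{12}=\sigma_1^2$ from Figure~\ref{Aij_as_bracket_handle} are both correct. Be aware, though, that the paper does not actually prove this proposition: it imports it from \cite[Th.~1.1]{Bellingeripresentations}, and its only original content here is the dictionary of conventions (our $\sigma_i$ is his $\sigma_i^{-1}$, our $a_r$ his $b_r^{-1}$, our $b_r$ his $a_r^{-1}$, with $(R2)$ and $(R3)$ swapped) plus the observation that the argument survives at $g=0$ (Remark~\ref{rk_genus_0}). So the only in-paper material to compare against is the methodological outline, which your proposal reproduces faithfully.

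As a proof, however, your text stops exactly where the content begins, and you say so yourself. For a presentation theorem of this kind the whole burden is the rewriting step: one must list the Schreier generators of $G_{1,n-1}$, rewrite each conjugated relator, check that the relations involving only the quotient generators ($\sigma_2,\ldots,\sigma_{n-1}$ together with the genus loops of the \emph{second} strand, i.e.\ $\sigma_1 a_r\sigma_1^{-1}$ and $\sigma_1 b_r\sigma_1^{-1}$, not $a_r,b_r$ themselves) close up into a copy of the presentation defining $G_{n-1}$, and check that the remaining rewritten relations do nothing beyond recording the conjugation action of those generators on the $2g+(n-1)$ intended free generators of $K$ (the $a_r$, $b_r$ and the $A_{1j}$). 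It is precisely $(BS2)$--$(BS4)$ that make this come out right, since they encode how $\sigma_1^{\pm 1}$ transports the genus loops between the first two strands; and until the computation is done one cannot exclude that the rewriting leaves a surplus relation inside $K$ (killing freeness, and with it your appeal to the universal property of free groups) or fails to reproduce all relations needed in the quotient. So: correct plan, same route as the source the paper relies on, but the verification that constitutes the proof is still outstanding.
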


Notice that it is easy to check that these relations hold in $\B_n(\Sigma_{g,1})$ by drawing explicit isotopies. See for instance Figure~\ref{Aij_as_bracket_handle} for a drawing of $(BS4)$ (which generalises to $[b_s^{(j)}, (a_r^{(j)})^{-1}] = A_{r,s}$ if $r < s$). The translation between Bellingeri's conventions and ours is as follows:
\begin{itemize}
    \item Our statement is the case $p = 1$ of \cite[Th.~1.1]{Bellingeripresentations}, whence the absence of the $z_k$, and of the relations involving them.
    \item Our $\sigma_i$ is his $\sigma_i^{-1}$, our $a_r$ is his $b_r^{-1}$, and our $b_r$ is his $a_r^{-1}$. 
    \item Our $(BS1)-(BS4)$ are his $(R1)-(R4)$, with $2$ and $3$ exchanged.
\end{itemize}

\begin{remark}\label{rk_genus_0}
Although \cite[Th.~1.1]{Bellingeripresentations} is stated for $g \geq 1$, the proof works equally well if $g = 0$. In fact, the case $g = 0$ of our statement is just the usual presentation of braid groups on the disc. 
\end{remark}

\begin{proposition}[{\cite[Th.~A.2]{Bellingeripresentations}}]\label{Bn(Ng1)}
Let $g \geq 1$. A presentation of the braid group on $\mathscr N_{g,1}$, generated by $\sigma_1, \ldots , \sigma_{n-1}$, $c_1, \ldots , c_g$ is given by the braid relations for $\sigma_1, \ldots , \sigma_{n-1}$, to which are added the following three families of relations (where $1 \leq r, s \leq g$):
\begin{equation}\label{relations_braids_on_non_orientable_surface}
\begin{cases}
(BN1)\ \textrm{ }\sigma_i\ \rightleftarrows\ c_r & \textrm{for all $r$ and all $i \geq 2$}\ ;\\
(BN2)\ \textrm{ } c_r\ \rightleftarrows\ \sigma_1 c_s \sigma_1^{-1} & \textrm{for  $r < s$}\ ;\\
(BN3)\ \textrm{ }[\sigma_1 c_r \sigma_1^{-1}, c_r^{-1}] = \sigma_1^2 & \textrm{for all $r$}.
\end{cases}
\end{equation}
\end{proposition}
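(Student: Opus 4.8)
The plan is to run the general Reidemeister--Schreier induction outlined at the start of \Spar\ref{sec_pstations_B(S)}, specialised to $S = \mathscr N_{g,1}$; alternatively, one simply invokes \cite[Th.~A.2]{Bellingeripresentations} together with the dictionary of conventions recorded above. Let me sketch the self-contained argument. Write $G_n$ for the group defined by the presentation in the statement, with the convention that $G_1$ is free on $c_1, \ldots , c_g$ (no braid generators occur when $n = 1$). First I would check that all the listed relations hold in $\B_n(\mathscr N_{g,1})$: the braid relations are inherited from the disc via $\varphi$, the commutations $(BN1)$--$(BN2)$ are realised by disjoint-support isotopies, and $(BN3)$ is exactly the crosscap version of the isotopy drawn in Figure~\ref{Aij_as_bracket_crosscap}, expressing the pure-braid generator $\sigma_1^2 = A_{1,2}$ as a commutator of two loops around the $r$-th crosscap. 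This produces a homomorphism $\pi \colon G_n \to \B_n(\mathscr N_{g,1})$, which is surjective because, by Proposition~\ref{generators_and_N(Pn)} (Goldberg's theorem), $\B_n(\mathscr N_{g,1})$ is generated by the disc braids $\sigma_i$ together with the image of $\psi_1$, and the latter is generated by $c_1, \ldots , c_g$ since $\pi_1(\mathscr N_{g,1}) \cong \F_g$ is free on them (Proposition~\ref{pi_1(S)}).

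Then I would prove that $\pi$ is an isomorphism by induction on $n$. The base case $n = 1$ is the identification $G_1 = \F_g \cong \pi_1(\mathscr N_{g,1}) = \B_1(\mathscr N_{g,1})$. For the inductive step, set $G_{1,n-1} := \pi^{-1}(\B_{1,n-1}(\mathscr N_{g,1}))$, the preimage of the subgroup keeping the first strand in its own block. Using the braid relations one checks that the $n$ words $1, \sigma_1, \sigma_1\sigma_2, \ldots , \sigma_1\cdots\sigma_{n-1}$ form a Schreier transversal, so that $[G_n : G_{1,n-1}] = n$ and $G_n/G_{1,n-1} \to \B_n(\mathscr N_{g,1})/\B_{1,n-1}(\mathscr N_{g,1})$ is a bijection. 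Applying the Reidemeister--Schreier method to this transversal yields a presentation of $G_{1,n-1}$; one then reorganises it to exhibit $G_{1,n-1}$ as a semidirect product $K \rtimes G'$, where $G'$ is the image of the ``forget the first strand'' retraction (isomorphic to $G_{n-1}$) and $K$ is normally generated by the Schreier-rewrites of $c_1, \ldots , c_g$ and of $\sigma_1^2, \sigma_1\sigma_2^2\sigma_1^{-1}, \ldots$, corresponding to the first point looping around each crosscap and around each of the other $n-1$ points.

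Geometrically, the splitting of the Fadell--Neuwirth sequence from Proposition~\ref{Fadell-Neuwirth} (which applies since $\mathscr N_{g,1}$ is non-closed) gives $\B_{1,n-1}(\mathscr N_{g,1}) \cong F \rtimes \B_{n-1}(\mathscr N_{g,1})$ with $F = \pi_1(\mathscr N_{g,1} - \{n-1\ \text{pts}\})$ free of rank $g + (n-1)$. The crux is to verify that $\pi$ maps the chosen generators of $K$ bijectively onto a free basis of $F$. Granting this, $K$ is generated by $g + (n-1)$ elements mapping to a basis of a free group of that rank, so these elements are themselves a free basis of $K$ and $\pi|_K \colon K \to F$ is an isomorphism; combined with the inductive isomorphism $G' \cong G_{n-1} \cong \B_{n-1}(\mathscr N_{g,1})$ and the compatibility of the two semidirect-product structures, this gives that $\pi \colon G_{1,n-1} \to \B_{1,n-1}(\mathscr N_{g,1})$ is an isomorphism, and the coset bijection upgrades this to $\pi \colon G_n \to \B_n(\mathscr N_{g,1})$.

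The main obstacle is precisely this last verification: carrying out the Reidemeister--Schreier rewriting so that the relations $(BN1)$--$(BN3)$ collapse, modulo $K$, to the defining relations of $G_{n-1}$, and checking that the induced conjugation action of $G' \cong \B_{n-1}(\mathscr N_{g,1})$ on $K$ matches the geometric monodromy action on $F$. The non-orientable relation $(BN3)$ demands the most care, since, unlike the orientable $(BS4)$, it couples a single crosscap generator $c_r$ with its own conjugate rather than two distinct generators, and one must track signs carefully when rewriting it over the transversal.
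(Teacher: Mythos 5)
Your proposal follows essentially the same route as the paper, which does not prove this proposition from scratch but cites \cite[Th.~A.2]{Bellingeripresentations} (with a dictionary of conventions) and sketches exactly the Fadell--Neuwirth/Reidemeister--Schreier induction you describe at the start of \Spar\ref{sec_pstations_B(S)}. Your outline of the induction, the transversal, and the identification of the kernel with a free group of rank $g+(n-1)$ matches that sketch, so there is nothing further to add.
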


Here again, it is easy to check these relations explicitly. See for instance Figure~\ref{Aij_as_bracket_crosscap} for a drawing of $(BN3)$ (which generalises to $[c_s^{(j)}, (c_r^{(j)})^{-1}] = A_{rs}$ if $r < s$). Also, it is the case $p = 1$ of Bellingeri's statement so, again, the $z_k$ and the corresponding relations are irrelevant. Moreover, our $\sigma_i$ is his $\sigma_i^{-1}$, our $c_i$ are his $a_i^{-1}$, and the indexation of our relations is the same as his.

\begin{remark}
The statement of \cite[Th.~A.2]{Bellingeripresentations} is for $g \geq 2$, but the proof works equally well if $g = 1$. 
\end{remark}

\medskip

In the case of non-orientable surfaces, we will also need the case $p > 1$ of \cite[Th.~A.2]{Bellingeripresentations}, which will be used twice, in the proofs of Proposition~\ref{Lie(Bmn(Ng1))} and Proposition~\ref{B2m(Proj)}.
Let us denote by $\mathscr N_{g,n+1}$ the non-orientable connected compact surface of genus $g$ with either $n+1$ punctures or $n+1$ boundary components (recall that, up to isotopy, removing a point and removing an open disc are equivalent). The braid group $\B_m(\mathscr N_{g,n+1})$ may be seen as a subgroup of $\B_{m+n}(\mathscr N_{g,1})$; see Proposition~\ref{Fadell-Neuwirth}. Namely, this subgroup is generated by $\sigma_1, \ldots , \sigma_{n-1}$, $c_1, \ldots , c_g$ and $z_j := A_{1, m+j}$ for all $1\leq j \leq n$.

\begin{proposition}[{\cite[Th.~A.2]{Bellingeripresentations}}]\label{Bm(Ngn)}
Let $g \geq 0$ and $m \geq 1$. A presentation of the braid group $\B_m(\mathscr N_{g,n+1})$, generated by $\sigma_1, \ldots , \sigma_{m-1}, c_1, \ldots , c_g, z_1, \ldots , z_n$ is given by the relations from Proposition~\ref{Bn(Ng1)}, together with the following four families of relations:
\begin{equation*}
\begin{cases}
(BN4)\ \textrm{ }z_j\ \rightleftarrows\ \sigma_i & \textrm{for all $j \leq n$ and all $i \in \{2, \ldots , m-1 \}$}\ ;\\
(BN5)\ \textrm{ }c_r\ \rightleftarrows\ \sigma_1 z_j \sigma_1^{-1} & \textrm{for all $j \leq n$ and all $r \leq g$}\ ;\\
(BN6)\ \textrm{ }z_i\ \rightleftarrows\ \sigma_1 z_j \sigma_1^{-1} & \textrm{for  $i > j$}\ ;\\
(BN7)\ \textrm{ }(\sigma_1 z_j)^2 = (z_j \sigma_1)^2 & \textrm{for all $j \leq n$}.
\end{cases}
\end{equation*}
\end{proposition}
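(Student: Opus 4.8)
The plan is to follow exactly the inductive scheme outlined in \Spar\ref{ss:appendix_braids_surfaces}, carried out this time on the once-holed non-orientable surface $\mathscr N_{g,1}$ with $n$ additional punctures; the induction is on the number $m$ of strands, the surface $\mathscr N_{g,n+1}$ being kept fixed. We regard the $n$ punctures as providing the extra generators $z_j = A_{1,m+j}$ (the first strand encircling the $j$-th puncture), which play the same role that the crosscap generators $c_r$ do, except that a loop around a puncture is not a commutator — this is why the punctures contribute relations $(BN4)$–$(BN7)$ parallel to $(BN1)$–$(BN2)$ and $(BS3)$ but no analogue of the handle/crosscap relation $(BN3)$.

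First I would check that the listed relations genuinely hold in $\B_m(\mathscr N_{g,n+1})$ by exhibiting explicit isotopies, exactly as in Figures~\ref{Aij_as_bracket_crosscap} and~\ref{pure_braid_generator}. The relations $(BN4)$ and $(BN5)$ are disjoint-support commutations (strand $1$, resp.\ a crosscap, versus strands of index $\geq 2$, resp.\ a puncture encircled by the second strand), $(BN6)$ is the ``linking'' commutation between the first and second strands around two distinct punctures, and $(BN7)$ is the surface-braid relation for a strand running around a puncture. This produces a group $G_m$ defined by the presentation of the statement, together with a well-defined surjection $\pi\colon G_m \twoheadrightarrow \B_m(\mathscr N_{g,n+1})$.

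The heart of the argument is the inductive step. For $m=1$ there are no $\sigma_i$, all of $(BN1)$–$(BN7)$ are vacuous, and $G_1$ is free on $c_1,\ldots,c_g,z_1,\ldots,z_n$; since $\mathscr N_{g,n+1}$ is non-closed, $\pi_1(\mathscr N_{g,n+1})$ is free of rank $g+n$ (Proposition~\ref{pi_1(S)}) on the corresponding loops, so $\pi$ is an isomorphism. For the step, one uses the split Fadell–Neuwirth sequence of Proposition~\ref{Fadell-Neuwirth} (the case $\mu=(1)$, $\nu=(m-1)$) obtained by forgetting one strand,
\[
1 \longrightarrow \pi_1\bigl(\mathscr N_{g,n+1}-\{(m-1)\ \mathrm{pts}\}\bigr) \longrightarrow \B_{1,m-1}(\mathscr N_{g,n+1}) \longrightarrow \B_{m-1}(\mathscr N_{g,n+1}) \longrightarrow 1,
\]
whose fibre $F$ is free of rank $g+n+(m-1)$ and which is split because $\mathscr N_{g,n+1}$ is not closed. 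Following the scheme of \Spar\ref{ss:appendix_braids_surfaces}, I would set $G_{1,m-1} := \pi^{-1}(\B_{1,m-1}(\mathscr N_{g,n+1}))$, use the relations to bound its index in $G_m$ by $m$ via the Schreier transversal $\{1,\sigma_1,\sigma_2\sigma_1,\ldots,\sigma_{m-1}\cdots\sigma_1\}$, and compute a presentation of $G_{1,m-1}$ by Reidemeister–Schreier. One then checks that this presentation splits as a semidirect product of a quotient isomorphic to $G_{m-1}$ with a normal subgroup $K$ generated by the transversal-conjugates of the images of a free basis of $F$; since $\pi$ maps these generators onto a free basis of $F$, they must freely generate $K$, whence $\pi|_K\colon K\xrightarrow{\ \cong\ }F$. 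By induction $\pi\colon G_{m-1}\to\B_{m-1}$ is an isomorphism, hence so is $\pi\colon G_{1,m-1}\to\B_{1,m-1}$, and finally $\pi\colon G_m\to\B_m(\mathscr N_{g,n+1})$ is an isomorphism because it induces a bijection on the $m$ cosets of $G_{1,m-1}$.

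The main obstacle is the Reidemeister–Schreier bookkeeping in the inductive step: one must rewrite every relator of $G_m$ over the transversal and verify both that no relation beyond those defining $G_{m-1}\ltimes K$ survives, and that $K$ has exactly the expected free basis of rank $g+n+(m-1)$. The interaction of the transversal-conjugates of the $z_j$ with the braid generators is the delicate point, since these must reproduce the canonical action of $\B_{m-1}$ on the free fibre group $F$. Here one can shorten the work considerably by observing that, after the change of conventions recorded below Proposition~\ref{Bn(Ng1)} (our $\sigma_i$, $c_i$ being Bellingeri's $\sigma_i^{-1}$, $a_i^{-1}$), our relations coincide with the $p>1$ case of \cite[Th.~A.2]{Bellingeripresentations}, so that the computation carried out there applies verbatim.
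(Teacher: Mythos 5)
Your proposal is correct and matches the paper's treatment: the paper establishes this proposition by citing \cite[Th.~A.2]{Bellingeripresentations} after recording the translation of conventions (our $\sigma_i$, $c_r$ being Bellingeri's $\sigma_i^{-1}$, $a_r^{-1}$, with the $z_j$ unchanged), which is exactly the shortcut you invoke in your final paragraph. The inductive Fadell--Neuwirth/Reidemeister--Schreier scheme you sketch beforehand is precisely the general method the paper outlines in \Spar\ref{ss:appendix_braids_surfaces} as the way such presentations are obtained, so there is no divergence of approach.
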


Once more, these relations are easy to check explicitly. The translation between \cite[Th.~A.2]{Bellingeripresentations} and our statement is the same as above, our $z_j$ being the same as Bellingeri's.

\subsection{Closed surfaces}\label{par_pstation_closed_surfaces}

When $S$ is a closed surface, one needs to add a single relation to a presentation of $\B_n(S - \textrm{pt})$ to get a presentation of $\B_n(S)$. In fact, this is a very general fact, which does not require any hypothesis on the surface.

\begin{proposition}\label{Bn(S)_from_Bn(S-pt)}
Let $S$ be a connected surface and $x \in S$ any point in its interior. The inclusion of $S-x$ into $S$ induces a surjective homomorphism
\[
\B_n(S-x) \twoheadrightarrow \B_n(S)
\]
whose kernel is normally generated by a single element $\beta$. Explicitly, $\beta$ is a braid with $n-1$ trivial strands, whose remaining strand loops once around the puncture~$x$.
\end{proposition}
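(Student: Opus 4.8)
The plan is to prove the statement purely topologically, by applying the Seifert--van Kampen theorem to the configuration space $C_n(S) = F_n(S)/\Sym_n$ and realising $C_n(S-x)$ as the complement of a codimension-two submanifold. Write $\Delta_x \subseteq C_n(S)$ for the set of configurations that contain the point $x$. One checks that $\Delta_x$ is closed and that it is a smoothly embedded submanifold: its preimage in $F_n(S)$ is the disjoint union $\bigsqcup_i \{x_i = x\}$ of $n$ codimension-two submanifolds, permuted freely by $\Sym_n$, and passing to the quotient identifies $\Delta_x$ with $C_{n-1}(S - x)$ (the point sitting at $x$ being distinguished from the remaining $n-1$ unordered points). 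In particular $\Delta_x$ has codimension two in the $2n$-manifold $C_n(S)$, it is connected since $C_{n-1}(S-x)$ is connected (as $S - x$ is a connected surface), and its complement is exactly $C_n(S-x)$. Thus the whole proposition reduces to the general principle that filling in a connected codimension-two submanifold quotients the fundamental group by the normal closure of a single meridian.

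To carry this out, I would choose an open tubular neighbourhood $N$ of $\Delta_x$, so that $N$ is the total space of a rank-two disc bundle over $\Delta_x$ and $C_n(S) = (C_n(S) - \Delta_x) \cup N$, with intersection $N - \Delta_x$. Radial deformation retracts give $N \simeq \Delta_x$ and $N - \Delta_x \simeq \partial N$, the associated circle bundle; all three pieces and their intersection are path-connected, so van Kampen applies cleanly. The homotopy exact sequence of the fibration $S^1 \hookrightarrow \partial N \to \Delta_x$ shows that $\pi_1(N - \Delta_x) \to \pi_1(\Delta_x)$ is surjective with kernel the cyclic subgroup generated by the class $\mu$ of a fibre circle (the \emph{meridian}); this subgroup is automatically normal, being the kernel of a homomorphism. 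Feeding this into the van Kampen pushout
\[
\B_n(S) = \pi_1(C_n(S)) \cong \pi_1(C_n(S-x)) *_{\pi_1(N-\Delta_x)} \pi_1(\Delta_x),
\]
and using that the amalgamating map to $\pi_1(\Delta_x)$ is surjective, one sees that the right-hand factor contributes nothing new: the pushout collapses to $\pi_1(C_n(S-x))$ modulo the normal closure of the image of $\mu$. This yields at once both the surjectivity of $\B_n(S-x) \to \B_n(S)$ and the fact that its kernel is normally generated by the single element $\mu$.

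It then remains to identify $\mu$ with the braid $\beta$ of the statement. This is the geometric heart of the argument: at a point of $\Delta_x$ represented by the configuration $\{x, c_2, \ldots, c_n\}$, the normal disc to $\Delta_x$ is spanned by the directions in which the distinguished point (currently at $x$) is displaced while $c_2, \ldots, c_n$ stay fixed. A fibre circle of $\partial N$ is therefore realised by letting that point traverse a small loop around $x$ with all other strands trivial, which is precisely the braid $\beta$ described. For $n = 1$ the argument degenerates to the classical van Kampen computation of $\pi_1(S-x) \twoheadrightarrow \pi_1(S)$ by filling in a puncture, consistent with the claim.

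The steps involving tubular neighbourhoods, the fibration sequence, and the collapse of the amalgamated pushout are standard, and the smoothness and connectivity checks for $\Delta_x$ are routine given the description of surfaces in \Spar\ref{sec:surfaces}. I expect the only point requiring genuine care to be the precise geometric identification of the meridian with $\beta$: one must verify that a generator of the normal circle bundle really is the single-strand loop around $x$, so that the normal closure is generated by exactly the stated element. Since $\Delta_x$ is connected, different choices of basepoint and of fibre give meridians that are conjugate in $\pi_1(C_n(S-x))$, so this ambiguity does not affect the normal closure; the content is simply to exhibit one fibre concretely as $\beta$, which the normal-bundle picture above makes transparent.
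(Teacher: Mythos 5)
Your proof is correct, but it follows a genuinely different route from the one in the paper. You realise $C_n(S-x)$ as the complement of the properly embedded, codimension-two, connected submanifold $\Delta_x \cong C_{n-1}(S-x)$ of configurations through $x$, take a tubular neighbourhood, and invoke the standard principle that filling in such a submanifold quotients $\pi_1$ by the normal closure of a meridian; the meridian is visibly the braid $\beta$ because the normal bundle of $\Delta_x$ is the (trivial) $T_xS$-bundle whose fibre directions displace only the point sitting at $x$. The paper instead covers $C_n(S)$ by $C_n(S-x)$ together with the set $\cU_n(S,x)$ of configurations having a unique closest point in a fixed metric disc $D$ to $x$, and identifies the fundamental groups of $\cU_n(S,x)$ and $\cU_n(S-x,x)$ via the fibrations over $D$ and $D-x$ with fibre $C_{n-1}(S-x)$; the split $\Z$ factor coming from $\pi_1(D-x)$ plays exactly the role of your meridian. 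The trade-off is that your argument is the more conceptual and transferable one (it is the usual codimension-two meridian computation), but it imports the tubular neighbourhood theorem for a non-compact properly embedded submanifold and hence a smooth structure on $C_n(S)$, whereas the paper's cover is defined purely metrically and leans only on the locally trivial fibrations already used throughout the chapter. Both arguments identify the normal generator by the same token: any loop projecting to a generator of the circle (your fibre of $\partial N$, the paper's generator of $\pi_1(D-x)$) is conjugate to $\beta$, and conjugacy is immaterial for the normal closure. The points you flag as routine (closedness and connectedness of $\Delta_x$, path-connectedness of all pieces of the cover, basepoint placement in the intersection) are indeed routine, so I see no gap.
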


\begin{proof}
Choose a subdisc $D \subset S$ containing $x$ in its interior, and a metric on $D$. Write $\cU_n(S,x)$ for the subspace of $C_n(S) = F_n(S)/\Sym_n$ of (unordered) configurations that have a unique closest point in $D$ to $x$ (which may be $x$ itself). Together with $C_n(S-x)$, this forms an open cover
\[
\left\lbrace \cU_n(S,x) , C_n(S-x) \right\rbrace
\]
of $C_n(S)$, with intersection $\cU_n(S,x) \cap C_n(S-x) = \cU_n(S-x,x)$ the space of $n$-point configurations in $S-x$ that have a unique closest point in $D-x$ to $x$. Note that these subspaces of $C_n(S)$ are all path-connected. Let us choose a basepoint for $C_n(S)$ that lies in $\cU_n(S-x,x)$. The Seifert-van Kampen theorem then gives us a pushout square of groups:
\begin{equation}
\label{eq:svk-square}
\begin{tikzcd}
\pi_1(\cU_n(S-x,x)) \ar[d] \ar[r]
&\B_n(S-x) \ar[d] \\
\pi_1(\cU_n(S,x)) \ar[r]
&\B_n(S).
\end{tikzcd}
\end{equation}

There is a well-defined projection $\cU_n(S,x) \twoheadrightarrow D$ given by remembering just the unique closest point in $D$ to $x$, which restricts to a projection $\cU_n(S-x,x) \twoheadrightarrow D-x$. These are both locally trivial fibrations with fibres canonically homeomorphic to $C_{n-1}(S-x)$. (Over a point $p \in D$, the homeomorphism is induced by the evident homeomorphism between $S-x$ and $S-\bar{B}_{d(p,x)}(x)$, where $\bar{B}_r(x)$ denotes the closed ball of radius $r$ in $D$ centred at $x$ and $d(-,-)$ is the metric that we chose on $D$.) The inclusion of the latter into the former is therefore a map of locally trivial fibrations which is the identity on fibres. Considering the induced long exact sequences of homotopy groups, we obtain a map of exact sequences:
\begin{equation}
\label{eq:map-of-les}
\begin{tikzcd}
1 \ar[r] &\B_{n-1}(S-x) \ar[d, swap, "\cong"] \ar[r] 
&\pi_1(\cU_n(S-x,x)) \ar[r] \ar[d] \ar[l, dashed, bend left=60, looseness=1, swap, "r"]
&\pi_1(D-x) \cong \Z \ar[r] \ar[d] &1 \\
1 \ar[r] &\B_{n-1}(S-x) \ar[r, swap, "\cong"] 
&\pi_1(\cU_n(S,x)) \ar[r]
&\pi_1(D) = 1.
\end{tikzcd}
\end{equation}

The map $r$ obtained from this diagram is a retraction for the upper short exact sequence, whose existence implies that $\pi_1(\cU_n(S-x,x))$ is the direct product of $\pi_1(\cU_n(S,x))$ and $\Z$. Then, the left-hand vertical map in \eqref{eq:svk-square} identifies with the projection that forgets the $\Z$ factor. Together with the fact that \eqref{eq:svk-square} is a pushout square, this implies that the right-hand vertical map in \eqref{eq:svk-square} is the quotient of $\B_n(S-x)$ by (the subgroup normally generated by) the image of the $\Z$ factor of $\pi_1(\cU_n(S-x,x))$ in $\B_n(S-x)$. We may choose for a generator of this $\Z$ factor any element of $\pi_1(\cU_n(S-x,x))$ that projects to a generator of $\pi_1(D-x)$, for example the braid described in the proposition.
\end{proof}

Let us make this explicit:
\begin{corollary}\label{Braids_on_closed_surfaces}
For all $g \geq 0$, the braid group $\B_n(\Sigma_g)$ is the quotient of $\B_n(\Sigma_{g,1})$ by the relation:
\[\sigma_1 \cdots \sigma_{n-2} \sigma_{n-1}^2 \sigma_{n-2} \cdots \sigma_1\ =\ \prod\limits_{r=1}^g [a_r, b_r^{-1}].\]
Similarly, the braid group $\B_n(\mathscr N_g)$ is the quotient of $\B_n(\mathscr N_{g,1})$ by the relation:
\[\sigma_1 \cdots \sigma_{n-2} \sigma_{n-1}^2 \sigma_{n-2} \cdots \sigma_1\ =\ c_1^2 \cdots c_g^2.\]
\end{corollary}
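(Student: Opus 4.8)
The plan is to apply Proposition~\ref{Bn(S)_from_Bn(S-pt)}, which reduces the problem to identifying the single normally-generating element $\beta$ of the kernel of $\B_n(\Sigma_{g,1}) \twoheadrightarrow \B_n(\Sigma_g)$ (and similarly for $\mathscr N_g$) with the explicit relation in the statement. Recall that $\beta$ is described geometrically as the braid on $n$ strands where $n-1$ strands stay fixed and one strand loops once around the puncture $x$. So the entire content of the proof is to show that, after choosing the puncture to sit in the correct position relative to the base configuration and the handles/crosscaps, this loop braid equals the given word.

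First I would fix the puncture $x$ to lie ``behind'' all of the base points, so that the loop around it may be taken to be the loop enclosing the $n$-th base point together with all the topological features of the surface (the $g$ handles, resp.\ the $g$ crosscaps). The key geometric fact is that on the once-punctured surface $\Sigma_{g,1}$ (resp.\ $\mathscr N_{g,1}$), a loop encircling everything is isotopic to the product of the boundary-parallel curve with the product of commutators of the standard handle curves (resp.\ the product of squares of the crosscap curves), by the standard relation in the fundamental group of a surface. Concretely, the $n$-th strand looping around $x$ can be pushed to be the $n$-th strand of the braid traversing the boundary of a disc containing everything; I would then decompose this loop, using the isotopies of Figures~\ref{Aij_as_bracket_handle} and~\ref{Aij_as_bracket_crosscap} and the generators of Figure~\ref{gen_of_Bn(S)}, into two pieces: the part that loops the $n$-th strand around the other $n-1$ strands, which gives $\sigma_1 \cdots \sigma_{n-2} \sigma_{n-1}^2 \sigma_{n-2} \cdots \sigma_1$, and the part that carries the $n$-th strand over the handles or crosscaps, which gives $\prod_{r=1}^g [a_r, b_r^{-1}]$ in the orientable case and $c_1^2 \cdots c_g^2$ in the non-orientable case.

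For the orientable case, the identification of the handle-traversing part uses precisely the relation $(BS4)$ of Proposition~\ref{Bn(Sg1)}, namely $[\sigma_1 b_r \sigma_1^{-1}, a_r^{-1}] = \sigma_1^2$, together with its generalisation (noted just after that proposition) that $[b_s^{(j)}, (a_r^{(j)})^{-1}] = A_{r,s}$; pushing the single moving strand once across the $r$-th handle contributes the commutator $[a_r, b_r^{-1}]$, and the product over all $g$ handles appears because the loop crosses each handle in turn. For the non-orientable case, the analogous role is played by relation $(BN3)$ of Proposition~\ref{Bn(Ng1)}, $[\sigma_1 c_r \sigma_1^{-1}, c_r^{-1}] = \sigma_1^2$, which shows that pushing the strand once across the $r$-th crosscap contributes $c_r^2$ (a crosscap being traversed ``twice'' in the orientation-reversing sense), giving the product $c_1^2 \cdots c_g^2$.

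The main obstacle I anticipate is purely bookkeeping of the geometric isotopy: one must verify that the composite loop around $x$ factors \emph{exactly} as the concatenation of the strand-encircling braid and the feature-traversing braid \emph{in the stated order}, with no stray conjugating factors, and that the orientation and crossing conventions of Figure~\ref{gen_of_Bn(S)} produce the signs and exponents as written (in particular the $b_r^{-1}$ and $a_r^{-1}$ appearing in the commutators, and the squares $c_r^2$ rather than $c_r$). This is best handled by drawing the explicit isotopy of the loop-around-$x$ braid, decomposing it feature-by-feature, and reading off each local contribution using the already-verified relations $(BS4)$ and $(BN3)$; the global factor $\sigma_1 \cdots \sigma_{n-1}^2 \cdots \sigma_1$ is simply the standard expression for the braid in which the last strand encircles all the others, which is a familiar computation in $\B_n$. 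Once the geometric identification of $\beta$ with this word is established, the corollary follows immediately from Proposition~\ref{Bn(S)_from_Bn(S-pt)}.
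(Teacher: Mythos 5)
Your proposal is correct and is essentially the paper's own argument: the corollary is obtained by applying Proposition~\ref{Bn(S)_from_Bn(S-pt)} (using that $\Sigma_g - \mathrm{pt} \simeq \Sigma_{g,1}$ and $\mathscr N_g - \mathrm{pt} \simeq \mathscr N_{g,1}$) and then identifying the normal generator $\beta$ of the kernel, i.e.\ the braid with one strand looping once around the puncture, with the boundary word via the explicit isotopy of Figure~\ref{boundary_relations}. One bookkeeping point to fix when you draw that isotopy: the word $\sigma_1 \cdots \sigma_{n-2}\sigma_{n-1}^2\sigma_{n-2}\cdots\sigma_1$ equals $A_{12}\cdots A_{1n}$, the \emph{first} strand turning around all the others (consistent with the fact that the generators $a_r$, $b_r$, $c_r$ of Figure~\ref{gen_of_Bn(S)} move the first point), not the $n$-th strand as you wrote; since all the $\beta_i$ are conjugate this does not affect the validity of the approach, but it is needed to land on the relation exactly as stated.
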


We note that we recover as a particular case the usual presentations of the braid group on the sphere (see \cite{Fadell-vanBuskirk1962} or \cite[Th.~1.11]{Birman}) and of the projective plane (see \cite[\Spar III, page 83]{vanBuskirk1966}). Not having to treat these as exceptional cases is one of the great advantages of the present method.

\begin{proof}[Proof of Corollary~\ref{Braids_on_closed_surfaces}]
This is a direct application of Proposition~\ref{Bn(S)_from_Bn(S-pt)}, using the fact that $\Sigma_g - \mathrm{pt}$ (resp.~$\mathscr N_g - \mathrm{pt}$) is isotopy equivalent to $\Sigma_{g,1}$ (resp.~to $\mathscr N_{g,1}$). We note that $\sigma_1 \cdots \sigma_{n-2} \sigma_{n-1}^2 \sigma_{n-2} \cdots \sigma_1 = A_{12} \cdots A_{1n}$ is the (pure) braid obtained by making the first strand turn once around all the other ones; see Figure~\ref{boundary_relations} for the relevant drawings.
\end{proof}

\begin{figure}[ht]
\centering
\begin{subfigure}[b]{.5\textwidth}
  \centering
  \includegraphics[width=.9\linewidth]{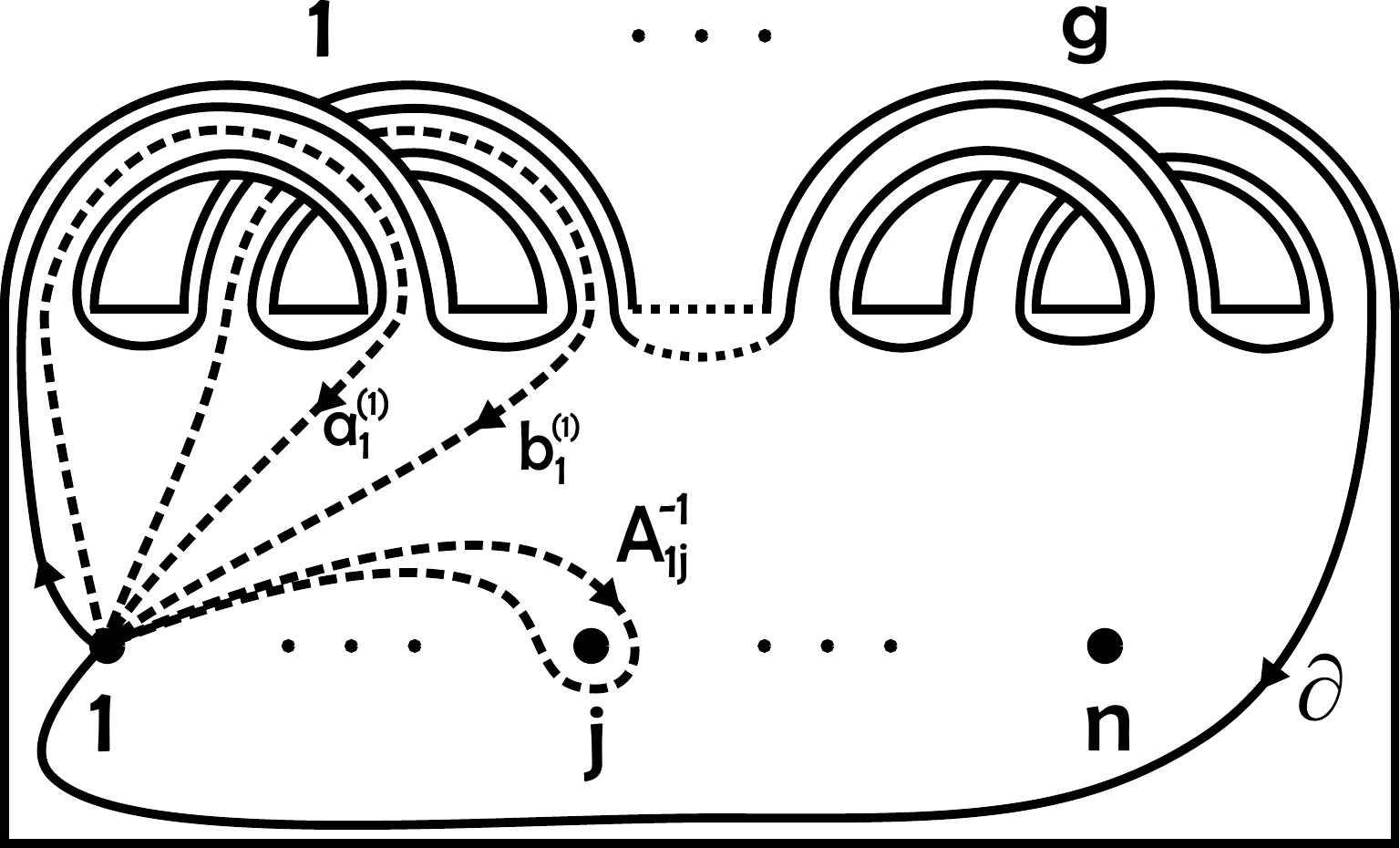}
  \caption{$\partial = [a_1, b_1^{-1}] \cdots [a_g, b_g^{-1}] A_{1n}^{-1} \cdots  A_{12}^{-1}$}
  \label{boundary_relation_on_Sg}
\end{subfigure}%
\begin{subfigure}[b]{.5\textwidth}
  \centering
  \includegraphics[width=.9\linewidth]{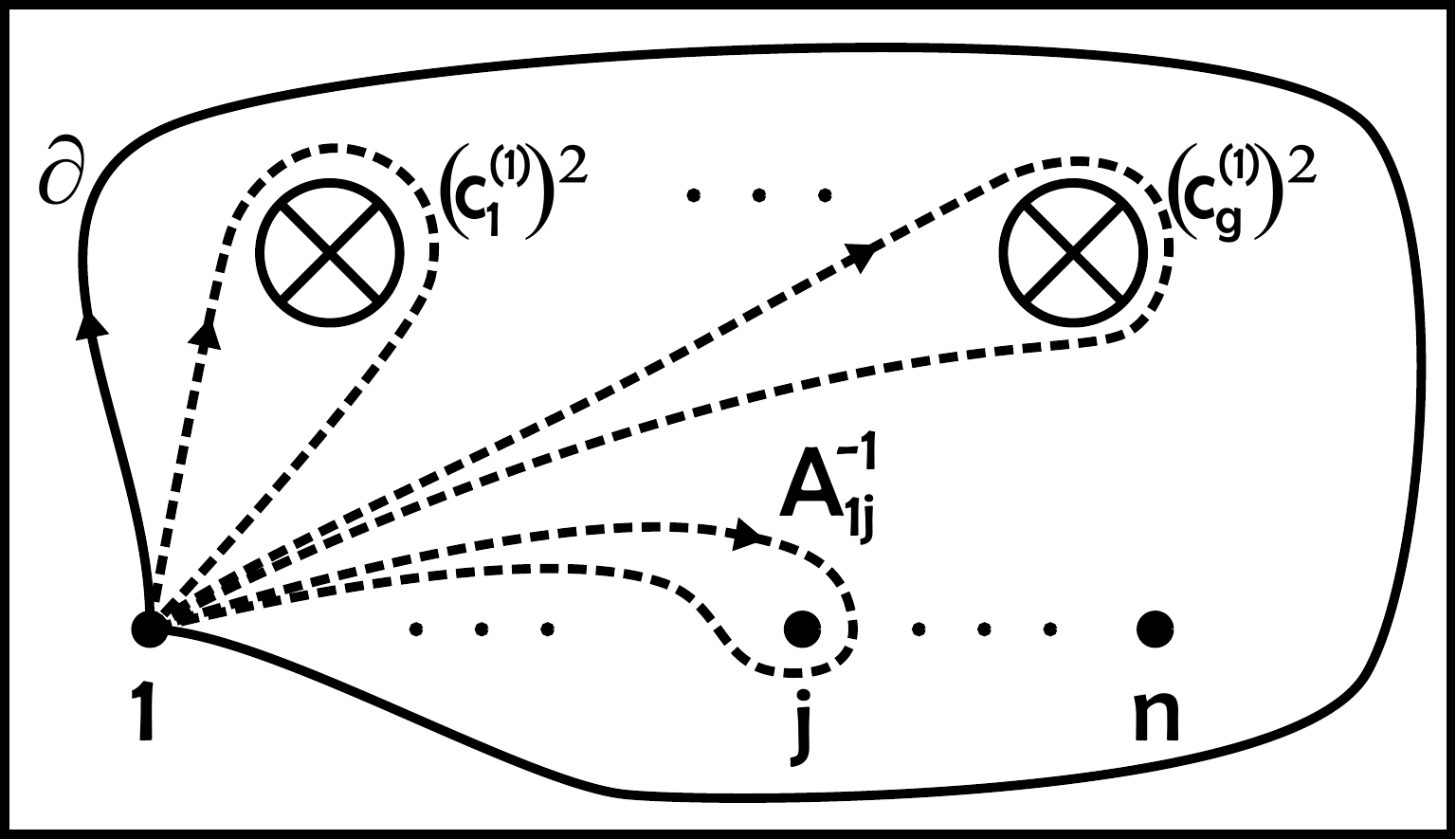}
  \caption{$\partial = c_1^2 \cdots c_g^2 A_{1n}^{-1} \cdots  A_{12}^{-1}$}
  \label{boundary_relation_on_Ng}
\end{subfigure}
\caption{The boundary elements in $\B_n(\Sigma_{g,1})$ and $\B_n(\mathscr N_{g,1})$.}
\label{boundary_relations}
\end{figure}

\subsection{Partitioned braids on closed surfaces}

The above proof of Proposition~\ref{Bn(S)_from_Bn(S-pt)} generalises to partitioned braid groups without much difficulty; see Remark~\ref{adaptation_of_proof_B(S)_from_B(S-pt)}. However, we prefer to deduce these generalisations directly from Proposition~\ref{Bn(S)_from_Bn(S-pt)} itself. We begin with the case of pure braids by describing a direct equivalence between Proposition~\ref{Bn(S)_from_Bn(S-pt)} and the following statement:
\begin{proposition}\label{Pn(S)_from_Pn(S-pt)}
Let $S$ be a connected surface and $x \in S$ any point in its interior. The inclusion of $S-x$ into $S$ induces a surjective homomorphism
\[
\PB_n(S-x) \twoheadrightarrow \PB_n(S)
\]
whose kernel is normally generated by $n$ elements $\beta_1, \ldots , \beta_n$. Explicitly, $\beta_i$ is a braid whose $i$-th strand loops once around the puncture $x$, the other strands being trivial.
\end{proposition}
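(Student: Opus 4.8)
The plan is to deduce this statement directly from Proposition~\ref{Bn(S)_from_Bn(S-pt)} by restricting the surjection $\B_n(S-x) \twoheadrightarrow \B_n(S)$ to the subgroups of pure braids. Both groups sit in compatible short exact sequences with the symmetric group, giving a commutative diagram
\[
\begin{tikzcd}
\PB_n(S-x) \ar[r, hook] \ar[d, two heads] & \B_n(S-x) \ar[r, two heads, "\pi"] \ar[d, two heads] & \Sym_n \ar[d, equal] \\
\PB_n(S) \ar[r, hook] & \B_n(S) \ar[r, two heads, "\pi"] & \Sym_n
\end{tikzcd}
\]
whose vertical maps are induced by the inclusion $S-x \hookrightarrow S$. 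Surjectivity of the left-hand map is a one-line diagram chase: any $p \in \PB_n(S)$ lifts to some $\tilde p \in \B_n(S-x)$, and since $\pi(\tilde p) = \pi(p) = 1$ the lift $\tilde p$ is already pure. First I would record this, together with the observation that the element $\beta$ of Proposition~\ref{Bn(S)_from_Bn(S-pt)} is itself a pure braid, since its $n-1$ trivial strands and its one strand looping around $x$ all return to their starting positions; thus $\beta = \beta_{i_0}$ is one of the $\beta_i$.

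Next I would identify the kernel of the left-hand vertical map. Writing $K$ for the kernel of $\B_n(S-x) \twoheadrightarrow \B_n(S)$, Proposition~\ref{Bn(S)_from_Bn(S-pt)} says that $K$ is the normal closure of $\beta$ in $\B_n(S-x)$. Because $\PB_n(S-x)$ is normal in $\B_n(S-x)$ and $\beta$ is pure, every conjugate of $\beta$ is pure, so $K \subseteq \PB_n(S-x)$; hence the kernel of $\PB_n(S-x) \twoheadrightarrow \PB_n(S)$ is exactly $K \cap \PB_n(S-x) = K$. It therefore remains to show that $K$ coincides with the normal closure $N_P$ of $\{\beta_1, \ldots, \beta_n\}$ inside $\PB_n(S-x)$.

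The heart of the argument is a conjugation computation. For $\sigma \in \Sym_n$, choose a disc braid $\tilde\sigma \in \varphi(\B_n) \subseteq \B_n(S-x)$ realising $\sigma$ by moving the strands within the base disc $\D$, away from $x$ and from the fixed paths defining the $\psi_i$. Then $\tilde\sigma \beta_i \tilde\sigma^{-1}$ is a pure braid whose only nontrivial strand loops around $x$ in the position indexed by $\sigma(i)$, so it is $\PB_n(S-x)$-conjugate to $\beta_{\sigma(i)}$ and in particular lies in $N_P$. Since all the $\beta_i$ are $\B_n(S-x)$-conjugate to $\beta$, we have $K = \langle\langle \beta_1, \ldots, \beta_n \rangle\rangle_{\B_n(S-x)} \supseteq N_P$. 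For the reverse inclusion I would verify that $N_P$ is normal in the full group $\B_n(S-x)$: writing an arbitrary $g \in \B_n(S-x)$ as $g = h \tilde\sigma$ with $h \in \PB_n(S-x)$ and $\sigma = \pi(g)$, one gets $g \beta_i g^{-1} = h(\tilde\sigma \beta_i \tilde\sigma^{-1})h^{-1} \in N_P$; since $N_P$ is normally generated in $\PB_n(S-x)$ by the $\beta_i$ and $\PB_n(S-x)$ is itself normal in $\B_n(S-x)$, this gives $g N_P g^{-1} \subseteq N_P$. Consequently $K = N_P$, as required.

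The main obstacle is the geometric claim in the third paragraph, namely that a disc braid conjugates the loop-around-$x$ of one strand to that of another; making this precise requires care with the implicit choices of paths used to define the $\psi_i$, although it suffices to know that the conjugate lands in $N_P$ up to a pure braid rather than equalling $\beta_{\sigma(i)}$ on the nose. The remaining steps are formal group theory. I would also note that this establishes the announced \emph{direct equivalence}: the same diagram read in reverse recovers Proposition~\ref{Bn(S)_from_Bn(S-pt)} from the present statement, because $\beta$ normally generates $K$ in $\B_n(S-x)$ as soon as the $\beta_i$ normally generate $K = K \cap \PB_n(S-x)$ in $\PB_n(S-x)$.
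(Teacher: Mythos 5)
Your proof is correct and follows essentially the same route as the paper: both reduce the statement to showing that the normal closure of $\beta$ in $\B_n(S-x)$ coincides with the normal closure of $\beta_1,\ldots,\beta_n$ in $\PB_n(S-x)$, using the coset decomposition $g = h\tilde\sigma$ with $h$ pure. The one step you flag as delicate --- that $\tilde\sigma\beta_i\tilde\sigma^{-1}$ lands in the $\PB_n(S-x)$-normal closure of $\beta_{\sigma(i)}$ --- is exactly what the paper makes exact by choosing $\beta$ to commute with $\langle\sigma_2,\ldots,\sigma_{n-1}\rangle$, defining $\beta_i := (\sigma_1\cdots\sigma_{i-1})^{-1}\beta(\sigma_1\cdots\sigma_{i-1})$, and writing each permutation as $\tau'(\tau_1\cdots\tau_{i-1})$ with $\tau'$ fixing $1$, so that $t^{-1}\beta t=\beta_i$ holds on the nose for the chosen coset representatives.
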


\begin{proof}[Equivalence between Proposition~\ref{Bn(S)_from_Bn(S-pt)} and Proposition~\ref{Pn(S)_from_Pn(S-pt)}]

Let $N$ be the subgroup of $\B_n(S-x)$ normally generated by the braid $\beta$ from Proposition~\ref{Bn(S)_from_Bn(S-pt)}. Note that $\beta$ is a pure braid, hence we have $N \subseteq \PB_n(S-x)$. Now, let $N'$ be the subgroup of $\PB_n(S-x)$ normally generated by the $\beta_i$. We will show that $N = N'$. This implies the equivalence of Propositions \ref{Bn(S)_from_Bn(S-pt)} and \ref{Pn(S)_from_Pn(S-pt)} by considering the diagram
\[
\begin{tikzcd}
N' \ar[r,hook] & \mathrm{ker}(\pi_{\PB}) \ar[r,hook] \ar[d,equal] & \PB_n(S-x) \ar[r,two heads,"\pi_{\PB}"]  \ar[d,hook] & \PB_n(S) \ar[d,hook] \\
N \ar[r,hook] & \mathrm{ker}(\pi_{\B}) \ar[r,hook] & \B_n(S-x) \ar[r,two heads,"\pi_{\B}"] \ar[d,two heads] & \B_n(S) \ar[d,two heads] \\
&& \mathfrak{S}_n \ar[r,equal] & \mathfrak{S}_n
\end{tikzcd}
\]
and noting that the two propositions are equivalent, respectively, to the statements $N=\mathrm{ker}(\pi_{\B})$ and $N'=\mathrm{ker}(\pi_{\PB})$.

Our definitions of $\beta$ and the $\beta_i$ are up to some choices, but all these choices give elements conjugate to each other (in $\B_n(S-x)$ or in $\PB_n(S-x)$, respectively) or each other's inverses, which does not affect the definition of $N$ and  $N'$. We can make these choices so that:
\begin{itemize}
\item the only moving strand of $\beta$ is the first one,  
\item $\beta$ commutes with every element of the subgroup $\B_{1, n-1} = \langle \sigma_2, \ldots , \sigma_{n-1} \rangle$ of $\B_n(S-x)$ (which consists of braids in a fixed disc $\D \subset S-x$ involving only the strands $2$ to $n$),
\item for each $i$, $\beta_i = (\sigma_1 \cdots \sigma_{i-1})^{-1} \beta (\sigma_1 \cdots \sigma_{i-1})$.
\end{itemize}
See Figure \ref{fig:filling-in-a-puncture} for an example of such choices. The latter relations imply $N' \subseteq N$.

We now show that $N'$ contains all the conjugates of $\beta$ by elements of $\B_n(S-x)$, which implies $N' \supseteq N$. In order to do this, we need only show that it contains $t^{-1} \beta t$ for $t$ in a set of representatives of classes modulo $\PB_n(S-x)$: then every element of $\B_n(S-x)$ is of the form $t \alpha$ for some such $t$ and some $\alpha \in \PB_n(S-x)$, and $(t \alpha)^{-1} \beta (t \alpha) = \alpha^{-1} (t^{-1} \beta t) \alpha$ must be in $N'$.

Every element $\tau \in \Sym_n \cong \B_n(S-x)/\PB_n(S-x)$ is the product of an element $\tau'$ fixing $1$ with some cycle $\tau_1 \cdots \tau_{i-1}$ (precisely, $i = \tau(1)$). Since $\beta$ commutes with every element of the subgroup $\langle\sigma_2, \ldots , \sigma_{n-1}\rangle \subset \B_n(S-x)$, and since this subgroup surjects onto permutations fixing $1$, we can choose a lift $t'$ of $\tau'$ commuting with $\beta$, so that the lift $t = t' \sigma_1 \cdots \sigma_{i-1}$ of $\tau$ to $\B_n(S-x)$ satisfies:
\begin{align*}
t^{-1} \beta t 
&= (\sigma_1 \cdots \sigma_{i-1})^{-1} t'^{-1} \beta t' (\sigma_1 \cdots \sigma_{i-1}) \\
&= (\sigma_1 \cdots \sigma_{i-1})^{-1} \beta (\sigma_1 \cdots \sigma_{i-1}) = \beta_i \in N',
\end{align*}
whence our result.
\end{proof}

\begin{figure}[ht]
	\centering
	\includegraphics[scale=0.8]{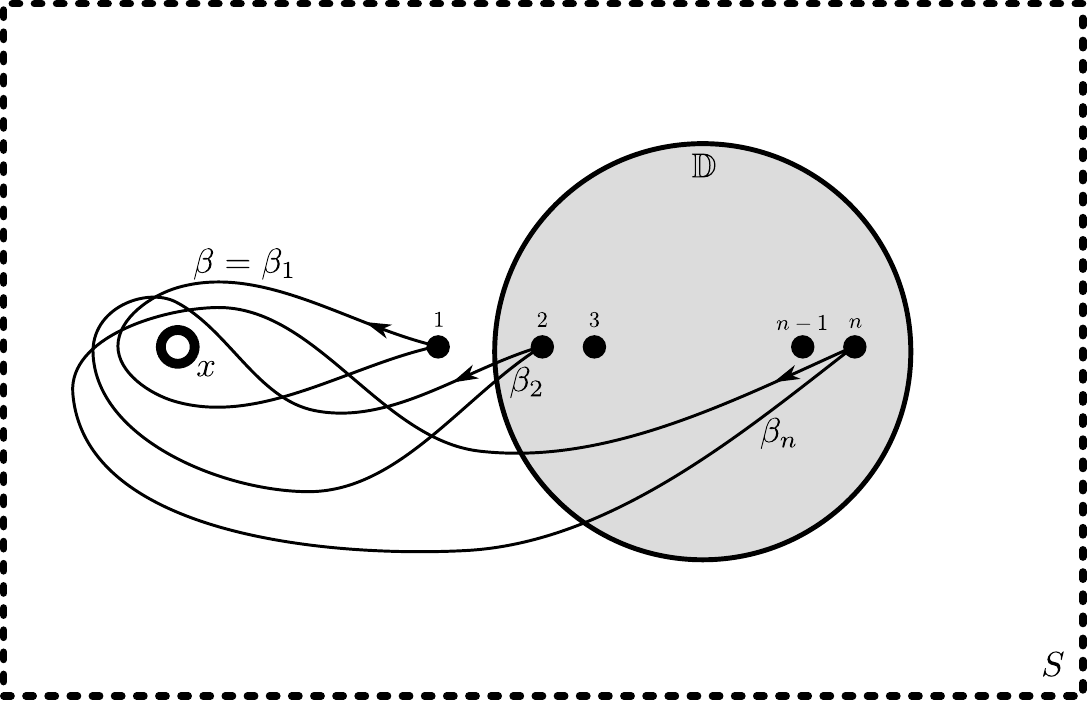}
	\caption{Braids $\beta$ and $\beta_1,\ldots,\beta_n$ from the proof of the equivalence of Propositions \ref{Bn(S)_from_Bn(S-pt)} and \ref{Pn(S)_from_Pn(S-pt)}.}
	\label{fig:filling-in-a-puncture}
\end{figure}

It is not difficult to generalise this to any partitioned braid group:
\begin{corollary}\label{partitioned_B(S)_from_B(S-pt)}
Let $S$ be a connected surface, $x \in S$ any point in its interior and $\lambda = (n_1, \ldots, n_l)$ be a partition of $n$ of length $l$. The inclusion of $S-x$ into $S$ induces a surjective homomorphism
\[
\B_\lambda(S-x) \twoheadrightarrow \B_\lambda(S)
\]
whose kernel is normally generated by $l$ elements $\beta_1,\ldots,\beta_l$. Explicitly, $\beta_i$ is a braid with $n-1$ trivial strands, except one block in the $i$-th block which loops once around the puncture $x$. 
\end{corollary}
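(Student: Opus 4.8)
The plan is to deduce Corollary~\ref{partitioned_B(S)_from_B(S-pt)} from the pure case (Proposition~\ref{Pn(S)_from_Pn(S-pt)}) by a diagram chase exactly analogous to the equivalence argument between Propositions~\ref{Bn(S)_from_Bn(S-pt)} and~\ref{Pn(S)_from_Pn(S-pt)}, but now interpolating between the pure group $\PB_n$ and the full partitioned group $\B_\lambda$ at the level of $\Sym_\lambda \subseteq \Sym_n$. The surjectivity of $\B_\lambda(S-x) \twoheadrightarrow \B_\lambda(S)$ is immediate: it is the restriction of the surjection $\B_n(S-x) \twoheadrightarrow \B_n(S)$ from Proposition~\ref{Bn(S)_from_Bn(S-pt)} to the preimages of $\Sym_\lambda$, and since the induced map on $\Sym_n$-quotients is the identity, the restriction is still onto.

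First I would set up the relevant normal subgroups. Let $\beta_1, \ldots, \beta_n$ be the pure braids from Proposition~\ref{Pn(S)_from_Pn(S-pt)}, chosen as in that proof so that $\beta_i = (\sigma_1 \cdots \sigma_{i-1})^{-1} \beta (\sigma_1 \cdots \sigma_{i-1})$ with $\beta = \beta_1$ looping the first strand around $x$. For each block $b_j(\lambda) = \{t_{j-1}+1, \ldots, t_j\}$, I would single out one representative index in that block — say the minimal one $t_{j-1}+1$ — and set $\beta_{(j)} := \beta_{t_{j-1}+1}$; these are the $l$ elements named in the statement. Let $N'$ be the subgroup of $\PB_n(S-x)$ normally generated by all the $\beta_i$ (so $N' = \ker(\PB_n(S-x) \twoheadrightarrow \PB_n(S))$ by Proposition~\ref{Pn(S)_from_Pn(S-pt)}), and let $N_\lambda$ be the subgroup of $\B_\lambda(S-x)$ normally generated by just the $l$ chosen elements $\beta_{(1)}, \ldots, \beta_{(l)}$. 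The goal is to show $N_\lambda = \ker(\B_\lambda(S-x) \twoheadrightarrow \B_\lambda(S))$.

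The two inclusions would be handled as in the model proof. For the kernel being contained in $N_\lambda$, I would use the commutative diagram
\[
\begin{tikzcd}
N_\lambda \ar[r,hook] & \ker(\pi_\lambda) \ar[r,hook] \ar[d,hook] & \B_\lambda(S-x) \ar[r,two heads,"\pi_\lambda"] \ar[d,hook] & \B_\lambda(S) \ar[d,hook] \\
N' \ar[r,hook] & \ker(\pi_{\PB}) \ar[r,hook] & \PB_n(S-x) \ar[r,two heads,"\pi_{\PB}"] & \PB_n(S),
\end{tikzcd}
\]
together with the observation that within each block the chosen $\beta_{(j)}$ conjugates (via elements of $\B_\lambda(S-x)$, namely products of $\sigma_\alpha$ internal to the block, which are permitted generators by Lemma~\ref{generators_of_partitioned_braids}) to all the other $\beta_i$ with $i$ in that block. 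Hence the $l$ chosen elements already normally generate, inside $\B_\lambda(S-x)$, all $n$ of the $\beta_i$, so $N' \subseteq N_\lambda$ and the kernel of $\pi_\lambda$ (which is contained in $\ker \pi_{\PB} = N'$) lands in $N_\lambda$. For the reverse, one checks each $\beta_{(j)}$ lies in the kernel of $\pi_\lambda$, which is clear since it loops a strand around the filled-in puncture and hence dies in $\B_\lambda(S)$.

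The one genuine point requiring care — and the step I expect to be the main obstacle — is verifying that the intra-block conjugators needed to produce all $\beta_i$ in block $j$ from the single $\beta_{(j)}$ actually live in $\B_\lambda(S-x)$ and not merely in $\B_n(S-x)$. This is exactly the analogue of the coset-representative argument in the model proof, but restricted to $\Sym_\lambda$: I would argue that since the subgroup $\Sym_{n_j}$ acting within the $j$-th block is generated by adjacent transpositions $(\alpha, \alpha+1)$ with $\alpha, \alpha+1$ in the same block, each lifts to a generator $\sigma_\alpha \in \B_\lambda(S-x)$, and conjugation of $\beta_{(j)}$ by these moves the looping strand to any other position within the block. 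For indices in \emph{different} blocks one does not need any relation, since those $\beta_i$ are separately normally generated by their own block's chosen element. Once this block-internal transitivity is established, the equality $N_\lambda = \ker(\pi_\lambda)$ follows, completing the proof.
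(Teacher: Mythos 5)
Your proposal is correct and follows essentially the same route as the paper: both deduce the result from Proposition~\ref{Pn(S)_from_Pn(S-pt)} by observing that the kernel lies in $\PB_n(S-x) \subseteq \B_\lambda(S-x)$ and that the relation $\beta_{\alpha+1} = \sigma_\alpha^{-1}\beta_\alpha\sigma_\alpha$, with $\sigma_\alpha \in \B_\lambda(S-x)$ whenever $\alpha$ and $\alpha+1$ lie in the same block, lets one normal generator per block suffice. The paper's write-up is just slightly more compact, invoking normality of the kernel in $\B_n(S-x)$ directly rather than running the full diagram chase.
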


\begin{proof}
We have $\PB_n(S-x) \subset \B_\lambda(S-x) \subset \B_n(S-x)$. As a consequence, if $N$ is a normal subgroup of $\PB_n(S-x)$ which is normal in $\B_n(S-x)$, we have $\PB_n(S-x)/N \subset \B_\lambda(S-x)/N \subset \B_n(S-x)/N$. Moreover, normal generators for $N$ in $\PB_n(S-x)$ are also normal generators for $N$ in $\B_\lambda(S-x)$ and, in fact, fewer generators are needed to normally generate $N$ in $\B_\lambda(S-x)$. Namely, using the notations from the previous proof, we have $\beta_{\alpha+1} = \sigma_\alpha^{-1}\beta_\alpha \sigma_\alpha$, so that $\beta_{\alpha+1}$ and $\beta_\alpha$ are conjugate in $\B_\lambda(S-x)$ whenever $\sigma_\alpha \in \B_\lambda(S-x)$, which happens when $\alpha$ and $\alpha + 1$ are in the same block of $\lambda$. As a consequence, we need to pick only one index $\alpha$ in each block of $\lambda$ in order for the $\beta_\alpha$ to normally generate $N$ in $\B_\lambda(S-x)$.
\end{proof}

\begin{remark}
This boils down to considering representatives modulo $\B_\lambda(S-x)$ instead of modulo $\PB_n(S-x)$ (that is, elements of $\Sym_n/\Sym_\lambda$ instead of $\Sym_n$) in the previous reasoning. In fact, one can see that there are straightforward equivalences between all these statements for the different partitions of $n$.
\end{remark}

Let us make these statements explicit. We use the usual convention $A_{ji} = A_{ij}$ and $A_{ii} = 1$:

\begin{corollary}\label{partitioned_Braids_on_closed_surfaces}
For all $g \geq 0$ and any partition $\lambda = (n_1, \ldots , n_l)$, the braid group $\B_\lambda(\Sigma_g)$ is the quotient of $\B_\lambda(\Sigma_{g,1})$ by the relations:
\[A_{\alpha 1} \cdots A_{\alpha n}\  =\ \prod\limits_{r=1}^g [a_r^{(\alpha)}, (b_r^{(\alpha)})^{-1}].\]
Similarly, the braid group $\B_\lambda(\mathscr N_g)$ is the quotient of $\B_\lambda(\mathscr N_{g,1})$ by the relations:
\[A_{\alpha 1} \cdots A_{\alpha n}\  =\ (c_1^{(\alpha)})^2 \cdots (c_g^{(\alpha)})^2.\]
In both cases, $\alpha$ runs through any set of representatives of the blocks of $\lambda$.
\end{corollary}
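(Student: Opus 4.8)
The plan is to deduce this statement as a direct consequence of Corollary~\ref{partitioned_B(S)_from_B(S-pt)}, exactly as Corollary~\ref{Braids_on_closed_surfaces} was deduced from Proposition~\ref{Bn(S)_from_Bn(S-pt)}. First I would invoke the isotopy equivalences $\Sigma_g - \mathrm{pt} \simeq \Sigma_{g,1}$ and $\mathscr N_g - \mathrm{pt} \simeq \mathscr N_{g,1}$, which (as recalled in \Spar\ref{sec:surfaces}) induce isomorphisms between the corresponding partitioned braid groups. Thus $\B_\lambda(\Sigma_g)$ is the quotient of $\B_\lambda(\Sigma_{g,1})$ by the subgroup normally generated by the $l$ elements $\beta_\alpha$ of Corollary~\ref{partitioned_B(S)_from_B(S-pt)}, one for each block of $\lambda$, and similarly in the non-orientable case. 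It remains only to identify each relation $\beta_\alpha = 1$ explicitly in terms of the standard generators.

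The key computation is the identification of $\beta_\alpha$, the braid whose $\alpha$-th strand loops once around the filled-in puncture while the other strands stay trivial. As noted in the proof of Corollary~\ref{Braids_on_closed_surfaces}, for the first strand one has $\sigma_1 \cdots \sigma_{n-2}\sigma_{n-1}^2 \sigma_{n-2}\cdots \sigma_1 = A_{12}\cdots A_{1n} = A_{11}A_{12}\cdots A_{1n}$ (using $A_{11}=1$), and the loop of the $\alpha$-th strand around all the others is exactly $A_{\alpha 1}\cdots A_{\alpha n}$, with the conventions $A_{\alpha\alpha}=1$ and $A_{ji}=A_{ij}$. On the other side, filling in the puncture identifies this loop with the class of the puncture in $\pi_1$ of the surface, which is the boundary word: $\prod_{r=1}^g [a_r^{(\alpha)}, (b_r^{(\alpha)})^{-1}]$ in the orientable case and $(c_1^{(\alpha)})^2\cdots(c_g^{(\alpha)})^2$ in the non-orientable case. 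This is precisely the content of the boundary-element pictures in Figure~\ref{boundary_relations}, transported to the $\alpha$-th strand via conjugation by $\sigma_1\cdots\sigma_{\alpha-1}$ (or its analogue), which sends the generators $a_r, b_r, c_r$ attached to the first strand to their $\alpha$-th-strand counterparts $a_r^{(\alpha)}, b_r^{(\alpha)}, c_r^{(\alpha)}$ in the notation of Figure~\ref{gen_of_Bn(S)}.

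The only subtlety—and the place where a little care is needed rather than a genuine obstacle—is that Corollary~\ref{partitioned_B(S)_from_B(S-pt)} tells us we may take one normal generator $\beta_\alpha$ per block, with $\alpha$ ranging over any choice of representatives of the blocks of $\lambda$, since $\beta_{\alpha+1}=\sigma_\alpha^{-1}\beta_\alpha\sigma_\alpha$ whenever $\sigma_\alpha \in \B_\lambda(S-x)$. This is exactly why the relations in the statement are indexed by $\alpha$ running through a set of representatives of the blocks, rather than over all strands. I would therefore conclude by remarking that the relations $A_{\alpha 1}\cdots A_{\alpha n} = \prod_r [a_r^{(\alpha)}, (b_r^{(\alpha)})^{-1}]$ (resp.\ $=(c_1^{(\alpha)})^2\cdots(c_g^{(\alpha)})^2$) for such $\alpha$ are precisely the relations $\beta_\alpha = 1$, so Corollary~\ref{partitioned_B(S)_from_B(S-pt)} yields the presentation claimed. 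I expect no real difficulty here: the entire argument is a translation of already-established facts, with the main work being the bookkeeping that matches the geometric braid $\beta_\alpha$ to its algebraic expression via the generators of Figure~\ref{gen_of_Bn(S)} and the boundary words of Figure~\ref{boundary_relations}.
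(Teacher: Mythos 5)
Your proposal is correct and is exactly the argument the paper intends: the corollary is stated as the explicit form of Corollary~\ref{partitioned_B(S)_from_B(S-pt)} (via the isotopy equivalence $\Sigma_g - \mathrm{pt} \simeq \Sigma_{g,1}$, resp.\ $\mathscr N_g - \mathrm{pt} \simeq \mathscr N_{g,1}$), with each normal generator $\beta_\alpha$ identified with the boundary word of Figure~\ref{boundary_relations} attached to the $\alpha$-th strand. The only cosmetic remark is that the identification of $\beta_\alpha$ can be read off directly from the figure for each block representative $\alpha$, without routing through conjugation from the first strand, but this changes nothing of substance.
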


\begin{remark}\label{adaptation_of_proof_B(S)_from_B(S-pt)}
Instead of the reasoning above, one could adapt the proof of Proposition~\ref{Bn(S)_from_Bn(S-pt)} to partitioned configuration spaces. Precisely, we get an open cover $\{ \cU_\lambda(S,x) , C_\lambda(S-x) \}$ of $C_\lambda(S) = F_n(S)/\Sym_\lambda$ by an obvious adaptation of the definition of $\cU_n(S,x)$. However, this time $\cU_\lambda(S,x)$ is disconnected, with one path-component for each block of the partition $\lambda$. As a consequence, one needs to apply the Seifert-van Kampen theorem once for each path-component of $\cU_\lambda(S,x)$, resulting in taking the quotient of the fundamental group by one additional relation for each application of the theorem. In doing so, one needs to be careful about basepoints, since one obviously cannot choose a common basepoint in the different path-components of $\cU_\lambda(S,x)$.
\end{remark}

\section{The lower central series of the whole group}\label{sec:LCS_undiscrete_partition}

We now turn to the study of the LCS of $\B_n(S)$, which we completely determine for any $n \geq 3$ and any surface $S$. We begin by computing the abelianisation of this group; see Proposition~\ref{Bn(S)^ab}. Then we study $\B_n(S)/\LCS_\infty$, and we show that when $n \geq 3$, it is nilpotent of class at most $2$, which means that the LCS of $\B_n(S)$ stops at most at $\LCS_3$; see Theorem~\ref{Bn(S)/residue} and Corollary~\ref{LCS_Bn(S)_stable}. Finally, we compute the Lie ring, generalising a result of \cite{BellingeriGervaisGuaschi}; see Theorem~\ref{Lie_ring_B(S)}.

\subsection{The abelianisation}

We first compute the abelianisation of $\B_n(S)$, for any $n$ and any surface $S$. In order to do this, recall that the group morphism $\varphi\colon\B_n \rightarrow \B_n(S)$ induces a map $\B_n^{\ab} \rightarrow \B_n(S)^{\ab}$, and that, since all the $\sigma_i$ are identified in $\B_n^{\ab} \cong \Z$, they are so also in $\B_n(S)^{\ab}$. We denote by $\sigma$ their common image in $\B_n(S)^{\ab}$.

\begin{proposition}\label{Bn(S)^ab}
In general, for all $n \geq 2$, we have:
\[\B_n(S)^{\ab} \cong \pi_1(S)^{\ab} \times \langle \sigma \rangle.\]
Moreover, $\sigma$ is:
\begin{itemize}
\item of infinite order if $S$ is planar,
\item of order $2(n-1)$ if  $S \cong \S^2$,
\item of order $2$ in all the other cases.
\end{itemize}
\end{proposition}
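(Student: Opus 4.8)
The plan is to compute $\B_n(S)^{\ab}$ directly from the abelianisation of the extension provided by the projection $\pi_S$, together with Goldberg-type generators, and then to pin down the order of $\sigma$ using the trichotomy already established in Proposition~\ref{trichotomy}.

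First I would set up the computation of the underlying abelian group. By Proposition~\ref{generators_and_N(Pn)}, the group $\B_n(S)$ is generated by the image of $\varphi\colon \B_n \to \B_n(S)$ (the classical braid generators $\sigma_i$) together with the images of the maps $\psi_i\colon \pi_1(S - \D) \to \PB_n(S)$. Passing to abelianisations, all the $\sigma_i$ become the single class $\sigma$, and the images of the various $\psi_i$ all become identified with one another (since the $\psi_i$ are conjugate by elements of $\varphi(\B_n)$, as noted in the proof of Proposition~\ref{generators_and_N(Pn)}), contributing a copy of $\pi_1(S)^{\ab}$. To make this precise and to get a clean splitting, I would apply Lemma~\ref{lem:abelianization semidirect} (or Lemma~\ref{abelianization_from_coinv}) to a suitable short exact sequence. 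The natural one is the Fadell-Neuwirth sequence from Proposition~\ref{Fadell-Neuwirth}, but the cleanest route is via the projection $\pi_S \colon \B_n(S) \twoheadrightarrow \pi_1(S)\wr\Sym_n$; however, since $\Sym_n$ interferes, it is better to use instead the map $\B_n(S) \to \B_n(S)/\PB_n^\circ(S) \cong \pi_1(S)\wr\Sym_n$ combined with the structure of $\PB_n^\circ(S)$ as the normal closure of $\PB_n$. The key structural input is that the abelianisation of $\pi_1(S)\wr\Sym_n$ is $\pi_1(S)^{\ab}\times \Sym_n^{\ab}$, and that the $\Sym_n^{\ab}\cong\Z/2$ factor is exactly absorbed into the class $\sigma$ (since $\pi$ sends $\sigma_i$ to a transposition). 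I expect this to yield the direct product decomposition $\B_n(S)^{\ab}\cong \pi_1(S)^{\ab}\times\langle\sigma\rangle$, with the two factors distinguished by the split projections: $\pi_1(S)^{\ab}$ detected by $\pi_S$ (after abelianising and killing the permutation part) and $\langle\sigma\rangle$ detected by the forgetful map to a single block.

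Next I would determine the order of $\sigma$, which is where the trichotomy of Proposition~\ref{dichotomy}/Proposition~\ref{trichotomy} enters. Since $\overline{A}_{ij} = \sigma^2$ in $\B_n(S)^{\ab}$ (because $A_{ij}=\sigma_i^2$ up to conjugacy, and all $\sigma_k$ are identified), the three cases of Proposition~\ref{trichotomy} translate directly into statements about $\sigma$. In the planar case, $\overline{A}_{ij}=\sigma^2$ has infinite order, hence so does $\sigma$. In the case $S\cong\S^2$, Corollary~\ref{Braids_on_closed_surfaces} gives the explicit presentation; abelianising the relation $\sigma_1\cdots\sigma_{n-2}\sigma_{n-1}^2\sigma_{n-2}\cdots\sigma_1 = 1$ (the sphere having trivial fundamental group, so the right-hand side is empty) yields $\sigma^{2(n-1)}=1$, and one checks $\sigma$ has exactly this order, so that $\langle\sigma\rangle\cong\Z/2(n-1)$ and $\pi_1(\S^2)^{\ab}=0$. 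In all remaining cases, $S$ contains an embedded handle or crosscap, so by Proposition~\ref{trichotomy} the element $A_{ij}$ is a commutator in $\PB_n(S)$, whence $\sigma^2=\overline{A}_{ij}=0$ in $\B_n(S)^{\ab}$; thus $\sigma$ has order dividing $2$. To see it has order exactly $2$ (and not $1$), I would use the forgetful/embedding map into a surface where $\sigma$ is nontrivial, or more directly the projection $\B_n(S)\twoheadrightarrow\Sym_n\twoheadrightarrow\Z/2$ sending $\sigma$ to the nontrivial element, which is well-defined and shows $\sigma\neq 0$.

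The main obstacle I anticipate is establishing that the decomposition is genuinely a \emph{direct product} — that the $\pi_1(S)^{\ab}$ factor and the $\langle\sigma\rangle$ factor do not interact or collapse into one another — rather than merely an extension, and simultaneously handling the closed versus non-closed cases uniformly. For non-closed $S$ the Fadell-Neuwirth sequence splits (Proposition~\ref{Fadell-Neuwirth}), so Lemma~\ref{lem:abelianization semidirect} applies cleanly and the splitting is automatic; for closed $S$ (in particular the torus, Klein bottle, and higher-genus surfaces) one must instead argue via Corollary~\ref{partitioned_Braids_on_closed_surfaces}, abelianising the added boundary relation and checking that it only imposes $\sigma^2=0$ (the right-hand sides $\prod[a_r,b_r^{-1}]$ and $\prod (c_r)^2$ becoming, respectively, $0$ and $2\sum \overline{c}_r$ in the abelianisation, consistent with the known $\pi_1(S)^{\ab}$). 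Verifying that no further relation among the $\pi_1(S)^{\ab}$ generators and $\sigma$ is introduced, so that the product remains direct, is the delicate bookkeeping step, but it is routine once the presentations are in hand.
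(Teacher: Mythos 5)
Your proposal follows essentially the same route as the paper's proof: abelianise the extension $\PB_n^\circ(S)\hookrightarrow\B_n(S)\twoheadrightarrow\pi_1(S)\wr\Sym_n$ via Lemma~\ref{abelianization_from_coinv}, use the fact that $\PB_n^\circ(S)$ is the normal closure of $\PB_n$ (so its coinvariants contribute only the cyclic subgroup generated by $\sigma^2$), and then invoke Proposition~\ref{trichotomy} to determine the order of $\sigma$. The one place you make extra work for yourself is the splitting: the paper avoids your closed-versus-non-closed case analysis entirely by observing that the surjection $\B_n(S)^{\ab}\twoheadrightarrow\pi_1(S)^{\ab}$ is split uniformly by (the map induced by) any $\psi_i$, so no bookkeeping with boundary relations is needed.
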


\begin{proof}
Consider the short exact sequence $\PB_{n}^{\circ}(S)\hookrightarrow\B_{n}(S)\overset{\pi_{S}}{\twoheadrightarrow}\pi_{1}(S)\wr\Sym_{n}$. By Lemma~\ref{abelianization_from_coinv}, it induces an exact sequence of abelian groups:
\[\begin{tikzcd}
(\PB_n^\circ(S)^{\ab})_{\B_n(S)} \ar[r] 
&\B_n(S)^{\ab} \ar[r, two heads] 
&(\pi_1(S) \wr \Sym_n)^{\ab} \ar[r]
&0.
\end{tikzcd}\]
On the one hand, the quotient $(\pi_1(S) \wr \Sym_n)^{\ab}$ is isomorphic to $\pi_1(S)^{\ab} \times \Z/2$ (Lemma~\ref{wr_ab}). On the other hand, it follows from Proposition~\ref{generators_and_N(Pn)} that $\PB_n^\circ(S)$ is generated by $\PB_n$ under the action of $\B_n(S)$. As a consequence, the map $\PB_n^{\ab} \rightarrow (\PB_n^\circ(S)^{\ab})_{\B_n(S)}$ induced by $\varphi$ is surjective. Moreover, it factors through $(\PB_n^{\ab})_{\B_n} = (\PB_n^{\ab})_{\Sym_n} \cong \Z$. Thus $(\PB_n^\circ(S)^{\ab})_{\B_n(S)}$ is cyclic, and its image in $\B_n(S)^{\ab}$ is generated by $\sigma^2$, which is the image of any pure braid generator.

All this implies that $\B_n(S)^{\ab}/\langle \sigma \rangle \cong \pi_1(S)^{\ab}$. Moreover, the corresponding projection $\B_n(S)^{\ab} \twoheadrightarrow \pi_1(S)^{\ab}$ splits, a splitting being induced by any of the $\psi_i$. As a consequence, $\B_n(S)^{\ab}$ identifies with $\pi_1(S)^{\ab} \times \langle \sigma \rangle$, and we can use Proposition~\ref{trichotomy} to get a complete calculation.
\end{proof}

\subsection{The lower central series}

Let us now turn to the study of the LCS of $\B_n(S)$. Our main tool for studying it is a decomposition theorem (Theorem~\ref{Bn(S)/residue} below), whose proof relies on the following:
\begin{lemma}\label{sigma_central}
Let $n \geq 3$. The image of $\B_n$ in $\B_n(S)/\LCS_\infty$ is cyclic, and it is central. Namely, it is generated by the common class $\sigma$ of the usual generators $\sigma_i$ of $\B_n$.
\end{lemma}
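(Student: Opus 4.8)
The plan is to prove the two assertions—cyclicity and centrality—separately, in both cases passing through the morphism $\varphi\colon\B_n\to\B_n(S)$, and exploiting the hypothesis $n\geq 3$ via a disjoint-support argument.

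For cyclicity, I would compose $\varphi$ with the projection $q\colon\B_n(S)\twoheadrightarrow\B_n(S)/\LCS_\infty$ to get a morphism $f=q\circ\varphi$. Recall from Example~\ref{eg:braids} that $\B_n^{\ab}\cong\Z$, so that the LCS of $\B_n$ stops at $\LCS_2$; in particular $\sigma_i\sigma_j^{-1}\in\LCS_2(\B_n)=\LCS_\infty(\B_n)$ for all $i,j$, since all the $\sigma_i$ coincide in $\B_n^{\ab}$. By functoriality of the lower central series, $f(\LCS_k(\B_n))\subseteq\LCS_k(\B_n(S)/\LCS_\infty)$ for every $k$, whence $f(\LCS_\infty(\B_n))\subseteq\LCS_\infty(\B_n(S)/\LCS_\infty)$. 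But $\B_n(S)/\LCS_\infty$ is residually nilpotent, so its residue is trivial, and therefore $f(\sigma_i\sigma_j^{-1})=1$. This shows that all the $\sigma_i$ have the same class $\sigma$ in $\B_n(S)/\LCS_\infty$, so the image of $\B_n$ is the cyclic group generated by $\sigma$.

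For centrality, I would first invoke Proposition~\ref{generators_and_N(Pn)}: the group $\B_n(S)$ is generated by the image of $\varphi$ together with the image of a single $\psi_i$. Hence $\B_n(S)/\LCS_\infty$ is generated by $\sigma$ and by the classes of the elements $\psi_i(\gamma)$, for $\gamma\in\pi_1(S-\D)$. Since the image of $\B_n$ is cyclic, $\sigma$ commutes with it, so it remains only to check that $\sigma$ commutes with each $\psi_i(\gamma)$ modulo $\LCS_\infty$. Here I would use a disjoint-support argument: because $n\geq 3$, I can represent $\sigma$ by a generator $\sigma_j$ braiding two adjacent strands $j,j+1$ inside a small subdisc of $\D$ with $\{j,j+1\}\cap\{i\}=\varnothing$, while $\psi_i(\gamma)$ moves only the $i$-th strand along a loop routed in $S-\D$ away from that subdisc. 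These two surface braids then move disjoint strands in disjoint regions, so they commute in $\B_n(S)$ itself, a fortiori in $\B_n(S)/\LCS_\infty$. Thus $\sigma$ commutes with every generator, hence is central.

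The main obstacle—and the reason the statement requires $n\geq 3$—is precisely this last disjoint-support step: with only two strands there is no room to simultaneously perform a nontrivial braiding $\sigma_j$ and carry a loop of $\pi_1(S)$ on a distinct strand through a disjoint region. I would take care over the routing of the path defining $\psi_i(\gamma)$: the $i$-th strand starts at $c_i\in\D$, so one must choose the fixed exit path of $\iota_i$ and the positions of the base points so that this path, together with the loop, avoids the subdisc supporting $\sigma_j$. Everything else is formal, relying on functoriality of the LCS, the residual nilpotence of $\B_n(S)/\LCS_\infty$, and the generation statement of Proposition~\ref{generators_and_N(Pn)}.
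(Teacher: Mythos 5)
Your proof is correct and follows essentially the same route as the paper's: identify all the $\sigma_i$ modulo $\LCS_\infty$ using $\sigma_i\sigma_j^{-1}\in\LCS_2(\B_n)=\LCS_\infty(\B_n)$ and the fact that residues map into residues, then combine the generation statement of Proposition~\ref{generators_and_N(Pn)} with a disjoint-support argument (the paper uses $\sigma_2$ against $\psi_1(\gamma)$) to get centrality. Your extra care about routing the exit path of $\psi_i$ away from the subdisc supporting $\sigma_j$ is a valid refinement of a point the paper leaves implicit.
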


\begin{proof}
The morphism $\varphi$ sends $\LCS_\infty(\B_n)$ to $\LCS_\infty(\B_n(S))$. We know that $\sigma_i \sigma_j^{-1} \in \LCS_\infty(\B_n)$ for all $i,j < n$, so $\sigma_i \equiv \sigma_j \pmod{\LCS_\infty(\B_n(S))}$. Let us denote by $\sigma \in \B_n(S)/\LCS_\infty$ the common image of the $\sigma_i$. Since the $\sigma_i$ generate $\B_n$, the image of $\B_n$ in $\B_n(S)/\LCS_\infty$ is the cyclic subgroup generated by $\sigma$. In particular, its elements commute with $\sigma$. Moreover, for all $\gamma \in \pi_1(S - \D)$, the braids $\sigma_2$ and $\psi_1(\gamma)$ have disjoint support, hence $\sigma$ also commutes with the image of $\psi_1$. Since the images of $\varphi$ and $\psi_1$ generate $\B_n(S)$ by Proposition~\ref{generators_and_N(Pn)}, this means that $\sigma$ is a central element of $\B_n(S)/\LCS_\infty$.
\end{proof}

We can now state our main decomposition theorem:
\begin{theorem}\label{Bn(S)/residue}
For all $n \geq 3$, there is a central extension:
\[\begin{tikzcd}
\langle \sigma^2 \rangle \ar[r, hook] 
&\B_n(S)/\LCS_\infty \ar[r, two heads] 
&\pi_1(S)^{\ab} \times \Z/2.
\end{tikzcd}\] 
\end{theorem}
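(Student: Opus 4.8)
The plan is to identify $\B_n(S)/\LCS_\infty$ as a central extension by combining the abelianisation computation (Proposition~\ref{Bn(S)^ab}), the centrality statement just proved (Lemma~\ref{sigma_central}), and the description of $\pi_S$. First I would recall that by Proposition~\ref{generators_and_N(Pn)} the group $\B_n(S)$ is generated by the image of $\varphi$ together with the images of the $\psi_i$. Lemma~\ref{sigma_central} tells us that the image of $\B_n$ in $Q := \B_n(S)/\LCS_\infty$ is the cyclic central subgroup $\langle \sigma \rangle$. So the quotient $Q/\langle \sigma \rangle$ is generated by the images of the $\psi_i$, which I expect to identify with $\pi_1(S)^{\ab}$.

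Concretely, the strategy is to first understand $Q/\langle \sigma \rangle$ and then cut down $\langle \sigma \rangle$ to $\langle \sigma^2 \rangle$. For the former, I would note that $\langle \sigma \rangle$ is central, so $Q/\langle \sigma \rangle$ is a quotient of the abelianisation-type object; more precisely, killing $\langle \sigma \rangle$ in $Q$ means killing the image of $\B_n$ entirely, which corresponds to passing through the projection $\pi_S \colon \B_n(S) \twoheadrightarrow \pi_1(S) \wr \Sym_n$ composed with further quotienting. The $\psi_i$ all become identified modulo $\LCS_\infty$ (they are conjugate via elements of $\varphi(\B_n)$, which is central mod $\LCS_\infty$, and the wreath product's abelianisation identifies the $n$ copies of $\pi_1(S)$ by Lemma~\ref{wr_ab}), so their common image generates a copy of $\pi_1(S)^{\ab}$. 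This should give $Q/\langle \sigma \rangle \cong \pi_1(S)^{\ab}$, at least once I have argued that no further relations are forced and that this quotient is abelian (which follows since $Q/\langle\sigma\rangle$ is generated by the commuting classes of the $\psi_i$). To recover the $\Z/2$ factor and the kernel $\langle \sigma^2 \rangle$, I would use that $\sigma^2$ is the common class of the pure braid generators $A_{ij}$, and Proposition~\ref{trichotomy} shows that in all the non-planar, non-sphere cases $A_{ij}$ is a commutator in $\PB_n(S)$, hence lies in $\LCS_2 \subseteq \ldots$; but I must be careful, since the statement is claimed uniformly and $\sigma^2$ need not be trivial in $Q$.

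The cleanest route is probably to work directly with the central subgroup $\langle \sigma \rangle \subseteq Q$ and show $Q/\langle \sigma^2 \rangle \cong \pi_1(S)^{\ab} \times \Z/2$, exhibiting the $\Z/2$ as generated by the image of $\sigma$. Since $\sigma^2 = \overline{A_{ij}}$, the quotient $Q/\langle \sigma^2 \rangle$ has $\sigma$ of order dividing $2$, and the extension $\langle \sigma \rangle/\langle \sigma^2 \rangle \hookrightarrow Q/\langle \sigma^2 \rangle \twoheadrightarrow Q/\langle \sigma \rangle \cong \pi_1(S)^{\ab}$ is central with cyclic-of-order-$\leq 2$ kernel. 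To see this splits as a direct product with an honest $\Z/2$ factor, I would invoke the splitting of $\B_n(S)^{\ab} \twoheadrightarrow \pi_1(S)^{\ab}$ from the proof of Proposition~\ref{Bn(S)^ab} (induced by any $\psi_i$): this produces a section $\pi_1(S)^{\ab} \to Q/\langle\sigma^2\rangle$ whose image avoids $\sigma$, forcing $Q/\langle \sigma^2 \rangle \cong \pi_1(S)^{\ab} \times \langle \bar\sigma \rangle$ with $\bar\sigma$ of order exactly $2$ (order exactly $2$, not $1$, because $\B_n(S)^{\ab}$ already detects $\sigma$ modulo squares in all cases by Proposition~\ref{Bn(S)^ab}, and $Q$ surjects onto $\B_n(S)^{\ab}$). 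The main obstacle will be the bookkeeping needed to confirm that $\langle \sigma^2 \rangle$ is genuinely the full kernel — i.e.\ that $Q/\langle\sigma^2\rangle$ is exactly $\pi_1(S)^{\ab}\times\Z/2$ and not a smaller quotient — and in particular verifying that the induced map $Q/\langle\sigma\rangle \to \pi_1(S)^{\ab}$ is an isomorphism rather than merely a surjection. This injectivity is the delicate point: it requires knowing that the only relations among the $\overline{\psi_i(\gamma)}$ in $Q$ modulo $\langle\sigma\rangle$ are the abelianisation relations, which I would extract by comparing with the surjection $\pi_S$ and the explicit computation of $\B_n(S)^{\ab}$, using that $Q \twoheadrightarrow \B_n(S)^{\ab}$ already realises $\pi_1(S)^{\ab} \times \langle\sigma\rangle$ faithfully on the relevant generators.
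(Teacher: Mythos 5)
Your overall architecture --- quotient $Q := \B_n(S)/\LCS_\infty$ by the central subgroup $\langle\sigma\rangle$, identify the quotient with $\pi_1(S)^{\ab}$, then refine to $\langle\sigma^2\rangle$ --- is workable, and it is organised differently from the paper's proof: the paper instead applies the Nine Lemma to the extension $\PB_n^\circ(S)\hookrightarrow\B_n(S)\overset{\pi_S}{\twoheadrightarrow}\pi_1(S)\wr\Sym_n$, after first showing that $\pi_S$ sends $\LCS_\infty(\B_n(S))$ onto exactly $\LCS_2(\pi_1(S)\wr\Sym_n)$ using Lemma~\ref{LCS_wr_sym_stable}. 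Your route trades that wreath-product input for the abelianisation computation of Proposition~\ref{Bn(S)^ab}. However, there is a concrete error at the crucial step, and the ``delicate point'' you flag is left genuinely unresolved.

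The error is the parenthetical claim that $Q/\langle\sigma\rangle$ is abelian ``since it is generated by the commuting classes of the $\psi_i$''. The braids $\psi_i(\gamma)$ and $\psi_j(\gamma')$ do \emph{not} commute in $\B_n(S)$ in general, nor do their classes commute in $Q$: on a surface with a handle, Figure~\ref{Aij_as_bracket_handle} exhibits such a commutator as a pure braid generator $A_{ij}$, whose class in $Q$ is $\sigma^2$, which is nontrivial (indeed of infinite order for $\Sigma_{g,1}$ by Proposition~\ref{Lie(Bn(Sg1))}). What is true, and what you actually need, is that every such commutator lands in $\langle\sigma^2\rangle$ in $Q$. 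The argument: for $i\neq j$ the element $[\psi_i(\gamma),\psi_j(\gamma')]$ lies in $\ker\pi_S=\PB_n^\circ(S)$, which by Proposition~\ref{generators_and_N(Pn)} is the normal closure of $\PB_n$; its image in $Q$ is therefore the normal closure of the image of $\PB_n$, which by Lemma~\ref{sigma_central} is the \emph{central} subgroup $\langle\sigma^2\rangle$, hence equal to its own normal closure. (The case $i=j$ reduces to $i\neq j$ because $\overline{\psi_i(\gamma')}=\overline{\psi_j(\gamma')}$ in $Q$, these being conjugate by an element of the central subgroup $\langle\sigma\rangle$.) This is exactly the kernel identification at the heart of the paper's proof, so your route does not avoid it --- it merely relocates it. Once it is in place, your injectivity worry disappears and no section argument is needed: $Q/\langle\sigma^2\rangle$ is abelian, so it equals $Q^{\ab}/\langle 2\bar\sigma\rangle=\B_n(S)^{\ab}/\langle 2\bar\sigma\rangle\cong\pi_1(S)^{\ab}\times\bigl(\langle\bar\sigma\rangle/2\bar\sigma\bigr)$ by Proposition~\ref{Bn(S)^ab}, and the last factor is $\Z/2$ in every case since $\bar\sigma$ has infinite or even order there.
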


\begin{proof}
Since $\pi_S$ is surjective, it sends $\LCS_\infty(\B_n(S))$ onto a normal subgroup of $\pi_1(S) \wr \Sym_n$. This normal subgroup is contained in $\LCS_\infty(\pi_1(S) \wr \Sym_n)$ and contains the $\tau_i \tau_j^{-1}$, which are the images of the $\sigma_i \sigma_j^{-1} \in \LCS_\infty(\B_n(S))$. By Lemma~\ref{LCS_wr_sym_stable}, it is equal to $\LCS_2(\pi_1(S) \wr \Sym_n)$. Then, we can apply the Nine Lemma to the diagram:
\[\begin{tikzcd}
\LCS_\infty(\B_n(S)) \cap \PB_n^\circ(S) \ar[r, hook] \ar[d, hook]
&\LCS_\infty(\B_n(S)) \ar[r, two heads] \ar[d, hook]
&[15pt]\LCS_2(\pi_1(S) \wr \Sym_n). \ar[d, hook] \\
\PB_n^\circ(S) \ar[r, hook] \ar[d, two heads]
&\B_n(S) \ar[r, two heads, "\pi_S"] \ar[d, two heads]
&\pi_1(S) \wr \Sym_n \ar[d, two heads] \\
\PB_n^\circ(S)/\LCS_\infty(\B_n(S)) \cap \PB_n^\circ(S) \ar[r, dashed] 
&\B_n(S)/\LCS_\infty \ar[r, dashed, "\overline \pi_S"] 
&(\pi_1(S) \wr \Sym_n)^{\ab}.
\end{tikzcd}\] 
Then recall that $(\pi_1(S) \wr \Sym_n)^{\ab} \cong \pi_1(S)^{\ab} \times \Z/2$. Thus, we are left with analysing the kernel $\PB_n^\circ(S)/\LCS_\infty(\B_n(S)) \cap \PB_n^\circ(S)$ of $\overline \pi_S$, which is the image of $\PB_n^\circ(S)$ in $\B_n(S)/\LCS_\infty$. Since $\PB_n^\circ(S)$ is the normal closure of $\PB_n$ in $\B_n(S)$ by Proposition~\ref{generators_and_N(Pn)}, its image in $\B_n(S)/\LCS_\infty$ is the normal closure of the image of $\PB_n$. But $\PB_n$ is sent to $\langle \sigma^2 \rangle$ which is central (and, in particular, normal) in $\B_n(S)/\LCS_\infty$, whence the result.
\end{proof}

\begin{remark}
\label{rmk:central-ext}
We also have a central extension:
\[\begin{tikzcd}
\langle \sigma \rangle \ar[r, hook] 
&\B_n(S)/\LCS_\infty \ar[r, two heads] 
&\pi_1(S)^{\ab}.
\end{tikzcd}\] 
This slightly different statement tells us slightly different things: it implies that $\sigma$ is central, whereas the statement of the theorem says that $\overline \sigma$ is not trivial in $\B_n(S)^{\ab}$.
\end{remark}

\begin{corollary}\label{LCS_Bn(S)_stable}
For any $n \geq 3$, we have $\LCS_3(\B_n(S)) = \LCS_4(\B_n(S))$.
\end{corollary}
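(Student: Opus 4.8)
The plan is to read off the conclusion directly from the central extension furnished by Theorem~\ref{Bn(S)/residue}, combined with the general principle (recalled in the Methods discussion of the Introduction) that the lower central series of a group and that of its quotient by the residue are ``the same''. Since $n \geq 3$ is exactly the hypothesis under which Theorem~\ref{Bn(S)/residue} applies, this is precisely where that hypothesis enters.

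First I would write $p \colon \B_n(S) \twoheadrightarrow G := \B_n(S)/\LCS_\infty$ for the canonical projection. Because each $\LCS_i(\B_n(S))$ is the preimage under $p$ of $\LCS_i(G)$, it suffices to prove that $\LCS_3(G) = \LCS_4(G)$; the desired equality $\LCS_3(\B_n(S)) = \LCS_4(\B_n(S))$ then follows simply by taking preimages. Thus the problem reduces to bounding the nilpotency class of $G$.

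Next I would invoke Theorem~\ref{Bn(S)/residue}, which presents $G$ as a central extension
\[\langle \sigma^2 \rangle \hookrightarrow G \twoheadrightarrow \pi_1(S)^{\ab} \times \Z/2,\]
with central kernel $\langle \sigma^2 \rangle$ and abelian quotient. Any such extension forces $G$ to be nilpotent of class at most $2$: the commutator subgroup $\LCS_2(G) = [G,G]$ maps into the commutator subgroup of the abelian group $\pi_1(S)^{\ab} \times \Z/2$, which is trivial, so $\LCS_2(G) \subseteq \langle \sigma^2 \rangle$; then centrality of $\langle \sigma^2 \rangle$ gives $\LCS_3(G) = [G, \LCS_2(G)] \subseteq [G, \langle \sigma^2 \rangle] = \{1\}$. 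In particular $\LCS_3(G) = \{1\} = \LCS_4(G)$, which completes the reduction.

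The substance of the argument lies entirely in Theorem~\ref{Bn(S)/residue}, so there is no genuine obstacle at the level of the corollary itself. The only point to keep track of is the translation between the two series: one should note that here $\LCS_3(\B_n(S)) = \LCS_4(\B_n(S))$ in fact coincides with $\LCS_\infty(\B_n(S)) = \ker(p)$, consistent with $G$ being residually nilpotent of class at most $2$.
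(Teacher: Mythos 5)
Your proof is correct and is essentially the paper's own argument: the paper likewise deduces from Theorem~\ref{Bn(S)/residue} that $\B_n(S)/\LCS_\infty$ is $2$-nilpotent (central kernel, abelian quotient), hence $\LCS_3 \subseteq \LCS_\infty$ for $\B_n(S)$, which forces $\LCS_3 = \LCS_4$. Your extra step of spelling out why the central extension bounds the nilpotency class, and the translation via preimages of the residue quotient, only makes explicit what the paper leaves implicit.
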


\begin{proof}
Proposition~\ref{Bn(S)/residue} implies that $\B_n(S)/\LCS_\infty$ is $2$-nilpotent, which means exactly that its $\LCS_3$ is trivial. In other words, $\LCS_3 \subseteq \LCS_\infty$ for $\B_n(S)$.
\end{proof}

The remaining cases consist in the cases when $n=1$ and when $n=2$.

\begin{proposition}\label{prop:LCS_non_stop_pi_1_surfaces}
For any connected surface $S$, either $\B_1(S) = \pi_1(S)$ is abelian, which occurs precisely when $S \in \{\D- pt, \D, \S^2, \Tor, \Proj, \Moeb\}$ up to isotopy equivalence, or its LCS does not stop.
\end{proposition}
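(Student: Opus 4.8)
The statement concerns $\B_1(S) = \pi_1(S)$, so the task reduces entirely to the classification of surfaces whose fundamental group is abelian, together with the well-known fact that a non-abelian surface group has non-stopping LCS.

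\emph{Plan.} The proof proceeds by invoking Richards' classification (Proposition~\ref{pi_1(S)}) to determine exactly which connected surfaces $S$ have abelian fundamental group, and then showing that in every remaining case $\pi_1(S)$ is either a free group of rank $\geq 2$ or a closed surface group of genus $\geq 2$ (orientable) or $\geq 3$ (non-orientable), all of which have non-stopping LCS.

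\begin{proof}
Recall that $\B_1(S) = \pi_1(S)$. We first identify the surfaces for which this group is abelian. By Proposition~\ref{pi_1(S)}, if $S$ is not closed, then $\pi_1(S)$ is a free group; a free group is abelian if and only if it has rank $0$ or $1$, that is, if and only if it is trivial or infinite cyclic. A connected non-closed surface with trivial or infinite cyclic fundamental group is, up to isotopy equivalence (and hence up to homotopy equivalence of the relevant configuration spaces), one of $\D$, $\D - pt$ or $\Moeb$: indeed, Richards' construction (Proposition~\ref{Richards_classification}) shows that any other non-closed surface either has at least two independent loops coming from punctures or boundary components, or carries a handle or a second crosscap, producing a free group of rank at least $2$. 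If instead $S$ is closed, then by the classification of closed surfaces $S$ is homeomorphic to $\Sigma_g$ or $\mathscr N_g$; its fundamental group is abelian exactly when $S \in \{\S^2, \Tor, \Proj\}$, with $\pi_1(\S^2) = 1$, $\pi_1(\Tor) \cong \Z^2$ and $\pi_1(\Proj) \cong \Z/2$. All other closed surfaces have non-abelian fundamental group. This establishes that $\pi_1(S)$ is abelian precisely for $S \in \{\D - pt, \D, \S^2, \Tor, \Proj, \Moeb\}$.

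It remains to check that for every surface $S$ outside this list, the LCS of $\pi_1(S)$ does not stop. By the dichotomy just obtained, such an $S$ has $\pi_1(S)$ either a free group $\F_k$ with $k \geq 2$, or the fundamental group of a closed surface other than $\S^2$, $\Tor$ and $\Proj$. In the free case, the LCS of $\F_k$ does not stop for $k \geq 2$ (this is classical; see for instance~\cite{FalkRandell1} or~\cite[Chap.~5]{MagnusKarrassSolitar}). In the closed case, $\pi_1(S)$ is a one-relator group which is known to be residually nilpotent and not nilpotent, so its LCS does not stop; for orientable $\Sigma_g$ with $g \geq 2$ and non-orientable $\mathscr N_g$ with $g \geq 3$ this follows, for example, from the fact that these groups surject onto a non-abelian free group (obtained by killing half of the standard generators), whose LCS does not stop, whence $\pi_1(S)$ has non-stopping LCS by Lemma~\ref{lem:stationary_quotient}. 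The one remaining closed non-orientable surface is the Klein bottle $\mathscr N_2$, whose fundamental group $\langle a, b \mid abab^{-1}\rangle \cong \Z \rtimes \Z$ is a non-trivial semidirect product of abelian groups; its LCS does not stop (it is an almost-direct product, and a direct computation, or the surjection onto $\Z \rtimes (\Z/2)$ of Proposition~\ref{LCS_of_Z/2*Z/2}, exhibits this). This covers all cases and completes the proof.
\end{proof}
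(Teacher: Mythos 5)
Your identification of the six exceptional surfaces, your treatment of the non-closed case, and your treatment of the orientable closed case are all fine, but there is a genuine gap in the closed non-orientable case. For $\mathscr N_g$ with $g \geq 3$ you claim that $\pi_1(\mathscr N_g) = \langle c_1,\ldots,c_g \mid c_1^2\cdots c_g^2 = 1\rangle$ surjects onto a non-abelian free group ``obtained by killing half of the standard generators''. Killing any proper subset of the $c_i$ leaves the relator $c_{i_1}^2\cdots c_{i_k}^2$, which is not trivial; the quotient is another non-orientable surface group, not a free group. Worse, for $g=3$ no surjection onto a non-abelian free group exists at all: by the Lyndon--Sch{\"u}tzenberger theorem, a relation $x^2y^2z^2=1$ in a free group forces $x$, $y$, $z$ to commute pairwise, so every homomorphism from $\pi_1(\mathscr N_3)$ to a free group has cyclic image. (For $g\geq 4$ a surjection onto $\F_2$ does exist, e.g.\ $c_1\mapsto x$, $c_2\mapsto x^{-1}$, $c_3\mapsto y$, $c_4\mapsto y^{-1}$ and the remaining $c_i\mapsto 1$, but that is not the map you describe, and it still leaves $g=3$ uncovered.)

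The case $g=3$ therefore needs a different target, and the paper's proof supplies one that handles all closed surfaces outside $\{\S^2,\Proj,\Tor\}$ uniformly: project onto $\Z/2 * \Z/2 = \langle x,y \mid x^2=y^2=1\rangle$ by sending $a_1,b_1$ (resp.~$c_1$) to $x$ and every other standard generator to $y$. The single defining relator dies because $x^2=y^2=1$ and $[x,x]=[y,y]=1$, the image is all of $\Z/2*\Z/2$ since $g\geq 2$, and the LCS of $\Z/2*\Z/2 \cong \Z\rtimes(\Z/2)$ does not stop by Proposition~\ref{LCS_of_Z/2*Z/2}. This projection also absorbs the Klein bottle, which you treat separately (correctly, via the Klein group $\Z\rtimes\Z$ and Proposition~\ref{LCS_Klein}). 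Replacing your free-quotient step by this projection closes the gap; the rest of your argument can stand as written.
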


\begin{proof}
If $S$ is closed and not in $\{\S^2,\Proj,\Tor\}$ then the LCS of $\pi_1(S)$ does not stop. To see this, note that it admits a presentation of the form $\langle a_1,b_1,\ldots,a_g,b_g \mid [a_1,b_1] \cdots [a_g,b_g] = 1 \rangle$ or $\langle c_1,\ldots,c_g \mid c_1^2 \cdots c_g^2 = 1 \rangle$ for $g\geq 2$, and these both project onto $\Z/2 * \Z/2 = \langle x,y \mid x^2 = y^2 = 1 \rangle$ by sending $a_1,b_1,c_1$ to $x$ and all other generators to $y$; the LCS of $\Z/2 * \Z/2$ does not stop by Proposition \ref{LCS_of_Z/2*Z/2}. If $S$ is not closed, its fundamental group is free by Proposition~\ref{pi_1(S)}. If it is also not in $\{\D - pt,\D,\Moeb\}$ then its fundamental group is moreover \emph{non-abelian} free, and hence its LCS does not stop by~\cite{Magnus1935} (see also~\cite[Chap.~5]{MagnusKarrassSolitar}).
\end{proof}

\begin{remark}\label{pi1(S)_res_nilp}
The fundamental group of a surface is known to always be residually nilpotent, and almost always residually torsion-free nilpotent (the only exceptions to the latter being the projective plane and the Klein bottle). This follows from~\cite[Chap.~5]{MagnusKarrassSolitar} for non-closed surfaces, whose fundamental group is free. For closed surfaces, it follows from~\cite{Baumslag} if $S$ contains a handle: then the Seifert-van Kampen theorem applied to a decomposition $S = \Tor \# S'$ gives a presentation of the form of Theorem~1 therein. The remaining surfaces are the sphere, the projective plane and the Klein bottle, whose LCS are easily computed explicitly (see Proposition~\ref{LCS_Klein} for the last one).
\end{remark}

\begin{proposition}\label{prop:LCS_non_stop_unpartitioned_braid_groups_surfaces}
When $S$ is not $\D$, $\S^2$ or $\Proj$, up to isotopy equivalence, the LCS of $\B_2(S)$ does not stop.
\end{proposition}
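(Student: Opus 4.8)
The plan is to exhibit a quotient of $\B_2(S)$ whose LCS does not stop and then invoke the contrapositive of Lemma~\ref{lem:stationary_quotient}. The natural candidate is the enhanced projection $\pi_S \colon \B_2(S) \twoheadrightarrow \pi_1(S) \wr \Sym_2$ introduced in \Spar\ref{subsec_gen_B(S)}, post-composed with the map induced on the base by a surjection $\pi_1(S) \twoheadrightarrow \Z$. Indeed, any group homomorphism $\pi_1(S) \to \Z$ induces a homomorphism of wreath products $\pi_1(S) \wr \Sym_2 \to \Z \wr \Sym_2$, which is surjective as soon as $\pi_1(S) \twoheadrightarrow \Z$ is. The LCS of $\Z \wr \Sym_2 \cong \Z^2 \rtimes \Sym_2$ does not stop by Corollary~\ref{LCS_G_wr_S2} (this is exactly the computation already used at the end of the proof of Proposition~\ref{tripartite_wB_P2_and_S+}). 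Composing, we would obtain a surjection $\B_2(S) \twoheadrightarrow \Z \wr \Sym_2$, forcing the LCS of $\B_2(S)$ not to stop.

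It therefore remains only to produce a surjection $\pi_1(S) \twoheadrightarrow \Z$, equivalently to check that $\pi_1(S)^{\ab} = H_1(S)$ is \emph{not} a torsion group, precisely when $S$ is not $\D$, $\S^2$ or $\Proj$ (up to isotopy equivalence). This is a direct consequence of the classification recalled in \Spar\ref{sec:surfaces}. If $S$ is not closed, then $\pi_1(S)$ is free by Proposition~\ref{pi_1(S)}, and it is nontrivial since $S \not\cong \D$; hence $\pi_1(S)^{\ab}$ is a nonzero free abelian group, which admits $\Z$ as a quotient. If $S$ is closed, then $\pi_1(S)^{\ab} = H_1(S)$ is $\Z^{2g}$ when $S \cong \Sigma_g$, and $\Z^{g-1} \oplus \Z/2$ when $S \cong \mathscr N_g$; in either case it has positive rank, hence surjects onto $\Z$, unless $S \cong \S^2$ (the case $g=0$) or $S \cong \Proj$ (the case $g=1$ for $\mathscr N_g$), which are precisely the excluded closed surfaces. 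Note that this covers the exceptional surfaces $\Tor$ and $\Moeb$ which the statement \emph{does} allow: there $\pi_1(S)^{\ab} \cong \Z^2$ and $\Z$ respectively, both surjecting onto $\Z$.

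The only genuine subtlety is the bookkeeping confirming that $\D$, $\S^2$ and $\Proj$ are exactly the surfaces for which $\pi_1(S)^{\ab}$ is torsion (so that no $\Z$-quotient exists); everything else in the argument is formal, since both the surjectivity of $\pi_S$ and the non-stopping of $\Z \wr \Sym_2$ are already established earlier in the text. I expect the final write-up to be short, splitting into the closed and non-closed cases for the existence of the map $\pi_1(S) \twoheadrightarrow \Z$, and then concluding in one line via the chain of surjections $\B_2(S) \twoheadrightarrow \pi_1(S) \wr \Sym_2 \twoheadrightarrow \Z \wr \Sym_2$ together with Lemma~\ref{lem:stationary_quotient}.
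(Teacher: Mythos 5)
Your proposal is correct and follows essentially the same route as the paper: the paper also uses the surjection $\pi_S \colon \B_2(S) \twoheadrightarrow \pi_1(S) \wr \Sym_2$, the existence of a surjection $\pi_1(S)^{\ab} \twoheadrightarrow \Z$ outside the three excluded cases, Corollary~\ref{LCS_G_wr_S2}, and Lemma~\ref{lem:stationary_quotient}. The only (immaterial) difference is that you pass to the further quotient $\Z \wr \Sym_2$ before applying the corollary, whereas the paper applies it directly to $\pi_1(S) \wr \Sym_2$.
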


\begin{proof}
The group $\B_2(S)$ surjects onto $\pi_1(S) \wr \Sym_2$. Since $\pi_1(S)^{\ab}$ surjects onto~$\Z$ except in the three excluded cases of the statement, we can apply Corollary~\ref{LCS_G_wr_S2} to see that the LCS of $\pi_1(S) \wr \Sym_2$ does not stop. Then the one of $\B_n(S)$ does not either by Lemma~\ref{lem:stationary_quotient}.
\end{proof}

\begin{remark}
For orientable surfaces, $\B_2(S)$ is residually nilpotent \cite[Cor.~10]{BellingeriBardakov2}. This result can in fact be extended to non-orientable surfaces, with the same method, using the results quoted in Remark~\ref{pi1(S)_res_nilp}.
\end{remark}

\begin{remark}\label{rmk:ref_exceptional_block_size_2} 
Clearly, the LCS of $\B_2(\D) \cong \Z$ and of $\B_2(\S^2) \cong \Z/2$ both stop at $\LCS_2$. The fact that $\B_2(\S^2) \cong \Z/2$ is the case $n = 2$ and $g = 0$ of Corollary~\ref{Braids_on_closed_surfaces}. On the other hand, the group $\B_2(\Proj)$ is the dicyclic group of order 16 (Corollary~\ref{B2(Proj)}), which is $3$-nilpotent, so its LCS stops at $\LCS_4$.
\end{remark}

\subsection{The Lie ring}\label{subsec:Lie_ring_undiscrete_partition_braids_surfaces}

We can be more precise about our description of the LCS of surface braid groups. In particular, Corollary~\ref{LCS_Bn(S)_stable} says that, for $n \geq 3$, the LCS stops at most at $\LCS_3$, but it does not say when it stops at $\LCS_2$ (then the associated Lie ring consists only of the abelianisation, which has already been computed in Proposition~\ref{Bn(S)^ab}), or when it stops at $\LCS_3$ (in which case the Lie ring is $2$-nilpotent but not abelian). We now show that the latter holds only for non-planar orientable surfaces, and we compute precisely $\Lie(\B_n(S))$, generalising results of \cite{BellingeriGervaisGuaschi}.

\begin{theorem}\label{Lie_ring_B(S)}
Let $n \geq 3$ be an integer and $S$ be a connected surface. The LCS of $\B_n(S)$:
\begin{itemize}
\item \emph{stops at $\LCS_2$} if $S$ is either planar or non-orientable, or if $S \cong \S^2$. 
\item \emph{stops at $\LCS_3$} in the other cases. Then $\Lie_2(\B_n(S))$ is cyclic, generated by the common class $\sigma^2$ of the pure braid generators $A_{ij}$.
\end{itemize}
Moreover, in the second case, $\sigma^2$ is of finite order if and only if $S$ is closed, in which case its order is $n+g-1$, where $g$ is the genus of $S$.
\end{theorem}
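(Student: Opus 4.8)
The plan is to prove Theorem~\ref{Lie_ring_B(S)} by leveraging the structural results already established, principally Theorem~\ref{Bn(S)/residue}, Proposition~\ref{Bn(S)^ab} and Proposition~\ref{trichotomy}. Since Corollary~\ref{LCS_Bn(S)_stable} already tells us that $\LCS_3 = \LCS_4$ for all $n \geq 3$, the whole question reduces to deciding whether $\Lie_2(\B_n(S)) = \LCS_2/\LCS_3$ vanishes or not, and, when it does not, computing it. The generation-in-degree-one principle (Proposition~\ref{generationproperty}) means $\Lie_2$ is spanned by brackets of the generators of $\Lie_1 = \B_n(S)^{\ab}$, which by Proposition~\ref{Bn(S)^ab} decomposes as $\pi_1(S)^{\ab} \times \langle \sigma \rangle$.

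First I would dispose of the three cases where the LCS stops at $\LCS_2$. In each of them I claim $\Lie(\B_n(S))$ is abelian, equivalently $\B_n(S)/\LCS_\infty$ is abelian. The cleanest route is via Theorem~\ref{Bn(S)/residue}: we have a central extension of $\pi_1(S)^{\ab} \times \Z/2$ by $\langle \sigma^2 \rangle$, so $\B_n(S)/\LCS_\infty$ is already $2$-nilpotent, and abelianness is equivalent to the central subgroup $\langle \sigma^2 \rangle$ being trivial, i.e.~to $\sigma^2 = 1$ in $\B_n(S)/\LCS_\infty$. When $S$ is non-orientable or $S \cong \S^2$, Proposition~\ref{trichotomy} shows each $A_{ij} = \sigma^2$ is a commutator of pure braids (non-orientable case) or has the appropriate torsion; more directly, in the non-orientable case $A_{ij} \in \LCS_2$ is a commutator of elements of $\PB_n^\circ(S)$, so its class $\sigma^2$ lies in $\LCS_3$ and hence vanishes in $\Lie_2$. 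For $S \cong \S^2$ one uses that $\sigma^2 = \overline A_{ij}$ has the relation forcing it into $\LCS_3$ via the boundary relation of Corollary~\ref{Braids_on_closed_surfaces}. For $S$ planar, $\pi_1(S)$ is free and a disjoint-support argument (as in Lemma~\ref{sigma_central}, with $\sigma$ central and $\pi_1(S)^{\ab}$-generators realisable by the $\psi_i$ with disjoint support) shows all brackets vanish.

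Next, for the non-planar orientable case, I would show $\Lie_2$ is exactly cyclic generated by $\sigma^2$ and that it is nonzero. Surjectivity onto $\langle \sigma^2 \rangle$ is immediate from Theorem~\ref{Bn(S)/residue}: the central $\langle \sigma^2 \rangle$ surjects onto $\Lie_2$ once we know $\Lie_1$-brackets all land in $\langle \sigma^2 \rangle$. That all brackets of the $\pi_1(S)^{\ab}$-generators among themselves (and with $\sigma$) reduce to powers of $\sigma^2$ follows from the presentation relations $(BS2)$, $(BS3)$, $(BS4)$ of Proposition~\ref{Bn(Sg1)}: relation $(BS4)$, $[\sigma_1 b_r \sigma_1^{-1}, a_r^{-1}] = \sigma_1^2$, exhibits a commutator of curve-generators equal to $\sigma^2$, which is precisely $A_{ij} = [a_r, b_r^{-1}]$ of Figure~\ref{Aij_as_bracket_handle}. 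The nonvanishing of $\sigma^2$ in $\Lie_2$ is the crux: for non-closed orientable $S$ it follows because $\sigma$ has infinite order in $\B_n(S)^{\ab}$ (Proposition~\ref{Bn(S)^ab}) and the central extension is non-split in the relevant sense, so $\sigma^2$ is a nontrivial infinite-order element of $\Lie_2$; for closed orientable $S$ one computes the order directly.

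The main obstacle will be the precise order computation for closed orientable surfaces, namely showing $\sigma^2$ has order $n+g-1$ in $\Lie_2(\B_n(\Sigma_g))$. Here I would work with the presentation from Corollary~\ref{Braids_on_closed_surfaces}: $\B_n(\Sigma_g)$ is $\B_n(\Sigma_{g,1})$ modulo the relation $\sigma_1 \cdots \sigma_{n-1}^2 \cdots \sigma_1 = \prod_{r=1}^g [a_r, b_r^{-1}]$. Reducing this relation modulo $\LCS_3$, the left-hand side becomes $\sigma^{2(n-1)}$ (as $A_{12}\cdots A_{1n} \equiv (n-1)\sigma^2$ additively in $\Lie_2$, each $A_{1j} \equiv \sigma^2$), while each $[a_r, b_r^{-1}] \equiv \sigma^2$ by relation $(BS4)$, so the right-hand side contributes $g \cdot \sigma^2$. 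Equating gives $2(n-1)\sigma^2 = g\,\sigma^2$ in $\Lie_2$, wait---one must track signs and the precise count carefully, as the boundary relation actually yields $(n-1)\sigma^2 = g\sigma^2$ after correctly identifying $A_{1j} \equiv \sigma^2$ in the \emph{additive} group $\Lie_2$, forcing $(n+g-1)\sigma^2 = 0$. The delicate point is verifying there are no further relations collapsing $\langle \sigma^2\rangle$, which I would handle by exhibiting an explicit quotient of $\B_n(\Sigma_g)$ (or a homomorphism detecting $\sigma^2$ modulo $n+g-1$) realising $\Z/(n+g-1)$, thereby pinning the order exactly. Assembling these pieces completes the proof.
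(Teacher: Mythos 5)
Your overall architecture matches the paper's: reduce via Corollary~\ref{LCS_Bn(S)_stable} and Theorem~\ref{Bn(S)/residue} to deciding whether the central subgroup $\langle \sigma^2 \rangle$ of $\B_n(S)/\LCS_\infty$ meets $\LCS_2$, then argue case by case on the surface. However, there is a genuine gap at the crux of the second bullet point: the \emph{non-vanishing} of $\sigma^2$ in $\Lie_2(\B_n(S))$ for non-planar orientable $S$. You justify this by asserting that ``$\sigma$ has infinite order in $\B_n(S)^{\ab}$'' for non-closed orientable $S$, but Proposition~\ref{Bn(S)^ab} says the opposite: $\sigma$ has infinite order in the abelianisation only when $S$ is \emph{planar}; for every non-planar surface other than $\S^2$ it has order $2$ there (indeed $\sigma^2 = \overline{A_{12}}$ is the class of a commutator by Proposition~\ref{trichotomy}, so it dies in $\B_n(S)^{\ab}$). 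Moreover, even a correct statement about the order of $\sigma$ in $\Lie_1$ would say nothing about whether $\sigma^2 \in \LCS_2$ survives in $\LCS_2/\LCS_3$, and the appeal to the central extension being ``non-split in the relevant sense'' is not an argument: Theorem~\ref{Bn(S)/residue} is perfectly consistent with $\langle\sigma^2\rangle$ being trivial --- that is exactly what happens in the non-orientable case. The paper closes this gap by an explicit computation: it determines the maximal $2$-nilpotent quotient of $\B_n(\Sigma_{g,1})$ from Bellingeri's presentation and identifies it with $(\Z\times\Z^g)\rtimes\Z^g$, in which $\sigma^2$ visibly generates an infinite cyclic $\LCS_2$ (Proposition~\ref{Lie(Bn(Sg1))}); general non-closed non-planar orientable surfaces are then handled by embedding into some $\Sigma_{g,1}$ plus a compactness argument, and the closed case by adding the (central) boundary relation. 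Some version of this computation is unavoidable, and it is also the input your closed-surface order computation silently assumes when you claim there are ``no further relations'' collapsing $\langle\sigma^2\rangle$ beyond $(n+g-1)\sigma^2=0$.

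Two smaller points. For $S\cong\S^2$ your route via the boundary relation only yields $(n-1)\sigma^2=0$ in $\Lie_2$, not $\sigma^2=0$; the correct easy argument is that $\B_n(\S^2)$ is a quotient of $\B_n$ (or that its abelianisation is cyclic, so Corollary~\ref{cyclic_abelianization} applies). In the non-orientable case, ``$A_{ij}$ is a commutator of elements of $\PB_n^\circ(S)$, so its class lies in $\LCS_3$'' is not a valid inference --- a commutator of arbitrary group elements lies only in $\LCS_2$; what saves the conclusion is that the class of $[\sigma_1 c\sigma_1^{-1},c^{-1}]$ in $\Lie_2$ equals the Lie bracket of the degree-one classes, namely $[\bar c,-\bar c]=0$ (equivalently, $\sigma$ being central in $\B_n(S)/\LCS_\infty$ forces the two entries of the commutator to coincide there).
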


\begin{proof}
Since the LCS of $\B_n(S)$ stops at most at $\LCS_3$, $\Lie_2(\B_n(S))$ identifies with $\LCS_2 \left(\B_n(S)/\LCS_\infty \right)$ (the latter being $\LCS_2/\LCS_\infty(\B_n(S)) = \LCS_2/\LCS_3(\B_n(S))$). Moreover, using the central extension of Theorem~\ref{Bn(S)/residue}, we see that in $\B_n(S)/\LCS_\infty$, the subgroup $\LCS_2$ must be contained in $\langle \sigma^2 \rangle$, which implies that it is cyclic, generated by a power of $\sigma^2$.

\ul{Planar surfaces:} if $S$ is planar, then the common class $\sigma$ of the $\sigma_i$ in $\B_n(S)^{\ab} = (\B_n(S)/\LCS_\infty)^{\ab}$ is of infinite order by Proposition~\ref{Bn(S)^ab}. Hence $\LCS_2(\B_n(S)/\LCS_\infty)$ does not contain any power of $\sigma = \overline{\sigma_1}$. But $\LCS_2(\B_n(S)/\LCS_\infty)$ is contained in $\langle \sigma^2 \rangle$, so it must be trivial, which means that $\LCS_2 = \LCS_\infty$ for $\B_n(S)$.

\ul{The sphere:} if $S = \S^2$, then $\B_n(\S^2)$ is a quotient of $\B_n$ (see Corollary~\ref{Braids_on_closed_surfaces}), which implies that its LCS also stops at $\LCS_2$.

\ul{Non-orientable surfaces:} the surface $S$ is non-orientable if and only if it contains an embedded Möbius band. Then $\sigma_1^2 = [\sigma_1 c \sigma_1^{-1} , c^{-1}]$ for some $c \in \B_n(S)$: precisely, $c$ is a braid whose first strand goes around the Möbius strip once, that is, through the crosscap; see Figure~\ref{Aij_as_bracket_crosscap}. Since $\sigma_1$ is sent to the central element $\sigma$ of $\B_n(S)/\LCS_\infty$, this relation implies that $\sigma^2 = [\overline c, \overline c^{-1}] = 1$. Thus $\langle \sigma^2 \rangle$ is trivial, and so is its subgroup  $\LCS_2 \left(\B_n(S)/\LCS_\infty \right) \cong \Lie_2(\B_n(S))$.

\ul{Non-planar orientable surfaces:} if $S$ has a handle, then $\sigma_1^2 = [a^{-1}, \sigma_1 b \sigma_1^{-1}]$ for some $a,b\in \B_n(S)$. Precisely, $a$ and $b$ are braids whose first strands go around a handle; see Figure~\ref{Aij_as_bracket_handle}. Hence $\sigma^2 \in \LCS_2\left(\B_n(S)/\LCS_\infty \right)$, which implies that $\Lie_2(\B_n(S))$ is generated by $\sigma^2$.

If $S$ is a \ul{non-planar compact orientable surface with at least one boundary component}, then $S$ can be embedded in some $\Sigma_{g,1}$ for an arbitrary large $g$, by attaching a disc to each boundary component save one. The induced map $\Lie_2(\B_n(S)) \rightarrow \Lie_2(\B_n(\Sigma_{g,1}))$ sends $\sigma^2$ to $\sigma^2$, and the latter is of infinite order (Proposition~\ref{Lie(Bn(Sg1))}).

If $S$ is a \ul{non-compact non-planar orientable surface}, let us suppose that $\sigma^2$ is a torsion element in $\B_n(S)/\LCS_\infty = \B_n(S)/\LCS_3$. Then for some integer $k$, $\sigma_1^{2k}$ is equal to some product of commutators of length at least $3$ in $\B_n(S)$.  Such a formula involves only a finite number of braids. Let us choose a representative of each of these isotopy classes. These involve a finite number of paths on the surface. If moreover we choose an isotopy realising the aforementioned equality of braids (using the concatenation of the chosen representatives as a representative of the right-hand side), the image of this isotopy is contained in a compact subsurface $S'$ of $S$. Thus, our formula also holds in $\B_n(S')$: $\sigma_1^{2k}$ is equal to some product of commutators of length at least $3$ in there. Then $\sigma^{2k} = 1$ in  $\B_n(S')/\LCS_3$. However, this contradicts the previous case, since $S'$ cannot be closed (nor planar). We conclude that  $\sigma^2$ cannot be a torsion element in $\B_n(S)/\LCS_\infty$; equivalently, $\Lie_2(\B_n(S)) \cong \Z$.

If $S$ is a \ul{closed orientable surface} $\Sigma_g$ of genus $g \geq 1$, then $\Lie_2(\B_n(\Sigma_g)) \cong \Z /(n+g-1)$. This is \cite[Th.~1]{BellingeriGervaisGuaschi}, which can be deduced from Proposition~\ref{Lie(Bn(Sg1))} below. We will do so in a more general context later; see Proposition~\ref{partitioned_Lie(B(Sg)}.
\end{proof}

The proof of the following result is inspired from \cite{BellingeriGervaisGuaschi}. In fact, it is equivalent to \cite[Th.~1]{BellingeriGervaisGuaschi} in a quite straightforward way; see Remark \ref{rmk:double_recoverings}.
\begin{proposition}\label{Lie(Bn(Sg1))}
Let $g \geq 1$, and let $\Sigma_{g,1}$ denote the compact surface of genus $g$ with one boundary component. For every $n \geq 1$, $\Lie_2(\B_n(\Sigma_{g,1})) \cong \Z$, generated by the common class $\sigma^2$ of the pure braid generators.
\end{proposition}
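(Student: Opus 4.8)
The plan is to prove the two assertions separately: that the class $\sigma^2$ \emph{generates} $\Lie_2(\B_n(\Sigma_{g,1}))$, and that it has \emph{infinite order}. For generation, note first that since $\Sigma_{g,1}$ carries a handle, Proposition~\ref{trichotomy} writes each pure braid generator $A_{ij}=\sigma_i^2$ as a commutator, so all the $A_{ij}$ lie in $\LCS_2$ and, being mutually conjugate, induce a single well-defined class $\sigma^2\in\Lie_2$. For $n\geq 3$, Theorem~\ref{Bn(S)/residue} presents $\B_n(\Sigma_{g,1})/\LCS_\infty$ as a central extension of $\pi_1(\Sigma_{g,1})^{\ab}\times\Z/2$ by $\langle\sigma^2\rangle$; hence its $\LCS_2$ is cyclic and contained in $\langle\sigma^2\rangle$, and since it contains $\sigma^2$ (a commutator) and $\LCS_3=\LCS_\infty$ by Corollary~\ref{LCS_Bn(S)_stable}, we obtain $\Lie_2=\LCS_2/\LCS_3=\langle\sigma^2\rangle$. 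This part invokes only Theorem~\ref{Bn(S)/residue} and not the present proposition, so there is no circularity with Theorem~\ref{Lie_ring_B(S)}. The small cases $n=1,2$ fall outside the range of Theorem~\ref{Bn(S)/residue} and would be analysed directly from the presentation of Proposition~\ref{Bn(Sg1)}.

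The heart of the matter is the infinite order of $\sigma^2$, and I would establish it by mapping onto an explicit $2$-step nilpotent group. Let $\beta(v,v')=\sum_{r=1}^g x_ry'_r$ be the upper-triangular half of the standard symplectic form $\omega=\beta-\beta^{\mathrm{op}}$ on $\Z^{2g}$, and let $H$ be the set $\Z^{2g}\times\Z$ equipped with the product
\[(v,m)\cdot(v',m')=\bigl(v+v',\ m+m'+2\beta(v,v')\bigr).\]
Bilinearity of $\beta$ makes this law associative; the elements $(0,m)$ are central; and a direct computation gives $[(v,0),(v',0)]=(0,2\omega(v,v'))$. Thus $H$ is $2$-step nilpotent with $[H,H]=\langle(0,2)\rangle\cong\Z$.

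I would then define $f\colon\B_n(\Sigma_{g,1})\to H$ by $\sigma_i\mapsto s:=(0,1)$, $a_r\mapsto\alpha_r$, $b_r\mapsto\beta_r$, where $\alpha_r,\beta_r$ are the standard symplectic basis vectors. Checking that $f$ respects the presentation is where the work lies: the braid relations become trivial identities in powers of $s$; relations $(BS1)$ and $(BS2)$ hold because $s$ is central and distinct handles commute in $H$; $(BS3)$ holds because $s$ is central; and $(BS4)$ becomes $[\beta_r,\alpha_r^{-1}]=[\alpha_r,\beta_r]=(0,2)=s^2$. Granting this, $f(\sigma^2)=s^2=(0,2)$ generates $[H,H]=\Lie_2(H)\cong\Z$ (as $\LCS_3H=1$), so $\Lie_2(f)$ carries $\sigma^2$ to an element of infinite order; hence $\sigma^2$ has infinite order in $\Lie_2(\B_n(\Sigma_{g,1}))$. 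Combined with the generation step this gives $\Lie_2\cong\Z$.

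The main obstacle is precisely the verification that $f$ is well defined: one must confirm that each of $(BS1)$--$(BS4)$ (and the braid relations) is sent to a valid identity in the twisted group $H$, and keep careful track of the factor of $2$ so that the central commutator $s^2$ really is the generator of $[H,H]$ rather than twice it. Once the quotient $H$ is correctly set up, detecting the infinite order of $\sigma^2$ is immediate, and this is the mechanism by which the nilpotent-class-$2$ Lie ring $\Lie_2$ is shown to be infinite cyclic rather than finite, the finiteness in the closed case arising only after imposing the extra boundary relation of Corollary~\ref{partitioned_Braids_on_closed_surfaces}.
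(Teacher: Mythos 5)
Your argument is correct in the range $n \geq 3$, and the mechanism at its heart is the same as the paper's: everything is read off from a $2$-step nilpotent central extension of $\Z^{2g}$ by $\Z$ in which $[\bar a_r,\bar b_r]$ is twice a generator of the centre. The packaging, however, is genuinely different. The paper forms the quotient $G$ of $\B_n(\Sigma_{g,1})$ by the normal closure $N$ of the $\sigma_i\sigma_{i+1}^{-1}$, matches the resulting presentation against $(\Z\times\Z^g)\rtimes\Z^g$, and concludes that $N=\LCS_3$; this identifies all of $\B_n(\Sigma_{g,1})/\LCS_3$ (hence $\Lie_1$, $\Lie_2$ and the brackets) in one stroke. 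You instead split the claim: generation of $\Lie_2$ by $\sigma^2$ comes from Theorem~\ref{Bn(S)/residue} and Corollary~\ref{LCS_Bn(S)_stable} (and you are right that this is non-circular, since those results do not rest on the present proposition), while the infinite order is detected by a homomorphism onto your cocycle model $H$ — which is isomorphic to the paper's $G$. Your version is more modular: you only ever have to check that relations hold in a prescribed target, never that a presentation collapses to exactly the group you claim; the price is that you do not identify $\B_n(\Sigma_{g,1})/\LCS_3$ itself, which the paper reuses later. Your verification of the relations, including the factor-of-$2$ bookkeeping $[\beta_r,\alpha_r^{-1}]=(0,-2\omega(\beta_r,\alpha_r))=(0,2)=s^2$, is correct.

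One caveat: the cases $n=1,2$, which you defer to a direct analysis of the presentation, cannot in fact be completed, because the statement fails there. For $n=2$, composing $\pi_S$ with the abelianisation of $\pi_1(\Sigma_{g,1})$ gives a surjection $\B_2(\Sigma_{g,1})\twoheadrightarrow\Z^{2g}\wr\Sym_2$, and Proposition~\ref{A_wr_S2} gives $\Lie_2(\Z^{2g}\wr\Sym_2)\cong(\Z/2)^{2g}$, which is not a quotient of $\Z$ for $g\geq 1$; the culprit is that the classes $[\sigma,\bar a_r]$ survive in $\Lie_2(\B_2(\Sigma_{g,1}))$, whereas for $n\geq 3$ they are killed by $(BS1)$, a relation that is vacuous when $n=2$. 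For $n=1$ there are no pure braid generators at all and $\Lie_2(\F_{2g})$ has rank $g(2g-1)$. This is a defect of the ``$n\geq 1$'' in the statement rather than of your strategy — the paper's own proof silently needs $n\geq 3$ for $(BS1)$ to make $\sigma$ central, and the proposition is only ever invoked with $n\geq 3$ (or with blocks of size $\geq 3$ in Proposition~\ref{partitioned_Lie(B(Sg1)}) — but you should restrict your proof to that range rather than promise the small cases.
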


\begin{proof}
We compute completely $\B_n(\Sigma_{g,1})/\LCS_3$. Let us consider the quotient of $\B_n(\Sigma_{g,1})$ by the relations $\sigma_i = \sigma_{i+1}$, that is, by the normal closure $N$ of the $\sigma_i \sigma_{i+1}^{-1}$ for $1\leq i\leq n-1$. We already know that, modulo $\LCS_3$, the braid generators $\sigma_i$ have a common class $\sigma$. In particular, $N \subseteq \LCS_3$, and we will show that $N = \LCS_3$ by showing that $G := \B_n(\Sigma_{g,1})/N$ is $2$-nilpotent. Thanks to Proposition~\ref{Bn(Sg1)}, we can compute a presentation of this quotient. It is generated by $\sigma$, together with $a_r$ and $b_r$ for $1\leq r\leq g$. The braid relations on the $\sigma_i$ become trivial there. $(BS1)$ says that $\sigma$ is central. $(BS2)$ says that the $a_r$ and the $b_s$ commute with one another, except $a_r$ and $b_r$ (for each $r$). Since $\sigma$ is central, $(BS3)$ becomes trivial. Finally, $(BS4)$ can be written as $b_r a_r b_r^{-1} = a_r \sigma^{-2}$. From this presentation, one can see that $G \cong (\Z \times \Z^g) \rtimes \Z^g$, where the three factors are free abelian on $\sigma$, the $a_r$ and the $b_r$ respectively; the action of each $b_r$ is trivial on $\sigma$ and the $a_s$ if $s \neq r$, and $b_r \cdot a_r = a_r - 2\sigma$. Since this group is $2$-nilpotent, we have $N = \LCS_3(\B_n(\Sigma_{g,1}))$, as announced. Moreover, the Lie ring of $G$, which is the Lie ring of $\B_n(\Sigma_{g,1})$ by Corollary~\ref{LCS_Bn(S)_stable}, is easy to compute. Namely, $\Lie_1(G) = G^{\ab} \cong (\Z/2)^2 \times \Z^{2g}$, $\Lie_2(G) = \LCS_2(G) \cong \Z$ is generated by $\overline{\sigma^2}$, and the only non-trivial brackets of generators are $[\overline a_r, \overline b_r] = \overline{\sigma^2}$. 
\end{proof}

\begin{remark}\label{rmk:double_recoverings}
Here we choose to recover \cite[Th.~1]{BellingeriGervaisGuaschi} from Proposition~\ref{Lie(Bn(Sg1))}. However, one can also \emph{deduce} Proposition~\ref{Lie(Bn(Sg1))} from \cite[Th.~1]{BellingeriGervaisGuaschi}. Namely, one can embed $\Sigma_{g,1}$ into $\Sigma_{g'}$ for any $g' \geq g$, by attaching $\Sigma_{g'-g,1}$ along the boundary component. The induced map $\Lie_2(\B_n(\Sigma_{g,1})) \rightarrow \Lie_2(\B_n(\Sigma_{g'}))$ sends $\sigma^2$ to $\sigma^2$, and the latter is of order $n+g'-1$ by \cite[Th.~1]{BellingeriGervaisGuaschi}. Thus, by varying $g'$, we see that $\sigma^2$ cannot be of finite order inside $\Lie_2(\B_n(\Sigma_{g,1}))$.
\end{remark}

\begin{remark}
The group $(\Z \times \Z^g) \rtimes \Z^g$ appearing in the proof of Proposition~\ref{Lie(Bn(Sg1))}, which is then the maximal nilpotent quotient of $\B_n(\Sigma_{g,1})$, has a nice interpretation as a matrix group resembling the Heisenberg group of a symplectic vector space. Namely, it is the subgroup of $\mathrm{GL}_{g+2}(\mathbb Q)$ given by:
\[\begin{pmatrix}
1 &\Z &\Z     &\cdots &\Z     &\frac12 \Z \\
  &1  &0      &\cdots &0      &\Z         \\
  &   &\ddots &\ddots &\vdots &\vdots     \\
  &   &       &\ddots &0      &\Z         \\
  &   &       &       &1      &\Z         \\
  &   &       &       &       &1  
\end{pmatrix}\] 
\end{remark}

\begin{remark}[About the Riemann-Hurwitz formula]\label{rmk:Riemann-Hurwitz_formula}
The result \cite[Th.~1]{BellingeriGervaisGuaschi} quoted above and recovered below (Proposition~\ref{partitioned_Lie(B(Sg)}) says that when $S$ is an orientable closed surface, the information encoded in $\Lie_2(\B_n(S))$ is essentially the genus of $S$, or its Euler characteristic. In fact, one can recover the Riemann-Hurwitz formula for (unramified) coverings of closed orientable surfaces from this computation. Indeed, let $p \colon \Sigma_h \twoheadrightarrow \Sigma_g$ be a $k$-sheeted covering. It induces a continuous map $p^* \colon C_n(\Sigma_g) \rightarrow C_{kn}(\Sigma_h)$ between unordered configuration spaces sending a configuration of $n$ points to the configuration of the $kn$ preimages of these points by $p$. The induced map between the fundamental groups sends the pure braid generator $A_{12}$ to the product of the $A_{j,j+1}$ for $j \in p^{-1}(1)$ (if the correct conventions are chosen). Thus the induced map from $\Lie_2(\B_n(\Sigma_g))$ to $\Lie_2(\B_{kn}(\Sigma_h))$ sends $\sigma^2$ to $k \sigma^2$. Since $\sigma^2$ is of order $n+g-1$ in $\Lie_2(\B_n(\Sigma_g))$, we obtain $(n+g-1)k \sigma^2 = 0$ in $\Lie_2(\B_{kn}(\Sigma_h))$. However, $\sigma^2$ is of order $kn + h - 1$ there, implying that $kn + h - 1$ divides $kn + k(g-1)$. This holds for all $n$. For $n$ big enough, $kn + k(g-1) < 2(kn + h - 1)$, so the only possibility is that $kn + k(g-1) = kn + h - 1$, that is, $k(g-1) = h - 1$, which is equivalent to the Riemann-Hurwitz formula for $p$.
\end{remark}

\section{Partitioned braids on surfaces}\label{sec:partitioned_braids_surfaces_stable}

Let us now study the LCS of partitioned surface braid groups $\B_{\lambda}(S)$, generalising the results from \Spar\ref{sec:LCS_undiscrete_partition}, which can be seen as the case of the trivial partition $\lambda = (n)$ of $n$. We follow the same steps: we first compute the abelianisations of $\B_{\lambda}(S)$ in  Proposition~\ref{partitioned_B(S)^ab}, before studying $\B_{\lambda}(S)/\LCS_\infty$ and showing that the LCS of $\B_{\lambda}(S)$ stops at most at $\LCS_3$ when the partition $\lambda$ has only blocks of size at least $3$, in Theorem~\ref{B_lambda(S)/residue} and Corollary~\ref{LCS_B_lambda(S)_stable}. Finally, under the latter hypothesis, we compute the associated Lie rings in Theorem~\ref{Lie_ring_partitioned_B(S)}.

\subsection{The abelianisation}

Let $\lambda = (n_1, \ldots , n_l)$ be a partition of an integer $n$. A computation of $\B_\lambda(S)^{\ab}$ can be obtained by a quite straightforward generalisation of the computation of $\B_n(S)^{\ab}$ from Proposition~\ref{Bn(S)^ab}. 

Let us first recall that the morphism $\varphi$ from \Spar\ref{sec_braids_general} induces a map $\B_\lambda^{\ab} \rightarrow \B_\lambda(S)^{\ab}$. Then, from Proposition~\ref{partitioned_B^ab}, we get that:
\begin{itemize}
\item For each $i \leq l$ such that $n_i \geq 2$, all the $\sigma_\alpha$ with $\alpha$ and $\alpha + 1$ in the $i$-th block of $\lambda$ have a common image in  $\B_\lambda(S)^{\ab}$, called $s_i$.
\item For each $i < j \leq l$, all the $A_{\alpha \beta}$ with $\alpha$ (resp.~$\beta$) in the $i$-th (resp.~the $j$-th) block of $\lambda$ have a common image in $\B_\lambda(S)^{\ab}$, called $a_{ij}$ (or $a_{ji}$).
\end{itemize}

Let us now consider the short exact sequence:
\[\begin{tikzcd}
\PB_n^\circ(S) \ar[r, hook] 
&\B_\lambda(S) \ar[r, two heads, "\pi_S"] 
&\pi_1(S) \wr \Sym_\lambda.
\end{tikzcd}\] 
We can apply Lemma~\ref{abelianization_from_coinv} to it, and we get an exact sequence of abelian groups:
\[\begin{tikzcd}
(\PB_n^\circ(S)^{\ab})_{\B_\lambda(S)} \ar[r] 
&\B_\lambda(S)^{\ab} \ar[r] 
&(\pi_1(S) \wr \Sym_\lambda)^{\ab} \ar[r]
&0.
\end{tikzcd}\] 
On the one hand, the quotient $(\pi_1(S) \wr \Sym_\lambda)^{\ab}$ is isomorphic to the product of the $(\pi_1(S) \wr \Sym_{n_i})^{\ab}$, which is $(\pi_1(S)^{\ab})^l \times (\Z/2)^{l'}$, where $l'$ is the number of indices $i \leq l$ such that $n_i \geq 2$ (see Corollary~\ref{wr_ab_partitioned}). On the other hand, it follows from Proposition~\ref{generators_and_N(Pn)} that $\PB_n^\circ(S)$ is generated by $\PB_n$ under the action of $\PB_n(S)$, which is a subgroup of $\B_\lambda(S)$. As a consequence, the map $\PB_n^{\ab} \rightarrow (\PB_n^\circ(S)^{\ab})_{\B_\lambda(S)}$ induced by $\varphi$ is surjective. Moreover, it factors through the quotient $(\PB_n^{\ab})_{\B_\lambda} = (\PB_n^{\ab})_{\Sym_\lambda} \cong \Z^{l(l-1)/2} \times \Z^{l'}$. Thus the image of $(\PB_n^\circ(S)^{\ab})_{\B_\lambda(S)}$ in $\B_\lambda(S)^{\ab}$ is generated by the images of the pure braid generators, which are the elements $2 s_i$ for $i \leq l$ such that $n_i \geq 2$, and $a_{ij}$ for $i < j \leq l$.

Now, let $H$ be the subgroup of $\B_\lambda(S)^{\ab}$ generated by the $s_i$ and the $a_{ij}$. From the above, we get an isomorphism  $\B_\lambda(S)^{\ab}/H \cong (\pi_1(S)^{\ab})^l$. Moreover, the corresponding projection $\B_\lambda(S)^{\ab} \twoheadrightarrow (\pi_1(S)^{\ab})^l$ splits: a splitting is defined by sending $(g_i)_i$ to $\sum \psi_i(g_i)$ where, for each $i$, $\psi_i$ is induced by $\psi_\alpha$ for any $\alpha$ in the $i$-th block of $\lambda$. As a consequence, $\B_\lambda(S)^{\ab}$ identifies with $(\pi_1(S)^{\ab})^l \times H$, and we can use Proposition~\ref{trichotomy} to get a complete calculation:

\begin{proposition}\label{partitioned_B(S)^ab}
Let $n \geq 1$ and $\lambda = (n_1, \ldots , n_l)$ be a partition of $n$. Then:
\[\B_\lambda(S)^{\ab}\ \cong\ \left(\pi_1(S)^{\ab}\right)^l \times \left(\B_\lambda^{\ab}/R\right),\]
where $\B_\lambda^{\ab}$ is free abelian on the $s_i$ and the $a_{ij}$ from Proposition~\ref{partitioned_B^ab}, and $R$~is:
\begin{itemize}
\item trivial if $S$ is planar,
\item generated by the relations $2(n_i-1) s_i + \sum_{j \neq i} n_j a_{ij}$ ($i = 1, \ldots , n$) if~$S \cong \S^2$,
\item generated by the $2 s_i$ and the $a_{ij}$ in all the other cases.
\end{itemize}
Explicitly, if $l'$ denotes the number of indices $i \leq l$ such that $n_i \geq 2$, we have, for $S \ncong \S^2$:
\[\B_\lambda^{\ab}/R\ \cong\
\begin{cases}
\Z^{l'} \times \Z^{l(l-1)/2} &\text{ if $S$ is planar,} \\
(\Z/2)^{l'} &\text{ if $S$ does not embed into the sphere.}
\end{cases}\]
\end{proposition}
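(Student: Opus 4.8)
The plan is to mirror the proof of Proposition~\ref{Bn(S)^ab}, which is the case $l=1$ of the present statement, using the same two ingredients: the coinvariants exact sequence of Lemma~\ref{abelianization_from_coinv} applied to the short exact sequence $\PB_n^\circ(S)\hookrightarrow \B_\lambda(S)\twoheadrightarrow \pi_1(S)\wr\Sym_\lambda$, together with the generation statement for $\PB_n^\circ(S)$ from Proposition~\ref{generators_and_N(Pn)} and the trichotomy of Proposition~\ref{trichotomy}. In fact the text immediately preceding the statement already carries out almost all of this: it establishes the splitting $\B_\lambda(S)^{\ab}\cong(\pi_1(S)^{\ab})^l\times H$, where $H$ is the subgroup generated by the classes $s_i$ and $a_{ij}$, and identifies the image of the coinvariants $(\PB_n^\circ(S)^{\ab})_{\B_\lambda(S)}$ in $\B_\lambda(S)^{\ab}$ as the subgroup generated by the pure-braid generator images, namely the $2s_i$ (for $n_i\geq 2$) and the $a_{ij}$ (for $i<j$). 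So the only genuinely new content of the final statement is the identification of the relation subgroup $R\subseteq \B_\lambda^{\ab}$ such that $H\cong \B_\lambda^{\ab}/R$, case by case according to the trichotomy.

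First I would set up $H$ as a quotient of $\B_\lambda^{\ab}$: since $\B_\lambda^{\ab}$ is free abelian on the $s_i$ and the $a_{ij}$ by Proposition~\ref{partitioned_B^ab}, and $H$ is generated by the images of these, there is a canonical surjection $\B_\lambda^{\ab}\twoheadrightarrow H$, and $R$ is by definition its kernel. The relations imposed are exactly those coming from the image of the coinvariants, i.e.\ the relations expressing that the pure-braid generator classes die or become torsion in $\B_\lambda(S)^{\ab}$. Concretely, I would invoke Proposition~\ref{trichotomy}: for each generator $A_{\alpha\beta}$, its class in $\B_n(S)^{\ab}$ (equivalently its image $a_{ij}$ or $2s_i$ in $\B_\lambda(S)^{\ab}$) is of infinite order when $S$ is planar, of order $n-1$ when $S\cong\S^2$, and trivial (a commutator) otherwise.

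In the planar case, Proposition~\ref{trichotomy} gives that all pure-braid generators have infinite order and no relations are forced, so $R$ is trivial and $H\cong\B_\lambda^{\ab}\cong\Z^{l'}\times\Z^{l(l-1)/2}$. In the non-embeddable (handle/crosscap) case, Proposition~\ref{trichotomy} says every $A_{\alpha\beta}$ is a commutator, hence each $a_{ij}$ and each $2s_i$ maps to zero, so $R$ is generated by the $a_{ij}$ and the $2s_i$; quotienting $\B_\lambda^{\ab}$ by these leaves exactly $(\Z/2)^{l'}$, generated by the classes of the $s_i$. The remaining case $S\cong\S^2$ requires reading off the precise torsion relation: here I would use the boundary relation from Corollary~\ref{partitioned_Braids_on_closed_surfaces} (or Corollary~\ref{Braids_on_closed_surfaces}) for the sphere, which in the abelianisation becomes $A_{\alpha 1}\cdots A_{\alpha n}=1$ for one representative $\alpha$ per block; translated into the $s_i,a_{ij}$ basis this is precisely $2(n_i-1)s_i+\sum_{j\neq i}n_j a_{ij}=0$, accounting for the contribution $A_{\alpha\alpha'}\mapsto 2s_i$ from the $n_i-1$ other indices in the same block and $A_{\alpha\beta}\mapsto a_{ij}$ from the $n_j$ indices of the $j$-th block.

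The main obstacle is the sphere case: one must verify that the relation generating $R$ is correctly the stated one $2(n_i-1)s_i+\sum_{j\neq i}n_j a_{ij}$, which means carefully bookkeeping, within a single block's boundary relation $A_{\alpha 1}\cdots A_{\alpha n}=1$, how many factors land on each of $s_i$ (those $A_{\alpha\alpha'}$ with $\alpha'$ in the same block, giving the factor $2(n_i-1)$ since each pure generator inside a block is $\sigma^2\mapsto 2s_i$ and there are $n_i-1$ such indices $\alpha'\neq\alpha$) and on each $a_{ij}$ (the $n_j$ indices of the $j$-th block). I would double-check the count of $2(n_i-1)$ against the $l=1$ case of Proposition~\ref{Bn(S)^ab}, where the single relation must reduce to $\sigma$ having order $2(n-1)$: with $l=1$ there are no $a_{ij}$ and the relation reads $2(n-1)s_1=0$, confirming consistency. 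Everything else is a routine transcription of the generalities already proven in the paragraph preceding the statement, combined with the explicit presentations of surface braid groups.
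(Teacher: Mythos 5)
Your overall strategy coincides with the paper's: lean on the splitting $\B_\lambda(S)^{\ab}\cong(\pi_1(S)^{\ab})^l\times H$ already established in the discussion preceding the statement, identify $R$ as the kernel of the canonical surjection $\B_\lambda^{\ab}\twoheadrightarrow H$, and treat the three cases of the trichotomy separately, using Corollary~\ref{partitioned_Braids_on_closed_surfaces} for the sphere. The sphere case is handled correctly and exactly as in the paper: abelianising the explicit quotient description of $\B_\lambda(\S^2)$ and counting how many factors $A_{\alpha\beta}$ land on $2s_i$ versus $a_{ij}$ gives precisely $2(n_i-1)s_i+\sum_{j\neq i}n_j a_{ij}$, and your consistency check against the $l=1$ case is sound.

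There is, however, a genuine gap in the other two cases: you only establish which relations \emph{hold}, not that $R$ contains nothing more. In the planar case, Proposition~\ref{trichotomy} tells you that each individual class $\overline{A_{\alpha\beta}}$ has infinite order, but this says nothing about linear independence of the $s_i$ and $a_{ij}$ in $H$ --- a relation such as $2s_1=a_{12}$ is not excluded by any single generator being torsion-free, so the inference ``no relations are forced, hence $R$ is trivial'' is not valid as stated. Likewise, in the handle/crosscap case you show $R\supseteq\langle 2s_i,a_{ij}\rangle$ but not the reverse inclusion; you still need to know that the $s_i$ remain linearly independent over $\Z/2$ in $\B_\lambda(S)^{\ab}$. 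The paper closes both gaps with short extra arguments: for planar $S$, an embedding $S\hookrightarrow\R^2$ induces a retraction $\B_\lambda(S)^{\ab}\twoheadrightarrow\B_\lambda^{\ab}$ of the map induced by $\varphi$, which forces $R=0$; for non-planar $S$, the projections onto the $\Sym_{n_i}^{\ab}\cong\Z/2$ (as in the proof of Proposition~\ref{partitioned_B^ab}) separate the $s_i$. Both fixes are easy, but neither is supplied by your argument, and without one of them the computation of $\B_\lambda^{\ab}/R$ in these two cases is only an upper bound on $H$.
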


\begin{proof}
If $S \neq \S^2$, all the relations are direct consequences of Proposition~\ref{trichotomy}, and it remains to prove that they are the only ones. In the first case, we can directly use the result of Proposition~\ref{partitioned_B^ab}: an embedding of $S$ into $\R^2 \cong \S^2 - pt$ induces a retraction $\B_\lambda(S)^{\ab} \twoheadrightarrow \B_\lambda^{\ab}$ of the morphism $\B_\lambda^{\ab} \rightarrow \B_\lambda(S)^{\ab}$ induced by $\varphi$, whence the result in this case. In the third one, the same proof as the proof of Proposition~\ref{partitioned_B^ab} works: projections onto the factors are given by projections onto the $\Sym_{n_i}^{\ab} \cong \Z/2$ for $n_i \geq 2$.

If $S = \S^2$, Corollary~\ref{partitioned_Braids_on_closed_surfaces} describes $\B_\lambda(\S^2)$ as the quotient of $\B_\lambda$ by the relations $A_{\alpha 1} A_{\alpha 2} \cdots A_{\alpha n} = 1$ for all $1\leq \alpha\leq n$. The abelianisation $\B_\lambda(\S^2)^{\ab}$ is then the quotient of $\B_\lambda^{\ab}$ (described in Proposition~\ref{partitioned_B^ab}) by the classes of the above relations. If $\alpha$ is in the $i$-th block of $\lambda$, then the class of $A_{\alpha\beta}$ is either $2 s_i$ if $\beta$ is also in the $i$-th block (which holds for $n_i - 1$ values of $\beta$), or $a_{ij}$ if $\beta$ is in the $j$-th block for some $j \neq i$ (which happens for $n_j$ values of $\beta$, for each $j \neq i$). Thus the classes of the $A_{\alpha 1} A_{\alpha 2} \cdots A_{\alpha n}$ are indeed the relations of the statement.
\end{proof}

\begin{remark}
The relation $2(n_i-1) s_i + \sum_{j \neq i} n_j a_{ij}$ makes sense even if $n_i = 1$: in this case, with our conventions, there is no  generator $s_i$, but this does not matter since its coefficient is $2(n_i - 1) = 0$ (alternatively, one could add a generator $s_i$ and ask that $s_i = 0$).
\end{remark}

\subsection{The lower central series}\label{sec:LCS_stable_partitioned_surface_braids}

The work done above to show that the LCS of $\B_n(S)$ stops if $n \geq 3$ (see Corollary~\ref{LCS_Bn(S)_stable}) generalises to the partitioned braid group when all the blocks of the partition are of size at least $3$. First, here follows the generalisation of Lemma~\ref{sigma_central}.
\begin{lemma}\label{central_image}
Let $\lambda = (n_1, \ldots , n_l)$ be a partition of $n$, with $n_i \geq 3$ for all $i$. The image of $\B_\lambda$ in $\B_\lambda(S)/\LCS_\infty$ is central. In particular, it is a quotient of $\B_\lambda^{\ab}$.
\end{lemma}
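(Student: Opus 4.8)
The plan is to generalise the proof of Lemma~\ref{sigma_central} in the obvious way, exploiting the fact that the blocks are large enough ($n_i \geq 3$) to run disjoint support arguments inside each block. The key observation is that the image of $\B_\lambda$ in $\B_\lambda(S)/\LCS_\infty$ is generated by the classes of the generators of $\B_\lambda$ listed in Lemma~\ref{generators_of_partitioned_braids}, namely the $\sigma_\alpha$ (with $\alpha, \alpha+1$ in the same block) and the $A_{\alpha\beta}$ (with $\alpha, \beta$ in distinct blocks). To prove the image is central in $\B_\lambda(S)/\LCS_\infty$, it suffices, by Proposition~\ref{generators_and_N(Pn)}, to show that each such generator commutes modulo $\LCS_\infty$ with all the generators of $\B_\lambda(S)$, which are the images of $\varphi$ (handled by commutation inside $\B_\lambda$ together with disjoint support) and the images of the $\psi_i$ (curves on the surface).

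First I would recall that $\varphi$ sends $\LCS_\infty(\B_\lambda)$ into $\LCS_\infty(\B_\lambda(S))$. By Proposition~\ref{LCS_stable_partitioned_braids}, the LCS of $\B_\lambda$ stops at $\LCS_2$ when all $n_i \geq 3$, so $\LCS_\infty(\B_\lambda) = \LCS_2(\B_\lambda)$; hence the image of $\B_\lambda$ in $\B_\lambda(S)/\LCS_\infty$ factors through $\B_\lambda/\LCS_2(\B_\lambda) = \B_\lambda^{\ab}$. This already yields the second assertion of the lemma, \emph{provided} we establish centrality, since a central image that is generated by commuting elements is a quotient of the abelianisation. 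The only thing remaining is therefore to prove that this image is central.

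For centrality I would check that the generating classes $\overline{\sigma_\alpha}$ and $\overline{A_{\alpha\beta}}$ commute with the image of each $\psi_i$. Here the hypothesis $n_i \geq 3$ does the work: given a generator $\sigma_\alpha$ or $A_{\alpha\beta}$, its support involves at most two of the strands, so inside any block containing a relevant strand there is at least one further strand available. Thus for any curve class $\psi_i(\gamma)$ I can choose, after possibly replacing the chosen representatives by conjugate ones using a disjoint support argument as in Lemma~\ref{sigma_central}, representatives of the two elements whose supports are disjoint as braids on $S$ (the braid generator supported in a small disc, the curve supported away from it), so that they commute in $\B_\lambda(S)$ and a fortiori modulo $\LCS_\infty$. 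Commutation of the generating classes among themselves follows from the fact that they all lie in the image of $\B_\lambda^{\ab}$, which is abelian. Combining these, every generator of the image of $\B_\lambda$ commutes with every generator of $\B_\lambda(S)$, so the image is central.

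The main obstacle, as in the unpartitioned case, is the bookkeeping around the $\psi_i$: one must verify that for \emph{every} pair (braid generator, surface curve) there genuinely is enough room to arrange disjoint support, and that the implicit choice of connecting paths in the definition of $\psi_i$ does not spoil this. This is exactly where $n_i \geq 3$ (rather than $n_i \geq 2$) is needed, and I would make the inspection explicit for the two types of generators, mirroring the argument of Lemma~\ref{sigma_central} block by block. Everything else is a direct transcription of the unpartitioned proof with $\sigma$ replaced by the finite family of classes $s_i$ and $a_{ij}$.
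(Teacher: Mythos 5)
Your proposal is correct and follows essentially the same route as the paper's proof: use the stable case of Theorem~\ref{thm:partitioned-braids} to see that $\varphi$ factors through $\B_\lambda^{\ab}$, reduce via Proposition~\ref{generators_and_N(Pn)} to checking commutation with the $\psi_\alpha$ (one per block, up to conjugation by $\B_\lambda$), and use $n_i \geq 3$ to pick representatives of the classes $s_i$ and $a_{ij}$ with support disjoint from the chosen $\psi_\alpha$. The only cosmetic difference is that the factorisation through $\B_\lambda^{\ab}$ already gives the ``quotient of $\B_\lambda^{\ab}$'' assertion directly, without needing centrality first.
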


\begin{proof}
It follows from Theorem~\ref{thm:partitioned-braids} that $\LCS_\infty(\B_\lambda) = \LCS_2(\B_\lambda)$. As a consequence, the morphism $\varphi \colon \B_\lambda \rightarrow \B_\lambda(S)/\LCS_\infty$ factors through $\B_\lambda/\LCS_2 = \B_\lambda^{\ab}$. Hence its image is abelian, generated by the images of the $s_i$ and the $a_{ij}$ from Proposition~\ref{partitioned_B^ab}. 

In order to show that it is central, we need to show that these elements commute with generators of $\B_\lambda(S)/\LCS_\infty$. We deduce from Proposition~\ref{generators_and_N(Pn)} that $\B_\lambda(S)$ is generated by the images of $\varphi$ and of the $\psi_\alpha$. In fact, we can restrict to taking one $\alpha$ in each block of $\lambda$, since $\psi_\alpha$ and $\psi_\beta$ are conjugated by elements of $\B_\lambda$ if $\alpha$ and $\beta$ are in the same block. The $s_i$ and the $a_{ij}$ already commute with each other, so we only need to show that they commute with the images of the selected $\psi_\alpha$. Since the blocks of~$\lambda$ have size at least~$3$, we can find representatives of the $s_i$ and the $a_{ij}$ having disjoint support with elements of all the chosen $\ima(\psi_\alpha)$. Thus, the $s_i$ and the $a_{ij}$ commute with a family of generators of $\B_\lambda(S)/\LCS_\infty$, which proves our claim.
\end{proof}

We can now generalise our main decomposition  theorem (Theorem~\ref{Bn(S)/residue}) to partitioned braids:
\begin{theorem}\label{B_lambda(S)/residue}
Let $\lambda = (n_1, \ldots , n_l)$ be a partition of $n$, with $n_i \geq 3$ for all $i$. There is a central extension:
\[\begin{tikzcd}
\langle s_i^2, a_{ij} \rangle_{i,j \leq l} \ar[r, hook] 
&\B_\lambda(S)/\LCS_\infty \ar[r, two heads] 
&(\pi_1(S)^{\ab} \times \Z/2)^l.
\end{tikzcd}\] 
\end{theorem}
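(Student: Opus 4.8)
The plan is to mimic the proof of Theorem~\ref{Bn(S)/residue} closely, replacing the single central element $\sigma^2$ by the family $\langle s_i^2, a_{ij}\rangle$ and the target $\pi_1(S)^{\ab}\times\Z/2$ by $(\pi_1(S)^{\ab}\times\Z/2)^l$. The central tool is once again the short exact sequence
\[\begin{tikzcd}
\PB_n^\circ(S) \ar[r, hook]
&\B_\lambda(S) \ar[r, two heads, "\pi_S"]
&\pi_1(S) \wr \Sym_\lambda,
\end{tikzcd}\]
together with Lemma~\ref{central_image}, which tells us that the image of $\B_\lambda$ in $\B_\lambda(S)/\LCS_\infty$ is central and generated by the classes of the $s_i$ and the $a_{ij}$.

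First I would identify the image of $\LCS_\infty(\B_\lambda(S))$ under $\pi_S$ inside $\pi_1(S)\wr\Sym_\lambda$. Since $\pi_S$ is surjective, this image is a normal subgroup contained in $\LCS_\infty(\pi_1(S)\wr\Sym_\lambda)$; moreover it contains the images of the elements $\sigma_\alpha\sigma_\beta^{-1}\in\LCS_\infty(\B_\lambda)$ (for $\alpha,\beta$ in the same block), which are products of commutators of the form $\tau_\alpha\tau_\beta^{-1}$ in each wreath factor. Appealing to the wreath-product computation (the partitioned analogue of Lemma~\ref{LCS_wr_sym_stable}, cf.~Corollary~\ref{LCS_wr_sym_stable_partitioned}), this forces the image to equal $\LCS_2(\pi_1(S)\wr\Sym_\lambda)$, whose quotient is exactly $(\pi_1(S)\wr\Sym_\lambda)^{\ab}\cong(\pi_1(S)^{\ab}\times\Z/2)^l$ by Corollary~\ref{wr_ab_partitioned}.

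Next I would assemble the Nine Lemma diagram, exactly as in the proof of Theorem~\ref{Bn(S)/residue}:
\[\begin{tikzcd}
\LCS_\infty(\B_\lambda(S)) \cap \PB_n^\circ(S) \ar[r, hook] \ar[d, hook]
&\LCS_\infty(\B_\lambda(S)) \ar[r, two heads] \ar[d, hook]
&[12pt]\LCS_2(\pi_1(S) \wr \Sym_\lambda) \ar[d, hook] \\
\PB_n^\circ(S) \ar[r, hook] \ar[d, two heads]
&\B_\lambda(S) \ar[r, two heads, "\pi_S"] \ar[d, two heads]
&\pi_1(S) \wr \Sym_\lambda \ar[d, two heads] \\
\PB_n^\circ(S)/{\cdot} \ar[r, dashed]
&\B_\lambda(S)/\LCS_\infty \ar[r, dashed, "\overline{\pi}_S"]
&(\pi_1(S) \wr \Sym_\lambda)^{\ab}.
\end{tikzcd}\]
The bottom row is then a short exact sequence, and the kernel of $\overline{\pi}_S$ is the image of $\PB_n^\circ(S)$ in $\B_\lambda(S)/\LCS_\infty$. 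Since $\PB_n^\circ(S)$ is the normal closure of $\PB_n$ in $\B_\lambda(S)$ by Proposition~\ref{generators_and_N(Pn)}, its image is the normal closure of the image of $\PB_n$; but $\PB_n$ is generated by pure braid generators $A_{\alpha\beta}$, whose classes are precisely the $s_i^2$ (when $\alpha,\beta$ lie in a common block) and the $a_{ij}$ (for distinct blocks), all of which are central by Lemma~\ref{central_image}. Hence the kernel is the central subgroup $\langle s_i^2, a_{ij}\rangle$, giving the desired central extension.

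The main obstacle I anticipate is pinning down the image of $\LCS_\infty$ under $\pi_S$ in the first step: one must check that the partitioned wreath product $\pi_1(S)\wr\Sym_\lambda$ has $\LCS_2=\LCS_\infty$ and that the elements $\tau_\alpha\tau_\beta^{-1}$ already normally generate $\LCS_2$, so that the normal subgroup squeezed between these two coincides with $\LCS_2$. This is exactly where the block-size hypothesis $n_i\geq 3$ is used (via Lemma~\ref{central_image} and the stability of the LCS of partitioned wreath products), so I would be careful to invoke the correct partitioned wreath-product lemma from Appendix~\ref{appendix:wreath-products}. The remaining centrality and normal-closure bookkeeping is routine and parallels Theorem~\ref{Bn(S)/residue} verbatim.
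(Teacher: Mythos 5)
Your proposal is correct and follows essentially the same route as the paper: reduce to the unpartitioned argument of Theorem~\ref{Bn(S)/residue}, observe that the images $\tau_\alpha\tau_\beta^{-1}$ (for $\alpha,\beta$ in a common block) still normally generate $\LCS_2(\pi_1(S)\wr\Sym_\lambda)$ by applying the wreath-product lemma factorwise, run the Nine Lemma, and identify the kernel as the central subgroup $\langle s_i^2, a_{ij}\rangle$ via Proposition~\ref{generators_and_N(Pn)} and Lemma~\ref{central_image}. The paper's own proof is exactly this, stated as a list of what changes relative to the unpartitioned case.
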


\begin{proof}
The proof is essentially the same as the proof of Theorem~\ref{Bn(S)/residue}, so we only stress what changes. The element $\sigma_\alpha \sigma_\beta^{-1}$ is in $\LCS_\infty(\B_n(S))$ only when $\alpha$ and $\beta$ are in the same block of $\lambda$. However, their images $\tau_\alpha \tau_\beta^{-1}$ in $\pi_1(S) \wr \Sym_\lambda$ still normally generate $\LCS_2(\pi_1(S) \wr \Sym_\lambda)$, because $\pi_1(S) \wr \Sym_\lambda$ is the product of the $\pi_1(S) \wr \Sym_{n_i}$, whose $\LCS_2$ is normally generated by the $\tau_\alpha \tau_\beta^{-1}$ for $\alpha$ and $\beta$ in the $i$-th block of~$\lambda$ (Lemma~\ref{LCS_wr_sym_stable}). Thus, arguing exactly as in the proof of Theorem~\ref{Bn(S)/residue}, we get a short exact sequence:
\[\begin{tikzcd}
\PB_n^\circ(S)/\LCS_\infty(\B_\lambda(S)) \cap \PB_n^\circ(S) \ar[r, hook] 
&\B_\lambda(S)/\LCS_\infty \ar[r, two heads, "\pi_S"] 
&(\pi_1(S) \wr \Sym_\lambda)^{\ab}.
\end{tikzcd}\] 
Recall that $(\pi_1(S) \wr \Sym_\lambda)^{\ab} \cong (\pi_1(S)^{\ab} \times \Z/2)^l$. Moreover, the kernel is the normal closure of the image of $\PB_n$, but this image is already normal, and even central by Lemma~\ref{central_image}, generated by the $a_{ij}$ and the squares of the $s_i$.
\end{proof}

Finally, Corollary~\ref{LCS_Bn(S)_stable} adapts readily to this context:
\begin{corollary}\label{LCS_B_lambda(S)_stable}
If all the blocks of $\lambda$ have size at least $3$, then:
\[\LCS_3(\B_\lambda(S)) = \LCS_4(\B_\lambda(S)).\]
\end{corollary}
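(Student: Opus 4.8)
The statement to be proved is Corollary~\ref{LCS_B_lambda(S)_stable}, which asserts that $\LCS_3(\B_\lambda(S)) = \LCS_4(\B_\lambda(S))$ whenever all blocks of $\lambda$ have size at least $3$. This is the exact analogue, for partitioned braid groups, of Corollary~\ref{LCS_Bn(S)_stable}, whose proof is a single short paragraph. The plan is to imitate that proof verbatim, replacing the input Theorem~\ref{Bn(S)/residue} by its just-established partitioned generalisation Theorem~\ref{B_lambda(S)/residue}.

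Let me spell out the reasoning I expect to carry out. Theorem~\ref{B_lambda(S)/residue} exhibits a \emph{central} extension
\[\begin{tikzcd}
\langle s_i^2, a_{ij} \rangle_{i,j \leq l} \ar[r, hook]
&\B_\lambda(S)/\LCS_\infty \ar[r, two heads]
&(\pi_1(S)^{\ab} \times \Z/2)^l.
\end{tikzcd}\]
The quotient $(\pi_1(S)^{\ab} \times \Z/2)^l$ is abelian, so its $\LCS_2$ is trivial; and the kernel is central, which means it is killed by any bracket. Consequently $\B_\lambda(S)/\LCS_\infty$ is nilpotent of class at most $2$: its $\LCS_3$ is trivial. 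Unwinding the definition of $\B_\lambda(S)/\LCS_\infty$, triviality of $\LCS_3(\B_\lambda(S)/\LCS_\infty)$ means precisely that $\LCS_3(\B_\lambda(S)) \subseteq \LCS_\infty(\B_\lambda(S))$. Since the reverse inclusion $\LCS_\infty \subseteq \LCS_3$ is automatic (the residue sits inside every term), we obtain $\LCS_3(\B_\lambda(S)) = \LCS_\infty(\B_\lambda(S))$, and in particular $\LCS_3 = \LCS_4$, which is the claim.

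The only thing worth a sentence of care is that the hypothesis $n_i \geq 3$ for all $i$ is exactly what is needed to invoke Theorem~\ref{B_lambda(S)/residue} (and, upstream of it, Lemma~\ref{central_image} and Theorem~\ref{thm:partitioned-braids}, which supply $\LCS_\infty(\B_\lambda) = \LCS_2(\B_\lambda)$ and the centrality of the image of $\B_\lambda$). There is no genuine obstacle here: all the real work has been front-loaded into establishing the central extension. The proof of the corollary is purely formal, extracting $2$-nilpotency of $\B_\lambda(S)/\LCS_\infty$ from the centrality of the kernel and the abelianness of the quotient, exactly as in the unpartitioned case. I would therefore write it as a two-sentence proof: first observe that Theorem~\ref{B_lambda(S)/residue} presents $\B_\lambda(S)/\LCS_\infty$ as a central extension of an abelian group, hence it is $2$-nilpotent; then conclude that $\LCS_3 \subseteq \LCS_\infty$ for $\B_\lambda(S)$, whence $\LCS_3 = \LCS_4$.
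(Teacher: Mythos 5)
Your argument is exactly the paper's: the authors prove this corollary by noting that Theorem~\ref{B_lambda(S)/residue} exhibits $\B_\lambda(S)/\LCS_\infty$ as a central extension of an abelian group, hence a $2$-nilpotent group, so that $\LCS_3(\B_\lambda(S)) \subseteq \LCS_\infty(\B_\lambda(S)) \subseteq \LCS_4(\B_\lambda(S))$ (this is precisely the adaptation of the proof of Corollary~\ref{LCS_Bn(S)_stable} that the paper indicates). Your write-up is correct and complete.
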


\subsection{The Lie ring}\label{sec:Lie_ring_partitioned_braid}

Here again, in the stable case (that is, when all the blocks of the partition $\lambda$ have size at least $3$), we can be more precise about the description of the LCS. Namely, our next goal is to show the following generalisation of Theorem~\ref{Lie_ring_B(S)}, whose proof occupies the rest of the present section. 

\begin{theorem}\label{Lie_ring_partitioned_B(S)}
Let $\lambda = (n_1, \ldots , n_l)$ be a partition of an integer $n$ with $n_i \geq 3$ for all $i$, and let $S$ be a connected surface. The LCS of $\B_\lambda(S)$:
\begin{itemize}
\item \emph{stops at $\LCS_2$} if $S$ is planar, if $S \cong \S^2$ or if $l = 1$ and $S$ is non-orientable. 
\item \emph{stops at $\LCS_3$} in the other cases. 
\end{itemize}
\end{theorem}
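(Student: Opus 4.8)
The plan is to reduce the entire statement to the central extension of Theorem~\ref{B_lambda(S)/residue} and then decide, case by case, whether the induced commutator pairing vanishes. By Corollary~\ref{LCS_B_lambda(S)_stable} the LCS of $\B_\lambda(S)$ stops at $\LCS_2$ or at $\LCS_3$, so it suffices to decide whether $\Lie_2(\B_\lambda(S)) = \LCS_2(\B_\lambda(S)/\LCS_\infty)$ is trivial. By Theorem~\ref{B_lambda(S)/residue} this group is contained in the central subgroup $K := \langle s_i^2, a_{ij}\rangle$, and by Lemma~\ref{central_image} the whole image of $\B_\lambda$ is central; hence $\Lie_2$ is generated by the brackets of the loop classes coming from the maps $\psi_\alpha$, which by Proposition~\ref{trichotomy} land among the $a_{ij}$ and the $s_i^2$. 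Thus the LCS stops at $\LCS_2$ exactly when all of these brackets vanish, and I must produce a non-trivial one in the remaining cases.

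For the three cases where the LCS stops at $\LCS_2$ I would argue as follows. If $S$ is planar, Proposition~\ref{partitioned_B(S)^ab} shows that the $s_i$ and $a_{ij}$ form part of a free-abelian basis of $\B_\lambda(S)^{\ab}$; consequently the composite $\Z\{s_i^2, a_{ij}\} \twoheadrightarrow K \to \B_\lambda(S)^{\ab}$ is injective, so $K$ injects into the abelianisation. Since $\LCS_2(\B_\lambda(S)/\LCS_\infty) \subseteq K$ dies in the abelianisation, it must be trivial. If $S \cong \S^2$, then Corollary~\ref{partitioned_Braids_on_closed_surfaces} (with $g=0$) presents $\B_\lambda(\S^2)$ as a quotient of $\B_\lambda$, whose LCS stops at $\LCS_2$ by Theorem~\ref{thm:partitioned-braids} (all blocks have size $\geq 3$); Lemma~\ref{lem:stationary_quotient} then transfers this. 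Finally, the case $l=1$ with $S$ non-orientable is literally the statement of Theorem~\ref{Lie_ring_B(S)} applied to $\B_\lambda(S) = \B_n(S)$, so nothing new is needed.

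For the $\LCS_3$ cases I must instead exhibit a non-trivial element of $\Lie_2$. When $S$ is orientable and non-planar (for any $l$), I would use the block-forgetting surjection $q\colon \B_\lambda(S) \twoheadrightarrow \B_{n_i}(S)$ supplied by Proposition~\ref{Fadell-Neuwirth}: on non-planar $S$ a within-block generator $A_{\beta,\beta+1}$ is a commutator (Proposition~\ref{trichotomy}), so $s_i^2$ has a class in $\Lie_2(\B_\lambda(S))$, and functoriality sends it under $\Lie(q)$ to the class of the pure-braid generator $\sigma^2 \in \Lie_2(\B_{n_i}(S))$. The latter is non-trivial by Theorem~\ref{Lie_ring_B(S)} (of infinite order if $S$ is non-closed, and of order $n_i+g-1 \geq 3$ if $S \cong \Sigma_g$), so $s_i^2 \neq 0$ and the LCS stops at $\LCS_3$. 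The subtle point is that this witness disappears in the non-orientable setting: there the crosscap relation of Proposition~\ref{trichotomy} expresses $\sigma^2$ as a commutator of loops of the \emph{same} strand, so $s_i^2$ already vanishes in $\Lie_2(\B_{n_i}(S))$ — consistent with $l=1$ non-orientable stopping at $\LCS_2$.

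The main obstacle is therefore the non-orientable case with $l \geq 2$, where I must show that an \emph{inter-block} class $a_{ij}$ is non-zero in $\Lie_2$. Geometrically $a_{ij}$ is represented by $A_{\alpha\beta} = [c_\beta, c_\alpha^{-1}]$, two \emph{distinct} strands linking through a common crosscap, and the issue is to check that this does not collapse the way $s_i^2$ does. I would first establish non-triviality over a compact non-orientable surface with boundary $\mathscr{N}_{g,1}$ by computing the maximal $2$-step nilpotent quotient of $\B_\lambda(\mathscr{N}_{g,1})$ directly from the presentation of Proposition~\ref{Bn(Ng1)}, in exact analogy with the proof of Proposition~\ref{Lie(Bn(Sg1))}: one kills the within-block differences $\sigma_\alpha \sigma_{\alpha+1}^{-1}$, reads off the resulting abelian-by-abelian structure, and observes that relation $(BN3)$ kills each $s_i^2$ while the between-block generators survive as honest commutators. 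One then transfers non-triviality to a closed non-orientable $S$ by embedding $\mathscr{N}_{g,1} \hookrightarrow \mathscr{N}_{g'}$ for large $g'$, and to a non-compact $S$ by the compact-exhaustion/finite-support argument used in the last case of Theorem~\ref{Lie_ring_B(S)}. Verifying that $a_{ij}$ is genuinely non-trivial in this $2$-nilpotent quotient — rather than being annihilated by an interaction between crosscaps and the boundary relation — is the step that will demand the most care.
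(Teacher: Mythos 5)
Your reduction to the central extension of Theorem~\ref{B_lambda(S)/residue}, your three $\LCS_2$ cases, and your treatment of non-planar orientable surfaces are all sound; indeed, in the orientable case your use of the block-forgetting surjection onto $\B_{n_i}(S)$ to detect $s_i^2$ via Theorem~\ref{Lie_ring_B(S)} is a genuine shortcut over the paper, which instead computes all of $\Lie_2$ (Propositions~\ref{partitioned_Lie(B(Sg1)} and~\ref{partitioned_Lie(B(Sg)}). The difficulty is concentrated exactly where you flagged it --- non-orientable $S$ with $l\geq 2$ --- and there your sketch has two genuine gaps.

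First, you cannot compute the maximal $2$-nilpotent quotient of $\B_\lambda(\mathscr N_{g,1})$ ``directly from the presentation of Proposition~\ref{Bn(Ng1)}'': that proposition presents the \emph{full} group $\B_n(\mathscr N_{g,1})$, of which $\B_\lambda(\mathscr N_{g,1})$ is a proper finite-index subgroup, and a presentation of the subgroup is not obtained by restricting to the within-block $\sigma_\alpha$ and killing their differences. The ``exact analogy'' with Proposition~\ref{Lie(Bn(Sg1))} breaks precisely because that proof concerns the unpartitioned group, for which the presentation is available. The paper supplies the missing presentation by splitting $\B_{m,n}(\mathscr N_{g,1})$ as $\B_m(\mathscr N_{g,1+n}) \rtimes \B_n(\mathscr N_{g,1})$ via Fadell--Neuwirth and combining Proposition~\ref{Bm(Ngn)} with the extension-presentation method of Appendix~\ref{section_pstation_of_ext}; this is Proposition~\ref{Lie(Bmn(Ng1))}, and it is not a cosmetic step you can wave through.

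Second, your transfer to closed non-orientable surfaces runs backwards. The embedding $\mathscr N_{g,1} \hookrightarrow \mathscr N_{g'}$ induces a map $\Lie_2(\B_\lambda(\mathscr N_{g,1})) \to \Lie_2(\B_\lambda(\mathscr N_{g'}))$ under which non-triviality of $a_{ij}$ in the source says nothing about the target; in the orientable Remark~\ref{rmk:double_recoverings} the deduction goes from the \emph{closed} case to the bounded one, not the other way round. And the map genuinely loses information here: the paper shows that $a_{12}$ has infinite order over $\mathscr N_{g,1}$ but only order $2$ over the closed surface $\mathscr N_g$, so after imposing the boundary relations of Corollary~\ref{partitioned_Braids_on_closed_surfaces} one must prove both $2a_{12}=0$ and $a_{12}\neq 0$. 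This is Proposition~\ref{Lie(Bmn(Ng))}, the most delicate computation in the whole proof; when both block sizes are odd it requires exhibiting a quotient onto $(Q_8)^g/H$ to certify $a_{12}\neq 1$. Your proposal contains no argument for this case, and the compact-exhaustion trick does not apply since closed surfaces are already compact.
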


\begin{remark}
In both cases, the Lie ring can be computed completely. Namely, $\Lie_1(\B_\lambda(S)) = \B_\lambda(S)^{\ab}$ has already been computed in Proposition~\ref{partitioned_B(S)^ab}. In the first case, no further computation is required. In the second case, $\Lie_2(\B_\lambda(S))$ is described completely in Proposition~\ref{partitioned_Lie(B(Sg1)} and Proposition~\ref{partitioned_Lie(B(Sg)} for orientable surfaces, and in Corollary~\ref{partitioned_Lie(B(Ng1))} and Corollary~\ref{partitioned_Lie(B(Ng))} for non-orientable ones. Moreover, one can easily describe the Lie bracket from the computations given there. Precisely, the only non-trivial brackets come from the computations depicted in Figure~\ref{Aij_as_bracket}. 
\end{remark}

The proof of Theorem~\ref{Lie_ring_partitioned_B(S)} begins with the following observation, which will be of essence in our study of the Lie ring of $\B_\lambda(S)$:
\begin{fact}\label{Lie2(B(S))}
Let $\lambda = (n_1, \ldots , n_l)$ be a partition of $n$ with all blocks of size at least $3$. Then $\Lie_2(\B_\lambda(S))$ identifies with $\LCS_2 \left(\B_\lambda(S)/\LCS_\infty \right)$, which is included in the subgroup $\langle s_i^2, a_{ij} \rangle_{i,j \leq l}$ of $\B_\lambda(S)$ from Theorem~\ref{B_lambda(S)/residue}. Furthermore, if $S$ has a handle or a crosscap, then $\Lie_2(\B_\lambda(S)) \cong \langle s_i^2, a_{ij} \rangle_{i,j \leq l}$.
\end{fact}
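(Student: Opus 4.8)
The plan is to prove Fact~\ref{Lie2(B(S))} in three stages, reusing the structure already established in Theorem~\ref{B_lambda(S)/residue} and mirroring the argument for the trivial partition in Theorem~\ref{Lie_ring_B(S)}.

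\emph{Stage 1: the identification and inclusion.} First I would recall that Corollary~\ref{LCS_B_lambda(S)_stable} gives $\LCS_3(\B_\lambda(S)) = \LCS_4(\B_\lambda(S))$, so that $\LCS_3 = \LCS_\infty$ for $\B_\lambda(S)$. Consequently $\Lie_2(\B_\lambda(S)) = \LCS_2/\LCS_3 = \LCS_2/\LCS_\infty$, which is precisely $\LCS_2\left(\B_\lambda(S)/\LCS_\infty\right)$. Now the central extension of Theorem~\ref{B_lambda(S)/residue},
\[
\langle s_i^2, a_{ij} \rangle_{i,j \leq l} \longhookrightarrow \B_\lambda(S)/\LCS_\infty \twoheadrightarrow (\pi_1(S)^{\ab} \times \Z/2)^l,
\]
has abelian quotient, so $\LCS_2\left(\B_\lambda(S)/\LCS_\infty\right)$ must be contained in the kernel $\langle s_i^2, a_{ij} \rangle_{i,j \leq l}$. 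This proves the first two assertions with no further work.

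\emph{Stage 2: surjectivity when $S$ has a handle or crosscap.} The remaining claim is that the inclusion $\Lie_2(\B_\lambda(S)) \subseteq \langle s_i^2, a_{ij} \rangle_{i,j \leq l}$ is an equality in the non-planar, non-spherical case. Here I would exploit the key geometric computation of~\Spar\ref{subsec_A_as_com}, namely Proposition~\ref{trichotomy} together with the explicit isotopies of Figure~\ref{Aij_as_bracket}: when $S$ contains a handle or a crosscap, each pure braid generator $A_{ij}$ is literally a commutator $[c_j, c_i^{-1}]$ (resp.\ $[a^{-1}, b]$) of two elements in the images of the maps $\psi_\alpha$. Passing to $\B_\lambda(S)/\LCS_\infty$, this exhibits every generator $a_{ij}$ of the kernel, and also each $s_i^2$ (which is the class of a pure braid generator $A_{\alpha\beta}$ with both indices in the $i$-th block), as a genuine commutator, hence as an element of $\LCS_2$. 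Since these elements generate the kernel, we conclude $\langle s_i^2, a_{ij}\rangle_{i,j\leq l} \subseteq \LCS_2(\B_\lambda(S)/\LCS_\infty) = \Lie_2(\B_\lambda(S))$, giving the reverse inclusion and hence equality.

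\emph{The main obstacle.} The delicate point is Stage~2, specifically ensuring that the commutator expressions for $A_{ij}$ survive intact after quotienting by $\LCS_\infty$ and that they really produce \emph{all} the generators of the kernel. One must be careful that the embedding of the handle or crosscap used in Figure~\ref{Aij_as_bracket} can be chosen so that its image meets the base configuration in exactly the two relevant points (as noted in the caption of Figure~\ref{Aij_as_bracket}), and that the two factors of the commutator lie in $\ima(\psi_j)$ and $\ima(\psi_i)$ respectively; this guarantees they are actual elements of $\B_\lambda(S)$ rather than merely of $\B_n(S)$. For the generators $s_i^2$ one uses an embedded handle or crosscap meeting two points of the \emph{same} $i$-th block, which is possible since $n_i \geq 3 \geq 2$. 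Once these choices are made, the equality of subgroups is immediate, and the structure of $\Lie_2$ as an abelian group (to be pinned down precisely in the subsequent propositions) follows from the concrete description of $\langle s_i^2, a_{ij}\rangle$ already available.
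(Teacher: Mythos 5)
Your Stage 1 is exactly the paper's argument: Corollary~\ref{LCS_B_lambda(S)_stable} gives $\LCS_3 = \LCS_\infty$, hence $\Lie_2 = \LCS_2/\LCS_3$ identifies with $\LCS_2(\B_\lambda(S)/\LCS_\infty)$, and the latter lies in the kernel of the central extension of Theorem~\ref{B_lambda(S)/residue} because the quotient $(\pi_1(S)^{\ab}\times\Z/2)^l$ is abelian.

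For the final claim your route is correct but differs from the paper's in mechanism. The paper simply observes that when $S$ has a handle or a crosscap, the quotient $(\pi_1(S)^{\ab}\times\Z/2)^l$ of the central extension coincides with the abelianisation of $\B_\lambda(S)$ as computed in Proposition~\ref{partitioned_B(S)^ab} (using $l'=l$ since all $n_i\geq 3$); since the kernel of the map onto the abelianisation is by definition $\LCS_2$, the kernel $\langle s_i^2, a_{ij}\rangle$ equals $\LCS_2(\B_\lambda(S)/\LCS_\infty)$. You instead prove the reverse inclusion $\langle s_i^2, a_{ij}\rangle \subseteq \LCS_2$ directly, by exhibiting each $A_{\alpha\beta}$ (hence each $a_{ij}$ and each $s_i^2$, the latter using two points of the same block, available since $n_i\geq 3$) as a commutator of two geometrically pure braids via the isotopies of Figure~\ref{Aij_as_bracket}. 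The two arguments are logically dual (for a surjection onto an abelian group, the kernel equals $\LCS_2$ iff the kernel is contained in $\LCS_2$) and rest on the same geometric input, since the relations $2s_i = a_{ij} = 0$ in Proposition~\ref{partitioned_B(S)^ab} come from Proposition~\ref{trichotomy}, i.e.\ from those same commutator identities. What your version buys is self-containedness at this point of the text --- you do not need the full abelianisation computation, only the commutator pictures; what the paper's version buys is brevity, since Proposition~\ref{partitioned_B(S)^ab} has already been established. Your cautionary remarks (choosing the embedded handle or crosscap to meet the base configuration in exactly the two relevant points, and checking both commutator factors lie in $\PB_n(S)\subseteq\B_\lambda(S)$) are exactly the right points to verify, and they hold as indicated in the caption of Figure~\ref{Aij_as_bracket}. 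Note also that in the non-orientable case some of the $s_i^2$ may be trivial in $\B_\lambda(S)/\LCS_\infty$; this does not affect your argument, since the Fact only asserts equality with the subgroup $\langle s_i^2, a_{ij}\rangle$, whatever it turns out to be.
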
 

\begin{proof}
 We know that $\LCS_3 = \LCS_\infty$ for $\B_\lambda(S)$ by Corollary~\ref{LCS_B_lambda(S)_stable}. Then, since $\Lie_2 = \LCS_2/\LCS_3$, $\Lie_2(\B_\lambda(S))$ identifies with $\LCS_2 \left(\B_\lambda(S)/\LCS_\infty \right)$. Moreover, in $\B_\lambda(S)/\LCS_\infty$, the subgroup $\LCS_2$ must be contained in the kernel $\langle s_i^2, a_{ij} \rangle_{i,j \leq l}$ of the central extension of Theorem~\ref{B_lambda(S)/residue}. Now, if $S$ has a handle or a crosscap, then the quotient $(\pi_1(S)^{\ab} \times \Z/2)^l$ of $\B_\lambda(S)/\LCS_\infty$ identifies with its abelianisation (see Proposition~\ref{partitioned_B(S)^ab}). As a consequence, the subgroup $\LCS_2 \left(\B_\lambda(S)/\LCS_\infty \right)$ is the whole of $\langle s_i^2, a_{ij} \rangle_{i,j \leq l}$.
\end{proof}

\subsubsection{Non-planar orientable surfaces.} Let us turn our attention to the case where the surface is not closed.

\begin{proposition}\label{partitioned_Lie(B(Sg1)}
If $S$ is a non-planar orientable surface which is not closed, then $\Lie_2(\B_\lambda(S))$ is free abelian on the $a_{ij}$ and the $s_i^2$.
\end{proposition}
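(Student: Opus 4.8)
The plan is to combine the generation statement we already have with an explicit computation on the model surfaces $\Sigma_{g,1}$. Since $S$ is non-planar and orientable it contains a handle, so Fact~\ref{Lie2(B(S))} already gives $\Lie_2(\B_\lambda(S)) \cong \langle s_i^2, a_{ij}\rangle_{i,j\leq l}$; what remains is to prove that these generators form a \emph{free} abelian basis, i.e.\ that they have infinite order and satisfy no relation. Equivalently, writing $\Lie_1 = \B_\lambda(S)^{\ab}$ (computed in Proposition~\ref{partitioned_B(S)^ab}), I must show that the bracket map $\Lie_1 \wedge \Lie_1 \to \Lie_2$ has image free on the classes $s_i^2$ and $a_{ij}$.

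First I would reduce to the compact case $S = \Sigma_{g,1}$. Any non-planar compact orientable surface with boundary embeds into $\Sigma_{g,1}$ (with $g$ its genus) by capping off all but one of its boundary components with discs; the induced map $\Lie_2(\B_\lambda(S)) \to \Lie_2(\B_\lambda(\Sigma_{g,1}))$ sends $s_i^2 \mapsto s_i^2$ and $a_{ij}\mapsto a_{ij}$, so freeness of the target forces freeness of the source. For a non-compact $S$ I would run the compactness argument used in the last case of the proof of Theorem~\ref{Lie_ring_B(S)}: a relation $\sum k_i s_i^2 + \sum k_{ij} a_{ij} = 0$ in $\Lie_2$ means that $\prod (s_i^2)^{k_i}\prod a_{ij}^{k_{ij}}$ is a product of length-$\geq 3$ commutators; this witnessing equality involves only finitely many braids, hence is supported on a compact orientable subsurface $S'$, which (after enlarging $S'$ to contain a handle) is non-planar with boundary. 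The relation then already holds in $\B_\lambda(S')$, and the compact case gives $k_i = k_{ij} = 0$.

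The heart of the argument is therefore the computation for $\Sigma_{g,1}$, which generalises Proposition~\ref{Lie(Bn(Sg1))}. I would compute the maximal class-$2$ nilpotent quotient $G := \B_\lambda(\Sigma_{g,1})/N$, where $N$ is the normal closure of the elements $\sigma_\alpha \sigma_\beta^{-1}$ with $\alpha,\beta$ in a common block. One has $N \subseteq \LCS_3$ automatically, and once $G$ is shown to be $2$-nilpotent, Corollary~\ref{LCS_B_lambda(S)_stable} gives $N = \LCS_3$, so that $\Lie_2(\B_\lambda(\Sigma_{g,1})) = \LCS_2(G)$. Using the presentation of Proposition~\ref{Bn(Sg1)} adapted to $\B_\lambda$ and the commutator identities recorded in Figure~\ref{Aij_as_bracket_handle}, the surviving brackets of the generators $\bar a_r^{(i)}, \bar b_r^{(i)}$ (handle loops attached to block $i$) and $s_i$ are $[\bar a_r^{(i)}, \bar b_r^{(i)}] = s_i^2$ and the cross-block brackets $[\bar a_r^{(i)}, \bar b_r^{(j)}] = a_{ij}$, all others vanishing by disjoint support. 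This exhibits $G$ as an iterated extension of free abelian groups (a block-indexed analogue of the Heisenberg-type group $(\Z \times \Z^g)\rtimes \Z^g$ from Proposition~\ref{Lie(Bn(Sg1))}), whose $\LCS_2$ is manifestly free abelian with basis the $s_i^2$ and $a_{ij}$.

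I expect the main obstacle to be exactly this last freeness verification: in the whole group $\B_n(S)$ all the pure-braid generators are identified, so distinguishing the $a_{ij}$ from one another and from the $s_i^2$ requires controlling the full bracket pairing $\Lie_1 \wedge \Lie_1 \to \Lie_2$ on $\Sigma_{g,1}$ and checking that the handle relations produce no unexpected coincidences among these classes. To keep the bookkeeping manageable I would, where possible, isolate individual generators by the forgetting-block projections $\B_\lambda(\Sigma_{g,1}) \twoheadrightarrow \B_{n_i}(\Sigma_{g,1})$ and $\B_\lambda(\Sigma_{g,1}) \twoheadrightarrow \B_{(n_i,n_j)}(\Sigma_{g,1})$, which on $\Lie_2$ kill all generators except, respectively, $s_i^2$ (detected as $\sigma^2$, of infinite order via Theorem~\ref{Lie_ring_B(S)}) and the triple $s_i^2, s_j^2, a_{ij}$, thereby reducing the independence of the $a_{ij}$ to the two-block case.
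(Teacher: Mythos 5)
Your closing paragraph is, in essence, the paper's entire proof, and it should be promoted from a bookkeeping remark to the main argument. The paper takes generation from Fact~\ref{Lie2(B(S))} exactly as you do, and then proves independence purely by the forgetting-block projections, with one extra twist you omit: after the single-block projections $\Lie_2(\B_\lambda(S)) \to \Lie_2(\B_{n_i}(S)) \cong \Z$ have killed the coefficients of the $s_i^2$, one does not stop at the two-block group but composes with the block-merging map $\B_{n_i,n_j}(S) \to \B_{n_i+n_j}(S)$, landing again in a group of the form $\Lie_2(\B_{n}(S)) \cong \Z$ in which $a_{ij}$ goes to the infinite-order element $\sigma^2$ while every other $a_{kl}$ dies. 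This is the same two-step scheme as in Proposition~\ref{partitioned_B^ab}, and it reduces everything to the one-block computation already carried out in Theorem~\ref{Lie_ring_B(S)}; no reduction to $\Sigma_{g,1}$, no compactness argument, and no separate two-block base case are needed at the level of this proposition, since all of that work was already absorbed into the proof of Theorem~\ref{Lie_ring_B(S)}.

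The part you call the heart of the argument --- computing the maximal $2$-nilpotent quotient of $\B_\lambda(\Sigma_{g,1})$ from an adapted presentation --- is therefore unnecessary, and as written it has a genuine gap. First, no presentation of $\B_\lambda(\Sigma_{g,1})$ is available in the paper (which deliberately avoids deriving presentations of partitioned groups), so ``the presentation of Proposition~\ref{Bn(Sg1)} adapted to $\B_\lambda$'' is itself a nontrivial piece of work, e.g.\ via Reidemeister--Schreier or the method of Appendix~\ref{section_pstation_of_ext}. Second, and more seriously, your claim that all brackets other than $[\bar a_r^{(i)}, \bar b_r^{(i)}] = s_i^2$ and $[\bar a_r^{(i)}, \bar b_r^{(j)}] = a_{ij}$ ``vanish by disjoint support'' fails for pairs such as $[\bar a_r^{(i)}, \bar a_r^{(j)}]$ with $i \neq j$: two strands in different blocks looping around the \emph{same} handle cannot be realised with disjoint supports, so their commutator must actually be computed from the relations rather than dismissed, and until that is done the semidirect-product structure of your quotient $G$ --- and hence the ``manifest'' freeness of its $\LCS_2$ --- is not established. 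If you genuinely want the explicit nilpotent quotient (the paper only computes it for $l=1$ in Proposition~\ref{Lie(Bn(Sg1))}, and in the non-orientable two-block case in Proposition~\ref{Lie(Bmn(Ng1))}, where precisely this kind of same-handle cross-block relation has to be checked by hand), you should expect to do that verification explicitly.
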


\begin{proof}
We already know that these elements generate $\Lie_2(\B_\lambda(S))$ by Fact~\ref{Lie2(B(S))}, so we need to show that they are linearly independent. The argument from the proof of Proposition~\ref{partitioned_B^ab} works here, using the maps from $\Lie_2(\B_\lambda(S))$ to $\Lie_2(\B_{n_i}(S)) \cong \Z$ and $\Lie_2(\B_{n_i + n_j}(S)) \cong \Z$ instead of the maps from $\B_\lambda^{\ab}$ to $\B_{n_i}^{\ab}$ and $\B_{n_i + n_j}^{\ab}$. This uses the fact that $\Lie_2(\B_n(S)) \cong \Z$ if $n \geq 3$, from Theorem~\ref{Lie_ring_B(S)}.
\end{proof}

The case of closed orientable surfaces is a bit trickier. In fact, we first need to generalise \cite[Th.~1]{BellingeriGervaisGuaschi}:

\begin{proposition}\label{partitioned_Lie(B(Sg)}
Let $g \geq 1$. Then $\Lie_2(\B_\lambda(\Sigma_g))$ is the quotient of the free abelian group on the $a_{ij}$ and the $s_i^2$ by the relations:
\[(n_i + g -1)s_i^2 + \sum\limits_{j \neq i} n_j a_{ij} = 0, \ \ \ \ \text{for all}\  1\leq i\leq n.\]
\end{proposition}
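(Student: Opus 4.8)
The plan is to follow the same strategy as in Proposition~\ref{Lie(Bn(Sg1))}, namely to compute the maximal $2$-nilpotent quotient of $\B_\lambda(\Sigma_g)$ explicitly by quotienting by the normal closure $N$ of the elements $\sigma_\alpha \sigma_\beta^{-1}$ for $\alpha, \beta$ in the same block of $\lambda$, and to identify $\Lie_2(\B_\lambda(\Sigma_g))$ with $\LCS_2$ of this quotient. Since Corollary~\ref{LCS_B_lambda(S)_stable} tells us that $\LCS_3 = \LCS_\infty$ for $\B_\lambda(\Sigma_g)$, it suffices to show that the quotient $G := \B_\lambda(\Sigma_g)/N$ is $2$-nilpotent and to read off its $\LCS_2$.

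First I would obtain a presentation of $G$. Starting from Proposition~\ref{partitioned_Lie(B(Sg1)} (or rather its underlying presentation of $\B_\lambda(\Sigma_{g,1})$ deduced from Proposition~\ref{Bn(Sg1)} via the Reidemeister-Schreier technique described in \Spar\ref{sec_pstations_B(S)}), I would pass to the closed surface using Corollary~\ref{partitioned_Braids_on_closed_surfaces}: $\B_\lambda(\Sigma_g)$ is the quotient of $\B_\lambda(\Sigma_{g,1})$ by the relations $A_{\alpha 1} \cdots A_{\alpha n} = \prod_{r=1}^g [a_r^{(\alpha)}, (b_r^{(\alpha)})^{-1}]$, one for each block representative $\alpha$. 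The key computation is to reduce each such relation modulo $N$ (and modulo $\LCS_3$). Modulo $N$, the class of $A_{\alpha \beta}$ is $s_i^2$ if $\beta$ is in the same ($i$-th) block as $\alpha$ (there are $n_i - 1$ such $\beta$) and is $a_{ij}$ if $\beta$ lies in a different $j$-th block (there are $n_j$ such $\beta$). On the other side, as in the proof of Proposition~\ref{Lie(Bn(Sg1))}, each handle contributes a relation of the form $b_r^{(\alpha)} a_r^{(\alpha)} (b_r^{(\alpha)})^{-1} = a_r^{(\alpha)} (s_i^2)^{-1}$, so that the commutator $[a_r^{(\alpha)}, (b_r^{(\alpha)})^{-1}]$ equals $s_i^2$ in $\Lie_2$; summing over the $g$ handles gives $g \cdot s_i^2$. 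Collecting the two sides in the abelian group $\LCS_2(G)$ yields exactly $(n_i - 1) s_i^2 + \sum_{j \neq i} n_j a_{ij} = g \cdot s_i^2$, which rearranges to the stated relation $(n_i + g - 1) s_i^2 + \sum_{j\neq i} n_j a_{ij} = 0$.

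To finish I would verify that $G$ is genuinely $2$-nilpotent, so that $N = \LCS_3(\B_\lambda(\Sigma_g))$ and the identification $\Lie_2(\B_\lambda(\Sigma_g)) = \LCS_2(G)$ is legitimate; this is where the real care is needed. The point is to check, from the presentation, that all generators of $G$ are central modulo $\LCS_2(G)$ and that $\LCS_2(G)$ is central in $G$ --- in other words that the images of the $s_i^2$ and $a_{ij}$ (together with the handle contributions) are central. This follows because the $\sigma_\alpha$ have become a single central class $s_i$ in each block after killing $N$ (the braid relations on the $\sigma_i$ trivialise, and relation $(BS1)$ makes $s_i$ central), while the surface relations $(BS2)$--$(BS4)$ express all remaining interactions in terms of central elements, exactly as in the genus-$g$ computation of Proposition~\ref{Lie(Bn(Sg1))}. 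Once $2$-nilpotency is established, Fact~\ref{Lie2(B(S))} already guarantees that $\Lie_2(\B_\lambda(\Sigma_g))$ is a quotient of the free abelian group on the $s_i^2$ and $a_{ij}$, and the relations computed above are the only ones, so the description is complete.

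The main obstacle I anticipate is controlling the closed-surface presentation carefully enough: one must be sure that no further relations beyond those listed appear in $\LCS_2(G)$, and that the handle contribution $\prod_r [a_r^{(\alpha)}, (b_r^{(\alpha)})^{-1}]$ contributes precisely $g \cdot s_i^2$ with no additional bracket terms surviving. This requires knowing that the brackets $[a_r^{(\alpha)}, (b_s^{(\alpha)})^{-1}]$ for $r \neq s$ vanish in $\Lie_2$ (a disjoint-support or $(BS2)$-type argument) and that, modulo $\LCS_3$, the product of commutators abelianises correctly. This is the genuinely surface-specific part of the argument, and it is essentially the generalisation of the matrix-group computation underlying \cite[Th.~1]{BellingeriGervaisGuaschi}; everything else is bookkeeping that parallels the unpartitioned case treated in Proposition~\ref{Lie(Bn(Sg1))}.
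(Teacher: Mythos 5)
Your overall strategy coincides with the paper's: pass from $\Sigma_{g,1}$ to $\Sigma_g$ via the boundary relations of Corollary~\ref{partitioned_Braids_on_closed_surfaces}, observe that these relators lie in the central subgroup $\langle s_i^2, a_{ij}\rangle = \LCS_2(\B_\lambda(\Sigma_{g,1})/\LCS_3)$, compute their classes there, and conclude using the freeness of $\Lie_2(\B_\lambda(\Sigma_{g,1}))$ established in Proposition~\ref{partitioned_Lie(B(Sg1)}. The paper does not re-derive a presentation of $\B_\lambda(\Sigma_g)/N$ by Reidemeister--Schreier as you propose; it only needs those two facts together with the observation that, the relators being central, their normal closure is exactly the subgroup they generate, which is what rules out ``further relations'' appearing in $\LCS_2$. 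Your appeal to Fact~\ref{Lie2(B(S))} and to $2$-nilpotency plays the same role, so this part is sound, if more laborious than necessary. Your worry about the cross-terms $[a_r^{(\alpha)}, (b_s^{(\alpha)})^{-1}]$ for $r\neq s$ is also moot: the boundary relator only involves commutators with matching indices.

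There is, however, a sign error at the heart of your computation. From $(BS4)$, written as $b_r^{(\alpha)} a_r^{(\alpha)} (b_r^{(\alpha)})^{-1} = a_r^{(\alpha)}\sigma_\alpha^{-2}$ (your own starting point), one gets $[\overline{a_r^{(\alpha)}},\overline{b_r^{(\alpha)}}] = s_i^2$ in $\Lie_2$, and hence
\[
\overline{\bigl[a_r^{(\alpha)}, (b_r^{(\alpha)})^{-1}\bigr]} = \bigl[\overline{a_r^{(\alpha)}}, -\overline{b_r^{(\alpha)}}\bigr] = -s_i^2,
\]
not $+s_i^2$ as you assert. The boundary relation therefore reads $(n_i-1)s_i^2 + \sum_{j\neq i} n_j a_{ij} = -g\,s_i^2$, which is what yields the stated $(n_i+g-1)s_i^2 + \sum_{j\neq i} n_j a_{ij} = 0$. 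With your sign, the relation $(n_i-1)s_i^2 + \sum_{j\neq i} n_j a_{ij} = g\,s_i^2$ would rearrange to $(n_i-1-g)s_i^2 + \sum_{j\neq i} n_j a_{ij} = 0$, which is \emph{not} the claimed relation, and in the case $l=1$ would give $\Lie_2(\B_n(\Sigma_g)) \cong \Z/(n-1-g)$ rather than $\Z/(n+g-1)$, contradicting \cite[Th.~1]{BellingeriGervaisGuaschi}. You land on the correct answer only because a second, compensating slip occurs in the ``rearrangement''. The fix is immediate, but as written the computation is internally inconsistent and should be corrected.
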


\begin{proof}
Let us consider the subsurface $\Sigma_{g,1}$ of $\Sigma_g$ obtained by removing an open disc.  The corresponding embedding of $\Sigma_{g,1}$ into $\Sigma_g$ induces the quotient map described in Corollary~\ref{partitioned_Braids_on_closed_surfaces}. In particular, $\B_\lambda(\Sigma_g)/\LCS_3$ is the quotient of $\B_\lambda(\Sigma_{g,1})/\LCS_3$ by the classes of the relations from Corollary~\ref{partitioned_Braids_on_closed_surfaces}. Since the $A_{\alpha\beta}$ are in $\LCS_2(\B_\lambda(\Sigma_{g,1}))$, these relations are between elements of $\LCS_2$, that is, elements of $\langle s_i^2, a_{ij} \rangle_{i,j \leq l}$. In order to write them as relations between the $s_i^2$ and the $a_{ij}$, we need to recall that if $\alpha$ is in the $i$-th block of $\lambda$, for any $r \leq g$, we have, in $\Lie_2(\B_\lambda(S))$:
\[\overline{\left[a_r^{(\alpha)}, (b_r^{(\alpha)})^{-1}\right]} =
- \bigg[\overline{a_r^{(\alpha)}}, \overline{b_r^{(\alpha)}}\bigg] = 
- \overline{\left[\sigma_\alpha b_r^{(\alpha)}\sigma_\alpha ^{-1}, (a_r^{(\alpha)})^{-1}\right]} =
- \overline{\sigma_\alpha^2} =
-s_i^2,
\]
where the third equality comes from Figure~\ref{Aij_as_bracket_handle}.
Moreover, the class of $A_{\alpha\beta}$ modulo $\LCS_3(\B_\lambda(S))$ is either $s_i^2$ if $\alpha$ and $\beta$ are both in the $i$-th block of $\lambda$, or $a_{ij}$ if $\alpha$ is in the $i$-th block and $\beta$ is in the $j$-th block for some $j \neq i$. Finally, using additive notations in the central subgroup $\langle s_i^2, a_{ij} \rangle$,  the classes of the relations from Corollary~\ref{partitioned_Braids_on_closed_surfaces} are:
\[(n_i-1)s_i^2 + \sum\limits_{j \neq i} n_j a_{ij} = - g s_i^2 \ \ \ \ \text{for all}\  1\leq i\leq n.\]
Moreover, these relators are central in $\B_\lambda(\Sigma_g)/\LCS_\infty$, so their normal closure is only the subgroup they generate, which implies that $\LCS_2(\B_\lambda(\Sigma_g)/\LCS_3)$ is the quotient of $\LCS_2(\B_\lambda(\Sigma_{g,1})/\LCS_3)$ by these relations. By Proposition~\ref{partitioned_Lie(B(Sg1)}, the latter is free on the $s_i^2$ and the $a_{ij}$. Whence our claim.
\end{proof}

\begin{remark}
Let us point out that this computation of $\Lie_2(\B_\lambda(\Sigma_g))$ is very similar to the computation of $\Lie_1(\B_\lambda(\S^2))$ in the proof of Proposition~\ref{partitioned_B(S)^ab} (note that $s_i^2$ was equal to $2 s_i$ there, but here $s_i$ and $s_i^2$ do not live in the same part of the Lie ring). The only difference lies in the degree of our relations with respect to the LCS: if the surface has a handle, then the pure braid generators $A_{\alpha \beta}$ belong to the derived subgroup (see Figure~\ref{Aij_as_bracket_handle}).
\end{remark}

\subsubsection{Non-orientable surfaces.} For non-orientable surfaces, we already know that the $s_i^2$ vanish in the quotient by $\LCS_\infty$, as in the proof of Theorem~\ref{Lie_ring_B(S)}. As a consequence, $\LCS_2 \left(\B_n(S)/\LCS_\infty \right)$ is generated by the $a_{ij}$.
The following proposition is an analogue of \cite[Prop.~3.7]{BellingeriGodelleGuaschi} in the non-orientable case:
\begin{proposition}\label{Lie(Bmn(Ng1))}
Let $g \geq 0$ and $m,n \geq 3$. We have:
\[\B_{m,n}(\mathscr{N}_{g,1})/\LCS_\infty \cong (\Z/2)^2 \times (\Z \times \Z^g) \rtimes \Z^g,\]
where the factors are respectively generated by $s_1$ and $s_2$, $a_{12}$, $c_r'$ and $c_r$. The action is given by $c_r c_r' c_r^{-1} = c_r' a_{12}$ (for all $r \leq g$) and all the other pairs of generators commuting. In particular, this group is $2$-nilpotent, and $\Lie_2(\B_{m,n}(\mathscr{N}_{g,1}))$ is infinite cyclic, generated by $a_{12}$.
\end{proposition}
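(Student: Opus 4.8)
The plan is to compute $\B_{m,n}(\mathscr N_{g,1})/\LCS_\infty$ explicitly from a presentation, exactly in the spirit of the proof of Proposition~\ref{Lie(Bn(Sg1))}, and then read off $\Lie_2$ from the resulting $2$-nilpotent group. First I would invoke Proposition~\ref{Bm(Ngn)} to obtain a presentation of the braid group $\B_{m,n}(\mathscr N_{g,1})$, viewed as a subgroup of $\B_{m+n}(\mathscr N_{g,1})$ via the Fadell-Neuwirth machinery of \Spar\ref{subsec_Fadell-Neuwirth}. Concretely, $\B_{m,n}(\mathscr N_{g,1})$ should be generated by the two families of braid generators (one for the block of size $m$, one for the block of size $n$), by the crosscap generators $c_1,\ldots,c_g$ (and their conjugates $c_r' := \sigma_1 c_r \sigma_1^{-1}$ or a similar translate landing in the second block), and by the element $a_{12}$ coming from a $z_j$-type generator $A_{1,m+1}$ linking the two blocks.

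Next I would pass to the quotient by $\LCS_\infty$. Since both blocks have size $\geq 3$, Lemma~\ref{central_image} tells us that the image of $\B_{m,n}$ is central and is a quotient of $\B_{m,n}^{\ab}$, so modulo $\LCS_\infty$ all the $\sigma_\alpha$ in a given block are identified with a single central class $s_i$, and by Theorem~\ref{Lie_ring_partitioned_B(S)} the LCS stops at $\LCS_3$, so $\B_{m,n}(\mathscr N_{g,1})/\LCS_\infty = \B_{m,n}(\mathscr N_{g,1})/\LCS_3$ is $2$-nilpotent. I would then kill the braid relations (which become trivial once $\sigma_\alpha = \sigma_\beta$ in a block) and simplify the relations $(BN1)$--$(BN7)$ of Propositions~\ref{Bn(Ng1)} and~\ref{Bm(Ngn)} under this centrality. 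The key relations to track are $(BN3)$, $[\sigma_1 c_r \sigma_1^{-1}, c_r^{-1}] = \sigma_1^2$, which in the non-orientable case forces $s_i^2 = 0$ in $\Lie_2$ (as already observed in the proof of Theorem~\ref{Lie_ring_B(S)}, the crosscap makes $\sigma^2$ a commutator of the \emph{same} element, hence $\overline{\sigma^2} = [\overline c, \overline c^{-1}] = 0$), and the linking relations involving the $z_j$, which should produce the single surviving commutator $[c_r, c_r'] = a_{12}$.

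From this I expect the quotient to decompose as $(\Z/2)^2 \times \big( (\Z \times \Z^g) \rtimes \Z^g \big)$, with the $(\Z/2)^2$ generated by $s_1,s_2$ (of order $2$ since $s_i^2 = 0$ and $S$ is non-orientable, cf.\ Proposition~\ref{partitioned_B(S)^ab}), the central $\Z = \langle a_{12}\rangle$, and the two free abelian rank-$g$ factors generated by the $c_r'$ and the $c_r$, with action $c_r c_r' c_r^{-1} = c_r' a_{12}$ and all other pairs commuting. This is visibly a Heisenberg-type group as in the remark after Proposition~\ref{Lie(Bn(Sg1))}, hence $2$-nilpotent, confirming $N = \LCS_3$. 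Finally $\Lie_2 = \LCS_2/\LCS_3$ is the centre-direction $\langle a_{12}\rangle \cong \Z$, generated by $a_{12}$, which matches the $l=2$ case of Proposition~\ref{partitioned_Lie(B(Sg1)} and Fact~\ref{Lie2(B(S))} (here $s_i^2 = 0$, so only $a_{12}$ remains and it is of infinite order because the $\rtimes \Z^g$ action is faithful on the $\Z$ factor).

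The main obstacle I anticipate is the bookkeeping in the Reidemeister-Schreier / presentation step: correctly identifying which conjugates of the crosscap generators survive as independent generators after the two blocks are collapsed to central classes, and verifying that the only nontrivial commutator relation is $[c_r,c_r'] = a_{12}$ rather than some tangle of relations mixing different $r$'s or the two blocks. Showing that $a_{12}$ has \emph{infinite} order in $\Lie_2$ (equivalently, that the semidirect product action is nontrivial and does not collapse $a_{12}$ to torsion) is where I would be most careful; this is precisely the content that distinguishes the non-orientable case from the closed case of Proposition~\ref{partitioned_Lie(B(Sg)}, where an extra boundary relation introduces finite order. The explicit isotopy $[c_s^{(j)}, (c_r^{(j)})^{-1}] = A_{rs}$ from Figure~\ref{Aij_as_bracket_crosscap} will be the geometric input guaranteeing that $a_{12}$ really is realised as the required commutator.
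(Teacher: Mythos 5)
Your proposal is correct and follows essentially the same route as the paper: assemble a presentation of $\B_{m,n}(\mathscr N_{g,1})$ from the Fadell--Neuwirth split extension via Propositions~\ref{Bn(Ng1)} and~\ref{Bm(Ngn)}, quotient by the normal closure of the elements identifying the braid generators within each block (and the $z_j$), kill $s_i^2$ via the crosscap relation $(BN3)$, isolate $[c_r,c_r']=a_{12}$ from Figure~\ref{Aij_as_bracket_crosscap} as the only surviving commutator by disjoint-support arguments, and identify the resulting $2$-nilpotent group explicitly to conclude $N=\LCS_3=\LCS_\infty$ and $\Lie_2\cong\Z$. One small caution: for the fact that $\LCS_3=\LCS_\infty$ you should cite Corollary~\ref{LCS_B_lambda(S)_stable} rather than Theorem~\ref{Lie_ring_partitioned_B(S)}, since the non-orientable case of the latter is itself deduced from the present proposition (and likewise the consistency check against Proposition~\ref{partitioned_Lie(B(Sg1)} concerns orientable surfaces, so it is not the relevant comparison here).
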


\begin{proof}
Recall that we have a split extension (from Proposition~\ref{Fadell-Neuwirth}):
\[\B_m(\mathscr{N}_{g,1+n}) \hookrightarrow \B_{m,n}(\mathscr{N}_{g,1}) \twoheadrightarrow \B_n(\mathscr{N}_{g,1}).\]
Thus, we can get a presentation of $\B_{m,n}(\mathscr{N}_{g,1})$ from the presentation of the quotient described in Proposition~\ref{Bn(Ng1)} and the presentation of the kernel from Proposition~\ref{Bm(Ngn)}, using the method of Appendix~\ref{section_pstation_of_ext}. The generators of this presentation are the $\sigma_i$ for $i \neq m$, the $c_r := c_r^{(1)}$, the $c_r' :=  c_r^{(m+1)}$ and the $z_j = A_{1, m+j}$ (where our conventions are those from \Spar\ref{sec_pstations_B(S)}).

We use this to get a presentation of the quotient of $\B_{m,n}(\mathscr{N}_{g,1})$ by the normal closure $N$ of the $\sigma_i\sigma_{i+1}^{-1}$ for $i < m+n$ and $i \notin \{m-1,m \}$, together with the $z_j z_{j+1}^{-1}$ for $j < n$. This quotient is generated by the common class $s_1$ (resp.~$s_2$, resp.~$a_{12}$) of $\sigma_1, \ldots , \sigma_{m-1}$, the common class $s_2$ of $\sigma_{m+1}, \ldots , \sigma_{m-1}$ and the common class $a_{12}$ of $z_1, \ldots , z_n$, together with the $c_r$ and the $c_r'$ for $r \leq g$. They are subject to the following relations:
\begin{itemize}
\item Relations coming from those of $\B_n(\mathscr{N}_{g,1})$: $s_2$ commutes with the $c_r'$ $(BN1)$, the $c_r'$ commute with one another $(BN2)$ and $s_2^2 = 1$ $(BN3)$.
\item Relations coming from those of $\B_m(\mathscr{N}_{g,1+n})$: $s_1$ commutes with the $c_r$ $(BN1)$, the $c_r$ commute with one another $(BN2)$, $s_1^2 = 1$ $(BN3)$, $a_{12}$ commutes with $s_1$ $(BN4)$, and with the $c_r$ $(BN5)$. $(BN6)$ and $(BN7)$ become trivial.  
\item Relations describing the action by conjugation of $s_2$ and the $c_r'$ on $s_1$, $a_{12}$ and the $c_r$. This action is easily seen to be trivial in most cases, since most of the pairs of elements involved come from elements having disjoint support in $\B_{m,n}(\mathscr{N}_{g,1})$, hence they commute. Namely, this holds for $c_s$ and $c_r'$ when $r < s$: using the fact that $a_{12}$ commutes with $c_s$, we see that $c_s$ is the class of $z_1 c_r^{(1)} z_1^{-1}$, whose support is disjoint from the support of $c_r^{(m+1)}$ (up to isotopy). This is again true for $a_{12}$ and the $c_r'$, since $s_2$ commutes with the $c_r'$: $c_r'$ is the class of $c_r^{(m+2)} = \sigma_{m+1} c_r^{(m+1)} \sigma_{m+1}^{-1}$, whose support is disjoint from that of $z_1$. Finally, the only pair of generators under scrutiny for which this does not hold are $c_r$ and $c_r'$ (for $r \leq g$). But for them, the situation is the one from Figure~\ref{Aij_as_bracket_crosscap}: 
$[(c_r')^{-1}, c_r] = \overline A_{1, m+1} = a_{12}.$
\end{itemize} 
These relations can be summed up as:
\begin{itemize}
\item All the generators commute pairwise, except $c_r$ and $c_r'$ for $r \leq g$,
\item $s_1^2 = s_2^2 = 1$,
\item $c_r c_r' c_r^{-1} = c_r' a_{12}$ (for $r = 1, \ldots , g$).
\end{itemize} 
This is a presentation of the group described in the statement. Its commutator subgroup is the factor $\Z$ of the decomposition, which is infinite cyclic, generated by $a_{12}$. It is also central, hence the group is $2$-nilpotent. Since this group is $\B_{m,n}(\mathscr{N}_{g,1})/N$, $N$ contains $\LCS_3(\B_{m,n}(\mathscr{N}_{g,1}))$. But the elements normally generating $N$ are in $\LCS_\infty(\B_{m,n}(\mathscr{N}_{g,1}))$; see Lemma~\ref{central_image} and its proof. Thus $N = \LCS_\infty(\B_{m,n}(\mathscr{N}_{g,1}))$, and the Lie ring of the quotient $\B_{m,n}(\mathscr{N}_{g,1})/N$ identifies with the Lie ring of $\B_{m,n}(\mathscr{N}_{g,1})$. In particular, since its $\LCS_3$ is trivial, its $\Lie_2$ coincides with its $\LCS_2$, which is infinite cyclic, generated by $a_{12}$.
\end{proof}

\begin{corollary}\label{partitioned_Lie(B(Ng1))}
Let $S$ be a non-orientable surface that is not closed. Let $\lambda = (n_1, \ldots , n_l)$ be a partition whose blocks are of size at least $3$. Then  $\Lie_2(\B_\lambda(S))$ is free abelian on the $a_{ij}$ for $1 \leq i < j \leq l$.
\end{corollary}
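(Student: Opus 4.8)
The plan is to deduce this corollary from Proposition~\ref{Lie(Bmn(Ng1))} in exactly the same way that Proposition~\ref{partitioned_Lie(B(Sg1)} was deduced from Theorem~\ref{Lie_ring_B(S)} in the orientable non-closed case. By Fact~\ref{Lie2(B(S))}, we already know that $\Lie_2(\B_\lambda(S))$ is generated by the $s_i^2$ and the $a_{ij}$, and that it equals the full subgroup $\langle s_i^2, a_{ij}\rangle$ whenever $S$ has a crosscap (which is the case here, since $S$ is non-orientable). Moreover, as recalled in the preamble to the statement, since $S$ is non-orientable the elements $s_i^2$ vanish modulo $\LCS_\infty$ (the argument being the one from the non-orientable case of the proof of Theorem~\ref{Lie_ring_B(S)}, where $\sigma_\alpha^2 = [\sigma_\alpha c \sigma_\alpha^{-1}, c^{-1}]$ forces $s_i^2$ to be a commutator of elements that are central modulo $\LCS_\infty$, hence trivial). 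So $\Lie_2(\B_\lambda(S))$ is generated by the $a_{ij}$ alone, and it remains only to prove their linear independence over $\Z$.

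First I would establish linear independence by constructing, for each fixed pair $i < j$, a suitable projection. The natural candidates are the maps induced by forgetting all blocks except the $i$-th and $j$-th, giving a surjection $\B_\lambda(S) \twoheadrightarrow \B_{n_i, n_j}(S)$ (from the Fadell--Neuwirth splittings of Proposition~\ref{Fadell-Neuwirth}, valid since $S$ is not closed), and hence an induced map $\Lie_2(\B_\lambda(S)) \rightarrow \Lie_2(\B_{n_i,n_j}(S))$ sending $a_{ij}$ to the corresponding class and killing all $a_{kl}$ with $\{k,l\}\neq\{i,j\}$ as well as all $s_k^2$. The key input is then that $\Lie_2(\B_{m,n}(S)) \cong \Z$, generated by $a_{12}$, for any non-orientable non-closed $S$ and any $m,n \geq 3$. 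For $S$ compact (i.e.~$S = \mathscr N_{g,1}$) this is precisely Proposition~\ref{Lie(Bmn(Ng1))}. Since $a_{ij}$ maps to a generator of this infinite cyclic group, the $a_{ij}$ must be linearly independent, which finishes the proof.

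The one genuine obstacle is that Proposition~\ref{Lie(Bmn(Ng1))} is stated only for the \emph{compact} surfaces $\mathscr N_{g,1}$, whereas the corollary allows any non-closed non-orientable surface. I would handle this exactly as in the non-compact orientable case inside the proof of Theorem~\ref{Lie_ring_B(S)}: embed $S$ into some $\mathscr N_{g,1}$ with $g$ arbitrarily large (attaching discs to all boundary components and ends save one, possibly after passing to a compact subsurface to which the relevant finite collection of braids and isotopies is confined), so that the induced map $\Lie_2(\B_{m,n}(S)) \rightarrow \Lie_2(\B_{m,n}(\mathscr N_{g,1})) \cong \Z$ sends $a_{12}$ to $a_{12}$. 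Since the target is infinite cyclic generated by the image of $a_{12}$, the class $a_{12}$ cannot be a torsion element of $\Lie_2(\B_{m,n}(S))$, which yields $\Lie_2(\B_{m,n}(S)) \cong \Z$ in the general non-closed case. Combining this with the projection argument above completes the proof that $\Lie_2(\B_\lambda(S))$ is free abelian on the $a_{ij}$ for $1 \leq i < j \leq l$.
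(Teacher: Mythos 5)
Your proposal is correct and follows essentially the same route as the paper: reduce to the two-block groups $\B_{n_i,n_j}(S)$ via the forgetful projections, feed in Proposition~\ref{Lie(Bmn(Ng1))} for $\mathscr N_{g,1}$, and extend to arbitrary non-closed non-orientable $S$ by embedding into some $\mathscr N_{g,1}$ in the compact case and by confining any putative torsion relation to a compact subsurface in the non-compact case, exactly as in the proof of Theorem~\ref{Lie_ring_B(S)}. The only cosmetic difference is that you ask for $g$ arbitrarily large, which is not needed here since a single embedding into any $\mathscr N_{g,1}$ already suffices (the target $\Lie_2$ being infinite cyclic on the image of $a_{12}$ for each fixed $g$).
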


\begin{proof}
We first show that if $m,n \geq 3$, $\Lie_2(\B_{m,n}(S))$ is infinite cyclic, generated by $a_{12}$. This is true for $S = \mathscr N_{g,1}$ by Proposition~\ref{Lie(Bmn(Ng1))}. Then we can follow the same method as in the proof of Theorem~\ref{Lie_ring_B(S)}, to which the reader is referred for more details. Namely, we already know that $\Lie_2(\B_{m,n}(S))$ is generated by $a_{12}$, and we need to show that it has infinite order. If $S$ is compact, we can embed $S$ into some $\mathscr N_{g,1}$, and the image of $a_{12}$ by the corresponding morphism from $\Lie_2(\B_{m,n}(S))$ to $\Lie_2(\B_{m,n}(\mathscr N_{g,1}))$ has infinite order, whence our claim in this case. Then, if $S$ is not compact, a relation saying that $a_{12}$ has finite order in $\B_{m,n}(S)/ \LCS_3$ would hold in a compact subsurface, which is impossible by the previous case. 

From this, we can deduce the result for $\lambda$ having more than two blocks, reasoning as in the proofs of Propositions~\ref{partitioned_B^ab} and~\ref{partitioned_Lie(B(Sg1)}. Indeed, each canonical map $\Lie_2(\B_\lambda(S)) \rightarrow \Lie_2(\B_{n_i, n_j}(S)) \cong \Z$ kills all the $a_{kl}$, except $a_{ij}$, which is sent to a generator of the target. Thus the $a_{ij}$ must be linearly independent. 
\end{proof}

\begin{proposition}\label{Lie(Bmn(Ng))}
Let $g \geq 0$ and $m,n \geq 3$. Then $\Lie_2(\B_{m,n}(\mathscr N_g))$ is cyclic of order $2$, generated by~$a_{12}$.
\end{proposition}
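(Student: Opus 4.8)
The plan is to compute $\Lie_2(\B_{m,n}(\mathscr N_g))$ by passing from the non-closed surface $\mathscr N_{g,1}$ to the closed surface $\mathscr N_g$, using Corollary~\ref{partitioned_Braids_on_closed_surfaces} to add the single defining relation, exactly in parallel with the proof of Proposition~\ref{partitioned_Lie(B(Sg)}. We already know from Proposition~\ref{Lie(Bmn(Ng1))} that $\Lie_2(\B_{m,n}(\mathscr N_{g,1}))$ is infinite cyclic, generated by $a_{12}$ (here the $s_i^2$ vanish because $S$ is non-orientable, so only the single generator $a_{12}$ survives for the partition $(m,n)$). The strategy is then to observe that $\B_{m,n}(\mathscr N_g)/\LCS_3$ is the quotient of $\B_{m,n}(\mathscr N_{g,1})/\LCS_3$ by the classes of the relations $A_{\alpha 1} \cdots A_{\alpha n} = (c_1^{(\alpha)})^2 \cdots (c_g^{(\alpha)})^2$ from Corollary~\ref{partitioned_Braids_on_closed_surfaces}, for $\alpha$ running over representatives of the two blocks.

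The key step is to rewrite these relations inside the central subgroup $\langle a_{12} \rangle = \LCS_2$. First I would recall the crosscap computation: by Figure~\ref{Aij_as_bracket_crosscap} (specifically $[c_s^{(j)}, (c_r^{(j)})^{-1}] = A_{rs}$, and in particular $A_{1,m+1}$ expressed via crosscaps), the class of each $(c_r^{(\alpha)})^2$ modulo $\LCS_3$ must be read off as a multiple of $a_{12}$. The essential point, paralleling the orientable case, is that on a non-orientable surface $\sigma_\alpha^2 = [\sigma_\alpha c \sigma_\alpha^{-1}, c^{-1}]$ forces $\overline{s_i^2} = 0$ in $\Lie_2$, so the left-hand side $A_{\alpha 1}\cdots A_{\alpha n}$ contributes $\sum_{j\neq i} n_j a_{ij}$; for the partition $(m,n)$ with $i=1$ this is $n\,a_{12}$, and for $i=2$ it is $m\,a_{12}$ (using $a_{21}=a_{12}$). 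The right-hand side contributes $g$ copies of the class of a squared crosscap generator. The crucial computation is to show that, unlike the orientable case where $[a_r,b_r^{-1}]$ gave $-\sigma^2$, here the class of $(c_r^{(\alpha)})^2$ in $\Lie_2$ is trivial: squaring a single crosscap loop lands in $\LCS_2$ but its class is $2$-torsion, and in fact one checks from the presentation that $\overline{(c_r^{(\alpha)})^2}=0$ after reducing modulo $\LCS_3$, since $c_r^{(\alpha)}$ is sent to a torsion generator of the abelianisation.

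Granting this, the defining relations become $n\, a_{12} = 0$ and $m\, a_{12} = 0$ in $\Lie_2$, so $\Lie_2(\B_{m,n}(\mathscr N_g))$ is the quotient of $\Z\langle a_{12}\rangle$ by $\gcd(m,n,\dots)$. To obtain that the order is exactly $2$ (rather than $\gcd(m,n)$), I expect the decisive relation to come not from these two but from combining them with the $2$-torsion already present: the relator coming from the crosscaps carries an intrinsic factor of $2$, forcing $2\,a_{12}=0$, while the presence of both blocks and the parities of $m,n\geq 3$ conspire so that no relation strictly finer than $2a_{12}=0$ can appear. I would verify non-triviality by constructing an explicit projection, as in the last paragraph of the proof of Proposition~\ref{prop:welded_points_block_size_two}: map $\B_{m,n}(\mathscr N_g)$ onto a small quotient (for instance a braid group on a projective plane or a wreath-type quotient) in which the image of $a_{12}$ is a visibly nontrivial element of order $2$ in $\Lie_2$, thereby showing $\Lie_2 \cong \Z/2$.

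The main obstacle will be pinning down the exact coefficient with which the squared crosscap generators $(c_r^{(\alpha)})^2$ enter the relation modulo $\LCS_3$, and in particular establishing that this is precisely what produces the factor $2$ (and not a larger or a trivial contribution). This requires careful bookkeeping with Figure~\ref{Aij_as_bracket_crosscap} and the presentation of Proposition~\ref{Bn(Ng1)}, tracking how $c_r^{(\alpha)}$ behaves under the quotient to the $2$-nilpotent group and confirming that its square is a $2$-torsion element whose class forces exactly $2a_{12}=0$; the disjoint-support arguments handle everything else, but this single torsion computation is where the genus-dependence drops out and the answer becomes uniformly $\Z/2$ regardless of $g$.
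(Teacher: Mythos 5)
Your setup is right---pass from $\mathscr N_{g,1}$ to $\mathscr N_g$ via the boundary relations of Corollary~\ref{partitioned_Braids_on_closed_surfaces}, imposed on the $2$-nilpotent group $\B_{m,n}(\mathscr N_{g,1})/\LCS_3$ computed in Proposition~\ref{Lie(Bmn(Ng1))}---but the central computation you propose is based on a false claim. You assert that the class of $(c_r^{(\alpha)})^2$ in $\Lie_2$ is trivial because "$c_r^{(\alpha)}$ is sent to a torsion generator of the abelianisation". This is not so: since $\mathscr N_{g,1}$ is not closed, $\pi_1(\mathscr N_{g,1}) \cong \F_g$ is free, and the crosscap generators $c_r, c_r'$ map to free generators of the factors $\Z^g$ in $\B_{m,n}(\mathscr N_{g,1})^{\ab}$ (equivalently, of the two $\Z^g$ factors in the decomposition $(\Z/2)^2 \times (\Z\times\Z^g)\rtimes\Z^g$ of Proposition~\ref{Lie(Bmn(Ng1))}). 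In particular $(c_r^{(\alpha)})^2$ does not even lie in $\LCS_2$, so the boundary relations $c_1^2\cdots c_g^2 = a_{12}^{m}$ and $(c_1')^2\cdots(c_g')^2 = a_{12}^{n}$ cannot be read as relations inside $\Lie_2$; they equate an infinite-order degree-one element with a power of the degree-two generator. Consequently your derivation of "$n\,a_{12}=0$ and $m\,a_{12}=0$" collapses, and with it the claim that the order of $a_{12}$ divides $2$. The correct mechanism is different: apply the commutator with $c_1$ to the relation $(c_1')^2\cdots(c_g')^2 = a_{12}^{n}$. Since $c_1$ commutes with every generator except $c_1'$, and $[c_1,c_1']=a_{12}$ is central, the left-hand side yields $[c_1,(c_1')^2] = a_{12}^2$ while the right-hand side yields $[c_1, a_{12}^n]=1$; hence $a_{12}^2=1$. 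This is where the factor $2$ actually comes from, and it has nothing to do with any torsion of the crosscap generators.

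The non-triviality of $a_{12}$ is also more delicate than "map onto a small quotient where $a_{12}$ is visibly of order $2$". Once $a_{12}^2=1$ is known, the structure of $G=\B_{m,n}(\mathscr N_g)/\LCS_3$ depends only on the parities of $m$ and $n$, and one must treat the three parity cases separately. When at least one of $m,n$ is even, $G$ admits an explicit semi-direct product decomposition in which $a_{12}$ generates the commutator subgroup $\Z/2$. But when $m$ and $n$ are both odd, the relations become $c_1^2\cdots c_g^2=(c_1')^2\cdots(c_g')^2=a_{12}$ and no such splitting is available; one needs a bespoke finite quotient---a central quotient $(Q_8)^g/H$ of a power of the quaternion group, with $c_r\mapsto i$ and $c_r'\mapsto j$ in the $r$-th factor---to see that $a_{12}$ survives. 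Your proposal does not identify this case distinction, and without it the argument that the order is exactly $2$ (rather than $1$) is incomplete.
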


\begin{proof}
Let us consider $G := \B_{m,n}(\mathscr N_g)/\LCS_3$. This group is the quotient of $\B_{m,n}(\mathscr N_{g,1})/\LCS_3 = \B_{m,n}(\mathscr N_g)/\LCS_\infty \cong (\Z/2)^2 \times (\Z \times \Z^g) \rtimes \Z^g$ described in Proposition~\ref{Lie(Bmn(Ng1))} by the images of the boundary relations from Corollary~\ref{partitioned_Braids_on_closed_surfaces}. Before considering these relations, we can already remark that the commutator subgroup of $\B_{m,n}(\mathscr N_{g,1})/\LCS_3$ is cyclic generated by $a_{12}$, so the same holds for $G$. Since $\LCS_2(G)$ identifies with $\Lie_2(\B_{m,n}(\mathscr N_g))$, we only need to show that $a_{12}$ has order $2$ in $G$ to prove our statement.

Recall that $A_{\alpha \beta}$ is sent to $s_1^2 = s_2^2 = 1$ if $\alpha$ and $\beta$ are in the same block of the partition $(m,n)$, and to $a_{12}$ if they are not. As a consequence, these relations are:
\[c_1^2 \cdots c_g^2 = a_{12}^m\ \ \ \text{and}\ \ \ (c_1')^2 \cdots (c_g')^2 = a_{12}^n.\]
Since $c_1$ commutes with all the other generators except $c_1'$, and $[c_1, c_1'] = a_{12}$, by applying the commutator with $c_1$ to the second relation, we get $a_{12}^2 = 1$. We can thus consider the intermediate quotient by the central element $a_{12}^2$, and see $G$ as a quotient of $(\Z/2)^2 \times ((\Z/2) \times \Z^g) \rtimes \Z^g$. Modulo $a_{12}^2$, the above relations (hence the structure of $G = \B_{m,n}(\mathscr N_g)/\LCS_3$) depend only on the parity of $m$ and $n$. Moreover, we now know that $a_{12}^2 = 1$ in $G$, so we are left with showing that $a_{12}$ is not trivial in $G$.

Suppose first that $m$ and $n$ are both even. Then the relations become $c_1^2 \cdots c_g^2 = (c_1')^2 \cdots (c_g')^2 = 1$. Notice that after the quotient by $a_{12}^2$, all the $c_r^2$ and the $(c_r')^2$ have become central, so both $c_1^2 \cdots c_g^2$ and $(c_1')^2 \cdots (c_g')^2$ are central in $(\Z/2)^2 \times ((\Z/2) \times \Z^g) \rtimes \Z^g$. Thus, if $A$ denotes the abelian group $\Z^g/(2,2, \ldots , 2)$ (which is also $\pi_1(\mathscr N_g)^{\ab}$), we see that $G \cong (\Z/2)^2 \times ((\Z/2) \times A) \rtimes A$, whose commutator subgroup is cyclic of order $2$, generated by $a_{12}$. 

If $m$ and $n$ have different parities, we can assume (by symmetry) that $m$ is even and $n$ is odd. Then the relations become $c_1^2 \cdots c_g^2 = 1$ and $(c_1')^2 \cdots (c_g')^2 = a_{12}$. In $(\Z/2)^2 \times ((\Z/2) \times \Z^g) \rtimes \Z^g$, both $c_1^2 \cdots c_g^2$ and $(c_1')^2 \cdots (c_g')^2 a_{12}^{-1}$ are central, so they general cyclic normal subgroups. Thus, if we denote by $\tilde A$ the quotient $\Z^g/(4,4, \ldots , 4)$ of $(\Z/2) \times \Z^g$ by $(1, 2, \ldots , 2)$, we have $G \cong (\Z/2)^2 \times \tilde A \rtimes A$, whose commutator subgroup is cyclic of order $2$, generated by $a_{12}$ (the latter identifies with the class of $(2,2, \ldots , 2)$ in $\tilde A$). 

If $m$ and $n$ are both odd, then the relations become $c_1^2 \cdots c_g^2 = (c_1')^2 \cdots (c_g')^2 = a_{12}$. In this case, there is no obvious semi-direct product decomposition of $G$ where $a_{12}$ is clearly non-trivial, so we need another argument to show that $a_{12} \neq 1$. If $g = 1$, one can see that $G \cong (\Z/2)^2 \times Q_8$, where $c_1$ is sent to $i$, $c_1'$ is sent to $j$, and $a_{12}$ identifies with the central element $-1$ of the quaternion group $Q_8$. Indeed, one can easily check that the above correspondence defines a morphism from $G$ to $(\Z/2)^2 \times Q_8$, and use the presentation $Q_8 = \langle i,j \ |\ i^2 = i^2 = (ij)^2 \rangle$ to construct its inverse.
For $g \geq 1$, we can find a similar quotient of $G$, by considering the quotient $(Q_8)^g/H$, where $H$ is the hyperplane of $\mathcal Z((Q_8)^g) \cong (\Z/2)^g$ defined by the vanishing of the sum of the coordinates. Since $H$ is central in  $(Q_8)^g$, it is normal. Moreover, $(Q_8)^g$ decomposes as a central extension of $(\Z/2)^{2g}$ by $(\Z/2)^g$, which induces a central extension:
\[\begin{tikzcd}
(\Z/2)^g/H \cong \Z/2 \ar[r, hook] &(Q_8)^g/H \ar[r, two heads] &(\Z/2)^{2g}.
\end{tikzcd}\]
Using the presentation of $G$, we can see that there is a well-defined projection from $G$ onto $(Q_8)^g/H$ sending $c_r$ to $(1, \ldots , 1, i, 1, \ldots , 1)$, $c_r'$ to $(1, \ldots , 1, j, 1, \ldots , 1)$ (where the non-trivial coordinate is the $r$-th one in both cases), and $s_1$ and $s_2$ to $1$. This projection sends $a_{12} = [c_1, c_1']$ to the generator of the centre $(\Z/2)^g/H \cong \Z/2$. Thus $a_{12}$ is again not trivial in $G$, whence our claim.
\end{proof}

\begin{corollary}\label{partitioned_Lie(B(Ng))}
Let $g \geq 0$ and let $\lambda = (n_1, \ldots , n_l)$ be a partition whose blocks are of size at least $3$. Then $\Lie_2(\B_\lambda(\mathscr N_g)) \cong (\Z/2)^{l(l-1)/2}$ is the free elementary abelian $2$-group on the $a_{ij}$ for $1 \leq i < j \leq l$.
\end{corollary}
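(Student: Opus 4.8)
The plan is to mirror the proof of Corollary~\ref{partitioned_Lie(B(Ng1))}, feeding in the closed-surface two-block computation of Proposition~\ref{Lie(Bmn(Ng))} in place of the non-closed one. First I would record that, since $\mathscr N_g$ is non-orientable it contains a crosscap, so Fact~\ref{Lie2(B(S))} gives $\Lie_2(\B_\lambda(\mathscr N_g)) \cong \langle s_i^2, a_{ij}\rangle_{i,j\leq l}$, the kernel of the central extension of Theorem~\ref{B_lambda(S)/residue}. Then, exactly as in the non-orientable case of Theorem~\ref{Lie_ring_B(S)}, the crosscap relation of Figure~\ref{Aij_as_bracket_crosscap} together with the centrality of the $s_i$ in $\B_\lambda(\mathscr N_g)/\LCS_\infty$ (Lemma~\ref{central_image}) forces each $\sigma_\alpha^2$ (for $\alpha$ in a block of size $\geq 3$) to become a commutator $[\,\overline c, \overline c^{-1}\,]$ of two elements that are identified modulo $\LCS_\infty$, whence $s_i^2 = 0$ in the Lie ring. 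This reduces the statement to showing that the $a_{ij}$ form a $\Z/2$-basis of $\Lie_2(\B_\lambda(\mathscr N_g))$.

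Next I would show that each $a_{ij}$ is of order dividing $2$, which is the local content of Proposition~\ref{Lie(Bmn(Ng))} transported into the full group. Fix representatives $\alpha$ of the $i$-th block and $\beta$ of the $j$-th block and a crosscap; the relation of Figure~\ref{Aij_as_bracket_crosscap} expresses $a_{ij}$ as the class of $[c_1^{(\beta)}, (c_1^{(\alpha)})^{-1}]$, a bracket of two degree-one generators. Working in $G := \B_\lambda(\mathscr N_g)/\LCS_3$ (where $\LCS_2(G)$ is central, by Corollary~\ref{LCS_B_lambda(S)_stable}), I would apply the commutator with $c_1^{(\alpha)}$ to the boundary relation of Corollary~\ref{partitioned_Braids_on_closed_surfaces} attached to the block containing $\beta$, namely $A_{\beta 1}\cdots A_{\beta n} = (c_1^{(\beta)})^2\cdots(c_g^{(\beta)})^2$. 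Since $c_1^{(\alpha)}$ commutes (by disjoint support) with every factor on the right except $c_1^{(\beta)}$, and brackets with the central left-hand side vanish in $G$, this collapses to $a_{ij}^2 = 1$, exactly as $a_{12}^2 = 1$ was obtained in Proposition~\ref{Lie(Bmn(Ng))}. Combined with the previous paragraph, $\Lie_2(\B_\lambda(\mathscr N_g))$ is then an elementary abelian $2$-group generated by the $a_{ij}$, hence a quotient of $(\Z/2)^{l(l-1)/2}$.

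Finally I would establish linear independence over $\Z/2$, giving the reverse surjection. For each pair $i<j$, forgetting every block of strands except the $i$-th and $j$-th yields a surjection $\B_\lambda(\mathscr N_g)\twoheadrightarrow\B_{n_i,n_j}(\mathscr N_g)$; such forgetting maps exist even on closed surfaces, since only the \emph{sections} fail there, while the surjective part of the Fadell--Neuwirth sequence of Proposition~\ref{Fadell-Neuwirth} always holds. As $n_i,n_j\geq 3$, Proposition~\ref{Lie(Bmn(Ng))} identifies $\Lie_2(\B_{n_i,n_j}(\mathscr N_g))\cong\Z/2$ with generator $a_{12}$, and the induced map on $\Lie_2$ sends $a_{ij}$ to this generator while killing every $a_{kl}$ with $\{k,l\}\neq\{i,j\}$ (at least one strand involved having been forgotten). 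This is the same separation-of-generators argument used for Propositions~\ref{partitioned_B^ab} and~\ref{partitioned_Lie(B(Sg1)} and for Corollary~\ref{partitioned_Lie(B(Ng1))}.

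I expect the main obstacle to be the second step: verifying that the crosscap-and-boundary computation of Proposition~\ref{Lie(Bmn(Ng))} survives the presence of the other blocks, i.e.\ that modulo $\LCS_3$ the auxiliary element $c_1^{(\alpha)}$ genuinely commutes with all the factors in the boundary relation except $c_1^{(\beta)}$, and that the remaining pure-braid generators appearing there are central. The disjoint-support machinery already developed makes each such commutation routine, but it is where the verification is concentrated and where one must be careful to use the boundary relation attached to a single block and a single common crosscap.
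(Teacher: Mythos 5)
Your proof is correct and follows essentially the same route as the paper's: the paper's own proof of Corollary~\ref{partitioned_Lie(B(Ng))} is a one-liner that reduces to the two-block computation of Proposition~\ref{Lie(Bmn(Ng))} via the forgetting-blocks projections $\Lie_2(\B_\lambda(\mathscr N_g))\to\Lie_2(\B_{n_i,n_j}(\mathscr N_g))\cong\Z/2$, exactly as in your third paragraph, with generation by the $a_{ij}$ coming from the vanishing of the $s_i^2$ as in your first. Your second paragraph --- running the boundary-relation-plus-commutator argument of Proposition~\ref{Lie(Bmn(Ng))} directly in $\B_\lambda(\mathscr N_g)/\LCS_3$ to obtain $2a_{ij}=0$ --- makes explicit a step the paper leaves implicit (the projections alone only show that the $a_{ij}$ are independent modulo $2\Lie_2$, not that $\Lie_2$ is $2$-torsion), and your verification of that step is sound.
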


\begin{proof}
The proof is again the same as the proofs of Propositions~\ref{partitioned_B^ab} and~\ref{partitioned_Lie(B(Sg1)}, using the canonical maps $\Lie_2(\B_\lambda(\mathscr N_g)) \rightarrow \Lie_2(\B_{n_i, n_j}(\mathscr N_g)) \cong \Z/2$, where the latter is generated by the image of $a_{ij}$, by Proposition~\ref{Lie(Bmn(Ng))}.
\end{proof}

We can now finish the proof of the main result of this section.
\begin{proof}[Proof of Theorem~\ref{Lie_ring_partitioned_B(S)}]
Under our hypotheses, $\Lie_2(\B_\lambda(S))$ identifies with $\LCS_2 \left(\B_\lambda(S)/\LCS_\infty \right) \subseteq \langle s_i^2, a_{ij} \rangle_{i,j \leq l}$ (see Fact~\ref{Lie2(B(S))}).

\ul{Planar surfaces:} if $S$ is planar, then the canonical projection from $\B_\lambda(S)/\LCS_\infty$ to $\B_\lambda(S)^{\ab}$ sends the $s_i$ and the $a_{ij}$ to their counterparts in $\B_\lambda(S)^{\ab}$. The latter is a linearly independent family by Proposition~\ref{partitioned_B(S)^ab}, so the restriction  $\langle s_i^2, a_{ij} \rangle_{i,j \leq l} \rightarrow \B_\lambda(S)^{\ab}$ must be injective. But its kernel is $\LCS_2(\B_\lambda(S) / \LCS_\infty) = \Lie_2(\B_\lambda(S))$, which must then be trivial. 

\ul{The sphere:} if $S = \S^2$, then $\B_\lambda(\S^2)$ is a quotient of $\B_\lambda$ (see Corollary~\ref{partitioned_Braids_on_closed_surfaces}), which implies that its LCS also stops at $\LCS_2$.

\ul{Non-planar orientable surfaces:} Proposition~\ref{partitioned_Lie(B(Sg1)} deals with most of them; the remaining ones are closed orientable surfaces, for which Proposition~\ref{partitioned_Lie(B(Sg)} is the relevant statement.

\ul{Non-orientable surfaces:} these are dealt with in Corollary~\ref{partitioned_Lie(B(Ng1))}, except for the closed ones, whose Lie ring is studied separately, in Corollary~\ref{partitioned_Lie(B(Ng))}.
\end{proof}

\section{Partitions with small blocks}\label{sec:surface_braid_unstable}

Now that we have a complete description of the LCS of $\B_\lambda(S)$ in the stable case, that is, when the blocks of the partition $\lambda$ have size at least $3$ (\Spar\ref{sec:partitioned_braids_surfaces_stable}), we turn our attention to the cases where $\lambda$ does have blocks of size $1$ or $2$. Then we ask ourselves: under this assumption, when does the LCS of $\B_\lambda(S)$ stop? For most surfaces, it does not; see Proposition~\ref{LCS_unstable_partitioned_surface_braids}. In fact, there are only six surfaces to which this result does not apply. One of them is the disc, for which an answer has already been given in Chapter~\ref{sec:partitioned_braids}. Another one is the cylinder, whose case can easily be deduced from the case of the disc. Four surfaces remain: the torus $\Tor$ (\Spar\ref{sec:partitioned_braids_torus}), the  Möbius strip $\Moeb$ (\Spar\ref{sec:partitioned_braids_Moebius}), the sphere $\S^2$ (\Spar\ref{sec:partitioned_braids_sphere}) and the projective plane $\Proj$ (\Spar\ref{sec:partitioned_braids_projective_plane}).

\subsection{The generic cases}

As a direct corollary of Propositions~\ref{prop:LCS_non_stop_pi_1_surfaces} and \ref{prop:LCS_non_stop_unpartitioned_braid_groups_surfaces}, we get the following result:
\begin{proposition}\label{LCS_unstable_partitioned_surface_braids}
Let $\lambda = (n_1, \ldots , n_l)$ be a partition of an integer $n \geq 1$.
\begin{itemize}
\item If $\lambda$ has at least one block of size $1$ and $\pi_1(S)$ is not abelian (that is, if we suppose that $S \notin \{\D-pt, \D, \S^2, \Tor, \Proj, \Moeb\}$ up to isotopy equivalence), then the LCS of $\B_{\lambda}(S)$ does not stop.
\item If $\lambda$ has at least one block of size $2$ and $\pi_1(S)^{\ab}$ is not finite (that is, if we suppose that $S \notin \{\D, \S^2, \Proj\}$ up to isotopy equivalence), then the LCS of $\B_{\lambda}(S)$ does not stop.
\end{itemize}
\end{proposition}

\begin{proof}
In the first case, there is a surjection $\B_\lambda(S) \twoheadrightarrow \B_1(S) \cong \pi_1(S)$, and in the second one, a surjection $\B_\lambda(S) \twoheadrightarrow \B_2(S)$. Propositions~\ref{prop:LCS_non_stop_pi_1_surfaces} and \ref{prop:LCS_non_stop_unpartitioned_braid_groups_surfaces} say that, under our hypotheses, the LCS of the quotient does not stop in either case. The result then follows from Lemma~\ref{lem:stationary_quotient}.
\end{proof}

Thus the question of whether the LCS of $\B_\lambda(S)$ stops has been answered for every partition, except for the six surfaces $\D- pt$, $\D$, $\S^2$, $\Tor$, $\Proj$ and $\Moeb$. In fact, $\B_\lambda(\D) = \B_\lambda$ has already been considered in Chapter~\ref{sec:partitioned_braids}; see Theorem~\ref{thm:partitioned-braids}. Also, since $\B_1(\D) = \{1\}$, we have an isomorphism $\B_\lambda(\D - pt) \cong \B_{1,\lambda}(\D)$ for every partition $\lambda$ by Proposition~\ref{Fadell-Neuwirth}, so we can deduce the remaining answer for $\D - pt$ from the answer for the disc. Namely, Lemma~\ref{LCS_B111} and Proposition~\ref{LCS_B11mu} imply:

\begin{lemma}\label{LCS_B1nu(D-pt)}
If $\lambda$ has at least two blocks of size $1$, then the LCS of $\B_\lambda(\D - pt)$ does not stop. If $\lambda = (1,n_2,\ldots,n_l)$ where every $n_i \geq 3$, then its LCS stops at $\LCS_2$.
\end{lemma}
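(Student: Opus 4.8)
The plan is to exploit the isomorphism $\B_\lambda(\D - pt) \cong \B_{1,\lambda}(\D)$, which follows from the Fadell--Neuwirth exact sequences (Proposition~\ref{Fadell-Neuwirth}) together with the fact that $\B_1(\D) = \{1\}$. Indeed, for the disc, taking $\mu = (1)$ and $\nu = \lambda$ gives a short exact sequence (the disc is not closed, so the exceptional cases are irrelevant) $\B_1(\D - \{n\ \text{pts}\}) \hookrightarrow \B_{1,\lambda}(\D) \twoheadrightarrow \B_\lambda(\D)$, but more directly one observes that removing an interior point from the disc and studying $\lambda$-partitioned braids there is the same as adding one isolated strand to a disc braid. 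Concretely, $\B_\lambda(\D - pt)$ is the preimage of $\Sym_\lambda$ under the projection $\B_n(\D - pt) \twoheadrightarrow \Sym_n$, and the identification $\D - pt \cong \D - \{\text{one base point}\}$ lets us view this as $\B_{1,\lambda}(\D)$, where the first block of size $1$ corresponds to the strand encircling the puncture. This is the reduction step, and once it is in place the statement becomes a corollary of results already proved for partitioned braids on the disc in Chapter~\ref{sec:partitioned_braids}.

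Granting this identification, both halves follow immediately. If $\lambda$ has at least two blocks of size $1$, then $\B_{1,\lambda}(\D)$ is a partitioned braid group whose partition $(1, \lambda)$ has at least three blocks of size $1$ (the extra $1$ from the puncture together with the two from $\lambda$). Lemma~\ref{LCS_B111} then asserts precisely that the LCS of such a partitioned braid group does not stop. For the second half, if $\lambda = (1, n_2, \ldots, n_l)$ with every $n_i \geq 3$ (for $i \geq 2$), then the partition $(1, \lambda) = (1, 1, n_2, \ldots, n_l)$ has exactly two blocks of size $1$ and all other blocks of size at least $3$. This is exactly the hypothesis of Proposition~\ref{LCS_B11mu}, which concludes that the LCS stops at $\LCS_2$. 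Thus both conclusions are read off directly.

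The only genuine content, and hence the main obstacle, is justifying the isomorphism $\B_\lambda(\D - pt) \cong \B_{1,\lambda}(\D)$ carefully and checking that it carries the partition $\lambda$ on $\D - pt$ to the partition $(1, \lambda)$ on $\D$, with the puncture contributing a \emph{single} block of size $1$ that is never merged with the blocks of $\lambda$. This requires being precise about what ``partitioned braids on $\D - pt$'' means: the puncture is a fixed marked point, not one of the $n$ moving strands, so under the Fadell--Neuwirth identification it becomes one further strand that is pure (it cannot be permuted with the others, since the symmetric group only permutes within $\Sym_\lambda$), giving a block of size exactly $1$. I would verify that the projection $\B_n(\D - pt) \to \Sym_n$ intertwines with the projection $\B_{n+1}(\D) \to \Sym_{n+1}$ restricted to permutations fixing the last point, so that taking preimages of $\Sym_\lambda$ on one side corresponds to taking the preimage of $\Sym_1 \times \Sym_\lambda = \Sym_{(1,\lambda)}$ on the other. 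Once this bookkeeping is settled, the proof is a one-line invocation of Lemma~\ref{LCS_B111} and Proposition~\ref{LCS_B11mu}, exactly as the paper's phrasing (``Lemma~\ref{LCS_B111} and Proposition~\ref{LCS_B11mu} imply'') suggests.
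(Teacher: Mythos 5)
Your proof is correct and follows exactly the paper's route: the identification $\B_\lambda(\D - pt)\cong\B_{1,\lambda}(\D)$ via Proposition~\ref{Fadell-Neuwirth} together with $\B_1(\D)=\{1\}$, followed by an appeal to Lemma~\ref{LCS_B111} and Proposition~\ref{LCS_B11mu}. The one small slip is that the instance of Fadell--Neuwirth you first write down (with $\mu=(1)$, $\nu=\lambda$) is not the relevant one --- you want $\mu=\lambda$ and $\nu=(1)$, giving $\B_\lambda(\D - pt)\hookrightarrow\B_{\lambda,1}(\D)\twoheadrightarrow\B_1(\D)=\{1\}$ --- but the direct description of the identification that you give immediately afterwards is the right one, so nothing is lost.
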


Therefore, we are left with four remaining cases: the torus, the Möbius strip, the sphere and the projective plane.

\subsection{Partitioned braids on the torus}\label{sec:partitioned_braids_torus}

We know that the LCS of $\B_\lambda(\Tor)$ stops if $\lambda$ has only blocks of size at least $3$, and that it does not if there is at least one block of size $2$. The remaining cases are dealt with using the following generalisation of \cite[Lem.~17]{BellingeriGervaisGuaschi}:

\begin{proposition}\label{B_1mu(T)}
There is an isomorphism $\B_{1, \mu}(\Tor) \cong \B_\mu(\Tor - pt) \times \Z^2$, for any partition $\mu$.
\end{proposition}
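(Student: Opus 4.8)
The plan is to exploit the special geometry of the torus, namely that the torus is a Lie group (or that it acts freely and transitively on itself by translations). The key point is that the $\Tor$-action gives us a way to trivialise the motion of one distinguished strand. Concretely, I would consider the configuration space $F_{1+n}(\Tor)$ with the partition $(1,\mu)$ and use the free transitive action of $\Tor$ on itself to ``fix'' the distinguished first point at a basepoint. Since the $\Tor$-action on $F_{1+n}(\Tor)$ is free (as it is already free on $\Tor$ itself, acting diagonally), the quotient map $F_{1+n}(\Tor) \to F_{1+n}(\Tor)/\Tor$ is a principal $\Tor$-bundle, and the orbit space is homeomorphic to $\{pt\} \times F_n(\Tor - pt)$, the configurations where the first point sits at a fixed basepoint and the remaining $n$ points lie in the complement.

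First I would set up the diagonal translation action of $\Tor$ on $F_{1+n}(\Tor)$ and observe it is free and proper, so we get a fibre bundle $\Tor \hookrightarrow F_{1+n}(\Tor) \to F_{1+n}(\Tor)/\Tor$. Next I would identify the base $F_{1+n}(\Tor)/\Tor$ with $F_n(\Tor - pt)$ by using the translation to move the first coordinate to a fixed point $p$; this is a homeomorphism because every orbit contains exactly one configuration with first coordinate equal to $p$. This should descend to the partitioned/unordered level: quotienting by $\Sym_\lambda = \Sym_1 \times \Sym_\mu = \Sym_\mu$ (which only permutes the last $n$ points and commutes with the $\Tor$-translation) yields a bundle $\Tor \hookrightarrow C_{1,\mu}(\Tor) \to C_\mu(\Tor - pt)$. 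The long exact sequence of homotopy groups then gives a short exact sequence $\pi_1(\Tor) \to \B_{1,\mu}(\Tor) \to \B_\mu(\Tor - pt) \to 1$, with injectivity on the left since $\pi_2$ of the base vanishes (the base is aspherical, being a configuration space of a non-closed surface, by Corollary~\ref{pi_2(S)} applied after removing the boundary/puncture). Since $\pi_1(\Tor) \cong \Z^2$, this realises $\Z^2$ as a subgroup.

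The main step is then to show that this extension is not merely split but a \emph{direct product}, i.e. that the $\Z^2$ sits centrally and splits off as a direct factor. The bundle has a section: the $\Tor$-action provides a canonical way to reconstruct a point of $F_{1+n}(\Tor)$ from a point of the base together with an element of $\Tor$, so $F_{1+n}(\Tor) \cong \Tor \times F_n(\Tor - pt)$ as spaces (this is exactly the statement that a free $\Tor$-action with a section trivialises the bundle; here the ``section'' is the tautological identification using the fixed basepoint). Taking fundamental groups of this product homeomorphism and passing to the $\Sym_\mu$-quotient yields $\B_{1,\mu}(\Tor) \cong \pi_1(\Tor) \times \B_\mu(\Tor - pt) \cong \Z^2 \times \B_\mu(\Tor - pt)$ directly. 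I would phrase this as: the homeomorphism $F_{1+n}(\Tor) \cong \Tor \times F_n(\Tor - pt)$ given by $(x_0, x_1, \ldots, x_n) \mapsto (x_0,\ (x_1 - x_0, \ldots, x_n - x_0))$ is $\Sym_\mu$-equivariant (with $\Sym_\mu$ acting trivially on the first factor), hence descends to a homeomorphism of the corresponding unordered-partitioned configuration spaces.

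The hard part will be verifying the equivariance and basepoint-compatibility carefully enough that the product decomposition of fundamental groups is genuinely a direct product and not just a semidirect one. I expect the subtlety to lie in checking that translating by $x_0$ genuinely commutes with the permutation action of $\Sym_\mu$ on the remaining coordinates, and that the resulting isomorphism of fundamental groups is natural. A clean way to avoid fiddly basepoint issues is to appeal directly to the space-level homeomorphism $F_{1+n}(\Tor) \cong \Tor \times F_n(\Tor - pt)$ above: since $\Tor$ acts on the left factor by the group structure and trivially permutes nothing in the right factor, the $\Sym_\mu$-equivariance is immediate, and $\pi_1$ of a product of spaces is the product of the $\pi_1$'s. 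This reduces the whole proposition to the single geometric observation that translation trivialises the configuration bundle over the torus, which I would present as the crux of the argument.
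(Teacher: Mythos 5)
Your proof is correct, but it takes a genuinely different route from the paper. The paper uses the Fadell--Neuwirth extension in the other direction, $\B_\mu(\Tor - pt) \hookrightarrow \B_{1,\mu}(\Tor) \twoheadrightarrow \pi_1(\Tor)$, and splits it by invoking the Paris--Rolfsen description of the centre of $\B_n(\Tor)$: the two central full rotations are pure braids mapping to a basis of $\pi_1(\Tor) \cong \Z^2$, so the extension splits centrally and is therefore a direct product. You instead exploit the group structure of $\Tor$ to produce the space-level homeomorphism $F_{1+n}(\Tor) \cong \Tor \times F_n(\Tor - pt)$, $(x_0,x_1,\ldots,x_n) \mapsto (x_0,(x_1-x_0,\ldots,x_n-x_0))$, which is visibly $\Sym_\mu$-equivariant (with trivial action on the first factor) and hence descends to $C_{1,\mu}(\Tor) \cong \Tor \times C_\mu(\Tor - pt)$; taking $\pi_1$ finishes the proof. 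Your argument is more self-contained (it needs no external input about the centre of $\B_n(\Tor)$) and is the standard way to split off a factor from the configuration space of a topological group, whereas the paper's argument is the one that would survive in settings with a central braid but no group structure on the surface. One editorial remark: the middle of your write-up, with the principal $\Tor$-bundle, the long exact sequence, and the asphericity discussion, is entirely superseded by your final paragraph --- the explicit product homeomorphism already gives the result, so you should lead with it and drop the bundle-theoretic detour (and note in passing that asphericity of $C_\mu(\Tor - pt)$ would anyway follow from the Fadell--Neuwirth fibrations rather than from the surface statement you cite).
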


\begin{proof}
Proposition~\ref{Fadell-Neuwirth} gives a short exact sequence:
\[\begin{tikzcd}
\B_\mu(\Tor - pt) \ar[r, hook]
&\B_{1, \mu}(\Tor) \ar[r, two heads] 
&\pi_1(\Tor) \cong \Z^2. 
\end{tikzcd}\]
Let $n\geq 2$ such that $\mu$ is a partition of $n-1$. The centre of $\B_n(\Tor)$ is generated by two braids $\alpha$ and $\beta$ corresponding to rotating all the punctures along each factor of $\S^1 \times \S^1$ \cite[Prop.~4.2]{Paris-Rolfsen}. These are pure braids, so they are in the subgroup $\B_{1, \mu}(\Tor)$ of $\B_n(S)$.  The above projection (forgetting all strands but one) maps these two elements to a basis of $\pi_1(\Tor) \cong \Z^2$. As a consequence, it restricts to an isomorphism $\langle \alpha, \beta \rangle \cong \Z^2$. Thus the above short exact sequence splits, and the corresponding action of $\Z^2$ is trivial ($\alpha$ and $\beta$ being central), hence it is in fact a direct product.
\end{proof}

It is then an easy task to finish proving the following summary of our results for partitioned torus braids:
\begin{theorem}\label{thm:braid_torus}
Let $\lambda$ be a partition of an integer $n \geq 1$. The LCS of $\B_\lambda(\Tor)$:
\begin{itemize}
\item \emph{does not stop} if $\lambda$ has at least two blocks of size $1$ or one block of size $2$. 
\item \emph{stops} at $\LCS_3$ in all the other cases, except for $\B_1(\Tor) \cong \Z^2$.
\end{itemize}
\end{theorem}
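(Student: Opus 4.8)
The plan is to combine the stable-case results already established with the decomposition from Proposition~\ref{B_1mu(T)}, using the generic non-stopping results for the small-block cases. I would organise the proof around the possible sizes of the blocks of $\lambda$. First, I would dispose of the non-stopping cases. If $\lambda$ has at least two blocks of size $1$, then forgetting all but two such blocks gives a surjection $\B_\lambda(\Tor) \twoheadrightarrow \B_{1,1}(\Tor)$; it thus suffices to show that the LCS of $\B_{1,1}(\Tor)$ does not stop, which follows from Proposition~\ref{B_1mu(T)} with $\mu = (1)$, giving $\B_{1,1}(\Tor) \cong \B_1(\Tor - pt) \times \Z^2$, where $\B_1(\Tor - pt) \cong \pi_1(\Tor - pt) \cong \F_2$ is non-abelian free, whose LCS does not stop by Proposition~\ref{prop:LCS_non_stop_pi_1_surfaces}. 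If $\lambda$ has at least one block of size $2$, then $\Tor \notin \{\D, \S^2, \Proj\}$ and $\pi_1(\Tor)^{\ab} \cong \Z^2$ is infinite, so the second bullet of Proposition~\ref{LCS_unstable_partitioned_surface_braids} applies directly and the LCS does not stop. Together these handle the first bullet of the theorem.

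For the stopping cases, I would split according to whether $\lambda$ has any block of size $1$. If every block of $\lambda$ has size at least $3$, then the result is exactly the content of Theorem~\ref{Lie_ring_partitioned_B(S)} applied to $S = \Tor$: since the torus is non-planar, orientable and not $\S^2$, the LCS stops at $\LCS_3$ (and when $l=1$ we are in the case of the whole group, covered by Theorem~\ref{Lie_ring_B(S)}, which also gives $\LCS_3$ for the orientable non-planar $\Tor$). The remaining stopping case is when $\lambda$ has exactly one block of size $1$ and all other blocks of size at least $3$, i.e. $\lambda = (1, \mu)$ with $\mu$ having all blocks of size $\geq 3$. Here I would invoke Proposition~\ref{B_1mu(T)} to write
\[
\B_{1,\mu}(\Tor) \cong \B_\mu(\Tor - pt) \times \Z^2.
\]
Since $\Tor - pt$ is isotopy equivalent to a non-closed non-planar orientable surface, the stable results of \Spar\ref{sec:partitioned_braids_surfaces_stable} apply to $\B_\mu(\Tor - pt)$: its LCS stops at $\LCS_3$ (Corollary~\ref{LCS_B_lambda(S)_stable} together with Theorem~\ref{Lie_ring_partitioned_B(S)} for the non-planar case; note $\Tor - pt$ is not closed, so the relevant statement is Proposition~\ref{partitioned_Lie(B(Sg1)}). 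Because the LCS of a direct product is the direct product of the LCS's, and the $\Z^2$ factor contributes only to $\Lie_1$, the LCS of $\B_{1,\mu}(\Tor)$ also stops precisely at $\LCS_3$.

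Finally I would record the degenerate exception: when $\lambda = (1)$ we have $\B_1(\Tor) = \pi_1(\Tor) \cong \Z^2$, which is abelian, so its LCS stops at $\LCS_2$ rather than $\LCS_3$; this is why it is singled out in the statement. The main subtlety, rather than any single hard computation, is bookkeeping the case division so that it exactly matches the dichotomy in the theorem: one must check that ``all blocks of size $\geq 3$'' and ``one block of size $1$ with the rest of size $\geq 3$'' together exhaust all stopping cases, and that every configuration with a size-$2$ block or with two size-$1$ blocks falls under the non-stopping results. The one genuinely load-bearing input is Proposition~\ref{B_1mu(T)}, whose product decomposition is what lets the isolated size-$1$ block be absorbed without changing the place where the LCS stops; the fact that the central braids $\alpha,\beta$ are pure and map to a basis of $\pi_1(\Tor)$ is exactly what makes the extension split as a direct product, and this is where I expect the only real care is required.
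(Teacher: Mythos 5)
Your proof is correct and follows essentially the same route as the paper: the same surjection onto $\B_{1,1}(\Tor) \cong \F_2 \times \Z^2$ for two size-$1$ blocks, the same appeal to Proposition~\ref{LCS_unstable_partitioned_surface_braids} for size-$2$ blocks, and the same use of the decomposition $\B_{1,\mu}(\Tor) \cong \B_\mu(\Tor - pt) \times \Z^2$ from Proposition~\ref{B_1mu(T)} together with the stable results of Corollary~\ref{LCS_B_lambda(S)_stable} and Theorem~\ref{Lie_ring_partitioned_B(S)} for the stopping cases. The case bookkeeping and the $\B_1(\Tor) \cong \Z^2$ exception are handled exactly as in the paper.
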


\begin{proof}
If $\lambda$ has \ul{at least two blocks of size $1$}, then $\B_\lambda(\Tor)$ surjects onto $\B_{1,1}(\Tor)$, which is isomorphic to $\B_1(\Tor - pt) \times \Z^2 \cong\F_2 \times \Z^2$ by Proposition~\ref{B_1mu(T)}. The LCS of $\F_2$ does not stop, whence the result in this case, by Lemma~\ref{lem:stationary_quotient}. 

If $\lambda$ has \ul{exactly one block of size $1$ and no block of size $2$}, then Proposition~\ref{B_1mu(T)} gives an isomorphism $\B_\lambda(\Tor) \cong \B_\mu(\Tor - pt) \times \Z^2$ where $\mu$ has only blocks of size at least $3$. Then Corollary~\ref{LCS_B_lambda(S)_stable} implies that the LCS stops at most at $\LCS_3$ in this case. In fact, if $\mu$ is non-trivial, then the LCS of $\B_\mu(\Tor - pt)$ stops exactly at $\LCS_3$; see Theorem~\ref{Lie_ring_partitioned_B(S)}.

\ul{The other cases} have already been treated as part of Corollary~\ref{LCS_B_lambda(S)_stable}, Theorem~\ref{Lie_ring_partitioned_B(S)} and Proposition~\ref{LCS_unstable_partitioned_surface_braids}.
\end{proof}

\subsection{Partitioned braids on the M\"obius strip}\label{sec:partitioned_braids_Moebius}

As in the case of the torus, we know that the LCS of $\B_\lambda(\Moeb)$ stops if $\lambda$ has only blocks of size at least~$3$, and that it does not if there is at least one block of size $2$. The only remaining cases are the ones when $\lambda$ has some blocks of size $1$, the other ones being of size at least $3$. We begin by showing that the LCS does not stop when there are at least two blocks of size $1$. In this case, $\B_\lambda(\Moeb)$ surjects onto $\B_{1,1}(\Moeb) = \PB_2(\Moeb)$ so, by Lemma~\ref{lem:stationary_quotient}, this case follows from the following study of $\PB_2(\Moeb)$:

\begin{lemma}\label{P2(M)}
A presentation of the pure braid group $\PB_2(\Moeb)$ is given by generators $\gamma_1, \gamma_2$ and $A$ subject to the relations
\[\begin{cases}
\gamma_1 A \gamma_1^{-1} = \gamma_2^{-1} A^{-1} \gamma_2 \\
\gamma_1 \gamma_2 \gamma_1^{-1} = \gamma_2^{-1} A^{-1} \gamma_2^2.
\end{cases}\]
\end{lemma}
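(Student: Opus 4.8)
The plan is to realise $\PB_2(\Moeb)$ as the index-two kernel of the canonical surjection $\B_2(\Moeb) \twoheadrightarrow \Sym_2$ and to read off a presentation by the Reidemeister–Schreier method, exactly as outlined in \Spar\ref{sec_pstations_B(S)}. The starting point is the presentation of $\B_2(\Moeb) = \B_2(\mathscr N_{1,1})$ given by the case $g = 1$, $n = 2$ of Proposition~\ref{Bn(Ng1)}: writing $\sigma = \sigma_1$ and $c = c_1$, the braid relations and $(BN1)$–$(BN2)$ are all vacuous, and we are left with
\[
\B_2(\Moeb) = \left\langle \sigma, c \ \middle|\ [\sigma c \sigma^{-1}, c^{-1}] = \sigma^2 \right\rangle .
\]

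First I would choose the Schreier transversal $\{1, \sigma\}$ for $\PB_2(\Moeb)$ inside $\B_2(\Moeb)$, the coset of an element being detected by its image in $\Sym_2$. The nontrivial Schreier generators are then $\gamma_1 := c$, $\gamma_2 := \sigma c \sigma^{-1}$ and $A := \sigma^2$ --- geometrically, the first strand around the core, the second strand around the core, and the pure braid generator $A_{12}$. Rewriting the single defining relator over each of the two coset representatives produces two relators in these generators: the representative $1$ yields (after cancelling the trivial generator $s_{1,\sigma} = 1$) the relation $\gamma_2 \gamma_1^{-1} \gamma_2^{-1} \gamma_1 = A$, while the representative $\sigma$ yields the longer word $A \gamma_1 A^{-1} \gamma_2^{-1} A \gamma_1^{-1} A^{-1} \gamma_2 = A$.

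It then remains to convert this raw presentation into the stated one by Tietze transformations. The first raw relation is, up to inversion and left-multiplication, equivalent to $\gamma_1^{-1} \gamma_2 \gamma_1 = A^{-1} \gamma_2$; combining this with the target relation~(1) yields relation~(2), and conversely. The real content lies in matching the second (long) raw relator with the clean commutation relation~(1), which requires some commutator calculus carried out modulo the first relation. I expect this Tietze/commutator step to be the main obstacle, since the relator coming from the $\sigma$-coset is genuinely more complicated than either target relation and does not collapse without invoking the first relation.

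As a conceptual check and an alternative route, I would keep in mind the Fadell–Neuwirth viewpoint (Proposition~\ref{Fadell-Neuwirth}): since $\Moeb$ is not closed, forgetting the second strand splits $\PB_2(\Moeb)$ as $\pi_1(\Moeb - pt) \rtimes \pi_1(\Moeb) \cong \F_2 \rtimes \Z$, with $\F_2 = \langle \gamma_2, A \rangle$ the free fibre and $\Z = \langle \gamma_1 \rangle$ the section. From this angle, relations~(1) and~(2) are precisely the formulas for the conjugation action of $\gamma_1$ on the fibre, and one checks directly that the assignment $A \mapsto \gamma_2^{-1} A^{-1} \gamma_2$, $\gamma_2 \mapsto \gamma_2^{-1} A^{-1} \gamma_2^2$ is surjective, hence an automorphism of $\F_2$ by Hopficity. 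This guarantees that the stated relations do present a group of the form $\F_2 \rtimes \Z$, so that matching it with $\PB_2(\Moeb)$ reduces to identifying the action --- the same computation as the Tietze step above, which confirms that the two approaches meet at exactly one nontrivial point.
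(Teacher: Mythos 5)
Your proposal is correct, but your primary route is genuinely different from the paper's. The paper takes what you call your ``alternative route'': it starts from the Fadell--Neuwirth splitting $\PB_2(\Moeb) \cong \pi_1(\Moeb - pt) \rtimes \pi_1(\Moeb) \cong \F_2 \rtimes \Z$ with free fibre $\langle \gamma_2, A\rangle$ and section $\langle\gamma_1\rangle$, identifies the conjugation action of $\gamma_1$ by drawing two explicit isotopies (Figure~\ref{Moebius_relation} gives relation~(1) outright, and Figure~\ref{Aij_as_bracket_crosscap} gives $[\gamma_2,\gamma_1^{-1}] = A$, from which~(2) follows by exactly the two-line rearrangement you describe), and concludes that the presented group is $\langle A,\gamma_2\rangle \rtimes \langle\gamma_1\rangle$ mapping isomorphically onto kernel and quotient. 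Your Reidemeister--Schreier derivation from the presentation of $\B_2(\Moeb)$ in Proposition~\ref{Bn(Ng1)} trades this geometric input for algebra: the action comes for free from the single relator of $\B_2(\Moeb)$, at the cost of a rewriting computation. Your two raw relators are correct, and the Tietze step you flag as ``the main obstacle'' in fact collapses immediately: the $\sigma$-coset relator rearranges (invert, then cancel) to $(\gamma_1 A^{-1}\gamma_1^{-1})(\gamma_1\gamma_2\gamma_1^{-1})(\gamma_1 A\gamma_1^{-1}) = A^{-1}\gamma_2$, and the first raw relator says precisely that the product of the first two factors equals $\gamma_2$, so one reads off $\gamma_1 A\gamma_1^{-1} = \gamma_2^{-1}A^{-1}\gamma_2$, i.e.\ relation~(1); the equivalences you already gave between the first raw relator and relation~(2) modulo~(1) then complete the exchange of presentations. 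Your Hopficity check that the prescribed action is by an automorphism of $\F_2$ is a detail the paper elides but which is genuinely needed to know that the presented group is all of $\F_2 \rtimes \Z$ rather than a proper quotient; it is correct as stated.
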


Recall that $\Moeb$ is the surface $\mathcal N_{1,1}$ from  \Spar\ref{sec_pstations_B(S)}. Here, we have changed the notations from \Spar\ref{sec_pstations_B(S)} to lighter ones, more suited to the case with only one crosscap  ($\gamma_i := c_1^{(i)}$) and only two strands ($A := A_{12}$).

\begin{proof}
Proposition~\ref{Fadell-Neuwirth} gives a decomposition $\PB_2(\Moeb) \cong \pi_1(\Moeb- pt) \rtimes \pi_1(\Moeb) \cong\F_2 \rtimes \Z$, where the projection onto $\pi_1(\Moeb) \cong \Z$ is given by forgetting one strand (say, the first one). Then the factor $\Z$ is generated by $\gamma_1$, and a free basis of the factor $\F_2$ is given by $\gamma_2$ and $A$ (with the notations introduced just before the proof). Moreover, the action of $\gamma_1$ by conjugation on $\langle A, \gamma_2 \rangle$ is not difficult to compute. Namely, we have $\gamma_1 A \gamma_1^{-1} = \gamma_2^{-1} A^{-1} \gamma_2$; see Figure~\ref{Moebius_relation}. Then, from Figure~\ref{Aij_as_bracket_crosscap}, we get that $[\gamma_2, \gamma_1^{-1}] = A$, which is equivalent to $\gamma_1 \gamma_2 \gamma_1^{-1} = \gamma_1 A \gamma_1^{-1} \gamma_2$. Using the previous relation, the latter equals $\gamma_2^{-1} A^{-1} \gamma_2^2$. These relations determine $\PB_2(\Moeb)$, since the group $G$ that they define decomposes as $\langle A, \gamma_2 \rangle \rtimes \langle \gamma_1 \rangle$, and the obvious projection of $G$ onto $\PB_2(\Moeb)$ must be an isomorphism on both the kernel and the quotient, hence it must be an isomorphism.
\end{proof}

\begin{figure}[ht]
  \centering
  \includegraphics[scale=0.3]{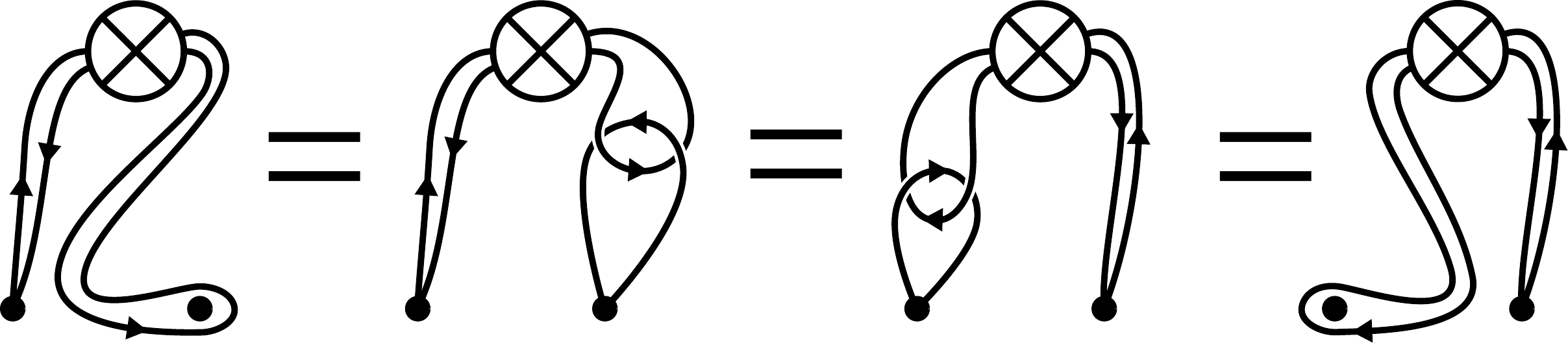}
  \caption{Relation in $\PB_2(\Moeb)$: $\gamma_1 A \gamma_1^{-1} = \gamma_2^{-1} A^{-1} \gamma_2$.}
  \label{Moebius_relation}
\end{figure}

\begin{corollary}\label{LCS_B11(M)}
The LCS of $\PB_2(\Moeb)$ does not stop.
\end{corollary}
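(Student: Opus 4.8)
The goal is to show that the lower central series of $\PB_2(\Moeb)$ does not stop, using the presentation from Lemma~\ref{P2(M)}.

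The plan is to exhibit an explicit quotient of $\PB_2(\Moeb)$ whose LCS does not stop, and then invoke the contrapositive of Lemma~\ref{lem:stationary_quotient}. The natural candidate is a semidirect product of an abelian group with $\Z/2$, since the relations already exhibit $\PB_2(\Moeb)$ as $\F_2 \rtimes \Z$ (free on $A, \gamma_2$, with $\Z$ generated by $\gamma_1$), and the conjugation action of $\gamma_1$ sends $A \mapsto \gamma_2^{-1}A^{-1}\gamma_2$, which abelianises to $A \mapsto -A$. First I would abelianise the normal factor: killing $[\F_2, \F_2]$, that is, killing the commutator $[A, \gamma_2]$, produces a quotient $G$ which is $\Z^2 \rtimes \Z$. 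In $G$, write the two free-abelian generators as (the images of) $A$ and $\gamma_2$, and let $t$ be the image of $\gamma_1$. The first relation $\gamma_1 A \gamma_1^{-1} = \gamma_2^{-1} A^{-1} \gamma_2$ becomes $t A t^{-1} = -A$ (additively), while the second relation $\gamma_1\gamma_2\gamma_1^{-1} = \gamma_2^{-1}A^{-1}\gamma_2^2$ becomes $t \gamma_2 t^{-1} = \gamma_2 - A$. So $t$ acts on $\Z^2 = \langle A, \gamma_2\rangle$ by the matrix sending $A \mapsto -A$ and $\gamma_2 \mapsto \gamma_2 - A$.

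Next I would simplify further: the subgroup generated by $A$ is invariant under $t$ (with $t$ acting as $-1$), so I can project onto the quotient $\Z \rtimes \Z$ where $\Z = \langle A \rangle$ and $t$ acts by $-1$, obtained by killing $\gamma_2$. This target is exactly $\Z \rtimes \Z$ with the generator of the second factor acting by the sign involution. But the cleanest route is to observe that $A$ has even order in no quotient forcing nilpotency: the point is that $\LCS_k$ of $\Z \rtimes_{-1} \Z$ is $2^{k-1}\Z$, which never stabilises. Concretely, $[t, A] = tAt^{-1}A^{-1} = A^{-1}A^{-1} = A^{-2}$, so $\LCS_2 = \langle A^2\rangle$, and inductively $\LCS_{k+1} = [\,t, \LCS_k\,] = (\LCS_k)^2$, giving $\LCS_k = \langle A^{2^{k-1}}\rangle$. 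Since these are strictly decreasing, the LCS does not stop. This is precisely the situation computed in Proposition~\ref{LCS_of_Z/2*Z/2} (or, after quotienting $A$'s action, Corollary~\ref{Lie_Klein_A/center}), so I would simply cite that rather than redo the induction.

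The main obstacle is bookkeeping rather than conceptual: I must verify that the two defining relations of $\PB_2(\Moeb)$ pass correctly to the abelianised-kernel quotient and yield the claimed action of $t$, and in particular that killing $\gamma_2$ is compatible with both relations (it is, since setting $\gamma_2 = 0$ turns the first relation into $tAt^{-1} = -A$ and the second into $0 = -A$, which would be too strong). Because of that last point, I would \emph{not} kill $\gamma_2$ outright; instead I would keep the full $\Z^2 \rtimes \Z$ and observe directly that $[t, A] = A^{-2}$ already lives in the $A$-line with $t$ acting by $-1$ there, so $\LCS_{k}(G) \supseteq \langle A^{2^{k-1}}\rangle$ with these never stabilising. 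Thus the surjection $\PB_2(\Moeb) \twoheadrightarrow G$ onto a group with non-stopping LCS, combined with Lemma~\ref{lem:stationary_quotient}, gives the result. Alternatively, and most economically, I would note that $\PB_2(\Moeb) \cong \F_2 \rtimes \Z$ surjects onto $\Z \rtimes (\Z/2)$ — quotienting the $\Z$ factor $\langle\gamma_1\rangle$ by $\gamma_1^2$ and the free factor down to $\langle A\rangle$ — and invoke Proposition~\ref{LCS_of_Z/2*Z/2}, which states that the LCS of $\Z \rtimes (\Z/2)$ does not stop.
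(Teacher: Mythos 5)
Your reduction is the same as the paper's: quotient $\PB_2(\Moeb) \cong \F_2 \rtimes \Z$ by $\LCS_2(\F_2)$ to get $G = \Z^2 \rtimes \Z$, with $t = \overline{\gamma_1}$ acting by $\overline A \mapsto -\overline A$ and $\overline{\gamma_2} \mapsto \overline{\gamma_2} - \overline A$, and you correctly diagnose that killing $\gamma_2$ outright is not permitted. The gap is in how you conclude. Establishing $\LCS_k(G) \supseteq \langle A^{2^{k-1}}\rangle$ with these subgroups strictly decreasing does \emph{not} show that the LCS of $G$ is strictly decreasing: a strictly decreasing chain of lower bounds is entirely compatible with $\LCS_k(G) = \LCS_{k+1}(G)$ (every term of the LCS of a perfect group containing an element of infinite order contains such a chain). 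You need the exact value, or at least matching upper bounds. The paper supplies this via Proposition~\ref{LCS_Klein_tau}: the action of $t$ is an involution $\tau$, so for $v$ in the abelian kernel $[g,v]$ always lies in $\ima(\tau - 1)$; hence $\LCS_2(G) = \ima(\tau-1) = \Z\cdot\overline A =: V$ (note this is $\langle \overline A\rangle$, not $\langle \overline A^{\,2}\rangle$, because $[t,\gamma_2] = -\overline A$), and since $\tau - 1$ acts on $V$ as multiplication by $-2$, one gets $\LCS_i(G) = 2^{i-2}V$ exactly for $i \geq 2$. Your argument needs this upper bound to be a proof.

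Your closing ``most economical'' alternative is broken, for precisely the reason you yourself point out two sentences earlier: the normal closure of $\gamma_2$ in $\PB_2(\Moeb)$ contains $A$ (the relation $\gamma_1\gamma_2\gamma_1^{-1} = \gamma_2^{-1}A^{-1}\gamma_2^2$ becomes $1 = A^{-1}$ in that quotient), so there is no surjection onto $\Z \rtimes (\Z/2)$ obtained by ``quotienting the free factor down to $\langle A\rangle$''; the quotient you describe collapses $A$ as well. If you insist on a rank-one quotient, you must kill a $\tau$-invariant element such as $2\overline{\gamma_2} - \overline A$, which yields the Klein group $\Z \rtimes \Z$ of Proposition~\ref{LCS_Klein}; otherwise, stay with $G = \Z^2\rtimes\Z$ and invoke Proposition~\ref{LCS_Klein_tau}, as the paper does.
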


\begin{proof}
Let us consider the quotient of $\PB_2(\Moeb) =\F_2 \rtimes \Z$ by $\LCS_2(\F_2)$. It is $\Z^2 \rtimes \Z$, where the generator $\overline \gamma_1$ of $\Z$ acts via the involution $\tau$ sending $\overline A$ to $-\overline A$ and $\overline \gamma_2$ to $\overline \gamma_2 - \overline A$. Then $V := \ima(\tau - 1)$ is $\Z \cdot \overline A$. By Proposition~\ref{LCS_Klein_tau}, for $i \geq 2$, we have $\LCS_i(\Z^2 \rtimes \Z) = 2^{i-2} V$, so the LCS of $\Z^2 \rtimes \Z$ does not stop. Thus the LCS of $\PB_2(\Moeb)$ does not either.
\end{proof}

The answer for the remaining cases are consequences of the following result:

\begin{proposition}\label{LCS_B1m(Moeb)}
For every $m \geq 3$, the LCS of $\B_{1,m}(\Moeb)$ does not stop.
\end{proposition}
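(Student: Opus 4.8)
The statement concerns $\B_{1,m}(\Moeb)$ with a single block of size $1$ and one block of size $m \geq 3$, and asserts that its LCS does \emph{not} stop. The most promising strategy is to mimic the torus case (Proposition~\ref{B_1mu(T)}): to find a useful decomposition of $\B_{1,m}(\Moeb)$ coming from the Fadell-Neuwirth exact sequence, and then exhibit a quotient whose LCS does not stop, invoking Lemma~\ref{lem:stationary_quotient}. The key difference from the torus is that $\pi_1(\Moeb) \cong \Z$ (the Möbius strip retracts onto its core circle), so forgetting the single isolated strand gives a short exact sequence
\[
\B_m(\Moeb - pt) \longhookrightarrow \B_{1,m}(\Moeb) \longtwoheadrightarrow \pi_1(\Moeb) \cong \Z,
\]
which, since $\Moeb$ is not closed, splits by Proposition~\ref{Fadell-Neuwirth}. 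So $\B_{1,m}(\Moeb) \cong \B_m(\Moeb - pt) \rtimes \Z$.

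First I would analyse the action of the $\Z$ factor (generated by the class $\gamma$ of a loop sending the isolated strand once around the core of the Möbius strip) on $\B_m(\Moeb - pt)$. The crucial feature, already visible in Lemma~\ref{P2(M)} and Figure~\ref{Moebius_relation}, is that going around the core of $\Moeb$ acts by an \emph{orientation-reversing} monodromy: it inverts the generator $A = A_{12}$ (more precisely $\gamma_1 A \gamma_1^{-1} = \gamma_2^{-1}A^{-1}\gamma_2$). This sign is exactly what produces a non-stopping LCS, as in Corollary~\ref{LCS_B11(M)}, where the relevant quotient was $\Z^2 \rtimes \Z$ with the generator acting via an involution $\tau$ for which $\LCS_i = 2^{i-2}\ima(\tau-1)$ by Proposition~\ref{LCS_Klein_tau}. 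The plan is to produce, as a quotient of $\B_{1,m}(\Moeb)$, a semidirect product $A \rtimes \Z$ (with $A$ free abelian) whose $\Z$-action has an involutive part with non-trivial image of $\tau - 1$, so that Proposition~\ref{LCS_Klein_tau} (or Corollary~\ref{Lie_Klein_A/center}) forces the LCS not to stop.

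Concretely, I would abelianise the fibre $\B_m(\Moeb - pt)$, or rather pass to a convenient intermediate quotient. Since the blocks of $\mu = (m)$ have size $m \geq 3$, the stable theory (Corollary~\ref{LCS_B_lambda(S)_stable}, Theorem~\ref{Lie_ring_partitioned_B(S)}) tells us that $\LCS_\infty = \LCS_3$ for $\B_m(\Moeb - pt)$; but for the \emph{non}-stopping argument it is cleaner to retain just enough structure. I would take the quotient of $\B_{1,m}(\Moeb)$ by $\LCS_2$ of the fibre (or by a suitable characteristic subgroup of it) to obtain $\bigl(\B_m(\Moeb-pt)^{\ab}\bigr) \rtimes \Z$, then project $\B_m(\Moeb-pt)^{\ab}$ onto a single $\Z$ (or $\Z^2$) summand generated by a pure-braid class $a$ that is inverted by the $\gamma$-action, discarding the $\gamma$-invariant generators. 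The resulting group is of the form $\Z^k \rtimes \Z$ with the generator acting by an involution $\tau$ whose $\ima(\tau-1)$ is non-zero, so its LCS does not stop by Proposition~\ref{LCS_Klein_tau}. Applying Lemma~\ref{lem:stationary_quotient} then yields the claim.

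\textbf{The main obstacle} will be verifying rigorously that the composite quotient map is well-defined and that the orientation-reversing sign genuinely survives to the final $\Z^k \rtimes \Z$ quotient — that is, that $\ima(\tau - 1) \neq 0$ and is not accidentally killed. This amounts to a careful bookkeeping of how the monodromy around the crosscap acts on the abelianised fibre generators (the $a_{1j}$ and the images of the core loops $c^{(\beta)}$), using the isotopies of Figure~\ref{Aij_as_bracket_crosscap} and the commutator relation $[\gamma_2,\gamma_1^{-1}] = A$ of Lemma~\ref{P2(M)}, generalised from two strands to $m+1$. I expect the geometric input to reduce to the single computation that a strand dragged once through the crosscap returns with its winding around another strand reversed, exactly as in the $m=1$ case; the rest is the algebraic application of Proposition~\ref{LCS_Klein_tau} already packaged in Corollary~\ref{LCS_B11(M)}.
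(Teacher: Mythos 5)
Your plan is correct and is essentially the paper's proof: decompose $\B_{1,m}(\Moeb) \cong \B_m(\Moeb - pt)\rtimes\Z$ via Fadell--Neuwirth, pass to the quotient by $\LCS_2$ of the fibre (which equals its $\LCS_\infty$ here since $m\geq 3$), kill the central class $\sigma$, and recognise the resulting $\Z^2\rtimes\Z$ --- with $\gamma_1$ acting as the involution inverting $\overline{A_{12}}$ --- as the group $\PB_2(\Moeb)/\LCS_2(\F_2)$ of Corollary~\ref{LCS_B11(M)}, whose LCS does not stop by Proposition~\ref{LCS_Klein_tau}. The one caution, which you correctly flag as the main obstacle, is that you must keep the full $\Z^2$ summand $\langle\overline{\gamma_2},\overline{A_{12}}\rangle$ rather than projecting onto $\Z\cdot\overline{A_{12}}$ alone (since $\gamma_1\cdot\overline{\gamma_2}=\overline{\gamma_2}-\overline{A_{12}}$, the normal closure of $\overline{\gamma_2}$ contains $\overline{A_{12}}$ and the projection would kill everything); this is exactly the ``$\Z^2$'' variant you allow for, so the plan goes through.
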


\begin{proof}
We can use Proposition~\ref{Fadell-Neuwirth} to get a decomposition:
\[\B_{1, m}(\Moeb) \cong \B_m(\Moeb - pt) \rtimes \pi_1(\Moeb),\] 
where $\pi_1(\Moeb) \cong \Z$ is identified with the subgroup of $\B_{1, m}(\Moeb)$ generated by $\gamma_1$ (as above, we denote $c_1^{(i)}$ by $\gamma_i$). We know that $\LCS_\infty(\B_{1, m}(\Moeb))$ contains $\LCS_\infty(\B_ m(\Moeb - pt))$. The latter is fully invariant in $\B_m(\Moeb - pt)$, hence normal in $\B_{1, m}(\Moeb)$, and we can consider the quotient 
\[G := \B_{1, m}(\Moeb)/\LCS_\infty(\B_ m(\Moeb - pt)) \cong (\B_m(\Moeb - pt)/\LCS_\infty)\rtimes \Z.\]
Moreover, Lemma~\ref{sigma_central} and Remark~\ref{rmk:central-ext} give a central extension:
\[\begin{tikzcd}
\langle \sigma \rangle \ar[r, hook] 
&\B_m(\Moeb - pt)/\LCS_\infty \ar[r, two heads] 
&\pi_1(\Moeb - pt)^{\ab}.
\end{tikzcd}\]
The element $\sigma$ is the common class of the usual generators $\sigma_\alpha$ of $\B_m$. Since these commute with $\gamma_1$ in $\B_{1, m}(\Moeb)$, $\sigma$ is central not only in $\B_m(\Moeb - pt)/\LCS_\infty$, but in fact in $G$. In particular, $\langle \sigma \rangle$  is normal in $G$, and the quotient decomposes as:
\[G/\sigma \cong \left[(\B_\mu(\Moeb - pt)/\LCS_\infty)/\sigma\right] \rtimes \pi_1(\Moeb) \cong \pi_1(\Moeb - pt)^{\ab} \rtimes \pi_1(\Moeb) \cong \Z^2 \rtimes \Z.\]
A basis of the $\Z^2$ factor is given by $\overline{\gamma_2}$ and $A = \overline{A_{12}}$. Consider the morphism $\PB_2(\Moeb) \rightarrow \B_{1, m}(\Moeb)$ corresponding to adding $m-1$ trivial strands to the second block (constructed as in the proof of Proposition~\ref{Fadell-Neuwirth}). Composing with the projection, we get a morphism from $\PB_2(\Moeb) =\F_2 \rtimes \Z$ to $G/\sigma = \Z^2 \rtimes \Z$. From the explicit description of both of these groups, one easily sees that it has to induce an isomorphism $\PB_2(\Moeb)/\LCS_2(\F_2) \cong G/\sigma$. But the LCS of $\PB_2(\Moeb)/\LCS_2(\F_2)$ does not stop (see the proof of Corollary~\ref{LCS_B11(M)}), so we have found a quotient of $\B_{1, m}(\Moeb)$ whose LCS does not stop, whence the result.
\end{proof}

Let us sum up our results about the LCS of $\B_\lambda(\Moeb)$:

\begin{theorem}\label{thm:braid_moebius}
Let $\lambda = (n_1, \ldots , n_l)$ be a partition of an integer $n \geq 1$. The LCS of $\B_\lambda(\Moeb)$:
\begin{itemize}
\item \emph{does not stop} if $\lambda$ has at least one block of size $1$ or $2$, with the exception of~$\B_1(\Moeb) \cong \Z$.
\item \emph{stops} in all the other cases, at $\LCS_3$ if $l \geq 2$ and at $\LCS_2$ if $l = 1$.
\end{itemize}
\end{theorem}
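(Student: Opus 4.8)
The plan is to assemble the final theorem for $\B_\lambda(\Moeb)$ from the various partial results already established, distinguishing cases according to the block sizes of $\lambda$. The statement splits naturally into a non-stopping part and a stopping part, and for each case I would point to the appropriate lemma or proposition proved earlier in this section or in the stable-case analysis.

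First I would dispose of the non-stopping cases. If $\lambda$ has at least one block of size $2$, then since $\pi_1(\Moeb)^{\ab} \cong \Z$ is infinite, the second bullet of Proposition~\ref{LCS_unstable_partitioned_surface_braids} applies directly (note $\Moeb \notin \{\D, \S^2, \Proj\}$), so the LCS does not stop. For blocks of size $1$, I would treat two subcases. If $\lambda$ has at least two blocks of size $1$, then $\B_\lambda(\Moeb)$ surjects onto $\B_{1,1}(\Moeb) = \PB_2(\Moeb)$, whose LCS does not stop by Corollary~\ref{LCS_B11(M)}; apply Lemma~\ref{lem:stationary_quotient}. If $\lambda$ has exactly one block of size $1$ together with at least one other (necessarily of size $\geq 3$, since size-$2$ blocks are already covered), then forgetting all blocks except the singleton and one larger block yields a surjection onto $\B_{1,m}(\Moeb)$ for some $m \geq 3$, whose LCS does not stop by Proposition~\ref{LCS_B1m(Moeb)}; again apply Lemma~\ref{lem:stationary_quotient}. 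The only block-of-size-$1$ configuration not yet handled is $\lambda = (1)$ itself, i.e.~$\B_1(\Moeb) \cong \pi_1(\Moeb) \cong \Z$, whose LCS trivially stops at $\LCS_2$ — this is the stated exception.

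Then I would handle the stopping cases, which are precisely those where every block has size at least $3$. Here Corollary~\ref{LCS_B_lambda(S)_stable} gives $\LCS_3 = \LCS_4$, and the sharper Theorem~\ref{Lie_ring_partitioned_B(S)} tells us exactly where the LCS stops: since $\Moeb$ is non-orientable, its LCS stops at $\LCS_2$ when $l = 1$ and at $\LCS_3$ when $l \geq 2$. This matches the second bullet of the theorem verbatim, so the stopping cases require only a citation of Theorem~\ref{Lie_ring_partitioned_B(S)}.

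I do not expect any genuine obstacle here, since all the substantive work has been done in the preceding results; the proof is essentially a bookkeeping exercise verifying that the case division is exhaustive and that the exception $\B_1(\Moeb) \cong \Z$ is correctly isolated. The one point requiring a little care is confirming that the surjections onto $\B_{1,1}(\Moeb)$ and $\B_{1,m}(\Moeb)$ exist with the correct block structure (obtained by forgetting blocks, as in the Fadell--Neuwirth projections of Proposition~\ref{Fadell-Neuwirth}), and that no configuration with a block of size $1$ or $2$ slips through the net. A concise write-up would read as follows.

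\begin{proof}[Proof of Theorem~\ref{thm:braid_moebius}]
Suppose first that $\lambda$ has at least one block of size $2$. Since $\pi_1(\Moeb)^{\ab} \cong \Z$ is not finite and $\Moeb \notin \{\D, \S^2, \Proj\}$ up to isotopy equivalence, the second part of Proposition~\ref{LCS_unstable_partitioned_surface_braids} ensures that the LCS of $\B_\lambda(\Moeb)$ does not stop.

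Suppose next that $\lambda$ has a block of size $1$, but no block of size $2$, and that $\lambda \neq (1)$. If $\lambda$ has at least two blocks of size $1$, then forgetting all other blocks gives a surjection $\B_\lambda(\Moeb) \twoheadrightarrow \B_{1,1}(\Moeb) = \PB_2(\Moeb)$, whose LCS does not stop by Corollary~\ref{LCS_B11(M)}; Lemma~\ref{lem:stationary_quotient} then gives the conclusion. Otherwise $\lambda$ has exactly one block of size $1$ and at least one further block, which must be of size at least $3$. Forgetting all blocks save the singleton and one such block yields a surjection $\B_\lambda(\Moeb) \twoheadrightarrow \B_{1,m}(\Moeb)$ for some $m \geq 3$, whose LCS does not stop by Proposition~\ref{LCS_B1m(Moeb)}; we again conclude by Lemma~\ref{lem:stationary_quotient}.

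If $\lambda = (1)$, then $\B_1(\Moeb) \cong \pi_1(\Moeb) \cong \Z$, whose LCS stops at $\LCS_2$; this is the exception in the first item.

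Finally, suppose that all blocks of $\lambda$ have size at least $3$. By Corollary~\ref{LCS_B_lambda(S)_stable} the LCS stops at most at $\LCS_3$, and Theorem~\ref{Lie_ring_partitioned_B(S)} determines it precisely: as $\Moeb$ is non-orientable, the LCS stops at $\LCS_2$ if $l = 1$ and at $\LCS_3$ if $l \geq 2$. Since the four cases above are exhaustive, this completes the proof.
\end{proof}
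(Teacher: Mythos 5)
Your proposal is correct and follows essentially the same route as the paper: the non-stopping cases are reduced to Proposition~\ref{LCS_unstable_partitioned_surface_braids} (blocks of size $2$), Corollary~\ref{LCS_B11(M)} and Proposition~\ref{LCS_B1m(Moeb)} (blocks of size $1$, via the forgetting-blocks surjections and Lemma~\ref{lem:stationary_quotient}), and the stopping cases to Corollary~\ref{LCS_B_lambda(S)_stable} and Theorem~\ref{Lie_ring_partitioned_B(S)}. The only difference is that you spell out the case division and the exception $\B_1(\Moeb)\cong\Z$ more explicitly than the paper's two-sentence proof does.
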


\begin{proof}
The first statement follows from Proposition~\ref{LCS_unstable_partitioned_surface_braids} (for blocks of size $2$) and from Corollary \ref{LCS_B11(M)} and Proposition \ref{LCS_B1m(Moeb)} (for blocks of size $1$). The second one is part of the general results of Corollary~\ref{LCS_B_lambda(S)_stable} and Theorem~\ref{Lie_ring_partitioned_B(S)}.
\end{proof}

\subsection{Partitioned braids on the sphere}\label{sec:partitioned_braids_sphere}

For a partition $\lambda = (n_1, \ldots , n_l)$, the inclusion of the disc into the sphere induces a surjection $\B_\lambda \twoheadrightarrow \B_\lambda(\S^2)$ by Corollary~\ref{partitioned_Braids_on_closed_surfaces}. We can thus apply Lemma~\ref{lem:stationary_quotient} to deduce that the LCS of $\B_\lambda(\S^2)$ stops whenever the one of $\B_\lambda$ does. Namely, by Theorem~\ref{thm:partitioned-braids}, it stops at $\LCS_2$ if $n_i \geq 3$ for all $i$, save at most two indices for which $n_i = 1$. Since the abelianisation of $\B_\lambda(\S^2)$ has already been computed in Proposition~\ref{partitioned_B(S)^ab}, we have a complete description of the LCS in these cases. 

The above argument can be extended somewhat if we remark that, because of Proposition~\ref{Fadell-Neuwirth}, $\B_1(\S^2) = \pi_1(\S^2) = \{1\}$ is the cokernel of the canonical morphism $\B_\mu(\D) \rightarrow \B_{1,\mu}(\S^2)$ (for any partition~$\mu$), which is thus surjective. As a consequence, the LCS of $\B_\lambda(\S^2)$ also stops at $\LCS_2$ when $\lambda$ has three blocks of size $1$, the other ones being of size at least $3$. This proves the first half of the following theorem. The second half is proven in the remainder of this subsection in several different cases, which are synthesised into a proof of the theorem at the end of the subsection.

\begin{theorem}
\label{LCS_B_lambda(S2)}
Let $n \geq 1$ be an integer, let $\lambda = (n_1, \ldots , n_l)$ be a partition of $n$. The LCS of $\B_{\lambda}(\S^2)$:
\begin{itemize}
\item \emph{stops at $\LCS_2$} if $n_i \geq 3$ for all $i$, save at most three indices for which $n_i = 1$. 
\item \emph{does not stop} in all the other cases, except for $\B_2(\S^2) \cong \Z/2$, $\B_{2,1}(\S^2) \cong \Z/4$ and the $\B_{2,m}(\S^2)$ with $m \geq 3$, whose LCS stop either at $\LCS_{v_2(m)+1}$ or at $\LCS_{v_2(m)+2}$ when $m$ is even, where $v_2$ is the $2$-adic valuation, and either at $\LCS_2$ or at $\LCS_3$ when $m$ is odd.
\end{itemize}
\end{theorem}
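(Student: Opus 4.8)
The first statement is already established, so the work is entirely in the second, and it naturally splits according to the size profile of the blocks of $\lambda$. The strategy throughout is the one developed in earlier chapters: exhibit quotients whose LCS is understood (applying Lemma~\ref{lem:stationary_quotient}), and handle the exceptional small cases by direct computation. I would organise the proof around the following cases, arranged so that each reduces to a group already analysed.

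\textbf{The non-stopping cases via quotients.} For partitions with many small blocks, I would produce surjections onto smaller partitioned sphere braid groups using the forgetful maps coming from Proposition~\ref{Fadell-Neuwirth} (valid since, away from the exceptional low-strand-count situations, these are genuine short exact sequences). The key observations are: (i) if $\lambda$ has at least four blocks of size $1$, then forgetting all other blocks gives a surjection onto $\B_{1,1,1,1}(\S^2) = \PB_4(\S^2)$, whose LCS does not stop; I would prove this as a separate proposition (the referenced \ref{LCS_P4(S2)}), likely by finding an explicit residually nilpotent or wreath-type quotient as in Appendix~\ref{sec:appendix_2}, or by exhibiting a free non-abelian quotient. (ii) If $\lambda$ has a block of size $2$ together with enough other blocks (the cases $l \geq 3$ with a block of size $2$, and the case $(2,2)$), one surjects onto $\B_{2,2}(\S^2)$ or onto a group with three blocks one of which has size $2$; the computation $\B_{2,2}/\langle A_{12},A_{34},\LCS_2(\PB_4)\rangle \cong (\Z^2)^{\otimes 2} \rtimes (\Sym_2)^2$ sketched in Proposition~\ref{LCS_B22} should adapt, and the methods of Appendix~\ref{section_pstation_of_ext} then show the LCS does not stop. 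These give references \ref{LCS_B2mu(S2)} and \ref{LCS_B22(S2)}.

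\textbf{The exceptional small cases.} The groups $\B_2(\S^2) \cong \Z/2$ and $\B_{2,1}(\S^2) \cong \Z/4$ are finite cyclic, so their LCS trivially stops at $\LCS_2$; these follow from the presentation in Corollary~\ref{Braids_on_closed_surfaces} (the case $n=2,3$, $g=0$). The genuinely delicate family is $\B_{2,m}(\S^2)$ for $m \geq 3$, which I expect to be the \textbf{main obstacle}. Here the disjoint-support argument fails because the block of size $2$ is too small, and the sphere relation $A_{\alpha 1}\cdots A_{\alpha n}=1$ forces $\sigma^2$ to have finite order, producing subtle $2$-torsion phenomena governed by the $2$-adic valuation of $m$. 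I would treat this in a dedicated proposition (the referenced \ref{LCS_B2m(S2)}): starting from the presentation of $\B_{2,m}(\S^2)$ obtained from Corollary~\ref{partitioned_Braids_on_closed_surfaces}, reduce modulo the residue using that $\sigma_\alpha \equiv \sigma_{\alpha'}$ whenever they lie in a common block, and identify $\B_{2,m}(\S^2)/\LCS_\infty$ as an explicit nilpotent group of class $v_2(m)+1$ or $v_2(m)+2$. The appearance of the function $f(m)=\max\{v_2(m),1\}$ and the ambiguity $\epsilon\in\{0,1\}$ strongly suggests the answer is read off from the $2$-adic valuation of $m$ appearing in the sphere relation, with the $+\epsilon$ reflecting a central $\Z/2$ extension whose splitting I would control (when possible) via Corollary~\ref{Quotient_by_central_subgroup}, checking injectivity of $A \to G/\LCS_2(G)$ using the known abelianisation from Proposition~\ref{partitioned_B(S)^ab}.

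\textbf{Assembly.} Finally I would assemble these into a proof of Theorem~\ref{LCS_B_lambda(S2)}: the stopping-at-$\LCS_2$ cases are the stable result pulled back along $\B_\lambda \twoheadrightarrow \B_\lambda(\S^2)$ together with the extra block-of-size-one slack afforded by $\B_1(\S^2)=\{1\}$ (so three size-$1$ blocks are permitted rather than two); the exceptional cyclic groups are immediate; every remaining profile — four or more size-$1$ blocks, or a size-$2$ block with $l\geq 3$ or $(2,2)$ — is pushed to a non-stopping quotient; and the $\B_{2,m}(\S^2)$ family is the content of the separate computation. I anticipate that verifying the precise nilpotency class for $\B_{2,m}(\S^2)$, and in particular pinning down whether $\epsilon=0$ or $1$, is where the argument will resist a clean closed form, which is presumably why the statement leaves it as an interval $\{v_2(m)+1, v_2(m)+2\}$ rather than a single value.
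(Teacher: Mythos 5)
Your proposal follows essentially the same route as the paper: the same case division, the same pullback of the stable cases along $\B_\lambda \twoheadrightarrow \B_\lambda(\S^2)$ with the extra size-one block afforded by $\B_1(\S^2)=\{1\}$, the decomposition $\PB_4(\S^2)\cong \Z/2\times\F_2$ for four blocks of size one, presentation-of-extension computations landing in groups of the form $A\rtimes(\Z/2)$ for the size-two cases, and Corollary~\ref{Quotient_by_central_subgroup} (with injectivity of the central kernel checked on the abelianisation via Proposition~\ref{partitioned_B(S)^ab}) for $\B_{2,m}(\S^2)$. The one caveat is that for $\lambda=(2,n_2,\ldots,n_l)$ with $n_i\geq 3$ there is no surjection onto $\B_{2,2}(\S^2)$, since one cannot shrink a block of a sphere braid group, so it is your second alternative — the direct computation for a partition with at least three blocks, one of size two — that must carry the argument, exactly as in Proposition~\ref{LCS_B2mu(S2)}.
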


In the case of $\B_{2,m}(\S^2)$ with $m \geq 3$, where the answer given above is ambiguous between two consecutive possibilities, we conjecture that its LCS stops at $\LCS_{v_2(m)+2}$ for all $m\geq 3$; see Remark \ref{rmk:experimental}.

\subsubsection{Blocks of size $1$.} We need to consider the case of the pure braid group on four strands:
\begin{lemma}\label{LCS_P4(S2)}
The LCS of $\PB_4(\S^2)$ does not stop.
\end{lemma}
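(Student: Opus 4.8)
The plan is to exhibit a quotient of $\PB_4(\S^2)$ whose LCS is already known not to stop. The most natural candidate comes from the structure of the sphere braid groups. Recall that $\PB_n(\S^2)$ has a non-trivial centre generated by the full twist, and that the Fadell--Neuwirth fibration (Proposition~\ref{Fadell-Neuwirth}) gives a way to strip off strands. For the sphere, forgetting three of the four strands lands in $\PB_1(\S^2) = \{1\}$, which is not directly useful, but forgetting down to $\PB_3(\S^2)$ is: one has the short exact sequence
\[
1 \longrightarrow \pi_1(\S^2 - \{3\ \text{pts}\}) \longrightarrow \PB_4(\S^2) \longrightarrow \PB_3(\S^2) \longrightarrow 1,
\]
valid because $n = 3 \geq 3$ avoids the exceptional cases in Proposition~\ref{Fadell-Neuwirth}. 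Here $\pi_1(\S^2 - \{3\ \text{pts}\}) \cong \F_2$, the free group on two generators.

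First I would try to show that this extension splits, or at least that its restriction to a suitable subquotient splits, so that $\PB_4(\S^2)$ maps onto an almost-direct product or semidirect product $\F_2 \rtimes G$ with $G = \PB_3(\S^2)$. The classical fact here is that $\PB_3(\S^2)$ is finite (it is isomorphic to $\Z/2$, as $\PB_3(\S^2)/\mathcal Z \cong \{1\}$ and the centre is $\Z/2$), so the whole interesting part of the group sits in the kernel $\F_2$ together with the action of the finite quotient. The key point to extract is that the conjugation action of $\PB_4(\S^2)$ on the abelianisation $\F_2^{\ab} \cong \Z^2$ of the fibre is an \emph{almost-direct product} structure, i.e.\ the action on $\F_2^{\ab}$ is trivial. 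If that holds, then $\PB_4(\S^2)$ is an almost-direct product of free groups, and its LCS does not stop by the same reasoning used for $\PB_3 \cong \F_2 \rtimes \Z$ in Lemma~\ref{LCS_B111} (one cites Falk--Randell~\cite{FalkRandell1} or directly argues that $\Lie(\F_2)$, which injects into the Lie ring, does not stop).

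The cleanest route, however, is probably to produce an \emph{explicit surjection} from $\PB_4(\S^2)$ onto a group whose LCS is computed in the appendices — for instance $\F_2$ itself, or a wreath-type or semidirect-product group from Appendix~\ref{sec:appendix_2}. Since the fibre inclusion $\F_2 \hookrightarrow \PB_4(\S^2)$ need not split, I would instead look for a retraction onto a free quotient: using Proposition~\ref{partitioned_B(S)^ab} to understand $\PB_4(\S^2)^{\ab}$ and then identifying a quotient isomorphic to $\F_2$, or to $\Z/2 * \Z/2$ (whose LCS does not stop by Proposition~\ref{LCS_of_Z/2*Z/2}). The relevant input is the presentation of $\B_n(\S^2)$ from Corollary~\ref{Braids_on_closed_surfaces}: imposing the sphere relation on the standard pure braid generators and then abelianising part of the group should reveal a free or free-product quotient. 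Then Lemma~\ref{lem:stationary_quotient} finishes the argument.

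The main obstacle will be controlling the sphere relation $A_{\alpha 1}\cdots A_{\alpha n} = 1$ (from Corollary~\ref{partitioned_Braids_on_closed_surfaces}), which is precisely what makes sphere braid groups differ from planar ones and introduces torsion (the centre $\Z/2$). I expect the delicate step to be verifying that after passing to the quotient one still retains a non-nilpotent piece — that the relation does not collapse the free part down to something nilpotent. Concretely, I would need to check that the images of two of the free generators of the fibre $\F_2$ remain a non-abelian free pair (or generate $\Z/2 * \Z/2$) in the chosen quotient, rather than being killed or made to commute by the interaction with the sphere relation and the finite centre. Once a single quotient with non-stopping LCS is pinned down, the conclusion is immediate from Lemma~\ref{lem:stationary_quotient}.
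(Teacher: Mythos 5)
Your first route is exactly the paper's proof, and it works; the only thing you leave unverified is the one fact that makes everything collapse. The extension
\[
1 \longrightarrow \pi_1(\S^2 - \{3\ \text{pts}\}) \cong \F_2 \longrightarrow \PB_4(\S^2) \longrightarrow \PB_3(\S^2) \cong \Z/2 \longrightarrow 1
\]
splits because the full twist on four strands projects to the full twist on three strands, which generates $\PB_3(\S^2) \cong \Z/2$ (the latter identified with $\pi_1(SO_3(\R))$); and since the full twist is \emph{central} in $\PB_4(\S^2)$, the splitting in fact gives a direct product $\PB_4(\S^2) \cong \Z/2 \times \F_2$. This is the decomposition of Gon{\c c}alves--Guaschi that the paper cites and sketches. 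Once you have it, your worry about the sphere relation collapsing the free part evaporates: the group visibly surjects onto $\F_2$, whose LCS does not stop, and Lemma~\ref{lem:stationary_quotient} concludes. Your second, vaguer route (hunting for a $\Z/2 * \Z/2$ quotient via the presentation) is unnecessary here, and your ``almost-direct product'' framing, while correct, is overkill since the action on the fibre is not merely trivial on $\F_2^{\ab}$ but trivial outright.
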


\begin{proof}
We sketch a proof of the decomposition of $\PB_4(\S^2)$ from \cite[Th.~4]{GoncalvesGuaschiroots}. Recall that we have a short exact sequence by Proposition~\ref{Fadell-Neuwirth}:
\[\PB_1(\S^2 - \{3\ \text{pts}\}) \hookrightarrow \PB_4(\S^2) \twoheadrightarrow \PB_3(\S^2).\]
It is known that $\PB_3(\S^2) \cong \Z/2$. In fact, an isomorphism between $\pi_1(SO_3(\R))$ and $\PB_3(\S^2)$ is induced by $\varphi \mapsto (\varphi(e_1), \varphi(e_2), \varphi(e_3))$ from $SO_3(\R)$ to $F_3(\S^2)$. Moreover, a splitting of the above short exact sequence is given by sending the generator of $\PB_3(\S^2)$ to the full twist. Since the latter is central, $\PB_4(\S^2)$ is the direct product of  $\Z/2$ with  $\pi_1(\S^2 - \{3\ \text{pts} \}) \cong\F_2$. Thus $\PB_4(\S^2)$ is residually nilpotent (but not nilpotent), whence the result.
\end{proof}

\subsubsection{Blocks of size $2$.} Let us begin with the case where the partition has at least three blocks:
\begin{proposition}\label{LCS_B2mu(S2)}
Let $\lambda$ be a partition of $n$ with at least three blocks, and at least one block of size $2$. Then the LCS of $\B_\lambda(\S^2)$ does not stop.
\end{proposition}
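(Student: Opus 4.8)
The plan is to reduce to the minimal partition $(2,1,1)$ and then exhibit an explicit metabelian quotient whose lower central series does not stop.

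\emph{Reduction.} First I would use the forgetting-strand surjections of Proposition~\ref{Fadell-Neuwirth} together with Lemma~\ref{lem:stationary_quotient} to reduce to the case $\lambda = (2,1,1)$. Given any $\lambda$ with at least three blocks and at least one block of size $2$, choose one block of size $2$ and two further blocks; forgetting all the remaining blocks and then forgetting all but one strand in each of the two chosen extra blocks yields a surjection $\B_\lambda(\S^2) \twoheadrightarrow \B_{2,1,1}(\S^2)$. Each such ``forget a block of strands'' map is one of the canonical surjections of Proposition~\ref{Fadell-Neuwirth}, after reordering the blocks so that the forgotten strands form a prefix (surjectivity always holds, the exceptional cases for $\S^2$ affecting only exactness on the left). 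By Lemma~\ref{lem:stationary_quotient} it therefore suffices to prove that the LCS of $\B_{2,1,1}(\S^2)$ does not stop.

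\emph{Structure of $\B_{2,1,1}(\S^2)$.} By Lemma~\ref{LCS_P4(S2)} we have $\PB_4(\S^2) \cong \Z/2 \times \F_2$, where the $\Z/2$ factor is generated by the central full twist $\Delta^2$ and $\F_2 \cong \pi_1(\S^2 - \{3\ \text{pts}\})$ is the kernel of the map forgetting the fourth strand. The group $\B_{2,1,1}(\S^2)$ is generated by $\PB_4(\S^2)$ together with the half-twist $\sigma_1$ swapping the two strands of the size-$2$ block, and sits in a short exact sequence $\PB_4(\S^2) \hookrightarrow \B_{2,1,1}(\S^2) \twoheadrightarrow \Sym_2$. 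Since $\sigma_1$ involves only the first two strands, conjugation by it commutes with forgetting the fourth strand and acts trivially on $\PB_3(\S^2) \cong \Z/2$; hence it preserves the factor $\F_2$ and the central $\Delta^2$. Both $\langle \Delta^2\rangle$ and the characteristic subgroup $\LCS_2(\F_2) = \LCS_2(\PB_4(\S^2))$ are therefore normal in $\B_{2,1,1}(\S^2)$, so I would form the quotient
\[
Q := \B_{2,1,1}(\S^2)\big/\bigl\langle \Delta^2,\ \LCS_2(\F_2)\bigr\rangle,
\]
which fits into a short exact sequence $\Z^2 \hookrightarrow Q \twoheadrightarrow \Z/2$, where $\Z^2 = \F_2^{\ab}$ and the quotient $\Z/2$ is generated by the image $t$ of $\sigma_1$.

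\emph{The LCS of $Q$.} The action of $t$ on $\Z^2 = H_1(\S^2 - \{3\ \text{pts}\})$ is the involution $\tau$ induced by the half-twist, which interchanges the homology classes of the loops around the two punctures indexed by the first block; on the basis $\bar x_1, \bar x_2$ (with $\bar x_1 + \bar x_2 + \bar x_3 = 0$) this is the coordinate swap, so that $V := \ima(\tau - 1) = \Z\cdot(\bar x_1 - \bar x_2) \cong \Z$ is infinite. Every commutator in $Q$ lies in the abelian normal subgroup $\Z^2$ and is a combination of terms $[t, \bar x] = (\tau - 1)\bar x \in V$; hence $\LCS_2(Q) = V$. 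For $v \in V$ one has $[t, v] = (\tau - 1)v = -2v$, whence $\LCS_{k}(Q) = 2^{k-2}V$ for all $k \geq 2$. As $V \cong \Z$ is not $2$-divisible, this chain strictly decreases, so the LCS of $Q$ does not stop; this is exactly the mechanism of Proposition~\ref{LCS_Klein_tau} applied to $A = \Z^2$ with the swap involution. By Lemma~\ref{lem:stationary_quotient} the LCS of $\B_{2,1,1}(\S^2)$, and therefore of $\B_\lambda(\S^2)$, does not stop. The one genuinely geometric step, and the main obstacle, is verifying that $\sigma_1$ acts on $\F_2^{\ab} \cong \Z^2$ by the \emph{nontrivial} swap (equivalently $\ima(\tau-1) \neq 0$); this requires tracking carefully how the half-twist of the size-$2$ block permutes the free homotopy classes of the loops based at the moving strand, using the explicit decomposition of $\PB_4(\S^2)$ from Lemma~\ref{LCS_P4(S2)}. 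Everything else is formal and parallels the disc computation of Proposition~\ref{LCS_B2mu}.
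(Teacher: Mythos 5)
Your argument for the single case $\lambda = (2,1,1)$ is correct: the decomposition $\PB_4(\S^2) \cong \Z/2 \times \F_2$, the normality of $\langle \Delta^2, \LCS_2(\F_2)\rangle$ in $\B_{2,1,1}(\S^2)$, and the computation of the LCS of the resulting quotient $Q$ via the swap involution on $\F_2^{\ab}$ (the mechanism of Proposition~\ref{LCS_Klein_tau}, which indeed does not require the extension $\Z^2 \hookrightarrow Q \twoheadrightarrow \Z/2$ to split) are all sound.

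However, the reduction step contains a genuine gap that invalidates the proof for general $\lambda$. You claim a surjection $\B_\lambda(\S^2) \twoheadrightarrow \B_{2,1,1}(\S^2)$ obtained by ``forgetting all but one strand in each of the two chosen extra blocks,'' and assert that this is a map of the form given in Proposition~\ref{Fadell-Neuwirth}. It is not: those projections forget \emph{entire} blocks of the partition (the partition being split as a concatenation $\mu\nu$), never a proper subset of a block. There is no induced map of configuration spaces $C_{(2,m,k)}(\S^2) \to C_{(2,1,1)}(\S^2)$ forgetting $m-1$ points of the second block and $k-1$ of the third, because the map $F_n(\S^2) \to F_{n'}(\S^2)$ does not descend to the quotient by $\Sym_\lambda$ when the forgotten points do not form a union of $\Sym_\lambda$-orbits. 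At the group level, $\B_{2,1,m-1,1,k-1}(\S^2)$ is a finite-index \emph{subgroup} of $\B_{2,m,k}(\S^2)$, not a quotient, so restricting and then projecting does not yield a surjection from $\B_{2,m,k}(\S^2)$. Consequently your argument only covers partitions of the form $(2,1,1,\nu)$; for, say, $\lambda = (3,2,3)$ it proves nothing, and the available block-forgetting surjections land in $\B_{2,m}(\S^2)$ or $\B_{m,k}(\S^2)$, whose LCS \emph{do} stop for $m,k\geq 3$, so no such shortcut exists. This is precisely why the paper works with $\B_{2,\mu}(\S^2)$ for arbitrary $\mu$ directly: it quotients by $\LCS_2(\B_{1,1,\mu}(\S^2))$, writes down a presentation of the resulting extension of $\Sym_2$ by $\B_{1,1,\mu}(\S^2)^{\ab}$, and extracts a further quotient $A \rtimes (\Z/2)$ with $\Z/2$ acting by $-\mathrm{id}$ on $A \cong \Z^{l-3} \times \Z/\gcd(n_i)$; the hypothesis of at least three blocks guarantees $l - 3 \geq 1$, so $A$ is infinite and Corollary~\ref{Lie_Klein_A/center} applies. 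To repair your proof you would need to carry out an analogous analysis of $\B_{2,m,k}(\S^2)$ for arbitrary $m,k$, at which point you have essentially reproduced the paper's argument.
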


\begin{proof}
Let $\mu$ be any partition. Let us consider the quotient of $\B_{2,\mu}(\S^2)$ by $\LCS_2(\B_{1,1,\mu}(\S^2))$, which is (clearly) an extension:
 \begin{equation}\label{ext1'}
\begin{tikzcd}
\B_{1,1, \mu}(\S^2)^{\ab} \ar[r, hook]
&G = \B_{2, \mu}(\S^2)/\LCS_2(\B_{1,1, \mu}(\S^2))\ar[r, two heads] 
&\Sym_2.
\end{tikzcd}
\end{equation}
Let us fix $(n_1, \ldots , n_l) := (1,1,\mu)$. From a presentation of the kernel (from Proposition~\ref{Bn(S)^ab}) and a presentation of the quotient, using the method from Appendix~\ref{section_pstation_of_ext}, we can write down a presentation of $G$. Precisely, $G$ admits the presentation with generators $s, s_i$ (for $1 \leq i \leq l$), and $a_{ij}$ (for $1 \leq i < j \leq l$), subject to the following relations:
\begin{equation*}
\begin{cases}
(1)\quad s^2 = a_{12}, \\
(2)\quad s_i = 1                                                          &\text{if } n_i = 1,\\ 
(3)\quad [s_i, s_j] = [s_i, a_{pq}] = [a_{pq}, a_{uv}] = 1                &\forall i,j,p,q,u,v,\\
(4)\quad [s, s_i] = 1                                                     &\forall i \geq 1, \\
(5)\quad [s, a_{ij}] = 1                                                  &\forall j > i \geq 3, \\
(6)\quad s a_{1j} s^{-1} =  a_{2j} \text{ and } s a_{2j} s^{-1} =  a_{1j} &\forall j \geq 3,  \\
(7)\quad 2(n_i-1) s_i + \sum\limits_{j \neq i} n_j a_{ij} = 0             &\forall i \geq 1, \\
\end{cases}
\end{equation*}
where the last relation uses additive notation in the abelian subgroup generated by the $s_i$ and the~$a_{ij}$.

Consider the subgroup $H$ of $G$ generated by $s^2$ together with the $s_i$ and the $a_{ij}$ for $i,j \geq 3$. One can easily see that it is central in $G$. Moreover, we can deduce from the presentation of $G$ a presentation of $G/H$, which turns out to be very simple. Indeed, it is generated by $s$, the $a_{1i}$ and the $a_{2i}$ for $i \geq 3$. Moreover, since $n_1 = n_2 = 1$, for $i \geq 3$ the last relation becomes $a_{1i} + a_{2i} = 0$, which means that $a_{1i}$ and $a_{2i}$ are inverse to each other.
For $i\in\{1,2\}$, we find the relations $\sum n_j a_{1j} = 0$ and $\sum n_j a_{2j} = 0$, which are equivalent to each other modulo the previous relations.
Thus, if we denote by $a_i$ the element $a_{1i} = a_{2i}^{-1}$ of $G/H$, we get that $G/H$ is generated by $a_3, a_4, \ldots , a_l$ and $s$, subject to the relations:
\begin{equation*}
\begin{cases}
(1)\quad s^2 = 1 \\                
(3)\quad [a_i, a_j] = 1            &\forall i,j,\\   
(6)\quad s a_j s^{-1} =  a_j^{-1}  &\forall j \geq 3,\\
(7)\quad \sum n_j a_j = 0.
\end{cases}
\end{equation*}
Finally, $G/H \cong A \rtimes (\Z/2)$, where $A$ is the quotient of the free abelian group on $a_3, a_4, \ldots , a_l$ by the single relation $\sum n_j a_j = 0$, and $\Z/2$ acts on $A$ via $-id$. We can use Corollary~\ref{Lie_Klein_A/center} to compute the LCS of $G/H$. Namely, since $A \cong \Z^{l-3} \times \Z/\gcd(n_i)$, it does not stop whenever $l > 3$. Thus, we have found a quotient of $\B_{2,\mu}(\S^2)$ having a non-stopping LCS whenever $\mu$ has at least two blocks. 
\end{proof}

If $\lambda$ has only two blocks, we first assume that the other block is large enough: 
\begin{proposition}\label{LCS_B2m(S2)}
Let $m \geq 3$ be an integer. The LCS of $\B_{2,m}(\S^2)$ stops at $\LCS_{v_2(m)+1}$ or at $\LCS_{v_2(m)+2}$ when $m$ is even, where $v_2$ is the $2$-adic valuation, and at $\LCS_2$ or at $\LCS_3$ when $m$ is odd.
\end{proposition}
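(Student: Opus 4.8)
The plan is to follow the strategy of Proposition~\ref{LCS_B2mu} and its sphere analogue Proposition~\ref{LCS_B2mu(S2)}, but now to keep careful track of the \emph{torsion} that the sphere relations introduce, since this is what produces the $2$-adic valuation. Write $\lambda = (2,m)$ and set $K := \LCS_2(\B_{1,1,m}(\S^2))$. The kernel of the composite $\B_{2,m}(\S^2) \twoheadrightarrow \Sym_2\times\Sym_m \twoheadrightarrow \Sym_2$ is $\B_{1,1,m}(\S^2)$, so $K$ is characteristic in a normal subgroup, hence normal in $\B_{2,m}(\S^2)$. Since $\LCS_k(H)\subseteq\LCS_k(\B_{2,m}(\S^2))$ for any subgroup $H$, and since the first part of Theorem~\ref{LCS_B_lambda(S2)} gives $\LCS_2(\B_{1,1,m}(\S^2))=\LCS_\infty(\B_{1,1,m}(\S^2))$, we obtain $K\subseteq\LCS_\infty(\B_{2,m}(\S^2))$. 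With $G := \B_{2,m}(\S^2)/K$, this inclusion yields $\LCS_k(G)=\LCS_k(\B_{2,m}(\S^2))/K$ for every $k$, so the LCS of $\B_{2,m}(\S^2)$ stops at exactly the same index as that of $G$. It therefore suffices to compute the LCS of $G$.

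The group $G$ fits in an extension $A\hookrightarrow G\twoheadrightarrow\Sym_2$ with $A = \B_{1,1,m}(\S^2)^{\ab}$, where the conjugation action of a lift $s$ of the generator of $\Sym_2$ is the involution $\tau$ exchanging $a_{13}\leftrightarrow a_{23}$ and fixing $s_3,a_{12}$, and where $s^2=a_{12}$. Using the method of Appendix~\ref{section_pstation_of_ext} together with the presentation of $A$ furnished by Proposition~\ref{partitioned_B(S)^ab}, I would write down an explicit presentation of $G$ exactly as in the proof of Proposition~\ref{LCS_B2mu(S2)}, now specialised to $\mu=(m)$ (i.e. $l=3$). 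The relevant sphere relations are $a_{12}+m\,a_{13}=0$, $a_{12}+m\,a_{23}=0$ and $2(m-1)s_3+a_{13}+a_{23}=0$, from which a Smith normal form computation gives $A\cong\Z\oplus\Z/2m$.

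Because $A$ is abelian and $G/A$ is abelian, $\LCS_2(G)=[G,G]$ is generated by the commutators $[s,a]=(\tau-1)a$; hence $\LCS_2(G)=V:=\ima(\tau-1)=\langle v\rangle$ with $v=a_{13}-a_{23}$. The relations force $mv=0$, and the Smith normal form shows that $v$ has order exactly $m$, so $V\cong\Z/m$. Since $\tau$ acts on $V$ by $-1$ (indeed $\tau(v)=-v$) and $A$ is abelian, one has $\LCS_{k+1}(G)=[G,\LCS_k(G)]=(\tau-1)\LCS_k(G)$ for $k\geq 2$, and therefore $\LCS_k(G)=2^{k-2}V$ as a subgroup of $\Z/m$. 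This is a nested chain that strictly decreases exactly as long as $2^{k-2}$ has not absorbed the full $2$-part of $m$, and it stabilises precisely when $k-2\geq v_2(m)$. Thus the LCS of $G$, and so of $\B_{2,m}(\S^2)$, stops at $\LCS_{v_2(m)+2}$ when $m$ is even, and at $\LCS_2$ when $m$ is odd (multiplication by $2$ being invertible on $\Z/m$); in either case it stops within the claimed two-term window.

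The main obstacle is locating the \emph{first} term of this chain without an off-by-one error: the whole ambiguity measured by $\epsilon$ amounts to deciding whether $\LCS_2(G)=V$ or $\LCS_2(G)=2V$, equivalently whether the geometric decay $2^{k-2}V$ begins at $k=2$ or $k=3$. Settling this rigorously requires a fully verified presentation of $G$ (the Reidemeister–Schreier computation of the extension, including the precise non-split relation $s^2=a_{12}$) together with the exact order of $v$ in $A$ — exactly the delicate points where the torsion of $\B_{1,1,m}(\S^2)^{\ab}$ interacts with the $\Sym_2$-action. The computation sketched above points to the sharp value $\LCS_{v_2(m)+2}$ for $m$ even and $\LCS_2$ for $m$ odd, i.e. $\epsilon=1$ (even) and $\epsilon=0$ (odd), in agreement with the conjecture recorded after the statement and in Remark~\ref{rmk:experimental}; to obtain only the weaker two-valued conclusion asserted in the proposition it is enough to note the coarser fact that $\LCS_k(G)$ is a subgroup of $\Z/m$ on which the recursion multiplies by $\pm 2$, forcing stabilisation within one step of $v_2(m)+2$.
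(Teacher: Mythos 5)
Your proposal is correct, and it takes a genuinely different route from the paper's proof --- one that in fact yields more than the stated proposition. Both arguments start with the same reduction to $G = \B_{2,m}(\S^2)/\LCS_2(\B_{1,1,m}(\S^2))$, an extension of $\Sym_2$ by $A = \B_{1,1,m}(\S^2)^{\ab}$. The paper then passes to the further quotient of $G$ by the central subgroup generated by $s^2$ and $s_3$, identifies that quotient with $(\Z/m)\rtimes(\Z/2)$, computes its LCS via Corollary~\ref{Lie_Klein_A/center}, and climbs back up using Corollary~\ref{Quotient_by_central_subgroup}; it is precisely this last step that produces the one-step ambiguity in the statement. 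You instead compute $\LCS_*(G)$ directly inside $A$: since $A$ is abelian and $G/A\cong\Z/2$ acts by $\tau$, one has $\LCS_2(G)=\ima(\tau-1)=\langle v\rangle$ with $v=a_{13}-a_{23}$, and $\LCS_{k+1}(G)=(\tau-1)\LCS_k(G)=2\,\LCS_k(G)$ for $k\geq 2$ because $\tau(v)=-v$; everything then reduces to the order of $v$ in $A$. Note that the extension class ($s^2=a_{12}$) plays no role from degree $2$ onwards, so the ``delicate non-split relation'' you worry about at the end is actually irrelevant: only the $\Sym_2$-action on $A$ and the three sphere relations $a_{12}+ma_{13}$, $a_{12}+ma_{23}$, $2(m-1)s_3+a_{13}+a_{23}$ enter. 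The one claim you should prove rather than gesture at (``the Smith normal form shows that $v$ has order exactly $m$'') is easy but essential: writing $kv=\alpha r_1+\beta r_2+\gamma r_3$ in the free abelian group on $s_3,a_{12},a_{13},a_{23}$, the $s_3$-coordinate forces $\gamma=0$ and the remaining coordinates force $m\mid k$, so $v$ has order exactly $m$ and not $m/2$. Granting this, your chain $\LCS_k(G)=2^{k-2}\langle v\rangle$ first stabilises at $k=v_2(m)+2$ for $m$ even and at $k=2$ for $m$ odd, which lies inside the window asserted by the proposition.

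Be aware of what you have thereby claimed: the \emph{exact} stopping point, i.e.\ the case of the conjecture recorded in Remark~\ref{rmk:experimental} for this family, which the authors state they could not decide theoretically. Your value agrees with their experimental (GAP/NQ) computations, and I find no flaw in the argument after repeated checking (for $m=4$ one gets $A\cong\Z\{s_3\}\oplus(\Z/8)\{u\}$ with $u=a_{13}+3s_3$, $v=2u$ of order $4$, hence $\LCS_2\supsetneq\LCS_3\supsetneq\LCS_4=0$, a stop at $\LCS_4=\LCS_{v_2(4)+2}$); but a blind proof that settles a conjecture of the source should be treated with corresponding suspicion, and the order-of-$v$ computation is the single point on which everything hinges --- write it out in full.
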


\begin{proof}
We have seen above that the LCS of $\B_{1,1, m}(\S^2)$ (which is a quotient of $\B_{1,1, m}$) stops at $\LCS_2$ if $m \geq 3$. Then, as in the proof of Proposition~\ref{LCS_B2mu}, we have that $\LCS_\infty(\B_{2,m}(\S^2))$ contains $\LCS_\infty(\B_{1,1,m}(\S^2)) = \LCS_2(\B_{1,1,m}(\S^2))$. Thus it is enough to understand when the LCS of $G := \B_{2,m}(\S^2)/\LCS_2(\B_{1,1,m}(\S^2))$ stops. In order to do this, we use the calculations already done in the course of the proof of Proposition~\ref{LCS_B2mu(S2)}: the group $G$ is a central extension of $(\Z/m) \rtimes (\Z/2)$, where $\Z/2$ acts via $-id$. Thanks to Corollary~\ref{Lie_Klein_A/center}, we know that the LCS of the latter is given by $\LCS_i = 2^{i-1}\Z/m$ for $i \geq 2$. Hence it stops at $\LCS_2$ if $m$ is odd and at $\LCS_{v_2(m)+1}$ if $m$ is even. Thus, the result will follow from Corollary~\ref{Quotient_by_central_subgroup}, if we show that the kernel $H$ of the central extension $H \hookrightarrow G \twoheadrightarrow (\Z/m) \rtimes (\Z/2)$ injects into $G^{\ab}$. We can compute $G^{\ab}$ using the presentation of $G$ from the proof of Proposition~\ref{LCS_B2mu(S2)}, or using Proposition~\ref{partitioned_B(S)^ab} and the fact that $G^{\ab} \cong \B_{2, m}(\S^2)^{\ab}$ (by definition of $G$). Either way, we find that it is generated by three elements $s$~$(=s_1)$, $s_3$ and $a$ (the latter being the common image of $a_{13}$ and $a_{23} = s a_{13} s^{-1}$ in $G^{\ab}$), subject to the relations:
\[\begin{cases}
2s + ma = 0; \\
2(m-1) s_3 + 2a = 0.
\end{cases}\]
In other words, $G^{\ab} = \Z^3/R$ where, if $(s, s_3, a)$ denotes a basis of $\Z^3$, $R$ is generated by $2s + ma$ and $2(m-1) s_3 + 2a$.  The image $Q$ of $H$ in $G^{\ab}$ is generated by $2s$ and $s_3$. An easy calculation shows that, in $\Z^3$, $\langle 2s, s_3 \rangle \cap R$ is generated by $r_m := 2s + m(m-1)s_3$ if $m$ is even, and $r_m :=  4s + 2m(m-1)s_3$ if $m$ is odd. This means that, as an abelian group, $Q$ is presented as the quotient of the free abelian group $\Z^2$ over $2s$ and $s_3$ by the relation $r_m = 0$. But this relation already holds in $H$. Indeed, in $\B_{1,1, m}(\S^2)^{\ab} \subset G$, we have $s^2 = a_{12} -m a_{13} = -m a_{23}$ (see the proof of Proposition~\ref{LCS_B2mu(S2)}), so if we multiply the relation $2(m-1)s_3 + a_{13} + a_{23} = 0$ by $m/2$ if $m$ is even (resp.~by $m$ if $m$ is odd), we obtain $m(m-1)s_3 = s^2$ (resp.~$2m(m-1)s_3 = 2s^2$), hence $r_m = 0$ in $H \subset G$. Finally, the projection $H \twoheadrightarrow Q$ must be an isomorphism (one can construct an inverse to it using the presentation of $Q$), whence our conclusion.
\end{proof}

\begin{remark}
\label{rmk:experimental}
It seems difficult to decide theoretically between the two possibilities, although our experimental calculations~\cite{GAPcode} using GAP~\cite{GAP4} and the package NQ~\cite{Nickel1996} suggest that the LCS of $G$ (and hence also the one of $\B_{2,m}(\S^2)$) always stops at $\LCS_{v_2(m)+2}$ (for both even and odd $m$); we conjecture that this is the case.
\end{remark}

Finally, let us show that the LCS of $\B_{2,2}(\S^2)$ does not stop, using Lemma~\ref{lem:stationary_quotient}. Namely, we are looking for a quotient of $\B_{2,2}(\S^2)$ whose LCS does not stop. As a manageable non-abelian quotient, one can think of $\B_{2,2}(\S^2)/\LCS_2(\PB_4(\S^2))$, which is an extension of $\Sym_2 \times \Sym_2$ by $\PB_4(\S^2)^{\ab}$. In fact, in order to make it even more manageable, we take a further quotient, turning it into a split extension to which the methods of Appendix~\ref{sec:appendix_2} apply. 

\begin{proposition}\label{LCS_B22(S2)}
The LCS of $\B_{2,2}(\S^2)$ does not stop.
\end{proposition}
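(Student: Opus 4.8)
The plan is to produce a quotient of $\B_{2,2}(\S^2)$ whose LCS does not stop, so that Lemma~\ref{lem:stationary_quotient} applies; this mirrors the treatment of the planar case in Proposition~\ref{LCS_B22}, but here the sphere relations of Corollary~\ref{partitioned_Braids_on_closed_surfaces} collapse the abelian part considerably, and the real work is to check that enough of it survives. Since $\PB_4(\S^2)$ is the kernel of $\B_{2,2}(\S^2) \twoheadrightarrow \Sym_2 \times \Sym_2$ and $\LCS_2(\PB_4(\S^2))$ is characteristic in it, hence normal in $\B_{2,2}(\S^2)$, I would first form the extension
\[\PB_4(\S^2)^{\ab} \hookrightarrow \B_{2,2}(\S^2)/\LCS_2(\PB_4(\S^2)) \twoheadrightarrow \Sym_2 \times \Sym_2,\]
and then further kill the two within-block pure generators $A_{12} = \sigma_1^2$ and $A_{34} = \sigma_3^2$. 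Because $\sigma_1$ and $\sigma_3$ commute in $\B_{2,2}(\S^2)$ and become involutions after this quotient, they split the projection onto $(\Sym_2)^2$, so the resulting group is a semidirect product $G \cong A \rtimes (\Sym_2)^2$ with $A = \PB_4(\S^2)^{\ab}/\langle A_{12}, A_{34}\rangle$.

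The key computation is the structure of $A$ as a $(\Sym_2)^2$-module. Using the abelianised sphere relations $\sum_{j \neq i} A_{ij} = 0$ (Corollary~\ref{partitioned_Braids_on_closed_surfaces}, Proposition~\ref{partitioned_B(S)^ab}), setting $A_{12} = A_{34} = 0$ forces $A_{14} = A_{23} = -A_{13}$ and $A_{24} = A_{13}$, so that $A$ is infinite cyclic, generated by $a := \overline{A_{13}}$. The first factor $\Sym_2$ (transposing strands $1,2$) sends $A_{13}$ to $A_{23} = -A_{13}$, and the second factor (transposing $3,4$) sends $A_{13}$ to $A_{14} = -A_{13}$; hence both factors act on $A \cong \Z$ by $-\mathrm{id}$. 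Consequently $G \cong \Z \rtimes (\Z/2)^2$, where $(\Z/2)^2$ acts through the sum homomorphism $(\Z/2)^2 \to \{\pm 1\}$. Quotienting by the central diagonal $\Z/2$, which acts trivially, yields $\Z \rtimes (\Z/2) \cong \Z/2 * \Z/2$, whose LCS does not stop by Proposition~\ref{LCS_of_Z/2*Z/2}; alternatively one computes the LCS of $G$ itself directly with Proposition~\ref{LCS_Klein_tau}, as in Appendix~\ref{sec:appendix_2}. Either way $G$ is a quotient of $\B_{2,2}(\S^2)$ with non-stopping LCS, and Lemma~\ref{lem:stationary_quotient} concludes the argument.

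The main obstacle is the module computation: a priori the four sphere relations could have rendered $A$ finite, in which case $G$ would be virtually abelian and this route would collapse. The crucial point to verify carefully is therefore that, after killing $A_{12}$ and $A_{34}$, a single copy of $\Z$ survives and that the two transpositions act on it by $-1$ rather than trivially -- it is precisely this sign action that produces the infinite dihedral quotient, and hence the non-stopping LCS. A secondary point requiring care is confirming that the extension genuinely splits after killing $A_{12}$ and $A_{34}$, i.e. that $\langle \sigma_1, \sigma_3\rangle$ maps isomorphically onto $(\Sym_2)^2$; this follows because $\sigma_1, \sigma_3$ are commuting involutions in $G$ projecting to the two independent generators of $(\Sym_2)^2$.
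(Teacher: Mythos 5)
Your proposal is correct and follows essentially the same route as the paper's proof: both pass to $\B_{2,2}(\S^2)/\LCS_2(\PB_4(\S^2))$, kill the central elements $A_{12}$ and $A_{34}$ to obtain a split extension $\Z \rtimes (\Sym_2)^2$ with both transpositions acting by $-1$, and conclude via the infinite-dihedral-type computation of Appendix~\ref{sec:appendix_2}. The only cosmetic difference is that the paper invokes Lemma~\ref{LCS_Klein} directly (after noting the action factors through the signature), whereas you additionally offer the equivalent shortcut of quotienting by the diagonal $\Z/2$ to land on $\Z/2 * \Z/2$.
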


\begin{proof}
Recall that $\LCS_2(\PB_4(\S^2))$ is fully invariant in $\PB_4(\S^2)$, hence normal in $\B_{2,2}(\S^2)$, so the quotient $G = \B_{2,2}(\S^2)/\LCS_2(\PB_4(\S^2))$ is a well-defined extension of $\Sym_2 \times \Sym_2$ by $\PB_4(\S^2)^{\ab}$. The latter it is the abelian group generated by $a_{ij}$ for $1 \leq i < j \leq 4$, subject to the four relations $\sum_{j \neq i} a_{ij} = 0$ (for each $i \leq 4$); this classical computation is part of Proposition~\ref{Bn(S)^ab}.

The action of $\B_{2,2}(\S^2)$ on $\PB_4(\S^2)^{\ab}$ induced by conjugation factors through $\B_{2,2}(\S^2)/\PB_4(\S^2) \cong \Sym_2 \times \Sym_2$. This action is by permutation of the indices of the generators $a_{ij}$ of $\PB_4(\S^2)^{\ab}$. In particular, it fixes $a_{12}$ and $a_{34}$, which then generate a central subgroup $H$ of $G$. Let us consider the quotient by this central subgroup, that is, the quotient of $\B_{2,2}(\S^2)$ by its subgroup $\tilde H = \langle A_{12}, A_{34}, \LCS_2(\PB_4(\S^2)) \rangle$. This quotient is an extension:
\[\begin{tikzcd}
\PB_4(\S^2)^{\ab}/H \ar[r, hook] &G/H \ar[r, two heads] &\Sym_2 \times \Sym_2.
\end{tikzcd}\]
By definition of $\tilde H$, we have $\overline \sigma_1^2 = \overline \sigma_3^2 = 1$ in $\B_{2,2}(\S^2)/\tilde H = G/H$, so this short exact sequence splits. Moreover, $\PB_4(\S^2)^{\ab}/H$ is the quotient of $\PB_4(\S^2)^{\ab}$ by the relations $a_{12} = a_{34} = 0$, modulo which the relations defining $\PB_4(\S^2)^{\ab}$ become $a_{13} = - a_{14} = a_{13} = - a_{23}$. Thus $\PB_4(\S^2)^{\ab}/H \cong \Z$, and $G/H \cong \Z \rtimes (\Sym_2)^2$, where both transpositions act via a sign. This action factors through the signature $\varepsilon \colon \Sym_2 \times \Sym_2 \twoheadrightarrow \Z/2$, so that $\LCS_*^{\Sym_2 \times \Sym_2}(\Z) = \LCS_*^{\Z/2}(\Z)$; see \Spar\ref{ss:relative_LCS} for the definition of relative LCS. From Lemma~\ref{LCS_Klein} (whose proof could alternatively be repeated in this situation), we get that $\LCS_i(\Z \rtimes (\Sym_2)^2) = 2^{i-1}\Z$ for $i\geq 2$, so the LCS of $\B_{2,2}/\tilde H = G/H$ does not stop.
\end{proof}

\begin{proof}[Proof of Theorem \ref{LCS_B_lambda(S2)}]
The first point of the theorem was proven just before its statement. We explain how to deduce the second point from the results above.

If $\lambda$ has at least four blocks of size $1$, then $\B_{\lambda}(\S^2)$ surjects onto the pure braid group $\PB_4(\S^2)$ (by forgetting the other blocks). Using Lemma~\ref{lem:stationary_quotient}, the  result in this case follows from Lemma~\ref{LCS_P4(S2)}.

The remaining cases are the ones where there is at least one block of size $2$. If the partition has at least three blocks, the result follows from Proposition~\ref{LCS_B2mu(S2)}. If the partition has exactly two blocks, and if the size of the other block is at least $2$, then we can apply either Proposition~\ref{LCS_B2m(S2)} or Proposition~\ref{LCS_B22(S2)}. 

Finally, let us compute $\B_{2,1}(\S^2)$ and $\B_2(\S^2)$. As recalled in the proof of Lemma~\ref{LCS_P4(S2)}, $\PB_3(\S^2) \cong \pi_1(SO_3(\R)) \cong \Z/2$, so that $\B_{2,1}(\S^2)$ is an extension of $\Sym_2 \cong \Z/2$ by $\Z/2$. Thus, it must be isomorphic to $(\Z/2)^2$ or to $\Z/4$. In order to decide between the two, one can use the computation of $\B_{2,1}^{\ab}$ from Proposition~\ref{Bn(S)^ab}, and find that $\B_{2,1}(\S^2) \cong \Z/4$. As for $\B_2(\S^2)$, we use Proposition~\ref{Fadell-Neuwirth} to get an exact sequence $\pi_1(\S^2-\{pt\}) \rightarrow \PB_2(\S^2) \rightarrow \pi_1(\S^2)$, which implies that $\PB_2(\S^2) =1$, whence $\B_2(\S^2) \cong \Sym_2$. 
\end{proof}

\subsection{Partitioned braids on the projective plane}\label{sec:partitioned_braids_projective_plane}

As in \Spar\ref{par_pstation_closed_surfaces}, we see the projective plane $\Proj = \mathcal N_1$ as a sphere with one crosscap, which is the quotient of the Möbius strip $\Moeb = \mathcal N_{1,1}$ by its boundary. This allows us to use the conventions of Figure~\ref{gen_of_Bn(Ng1)} (with $g = 1$) to describe braids on the projective plane. We only modify slightly our notation, as we did before for braids on the Möbius strip:
\begin{notation}\label{notation_gamma}
We use the notation of \Spar\ref{sec_pstations_B(S)} for braids on non-orientable surfaces, but since there is only one crosscap, we denote $c_1^{(\alpha)}$ by $\gamma_\alpha$. 
\end{notation}

We now prove the following theorem, which describes when the LCS of $\B_{\lambda}(\Proj)$ stops, except for $\B_{2,m}(\Proj)$ with $m \geq 3$ (for this case, see Conjecture \ref{conj:B2m}):

\begin{theorem}\label{LCS_B_lambda(Proj)}
Let $n \geq 1$ be an integer, let $\lambda = (n_1, \ldots , n_l)$ be a partition of $n$. The LCS of $\B_{\lambda}(\Proj)$:
\begin{itemize}
\item \emph{stops at $\LCS_2$} if $l = 1$, except for $\B_2(\Proj)$, which is (strictly) $3$-nilpotent.
\item \emph{stops at $\LCS_3$} if $l \geq 2$ and $n_i \geq 3$ for all $i$.
\item \emph{does not stop} in all the other cases where $l \geq 3$. 
\end{itemize}
Moreover, the LCS of $\B_{2,2}(\Proj)$ and of $\B_{1,2}(\Proj)$ do not stop, the LCS of $\B_{1,1}(\Proj)$ stops at $\LCS_3$ and, for $m\geq 3$, the LCS of $\B_{1,m}(\Proj)$ stops at $\LCS_{v_2(m)+2}$ or at $\LCS_{v_2(m)+3}$ when $m$ is even, where $v_2$ is the $2$-adic valuation, and at $\LCS_3$ or $\LCS_4$ when $m$ is odd.
\end{theorem}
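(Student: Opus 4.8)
The plan is to follow exactly the template that has already proven successful for the sphere (Theorem~\ref{LCS_B_lambda(S2)}), transporting each step to the projective plane using the surface-specific tools developed in~\Spar\ref{sec_pstations_B(S)}. The overall architecture is a case analysis by the shape of $\lambda$, where most cases are handled by Lemma~\ref{lem:stationary_quotient} (exhibit a quotient whose LCS is understood) and the genuinely new work is concentrated in the blocks-of-size-$2$ analysis, where Corollary~\ref{Quotient_by_central_subgroup} will be the workhorse.

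I would organise the proof as follows. First, the stable-range statements ($l=1$ with $n_1\geq 3$, and $l\geq 2$ with all $n_i\geq 3$) are immediate: they are precisely Corollary~\ref{LCS_B_lambda(S)_stable} together with Theorem~\ref{Lie_ring_partitioned_B(S)}, since $\Proj$ is a non-orientable closed surface (so $\Lie_2$ is nontrivial exactly when $l\geq 2$, by Corollary~\ref{partitioned_Lie(B(Ng))}). Second, for the non-stopping cases with $l\geq 3$, I would use the projections onto smaller partitioned braid groups and reduce to known non-stopping examples; the block-of-size-$1$ subcase should reduce, via Proposition~\ref{Fadell-Neuwirth} and the surjection $\B_\lambda(\Proj)\twoheadrightarrow\B_{1,\nu}(\Proj)$, to showing that an appropriate low-complexity group has non-stopping LCS. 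Third, the exceptional small groups: $\B_1(\Proj)\cong\Z/2$ and $\B_2(\Proj)$ I would identify using Corollary~\ref{Braids_on_closed_surfaces} (the case $g=1$), recognising $\B_2(\Proj)$ as the dicyclic group $Dic_{16}$ and $\B_{1,1}(\Proj)\cong Q_8$ as recorded in the tables; their nilpotency classes then follow by direct inspection. The groups $\B_{2,2}(\Proj)$ and $\B_{1,2}(\Proj)$ should be handled by adapting the arguments for $\B_{2,2}(\S^2)$ (Proposition~\ref{LCS_B22(S2)}) and for the Möbius/torus blocks-of-size-$2$ cases, namely by passing to a central quotient that becomes a semidirect product $\Z\rtimes(\Z/2)$-type group (or a wreath-like group), whose LCS is computed via Corollary~\ref{Lie_Klein_A/center} or Proposition~\ref{LCS_Klein_tau}.

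The main obstacle, and the heart of the theorem, will be the family $\B_{1,m}(\Proj)$ with $m\geq 3$. Here I expect to mimic Proposition~\ref{LCS_B2m(S2)} closely: using Corollary~\ref{partitioned_Braids_on_closed_surfaces} to present $\B_{1,m}(\Proj)$ as a quotient of $\B_{1,m}(\mathscr N_{1,1})$ by one boundary relation, and using that $\LCS_\infty$ of the relevant stable subgroup equals $\LCS_2$, I would reduce to studying a central extension $H\hookrightarrow G\twoheadrightarrow (\Z/m)\rtimes(\Z/2)$ where $\Z/2$ acts by $-\mathrm{id}$. Corollary~\ref{Lie_Klein_A/center} gives that the base quotient's LCS stops at $\LCS_{v_2(m)+1}$ ($m$ even) or $\LCS_2$ ($m$ odd); the crosscap relation $c^2=\gamma^2$ shifts the degree by one compared to the orientable sphere computation (this is the analogue of the $A_{ij}$-as-commutator phenomenon of Proposition~\ref{trichotomy}), which is exactly what produces the extra $+1$ in the exponent ($v_2(m)+2$ rather than $v_2(m)+1$). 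Applying Corollary~\ref{Quotient_by_central_subgroup} then leaves the ambiguity of one extra step, precisely as in the sphere case, accounting for the ``or at $\LCS_{v_2(m)+3}$'' phrasing. The delicate point will be verifying, via an explicit computation of $G^{\ab}\cong\B_{1,m}(\Proj)^{\ab}$ from Proposition~\ref{partitioned_B(S)^ab}, that the kernel $H$ of the central extension injects into $G^{\ab}$ (so that the hypothesis of Corollary~\ref{Quotient_by_central_subgroup} holds); as in Proposition~\ref{LCS_B2m(S2)}, this requires carefully intersecting the relation lattice and checking the boundary relation already forces the needed identity in $H$, with the two-adic valuation bookkeeping being where all the care is needed.
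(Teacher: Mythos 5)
Your high-level architecture (stable cases from Corollary~\ref{LCS_B_lambda(S)_stable} and Theorem~\ref{Lie_ring_partitioned_B(S)}; non-stopping cases by exhibiting quotients; the hard work concentrated in small blocks) matches the paper, but three of your concrete plans would not go through as written. First, you never invoke the exceptional Fadell--Neuwirth sequence for the projective plane. Proposition~\ref{Fadell-Neuwirth} is \emph{not} short exact when $S=\Proj$ and $n=1$, precisely because $\pi_2(\Proj)\neq 0$; the paper's substitute is Proposition~\ref{P(Proj)_from_P(Moeb)}, giving $\B_\mu(\Moeb)/\xi^2 \hookrightarrow \B_{1,\mu}(\Proj)\twoheadrightarrow \Z/2$ where $\xi$ is the boundary twist. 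Every blocks-of-size-$1$ argument on $\Proj$ (Propositions~\ref{LCS_B1mu(P2)} and~\ref{LCS_B1m(P2)}, and the identification $\PB_2(\Proj)\cong Q_8$) runs through this sequence; your alternative of presenting $\B_{1,m}(\Proj)$ as a quotient of $\B_{1,m}(\Moeb)$ by boundary relations would force you to control the LCS of $\B_{1,m}(\Moeb)$, which does not stop (Proposition~\ref{LCS_B1m(Moeb)}), rather than the LCS of the single-block group $\B_m(\Moeb)$, which does.

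Second, your plan for the one-step ambiguity in $\B_{1,m}(\Proj)$ fails at the stated verification. You propose to check that the central kernel injects into $G^{\ab}$ so as to apply Corollary~\ref{Quotient_by_central_subgroup}, mimicking Proposition~\ref{LCS_B2m(S2)}. But here the relevant central subgroup is $A=\langle \gamma_1^2\rangle=\langle\gamma^{2m}\rangle\cong\Z/2$, while $G^{\ab}\cong\B_{1,m}(\Proj)^{\ab}\cong(\Z/2)^3$ is entirely $2$-torsion, so $\gamma^{2m}$ vanishes in $G^{\ab}$ and the hypothesis of Corollary~\ref{Quotient_by_central_subgroup} (with $i=2$) is false. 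The paper instead uses Lemma~\ref{Quotient_by_finite_subgroup} with $K\cong\Z/2$ and $l=1$ (this is exactly the application announced in the example following that lemma), which needs no injectivity at all. Relatedly, the quotient one actually lands on is $\Z/2\times(\Z/2m\rtimes\Z/2)$, not $(\Z/m)\rtimes(\Z/2)$; the $\Z/2m$ (coming from $\overline\xi=2m\gamma$ inside $\B_m(\Moeb)^{\ab}\cong\Z\times\Z/2$) is what produces $v_2(m)+2$ directly, rather than a separate ``degree shift'' argument. Finally, for $\B_{1,2}(\Proj)$ and $\B_{2,2}(\Proj)$ the paper needs explicit presentations (Proposition~\ref{B2m(Proj)}) and then quotients by the \emph{non-central, non-obvious} relations $\sigma\gamma\sigma^{-1}=\gamma^{-1}$ (plus $\sigma_1^2=\gamma_1^2$ in the second case); the authors themselves note that finding these required much experimentation, and for $\B_{2,2}(\Proj)$ the resulting group is $\Z^2\rtimes(Dic_{16}\times\Z/2)$ with the $W_2$-action on the root lattice $\Lambda$ (Proposition~\ref{LCS_can_rep_of_W2}), not a $\Z\rtimes(\Z/2)$-type group. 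Your sketch does not contain the ideas needed for these two cases.
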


In the case of $\B_{1,m}(\Proj)$ with $m \geq 3$, where the answer given above is ambiguous between two consecutive possibilities, we conjecture that its LCS stops at $\LCS_{v_2(m)+3}$ for all $m\geq 3$; see Remark \ref{rmk:experimental2}.

The proof splits into many different cases, which we consider individually below; they are synthesised into a proof of Theorem \ref{LCS_B_lambda(Proj)} immediately after Remark \ref{rk_LCS_B22(P2)}.

\subsubsection{Blocks of size $1$.}
In order to study partitioned braids with blocks of size $1$ on the projective plane, we need to know what the Fadell-Neuwirth exact sequence becomes in the exceptional case $n = 1$ of Proposition~\ref{Fadell-Neuwirth}.

\begin{proposition}\label{P(Proj)_from_P(Moeb)}
For any partition $\mu$ of an integer $m$, we have a short exact sequence:
\[\begin{tikzcd}
\B_\mu(\Moeb)/\xi^2 \ar[r, hook] 
&\B_{1,\mu}(\Proj) \ar[r, two heads] 
&\pi_1(\Proj) \cong \Z/2,
\end{tikzcd}\]
where $\xi$ is the central element of $\B_\mu(\Moeb)$ given by all the strands going once along $\partial\Moeb$.
\end{proposition}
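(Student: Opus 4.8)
I would deduce this statement from the general Fadell–Neuwirth machinery of Proposition~\ref{Fadell-Neuwirth}, carefully adapted to the exceptional case $S = \Proj$ with $n=1$. Recall that when $S = \Proj$ and we forget a single strand, the map of configuration spaces $C_{1,\mu}(\Proj) \to C_1(\Proj) = \Proj$ is still a locally trivial fibration with fibre $C_\mu(\Proj - pt) \cong C_\mu(\Moeb)$ (using the isotopy equivalence $\Proj - pt \simeq \Moeb$ from \Spar\ref{sec:surfaces}), but the sequence of fundamental groups is no longer left-exact: the obstruction is the nontrivial $\pi_2(\Proj) \cong \pi_2(\S^2) \cong \Z$, which maps into $\pi_1$ of the fibre. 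So the first step is to write down the relevant portion of the long exact sequence of homotopy groups:
\[
\pi_2(\Proj) \longrightarrow \B_\mu(\Moeb) \longrightarrow \B_{1,\mu}(\Proj) \longrightarrow \pi_1(\Proj) \cong \Z/2 \longrightarrow 1,
\]
so that $\B_{1,\mu}(\Proj)$ is an extension of $\Z/2$ by the quotient of $\B_\mu(\Moeb)$ by the image of the boundary map $\partial\colon \pi_2(\Proj) \to \B_\mu(\Moeb)$.

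Second, I would identify this image explicitly. The image of $\partial$ is a cyclic subgroup of $\B_\mu(\Moeb)$, and the content of the statement is that it is generated by $\xi^2$, where $\xi$ is the "full boundary twist'' in which all strands travel once along $\partial\Moeb$. Geometrically, $\partial$ sends a generator of $\pi_2(\Proj)$ to the braid obtained by sweeping the configuration once around the nontrivial $2$-sphere worth of motion, and in the presence of the crosscap this sweep realises each strand going \emph{twice} around the core curve of the Möbius strip—equivalently, the square of the element $\xi$ in which every strand runs once along the boundary. I would make this precise by comparing with the analogous $\S^2$ computation (as in the proof of Lemma~\ref{LCS_P4(S2)} and the sphere cases) and with van Buskirk's description \cite{vanBuskirk1966}, and by using the explicit boundary element depicted in Figure~\ref{boundary_relations}; the key relation $\partial = c_1^2 A_{1n}^{-1}\cdots A_{12}^{-1}$ from Corollary~\ref{Braids_on_closed_surfaces} (with $g=1$) is exactly the incarnation of $\xi^2$ once one unwinds the notation, since $\xi$ is the product of the boundary loops $\gamma_\alpha = c_1^{(\alpha)}$.

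Third, I would verify that $\xi$ is central in $\B_\mu(\Moeb)$, so that the quotient $\B_\mu(\Moeb)/\xi^2$ makes sense and the extension is well-posed. Centrality follows because $\xi$ corresponds to a global rigid motion (rotating all the strands together once around the boundary circle of $\Moeb$), which commutes with every braid up to isotopy; this is the same style of argument used for the central elements $\alpha,\beta$ on the torus in the proof of Proposition~\ref{B_1mu(T)}, and for the full twist on $\S^2$. The surjectivity of $\B_{1,\mu}(\Proj) \twoheadrightarrow \pi_1(\Proj) \cong \Z/2$ and the identification of the kernel with $\B_\mu(\Moeb)/\mathrm{im}(\partial)$ are then immediate from exactness.

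\textbf{The main obstacle.} The genuinely delicate step is the precise identification of $\mathrm{im}(\partial)$ with $\langle \xi^2\rangle$ rather than with $\langle\xi\rangle$ or some other power—that is, pinning down the factor of $2$ coming from the orientation-reversing nature of the crosscap. I expect to handle this by a direct geometric/diagrammatic analysis of the boundary map on $\pi_2(\Proj)$, tracking how the nontrivial element of $\pi_2$ (a degree-one sphere swept out by the forgotten strand) drags the remaining $m$ strands, and cross-checking the answer against the single-strand case $\mu = \varnothing$, where the sequence must reduce to the known identity $\B_1(\Proj) = \pi_1(\Proj) = \Z/2$ together with $\PB_2(\Proj)$-type computations (cf.\ Corollary~\ref{B2(Proj)}). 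Matching these boundary cases forces the exponent to be exactly $2$, which is the crux of the proof.
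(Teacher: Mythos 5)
Your overall framework is exactly the paper's: run the long exact sequence of the Fadell--Neuwirth fibration $C_{1,\mu}(\Proj) \to \Proj$ with fibre $C_\mu(\Proj - pt) \simeq C_\mu(\Moeb)$, observe that $\pi_2(\Proj)\cong\Z$ obstructs left-exactness, and identify the image of the boundary map $\partial\colon\pi_2(\Proj)\to\B_\mu(\Moeb)$ with $\langle\xi^2\rangle$. The gap is in how you propose to carry out that identification, which is the entire content of the proposition. First, your appeal to Corollary~\ref{Braids_on_closed_surfaces} is a confusion of two different exact sequences: the relation $\gamma_1^2 = A_{12}\cdots A_{1n}$ describes the kernel of $\B_n(\Moeb)\twoheadrightarrow\B_n(\Proj)$ coming from \emph{filling in a puncture that is not a strand} (Proposition~\ref{Bn(S)_from_Bn(S-pt)}), whereas here the puncture \emph{is} the forgotten strand and the relevant kernel is $\mathrm{im}(\partial)$; these are not the same subgroup. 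Relatedly, $\xi$ is not ``the product of the boundary loops $\gamma_\alpha$'': since $\partial\Moeb$ is freely homotopic to \emph{twice} the core curve, each strand travelling once along $\partial\Moeb$ contributes $\gamma_\alpha^2$ up to pure-braid corrections (compare the formula $\xi=\prod_{i}\gamma_i^2(A_{i,i+1}\cdots A_{i,n})^{-1}$ used in the proof of Proposition~\ref{LCS_B1mu(P2)}), so ``unwinding the notation'' as you describe does not produce $\xi^2$.

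Second, your fallback — cross-checking against small cases such as $\PB_2(\Proj)\cong Q_8$ — can at best confirm the exponent in those cases (and is circular inside this paper, where Corollary~\ref{B2(Proj)} is \emph{deduced from} this proposition); it does not determine $\partial(\text{generator})$ as an element of $\B_\mu(\Moeb)$ for general $\mu$, since an element mapping to $\xi^2$ under strand-forgetting maps is only determined up to the kernels of those maps. What is actually needed, and what the paper does, is an explicit lift of the generator of $\pi_2(\Proj)$: take the canonical projection $\S^2\twoheadrightarrow\Proj$, lift it to a map $\D\to F_{m+1}(\Proj)$ by composing $x\mapsto\rho(x)\in SO_3(\R)$ (the rotation carrying the north pole $N$ to $\pi(x)$) with evaluation at a base configuration containing $\pm N$, and observe that as $x$ traverses $\partial\D$ once, $\rho(x)$ traverses \emph{twice} the circle of $\pi$-rotations about horizontal axes, each traversal of which evaluates to the loop $\xi$ of configurations. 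This single computation pins down $\mathrm{im}(\partial)=\langle\xi^2\rangle$ uniformly in $\mu$; without it (or an equivalent), your argument does not close.
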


\begin{proof}[Sketch of proof]
One needs to show that $\xi^2$ is the image of a generator of $\pi_2(\Proj)$ by the connecting morphism in the long exact sequence of the proof of Proposition~\ref{Fadell-Neuwirth}. A generator of $\pi_2(\Proj)$ is given by the canonical projection $\S^2 \twoheadrightarrow \Proj$, which can be lifted to a map from the disc $\D$ to $F_{m+1}(\Proj)$ as follows. Let $\pi$ denote the projection of $\D$ onto $\S^2 = \D/\partial \D$ sending $\partial \D$ to the south pole $P = -N$. Let $\rho \colon \D \rightarrow SO_3(\R)$ be the unique continuous function sending each $x \in \Int(\D)$ to the rotation of axis $N \times \pi(x)$ and angle $(N, \pi(x))$ (so that $\rho(x)(N) = \pi(x)$). Let $\ev \colon SO_3(\R) \rightarrow F_{m+1}(\Proj)$ be evaluation at a base configuration whose first element is $\pm N$. Then $\ev \circ \rho$ is the required lift. Moreover, when $x$ goes once around $\partial \D$, then $\rho(x)$ goes twice around the circle of rotations of angle $\pi$ with axis orthogonal to $N$. Since $\Proj$ is to be thought of as the quotient of the Möbius strip by its boundary (which goes to $\pm N$), this circle of rotations evaluates to the element $\xi$ described in the statement.
\end{proof}

\begin{proposition}\label{LCS_B1mu(P2)}
Let $\mu$ be a partition having at least two blocks. Then the LCS of $\B_{1, \mu}(\Proj)$ does not stop.
\end{proposition}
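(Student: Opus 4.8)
The plan is to exploit the short exact sequence from Proposition~\ref{P(Proj)_from_P(Moeb)} to find a quotient of $\B_{1,\mu}(\Proj)$ whose LCS does not stop, and then to invoke Lemma~\ref{lem:stationary_quotient}. Since $\mu$ has at least two blocks, forgetting all but two of them gives a surjection $\B_{1,\mu}(\Proj) \twoheadrightarrow \B_{1,\nu}(\Proj)$, where $\nu$ is a partition of an integer with exactly two blocks (of sizes at least $1$). By Lemma~\ref{lem:stationary_quotient}, it therefore suffices to treat the cases where $\mu = \nu$ has exactly two blocks; the main work will be to produce, in each such case, a manageable non-nilpotent quotient.

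First I would analyse the extension $\B_\mu(\Moeb)/\xi^2 \hookrightarrow \B_{1,\mu}(\Proj) \twoheadrightarrow \Z/2$. Combining this with the decompositions of $\B_\mu(\Moeb)$ coming from Proposition~\ref{Fadell-Neuwirth} (and the explicit description of $\PB_2(\Moeb)$ in Lemma~\ref{P2(M)} together with its central element), I expect to identify $\B_{1,\mu}(\Proj)/\LCS_\infty$, or at least a residually-nilpotent-detecting quotient, with a semidirect product of an abelian group with $\Z/2$ acting by an involution. The key point is that the quotient $\Z/2$ action should recover (after killing an appropriate central subgroup) a group of the form $A \rtimes (\Z/2)$ with $\Z/2$ acting via $-\mathrm{id}$, exactly the situation handled by Corollary~\ref{Lie_Klein_A/center}. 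Provided $A$ is infinite (which is where the hypothesis that $\mu$ has two blocks, giving at least one $\Z$ or $\Z^2$ summand coming from a block of circles, enters), Corollary~\ref{Lie_Klein_A/center} shows the LCS of $A \rtimes (\Z/2)$ does not stop, and Lemma~\ref{lem:stationary_quotient} finishes the argument.

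More concretely, I would mimic the strategy already used for $\B_{1,m}(\Moeb)$ in Proposition~\ref{LCS_B1m(Moeb)} and for the sphere in Propositions~\ref{LCS_B2mu(S2)} and~\ref{LCS_B2m(S2)}: pass to the quotient by $\LCS_2$ of the relevant pure/isolated-strand subgroup so as to linearise the fibre, then read off the induced $\Z/2$-action from the crosscap relation $[\gamma_2,\gamma_1^{-1}] = A$ of Figure~\ref{Moebius_relation} and the relation $\gamma_1 A \gamma_1^{-1} = \gamma_2^{-1}A^{-1}\gamma_2$. This should present a quotient isomorphic to $\Z^r \rtimes (\Z/2)$ (or $(\Z \times \text{finite}) \rtimes \Z/2$) with the involution acting by $-\mathrm{id}$ on an infinite-rank piece, whose LCS does not stop by Corollary~\ref{Lie_Klein_A/center}.

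The hard part will be controlling the element $\xi^2$ and verifying that quotienting by it (as forced by Proposition~\ref{P(Proj)_from_P(Moeb)}) does not accidentally collapse the infinite $\Z$-summand on which the involution acts by $-1$; in other words, ensuring that the $-\mathrm{id}$-action survives into the quotient and that the relevant abelian group remains infinite. This is precisely the delicate torsion/central-element bookkeeping that distinguishes $\Proj$ from $\Moeb$ and that makes the sphere and projective-plane computations subtler than the generic surfaces. I expect the cleanest route is to build the non-stopping quotient explicitly on generators $\gamma_\alpha$ and $A_{\alpha\beta}$ (Notation~\ref{notation_gamma}) and check the defining relations directly, rather than trying to compute $\B_{1,\mu}(\Proj)/\LCS_\infty$ in full.
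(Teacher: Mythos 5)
Your plan is essentially the paper's proof: it uses the extension of Proposition~\ref{P(Proj)_from_P(Moeb)}, abelianises the kernel, kills the central classes $s_i$, and arrives at a quotient of the form $(\Z^{l-1}/\overline{\xi})\rtimes(\Z/2)$ with the involution acting by $-\mathrm{id}$ (read off from $[\gamma_i,\gamma_1^{-1}]=A_{1i}$), to which Corollary~\ref{Lie_Klein_A/center} applies. The ``hard part'' you flag is resolved exactly as you hope: the boundary relations of Corollary~\ref{partitioned_Braids_on_closed_surfaces} give $c_1^2=\overline{\xi}=2\sum_{i\geq 2}n_ic_i$, which is therefore central, so quotienting by $\overline{\xi}$ splits the extension, and the resulting abelian group $\Z^{l-1}/\overline{\xi}\cong\Z^{l-2}\times(\Z/2)$ retains an infinite summand precisely because $\mu$ has at least two blocks (i.e.\ $l\geq 3$).
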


\begin{proof}
We use the extension from Proposition~\ref{P(Proj)_from_P(Moeb)}:
\[\begin{tikzcd}
\B_\mu(\Moeb)/\xi^2 \ar[r, hook] 
&\B_{1,\mu}(\Proj) \ar[r, two heads] 
&\pi_1(\Proj) \cong \Z/2.
\end{tikzcd}\]
We denote the partition $\lambda := (1, \mu)$ of the integer $n$ by $(n_1, \ldots , n_l)$, and we denote by $l'$ the number of indices $i$ such that $n_i \geq 2$. Using notations from the \Spar\ref{sec_pstations_B(S)} (changed to $\gamma_i := c_1^{(i)}$), we have that the quotient in the previous extension is generated by the class of $\gamma_1$, and we can write $\xi$ as the following product of commuting braids:
\[\xi = \prod\limits_{i = 2}^{n}\gamma_i^2 (A_{i, i+1} \cdots A_{i,n})^{-1}.\]
Let us consider the quotient $G$ of $\B_{1,\mu}(\Proj)$ by $\LCS_2(\B_\mu(\Moeb)/\xi^2)$. It is an extension of $\Z/2$ by $\B_\mu(\Moeb)^{\ab}/2 \overline \xi$. Recall from Proposition~\ref{partitioned_B(S)^ab} that:
\[\B_\mu(\Moeb)^{\ab} \cong \Z^{l-1} \times (\Z/2)^{l'}.\]
A basis of the first factor is given by $c_2, \ldots , c_l$, where each $c_i$ is the common class of the $\gamma_\alpha$ with $\alpha$ in the $i$-th block of $\lambda$. A $(\Z/2)$-basis of the second factor is given by the elements $s_i$ described in Proposition~\ref{partitioned_B^ab}. The images of the $s_i$ in $G$ commute with the class $c_1$ of $\gamma_1$, since they have lifts with disjoint support. As a consequence, they are central not only in $\B_\mu(\Moeb)^{\ab}/2\overline \xi$, but in $G$. Let them generate the subgroup $A$ of $G$, and consider $G/A$. There is an extension:
\[\begin{tikzcd}
\Z^{l-1}/2 \overline\xi \ar[r, hook] 
&G/A \ar[r, two heads] 
&\Z/2,
\end{tikzcd}\]
where $\overline\xi = 2 \sum_{i \geq 2} n_ic_i$. This extension is not split, but we can quotient further to get a split extension of abelian groups. Namely, we need to kill the element $c_1^2$ of $G$. In order to understand it, we use the relations from Corollary~\ref{partitioned_Braids_on_closed_surfaces}: $\gamma_\alpha^2 = A_{\alpha1} \cdots A_{\alpha n}$ in $\B_{1, \mu}(\Proj)$. In $G/A$, where the $A_{\alpha \beta}$ are killed if $\alpha, \beta \geq 2$, these relations give $\overline{A_{1 \alpha}} = \overline {\gamma_\alpha}^2$ if $\alpha \geq 2$ and
\[c_1^2 \ =\ \overline {\gamma_1}^2 \ =\ \overline{A_{12}} \cdots \overline{A_{1n}} \ =\ \overline {\gamma_2}^2 \cdots \overline {\gamma_n}^2 = 2 \sum_{i \geq 2} n_i c_i \ =\ \overline \xi.\]
In particular, $\overline \xi$ commutes with $c_1$, which implies that it is central in $G/A$. The quotient $H$ of $G/A$ by $\overline \xi$ is thus a semi-direct product:
\[\begin{tikzcd}
H \cong (\Z^{l-1}/\overline \xi) \rtimes \Z/2.
\end{tikzcd}\]
Finally, we need to compute the action of $c_1$ by conjugation on the other $c_i$. Recall that $[\gamma_\alpha, \gamma_1^{-1}] = A_{1, \alpha}$ in $\B_{1, \mu}(\Proj)$ (see Figure~\ref{Aij_as_bracket_crosscap}), and $\overline{A_{1, \alpha}} = \overline {\gamma_\alpha}^2$ in $H$, so that $[c_i, c_1^{-1}] = c_i^2$ in $H$, whence $c_1^{-1} c_i^{-1} c_1 = c_i$, which implies that $c_1$ acts via $-id$ on $\langle c_2, \ldots , c_l \rangle = \Z^{l-1}/\overline \xi \cong \Z^{l-2} \times (\Z/2)$. Since we have assumed that $l \geq 3$, the LCS of $H = (\Z^{l-2} \times (\Z/2)) \rtimes (\Z/2)$ does not stop; see Corollary~\ref{LCS_Klein_A}.
\end{proof}

\begin{proposition}\label{LCS_B1m(P2)}
For $m \geq 3$, the LCS of $\B_{1, m}(\Proj)$ stops either at $\LCS_{v_2(m)+2}$ or at $\LCS_{v_2(m)+3}$ when $m$ is even, where $v_2$ is the $2$-adic valuation, and either at $\LCS_3$ or at $\LCS_4$ when $m$ is odd.
\end{proposition}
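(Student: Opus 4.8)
The plan is to follow the same strategy that worked for $\B_{2,m}(\S^2)$ in Proposition~\ref{LCS_B2m(S2)}, adapting it to the projective plane. The key structural fact is the short exact sequence from Proposition~\ref{P(Proj)_from_P(Moeb)}, which for $\lambda = (1,m)$ reads $\B_m(\Moeb)/\xi^2 \hookrightarrow \B_{1,m}(\Proj) \twoheadrightarrow \Z/2$. First I would observe that, since $m \geq 3$, the stable theory applies to $\B_m(\Moeb)$: by Theorem~\ref{Lie_ring_partitioned_B(S)} (or the $l=1$ non-orientable case), the LCS of $\B_m(\Moeb)$ stops at $\LCS_2$, and hence so does that of the quotient $\B_m(\Moeb)/\xi^2$. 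This means $\LCS_\infty(\B_{1,m}(\Proj))$ contains $\LCS_2(\B_m(\Moeb)/\xi^2)$, so it suffices to understand the LCS of the intermediate quotient $G := \B_{1,m}(\Proj)/\LCS_2(\B_m(\Moeb)/\xi^2)$.

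Next I would analyse $G$ along the lines of the computation in the proof of Proposition~\ref{LCS_B1mu(P2)}, but now keeping track of torsion carefully rather than just exhibiting a non-stopping quotient. The group $G$ is an extension of $\Z/2$ (generated by the class $c_1$ of $\gamma_1$) by the abelianisation $\B_m(\Moeb)^{\ab}/2\overline\xi$. Using Proposition~\ref{partitioned_B(S)^ab} and Corollary~\ref{partitioned_Braids_on_closed_surfaces}, together with the commutator relation $[\gamma_\alpha,\gamma_1^{-1}] = A_{1\alpha}$ from Figure~\ref{Aij_as_bracket_crosscap} and $\overline{A_{1\alpha}} = \overline{\gamma_\alpha}^2$, I expect to identify a central quotient $H$ of $G$ of the form $(\Z/m) \rtimes (\Z/2)$ with $\Z/2$ acting by $-\mathrm{id}$. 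The LCS of such a group is computed in Corollary~\ref{Lie_Klein_A/center}: one has $\LCS_i = 2^{i-1}(\Z/m)$ for $i \geq 2$, so it stops at $\LCS_{v_2(m)+1}$ when $m$ is even and at $\LCS_2$ when $m$ is odd. (The shift from the sphere to the projective plane, where the pure braid generators live one degree deeper in the LCS, accounts for the extra $+1$ in the final answer compared with Proposition~\ref{LCS_B2m(S2)}.)

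The main obstacle, exactly as in the sphere case, is the passage from $H$ back to $G$: the central extension $H \hookrightarrow G$ (or rather the central kernel $A \cong \Z/2$ arising from the $s_i$ together with the kernel of $G \twoheadrightarrow H$) forces a one-step ambiguity in where the LCS stops. To control this I would invoke Corollary~\ref{Quotient_by_central_subgroup}: if $A$ is a central subgroup of $G$ such that the canonical map $A \to G/\LCS_2(G) = G^{\ab}$ is injective, then the LCS of $G$ stops at $\LCS_k$ or $\LCS_{k+1}$, where $\LCS_k$ is where the LCS of $G/A$ stops. So the crucial computation will be $G^{\ab} \cong \B_{1,m}(\Proj)^{\ab}$, obtainable from Proposition~\ref{partitioned_B(S)^ab}, followed by checking that the central $\Z/2$ injects into it; this is the step most likely to require genuine care, since verifying injectivity into the abelianisation is precisely what is needed to apply the corollary (compare the use of $K \cong \Z/2$, $l=1$ in the Example following Lemma~\ref{Quotient_by_finite_subgroup}, which is cited as being used here). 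Combining $k = v_2(m)+1+1 = v_2(m)+2$ (even $m$) or $k = 2+1 = 3$ (odd $m$) with the one-step ambiguity from Corollary~\ref{Quotient_by_central_subgroup} yields exactly the stated pairs $\{\LCS_{v_2(m)+2}, \LCS_{v_2(m)+3}\}$ and $\{\LCS_3,\LCS_4\}$.
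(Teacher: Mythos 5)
Your overall skeleton is the right one and matches the paper's: present $\B_{1,m}(\Proj)$ via Proposition~\ref{P(Proj)_from_P(Moeb)} as an extension of $\Z/2$ by $\B_m(\Moeb)/\xi^2$, use $\LCS_2 = \LCS_\infty$ for $\B_m(\Moeb)$ ($m\geq 3$) to reduce to a finite quotient $G$, compute a metabelian quotient of $G$ via the generalised Klein group lemmas, and absorb a central $\Z/2$ at the end. However, there are two concrete gaps. First, your intermediate quotient is misidentified. Working in $G$, the kernel of $G\twoheadrightarrow\Z/2$ is $\B_m(\Moeb)^{\ab}/\overline{\xi^2}$; since $\B_m(\Moeb)^{\ab}\cong\Z\times\Z/2$ with the $\Z$ generated by $\gamma$ and $\overline\xi = 2m\gamma$, this kernel is $\Z/4m\times\Z/2$, and the boundary relation of Corollary~\ref{partitioned_Braids_on_closed_surfaces} gives $\gamma_1^2 = \gamma^{2m}$. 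Quotienting by the order-$2$ central element $\gamma_1^2=\gamma^{2m}$ splits the extension and yields $\Z/2\times\bigl((\Z/2m)\rtimes(\Z/2)\bigr)$: the cyclic factor is $\Z/2m$, not $\Z/m$. By Corollary~\ref{Lie_Klein_A/center} this is where the exponent $v_2(2m)=v_2(m)+1$ enters; your parenthetical ``$+1$ for the shift from the sphere to the projective plane'' is precisely this missing factor of $2$, asserted rather than derived. Reaching a $(\Z/m)\rtimes(\Z/2)$ quotient instead would require killing a central $\Z/4$, which by Lemma~\ref{Quotient_by_finite_subgroup} costs a \emph{two}-step ambiguity and would not reproduce the stated pairs.

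Second, Corollary~\ref{Quotient_by_central_subgroup} is not applicable here. Its hypothesis (most easily checked for $i=2$) asks that the central subgroup inject into $G^{\ab}$, but $\gamma_1^2=\gamma^{2m}$ is a product of classes of pure braid generators, hence lies in $\LCS_2$, and indeed dies in $\B_{1,m}(\Proj)^{\ab}\cong(\Z/2)^3$; so the central $\Z/2$ you want to absorb does \emph{not} inject into $G/\LCS_2(G)$, and there is no evident larger $i$ for which injectivity holds. The paper instead invokes Lemma~\ref{Quotient_by_finite_subgroup} with $K\cong\Z/2$ and $l=1$ (exactly as announced in the example following that lemma), which requires no injectivity and still yields the one-step ambiguity. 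With these two corrections --- the quotient $\Z/2\times((\Z/2m)\rtimes(\Z/2))$, whose LCS is $2^{i-1}\Z/(2m)$ for $i\geq 2$, and the finite-kernel lemma in place of the central-subgroup corollary --- your argument closes and coincides with the paper's proof.
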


\begin{proof}
Again, we use the extension from Proposition~\ref{P(Proj)_from_P(Moeb)}:
\[\begin{tikzcd}
\B_m(\Moeb)/\xi^2 \ar[r, hook] 
&\B_{1,m}(\Proj) \ar[r, two heads] 
&\pi_1(\Proj) \cong \Z/2.
\end{tikzcd}\]
Recall that, since $m \geq 3$, the LCS of $\B_m(\Moeb)$ stops at $\LCS_2$; see Theorem~\ref{Lie_ring_B(S)}. Moreover, $\B_m(\Moeb)/\LCS_\infty = \B_m(\Moeb)^{\ab} \cong \Z \times \Z/2$, where the first factor is generated by the common class $\gamma$ of $\gamma_2, \ldots , \gamma_{m+1}$ and the second factor is generated by the common class $\sigma$ of the $\sigma_i$; see Proposition~\ref{Bn(S)^ab}. The image of the central element $\xi$ of $\B_m(\Moeb)^{\ab}$ is then:
\[\overline \xi = \sum\limits_{i = 2}^{m+1}\overline{\gamma_i^2 (A_{i, i+1} \cdots A_{i,m+1})^{-1}} = 2m \gamma - 2 \cdot \frac{m(m+1)}2 \sigma = 2m \gamma,\]
so that $\B_m(\Moeb)/(\xi^2 \LCS_2) \cong \Z/4m \times \Z/2$.
Let us consider the quotient $G$ of $\B_{1,m}(\Proj)$ by the image of $\Gamma_2(\B_m(\Moeb))$. The image of $\LCS_2(\B_m(\Moeb)) = \LCS_\infty(\B_m(\Moeb))$ is inside $\LCS_\infty(\B_{1,m}(\Proj))$, so that $G$ and $\B_{1,m}(\Proj)$ have the same associated Lie ring. Now, $G$ is an extension:
\[\begin{tikzcd}
\Z/4m \times \Z/2 \ar[r, hook] 
&G \ar[r, two heads] 
&\Z/2.
\end{tikzcd}\]
We can already see that $G$ is finite, and deduce that its LCS stops, so that  $\LCS_*(\B_{1,m}(\Proj))$ stops too. In order to be more precise, let us recall that $G = \langle \gamma, \sigma, \gamma_1 \rangle$, where the class of $\gamma_1$ generates the quotient in the previous extension. Notice that $\gamma_1^2$ is in the kernel, so it commutes with $\gamma$ and $\sigma$. Since it obviously commutes with $\gamma_1$, it is central in $G$. Moreover, as in the previous proof, the boundary relations in $\B_{1,m}(\Proj)$ give:
\[\gamma = \overline{\gamma_i^2} = \overline{A_{i,1} \cdots A_{i, m+1}} = \overline{A_{i,1}}\ \ \text{for}\ i \geq 2 \ \ \text{and}\ \  \gamma_1^2 = \overline{A_{1,2} \cdots A_{1, m+1}} = \gamma^{2m} = \overline \xi.\]
Thus, the quotient of $G$ by its central subgroup $A = \langle \gamma^{2m} \rangle$ is an extension of $\Z/2$ by $(\Z/2m) \times \Z/2$. Since $\gamma_1^2 = 1$ in $G/A$, this extension splits as a semi-direct product:
\[G/A \cong ((\Z/2m) \times \Z/2) \rtimes \Z/2.\] 
The element $\gamma_1$ of $G/A$  commutes with $\sigma$, and, using once again the isotopy from Figure~\ref{Aij_as_bracket_crosscap}: 
\[[\gamma, \gamma_1^{-1}] = \overline{[\gamma_2, \gamma_1^{-1}]} = \overline{A_{12}} = \gamma^2,\]
which implies that $\gamma_1^{-1}\gamma^{-1}\gamma_1 = \gamma$. This means that $ \Z/2$ acts trivially on the $\Z/2$ factor and via $-id$ on the $\Z/2m$ factor. This explicit description allows us to compute completely the LCS of $G/A \cong \Z/2 \times (\Z/2m \rtimes \Z/2)$ using Corollary~\ref{Lie_Klein_A/center}. Precisely, $\LCS_i(G/A) = 2^{i-1}\Z/(2m)$ for $i \geq 2$. Hence it stops at $\LCS_3$ if $m$ is odd and at $\LCS_{v_2(m)+2}$ if $m$ is even. Finally, since $A$ is cyclic of order $2$, we can apply Lemma~\ref{Quotient_by_finite_subgroup} (with $l = 1$) to conclude that the LCS of $G$ (whence the one of $\B_{1,m}(\Proj)$) stops at $\LCS_{v_2(m)+2}$ or at $\LCS_{v_2(m)+3}$ when $m$ is even and at $\LCS_3$ or $\LCS_4$ when $m$ is odd.
\end{proof}

\begin{remark}
\label{rmk:experimental2}
Similarly to the situation of $\B_{2,m}(\S^2)$ (see Remark \ref{rmk:experimental}), it seems difficult to decide theoretically between the two possibilities in Proposition \ref{LCS_B1m(P2)}. However, based on experimental calculations~\cite{GAPcode} using GAP~\cite{GAP4} and the package NQ~\cite{Nickel1996}, we conjecture that the LCS of $\B_{1, m}(\Proj)$ always stops at $\LCS_{v_2(m)+3}$ (for both even and odd $m$).
\end{remark}

We are left with two cases to consider where there is a block of size $1$, namely $\B_{1,1}(\Proj)$ and $\B_{1,2}(\Proj)$. The group $\B_{1,1}(\Proj) = \PB_2(\Proj)$ is isomorphic to the quaternion group $Q_8$ (see Corollary~\ref{B2(Proj)} below), which is $2$-nilpotent, so its LCS stops at $\LCS_3$. Notice that this means that the conclusion of Proposition \ref{LCS_B1m(P2)} is correct also for $m=1$, since that proposition would assert that the LCS of $\B_{1,1}(\Proj)$ stops at $\LCS_3$ or $\LCS_4$. This fact is also compatible with our conjecture in Remark \ref{rmk:experimental2}. In contrast, we will show that the LCS of $\B_{1,2}(\Proj)$ does not stop (Proposition~\ref{LCS_B12(P2)}). The study of this latter case is postponed, and will be part of our study of $\B_{2,m}(\Proj)$, of which a presentation will be computed in Proposition~\ref{B2m(Proj)} for every $m \geq 1$.

\subsubsection{Blocks of size $2$.} We now study the case where there is at least one block with exactly two strands.

Proposition~\ref{P(Proj)_from_P(Moeb)} can be used to recover the following classical calculations from \cite[p.~87]{vanBuskirk1966}:

\begin{corollary}\label{B2(Proj)}
The pure braid group $\PB_2(\Proj)$ is isomorphic to the quaternion group $Q_8$ (which is $2$-nilpotent), and $\B_2(\Proj)$ to the dicyclic group of order $16$ (which is $3$-nilpotent). Precisely, a presentation of the latter is:
\[\B_2(\Proj) = \left\langle \sigma_1, \gamma_1\ \middle| \ \gamma_1^2 = [\sigma_1\gamma_1\sigma_1^{-1}, \gamma_1^{-1}] = \sigma_1^2 \right\rangle,\]
where $\sigma_1$ and $\gamma_1$ are the elements of $\B_2(\Proj)$ defined above (Notation~\ref{notation_gamma}).
\end{corollary}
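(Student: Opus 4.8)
The plan is to use the Fadell–Neuwirth machinery of Proposition~\ref{P(Proj)_from_P(Moeb)} in the simplest nontrivial case, namely the partition $\mu = (1)$ of $m = 1$, which gives the short exact sequence
\[\begin{tikzcd}
\B_1(\Moeb)/\xi^2 \ar[r, hook]
&\B_{1,1}(\Proj) \ar[r, two heads]
&\pi_1(\Proj) \cong \Z/2.
\end{tikzcd}\]
Here $\B_1(\Moeb) \cong \pi_1(\Moeb) \cong \Z$ is infinite cyclic, generated by $\gamma_2$ (in the notation of \Spar\ref{sec_pstations_B(S)}), and the central element $\xi$ is simply $\gamma_2^2$ (the single strand going once along $\partial\Moeb$). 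Thus $\xi^2 = \gamma_2^4$, so the kernel $\B_1(\Moeb)/\xi^2 \cong \Z/4$, and $\PB_2(\Proj) = \B_{1,1}(\Proj)$ is an extension of $\Z/2$ by $\Z/4$, hence a group of order $8$.

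First I would pin down which group of order $8$ this is. The conjugation action of $\gamma_1$ (a lift of the generator of the quotient $\Z/2$) on the kernel is governed by the crosscap relation $[\gamma_2, \gamma_1^{-1}] = A_{12}$ from Figure~\ref{Aij_as_bracket_crosscap}, together with the boundary relation $\gamma_\alpha^2 = A_{\alpha 1} \cdots A_{\alpha n}$ from Corollary~\ref{partitioned_Braids_on_closed_surfaces} specialised to $n = 2$. These give $\gamma_1^2 = A_{12} = \gamma_2^2$ and the relation expressing how $\gamma_1$ conjugates $\gamma_2$; carrying this out shows that $\gamma_1^2 = \gamma_2^2$ is the unique element of order $2$ and that $\gamma_1 \gamma_2 \gamma_1^{-1} = \gamma_2^{-1}$, which is exactly the presentation of $Q_8$ with $i = \gamma_2$, $j = \gamma_1$. (Alternatively, one checks directly that $\B_{1,1}(\Proj)$ is nonabelian of order $8$ with a unique involution, which forces $Q_8$.) Since $Q_8$ is $2$-nilpotent — its centre is $\langle -1 \rangle$ with quotient $(\Z/2)^2$ — its LCS stops at $\LCS_3$, giving the stated claim for the pure braid group.

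For $\B_2(\Proj)$ itself I would apply Proposition~\ref{P(Proj)_from_P(Moeb)} with $\mu = (1)$ but now to the \emph{non-pure} group, or equivalently realise $\B_2(\Proj)$ as an extension of $\Sym_2 \cong \Z/2$ by $\PB_2(\Proj) \cong Q_8$, a group of order $16$. The generator $\sigma_1$ of the quotient satisfies $\sigma_1^2 = A_{12} = \gamma_1^2 = \gamma_2^2$ (again by the $\sigma_i^2 = A_{i,i+1}$ identity together with the computations above), so $\sigma_1^2$ coincides with the central involution of $Q_8$. This immediately yields the presentation
\[\B_2(\Proj) = \left\langle \sigma_1, \gamma_1\ \middle| \ \gamma_1^2 = [\sigma_1\gamma_1\sigma_1^{-1}, \gamma_1^{-1}] = \sigma_1^2 \right\rangle,\]
where the middle relation is the crosscap relation of Figure~\ref{Aij_as_bracket_crosscap} transported by $\sigma_1$. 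Verifying that this presentation defines exactly the dicyclic group $\mathrm{Dic}_{16}$ (equivalently $Q_{16}$, the generalised quaternion group of order $16$) is a routine check: the given relations force $\sigma_1^4 = 1$, $\gamma_1^4 = 1$, $\sigma_1^2 = \gamma_1^2$ central, and $\gamma_1 \sigma_1 \gamma_1^{-1} = \sigma_1^{-1}$, which is the standard dicyclic presentation, and a counting argument (using the surjection onto $\Sym_2$ with kernel $Q_8$) shows the group has exactly $16$ elements so the surjection from the presented group is an isomorphism. Finally, $\mathrm{Dic}_{16}$ is $3$-nilpotent (its LCS is $\mathrm{Dic}_{16} \supset \langle \sigma_1^2 \rangle$-type chain terminating after three steps), so its LCS stops at $\LCS_4$, consistent with the table entry.

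The main obstacle I expect is not the nilpotency analysis — once the groups are identified, their lower central series are elementary — but rather the careful bookkeeping needed to identify $\xi$ and the various generators across the isotopy-equivalence identifications and to confirm that the boundary and crosscap relations combine to give precisely $\sigma_1^2 = \gamma_1^2 = [\sigma_1\gamma_1\sigma_1^{-1},\gamma_1^{-1}]$ rather than some related word. In particular one must be attentive to sign and orientation conventions in Figure~\ref{Aij_as_bracket_crosscap} and to the fact that $A_{12} = \sigma_1^2$ (Figure~\ref{pure_braid_generator}), since a sign error here would spuriously produce the abelian group $\Z/2 \times \Z/4$ or the dihedral group instead of the quaternionic ones. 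This is precisely the delicate point that the reference \cite[p.~87]{vanBuskirk1966} handles, and the cleanest route is to verify the presentation abstractly and then match orders, rather than to track the geometry too literally.
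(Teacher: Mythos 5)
Your overall strategy is the same as the paper's: both use Proposition~\ref{P(Proj)_from_P(Moeb)} with $m=n=1$ to see that $\PB_2(\Proj)$ has order $8$ and is generated by $\gamma_1$ and $\gamma_2$, identify it with $Q_8$, pass to the extension by $\Sym_2$ to get order $16$, and verify the displayed relations via the boundary relation of Corollary~\ref{Braids_on_closed_surfaces} and the crosscap isotopy of Figure~\ref{Aij_as_bracket_crosscap}. The only substantive difference is how one certifies that the abstract presentation defines a group of order at most $16$, and this is where your argument breaks.

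You assert that the relations $\gamma_1^2=[\sigma_1\gamma_1\sigma_1^{-1},\gamma_1^{-1}]=\sigma_1^2$ force $\sigma_1^4=\gamma_1^4=1$, $\sigma_1^2=\gamma_1^2$ central, and $\gamma_1\sigma_1\gamma_1^{-1}=\sigma_1^{-1}$, and that this is ``the standard dicyclic presentation''. It is not: those relations present the quaternion group $Q_8$, of order $8$, so if they really were consequences the presented group would have order at most $8$ and could not surject onto the order-$16$ group $\B_2(\Proj)$; your counting argument would then prove nothing. In fact $\gamma_1\sigma_1\gamma_1^{-1}=\sigma_1^{-1}$ is false in $Dic_{16}$: writing $Dic_{16}=\langle a,b\mid a^8=1,\ b^2=a^4,\ bab^{-1}=a^{-1}\rangle$ and taking $\sigma_1=ab$, $\gamma_1=b$ (which do satisfy the two defining relations and generate the whole group), one finds $\gamma_1\sigma_1\gamma_1^{-1}=a^7b$ whereas $\sigma_1^{-1}=a^5b$. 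The relation $[\sigma_1\gamma_1\sigma_1^{-1},\gamma_1^{-1}]=\gamma_1^2$ actually forces $\gamma_1\gamma_2\gamma_1^{-1}=\gamma_2^{-1}$ for $\gamma_2:=\sigma_1\gamma_1\sigma_1^{-1}$ -- the relation you correctly use in the $Q_8$ step -- and the element of order $8$ in $Dic_{16}$ is $\sigma_1\gamma_1$, not $\sigma_1$. The paper's way around this is a Tietze transformation: introduce $y=\sigma_1\gamma_1\sigma_1^{-1}$ as a third generator and check that the given relations are equivalent to $sx=ys$, $x^2=y^2=(xy)^2=s^2$, a standard presentation of $Dic_{16}$ in which $Q_8=\langle x,y\rangle$ is the index-$2$ subgroup; your order count then finishes the proof. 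With that one step repaired, your argument coincides with the paper's.
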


\begin{proof}
Recall that the dicyclic group of order $16$ can be defined by the presentation:
\[Dic_{16} = \left\langle s, x, y\ \middle| \ sx = ys,\  x^2 = y^2 = (xy)^2 = s^2 \right\rangle,\]
and that it contains the subgroup $Q_8$ of quaternions as the index-$2$ subgroup generated by $x$ and $y$. Notice that, modulo the other relations, $(xy)^2 = s^2$ is equivalent to $s^2 x^{-1} y x s^2 = s^2$, which is equivalent to $[y, x^{-1}] = s^2$ by passing to the inverses. We can also use the first relation to eliminate $y = sxs^{-1}$. As a consequence, we also have:
\[Dic_{16} = \left\langle s, x\ \middle| \ x^2 = [sxs^{-1}, x^{-1}] = s^2 \right\rangle.\]
It is easy to check that the elements $\sigma_1, \gamma_1$ and $\gamma_2$ of $\B_2(\Proj)$ satisfy the above relations, so that $s \mapsto \sigma_1$ and $x \mapsto \gamma_1$ define a morphism $\varphi$ from $Dic_{16}$ to $\B_2(\Proj)$. Indeed, the relation $\gamma_1^2 = \sigma_1^2$ is one of the boundary relations from Corollary~\ref{Braids_on_closed_surfaces}, and an isotopy witnessing the last one is drawn in Figure~\ref{Aij_as_bracket_crosscap}. Note that $\varphi$ sends the element $y = sxs^{-1}$ to  $\sigma_1 \gamma_1 \sigma_1^{-1} = \gamma_2$.

By taking $m = n = 1$, the short exact sequence of Proposition~\ref{P(Proj)_from_P(Moeb)} specialises to:
\[\begin{tikzcd}
\pi_1(\Moeb)/\xi^2 \cong \Z/4 \ar[r, hook] 
&\PB_2(\Proj) \ar[r, two heads] 
&\pi_1(\Proj) \cong \Z/2.
\end{tikzcd}\]
Indeed, $\B_1(\Moeb) = \pi_1(\Moeb)$ is isomorphic to $\Z$, and the element $\xi$, which is a loop parallel to the boundary of the Möbius strip, is the square of a generator. From this, we deduce that $\PB_2(\Proj)$ has eight elements, and that it is generated by $\gamma_1$ (the image of a generator of $\pi_1(\Moeb)$) and $\gamma_2$ (a lift of the generator of $\pi_1(\Proj)$). As a consequence, $\varphi$ must induce an isomorphism between $Q_8 = \langle x,y \rangle \subset Dic_{16}$ and $\PB_2(\Proj)$. Then, we can use the usual extension
\[\begin{tikzcd}
\PB_2(\Proj) \ar[r, hook] 
&\B_2(\Proj) \ar[r, two heads] 
&\Sym_2
\end{tikzcd}\]
to deduce that $\sigma_1, \gamma_1$ and $\gamma_2$ generate $\B_2(\Proj)$, and that $\B_2(\Proj)$ has sixteen elements. Hence $\varphi$ is an isomorphism.
\end{proof}

We first deal with the case where there are at least three blocks:

\begin{proposition}\label{LCS_B2mu(P2)} 
Let $\mu$ be a partition having at least two blocks. Then the LCS of $\B_{2, \mu}(\Proj)$ does not stop.
\end{proposition}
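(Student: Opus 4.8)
The plan is to reduce, via Lemma~\ref{lem:stationary_quotient}, to the production of a single quotient of $\B_{2,\mu}(\Proj)$ whose LCS does not stop, and to arrange that quotient to be of the form $A \rtimes (\Z/2)$ with $\Z/2$ acting by $-\mathrm{id}$ on an \emph{infinite} abelian group $A$. Once this is achieved, Corollary~\ref{Lie_Klein_A/center} (alternatively Corollary~\ref{LCS_Klein_A}) gives $\LCS_i = 2^{i-1}A$ for $i \geq 2$, which never stabilises. This is exactly the shape of argument used for the sphere in Proposition~\ref{LCS_B2mu(S2)} and for the projective plane in Propositions~\ref{LCS_B1mu(P2)} and~\ref{LCS_B1m(P2)}. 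The crucial structural difference from the sphere is that on $\Proj$ every pure braid generator $A_{\alpha\beta}$ is a commutator (Proposition~\ref{trichotomy}), hence dies in all the relevant abelianisations; so, unlike in the sphere proof, the infinite part of $A$ cannot come from the linking classes $a_{ij}$ but must instead be supplied by \emph{crosscap loops}, whose infinite order is visible through the Möbius-strip structure.

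To set this up I would use the index-two inclusion $\B_{1,1,\mu}(\Proj)\lhd\B_{2,\mu}(\Proj)$, whose quotient is the symmetric group $\Sym_2$ generated by the swap $\sigma_1$ of the size-two block. Applying Proposition~\ref{P(Proj)_from_P(Moeb)} to $\B_{1,1,\mu}(\Proj)=\B_{1,(1,\mu)}(\Proj)$ exhibits the Möbius braid group $\B_{1,\mu}(\Moeb)/\xi^2$ as its pure part, and by Proposition~\ref{partitioned_B(S)^ab} (for $\Moeb$) the abelianisation of that group carries a \emph{free} abelian summand generated by the crosscap classes $c_i$. I would then form $G=\B_{2,\mu}(\Proj)/N$, with $N$ generated by the $\LCS_2$ of this Möbius part together with enough commutation relations, so that $G$ becomes an extension of $\Sym_2$ by an explicitly presented group; the presentation is obtained by the method of Appendix~\ref{section_pstation_of_ext}, feeding in the $\Proj$ boundary relations $\gamma_\alpha^2 = A_{\alpha1}\cdots A_{\alpha n}$ from Corollary~\ref{partitioned_Braids_on_closed_surfaces} and the identity $A_{1\alpha}=[\gamma_\alpha,\gamma_1^{-1}]$ from Figure~\ref{Aij_as_bracket_crosscap}. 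The key observation is that $\sigma_1$ exchanges $\gamma_1\leftrightarrow\gamma_2$ and $a_{1j}\leftrightarrow a_{2j}$, so that the anti-invariant combinations $c_1-c_2$ and $a_{1j}-a_{2j}$ are each sent to their negatives.

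Next I would quotient $G$ by its $\Sym_2$-invariant central generators (the invariant crosscap classes, the $s_i$, and the squares $\sigma_1^2$ after they are made central of finite order), landing in a group $A\rtimes(\Z/2)$ where $A$ is spanned by the anti-invariant combinations above and $\sigma_1$ acts by $-\mathrm{id}$. Because $\mu$ has at least two blocks, $A$ has free rank at least one: the Möbius crosscap classes are of infinite order, and filling in the boundary imposes only the single relation coming from $\overline\xi$, which is not enough to collapse the anti-invariant subgroup. This is precisely the mechanism by which the sphere argument needed $l>3$, transported to the non-orientable setting. Applying Corollary~\ref{Lie_Klein_A/center} to $A\rtimes(\Z/2)$ then finishes the proof.

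The hard part will be the last infiniteness claim: verifying that the boundary relation $\xi^2=1$ and the $\Z/2$-torsion of $\pi_1(\Proj)$ do not force $A$ to be finite. I would settle this, as in the proof of Proposition~\ref{partitioned_B^ab}, by exhibiting block-forgetting projections onto smaller partitioned braid groups (it suffices to keep one strand from each of two blocks of $\mu$, giving a surjection onto $\B_{2,1,1}(\Proj)$) on which the generators $c_1-c_2$ or $a_{1j}-a_{2j}$ are manifestly of infinite order. All other steps — the extension presentation, the identification of the central invariant part, and the final Klein-bottle-type LCS computation — are routine given the tools already established in the excerpt; the genuine content is isolating the correct infinite anti-invariant generator and proving it survives the passage from $\Moeb$ to $\Proj$.
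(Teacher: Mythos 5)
Your reduction to a quotient of the form $A \rtimes (\Z/2)$ with $\Z/2$ acting by $-\mathrm{id}$ on an infinite abelian group $A$ breaks down at exactly the step you leave vague. The classes you propose to quotient out as ``$\Sym_2$-invariant central generators'' are not central: the crosscap loop $\gamma_1$ of a strand in the size-two block lies in the kernel $\B_{1,1,\mu}(\Proj)$ of your index-two extension, so it survives into the group whose kernel you are abelianising, and by Figure~\ref{Aij_as_bracket_crosscap} it conjugates $\gamma_\alpha$ to $A_{1\alpha}^{-1}\gamma_\alpha$ for $\alpha$ in a block of $\mu$. Hence the normal closure of an ``invariant crosscap class'' $c_i$ contains $a_{1i}$, and (conjugating by $\sigma_1$) also $a_{2i}$, so killing the invariant crosscap classes destroys precisely the anti-invariants $a_{1i}-a_{2i}$ that were supposed to span $A$. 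What survives is $c_1-c_2$, and that is $2$-torsion: the boundary relations of Corollary~\ref{partitioned_Braids_on_closed_surfaces} give $2c_1=\overline{A_{12}}+\sum_j n_j a_{1j}$ and $2c_2=\overline{A_{12}}+\sum_j n_j a_{2j}$, so $2(c_1-c_2)=\sum_j n_j(a_{1j}-a_{2j})=0$ once the $a_{1j}-a_{2j}$ are gone. Your $A$ is therefore finite and Corollary~\ref{Lie_Klein_A/center} yields a \emph{stopping} LCS. The fallback of detecting infinite order by projecting onto $\B_{2,1,1}(\Proj)$ is circular (that group is the case $\mu=(1,1)$ of the very proposition you are proving), and no abelianisation can help: by Proposition~\ref{partitioned_B(S)^ab}, every $\B_\lambda(\Proj)^{\ab}$ is a finite elementary abelian $2$-group, so on $\Proj$ an infinite-order element in a nilpotent quotient must be manufactured from non-abelian structure, not read off from anti-invariant classes in an abelianisation.

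This is why the paper keeps a larger group acting. Its proof uses the Fadell--Neuwirth extension with quotient $\B_2(\Proj)\cong Dic_{16}$ rather than $\Sym_2$, so that both $\sigma_1$ \emph{and} $\gamma_1$ remain as acting elements; after quotienting by the genuinely central $s_i$, then by $\sigma_1^2\gamma_1^{-2}$ and $\sum n_i c_i$, it lands in $(\Lambda\otimes B)\rtimes W_2$, where $B=\Z^{l-2}/(n_3,\ldots,n_l)$ has positive rank precisely because $\mu$ has at least two blocks, giving a surjection onto $\Lambda\rtimes W_2$. The non-stopping is then Proposition~\ref{LCS_can_rep_of_W2}, whose relative filtration $\Lambda\supset V\supset 2\Lambda\supset 2V\supset\cdots$ strictly interleaves two lattices and genuinely requires the interaction of both generators of $W_2$; it cannot be collapsed onto a single involution acting by $-\mathrm{id}$ on an infinite quotient, which is the reduction your plan depends on.
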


\begin{proof}
We are looking for a quotient whose LCS can be computed, and does not stop. Let us consider the Fadell-Neuwirth extension from Proposition~\ref{Fadell-Neuwirth}:
\[\begin{tikzcd}
\B_\mu(\Moeb - \{pt\}) \ar[r, hook] 
&\B_{2,\mu}(\Proj) \ar[r, two heads] 
&\B_2(\Proj).
\end{tikzcd}\]
We use notations similar to the ones from the proof of Proposition~\ref{LCS_B1mu(P2)}: we denote the partition $\lambda := (1, 1, \mu)$ of the integer $n$ by $(n_1, \ldots , n_l)$, and we denote by $l'$ the number of indices $i$ such that $n_i \geq 2$. We also use Notation~\ref{notation_gamma} for braids on the projective plane. 

In order to get a more manageable extension, we first take the quotient $G$ of $\B_{2,\mu}(\Proj)$ by $\LCS_2(\B_\mu(\Moeb - \{pt\}))$, getting an extension:
\[\begin{tikzcd}
\B_\mu(\Moeb - \{pt\})^{\ab} \ar[r, hook] 
&G \ar[r, two heads] 
&\B_2(\Proj).
\end{tikzcd}\]
The kernel $\B_\mu(\Moeb - \{pt\})^{\ab}$ is computed in Proposition~\ref{partitioned_B(S)^ab}: it is the product $(\pi_1(\Moeb - \{pt\})^{\ab})^{l-2} \times (\Z/2)^{l'}$, where the first factor is generated by the classes $c_i$ of $\gamma_\alpha$, $a_{1i}$ of $A_{1\alpha}$ and $a_{2i}$ of $A_{2\alpha}$ (for $\alpha$ in the $i$-th block, with $i \geq 3$), subject to the relations $a_{1i} + a_{2i} = 2 c_i$ (for each $i$, we get a copy of $\pi_1(\Moeb - \{pt\})^{\ab} \cong \Z^2$), and the second one is generated by the classes $s_i$ of the $\sigma_\alpha$ (for $\alpha$ and $\alpha + 1$ in the $i$-th block of $\lambda$, which is possible only if $n_i \geq 2$). The group $G$ is generated by these elements, together with the classes of $\sigma_1$ and $\gamma_1$ (whose images generate $\B_2(\Proj)$). 

The elements $s_i$ commute with the other elements of $\B_\mu(\Moeb - \{pt\})^{\ab}$ (which is abelian), but also with $\sigma_1$ and $\gamma_1$ (for reasons of support). Hence they are central elements of $G$. Let us denote by $A \cong (\Z/2)^{l'}$ the central subgroup they generate, and let us consider the extension:
\[\begin{tikzcd}
\Z^{2(l-2)} \ar[r, hook] 
&G/A \ar[r, two heads] 
&\B_2(\Proj).
\end{tikzcd}\]

Corollary~\ref{B2(Proj)} gives a presentation of the quotient, namely:
\[\B_2(\Proj) = \left\langle \sigma_1, \gamma_1\ \middle| \ \gamma_1^2 = [\sigma_1\gamma_1\sigma_1^{-1}, \gamma_1^{-1}] = \sigma_1^2 \right\rangle.\]
We now compute a presentation of $G/A$, using the method from Appendix~\ref{section_pstation_of_ext}. Generators are $c_i$, $a_{1i}$ and $a_{2i}$ (for $3 \leq i \leq l$), together with $\sigma_1$ and $\gamma_1$. Relations defining the kernel are the ones saying that the $c_i$, the $a_{1i}$ and the $a_{2i}$ commute with each other, together with $a_{1i} + a_{2i} = 2 c_i$ (for each $i \geq 3$). The latter could be used to eliminate $a_{2i} = 2c_i - a_{1i}$. However, we will choose not to do so here, and to give a redundant, but more tractable presentation of $G/A$. Relations lifting the above presentation of the quotient are:
\begin{equation*}
\begin{cases}
\gamma_1^2  =  \sigma_1^2 \cdot a_{13}^{n_3} \cdots a_{1l}^{n_l}, \\
[\sigma_1\gamma_1\sigma_1^{-1}, \gamma_1^{-1}] = \sigma_1^2.
\end{cases}
\end{equation*}
Indeed, these hold in $\B_{2,\mu}(\Proj) \subset \B_n(\Proj)$: the first one is one of the boundary relations from Corollary~\ref{Braids_on_closed_surfaces} and the second one is the one pictured in Figure~\ref{Aij_as_bracket_crosscap}. Moreover, these are clearly lifts of the relations defining $\B_2(\Proj)$. 

We are left with understanding how $\sigma_1$ and $\gamma_1$ (whence also $\gamma_2 = \sigma_1 \gamma_1 \sigma_1^{-1}$) act by conjugation on the $c_i$, the $a_{1i}$ and the $a_{2i}$.  We claim that the following relations hold in $G/A$:
\begin{equation*}
\begin{cases}
\sigma_1 a_{1i} = a_{2i} \sigma_1,\ \ \sigma_1 a_{2i} =  a_{1i} \sigma_1\ \text{ and }\ \sigma_1 \rightleftarrows c_i \\
\gamma_1 a_{ki} \gamma_1^{-1} = (-1)^{\delta_{1k}}a_{ki} \ \text{ and }\  \gamma_1 c_i \gamma_1^{-1} = c_i - a_{1i},
\end{cases}
\end{equation*}
where we use additive notations in the (abelian) subgroup generated by the $c_i$, the $a_{1i}$ and the $a_{2i}$. These are images of relations holding in $\B_{2,\mu}(\Proj)$, which can be proved by drawing explicit isotopies. Precisely, the first one comes from $\sigma_1 A_{1i} \sigma_1^{-1} = A_{2i}$, the second one from $\sigma_1 A_{2i} \sigma_1^{-1} = A_{2i}^{-1} A_{1i} A_{2i}$ and the third one from the commutation of $\sigma_1$ with all the $\gamma_\alpha$ if $\alpha \geq 3$. The other relation comes from $\gamma_1$ commuting with $A_{2i}$, and from $[\gamma_i, \gamma_1^{-1}] = A_{1i}$ (Figure~\ref{Aij_as_bracket_crosscap}), that is, $\gamma_1^{-1} \gamma_i \gamma_1 = A_{1i}^{-1}\gamma_i$. Notice that the relation involving $\gamma_1 a_{1i} \gamma_1^{-1}$ can be deduced from the other two, using $a_{1i} = 2c_i - a_{2i}$.

We now have a presentation of $G/A$. In order to get a simpler quotient, we quotient further by $\sigma_1^2$ and $\gamma_1^2$. That is, we add the relations $\sigma_1^2 = \gamma_1^2 = 1$ to the previous presentation. The result is a split extension:
\[ \Z^{2(l-2)}/\left(\sum\limits_{i = 3}^l n_i a_{1i} = \sum \limits_{i = 3}^l n_i a_{2i} = 0 \right) \rtimes W_2.\]
If we quotient further by $\sum n_i c_i$ (which is central in the above semi-direct product, since it is fixed by the action of $W_2$), we obtain a semi-direct product $M \rtimes W_2$, where $M$ has an explicit workable description. In fact, as a $W_2$-representation, $M \cong \Lambda \otimes B$, where $\Lambda$ is the canonical representation of $W_2$ defined in~\Spar\ref{par_can_rep_of_W2} and $B$ is the quotient of  $\Z^{l-2}$ by the vector $(n_3, \ldots , n_l)$, seen as a trivial $W_2$-representation. Precisely, if $e_i$ ($i = 3,\ldots, l$) is the generating family of $B$ obtained from the canonical basis of $\Z^2$, an isomorphism $\Lambda \otimes B \cong M$ is given, with the notations from Remark~\ref{Basis_of_Lambda}, by $a \otimes e_i \mapsto a_{2i}$, $b \otimes e_i \mapsto a_{1i}$ and $c \otimes e_i \mapsto c_i$ (with notations from Remark~\ref{Basis_of_Lambda}). 

We finally use our hypothesis: since $l \geq 4$, the rank of $B$ is not $0$, so it surjects onto $\Z$. Thus, $M$ surjects onto $\Lambda \otimes \Z = \Lambda$ (as a $W_2$-representation), and $M \rtimes W_2$ surjects onto $\Lambda \rtimes W_2$, whose LCS (computed in Proposition~\ref{LCS_can_rep_of_W2}) does not stop.
\end{proof}

Now, we are left with the case where there are precisely two blocks, one of which has exactly two strands. First of all, we give an explicit presentation of the associated group:

\begin{proposition}\label{B2m(Proj)}
Let $m \geq 1$ be an integer. The group $\B_{2,m}(\Proj)$ admits the presentation with generators $\sigma_1, \sigma_3, \sigma_4, \ldots , \sigma_{m+1}$, $\gamma_1$, $\gamma_3$ and $A_{23}$, subject to the following relations:
\begin{equation*}
\begin{cases}
(\mathbb PR1)\quad \sigma_4, \ldots , \sigma_{m+1}\ \rightleftarrows\ \gamma_3, A_{23}; \\
(\mathbb PR2)\quad A_{23}\ \rightleftarrows\  \sigma_3 \gamma_3 \sigma_3^{-1};  \\
(\mathbb PR3)\quad (\sigma_3 A_{23})^2 = (A_{23}\sigma_3)^2; \\
(\mathbb PR4)\quad  [\sigma_i \gamma_i \sigma_i^{-1}, \gamma_i^{-1}] = \sigma_i^2\ \ \text{for}\ i \in \{1,3\};\\
(\mathbb PR5)\quad \gamma_1^2 = \sigma_1 \left( \prod\limits_{k = 2}^{m+1} (\sigma_k \cdots \sigma_3) A_{23} (\sigma_k \cdots \sigma_3)^{-1} \right) \sigma_1; \\
(\mathbb PR6)\quad \sigma_1\ \rightleftarrows\ \sigma_3, \sigma_4, \ldots , \sigma_{m+1}, \gamma_3; \\
(\mathbb PR7)\quad \gamma_1\ \rightleftarrows\ \sigma_3, \sigma_4, \ldots , \sigma_{m+1}, A_{23}; \\
(\mathbb PR8)\quad \gamma_3^2 = (\sigma_1^{-1} A_{23} \sigma_1) A_{23} \cdot \prod\limits_{k = 3}^{m+1} (\sigma_k \cdots \sigma_4) \sigma_3^2 (\sigma_k \cdots \sigma_4)^{-1}; \\
(\mathbb PR9)\quad [\gamma_3, \gamma_1^{-1}] = \sigma_1^{-1} A_{23} \sigma_1.
\end{cases}
\end{equation*}
As above, the choice of names for the generators is coherent with their geometric interpretation (see Notation~\ref{notation_gamma} and Figure~\ref{gen_of_Bn(Ng1)}).
\end{proposition}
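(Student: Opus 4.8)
The plan is to obtain the presentation of $\B_{2,m}(\Proj)$ by the Reidemeister--Schreier machinery described in \Spar\ref{sec_pstations_B(S)}, applied to the Fadell--Neuwirth fibration of Proposition~\ref{Fadell-Neuwirth}. First I would view $\B_{2,m}(\Proj)$ as the middle term of the exact sequence
\[
\B_m(\Proj - \{2\ \text{pts}\}) \longrightarrow \B_{2,m}(\Proj) \longrightarrow \B_2(\Proj),
\]
obtained by forgetting the two strands of the first block. Since the quotient $\B_2(\Proj) \cong Dic_{16}$ is already understood completely via Corollary~\ref{B2(Proj)}, and since $\Proj - \{2\ \text{pts}\}$ is homotopy equivalent to $\mathscr N_{1,2}$, a surface with boundary, the kernel admits the explicit presentation from Proposition~\ref{Bm(Ngn)} (with $g=1$, taking $z_j = A_{2,m+j}$ in the notation there). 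The strategy is exactly the inductive/extension scheme spelled out in \Spar\ref{sec_pstations_B(S)}: write down a candidate group $G$ on the generators listed in the statement, produce a surjection $G \twoheadrightarrow \B_{2,m}(\Proj)$ by checking that each relation $(\mathbb{PR}1)$--$(\mathbb{PR}9)$ holds geometrically, and then verify that $G$ decomposes as the correct extension so that the surjection must be an isomorphism.

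The key steps, in order, are as follows. First I would fix the geometric meaning of each generator following Notation~\ref{notation_gamma} and Figure~\ref{gen_of_Bn(Ng1)}: $\sigma_1$ is the half-twist inside the first block, $\sigma_3,\ldots,\sigma_{m+1}$ are the braid generators of the second block, $\gamma_1,\gamma_3$ are the crosscap loops of a strand in each block, and $A_{23}$ is the pure braid linking the two blocks across the partition. Second, I would verify that relations $(\mathbb{PR}1)$--$(\mathbb{PR}9)$ all hold in $\B_{2,m}(\Proj)$. Most of these are either disjoint-support commutations $(\mathbb{PR}1)$, $(\mathbb{PR}6)$, $(\mathbb{PR}7)$ or direct transcriptions of the surface relations $(BN\ast)$ from Propositions~\ref{Bn(Ng1)} and~\ref{Bm(Ngn)}, together with the two boundary relations from Corollary~\ref{partitioned_Braids_on_closed_surfaces} which become $(\mathbb{PR}5)$ and $(\mathbb{PR}8)$; the commutator relations $(\mathbb{PR}4)$ and $(\mathbb{PR}9)$ are the isotopies drawn in Figure~\ref{Aij_as_bracket_crosscap}, expressing a pure braid generator as a bracket via the crosscap. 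This yields a well-defined surjection $\pi\colon G \twoheadrightarrow \B_{2,m}(\Proj)$.

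Third, and this is where the real work lies, I would show $\pi$ is injective by realising $G$ itself as an extension matching the Fadell--Neuwirth sequence. Concretely I would identify in $G$ the subgroup $K$ generated by $\gamma_3$, $A_{23}$ and the $\sigma_i$ with $i\geq 3$, check (using $(\mathbb{PR}1)$--$(\mathbb{PR}3)$ and the relevant $(\mathbb{PR}4)$, $(\mathbb{PR}8)$) that it satisfies the defining relations of $\B_m(\mathscr N_{1,2})$, and check that $G/K$ recovers the presentation of $\B_2(\Proj)$ from Corollary~\ref{B2(Proj)} once one sees that the images of $\sigma_1$ and $\gamma_1$ satisfy $(\mathbb{PR}4)$, $(\mathbb{PR}5)$ with $i=1$. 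The induced surjection $\bar\pi\colon G/K \to \B_2(\Proj)$ is then an isomorphism, $\pi$ restricts to a surjection $K \to \B_m(\Proj-\{2\,\text{pts}\})$ which is an isomorphism by comparison of the two presentations, and the Five Lemma forces $\pi$ to be an isomorphism. I expect the main obstacle to be the bookkeeping in the coset/Schreier computation underlying the claim that $K$ is genuinely normal with the stated relations and no extra ones: one must be careful that the conjugation action of $\sigma_1,\gamma_1$ on the $K$-generators, as dictated by $(\mathbb{PR}5)$--$(\mathbb{PR}9)$, is precisely the monodromy action coming from the fibration, so that $G$ has no collapse beyond $\B_{2,m}(\Proj)$. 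Verifying these conjugation formulas---particularly the products of conjugated generators appearing in $(\mathbb{PR}5)$ and $(\mathbb{PR}8)$---against explicit isotopies on $\Proj$ is the delicate point, but it is a finite and routine (if laborious) check once the generators are pinned down geometrically.
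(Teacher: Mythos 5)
Your proposal follows essentially the same route as the paper: verify relations $(\mathbb{PR}1)$--$(\mathbb{PR}9)$ geometrically (disjoint supports, the crosscap commutator of Figure~\ref{Aij_as_bracket_crosscap}, and the boundary relations of Corollary~\ref{partitioned_Braids_on_closed_surfaces} for $(\mathbb{PR}5)$ and $(\mathbb{PR}8)$), then assemble the presentation from the Fadell--Neuwirth extension with kernel presented by Proposition~\ref{Bm(Ngn)} (with $n=1$, indices shifted, $z_1 = A_{23}$) and quotient presented by Corollary~\ref{B2(Proj)}. The only cosmetic difference is that you re-derive the extension-presentation argument (normality of $K$, Five Lemma) by hand, whereas the paper invokes it once and for all as Proposition~\ref{pstation_of_ext} in Appendix~\ref{section_pstation_of_ext}.
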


\begin{remark}
This set of generators is not minimal, although it is close to being minimal. Indeed, we can eliminate $A_{23}$ using $(\mathbb PR9)$. The set of generators thus obtained is then minimal, at least for $m = 1,2$, since the classes of the generators are a $\Z/2$-basis of the abelianisation in this case. However, this elimination would render the relations much less tractable; the above seems a better compromise between the number of generators and readability of the relations.
\end{remark}

\begin{proof}[Proof of Proposition~\ref{B2m(Proj)}]
Let us begin by checking that these relations hold for the usual elements $\sigma_1, \sigma_3, \sigma_4, \ldots , \sigma_{m+1}$, $\gamma_1$, $\gamma_3$ and $A_{23}$ of $\B_{2,m}(\Proj)$. Let us first remark that we can express other usual elements in terms of these elements. Namely, $A_{13} = \sigma_1^{-1} A_{23} \sigma_1$ and, if $k \in \{3,\ldots, m+2\}$, $\gamma_k$ (resp.~$A_{2k}$, $A_{1k}$) is obtained from $\gamma_3$ (resp.~$A_{23}$, $A_{13}$) by conjugation by $\sigma_k \cdots \sigma_4$ on the left (by convention, this product equals $1$ if $k = 3$). Then one can see that $(\mathbb PR5)$ and $(\mathbb PR8)$ are the boundary relations from Corollary~\ref{Braids_on_closed_surfaces}, corresponding to the strands $1$ and $3$ (recall that $A_{12} = \sigma_1^2$ and that $\sigma_1$ commutes with $\sigma_3, \ldots , \sigma_{m+1}$). All the other relations can be checked by drawing explicit isotopies. Precisely, the relations $(\mathbb PR1)$, $(\mathbb PR2)$, $(\mathbb PR6)$ and $(\mathbb PR7)$ are commutation relations between elements having disjoint support; $(\mathbb PR4)$ and $(\mathbb PR9)$ are instances of the relation drawn in Figure~\ref{Aij_as_bracket_crosscap}; $(\mathbb PR3)$ can either be proved by drawing an explicit isotopy, or it can be deduced from the similar relation in $\B_{1,2}$ (see Lemma~\ref{LCS_B12}), by considering the morphism from $\B_{1,2}$ to $\B_{2,m}(\Proj)$ induced by a well-chosen embedding of $\D$ into $\Proj$.

In order to show that these relations describe the group, we now apply the methods of Appendix~\ref{section_pstation_of_ext} to the Fadell-Neuwirth extension from Proposition \ref{Fadell-Neuwirth}:
\[\begin{tikzcd}
\B_m(\Proj - \{2\ \text{pts}\}) \ar[r, hook] 
&\B_{2,m}(\Proj) \ar[r, two heads] 
&\B_2(\Proj).
\end{tikzcd}\]
Since $\Proj - \{2\ \text{pts}\} = \mathscr N_{1,2}$, a presentation of the kernel is given by Proposition~\ref{Bm(Ngn)}, for $n = 1$. In order for it to identify with the right subgroup of  $\B_{2,m}(\Proj)$ (corresponding to braids on the strands $3,\ldots, m+2$), indices are shifted by $2$ (so that, for instance, $c_1$ becomes $\gamma_3$), and we take $z_1 = A_{23}$; this last choice changes the presentation a little bit, but one can easily figure out the necessary modifications. Thus, $(BN1)$ and $(BN4)$ give $(\mathbb PR1)$; $(BN2)$ and $(BN6)$ are empty; $(BN3)$ is the case $i = 3$ of $(\mathbb PR4)$; $(BN5)$ is $(\mathbb PR2)$; finally, $(BN7)$ is the case $i = 3$ of $(\mathbb PR3)$.

A presentation of the quotient is given by Corollary~\ref{B2(Proj)} and its proof:
\[\B_2(\Proj) = \left\langle \sigma_1, \gamma_1, \gamma_2\ \middle| \ \gamma_1^2 = [\sigma_1\gamma_1\sigma_1^{-1}, \gamma_1^{-1}] = \sigma_1^2 \right\rangle.\]
The elements $\gamma_1, \sigma_1 \in \B_{2,m}(\Proj)$ are lifts of the elements $\gamma_1, \sigma_1 \in \B_2(\Proj)$. The second relation holds without change for these lifts, giving the case $i = 1$ of $(\mathbb PR4)$. The relation $\gamma_1^2 = \sigma_1^2$ lifts to the boundary relation associated with the first strand, which is $(\mathbb PR5)$.

We are left with finding relations describing the action of $\sigma_1$ and $\gamma_1$ (or of $\sigma_1^{-1}$ and $\gamma_1^{-1}$ -- see Remark~\ref{inverses_of_gen_in_pstation_of_ext}) by conjugation on the other generators. The commutation relations $(\mathbb PR6)$ and $(\mathbb PR7)$ describe most of this action. Only two elements still need to be expressed in terms of the generators $\sigma_3,\ldots, \sigma_{m+1}$, $\gamma_3$ and $A_{23}$ of the kernel, namely $\sigma_1^{- 1} A_{23} \sigma_1$ and $\gamma_1^{- 1} \gamma_3 \gamma_1$. The boundary relation $(\mathbb PR8)$ deals with $\sigma_1^{-1} A_{23} \sigma_1$. Finally, $(\mathbb PR9)$ deals with $\gamma_1^{-1} \gamma_3 \gamma_1$: it says that $\gamma_1^{-1} \gamma_3 \gamma_1 = \gamma_3 \sigma_1^{-1} A_{23} \sigma_1$, and the right-hand side can be expressed in terms of the generators of the kernel using the previous relation $(\mathbb PR8)$. This finishes the proof that the above relations are the ones obtained using the method of Appendix~\ref{section_pstation_of_ext}, whence the result.
\end{proof}

Let us now consider the case $m = 1$. In this case, there is no $\sigma_i$ for $i \geq 3$, so the presentation is much simpler; in fact, in the extension of the proof, the kernel is just $\pi_1(\Proj - \{2\ \text{pts}\})$, which is free on $\gamma_3$ and $A_{23}$. Thus, $(\mathbb PR1)$, $(\mathbb PR2)$ and $(\mathbb PR3)$ are empty in this case, and there is no case $i = 3$ in $(\mathbb PR4)$. The other relations reduce to:
\begin{corollary}\label{B12(Proj)}
The group $\B_{2,1}(\Proj)$ has a presentation with $4$ generators $\sigma_1$, $\gamma_1$, $\gamma_3$ and $A_{23}$ and $6$ relations (indexed as above):
\begin{equation*}
\begin{cases}
(4)\quad [\sigma_1 \gamma_1 \sigma_1^{-1}, \gamma_1^{-1}] = \sigma_1^2; \\
(5)\quad  \gamma_1^2 = \sigma_1 A_{23} \sigma_1; \\
(6)\quad  \sigma_1\ \rightleftarrows\ \gamma_3;
\end{cases}\hspace{2em}\begin{cases}
(7)\quad \gamma_1\ \rightleftarrows\ A_{23}; \\
(8)\quad \gamma_3^2 = \sigma_1^{-1} A_{23} \sigma_1 A_{23}; \\
(9)\quad [\gamma_3, \gamma_1^{-1}] = \sigma_1^{-1} A_{23} \sigma_1.
\end{cases}
\end{equation*}
\end{corollary}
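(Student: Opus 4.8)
The plan is to specialise the general presentation of Proposition~\ref{B2m(Proj)} to the case $m=1$ and then carefully simplify the resulting relations, eliminating the generators and relations that become vacuous. This is entirely a matter of substituting $m=1$ into the nine families of relations $(\mathbb{P}\mathrm{R}1)$--$(\mathbb{P}\mathrm{R}9)$ and tracking what survives, so the bulk of the argument is bookkeeping rather than new ideas.

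First I would record that for $m=1$ the surface $\Proj - \{2\ \text{pts}\} = \mathscr{N}_{1,2}$ has free fundamental group on the two generators $\gamma_3$ and $A_{23}$, so in the Fadell--Neuwirth extension used in the proof of Proposition~\ref{B2m(Proj)}, the kernel is simply $\F_2 = \pi_1(\Proj - \{2\ \text{pts}\})$; in particular there are no generators $\sigma_i$ for $i \geq 3$ and no internal braid relations on them to impose. This immediately removes all the $\sigma_i$ ($i\ge3$) from the generating set, leaving exactly the four generators $\sigma_1$, $\gamma_1$, $\gamma_3$ and $A_{23}$ claimed in the statement.

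Next I would go through the relations one by one. Relations $(\mathbb{P}\mathrm{R}1)$, $(\mathbb{P}\mathrm{R}2)$ and $(\mathbb{P}\mathrm{R}3)$ all involve the absent generators $\sigma_4,\ldots,\sigma_{m+1}$ or the element $\sigma_3\gamma_3\sigma_3^{-1}$ and become empty when $m=1$ (there is no $\sigma_3$ either, since the second block has only one moving pair among strands $3,\dots,m+2=3$, so the kernel carries no $\sigma$-generator). This also means the case $i=3$ of $(\mathbb{P}\mathrm{R}4)$ disappears, leaving only the case $i=1$, which is relation $(4)$. For $(\mathbb{P}\mathrm{R}5)$, the product $\prod_{k=2}^{m+1}$ collapses to the single factor $k=2$, for which $(\sigma_k\cdots\sigma_3)=(\sigma_2\cdots\sigma_3)$ is the empty product equal to $1$; hence the relation reduces to $\gamma_1^2 = \sigma_1 A_{23}\sigma_1$, which is $(5)$. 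The commutation relations $(\mathbb{P}\mathrm{R}6)$ and $(\mathbb{P}\mathrm{R}7)$ lose all their $\sigma_i$ ($i\ge3$) terms and become $\sigma_1 \rightleftarrows \gamma_3$ (relation $(6)$) and $\gamma_1 \rightleftarrows A_{23}$ (relation $(7)$) respectively. In $(\mathbb{P}\mathrm{R}8)$ the product $\prod_{k=3}^{m+1}$ is empty when $m=1$, so the relation becomes $\gamma_3^2 = (\sigma_1^{-1}A_{23}\sigma_1)A_{23}$, which is $(8)$. Finally $(\mathbb{P}\mathrm{R}9)$ is unchanged and gives $(9)$.

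I expect no genuine obstacle here, since Proposition~\ref{B2m(Proj)} has already been established for all $m\ge1$; the only points requiring a little care are the two empty-product conventions (verifying that the ranges $\prod_{k=2}^{2}$ with $(\sigma_2\cdots\sigma_3)=1$ in $(\mathbb{P}\mathrm{R}5)$ and $\prod_{k=3}^{2}=1$ in $(\mathbb{P}\mathrm{R}8)$ specialise correctly) and confirming that with $m=1$ there really is no $\sigma_3$-generator, so that the relations indexed $(1),(2),(3)$ and the $i=3$ instance of $(4)$ all vanish. The cleanest way to present this is therefore to invoke Proposition~\ref{B2m(Proj)} directly, observe that the kernel is free on $\{\gamma_3,A_{23}\}$, and then list the surviving relations, noting for each that it is the $m=1$ specialisation of the corresponding $(\mathbb{P}\mathrm{R}i)$.
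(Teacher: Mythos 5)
Your proposal is correct and matches the paper's own argument exactly: the paper likewise obtains Corollary~\ref{B12(Proj)} by setting $m=1$ in Proposition~\ref{B2m(Proj)}, noting that the kernel of the Fadell--Neuwirth extension is free on $\gamma_3$ and $A_{23}$ so there are no $\sigma_i$ for $i\geq 3$, that $(\mathbb PR1)$--$(\mathbb PR3)$ and the $i=3$ case of $(\mathbb PR4)$ are therefore empty, and that the remaining relations reduce (via the same empty-product conventions you identify) to $(4)$--$(9)$.
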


\begin{remark}
This presentation is smaller than van Buskirk's presentation from \cite[Lem.~p.~84]{vanBuskirk1966}, which has $6$ generators and $13$ relations. Note that in our language, his generators are as follows, where the right-hand side of each equation uses our notation and the left-hand side uses his:
\begin{equation*}
\begin{cases}
\sigma_2 = \sigma_1, \\
a_2 = \sigma_1^{-1} A_{23} \sigma_1, \\
a_3 = A_{23},
\end{cases}\hspace{2em}\begin{cases}
\rho_1 = \gamma_3, \\
\rho_2 = \gamma_1 \sigma_1^{-1} A_{23}^{-1} \sigma_1, \\
\rho_3 = \sigma_1^{-1} \gamma_1 \sigma_1.
\end{cases}
\end{equation*}
\end{remark}

\begin{proposition}\label{LCS_B12(P2)}
The LCS of $\B_{2,1}(\Proj)$ does not stop.
\end{proposition}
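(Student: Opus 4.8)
The plan is to produce a surjection from $\B_{2,1}(\Proj)$ onto the infinite dihedral group $D_\infty \cong \Z \rtimes (\Z/2)$ and then apply Lemma~\ref{lem:stationary_quotient}: since the LCS of $\Z \rtimes (\Z/2)$ does not stop (Proposition~\ref{LCS_of_Z/2*Z/2}, or equivalently Corollary~\ref{Lie_Klein_A/center}, via $\Z \rtimes (\Z/2) \cong \Z/2 * \Z/2$), neither will the LCS of $\B_{2,1}(\Proj)$. The whole argument rests on the presentation of $\B_{2,1}(\Proj)$ computed in Corollary~\ref{B12(Proj)}, with generators $\sigma_1, \gamma_1, \gamma_3, A_{23}$ and relations $(4)$--$(9)$.

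Writing $D_\infty = \langle s, t \mid s^2 = 1,\ sts^{-1} = t^{-1} \rangle$, I would define a homomorphism $f$ by
\[
f(\sigma_1) = s, \qquad f(\gamma_3) = s, \qquad f(\gamma_1) = t, \qquad f(A_{23}) = t^{-2},
\]
and check that $f$ respects every defining relation. Relations $(6)$ ($\sigma_1 \rightleftarrows \gamma_3$) and $(7)$ ($\gamma_1 \rightleftarrows A_{23}$) are immediate, since $s$ commutes with $s$ and $t$ with $t^{-2}$. Relation $(4)$ becomes $[t^{-1}, t^{-1}] = 1 = s^2$ (using $s t s^{-1} = t^{-1}$); relation $(5)$ becomes $t^2 = s\,t^{-2}\,s = t^2$; relation $(8)$ becomes $s^2 = (s\,t^{-2}\,s)\,t^{-2} = t^{2}t^{-2} = 1$; and relation $(9)$ becomes $[s,t^{-1}] = t^2 = s\,t^{-2}\,s$, using $s t^{-1} s^{-1} = t$. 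Thus $f$ is well defined, and as its image contains both $s = f(\sigma_1)$ and $t = f(\gamma_1)$ it is onto $D_\infty$, which completes the argument.

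The only genuinely creative step is finding this target and assignment; the verification above is mechanical. One natural way to discover the map is to observe that in $D_\infty$ the centralizer of a nontrivial power of $t$ is $\langle t \rangle$, so relation $(7)$ forces $A_{23}$ (and hence, through $(5)$, also $\gamma_1$) into $\langle t \rangle$; writing $f(\gamma_1) = t^c$ and $f(A_{23}) = t^a$, relation $(5)$ then imposes $a = -2c$. On the other hand the centralizer of $s$ is the two-element subgroup $\{1, s\}$, so relation $(6)$ forces $f(\gamma_3) \in \{1,s\}$, and the choice $f(\gamma_3) = 1$ is excluded by relation $(9)$ (which would otherwise demand $a = 0$). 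Taking $c = 1$ yields the explicit map above. Since this reduction is entirely formal, I anticipate no real obstacle: the essential work has been front-loaded into establishing the presentation of Corollary~\ref{B12(Proj)}.
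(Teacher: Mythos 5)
Your proposal is correct and takes essentially the same route as the paper: both arguments rest on the presentation of Corollary~\ref{B12(Proj)} and both collapse the group via the dihedral relation $\sigma_1\gamma_1\sigma_1^{-1}=\gamma_1^{-1}$, your map being exactly the composite of the paper's quotient onto $\Z\rtimes(\Z/2)^2$ with its further identification $\sigma_1=\gamma_3$ landing in $\Z\rtimes(\Z/2)$. Packaging this as an explicit homomorphism onto $D_\infty$ (with the relation checks you carried out, all of which are correct) is a clean and valid way to invoke Lemma~\ref{lem:stationary_quotient} together with Proposition~\ref{LCS_of_Z/2*Z/2}.
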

\begin{proof}
Let $G$ be the quotient of $\B_{2,1}(\Proj)$ by the single relation $\sigma_1 \gamma_1 \sigma_1^{-1} = \gamma_1^{-1}$. Let us consider the presentation of $G$ given by the presentation of Corollary~\ref{B12(Proj)}, to which this relation is added. Then relation $(4)$ becomes $\sigma_1^2 = 1$, that is, $\sigma_1^{-1} = \sigma_1$. Relation $(5)$ becomes $A_{23} = \sigma_1 \gamma_1^2 \sigma_1 = \gamma_1^{-2}$, hence $(7)$ becomes redundant. Relation $(8)$ is then equivalent to $\gamma_3^2 = 1$. Relation $(9)$ becomes $\gamma_3 \gamma_1^{-1} \gamma_3^{-1} = \gamma_1$. If we add to this relation $(6)$, which says that $\sigma_1$ commutes with $\gamma_3$, we get a presentation of $\Z \rtimes (\Z/2)^2$ (where both elements of a basis of $(\Z/2)^2$ act via $-id$). Thus, $G \cong \Z \rtimes (\Z/2)^2$, whose LCS does not stop (one can either compute it with the method of Appendix~\ref{sec:appendix_2}, or quotient further by $\sigma_1 = \gamma_3$ to get $\Z \rtimes (\Z/2)$ as a quotient and apply Corollary \ref{Lie_Klein_A/center}).
\end{proof}

\begin{remark}\label{rq_LCS_B12(P2)}
Even if the imposed relation looks very much like a relation defining the infinite dihedral group $\Z \rtimes (\Z/2)$, it is not at all clear \emph{a priori} why adding this one relation should work. Much experimentation has been needed before ending up here.
\end{remark}

Next, let us consider the case $m = 2$. In this case, there is no $\sigma_i$ for $i \geq 4$. Thus, $(\mathbb PR1)$ is empty, and the boundary relations $(\mathbb PR5)$ and $(\mathbb PR8)$ become much simpler. Let us spell out the result in this case:
\begin{corollary}\label{B22(Proj)}
The group $\B_{2,2}(\Proj)$ has a presentation with $5$ generators $\sigma_1$, $\gamma_1$, $\gamma_3$ and $A_{23}$ and $8$ relations (indexed as above):
\begin{equation*}
\begin{cases}
(2)\quad A_{23}\ \rightleftarrows\  \sigma_3 \gamma_3 \sigma_3^{-1};  \\
(3)\quad (\sigma_3 A_{23})^2 = (A_{23}\sigma_3)^2; \\
(4)\quad  [\sigma_i \gamma_i \sigma_i^{-1}, \gamma_i^{-1}] = \sigma_i^2\ \ \text{for}\ i \in \{1,3\};\\
(5)\quad \gamma_1^2 = \sigma_1 A_{23} \sigma_3 A_{23} \sigma_3^{-1} \sigma_1; \\
\end{cases}\hspace{2em}\begin{cases}
(6)\quad \sigma_1\ \rightleftarrows\ \sigma_3, \gamma_3; \\
(7)\quad \gamma_1\ \rightleftarrows\ \sigma_3, A_{23}; \\
(8)\quad \gamma_3^2 = \sigma_1^{-1} A_{23} \sigma_1 A_{23} \sigma_3^2; \\
(9)\quad [\gamma_3, \gamma_1^{-1}] = \sigma_1^{-1} A_{23} \sigma_1.
\end{cases}
\end{equation*}
\end{corollary}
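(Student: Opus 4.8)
The final statement to prove is Corollary~\ref{B22(Proj)}, which gives a presentation of $\B_{2,2}(\Proj)$ with $5$ generators and $8$ relations.

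\medskip

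The plan is to specialise Proposition~\ref{B2m(Proj)} to the case $m=2$, exactly as Corollary~\ref{B12(Proj)} was obtained by setting $m=1$. Since Proposition~\ref{B2m(Proj)} has already established the general presentation of $\B_{2,m}(\Proj)$ for every $m \geq 1$, the entire content of this corollary is a mechanical simplification of that presentation under the substitution $m=2$, and no new geometric or algebraic input is needed. First I would record that, for $m=2$, the generating set $\sigma_1, \sigma_3, \sigma_4, \ldots, \sigma_{m+1}, \gamma_1, \gamma_3, A_{23}$ contains no generator $\sigma_i$ with $i \geq 4$, since $m+1 = 3$. Thus the generators reduce to $\sigma_1, \sigma_3, \gamma_1, \gamma_3, A_{23}$, which are exactly the five generators in the statement.

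\medskip

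Next I would go through the nine relation families $(\mathbb PR1)$--$(\mathbb PR9)$ of Proposition~\ref{B2m(Proj)} one by one and simplify each when $m=2$. The relation $(\mathbb PR1)$ is empty, since there is no $\sigma_i$ for $i \geq 4$. Relations $(\mathbb PR2)$, $(\mathbb PR3)$, $(\mathbb PR4)$, $(\mathbb PR6)$, $(\mathbb PR7)$ and $(\mathbb PR9)$ carry over unchanged (with the lists of commuting $\sigma_k$ in $(\mathbb PR6)$ and $(\mathbb PR7)$ shrinking to just $\sigma_3$), and these become the relations labelled $(2)$, $(3)$, $(4)$, $(6)$, $(7)$ and $(9)$ of the corollary. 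The only relations requiring genuine (though still routine) simplification are the two boundary relations $(\mathbb PR5)$ and $(\mathbb PR8)$. For $(\mathbb PR5)$, the product $\prod_{k=2}^{m+1}(\sigma_k \cdots \sigma_3) A_{23} (\sigma_k \cdots \sigma_3)^{-1}$ runs over $k=2,3$: the $k=2$ term uses the empty product $(\sigma_2 \cdots \sigma_3)$, which by the convention stated in the proof of Proposition~\ref{B2m(Proj)} equals $1$ (so this term is $A_{23}$), while the $k=3$ term is $\sigma_3 A_{23} \sigma_3^{-1}$. Hence $(\mathbb PR5)$ becomes $\gamma_1^2 = \sigma_1 A_{23} \sigma_3 A_{23} \sigma_3^{-1} \sigma_1$, which is relation $(5)$. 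For $(\mathbb PR8)$, the product $\prod_{k=3}^{m+1}(\sigma_k \cdots \sigma_4)\sigma_3^2(\sigma_k \cdots \sigma_4)^{-1}$ has only the single term $k=3$, for which $(\sigma_3 \cdots \sigma_4)$ is the empty product equal to $1$, so the whole product collapses to $\sigma_3^2$, giving $\gamma_3^2 = (\sigma_1^{-1} A_{23}\sigma_1)A_{23}\sigma_3^2$, which is relation $(8)$.

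\medskip

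Collecting these simplified relations yields precisely the eight relations $(2)$--$(9)$ of the statement, and the correspondence is complete. There is no serious obstacle here: the argument is entirely a matter of carefully evaluating the empty products appearing in the general boundary relations of Proposition~\ref{B2m(Proj)} and observing which relation families become vacuous when there are no high-index $\sigma_i$. The only point demanding a little care—and the closest thing to a ``hard part''—is correctly applying the product conventions (the empty product equals $1$) in $(\mathbb PR5)$ and $(\mathbb PR8)$, so as to land on exactly the stated forms rather than an equivalent but differently-written expression. Since Proposition~\ref{B2m(Proj)} is invoked as already proven, the corollary follows immediately.
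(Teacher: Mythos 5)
Your proposal is correct and follows exactly the paper's own route: the corollary is obtained by specialising Proposition~\ref{B2m(Proj)} to $m=2$, noting that $(\mathbb{P}\mathrm{R}1)$ becomes vacuous and evaluating the (empty) products in the boundary relations $(\mathbb{P}\mathrm{R}5)$ and $(\mathbb{P}\mathrm{R}8)$ to obtain relations $(5)$ and $(8)$. Your bookkeeping of the conventions for the products $\sigma_k \cdots \sigma_3$ and $\sigma_k \cdots \sigma_4$ is accurate, and you correctly list the five generators (including $\sigma_3$, which the statement's wording accidentally omits).
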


\begin{proposition}\label{LCS_B22(P2)}
The LCS of $\B_{2,2}(\Proj)$ does not stop.
\end{proposition}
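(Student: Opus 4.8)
The plan is to mimic the strategy that worked for $\B_{2,1}(\Proj)$ in Proposition~\ref{LCS_B12(P2)}: starting from the explicit presentation of $\B_{2,2}(\Proj)$ given in Corollary~\ref{B22(Proj)}, I would look for a single well-chosen additional relation whose imposition produces a quotient that is recognisably a group with non-stopping LCS (typically a semidirect product $\Z \rtimes (\Z/2)^k$ or a wreath-type group of the form studied in Appendix~\ref{sec:appendix_2}), and then invoke Lemma~\ref{lem:stationary_quotient}. Guided by the successful case $m=1$, the natural first attempt is to impose $\sigma_1 \gamma_1 \sigma_1^{-1} = \gamma_1^{-1}$ again, and to simplify the presentation of the resulting quotient $G$ using this relation together with relations $(4)$--$(9)$.

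Concretely, I would first use the case $i=1$ of relation $(4)$ to deduce $\sigma_1^2 = 1$, so that $\sigma_1^{-1} = \sigma_1$. Then relation $(5)$ expresses $\gamma_1^2$ in terms of $\sigma_1$, $\sigma_3$ and $A_{23}$, which (modulo $\sigma_1^2=1$) should let me solve for one of the generators or at least reduce the number of independent ones. Relation $(9)$ then describes the action of $\gamma_1$ on $\gamma_3$, relation $(8)$ pins down $\gamma_3^2$, relation $(2)$ and relation $(3)$ govern the interaction of $A_{23}$ with $\sigma_3$ and $\gamma_3$, and the commutation relations $(6)$ and $(7)$ say that $\sigma_1$ and $\gamma_1$ commute with $\sigma_3$ (and $\gamma_1$ with $A_{23}$). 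The bookkeeping goal is to collapse these into a presentation of a manageable group: I expect the outcome to be an extension involving $\Z \rtimes (\Z/2)^2$ or a similar Klein-type group, to which Corollary~\ref{Lie_Klein_A/center} or Corollary~\ref{LCS_Klein_A} applies directly to show the LCS does not stop.

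As a cross-check and as an alternative route, I would keep in mind the surjections already established: $\B_{2,2}(\Proj)$ surjects onto $\B_{2,2}(\S^2)$, whose LCS does not stop by Proposition~\ref{LCS_B22(S2)}; and $\B_{2,2}(\Proj)$ surjects onto $\B_{1,2}(\Proj)$ and onto $\B_{2,1}(\Proj)$ by forgetting strands. However, forgetting strands on a \emph{closed} surface is not in general available (there is no obvious strand-forgetting map for $\Proj$, as noted after Proposition~\ref{Fadell-Neuwirth}), so these shortcuts may not be valid, and the direct presentation-based argument is the safer path. If the relation $\sigma_1 \gamma_1 \sigma_1^{-1} = \gamma_1^{-1}$ does not by itself yield a transparent quotient, I would experiment with imposing instead (or additionally) a relation that kills $\sigma_3$ or identifies $\sigma_3$ with another generator, so as to reduce to the $m=1$ situation already handled.

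The hard part will be the same as in Remark~\ref{rq_LCS_B12(P2)}: it is not clear \emph{a priori} which single relation to add, and verifying that the chosen relation produces exactly a group of the form $\Z \rtimes (\Z/2)^2$ (rather than an accidental collapse to something nilpotent or finite) requires careful manipulation of relations $(2)$--$(9)$, with attention to the non-split and non-obvious interactions coming from the boundary relations $(5)$ and $(8)$. In particular, ensuring that the $\Z$ factor genuinely survives — i.e.\ that the relevant generator has infinite order in the quotient — is the crux; this is where much of the experimentation alluded to in Remark~\ref{rq_LCS_B12(P2)} will be needed. Once the quotient is correctly identified, the conclusion follows immediately from the explicit LCS computations in Appendix~\ref{sec:appendix_2} together with Lemma~\ref{lem:stationary_quotient}.
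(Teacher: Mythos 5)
Your overall strategy is the paper's strategy -- impose a small number of relations on the presentation of Corollary~\ref{B22(Proj)}, recognise the quotient as a group whose LCS is computed in Appendix~\ref{sec:appendix_2}, and conclude by Lemma~\ref{lem:stationary_quotient} -- but the decisive content is exactly the part you defer to ``experimentation'', and your specific guesses would not survive it. The single relation $\sigma_1\gamma_1\sigma_1^{-1}=\gamma_1^{-1}$ is not enough: it forces $\sigma_1^2=1$ and collapses the image of $\B_2(\Proj)$ to $(\Z/2)^2$, but it does nothing to tame the subgroup generated by $\sigma_3$, $\gamma_3$ and $A_{23}$; in particular relation $(5)$ now reads $\gamma_1^{-2}=A_{23}\,\sigma_3A_{23}\sigma_3^{-1}$ instead of eliminating $A_{23}$ as it did for $m=1$, and no recognisable group emerges (the evident further quotients, e.g.\ by $A_{23}$, give nilpotent groups such as $(\Z/2)^2\times Dic_{16}$). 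The paper instead imposes \emph{two} relations: $\sigma_1^2=\gamma_1^2$, which makes the Fadell--Neuwirth projection onto $\B_2(\Proj)\cong Dic_{16}$ split, and $\sigma_3\gamma_3\sigma_3^{-1}=\gamma_3^{-1}$, which is your relation but applied to the \emph{second} block. Moreover the resulting quotient is not a Klein-type group $\Z\rtimes(\Z/2)^k$: it is $\Z^2\rtimes(Dic_{16}\times\Z/2)$, where $\Z^2$ is identified with the root lattice $\Lambda$ of $W_2$ and the action factors through $W_2$; its non-stopping LCS is the separate computation $\Lambda\supset V\supset 2\Lambda\supset 2V\supset\cdots$ of Proposition~\ref{LCS_can_rep_of_W2}, not Corollary~\ref{Lie_Klein_A/center}. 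So both the choice of relations and the identification of the target group are missing, and these constitute essentially the whole proof.

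A secondary point: none of your proposed cross-checks is available. No surjection $\B_{2,2}(\Proj)\twoheadrightarrow\B_{2,2}(\S^2)$ is established (the natural candidate, killing the crosscap generator normally, also kills $\sigma_1^2$ via relation $(BN3)$ and yields only $\Sym_2\times\Sym_2$), and there is no map $\B_{2,2}(\Proj)\to\B_{1,2}(\Proj)$ forgetting one strand of an unordered block, since the Fadell--Neuwirth projections only forget whole blocks. You were right to distrust these shortcuts, but that means the presentation-based argument is not merely the ``safer path'' -- it is the only one, and it still needs to be carried out.
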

\begin{proof}
Let us consider, as above, the projection $p \colon \B_{2,2}(\Proj) \twoheadrightarrow \B_2(\Proj) \cong Dic_{16}$ induced by forgetting the last two strands. Recall that Corollary~\ref{B2(Proj)} gives a presentation of this quotient:
\[\B_2(\Proj) = \left\langle \sigma_1, \gamma_1\ \middle| \ \gamma_1^2 = [\sigma_1\gamma_1\sigma_1^{-1}, \gamma_1^{-1}] = \sigma_1^2 \right\rangle.\]
Since the second relation is already true in $\B_{2,2}(\Proj)$ (it is the case $i = 1$ of relation $(4)$ in Corollary~\ref{B22(Proj)}), the projection $p$ becomes split if we impose the relation $\sigma_1^2 = \gamma_1^2$. We will in fact consider the quotient $G$ of $\B_{2,2}(\Proj)$ by the two relations:
\[\begin{cases}
(Q1)\quad \sigma_1^2 = \gamma_1^2; \\
(Q2)\quad \sigma_3 \gamma_3 \sigma_3^{-1} = \gamma_3^{-1}.
\end{cases}\]
Since the second relator also sits in the kernel of $p$, we get an induced split projection $\overline p \colon G \twoheadrightarrow \B_2(\Proj)$. 

Let us consider the presentation of $G$ given by the presentation of Corollary~\ref{B22(Proj)}, together with the two relations $(Q1)$ and $(Q2)$. Modulo $(Q1)$, relation $(5)$ becomes $\sigma_3 A_{23} \sigma_3^{-1}  = A_{23}^{-1}$. Modulo $(Q2)$, relation $(2)$ says that $A_{23}$ commutes with $\gamma_3$, and the case $i = 3$ of relation $(4)$ gives $\sigma_3^2 = 1$. At this point, let us remark that the relations obtained so far say that the subgroup $\langle \gamma_3, A_{23}, \sigma_3 \rangle$ is a quotient of $\Z^2 \rtimes (\Z/2)$, where the action of $\Z/2$ is by $-id$. 

Continuing our investigation, we remark that relation $(3)$ is a consequence of the previous relation (both sides of it are killed modulo these). Relation $(8)$ becomes $\sigma_1^{-1} A_{23} \sigma_1 = \gamma_3^2 A_{23}^{-1}$ and (remembering that $\sigma_3^2 = 1$) relation $(9)$ becomes $\gamma_1^{-1} \gamma_3 \gamma_1 = A_{23}\gamma_3^{-1}$. If we add relations $(6)$ and $(7)$ without change, we get a presentation of $G$, which will allow us to describe it in an explicit way. 

Let us first consider the subgroup $A := \langle \gamma_3, A_{23} \rangle$ (which is abelian by relation $(2)$). Relations $(Q2)$ and $(2)$ imply that it is stable under conjugation by $\sigma_3$ (which equals $\sigma^{-1}$, by $(4)$). Relations $(6)$ and $(8)$ imply that it is stable under conjugation by $\sigma_i^{-1}$, and also by $\sigma_i$ (as one sees by conjugating $(8)$ by $\sigma_i$, taking $(6)$ into account). In the same way, relations $(7)$ and $(9)$ imply that it is stable under conjugation by $\gamma_1^{\pm 1}$. Finally, all this implies that it is normal in $G$. Moreover, the presentation of $G/A$ that we get from the one of $G$ clearly gives $G/A \cong Dic_{16} \times (\Z/2)$ and, in fact, the relations defining $G/A$ were already true in $G$ for the generators $\sigma_1$, $\gamma_1$ and $\sigma_3$, so $G \twoheadrightarrow G/A$ splits, and finally:
\[G \cong A \rtimes (Dic_{16} \times (\Z/2)).\]
We are left with understanding $A$ (which is a quotient of $\Z^2$) and the action of $K := Dic_{16} \times (\Z/2)$ on it. In order to do this, let us consider the relations describing the action of $K$ on $A$, namely:
\begin{equation*}
\begin{cases}
(Q2)\quad \sigma_3 \gamma_3 \sigma_3^{-1}  = \gamma_3^{-1};  \\
(5)\quad \sigma_3 A_{23} \sigma_3^{-1}  = A_{23}^{-1};  \\
(6)\quad \sigma_1 \gamma_3 \sigma_1^{-1} = \gamma_3; \\
(8)\quad \sigma_1 A_{23} \sigma_1^{-1} = A_{23}^{-1} \gamma_3^2; \\ 
(9)\quad \gamma_1 \gamma_3 \gamma_1^{-1} = \gamma_3^{-1} A_{23}; \\
(7)\quad \gamma_1 A_{23} \gamma_1^{-1} = A_{23}.
\end{cases}
\end{equation*}
We remark that these already define an action of $K$ on $\Z^2$, which is exactly the action on $\Z^2 = \Gamma$ considered at the end of \Spar\ref{par_can_rep_of_W2}. Precisely, with the identifications $\sigma_1 \mapsto \sigma$, $\gamma_1 \mapsto \gamma$ and $\sigma_3 \mapsto \tau$ for generators of $K$, we get an equivariant map $\Gamma \twoheadrightarrow A$ sending $a$ to $A_{23}$ and $c$ to $\gamma_3$ (with the notations of Remark~\ref{Basis_of_Lambda}). This induces a surjective morphism from $\Gamma \rtimes K$ onto $A \rtimes K = G$. It is then easy to check that all the relations defining $G$ are in fact already true in $\Gamma \rtimes K$, which allows us to define a converse isomorphism $G \cong \Z^2 \rtimes K$.

Finally, Proposition~\ref{LCS_can_rep_of_W2}, together with the equality $\LCS_*^K(\Lambda) = \LCS_*^{W_2}(\Lambda)$ from the end of \Spar\ref{par_can_rep_of_W2}, gives us a complete description of the LCS of $G$ which, in particular, does not stop.
\end{proof}

\begin{remark}
The projection onto $Dic_{16} \times (\Z/2)$ in the proof can be seen as coming from the geometry. Precisely, it is the factorisation through $G$ of the projection
\[q \colon \B_{2,2}(\Proj) \twoheadrightarrow \B_2(\Proj) \times \Sym_2 \cong Dic_{16} \times (\Z/2)\]
whose first factor forgets the last two strands and whose second factor forgets the first two strands and then applies the usual projection $\pi \colon \B_2(\Proj) \twoheadrightarrow \Sym_2$. 
\end{remark}

\begin{remark}
\label{rk_LCS_B22(P2)}
This quotient looks very much like the one from the proof of Proposition~\ref{LCS_B12(P2)}, and the same remark applies (see Remark~\ref{rq_LCS_B12(P2)}). Namely, $(Q1)$ is a natural relation to impose (making the extension split), whereas it is much less clear why quotienting by $(Q2)$ (which is the same relation as in the aforementioned proof, up to re-indexing the strands) should work. 
\end{remark}

We may now complete the proof of Theorem \ref{LCS_B_lambda(Proj)}.

\begin{proof}[Proof of Theorem \ref{LCS_B_lambda(Proj)}]
The first two statements are part of the general results of Corollary~\ref{LCS_B_lambda(S)_stable} and Theorem~\ref{Lie_ring_partitioned_B(S)}, except for $\B_1(\Proj) \cong \Z/2$ and $\B_2(\Proj)$, which is the dicyclic group of order $16$ (Corollary \ref{B2(Proj)}). The third statement combines Propositions~\ref{LCS_B1mu(P2)} (if $\lambda$ has blocks of size $1$) and \ref{LCS_B2mu(P2)} (if $\lambda$ has blocks of size $2$). The fourth statement combines Propositions~\ref{LCS_B1m(P2)}, \ref{LCS_B12(P2)} and \ref{LCS_B22(P2)}, together with the fact that $\B_{1,1}(\Proj) = \PB_2(\Proj)$ is the quaternion group $Q_8$ (Corollary \ref{B2(Proj)}).
\end{proof}

The remaining cases to consider are $\B_{2,m}(\Proj)$ for $m\geq 3$. These are the only examples of partitioned surface braid groups for which we have not been able to answer the question of whether their LCS stop. We can still say something about these LCS, using the proof of Proposition~\ref{LCS_B2mu(P2)}. Recall that in this proof, the hypothesis on the number of blocks of $\mu$ was not used until the end. Moreover, in the case $\mu = (m)$ and $m \geq 3$, Proposition \ref{Lie_ring_B(S)} applies, implying that the first quotient $G$ is the quotient by $\LCS_\infty(\B_m(\Moeb - \{pt\}))$. The latter must be contained in $\LCS_\infty(\B_{2,m}(\Proj))$ so, in order to understand the LCS of $\B_{2,m}(\Proj)$, we only need to understand the LCS of $G$. Then, since the central subgroup $A = \langle s_3 \rangle$ (where $s_3$ is the class of $\sigma_3$) is cyclic of order $2$, we can apply Lemma~\ref{Quotient_by_finite_subgroup} to see that the LCS of $G$ stops if and only if the one of $G/A$ does (with possibly one more step). We have the same presentation of $G/A$ as in the proof of Proposition~\ref{LCS_B2mu(P2)}, from its decomposition as an extension of $\B_2(\Proj) = Dic_{16}$ by $\Z^2$. Note that the action of $Dic_{16}$ on the (abelian) kernel in this extension is exactly the one on $\Lambda$ from \Spar\ref{par_can_rep_of_W2}, which is through the quotient $Dic_{16} \twoheadrightarrow W_2$. Precisely, as in the previous proof, $a = a_{23}$ and $c = c_3$ identify with the basis of $\Lambda$ from Remark~\ref{Basis_of_Lambda}. However, this extension is not split, so computing its LCS seems tricky. We can try to make it split, by considering the quotient by the relation $\sigma_1^2 = \gamma_1^2$ (which is equivalent to $a^m = 1$), but then we also kill $c^{2m}$, getting  the finite quotient: 
\[(G/A)/\sigma_1^2\gamma_1^{-2} \cong \left(\Lambda/\langle ma, 2mc \rangle \right) \rtimes Dic_{16} \cong \left(\Z/m \times \Z/2m \right) \rtimes Dic_{16}.\]
In fact, using the notation of Proposition~\ref{LCS_can_rep_of_W2}, we have $\langle ma, 2mc \rangle = mV \subset \Lambda$. Thus we can deduce from Proposition~\ref{LCS_can_rep_of_W2} a computation of
\[\LCS_*^{Dic_{16}}(\Z/m \times \Z/2m) = \LCS_*^{W_2}(\Lambda/mV).\]
Namely, if $m = 2^\nu m'$, with $m'$ odd, we have that $2^\nu V$ contains $mV$, and that $2^{\nu+1}V$ equals $2^\nu V$ modulo $mV$, so $\LCS_*^{W_2}(\Lambda/mV)$ stops at $\LCS_{2\nu + 2}^{W_2}(\Lambda/mV) = 2^\nu V/mV$. Finally, the LCS of the above quotient stops at $\LCS_k$, where $k = 2 v_2(m) + 2$ if $m$ is even and $k = 4$ is $m$ is odd (in the latter case, note that the relative LCS stops at the second step, but $Dic_{16}$ is $3$-nilpotent).

This gives a lower bound for the step at which the LCS of $\B_{2,m}(\Proj)$ stops: it cannot stop before $\LCS_k$ for $k = \mathrm{max}\{ 4 , 2v_2(m)+2 \}$. However, this lower bound is far from optimal: our experimental calculations~\cite{GAPcode} using GAP~\cite{GAP4} and the package NQ~\cite{Nickel1996} show that, for all $m \leq 1024$, the LCS of $G/A$, and hence those of $G$ and of $\B_{2,m}(\Proj)$, do not stop before $\LCS_{100}$ (we also verified this for all $m = 2^\nu$ with $\nu \leq 23$). We thus conjecture:

\begin{conjecture}
\label{conj:B2m}
If $m \geq 3$, the LCS of $\B_{2,m}(\Proj)$ does not stop.
\end{conjecture}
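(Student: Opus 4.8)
The plan is to carry out the reduction sketched in the discussion preceding the conjecture and then to attack the resulting module-theoretic problem head on. Since $G/A$ is a quotient of $\B_{2,m}(\Proj)$ (first pass to $G=\B_{2,m}(\Proj)/\LCS_\infty(\B_m(\Moeb-pt))$, then to the central quotient by $A=\langle s_3\rangle\cong\Z/2$), the contrapositive of Lemma~\ref{lem:stationary_quotient} shows it suffices to prove that the LCS of $G/A$ does not stop; the central $\Z/2$ costs at most one extra step by Lemma~\ref{Quotient_by_finite_subgroup}. Recall that $G/A$ is a \emph{non-split} extension $\Lambda\hookrightarrow G/A\twoheadrightarrow Dic_{16}$, where $\Lambda\cong\Z^2$ is the canonical representation of $W_2$ and $Dic_{16}$ acts on $\Lambda$ through the surjection $Dic_{16}\twoheadrightarrow W_2$.

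First I would install the governing recursion. Because $Dic_{16}$ is $3$-nilpotent (Corollary~\ref{B2(Proj)}), functoriality of the LCS gives $\LCS_n(G/A)\subseteq\Lambda$ for every $n\geq4$. For such $n$ and any $x\in\LCS_n(G/A)\subseteq\Lambda$, every commutator $[g,x]$ equals $(\bar g-1)x$ with $\bar g\in W_2$, independently of the lift $g$; consequently $\LCS_{n+1}(G/A)=I\cdot\LCS_n(G/A)$, where $I=I_{W_2}$ is the augmentation ideal acting on $\Lambda$. Writing $W:=\LCS_4(G/A)$, one has $\LCS_{4+k}(G/A)=I^kW$ for all $k\geq0$, with $I^3\Lambda\subseteq W\subseteq\Lambda$ (the left inclusion is the standard bound $\LCS_{i+1}\supseteq I^i\Lambda$). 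Thus the conjecture is equivalent to the assertion that the augmentation filtration $I^kW$ of the rank-two group $\Lambda$ strictly decreases for every $k$.

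It is worth isolating why no short argument closes this out. A tempting line is: if the LCS stopped at $N$, then $U:=\LCS_N(G/A)=I^{N-4}W$ would satisfy $IU=U$, hence $U\subseteq\bigcap_k I^k\Lambda$; were this intersection trivial we would get $U=0$, contradicting $U\supseteq I^{N-1}\Lambda$. The difficulty is precisely that $\bigcap_k I^k\Lambda$ need \emph{not} be controllable for us here: a strictly decreasing chain in $\Lambda\cong\Z^2$ may converge to a nonzero ``trapped'' submodule, and the sandwich $I^{k+3}\Lambda\subseteq I^kW\subseteq I^k\Lambda$ is too loose to separate $I^kW$ from $I^{k+1}W$. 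This is exactly the feature that distinguishes the present case from $\B_{2,2}(\Proj)$ in Proposition~\ref{LCS_B2mu(P2)}: there the extension splits and the LCS can be read off directly from Proposition~\ref{LCS_can_rep_of_W2}, whereas here the splitting is unavailable (imposing $\sigma_1^2=\gamma_1^2$ collapses $\Lambda$ to a finite group).

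The hard part, therefore, is to determine the module $W=\LCS_4(G/A)$ \emph{exactly} as a $\Z[W_2]$-submodule of $\Lambda$ and then to prove that $I$ never acts on it with finite stabilisation. I would compute $W$ from the explicit presentation of Proposition~\ref{B2m(Proj)} via the extension-presentation method of Appendix~\ref{section_pstation_of_ext}, tracking all weight-$\leq4$ commutators; the dependence on $m$ enters only through the boundary relations $(\mathbb PR5)$ and $(\mathbb PR8)$, which encode the non-split cocycle (a twist by a factor of the shape $a^m$). The goal would then be to show, uniformly in $m\geq3$, that $W$ retains a free $\Z$-direction transverse to $\bigcap_k I^k\Lambda$ at every stage of the $I$-adic filtration, so that $I^kW\supsetneq I^{k+1}W$ for all $k$. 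Matching the experimental evidence (no stabilisation before $\LCS_{100}$ for all $m\leq1024$), I expect the decisive and most delicate input to be a clean identification of the $m$-twisted $\Z[W_2]$-module structure of $W$ coming from the non-split extension, and this is where I anticipate the principal obstacle lies.
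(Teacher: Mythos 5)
First, a point of order: the paper offers no proof of this statement — it is Conjecture~\ref{conj:B2m}, supported only by the reduction to $G/A$ that you also use, a lower bound on where the LCS could stop, and computer experiments up to $m\leq 1024$. So there is no proof of record to compare yours against, and your proposal must be judged on its own terms. On those terms it is not a proof: you set up the reduction correctly, you correctly establish that $\LCS_4(G/A)\subseteq\Lambda$ and that $\LCS_{n+1}(G/A)=I\cdot\LCS_n(G/A)$ for $n\geq 4$ (where $I$ is the augmentation ideal of $\Z[W_2]$ acting on $\Lambda$), and then you explicitly stop, declaring that the exact determination of $W=\LCS_4(G/A)$ as an $m$-twisted $\Z[W_2]$-module is the remaining obstacle.

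The substantive issue is that the obstacle you name does not appear to be where the difficulty lies: the ``tempting line'' you dismiss seems to go through, and no computation of $W$ is needed. Your stated worry is that $\bigcap_k I^k\Lambda$ might be a nonzero ``trapped'' submodule. But this intersection is controlled by Proposition~\ref{LCS_can_rep_of_W2}: since $\Lambda$ is abelian, $I^k\Lambda=\LCS_{k+1}^{W_2}(\Lambda)$, and that proposition computes the chain as $V\supset 2\Lambda\supset 2V\supset 4\Lambda\supset\cdots$, whose intersection is trivial because $\Lambda\cong\Z^2$ is free abelian of finite rank (this is exactly the residual nilpotence of $\Lambda\rtimes W_2$ asserted there). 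Granting your recursion, if the LCS of $G/A$ stabilised at $U=\LCS_N(G/A)$ with $N\geq 4$, then $U=I^kU\subseteq I^k\Lambda$ for all $k$, forcing $U=0$; but $U\supseteq I^{N-1}\Lambda\neq 0$, a contradiction. The non-splitness of the extension, which you (and the authors) treat as the source of the difficulty, is irrelevant to this argument: it affects \emph{which} submodule $\LCS_4(G/A)$ is, but neither the sandwich $I^{k+3}\Lambda\subseteq\LCS_{k+4}(G/A)\subseteq I^k\Lambda$ nor the stabilisation argument. The only inputs are that $\Lambda$ is a torsion-free normal abelian subgroup of $G/A$ with $3$-nilpotent quotient $Dic_{16}$, and that the conjugation action of $G/A$ on $\Lambda$ surjects onto $W_2\subset\mathrm{GL}_2(\Z)$ — both of which the paper itself asserts in the discussion preceding the conjecture. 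Since this reasoning would settle the paper's open conjecture, those two inputs deserve careful scrutiny before you believe it; but as written, your proposal abandons the argument at exactly the point where it closes.
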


\appendix

\chapter{Some calculations of lower central series}
\label{sec:appendix_2}

This appendix is devoted to computing the LCS of some combinatorially-defined groups. These include notably the Klein group $\Z \rtimes \Z$, the free products $\Z/2 * \Z/2$ and $\Z * \Z/2$, the Artin group of type $B_2$ and wreath products. Our main tool here is the decomposition of the LCS of a semi-direct product into a semi-direct product of filtrations, which we recall first. 

\section{Relative lower central series}\label{ss:relative_LCS}

In order to obtain actual computations, we need to recall some material from \cite[\Spar 3]{Darne_subgroups} about the LCS of a semi-direct product. 

\begin{definition}\label{SCD_relative}
Let $G$ be a group, of which $H$ is a normal subgroup. We define the \emph{relative lower central series} $\LCS_*^G (H)$ by:
\[\begin{cases} \LCS_1^G (H):= H, \\ \LCS_{k+1}^G (H):= [G, \LCS_k^G (H)]. \end{cases}\]
\end{definition}

If $G$ is the semi-direct product of $H$ with a group $K$, we write $\LCS_*^K(H)$ for $\LCS_*^{H \rtimes K}(H)$ (which does not cause any confusion: if $H$ is a normal subgroup of a group $G$, then $\LCS_*^G(H) = \LCS_*^{H \rtimes G}(H)$, for the semi-direct product associated to the conjugation action of $G$ on $H$). It was shown in \cite{Darne_subgroups} that in this case:
\[\LCS_*(H \rtimes K) = \LCS_*^K(H) \rtimes \LCS_*(K).\]
Moreover, the filtration $\LCS_*^K(H) = H \cap \LCS_*(H \rtimes K)$ does have the property that $[\LCS_i^K(H), \LCS_j^K(H)] \subseteq\LCS_{i+j}^K(H)$ (for all $i,j \geq 1$), which allows one to define an associated graded Lie ring $\Lie^K(H)$ (with brackets induced by commutators, as in \Spar\ref{Lie_rings}). Then, the Lie ring of $H \rtimes K$ decomposes into a semi-direct product of Lie rings:
\[\Lie(H \rtimes K) = \Lie^K(H) \rtimes \Lie(K).\]
This is in fact a generalisation of Lemma~\ref{lem:abelianization semidirect}, which is the degree-one part (one can check that $\Lie_1^K(H) = (H^{\ab})_K$).

We can devise an analogue of Lemma~\ref{commuting_representatives} in this context, which gives a criterion for the relative LCS to stop:
\begin{lemma}\label{commuting_representatives_rel}
Let a group $K$ act on a group $H$. Let the set $S_K$ generate $K^{\ab}$ and let the set $S_H$ generate $(H^{\ab})_K$. Suppose that, for each pair $(s,t) \in S_H^2$ (resp.~each pair $(s,t) \in S_H \times S_K$), we can find representatives $\tilde s, \tilde t \in H$ (resp.~$\tilde s \in H$ and $\tilde t \in K$) of $s$ and $t$ such that $\tilde s$ and $\tilde t$ commute in $H$ (resp.~in $H \rtimes K$). Then $\LCS_2^K (H) = \LCS_3^K (H)$, which means that $\LCS_*^K (H)$ stops at $\LCS_2^K (H)$, and:
\[\Lie(H \rtimes K) \cong (H^{\ab})_K \times \Lie(K),\]
where the first factor is concentrated in degree one. 
\end{lemma}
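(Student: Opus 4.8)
The plan is to adapt the proof of Corollary~\ref{commuting_representatives} to the relative setting, using the decomposition $\Lie(H \rtimes K) = \Lie^K(H) \rtimes \Lie(K)$ recalled above. The key observation is that $\Lie^K(H)$ is generated as a Lie ring \emph{over} $\Lie(K)$ by its degree-one part $\Lie_1^K(H) = (H^{\ab})_K$; more precisely, the bracket $[\Lie_1(H \rtimes K), \Lie_k^K(H)] = \Lie_{k+1}^K(H)$ follows from $[\LCS_1(H \rtimes K), \LCS_k^K(H)] = \LCS_{k+1}^K(H)$ by passing to the appropriate quotients, just as in Proposition~\ref{generationproperty}. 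Since $\Lie_1(H \rtimes K) = (H \rtimes K)^{\ab} = (H^{\ab})_K \times K^{\ab}$ (Lemma~\ref{lem:abelianization semidirect}), the group $\Lie_2^K(H)$ is spanned by the brackets $[s,t]$ where $s$ ranges over generators of $(H^{\ab})_K$ and $t$ over generators of $(H^{\ab})_K \times K^{\ab}$. Taking $S_H$ and $S_K$ as our generating sets, these are exactly the brackets $[s,t]$ for $(s,t) \in S_H^2$ and for $(s,t) \in S_H \times S_K$.

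First I would spell out that $\Lie_2^K(H)$ is generated by these two families of brackets. Then, for each such pair, the hypothesis furnishes representatives $\tilde s \in H$ and $\tilde t \in H$ (resp.~$\tilde t \in K$) that commute in $H$ (resp.~in $H \rtimes K$). As in the proof of Corollary~\ref{commuting_representatives}, the relation $[\tilde s, \tilde t] = 1$ in $H \rtimes K$ forces $[s,t] = 0$ in $\Lie(H \rtimes K)$, hence in its sub-Lie-ring $\Lie^K(H)$. Since every generator of $\Lie_2^K(H)$ thus vanishes, we conclude $\Lie_2^K(H) = \LCS_2^K(H)/\LCS_3^K(H) = 0$, i.e.~$\LCS_2^K(H) = \LCS_3^K(H)$, which is the claimed stopping.

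It remains to deduce the Lie-ring isomorphism. Having shown $\Lie_2^K(H) = 0$, generation in degree one over $\Lie(K)$ forces $\Lie_k^K(H) = 0$ for all $k \geq 2$, so $\Lie^K(H)$ is concentrated in degree one and equals $(H^{\ab})_K$ (an abelian Lie ring). Feeding this into the semidirect-product decomposition $\Lie(H \rtimes K) = \Lie^K(H) \rtimes \Lie(K)$ gives $\Lie(H \rtimes K) \cong (H^{\ab})_K \times \Lie(K)$, with the first factor concentrated in degree one as stated. The only mild subtlety is justifying that the product is direct rather than merely semidirect in the conclusion: here the degree-one factor $(H^{\ab})_K$ is central in the Lie ring precisely because all its brackets with degree-one elements (both inside $H$ and against $K$-generators) vanish by the commuting-representatives hypothesis, and hence all its brackets vanish by the Jacobi/generation argument of Corollary~\ref{torsion}-style induction.

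The main obstacle I anticipate is purely bookkeeping: confirming that the hypothesis covers \emph{all} the degree-two brackets needed, i.e.~that brackets of the form $[s,t]$ with both $s,t \in S_K$ are already accounted for by $\Lie(K)$ (they lie in $\Lie_2(K)$, not $\Lie_2^K(H)$, so they do not need to vanish and correctly do not appear in the hypothesis), and that the coinvariants $(H^{\ab})_K$ — rather than $H^{\ab}$ — are the correct degree-one piece, so that choosing $S_H$ to generate $(H^{\ab})_K$ suffices. This matches the identity $\Lie_1^K(H) = (H^{\ab})_K$ quoted from \cite{Darne_subgroups}, so no genuine difficulty arises, only care in tracking which brackets live in which factor of the semidirect decomposition.
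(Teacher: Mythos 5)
Your proposal is correct and follows essentially the same route as the paper's own proof: identify $\Lie_2^K(H)$ as spanned by brackets of $\Lie_1^K(H)=(H^{\ab})_K$ with $\Lie_1(H\rtimes K)=(H^{\ab})_K\times K^{\ab}$, kill these brackets via the commuting representatives, and read off the direct-product decomposition from $\Lie(H\rtimes K)=\Lie^K(H)\rtimes\Lie(K)$. Your extra remark on why the semidirect product becomes direct (centrality of the degree-one factor via generation in degree one) is just an explicit spelling-out of what the paper leaves as ``a reformulation of the decomposition''.
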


\begin{proof}
On the one hand, by definition of the relative LCS, an element of  $\Lie_2^K(H)$ is a sum of brackets in $\Lie(G \rtimes K)$, either of two elements of $\Lie_1^K(H)$, or of an element of $\Lie_1(K)$ with an element of $\Lie_1^K(H)$. On the other hand, the relation $[\tilde s,\tilde t] = 1$ in $H \rtimes K$ readily implies that $[s, t] = 0$ in $\Lie(H \rtimes K) \cong \Lie^K(H) \rtimes \Lie(K)$. Since $S_H$ linearly generates $\Lie_1^K(H) = (H^{\ab})_K$ and $S_K$ linearly generates $\Lie_1(K) = K^{\ab}$, we infer that under our hypothesis, all elements of $\Lie_2^K(H)$ are trivial, which means that $\LCS_2^K (H) = \LCS_3^K (H)$. Moreover, from the definition of $\LCS_*^K(H)$, this obviously implies that $\LCS_i^K (H) = \LCS_{i+1}^K (H)$ for all $i \geq 2$. The statement about Lie rings is then just a reformulation of the decomposition $\Lie(H \rtimes K) \cong \Lie^K(H) \rtimes \Lie(K)$ taking into account these conclusions.
\end{proof}

\section{Semi-direct products of abelian groups}

Let a group $G$ act on an abelian group $A$. Then the LCS of $A \rtimes G$ can be computed using linear algebra. Indeed, we have:
\[\LCS_{k+1}^G(A):= \left[A \rtimes G,\ \LCS_k^G (A)\right] = \left[G,\ \LCS_k^G (A)\right].\]
These are commutators in $A \rtimes G$, given, for  $g \in G$ and $a \in A$, by:
\[[g,a] = g \cdot a - a = (g - id)(a).\]  
As a consequence, $\LCS_{k+1}^G(A)$ is the subgroup of $A$ generated by the $(g-id)(\LCS_k^G(A))$ (for $g \in G$), which can be computed by studying the endomorphisms $g - id$ of $A$.

\medskip

We now study several instances of this situation. We begin by computing the LCS of the Klein group $\Z \rtimes \Z$ (which is the fundamental group of the Klein bottle). We then generalise this calculation to any semi-direct product of an abelian group by $\Z$ acting by $-id$. This can be generalised again, to the case of an action of $\Z$ by an involution. Finally, we compute the LCS of $\Lambda \rtimes W_2$, where $W_2  \cong (\Z/2) \wr \Sym_2$ is the Coxeter group of type $B_2$ (or $C_2$), acting on the lattice~$\Lambda$ generated by its root system.

\subsection{The Klein group}

There are two distinct automorphisms of $\Z$ (that is, $\pm id$), whence only one non-trivial action of $\Z$ on $\Z$. Thus the following definition makes sense:
\begin{definition}\label{def_Klein}
The \emph{Klein group} is the semi-direct product $K = \Z \rtimes \Z$.
\end{definition}

Let us denote by $x$ (resp.~by $t$) the element $(1,0)$ (resp.~$(0,1)$) of  $\Z \rtimes \Z$. A presentation of $K$ is given by 
\[K = \langle x,t\ |\ txt^{-1} = x^{-1} \rangle.\]
The LCS of $K$ decomposes as $\LCS_*(K) = \LCS_*^{\Z}(\Z) \rtimes \LCS_*(\Z)$. Thus, in order to understand it, we need to understand the filtration $\LCS_*^{\Z}(\Z)$.
\begin{proposition}\label{LCS_Klein}
The LCS of the Klein group is $\LCS_i(\Z \rtimes \Z) = (2^{i-1}\Z) \rtimes \{1\}$ for $i \geq 2$. In other words, $\LCS_i^{\Z}(\Z) = 2^{i-1}\Z$. In particular, $\Z \rtimes \Z$ is residually nilpotent.
\end{proposition}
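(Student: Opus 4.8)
The plan is to compute the filtration $\LCS_*^{\Z}(\Z)$ directly using the linear-algebra description of relative lower central series given just above the statement, and then assemble the answer via the semidirect-product decomposition $\LCS_*(K) = \LCS_*^{\Z}(\Z) \rtimes \LCS_*(\Z)$. Writing $A = \Z$ (generated by $x$) and letting the second factor $\Z$ (generated by $t$) act by $-\mathrm{id}$, the generator $t$ acts on $A$ as the endomorphism $-\mathrm{id}$, so $t - \mathrm{id}$ is multiplication by $-2$ on $A$. The formula $[g,a] = (g - \mathrm{id})(a)$ then tells us that $\LCS_{k+1}^{\Z}(\Z)$ is generated by $(t-\mathrm{id})(\LCS_k^{\Z}(\Z)) = -2 \cdot \LCS_k^{\Z}(\Z)$.

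First I would record the base case: $\LCS_1^{\Z}(\Z) = \Z$ by definition, and $\LCS_2^{\Z}(\Z) = [\,\Z\rtimes\Z,\ \Z\,] = (t-\mathrm{id})(\Z) = 2\Z$. Then I would argue by induction that $\LCS_k^{\Z}(\Z) = 2^{k-1}\Z$ for all $k \geq 1$: assuming $\LCS_k^{\Z}(\Z) = 2^{k-1}\Z$, the generation property above gives $\LCS_{k+1}^{\Z}(\Z) = \langle (t-\mathrm{id})(2^{k-1}\Z)\rangle = 2\cdot 2^{k-1}\Z = 2^k\Z$, completing the induction. (Since $A$ is cyclic and $t$ is the only nontrivial generator of the acting $\Z$ modulo inner contributions, it suffices to apply $t - \mathrm{id}$; no other generators of the acting group need be considered.)

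Next I would translate this back into the full LCS of $K$. Since $\LCS_*(\Z) = \{1\}$ in all degrees $\geq 2$ for the acting factor (the second $\Z$ being abelian, its own LCS is trivial beyond degree one), the decomposition $\LCS_i(K) = \LCS_i^{\Z}(\Z) \rtimes \LCS_i(\Z)$ yields $\LCS_i(K) = (2^{i-1}\Z) \rtimes \{1\}$ for $i \geq 2$, exactly as claimed. Finally, residual nilpotence follows immediately: $\LCS_\infty(K) = \bigcap_{i\geq 1}\LCS_i(K) = \bigcap_{i\geq 2}(2^{i-1}\Z)\rtimes\{1\} = \{1\}$, because $\bigcap_i 2^{i-1}\Z = \{0\}$ in $\Z$.

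There is no serious obstacle here; the computation is entirely mechanical once the relative-LCS machinery and the decomposition $\LCS_*(H\rtimes K) = \LCS_*^K(H)\rtimes\LCS_*(K)$ are in hand. The only point requiring a small amount of care is the justification that $\LCS_{k+1}^{\Z}(\Z)$ is generated purely by applying $t - \mathrm{id}$ (rather than needing to account for all group elements $g$): this is immediate because every element of the acting group is a power of $t$, and the subgroups $2^{k-1}\Z$ are already $t$-invariant, so iterating $t-\mathrm{id}$ captures the full subgroup $[\,K,\LCS_k^{\Z}(\Z)\,]$. I would state this briefly to keep the argument clean.
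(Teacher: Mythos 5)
Your proof is correct and follows essentially the same route as the paper: the paper's one-line computation $[x^{2^j},t]=x^{2^{j+1}}$ is exactly your observation that $t-\mathrm{id}$ acts as multiplication by $\pm 2$, written multiplicatively, combined with the same decomposition $\LCS_*(\Z\rtimes\Z)=\LCS_*^{\Z}(\Z)\rtimes\LCS_*(\Z)$. You are somewhat more explicit about why it suffices to iterate $t-\mathrm{id}$ rather than consider all $t^n-\mathrm{id}$, a point the paper leaves implicit.
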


\begin{proof}
This follows from the formula $[x^{2^j},t] = x^{2^j}(tx^{-2^j}t^{-1}) = x^{2^{j+1}}$, by induction on $i$.
\end{proof}

\begin{corollary}\label{Lie_Klein}
The (graded) Lie ring  of the Klein group identifies with $(\Z/2)[X] \rtimes \Z$, where the polynomial ring $(\Z/2)[X]$ is seen as a graded abelian Lie ring (where $X^i$ is of degree $i$), and the generator $T$ of $\Z$ (of degree $1$) acts via $[X^i, T] = X^{i+1}$.
\end{corollary}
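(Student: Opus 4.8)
The goal is to identify the graded Lie ring $\Lie(K)$ of the Klein group $K = \Z \rtimes \Z$ explicitly, given the computation of its lower central series in Proposition~\ref{LCS_Klein}. The plan is to use the general decomposition $\Lie(H \rtimes K) = \Lie^K(H) \rtimes \Lie(K)$ recalled in Appendix~\ref{ss:relative_LCS}, specialised to $H = \Z$ (generated by $x$) and $K = \Z$ (generated by $t$). Since $\Lie(\Z) \cong \Z$ is concentrated in degree one (generated by the class $T$ of $t$), the whole content lies in identifying the relative Lie ring $\Lie^{\Z}(\Z) = \bigoplus_{i \geq 1} \LCS_i^{\Z}(\Z)/\LCS_{i+1}^{\Z}(\Z)$ as a graded abelian group together with the action of $T$.

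First I would compute the graded pieces of $\Lie^{\Z}(\Z)$ directly from Proposition~\ref{LCS_Klein}. In degree one, $\Lie_1^{\Z}(\Z) = (\Z)_{\Z} = \Z$, the coinvariants under the action by $-\mathrm{id}$, which is $\Z/2\Z$ (since $a \sim -a$ forces $2a = 0$); this is generated by the class of $x$, which I would denote $X = X^1$. For $i \geq 2$, Proposition~\ref{LCS_Klein} gives $\LCS_i^{\Z}(\Z) = 2^{i-1}\Z$, so the quotient $\LCS_i^{\Z}(\Z)/\LCS_{i+1}^{\Z}(\Z) = 2^{i-1}\Z/2^{i}\Z \cong \Z/2\Z$ for every $i \geq 1$. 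Thus as a graded abelian group $\Lie^{\Z}(\Z) \cong \bigoplus_{i \geq 1} \Z/2\Z$, with one copy of $\Z/2$ in each positive degree; writing $X^i$ for the generator in degree $i$ identifies this with the graded abelian group underlying the polynomial ring $(\Z/2)[X]$ (where $X^i$ has degree $i$), viewed as an abelian Lie ring.

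Next I would verify that the bracket structure matches the claimed one. Since both $\Lie^{\Z}(\Z)$ and the polynomial part are $2$-torsion in every degree (for $\Lie^{\Z}(\Z)$ this also follows from Corollary~\ref{torsion} once degree one is $2$-torsion) and since $\Z$ is abelian, the only nontrivial brackets are those of the form $[X^i, T]$. The key computation is to show $[X^i, T] = X^{i+1}$: this is exactly the infinitesimal shadow of the LCS calculation. Concretely, the class $X^i$ is represented by $x^{2^{i-1}} \in \LCS_i^{\Z}(\Z)$, and the formula $[x^{2^{i-1}}, t] = x^{2^{i}}$ from the proof of Proposition~\ref{LCS_Klein} shows that the bracket of the degree-$i$ generator with $T$ lands on a representative of the degree-$(i+1)$ generator $X^{i+1}$. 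Passing to the appropriate quotients $\Lie_i(K) \times \Lie_1(K) \to \Lie_{i+1}(K)$ as in Proposition~\ref{prop:lazard}, this gives $[X^i, T] = X^{i+1}$ in the Lie ring. Since the $X^i$ together with $T$ generate $\Lie(K)$ (the Lie ring is generated in degree one by Proposition~\ref{generationproperty}, and degree one is spanned by $X$ and $T$), these brackets determine the whole structure.

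Finally I would assemble these observations into the semi-direct product statement: $\Lie(K) \cong (\Z/2)[X] \rtimes \Z$, where $(\Z/2)[X]$ is the graded abelian Lie ring and the generator $T$ of the $\Z$ factor acts by $[X^i, T] = X^{i+1}$. The main obstacle is not any single step but keeping the bookkeeping honest: one must be careful that the chosen representatives $x^{2^{i-1}}$ genuinely give generators of the successive quotients (rather than zero or a non-generator), which is guaranteed precisely because $\LCS_i^{\Z}(\Z)/\LCS_{i+1}^{\Z}(\Z) = 2^{i-1}\Z/2^i\Z$ is cyclic of order two with $x^{2^{i-1}}$ as its generator. Everything else is a direct translation of Proposition~\ref{LCS_Klein} into the graded setting, so I expect this corollary to follow almost immediately once the degree-by-degree identification and the single bracket formula are in place.
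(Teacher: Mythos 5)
Your proposal is correct and follows essentially the same route as the paper: decompose $\Lie(K)$ as $\Lie^{\Z}(\Z) \rtimes \Lie(\Z)$, read off each graded piece $2^{i-1}\Z/2^{i}\Z \cong \Z/2$ from Proposition~\ref{LCS_Klein}, and obtain the bracket $[X^i,T]=X^{i+1}$ from the commutator identity $[x^{2^{j}},t]=x^{2^{j+1}}$. The extra care you take in checking that the representatives $x^{2^{i-1}}$ generate the successive quotients (and that degree one is the coinvariants $\Z/2$) is exactly the bookkeeping the paper leaves implicit.
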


\begin{proof}
From Proposition~\ref{LCS_Klein}, we get a decomposition
$\Lie(K) = \Lie(2^{*-1}\Z) \rtimes \Lie(\Z).$
Since $\Z$ is abelian, the two factors are abelian Lie rings. The result follows, by calling $X^i$ the class of $x^{2^{i-1}}$ and $T$ the class of $t$. The formula for brackets comes from $[x^{2^j},t] = x^{2^{j+1}}$.
\end{proof}

\subsection{Generalised Klein groups}

Let $A$ be any abelian group and let $\Z$ act on $A$ via the powers of $-id_A$. The corresponding semi-direct product $K_A = A \rtimes \Z$ is a generalisation of the Klein group $K = K_{\Z}$. We can generalise the above results to this context:

\begin{proposition}\label{LCS_Klein_A}
The LCS of $K_A$ is given by $\LCS_i(A \rtimes \Z) = (2^{i-1} A) \rtimes \{1\}$ for $i \geq 2$. In other words, for all $i \geq 2$, $\LCS_i^{\Z}(A) = 2^{i-1} A$. In particular, for any free abelian group $A$, $K_A$ is residually nilpotent.
\end{proposition}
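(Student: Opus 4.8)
The plan is to prove the statement by computing the relative lower central series $\LCS_*^{\Z}(A)$ explicitly, just as was done for the Klein group itself in Proposition~\ref{LCS_Klein}, and then to deduce residual nilpotence in the free abelian case from the resulting formula.

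First I would recall the general recipe for semi-direct products of abelian groups established just above: since $\LCS_{k+1}^{\Z}(A) = [\Z, \LCS_k^{\Z}(A)]$ is generated by the elements $(g - id)(a)$ for $g \in \Z$ and $a \in \LCS_k^{\Z}(A)$, and since the generator $t$ of $\Z$ acts on $A$ by $-id_A$, I compute $(t - id)(a) = (-id - id)(a) = -2a$. Hence $\LCS_2^{\Z}(A) = 2A$, and more generally the endomorphism $t - id$ restricted to any subgroup $B \subseteq A$ stable under $-id$ (which every subgroup of $A$ is, being a subgroup of an abelian group closed under negation) has image $2B$. An easy induction on $i$ then gives $\LCS_i^{\Z}(A) = 2^{i-1} A$ for all $i \geq 2$. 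Combining this with the decomposition $\LCS_*(A \rtimes \Z) = \LCS_*^{\Z}(A) \rtimes \LCS_*(\Z)$ and the fact that $\LCS_i(\Z) = \{1\}$ for $i \geq 2$ (as $\Z$ is abelian) yields $\LCS_i(A \rtimes \Z) = (2^{i-1}A) \rtimes \{1\}$ for $i \geq 2$, which is the formula claimed.

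For the final assertion, I would specialise to $A$ free abelian. The residue is $\LCS_\infty(K_A) = \bigcap_{i \geq 2} (2^{i-1} A) \rtimes \{1\} = \left(\bigcap_{i \geq 1} 2^i A\right) \rtimes \{1\}$. When $A$ is free abelian, $\bigcap_{i \geq 1} 2^i A = \{0\}$, since an element of $A$ lying in $2^i A$ for all $i$ would have all its coordinates (with respect to any basis) divisible by every power of $2$, forcing them to be zero. Therefore $\LCS_\infty(K_A)$ is trivial, i.e.\ $K_A$ is residually nilpotent.

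The computation is essentially routine, so I do not anticipate a genuine obstacle; the only point requiring slight care is that the induction step needs the image of $t - id$ on the subgroup $\LCS_k^{\Z}(A) = 2^{k-1}A$ to be exactly $2 \cdot 2^{k-1} A = 2^k A$, which holds because multiplication by $2$ is surjective onto $2^k A$ from $2^{k-1}A$. The verification that $\bigcap_i 2^i A = 0$ for free abelian $A$ is where the freeness hypothesis is genuinely used, and it is worth noting that the statement can fail without it (for instance if $A = \Z[1/2]$, where $2A = A$, so the relative LCS is constant and $K_A$ is not residually nilpotent); this is precisely why the conclusion is restricted to free abelian $A$.
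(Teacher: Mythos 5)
Your proposal is correct and follows essentially the same route as the paper: the commutator formula $[a,t] = a - t\cdot a = \pm 2a$ gives $\LCS_{i}^{\Z}(A) = 2^{i-1}A$ by induction, and residual nilpotence for free abelian $A$ follows from $\bigcap_i 2^i A = \{0\}$. The extra remark about $A = \Z[1/2]$ showing the necessity of the hypothesis is a nice touch but not needed for the statement.
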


\begin{proof}
Let $t$ denote the generator of $\Z$. For all $a$ in $A$, we have $[a,t] = a - t \cdot a = 2a$ in $A \rtimes \Z$. Hence $[2^j A, t] = 2^{j+1} A$, from which the calculation of the LCS follows. Then $K_A$ is residually nilpotent if and only if the intersection of the $2^j A$ is trivial, which is true for instance when $A$ is finitely generated or when $A$ is free abelian.
\end{proof}

\begin{corollary}\label{Lie_Klein_A}
Let us consider the graded abelian Lie ring $\Lie(2^{*-1} A) = \bigoplus 2^{i-1} A/2^i A$ (where the sum is taken over $i \geq 1$). The (graded) Lie ring  of $K_A$ identifies with $\Lie(2^* A) \rtimes \Z$, where the generator $T$ of $\Z$ acts via the degree-one map induced by $a \mapsto 2a$.
\end{corollary}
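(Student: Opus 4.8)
The plan is to leverage the decomposition of the Lie ring of a semidirect product that was established just above, combined with the explicit computation of the lower central series from Proposition~\ref{LCS_Klein_A}. Since the group $K_A = A \rtimes \Z$ has $\LCS_i(A \rtimes \Z) = (2^{i-1}A) \rtimes \{1\}$ for $i \geq 2$ (with $\LCS_1 = A \rtimes \Z$), the relative lower central series of the normal subgroup $A$ is $\LCS_i^{\Z}(A) = 2^{i-1}A$ for $i \geq 1$. The general theory recalled in~\Spar\ref{ss:relative_LCS} gives a decomposition of the associated Lie ring as a semidirect product $\Lie(K_A) = \Lie^{\Z}(A) \rtimes \Lie(\Z)$, so the task reduces to identifying each factor and the action.

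First I would identify the two factors. The factor $\Lie(\Z)$ is simply $\Z$ concentrated in degree one, generated by the class $T$ of the generator $t$ of the quotient $\Z$; since $\Z$ is abelian its Lie ring is trivial in higher degrees. For the relative factor, by definition $\Lie^{\Z}(A) = \bigoplus_{i \geq 1} \LCS_i^{\Z}(A)/\LCS_{i+1}^{\Z}(A) = \bigoplus_{i \geq 1} 2^{i-1}A / 2^i A$, which is precisely the graded abelian Lie ring $\Lie(2^{*-1}A)$ appearing in the statement. Because $A$ is abelian, all brackets internal to this factor vanish, so $\Lie^{\Z}(A)$ is an abelian Lie ring, exactly as claimed. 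This matches verbatim the computation strategy used for Corollary~\ref{Lie_Klein} in the special case $A = \Z$.

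Next I would pin down the action of $T$ on $\Lie(2^{*-1}A)$, which is the only genuinely computational point. The bracket $[\overline a, T]$ for $\overline a \in \Lie_i^{\Z}(A)$ (the class of $a \in 2^{i-1}A$) is, by the definition of the Lie bracket in~\Spar\ref{Lie_rings}, the class of the group commutator $[a,t]$. As computed in the proof of Proposition~\ref{LCS_Klein_A}, we have $[a,t] = a - t \cdot a = a - (-a) = 2a$ in $A \rtimes \Z$. Thus $[\overline a, T] = \overline{2a} \in \LCS_{i+1}^{\Z}(A)/\LCS_{i+2}^{\Z}(A) = 2^i A/2^{i+1}A$, so $T$ acts via the degree-raising map induced by multiplication by $2$, sending the degree-$i$ piece $2^{i-1}A/2^iA$ to the degree-$(i+1)$ piece $2^iA/2^{i+1}A$ by $a \mapsto 2a$. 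This is exactly the degree-one derivation $a \mapsto 2a$ described in the statement.

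The main obstacle, such as it is, is merely bookkeeping: one must be careful that the isomorphism $\Lie^{\Z}(A) \cong \Lie(2^{*-1}A)$ is an isomorphism of \emph{graded} Lie rings respecting the grading shift under the $T$-action, and that the semidirect product of Lie rings is being read with $T$ in degree one so that $[\Lie_i, \Lie_1] \subseteq \Lie_{i+1}$ holds. There is no hard analytic or combinatorial difficulty here, since everything follows formally from Proposition~\ref{LCS_Klein_A} and the semidirect-product decomposition of Lie rings from~\Spar\ref{ss:relative_LCS}; the proof is essentially a transcription of the proof of Corollary~\ref{Lie_Klein}, with $\Z$ replaced by the general abelian group $A$ and the polynomial ring $(\Z/2)[X]$ replaced by the graded ring $\Lie(2^{*-1}A)$.
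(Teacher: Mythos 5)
Your proof is correct and follows essentially the same route as the paper: both invoke the semidirect-product decomposition $\Lie(K_A) = \Lie^{\Z}(A) \rtimes \Lie(\Z)$ from \Spar\ref{ss:relative_LCS}, identify $\Lie^{\Z}(A)$ with $\bigoplus 2^{i-1}A/2^iA$ via Proposition~\ref{LCS_Klein_A}, and read off the action of $T$ from the commutator formula $[a,t]=2a$. Your version merely spells out the bookkeeping more explicitly than the paper's two-line argument.
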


\begin{proof}
From Proposition~\ref{LCS_Klein_A}, we get a decomposition 
$\Lie(K) = \Lie(2^{*-1}A) \rtimes \Lie(\Z).$
Since $\Z$ and $A$ are abelian, the two factors are abelian Lie rings. The result follows, since brackets with $T$ come from commutators with $t$, given by $[a,t] = 2a$ in $A \rtimes \Z$.
\end{proof}

Since $t^2$ acts trivially on $A$, it is a central element of $K_A$ (in fact, one easily sees that it generates the centre of $K_A$ if $A$ is not trivial). Thus we can consider $A \rtimes (\Z/2)$ (where $\Z/2$ acts on $A$ via $-id_A$) as a quotient of $K_A$, which behaves in much the same way:
\begin{corollary}\label{Lie_Klein_A/center}
Consider the group $A \rtimes (\Z/2)$, where $\Z/2$ acts on the abelian group $A$ via $-id_A$. We have $\LCS_i(A \rtimes (\Z/2)) = (2^{i-1} A) \rtimes \{1\}$ for all $i \geq 2$. In particular, for any finitely generated abelian group $A$, $A \rtimes (\Z/2)$ is residually nilpotent. Moreover, the (graded) Lie ring of this group identifies with $\Lie(2^{*-1}A) \rtimes (\Z/2)$, where the generator $T$ of $\Z/2$ acts via the degree-one map induced by $a \mapsto 2a$.
\end{corollary}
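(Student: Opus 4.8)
The plan is to imitate, essentially verbatim, the proofs of Proposition~\ref{LCS_Klein_A} and Corollary~\ref{Lie_Klein_A}, the only new input being that replacing $\Z$ by $\Z/2$ as the acting group changes nothing in the relevant computation. Concretely, write $G = A \rtimes (\Z/2)$ and let $t$ denote the generator of the $\Z/2$ factor. By the decomposition of the LCS of a semidirect product recalled in \Spar\ref{ss:relative_LCS} (from \cite{Darne_subgroups}), we have
\[\LCS_*(G) = \LCS_*^{\Z/2}(A) \rtimes \LCS_*(\Z/2).\]
Since $\Z/2$ is abelian, $\LCS_i(\Z/2) = \{1\}$ for all $i \geq 2$, so for $i \geq 2$ the only surviving factor is the relative series $\LCS_i^{\Z/2}(A)$, sitting inside $A$. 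Thus everything reduces to computing $\LCS_*^{\Z/2}(A)$.

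First I would compute this relative series exactly as in Proposition~\ref{LCS_Klein_A}. For $a \in A$ we have $[a,t] = a - t\cdot a = a - (-a) = 2a$ in $G$, and since $A$ is abelian all commutators $[a,a']$ vanish. Hence $\LCS_{k+1}^{\Z/2}(A) = [G,\LCS_k^{\Z/2}(A)]$ is generated by $2 \cdot \LCS_k^{\Z/2}(A)$, and an immediate induction gives $\LCS_i^{\Z/2}(A) = 2^{i-1}A$. The conceptually clean way to phrase why this matches the $\Z$ case is that the relative lower central series of an abelian normal subgroup depends only on the image of the acting group in $\Aut(A)$: here both $\Z$ and $\Z/2$ map onto $\{\pm id_A\} \subseteq \Aut(A)$ with the generator acting by $-id_A$, so $\LCS_*^{\Z/2}(A) = \LCS_*^{\Z}(A)$. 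Combining with the previous paragraph yields $\LCS_i(G) = (2^{i-1}A) \rtimes \{1\}$ for $i \geq 2$, as claimed. (Alternatively one may present $G$ as the quotient of the generalised Klein group $K_A = A \rtimes \Z$ by the central subgroup $\langle t^2 \rangle$; since $\LCS_i(K_A) = 2^{i-1}A \subseteq A$ for $i \geq 2$ by Proposition~\ref{LCS_Klein_A}, the projection $K_A \twoheadrightarrow G$ restricts to an isomorphism on each $\LCS_i$ for $i\geq 2$, giving the same conclusion, compare Proposition~\ref{Extensions_and_residues}.)

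The residual nilpotence statement then follows as in Proposition~\ref{LCS_Klein_A}: since $\LCS_\infty(G) = \bigl(\bigcap_{i} 2^{i-1}A\bigr) \rtimes \{1\}$, the group $G$ is residually nilpotent precisely when $\bigcap_i 2^{i-1}A = \{1\}$, which for finitely generated $A$ is verified directly from the structure theorem exactly as in the cited proof. Finally, for the Lie ring I would pass to the associated graded of the decomposition above, $\Lie(G) = \Lie^{\Z/2}(A) \rtimes \Lie(\Z/2)$; here $\Lie(\Z/2) = \Z/2$ is concentrated in degree one and $\Lie^{\Z/2}(A) = \bigoplus_{i\geq 1} 2^{i-1}A/2^iA = \Lie(2^{*-1}A)$ is abelian, while the action of the degree-one generator $T$ (the class of $t$) is read off from $[a,t] = 2a$, i.e.\ it is the degree-one map induced by $a \mapsto 2a$. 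This is identical to the identification made in Corollary~\ref{Lie_Klein_A}. There is no serious obstacle in this argument; the only point requiring a word of care is the justification that $\LCS_*^{\Z/2}(A)$ coincides with the series computed over $\Z$ (handled by the ``image in $\Aut(A)$'' observation), together with pinning down exactly when the intersection $\bigcap_i 2^{i-1}A$ is trivial for the residual nilpotence claim.
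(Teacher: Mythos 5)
Your proposal is correct and follows essentially the same route as the paper: the paper obtains Corollary~\ref{Lie_Klein_A/center} from Proposition~\ref{LCS_Klein_A} and Corollary~\ref{Lie_Klein_A} by observing that $A \rtimes (\Z/2)$ is the quotient of $K_A = A \rtimes \Z$ by the central subgroup $\langle t^2 \rangle$, and your primary argument (rerunning the computation $[a,t] = 2a$ for the relative series $\LCS_*^{\Z/2}(A)$, which only depends on the image of the acting group in $\Aut(A)$) is just the proof of Proposition~\ref{LCS_Klein_A} transplanted, with your parenthetical alternative being literally the paper's reduction.

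One caveat, which concerns the statement rather than your method: the assertion that $\bigcap_i 2^{i-1}A$ is trivial for every finitely generated abelian group $A$ is false in the presence of odd torsion. For $A = \Z/3$ one has $2A = A$, so $\LCS_i(A \rtimes (\Z/2)) = \Z/3$ for all $i \geq 2$, and indeed $A \rtimes (\Z/2) \cong \Sym_3$ is not residually nilpotent. Your sentence claiming this is ``verified directly from the structure theorem'' for all finitely generated $A$ therefore cannot be completed as written; the correct hypothesis is that $A$ has no odd torsion (equivalently, for finitely generated $A$, that $\bigcap_i 2^i A = 0$). You are faithfully reproducing an over-claim already present in the proof of Proposition~\ref{LCS_Klein_A}; everything else in your argument, including the formula $\LCS_i = 2^{i-1}A$ for $i \geq 2$ and the identification of the Lie ring, is correct.
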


Finally, let us spell out the particular case where $A \cong \Z^n$ is free abelian on some basis $x_1,\ldots, x_n$. We then denote $K_A$ by $K_n$, for short.
\begin{corollary}\label{Lie_Klein_n}
The (graded) Lie ring  of $K_n$ identifies with $(\Z/2)^n[X] \rtimes \Z$, where the polynomial ring $(\Z/2)^n[X]$ is seen as an abelian Lie ring (where $X^i$ is of degree $i$), and the generator $T$ of $\Z$ (of degree $1$) acts via $[v \cdot X^i, T] = v \cdot  X^{i+1}$, for $v \in (\Z/2)^n$.
\end{corollary}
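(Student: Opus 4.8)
The plan is to obtain Corollary~\ref{Lie_Klein_n} as the special case $A = \Z^n$ of Corollary~\ref{Lie_Klein_A}, since by definition $K_n = K_{\Z^n} = \Z^n \rtimes \Z$ with $\Z$ acting through the powers of $-\mathrm{id}$. Applying Corollary~\ref{Lie_Klein_A} directly gives a decomposition of graded Lie rings
\[
\Lie(K_n) \cong \Lie(2^{*-1}\Z^n) \rtimes \Z,
\]
where $\Lie(2^{*-1}\Z^n) = \bigoplus_{i \geq 1} 2^{i-1}\Z^n/2^i\Z^n$ (the $i$-th summand placed in degree $i$) and the generator $T$ of $\Z$, in degree one, acts through the degree-one map induced by $a \mapsto 2a$. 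So all that remains is to identify the abelian factor with the graded Lie ring $(\Z/2)^n[X]$ and to transport the $T$-action accordingly.

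For each $i \geq 1$, multiplication by $2^{i-1}$ induces an isomorphism $\Z^n/2\Z^n \xrightarrow{\ \sim\ } 2^{i-1}\Z^n/2^i\Z^n$, so each graded piece is a free $\Z/2$-module of rank $n$. I would then define the identification on generators by sending $v \cdot X^i$ (for $v \in (\Z/2)^n$ a basis vector and $i \geq 1$) to the class of $2^{i-1}v$ in $2^{i-1}\Z^n/2^i\Z^n$; assembling these over all $i$ yields a graded isomorphism of abelian groups $(\Z/2)^n[X] \cong \Lie(2^{*-1}\Z^n)$, under which both sides carry the trivial (abelian) Lie bracket. This is exactly the $n$-fold version of the bookkeeping already carried out for the Klein group in Corollary~\ref{Lie_Klein}, where $X^i$ was the class of $x^{2^{i-1}}$; here $v\cdot X^i$ plays the role of the class of $\bigl(\prod_j x_j^{v_j}\bigr)^{2^{i-1}}$.

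Finally I would check that the $T$-action matches the stated formula: under the identification above, the map $a \mapsto 2a$ carries the class of $2^{i-1}v$ to the class of $2^i v$, i.e.\ sends $v\cdot X^i$ to $v\cdot X^{i+1}$, which is precisely $[v\cdot X^i, T] = v\cdot X^{i+1}$. Since the whole argument is a direct specialization of Corollary~\ref{Lie_Klein_A}, there is no genuine obstacle; the only point requiring care is the indexing of the grading (the degree-$i$ piece coming from $2^{i-1}\Z^n$, not $2^i\Z^n$) and the observation that the natural ``multiplication by $X$'' on $(\Z/2)^n[X]$ is exactly the operator induced by $a \mapsto 2a$, so that the polynomial-ring notation faithfully encodes the action of $T$.
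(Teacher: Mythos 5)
Your proposal is correct and follows exactly the paper's own route: specialise Corollary~\ref{Lie_Klein_A} to $A=\Z^n$ and identify $\Lie(2^{*-1}\Z^n)$ with $(\Z/2)^n[X]$ by sending $v\cdot X^i$ to the class of $2^{i-1}v$, checking that the $T$-action corresponds to multiplication by $X$. The paper's proof is a one-line version of the same bookkeeping.
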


\begin{proof}
This is just a matter of identifying $\Lie(2^{*-1} \Z^n)$ with $(\Z/2)^n[X]$, by calling $v \cdot X^i$ the class of the sum of the $2^{i-1} x_k$ such that $v_k = 1$.
\end{proof}

\subsection{A further generalisation}

Let $A$ be an abelian group and let $\Z$ act on $A$ via the powers of some involution $\tau$ (for instance, $\tau$ could exchange two isomorphic direct factors of $A$). Let us denote by $K_\tau$ the corresponding semi-direct product $A \rtimes \Z$. We have:
\[((\tau - 1) + 2)(\tau - 1) = \tau^2 - 1 = 0,\]
which means that $\tau - 1$ acts via multiplication by $-2$ on $V := \ima(\tau - 1)$. 

\begin{proposition}\label{LCS_Klein_tau}
The LCS of $K_\tau$ is given by $\LCS_i(A \rtimes \Z) = (2^{i-2} V) \rtimes \{1\}$ for $i \geq 2$. In other words, for all $i \geq 2$, $\LCS_i^{\Z}(A) = 2^{i-2} V$. In particular, for any free abelian group $A$, $K_\tau$ is residually nilpotent.
\end{proposition}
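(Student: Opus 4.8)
The plan is to follow the pattern of Propositions~\ref{LCS_Klein} and~\ref{LCS_Klein_A}, reducing the statement to a computation of the relative lower central series $\LCS_*^{\Z}(A)$. Recall from \Spar\ref{ss:relative_LCS} the decomposition $\LCS_*(A \rtimes \Z) = \LCS_*^{\Z}(A) \rtimes \LCS_*(\Z)$. Since $\Z$ is abelian, $\LCS_i(\Z) = \{1\}$ for all $i \geq 2$, so that $\LCS_i(A \rtimes \Z) = \LCS_i^{\Z}(A) \rtimes \{1\}$ for $i \geq 2$; it therefore suffices to prove that $\LCS_i^{\Z}(A) = 2^{i-2}V$ for all $i \geq 2$.

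First I would record how the bracket acts at the level of $A$. Let $t$ denote the generator of $\Z$ and use additive notation in $A$. For any subgroup $B \subseteq A$ that is normal in $A \rtimes \Z$, the commutator $[t^n, b]$ equals $(\tau^n - 1)(b)$ for $b \in B$ (conjugation by the $A$-part of a group element is trivial, as $A$ is abelian), and these elements generate $[A \rtimes \Z, B]$. Because $\tau$ is an involution, $\tau^n - 1$ equals $\tau - 1$ for $n$ odd and $0$ for $n$ even; hence $[A \rtimes \Z, B] = (\tau - 1)(B)$. This gives the recursion $\LCS_{k+1}^{\Z}(A) = (\tau - 1)\!\left(\LCS_k^{\Z}(A)\right)$.

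Then I would run the induction. The base case is $\LCS_2^{\Z}(A) = (\tau - 1)(A) = V$. For the inductive step, supposing $\LCS_k^{\Z}(A) = 2^{k-2}V$ with $k \geq 2$, I apply $\tau - 1$: it is additive and, by the identity $(\tau - 1)^2 = -2(\tau - 1)$ noted before the statement, acts as multiplication by $-2$ on $V$. Hence $\LCS_{k+1}^{\Z}(A) = (\tau - 1)(2^{k-2}V) = 2^{k-2}(\tau - 1)(V) = 2^{k-1}V$, using $-2V = 2V$ as subgroups, which closes the induction. For the final assertion, when $A$ is free abelian the subgroup $V = \ima(\tau - 1)$ is again free abelian, so $\bigcap_{j \geq 0} 2^j V = \{0\}$ and thus $\LCS_\infty(A \rtimes \Z) = \bigcap_{i \geq 2} (2^{i-2}V) \rtimes \{1\}$ is trivial, i.e.\ $K_\tau$ is residually nilpotent.

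I do not expect a serious obstacle, since the argument runs exactly parallel to the case $\tau = -\mathrm{id}$ treated in Proposition~\ref{LCS_Klein_A}. The two points that require genuine (if mild) attention are the reduction of every higher commutator $[t^n, -]$ to $(\tau - 1)$, which relies on $\tau^2 = \mathrm{id}$, and the passage from $\tau - 1$ to multiplication by $-2$ on $V$, which is what produces the shift in exponent, so that the series is governed by $V$ and the factor $2$ only from degree $2$ onwards.
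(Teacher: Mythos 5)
Your proof is correct and follows essentially the same route as the paper: both reduce to the relative series $\LCS_*^{\Z}(A)$ via the semi-direct product decomposition, identify $\LCS_2^{\Z}(A)=\ima(\tau-1)=V$ from the commutator formula $[t,a]=(\tau-1)(a)$, and then use the identity $(\tau-1)^2=-2(\tau-1)$ to see that $\tau-1$ acts as multiplication by $\pm 2$ on $V$, giving $\LCS_i^{\Z}(A)=2^{i-2}V$ by induction. Your explicit reduction of $[t^n,-]$ to $(\tau-1)$ via $\tau^2=\mathrm{id}$ is a mild elaboration of what the paper leaves implicit, but it is not a different argument.
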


\begin{proof}
For all $a \in A$, we have $[a,t] = a - \tau(a) = (1- \tau)(a)$. This implies that $\LCS_2^{\Z}(A) = \ima(\tau - 1) = V$. Then for all $v$ in $V$, we have $[v,t] = v - \tau(v) = 2v$ in $A \rtimes \Z$, and the rest of the proof is similar to that of Proposition~\ref{LCS_Klein_A}.
\end{proof}

Corollary~\ref{Lie_Klein_A} generalises immediately to this context:
\begin{corollary}\label{Lie_Klein_tau}
The (graded) Lie ring of $K_\tau$ identifies with $(A/V \oplus V/2V \oplus 2V/4V \oplus \cdots) \rtimes \Z$, where $A/V$ and $\Z$ are in degree $1$, $2^{i-2}V/2^{i-1}V$ is in degree $i$ and the generator $T$ of $\Z$ acts via the degree-one map induced by $1 - \tau$ (which coincides with $v \mapsto 2v$ on $V$).
\end{corollary}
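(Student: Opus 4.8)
The plan is to deduce the statement directly from the semi-direct product decomposition of the associated Lie ring recalled in \Spar\ref{ss:relative_LCS}, combined with the computation of the relative LCS already carried out in Proposition~\ref{LCS_Klein_tau}. Concretely, applying the decomposition $\Lie(H \rtimes K) = \Lie^K(H) \rtimes \Lie(K)$ to $H = A$ and $K = \Z$ yields $\Lie(K_\tau) \cong \Lie^{\Z}(A) \rtimes \Lie(\Z)$. Since both $A$ and $\Z$ are abelian, each of the two factors is an abelian Lie ring: all brackets of two elements coming from $A$ vanish in $\Lie^{\Z}(A)$, and $\Lie(\Z) \cong \Z$ is concentrated in degree one, generated by the class $T$ of the generator $t$. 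Thus the entire Lie bracket of $\Lie(K_\tau)$ is encoded by the action of $T$ on $\Lie^{\Z}(A)$, and it remains only to identify the underlying graded abelian group together with this action.

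For the graded group, I would read off the terms of $\LCS_*^{\Z}(A)$ from Proposition~\ref{LCS_Klein_tau}, which gives $\LCS_1^{\Z}(A) = A$ and $\LCS_i^{\Z}(A) = 2^{i-2}V$ for all $i \geq 2$. Taking successive quotients produces $\Lie_1^{\Z}(A) = A/V$ in degree one and $\Lie_i^{\Z}(A) = 2^{i-2}V/2^{i-1}V$ in degree $i \geq 2$, which is precisely the graded group $A/V \oplus V/2V \oplus 2V/4V \oplus \cdots$ appearing in the statement.

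To determine the action, I would trace the bracket $[\overline a, T]$, for $a \in A$, back to the commutator in $K_\tau$: as computed in the proof of Proposition~\ref{LCS_Klein_tau}, one has $[a,t] = a - \tau(a) = (1 - \tau)(a)$ in $A \rtimes \Z$. Hence bracketing with $T$ is the degree-raising map induced by $1 - \tau$. Two small verifications are then needed: that this descends to a well-defined map $A/V \to V/2V$ in the bottom degree, which holds because $(1-\tau)(V) = 2V$ maps to $0$ in $V/2V$; and that on each $V$-piece the operator $1 - \tau$ coincides with multiplication by $2$, which is exactly the identity $(\tau - 1)|_V = -2$ recorded just before Proposition~\ref{LCS_Klein_tau}.

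This computation is a direct generalisation of Corollary~\ref{Lie_Klein_A}, to which it reduces when $\tau = -\mathrm{id}$ (so that $V = 2A$), and I expect no serious obstacle: the substance is entirely contained in Proposition~\ref{LCS_Klein_tau} and the decomposition $\Lie(H \rtimes K) = \Lie^K(H) \rtimes \Lie(K)$. The only point requiring genuine care is the behaviour of the operator $1 - \tau$ at the junction between degrees $1$ and $2$, where one must check that it descends to $A/V \to V/2V$; in all higher degrees it is simply multiplication by $2$ on $V$, and the verification is immediate.
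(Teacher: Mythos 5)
Your proposal is correct and follows essentially the same route as the paper, which proves this corollary exactly as the immediate generalisation of Corollary~\ref{Lie_Klein_A}: apply the decomposition $\Lie(H \rtimes K) = \Lie^K(H) \rtimes \Lie(K)$, read the graded pieces off Proposition~\ref{LCS_Klein_tau}, and identify the bracket with $T$ via $[a,t] = (1-\tau)(a)$. Your extra check that $1-\tau$ descends correctly at the junction between degrees $1$ and $2$ is the right point to be careful about and is handled correctly.
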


The reader can also easily write a generalisation of Corollary~\ref{Lie_Klein_A/center} to this context, by factoring the action of $\Z$ through $\Z/2$.

\subsection{More actions on abelian groups}\label{par_can_rep_of_W2}

Let us consider the group defined by the presentation
$\langle \sigma, \gamma \ |\ \sigma^2 = \gamma^2 = (\sigma\gamma)^4 = 1 \rangle$,
which is the Coxeter group of type $B_2$, also denoted by $W_2 = (\Z/2) \wr \Sym_2$ in the rest of the memoir. It acts on $\R^2$ in the usual way: $\gamma$ acts by \scalebox{0.5}{$\begin{pmatrix} -1 &0 \\ 0 &1 \end{pmatrix}$} and $\sigma$ by \scalebox{0.5}{$\begin{pmatrix} 0 &1 \\ 1 &0 \end{pmatrix}$}. This action preserves the lattice $\Lambda := \Z \cdot (0,1) \oplus \Z \cdot (\frac12, \frac12)$ (which is generated by roots). It also preserves the lattice $V = \Z^2$, which is of index $2$ in $\Lambda$.

\begin{proposition}\label{LCS_can_rep_of_W2}
The filtration $\LCS_*^{W_2}(\Lambda)$ on $\Lambda$ is given by: 
\[\Lambda \supset V \supset 2 \Lambda \supset 2V \supset 4 \Lambda \supset \cdots.\] 
In particular, $\Lambda \rtimes W_2$ is residually nilpotent, but not nilpotent.
\end{proposition}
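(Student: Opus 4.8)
The filtration $\LCS_*^{W_2}(\Lambda)$ on $\Lambda$ is given by $\Lambda \supset V \supset 2\Lambda \supset 2V \supset 4\Lambda \supset \cdots$. In particular, $\Lambda \rtimes W_2$ is residually nilpotent, but not nilpotent.

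The plan is to compute the relative lower central series $\LCS_*^{W_2}(\Lambda)$ directly, using the general principle recalled in the appendix: since $\Lambda$ is abelian, $\LCS_{k+1}^{W_2}(\Lambda) = [W_2, \LCS_k^{W_2}(\Lambda)]$ is the subgroup of $\Lambda$ generated by all $(g - \mathrm{id})(a)$ for $g \in W_2$ and $a \in \LCS_k^{W_2}(\Lambda)$. Because $W_2$ is generated by $\sigma$ and $\gamma$, it suffices to understand the two endomorphisms $\sigma - \mathrm{id}$ and $\gamma - \mathrm{id}$ of $\Lambda$, and to iterate. First I would fix the explicit generators $e_1 = (0,1)$ and $f = (\tfrac12, \tfrac12)$ of $\Lambda$, so that $V = \Z^2$ is spanned by $e_1$ and $e_2 = (1,0)$, and record the matrices of $\sigma$ and $\gamma$ in the basis $(e_1, f)$ (or equivalently compute their action on $e_1, e_2$ directly from the given matrices $\gamma = \left(\begin{smallmatrix} -1 & 0 \\ 0 & 1 \end{smallmatrix}\right)$ and $\sigma = \left(\begin{smallmatrix} 0 & 1 \\ 1 & 0 \end{smallmatrix}\right)$).

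The key computation is the first step: I claim $\LCS_2^{W_2}(\Lambda) = V$. One inclusion comes from evaluating $(\gamma - \mathrm{id})$ and $(\sigma - \mathrm{id})$ on the generators of $\Lambda$ and checking that all resulting vectors lie in $V = \Z^2$, and that together they generate $V$ (for instance $(\gamma - \mathrm{id})(e_2) = -2e_2$ and $(\sigma-\mathrm{id})(f) = 0$ while $(\gamma-\mathrm{id})(f)$ and $(\sigma - \mathrm{id})(e_1)$ produce $e_1, e_2$ up to sign and the index-$2$ discrepancy). Once $\LCS_2^{W_2}(\Lambda) = V$ is established, the remaining steps follow a clean alternating pattern: on $V = \Z^2$ the involutions $\sigma, \gamma$ act by signed permutations, and I would show $[W_2, V] = 2\Lambda$, then $[W_2, 2\Lambda] = 2V$, and in general $[W_2, 2^k\Lambda] = 2^kV$ and $[W_2, 2^kV] = 2^{k+1}\Lambda$. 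This reduces to the elementary observation that multiplication by $2$ commutes with the linear action, so the whole series is obtained from the two base computations $[W_2,\Lambda] = V$ and $[W_2, V] = 2\Lambda$ by scaling; an induction on $k$ then yields the displayed chain.

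With the filtration in hand, both final claims are immediate. The group $\Lambda \rtimes W_2$ is residually nilpotent because, by the decomposition $\LCS_*(\Lambda \rtimes W_2) = \LCS_*^{W_2}(\Lambda) \rtimes \LCS_*(W_2)$ from \Spar\ref{ss:relative_LCS}, the residue intersects $\Lambda$ in $\bigcap_k \LCS_k^{W_2}(\Lambda) \subseteq \bigcap_k 2^{\lfloor k/2\rfloor}\Lambda = \{0\}$ (and the $W_2$-part contributes nothing beyond the finitely many steps needed for the finite group $W_2$); and it is not nilpotent precisely because this filtration never stabilises, each term being strictly contained in the previous one. The main obstacle I anticipate is purely bookkeeping at the very first step: correctly tracking the index-$2$ relationship between $\Lambda$ and $V$ through the non-integral generator $f = (\tfrac12,\tfrac12)$, so that one verifies $\LCS_2^{W_2}(\Lambda)$ equals $V$ exactly rather than a proper sub- or super-lattice. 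After that base case is pinned down, the induction is routine.
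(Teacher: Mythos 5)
Your proposal is correct and follows essentially the same route as the paper: both compute the images of the operators $g - \mathrm{id}$ on $\Lambda$ and on $V$, establish $[W_2,\Lambda] = V$ and $[W_2,V] = 2\Lambda$, and then obtain the full alternating chain by observing that multiplication by $2$ commutes with the $W_2$-action. The only cosmetic difference is that the paper checks all eight monomial matrices of $W_2$ at once, whereas you reduce to the generators $\sigma$ and $\gamma$ via the identity $(gh-\mathrm{id})(a) = (g-\mathrm{id})(ha) + (h-\mathrm{id})(a)$ together with the $W_2$-invariance of each term of the filtration --- a valid shortcut that changes nothing of substance.
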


\begin{proof}
One can easily write down explicitly the eight matrices for the actions of elements of $W_2$ (which are the invertible monomial matrices in $\mathrm{GL}_2(\Z)$). Recall that, for every $g \in W_2$, $g - id$ is the commutator by $g$ in $\Lambda \rtimes W_2$. It is then easy to check that the $g - id$ send $\Lambda$ to $V$ (resp.~$V$ to $2\Lambda$) and that the $(g-id)(\Lambda)$ (resp.~the $(g-id)(V)$) generate $V$ (resp.~$2\Lambda$), whence the result.
\end{proof}

\begin{remark}
One can look at the proof in a geometric way, by seeing each element $g \cdot v - v$ as the difference between two vertices of a square centred at $0$.
\end{remark}

\begin{remark}
One can compute completely the associated Lie ring, which is a semi-direct product of the abelian Lie ring $(\Z/2)[X]$ by the mod $2$ Heisenberg Lie ring $\Lie(W_2) \cong \mathfrak{n}_3(\Z/2)$.
\end{remark}

\begin{remark}\label{Basis_of_Lambda}
Let us use the notations $a := (0,1)$, $b := (1,0)$ and $c := (\frac12, \frac12)$. Then $\Lambda$ can be described abstractly as the abelian group generated by $a$, $b$ and $c$, modulo the relation $a+b = 2c$. Moreover, the action of $\sigma$ fixes $c$ and exchanges $a$ and $b$, and the action of $\gamma$ is via $a \mapsto a$, $b \mapsto -b$ and $c \mapsto c - b$. In particular, since $b = 2c - a$ and  $c - b = a - c$, in the basis $(a,c)$ of $\Lambda$, $\sigma$ acts by \scalebox{0.5}{$\begin{pmatrix} -1 &0 \\ 2 &1 \end{pmatrix}$} and $\gamma$ by \scalebox{0.5}{$\begin{pmatrix} 1 &1 \\ 0 &-1 \end{pmatrix}$}. Notice that, in this basis, $V$ is just the subgroup of elements whose second coordinate is even.
\end{remark}

In the course of the proof of Proposition~\ref{LCS_B22(P2)}, we encounter a slight variation on the above action of $W_2$ on $\Lambda \cong \Z^2$. Namely, we can construct an action of the group $K = Dic_{16} \times (\Z/2)$ (where $Dic_{16}$ is the dicyclic group of order $16$, cf.~Corollary~\ref{B2(Proj)}, which is an index-$2$ central extension of $W_2$) on $\Lambda$ by making $Dic_{16} = \langle \sigma, \gamma \rangle$ act through its quotient $W_2$ (which is the quotient by $\sigma^2$) and making $\Z/2 = \langle \tau \rangle$ act by $-id$. Notice that there is already an element in $W_2$ acting by $-id$, namely the central element $(\sigma\gamma)^2$ of $W_2$, so this action is in fact through the quotient $K \twoheadrightarrow W_2$ sending respectively $\sigma$, $\gamma$ and $\tau$ to $\sigma$, $\gamma$ and $(\sigma\gamma)^2$. In particular, this implies that we have:
\[\LCS_*^K(\Lambda) = \LCS_*^{W_2}(\Lambda).\]

\section{Free products}

\subsection{Two examples}\label{ex_of_free_products}

Consider the simplest free product of two groups, which is the infinite dihedral group $\Z/2 * \Z/2 = \langle x, y \ |\ x^2 = y^2 = 1\rangle$. We can determine its LCS from its description as a semi-direct product:
\begin{proposition}\label{LCS_of_Z/2*Z/2}
There is an isomorphism:
\[\Z/2 * \Z/2 \ \cong \ \Z \rtimes (\Z/2).\] 
As a consequence, this group is residually nilpotent, and its Lie ring is $(\Z/2)[X] \rtimes (\Z/2)$, where both factors are abelian Lie rings, and the generator $T$ of $\Z/2$ acts by $[X^i,T] = X^{i+1}$.
\end{proposition}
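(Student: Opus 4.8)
\textbf{The plan.} The only substantive step is the isomorphism $\Z/2 * \Z/2 \cong \Z \rtimes (\Z/2)$; once this is in hand, both the residual nilpotence and the computation of the Lie ring are immediate consequences of Corollary~\ref{Lie_Klein_A/center} applied with $A = \Z$. Rather than invoke normal-form theory for free products to argue that a suitable element has infinite order, I would establish the isomorphism cleanly by writing down explicit homomorphisms in both directions and checking that they are mutually inverse on generators. Write $\Z/2 * \Z/2 = \langle x,y \mid x^2 = y^2 = 1\rangle$ and present the target as $\Z \rtimes (\Z/2) = \langle a, s \mid s^2 = 1,\ sas^{-1} = a^{-1}\rangle$, where $a$ generates the $\Z$ factor and $s$ the $\Z/2$ factor acting by $-\mathrm{id}$.

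First I would observe that in $\Z \rtimes (\Z/2)$ the element $sa$ is an involution: from $sas^{-1} = a^{-1}$ we get $sa = a^{-1}s$, whence $(sa)^2 = (a^{-1}s)(a^{-1}s) = a^{-1}(sa^{-1})s = a^{-1}(as)s = a^{-1}a\,s^2 = 1$. Since both $s$ and $sa$ square to $1$, the assignment $x \mapsto s$, $y \mapsto sa$ respects the defining relations of $\Z/2 * \Z/2$ and so defines a homomorphism $\varphi \colon \Z/2 * \Z/2 \to \Z \rtimes (\Z/2)$, with $\varphi(xy) = s\cdot sa = a$. In the other direction I would define $\psi \colon \Z \rtimes (\Z/2) \to \Z/2 * \Z/2$ by $a \mapsto xy$ and $s \mapsto x$. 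This is well-defined because $x^2 = 1$ handles the relation $s^2 = 1$, and because $x(xy)x^{-1} = x(xy)x = yx = (xy)^{-1}$ (using $x^2 = 1$), which is exactly the image of $sas^{-1} = a^{-1}$. Evaluating the composites on generators gives $\psi\varphi(x) = \psi(s) = x$, $\psi\varphi(y) = \psi(sa) = x\cdot xy = y$, and $\varphi\psi(a) = \varphi(xy) = a$, $\varphi\psi(s) = \varphi(x) = s$; hence $\varphi$ and $\psi$ are mutually inverse isomorphisms. Note in particular that the conjugation action of the $\Z/2$ factor on $\Z = \langle xy\rangle$ is by $-\mathrm{id}$, matching the hypothesis of Corollary~\ref{Lie_Klein_A/center}.

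It then remains to transport the conclusions of Corollary~\ref{Lie_Klein_A/center} (with $A = \Z$) across this isomorphism. That corollary gives $\LCS_i(\Z \rtimes (\Z/2)) = (2^{i-1}\Z) \rtimes \{1\}$ for $i \geq 2$; since $\bigcap_{i} 2^{i-1}\Z = \{0\}$, the group is residually nilpotent. It also identifies the associated graded Lie ring with $\Lie(2^{*-1}\Z) \rtimes (\Z/2)$, where the generator $T$ of $\Z/2$ acts by the degree-one map induced by $a \mapsto 2a$. Identifying $\Lie(2^{*-1}\Z) = \bigoplus_{i\geq 1} 2^{i-1}\Z/2^i\Z$ with $(\Z/2)[X]$ exactly as in the proof of Corollary~\ref{Lie_Klein} (calling $X^i$ the class of $2^{i-1}$), the relation $[2^{i-1}a, t] = 2\cdot 2^{i-1}a = 2^i a$ translates into $[X^i, T] = X^{i+1}$, which is the asserted description of the Lie ring.

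\textbf{Main obstacle.} There is no deep difficulty here: the entire proof is the two-line verification that $\varphi$ and $\psi$ are well-defined and inverse to one another, plus a citation. The only point requiring the slightest care is bookkeeping — making sure the \emph{direction} of the action (conjugation by $x$ sends $xy$ to its inverse, i.e.\ by $-\mathrm{id}$) lines up with the hypothesis of Corollary~\ref{Lie_Klein_A/center}, and that the identification $\Lie(2^{*-1}\Z)\cong(\Z/2)[X]$ is applied consistently so that the bracket formula $[X^i,T]=X^{i+1}$ comes out with the correct grading.
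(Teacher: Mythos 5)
Your proposal is correct and follows essentially the same route as the paper: the paper's proof also exhibits the explicit mutually inverse assignments ($x \mapsto t$, $y \mapsto ta$ and $t \mapsto x$, $a \mapsto xy$) between the two presentations and then invokes Corollary~\ref{Lie_Klein_A/center} with $A = \Z$. Your version merely spells out the relation checks and the composite computations that the paper leaves as ``easy to check''.
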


\begin{proof}
We know a presentation of each of these groups, namely $\Z/2 * \Z/2 = \langle x, y \ |\ x^2 = y^2 = 1\rangle$ and $\Z \rtimes (\Z/2) = \langle a, t \ |\ tat^{-1} = a^{-1},\  t^2 = 1\rangle$. It is then easy to check that the assignments $x \mapsto t$, $y \mapsto ta$ and $t \mapsto x$, $a \mapsto xy$ define morphisms inverse to each other. The rest is an application of Corollary~\ref{Lie_Klein_A/center} with $A = \Z$.
\end{proof}

Consider now the free product $\Z * (\Z/2) = \langle x, y \ |\ y^2 = 1\rangle$. We have a similar decomposition into a semi-direct product:
\begin{proposition}\label{Z*Z/2_as_sdp}
There is an isomorphism:
\[\Z * (\Z/2) \ \cong \ \F_2 \rtimes (\Z/2),\]
where the action of the generator $t$ of $\Z/2$ is given by exchanging the two elements $a$ and $b$ of a basis of the free group $\F_2$.
\end{proposition}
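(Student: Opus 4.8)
The plan is to prove the isomorphism exactly as was done for $\Z/2 * \Z/2$ in Proposition~\ref{LCS_of_Z/2*Z/2}: write down presentations of both sides and exhibit mutually inverse morphisms between them. First I would record that the semidirect product $\F_2 \rtimes (\Z/2)$, with $\Z/2 = \langle t \rangle$ acting by swapping the free basis $a, b$, admits the presentation
\[
\F_2 \rtimes (\Z/2) = \left\langle a, b, t\ \middle|\ t^2 = 1,\ tat^{-1} = b,\ tbt^{-1} = a \right\rangle,
\]
which is the standard presentation of a semidirect product obtained from a presentation of the kernel, a presentation of the quotient, and the relations encoding the action.

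The key observation is that one of the three ``action'' relations is redundant. Since $t^2 = 1$ we have $t^{-1} = t$, so the relation $tat^{-1} = b$ reads $b = tat$, allowing us to eliminate the generator $b$. Substituting into the remaining relation $tbt^{-1} = a$ gives $t(tat)t = t^2 a t^2 = a$, which holds trivially. Hence $\F_2 \rtimes (\Z/2) \cong \langle a, t \mid t^2 = 1 \rangle$, which is precisely the presentation of $\Z * (\Z/2) = \langle x, y \mid y^2 = 1 \rangle$ after renaming.

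To make this rigorous as an explicit isomorphism (rather than a bare identification of presentations), I would define $\phi \colon \Z * (\Z/2) \to \F_2 \rtimes (\Z/2)$ by $x \mapsto a$, $y \mapsto t$, and $\psi \colon \F_2 \rtimes (\Z/2) \to \Z * (\Z/2)$ by $a \mapsto x$, $b \mapsto yxy$, $t \mapsto y$. Both are well-defined: $\phi$ respects $y^2 = 1$ since $t^2 = 1$, while $\psi$ respects $t^2 = 1$ (as $y^2 = 1$), respects $tat^{-1} = b$ since $yxy^{-1} = yxy = \psi(b)$, and respects $tbt^{-1} = a$ since $y(yxy)y^{-1} = y^2 x y^2 = x = \psi(a)$. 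Checking $\psi \circ \phi$ and $\phi \circ \psi$ on generators then shows they are mutually inverse, using $tat = b$ in $\F_2 \rtimes (\Z/2)$ to verify $\phi(\psi(b)) = tat = b$.

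I do not expect any genuine obstacle here; the only point requiring care is the redundancy argument eliminating $b$, which is where the involution hypothesis on the action is used (the action must genuinely square to the identity, so that $t^{-1} = t$ collapses the second action relation). This mirrors the structure of Proposition~\ref{LCS_of_Z/2*Z/2} almost verbatim, the only difference being that the kernel is $\F_2$ rather than $\Z$, so no further computation of the lower central series is packaged into the statement.
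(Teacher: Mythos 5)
Your proof is correct and follows essentially the same route as the paper: writing down the two presentations and exhibiting explicit mutually inverse morphisms, with the Tietze elimination of $b$ as the underlying mechanism (the paper mentions exactly this Tietze transformation as an alternative phrasing). The only cosmetic difference is the choice of isomorphism — you send $x \mapsto a$ and $b \mapsto yxy$, whereas the paper sends $x \mapsto tb$ and $b \mapsto yx$ — but these differ only by an automorphism and both verifications go through.
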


\begin{proof}
Again, we know a presentation of each these groups, namely $\Z * (\Z/2) = \langle x, y \ |\ y^2 = 1\rangle$ and $\F_2 \rtimes (\Z/2) = \langle a, b, t \ |\ tat^{-1} = b,\ tbt^{-1} = a,\  t^2 = 1\rangle$. It is then easy to check that the assignments $x \mapsto tb$, $y \mapsto t$ and $t \mapsto y$, $a \mapsto xy$, $b \mapsto yx$ define morphisms inverse to each other. One may alternatively observe that the Tietze transformation removing the generator $a$ turns the second presentation into the first.
\end{proof}

\begin{remark}
The group $\Z * (\Z/2)$ is isomorphic to $\wB_2$ (or $\vB_2$) and the above isomorphism can be identified with $\wB_2 \cong \wP_2 \rtimes \Sym_2$, together with $\wP_2 \cong\F_2$ (with basis $(\chi_{12},\chi_{21})$).
\end{remark}

The LCS of $\Z * (\Z/2)$ is much more difficult to compute than the one of $\Z/2 * \Z/2$ above. The reader can find a presentation of the associated Lie ring in \cite{Labute}, where the methods used are somewhat different from ours. Our methods could be adapted to recover this result, together with the residual nilpotence of the group, but we will not do so here. We only give a proof of the following:
\begin{proposition}\label{Lie(Z*Z/2)}
The group $\Z * (\Z/2)$ is residually nilpotent, but not nilpotent. Its Lie ring has only $2$-torsion elements, except in degree one.
\end{proposition}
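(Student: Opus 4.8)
The plan is to work with the decomposition $\Z * (\Z/2) \cong \F_2 \rtimes (\Z/2)$ from Proposition~\ref{Z*Z/2_as_sdp}, where the generator $t$ of $\Z/2$ swaps the free generators $a,b$ of $\F_2$. The main tool will be the semi-direct product decomposition of the LCS and the induced Lie ring, namely $\Lie(\F_2 \rtimes \Z/2) \cong \Lie^{\Z/2}(\F_2) \rtimes \Lie(\Z/2)$ recalled in \Spar\ref{ss:relative_LCS}. Since $\Lie(\Z/2)$ is concentrated in degree one (the $\Z/2$ factor being abelian and finite), all the higher-degree information lives in the relative graded $\Lie^{\Z/2}(\F_2)$, so the $2$-torsion claim in degrees $\geq 2$ reduces to a statement about $\Lie^{\Z/2}(\F_2)$.

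The key computational input is the involution $\tau$ exchanging $a$ and $b$. First I would compute $\Lie_1^{\Z/2}(\F_2) = (\F_2^{\ab})_{\Z/2}$: on the abelianisation $\F_2^{\ab} \cong \Z^2$, the involution swaps the two basis vectors, so the coinvariants are $\Z^2/\langle \bar a - \bar b\rangle \cong \Z$, which accounts for the torsion-free degree-one part. The heart of the argument is then to show that $\Lie_k^{\Z/2}(\F_2)$ is $2$-torsion for every $k \geq 2$. By Corollary~\ref{torsion}, it suffices to prove this for $k = 2$: once a single graded piece is $d$-torsion, all higher ones are too (the relative LCS satisfies the same bracket-compatibility as the absolute one, so the induction in the proof of Corollary~\ref{torsion} applies verbatim). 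Thus I would concentrate on showing $\Lie_2^{\Z/2}(\F_2)$ is killed by $2$. The relevant bracket is $[t, z]$ where $z$ generates the degree-one coinvariants: writing $\zeta \in \F_2$ for a lift of $z$, one computes that conjugation by $t$ acts on the class of $\zeta$ via $\tau$, and since $\tau$ fixes the symmetric class $\bar a + \bar b$ while negating the antisymmetric one, the commutator $[t,\zeta]$ already lands, modulo higher terms, in the image of $(\tau - 1)$, on which $\tau - 1$ acts by $-2$ exactly as in Proposition~\ref{LCS_Klein_tau}. This yields $2[t,z] = 0$, establishing the $2$-torsion in degree two.

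For residual nilpotence (which is needed for the ``not nilpotent'' half to be sharp, and is asserted in the statement), I would either invoke the known result of Labute~\cite{Labute} cited just above, or observe directly that $\F_2 \rtimes \Z/2$ is residually nilpotent: $\F_2$ is residually nilpotent, the relative filtration $\LCS_*^{\Z/2}(\F_2) = \F_2 \cap \LCS_*(\F_2 \rtimes \Z/2)$ has trivial intersection because it is squeezed between the free group's own LCS-type filtrations, and the finite quotient $\Z/2$ poses no obstruction. The non-nilpotence follows because the group surjects onto $\Z/2 * \Z/2 \cong \Z \rtimes \Z/2$ (by killing $\bar a + \bar b$, equivalently by a suitable quotient of $\F_2$), whose LCS does not stop by Proposition~\ref{LCS_of_Z/2*Z/2}; hence by Lemma~\ref{lem:stationary_quotient} the LCS of $\Z * \Z/2$ does not stop either, so the group is not nilpotent.

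The main obstacle I anticipate is making the degree-two computation fully rigorous, i.e.\ pinning down exactly which element generates $\Lie_2^{\Z/2}(\F_2)$ and verifying the bracket relation $2[t,z]=0$ without accidentally quotienting away torsion-free contributions. The subtlety is that $\Lie_2^{\Z/2}(\F_2)$ receives brackets both of two degree-one \emph{relative} classes (coming from $\LCS_2(\F_2)$) and of the $\Z/2$ generator with a relative degree-one class; I must check that the free-group brackets $[\bar a, \bar b]$ do not survive as torsion-free elements in the coinvariants. The cleanest route is probably to pass to the quotient of $\F_2$ by $\LCS_3(\F_2)$, reducing to a computation in a finitely generated $2$-step nilpotent group with an explicit $\tau$-action, where the endomorphism $\tau - 1$ can be written as an honest integer matrix and the $2$-torsion read off by linear algebra as in \Spar\ref{par_can_rep_of_W2}.
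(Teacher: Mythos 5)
Your handling of the non-nilpotence (surject onto $\Z/2 * \Z/2$, apply Lemma~\ref{lem:stationary_quotient} and Proposition~\ref{LCS_of_Z/2*Z/2}) is exactly the paper's argument, and your $2$-torsion argument is correct but more roundabout than necessary: since $\Lie_k(G)=\Lie_k^{\Z/2}(\F_2)$ for $k\geq 2$ and $\Lie_2(G)$ is generated by $[\bar t,z]$ with $2\bar t=0$, one simply has $2[\bar t,z]=[2\bar t,z]=0$ and Corollary~\ref{torsion} propagates this upward; this is Example~\ref{G^ab_equals_Z+Z/d}, which needs only $G^{\ab}\cong\Z\times(\Z/2)$ and no semi-direct product structure. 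Your worry about $[\bar a,\bar b]$ is resolved the same way: $\bar a=\bar b$ in $(\F_2^{\ab})_{\Z/2}$, so that bracket vanishes by generation in degree one.

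The genuine gap is in the residual nilpotence, which is the hard part of the statement. Your ``direct'' argument --- that $\bigcap_k\LCS_k^{\Z/2}(\F_2)=\{1\}$ because this filtration is ``squeezed between the free group's own LCS-type filtrations'' and ``the finite quotient $\Z/2$ poses no obstruction'' --- does not work. The only containment available is $\LCS_k(\F_2)\subseteq\LCS_k^{\Z/2}(\F_2)$, i.e.\ the relative terms are \emph{larger} than the absolute ones (already $\LCS_2^{\Z/2}(\F_2)$ contains $ba^{-1}\notin\LCS_2(\F_2)$), so the triviality of $\bigcap_k\LCS_k(\F_2)$ says nothing about $\bigcap_k\LCS_k^{\Z/2}(\F_2)$. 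Moreover, finite quotients are precisely where the obstruction can live: $\Z/3\rtimes(\Z/2)\cong\Sym_3$ and $\Z[1/2]\rtimes(\Z/2)$ are semi-direct products of a residually nilpotent group by $\Z/2$ whose lower central series stabilise at a non-trivial subgroup; and the second example shows that knowing every $\Lie_k$ in degree $\geq 2$ (there they all vanish) still does not determine whether the residue is trivial. The paper supplies the missing input with a Magnus-type embedding of $\Z*(\Z/2)$ into the unit group of $\mathbb{F}_2\llangle X,Y\rrangle/(Y^2=1)$, sending $x\mapsto 1+X$ and $y\mapsto 1+Y$; injectivity together with $\LCS_k(1+(X,Y))\subseteq 1+(X,Y)^k$ then gives residual nilpotence. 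Your fallback of citing \cite{Labute} is also insufficient as stated: the paper attributes to Labute a presentation of the associated Lie ring, and a presentation of $\Lie(G)$ does not by itself imply that $\LCS_\infty(G)$ is trivial. If you want to avoid the Magnus expansion, you would need an actual free-product theorem, e.g.\ Gruenberg's result that a free product of residually (finite $2$)-groups is residually (finite $2$), applied to $\Z$ and $\Z/2$.
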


\begin{proof}
Note that the group $\Z * (\Z/2)$ surjects onto $\Z/2 * \Z/2$, whose LCS does not stop, by Proposition~\ref{LCS_of_Z/2*Z/2}. Notice also that the statement about torsion has already been proven in Example~\ref{G^ab_equals_Z+Z/d}. As a consequence, we only need to prove that $\Z * (\Z/2)$ is residually nilpotent, which is the difficult part of the statement. We can prove it using a kind of Magnus expansion. Namely, let us consider the associative algebra of non-commutative formal power series $A := \mathbb\F_2 \llangle X, Y \rrangle/(Y^2 = 1)$. We get a morphism $\Phi$ from $\Z * (\Z/2)$ to the group $A^\times$ of units of $A$ by sending $x$ to $1+X$ and $y$ to $1+Y$. It is injective, by the usual argument: if 
$x^{a_1} y^{\epsilon_1} \cdots x^{a_l} y^{\epsilon_l} x^{a_{l+1}}$ 
is a non-trivial reduced expression of some non-trivial element $g \in \Z * (\Z/2)$ (with $\epsilon_i = \pm 1$,  $a_i \in \Z$, and $a_1$ and $a_{l+1}$ possibly trivial), then, by writing $a_i = 2^{b_i}(2c_i + 1)$, using $(1+X)^{2^k} = 1 + X^{2^k}$ and $(1+T)^\alpha = 1 + \alpha T + \cdots$, we see that the coefficient of the monomial $X^{2^{b_1}}Y X^{2^{b_2}}Y \cdots Y X^{2^{b_{l+1}}}$ in $\Phi(g)$ is not trivial, hence $\Phi(g) \neq 1$. 

Now let us denote by $(X,Y)$ the ideal generated by $X$ and $Y$ in $A$, and by $A^\times_k$ the subgroup $1 + (X,Y)^k$ of $A^\times$ (for $k \geq 1$). It is easy to see that $[A^\times_1, A^\times_k] \subset A^\times_{k+1}$ for all $k \geq 1$. As a consequence, for all $k \geq 1$, we have $\LCS_k(A^\times_1) \subseteq A^\times_k$. Since the intersection of the $1 +(X,Y)^k$ is obviously trivial, $A^\times_1$ is residually nilpotent, whence also $\Z * (\Z/2)$, which is isomorphic to one of its subgroups.
\end{proof}

\begin{remark}\label{LCS_Z/2*Z_weak}
As mentioned at the beginning of the proof above, if one wishes only to see that the LCS of $\Z*(\Z/2)$ does not stop, one need only apply~Lemma~\ref{lem:stationary_quotient} to the obvious projection $\Z*(\Z/2) \twoheadrightarrow (\Z/2)*(\Z/2)$ and one can then deduce the required result from the much simpler explicit computation of the LCS of the infinite dihedral group $(\Z/2)*(\Z/2) \cong \Z \rtimes (\Z/2)$ done in Proposition~\ref{LCS_of_Z/2*Z/2}.
\end{remark}

\subsection{An Artin group}\label{subsubsec:an_artin_group}

Consider the Artin group of type $B_2$ (which is also $\B_{1,2}$ -- see Lemma~\ref{LCS_B12}), that is:
\[G:= \langle \sigma, x\ |\ (\sigma x)^2 = (x \sigma)^2 \rangle.\]
Let $\delta := \sigma x$. Then $(\sigma x)^2 = (x \sigma)^2$ is equivalent to $\delta^2 = \sigma^{-1} \delta^2 \sigma$, so that:
\[G = \langle \sigma, \delta \ |\ \delta^2 \sigma = \sigma \delta^2 \rangle.\]
The element $\delta^2$ commutes with the generators $\delta$ and $\sigma$, hence it is central in $G$. Since $G^{\ab}$ is free on the classes of $\delta$ and $\sigma$ (as is obvious from the presentation), $\delta^2$ is of infinite order.  Moreover, the above relation clearly becomes trivial when $\delta^2$ is killed, so that:
\[G/\delta^2 = \langle \sigma, \delta \ |\ \delta^2 = 1 \rangle \cong \Z * (\Z/2).\]
We thus have a central extension:
\[\begin{tikzcd}
\Z \ar[r, hook] &G \ar[r, two heads] &\Z * (\Z/2).
\end{tikzcd}\]

\begin{proposition}\label{Artin_B2}
The group $G= \langle \sigma, x\ |\ (\sigma x)^2 = (x \sigma)^2 \rangle$ is residually nilpotent (but not nilpotent).
\end{proposition}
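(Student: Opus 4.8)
The plan is to exploit the central extension
$\Z \hookrightarrow G \twoheadrightarrow \Z * (\Z/2)$
constructed just above, whose kernel $N = \langle \delta^2 \rangle$ (with $\delta = \sigma x$) is central and infinite cyclic, and to combine it with the residual nilpotence of $\Z * (\Z/2)$ proved in Proposition~\ref{Lie(Z*Z/2)}. The engine of the argument is Proposition~\ref{Extensions_and_residues}, which transports residual nilpotence through a quotient by a normal subgroup $N$ provided $N \cap \LCS_i(G) = \{1\}$ for some $i \geq 2$.

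First I would verify this hypothesis with $i = 2$. As noted just above the statement, $G^{\ab}$ is free abelian on the classes of $\sigma$ and $\delta$, which is immediate from the presentation $G = \langle \sigma, \delta \mid \delta^2 \sigma = \sigma \delta^2 \rangle$ (the single relation abelianises to a triviality). Hence a power $\delta^{2k}$ maps to $2k\,\overline{\delta}$ in $G^{\ab} = G/\LCS_2(G)$, and since $\overline{\delta}$ has infinite order this is trivial only for $k = 0$. Therefore no non-trivial element of $N$ lies in $\LCS_2(G)$, i.e.\ $N \cap \LCS_2(G) = \{1\}$.

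With this in hand, Proposition~\ref{Extensions_and_residues} yields that the canonical morphism $\LCS_\infty(G) \to \LCS_\infty(G/N)$ is an isomorphism; in particular $G$ is residually nilpotent if and only if $G/N$ is. Since $G/N \cong \Z * (\Z/2)$ is residually nilpotent by Proposition~\ref{Lie(Z*Z/2)}, so is $G$. For the non-nilpotence, I would simply observe that $G$ surjects onto $\Z * (\Z/2)$, whose LCS does not stop (again Proposition~\ref{Lie(Z*Z/2)}); by Lemma~\ref{lem:stationary_quotient} the LCS of $G$ does not stop either, so $G$ cannot be nilpotent.

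I do not expect any serious obstacle in the present argument: all the genuine difficulty has already been absorbed into Proposition~\ref{Lie(Z*Z/2)}, where residual nilpotence of $\Z * (\Z/2)$ was established via a Magnus-type expansion. The only point requiring a (routine) check is the intersection condition $N \cap \LCS_2(G) = \{1\}$, and I would want to state clearly that this is where the hypothesis ``$\delta^2$ has infinite order in $G^{\ab}$'' is used, since it is exactly what makes the central-extension machinery applicable here.
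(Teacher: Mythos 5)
Your proof is correct and follows exactly the paper's own argument: verify $\langle \delta^2 \rangle \cap \LCS_2(G) = \{1\}$ via the freeness of $G^{\ab}$ on $\overline{\sigma}$ and $\overline{\delta}$, then apply Proposition~\ref{Extensions_and_residues} together with the residual nilpotence of $\Z * (\Z/2)$ from Proposition~\ref{Lie(Z*Z/2)}. Your explicit treatment of non-nilpotence via Lemma~\ref{lem:stationary_quotient} matches the paper's Remark~\ref{LCS_Artin_B2_weak}.
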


\begin{proof}
We have observed above that the central subgroup $\langle \delta^2 \rangle$ injects into $G^{\ab}$, i.e., we have $\langle \delta^2 \rangle \cap \LCS_2(G) = \{1\}$. Thus, we can deduce the (strict) residual nilpotence of $G$ from the fact that $\Z * (\Z/2)$ is  (strictly) residually nilpotent (see Proposition~\ref{Lie(Z*Z/2)}) by applying Proposition~\ref{Extensions_and_residues}.
\end{proof}

\begin{remark}\label{LCS_Artin_B2_weak}
The weaker fact that the LCS of $G$ does not stop can also be deduced more directly from the reasoning of Remark~\ref{LCS_Z/2*Z_weak}.
\end{remark} 

The decomposition of $G$ into a central extension can also be used to describe its Lie ring. Namely, it can be obtained from the Lie ring of $\Z * (\Z/2)$ described in \cite{Labute}; see \Spar\ref{ex_of_free_products}.
\begin{proposition}\label{Lie(Artin_B2)}
The Lie ring of $G= \langle \sigma, x\ |\ (\sigma x)^2 = (x \sigma)^2 \rangle$ is a central extension of $\Lie(\Z * (\Z/2))$ by $\Z$, concentrated in degree one. Precisely, $2\overline{\sigma x}$ is central in $\Lie(G)$, and $\Lie(G)/(2\overline{\sigma x}) \cong \Lie(\Z * (\Z/2))$.
\end{proposition}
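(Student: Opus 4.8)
The plan is to exploit the central extension $\langle \delta^2 \rangle \hookrightarrow G \twoheadrightarrow \Z * (\Z/2)$ established just above, where $\delta = \sigma x$ and $N := \langle \delta^2 \rangle \cong \Z$, and to promote the comparison of residues from Proposition~\ref{Extensions_and_residues} to a comparison of the entire graded Lie rings. Write $Q := \Z * (\Z/2)$ and $\pi \colon G \twoheadrightarrow Q$ for the projection. Recall from the preceding discussion that $\overline{\delta^2} = 2\,\overline{\sigma x}$ has infinite order in $G^{\ab}$, so that $N \cap \LCS_2(G) = \{1\}$.

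First I would establish that $\pi$ induces isomorphisms $\LCS_k(G) \xrightarrow{\ \cong\ } \LCS_k(Q)$ for every $k \geq 2$. Surjectivity is automatic, since $\LCS_k(Q) = \pi(\LCS_k(G))$ (as in the proof of Lemma~\ref{lem:stationary_quotient}). For injectivity, the kernel of $\pi|_{\LCS_k(G)}$ is $N \cap \LCS_k(G) \subseteq N \cap \LCS_2(G) = \{1\}$ whenever $k \geq 2$, using the nesting $\LCS_k(G) \subseteq \LCS_2(G)$. Since $\pi$ moreover carries $\LCS_{k+1}(G)$ onto $\LCS_{k+1}(Q)$, these isomorphisms descend to isomorphisms $\Lie_k(\pi) \colon \Lie_k(G) \xrightarrow{\cong} \Lie_k(Q)$ for all $k \geq 2$.

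It remains to treat degree one and identify the kernel of $\Lie(\pi)$. In degree one, $\Lie_1(\pi) \colon G^{\ab} \twoheadrightarrow Q^{\ab}$ is the canonical surjection, whose kernel is the image of $N$ in $G^{\ab}$; since $N \cap \LCS_2(G) = \{1\}$, the map $N \to G^{\ab}$ is injective, so this kernel is the infinite cyclic group generated by $\overline{\delta^2} = 2\,\overline{\sigma x}$. Combining this with the previous paragraph, the kernel of $\Lie(\pi)$ is exactly $\Z \cdot 2\,\overline{\sigma x}$, concentrated in degree one, and $\Lie(G)/(2\,\overline{\sigma x}) \cong \Lie(Q) = \Lie(\Z * (\Z/2))$.

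Finally, centrality of $2\,\overline{\sigma x}$ in $\Lie(G)$ is immediate from the fact that the Lie bracket is induced by the group commutator: for any homogeneous class $\overline{g}$ arising from $g \in \LCS_k(G)$, one has $[2\,\overline{\sigma x}, \overline{g}] = [\overline{\delta^2}, \overline{g}] = \overline{[\delta^2, g]} = 0$ because $\delta^2$ is central in $G$. (Alternatively it suffices to check this against degree-one classes and to invoke generation in degree one, Proposition~\ref{generationproperty}.) This exhibits $\Lie(G)$ as the announced central extension of $\Lie(\Z * (\Z/2))$ by $\Z$ concentrated in degree one. The only point demanding genuine care is the first step — verifying that the central kernel forces the LCS terms to coincide from degree two onwards — but this is clean once one observes that $N \cap \LCS_k(G) = \{1\}$ for all $k \geq 2$.
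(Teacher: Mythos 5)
Your argument is correct and follows essentially the same route as the paper's own proof: both hinge on the observation that $\langle \delta^2 \rangle \cap \LCS_2(G) = \{1\}$ (because $\overline{\delta^2} = 2\overline{\sigma x}$ has infinite order in $G^{\ab}$), deduce that $\LCS_k(G) \to \LCS_k(\Z * (\Z/2))$ is an isomorphism for $k \geq 2$, identify the degree-one kernel as $\Z \cdot 2\overline{\sigma x}$, and obtain centrality in $\Lie(G)$ directly from the centrality of $\delta^2$ in $G$. Nothing to add.
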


\begin{proof}
We have seen that $\langle \delta^2 \rangle$ injects into $G^{\ab}$, which means that $\langle \delta^2 \rangle \cap \LCS_2(G) = \{1\}$. As a consequence, the projections $\pi \colon \LCS_k(G) \twoheadrightarrow \LCS_k(G/\delta^2)$ have trivial kernels for $k \geq 2$, which means that they are isomorphisms. Thus, the canonical morphism from $\Lie(G)$ to $\Lie(G/\delta^2)$ is an isomorphism in degree at least $2$. In degree one, it identifies with the projection of $G^{\ab} \cong \Z^2$ onto $(G/\delta^2)^{\ab} \cong \Z \times (\Z/2)$, whose kernel is generated by $\overline{\delta^2} = 2\overline \delta$. Moreover, since $\delta^2$ is central in $G$, its class in $\Lie(G)$ must be central, whence our result.
\end{proof}

\section{Wreath products}\label{appendix:wreath-products}

This section is devoted to the study of the LCS of $G \wr \Sym_\lambda = G^n \rtimes \Sym_\lambda$, where $G$ is any group, $\lambda = (n_1, \ldots, n_l)$ is a partition of the integer $n$ and $\Sym_\lambda \subseteq \Sym_n$ acts on $G^n$ by permuting the factors. By Proposition \ref{Bn(M)}, wreath products of this form are precisely the (partitioned) braid groups on manifolds of dimensions at least $3$. We first compute the abelianisation (Corollary~\ref{wr_ab_partitioned}), then we show that the LCS stops under some stability condition (Corollary~\ref{LCS_wr_sym_stable_partitioned}). Finally, we look at the unstable cases, whose LCS we compute if $G$ is abelian (\Spar\ref{par_unstable_wr}).

\smallskip

Let us remark that, in order to understand the LCS of $G \wr \Sym_\lambda$, we only need to study the LCS of $G \wr \Sym_n$. Indeed:
\[G \wr \Sym_\lambda \cong \prod\limits_{i=1}^l G \wr \Sym_{n_i}.\]

\subsection{Abelianisations}

Lemma~\ref{lem:abelianization semidirect} allows us to compute abelianisations of wreath products:
\begin{lemma}\label{wr_ab}
For any integer $n \geq 2$, we have $(G \wr \Sym_n)^{\ab} \cong G^{\ab} \times (\Z/2)$.
\end{lemma}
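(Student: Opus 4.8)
The plan is to apply Lemma~\ref{lem:abelianization semidirect} to the semidirect product decomposition $G \wr \Sym_n = G^n \rtimes \Sym_n$, where $\Sym_n$ acts on $G^n$ by permuting the factors. That lemma gives us
\[
(G \wr \Sym_n)^{\ab} \cong ((G^n)^{\ab})_{\Sym_n} \times \Sym_n^{\ab},
\]
so the proof reduces to identifying the two factors on the right-hand side.

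First I would handle the second factor. It is classical that $\Sym_n^{\ab} \cong \Z/2$ for $n \geq 2$, generated by the common class of the transpositions, via the signature homomorphism; I would simply cite this. Then I would turn to the coinvariants $((G^n)^{\ab})_{\Sym_n}$. Since abelianisation commutes with finite products, we have $(G^n)^{\ab} \cong (G^{\ab})^n$, and the permutation action of $\Sym_n$ on $G^n$ descends to the permutation action on $(G^{\ab})^n$. The key computation is therefore that the coinvariants of the $\Sym_n$-permutation representation $(G^{\ab})^n$ are isomorphic to $G^{\ab}$ (for $n \geq 2$, though in fact this holds for all $n \geq 1$). Concretely, the coinvariants are the quotient of $(G^{\ab})^n$ by the subgroup generated by all elements $v - \tau \cdot v$; since $\Sym_n$ is generated by transpositions and these transpositions move any chosen basis-coordinate to any other, the quotient map $(G^{\ab})^n \to G^{\ab}$ given by summing the coordinates induces the desired isomorphism $((G^{\ab})^n)_{\Sym_n} \cong G^{\ab}$.

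Combining these two identifications yields $(G \wr \Sym_n)^{\ab} \cong G^{\ab} \times (\Z/2)$, as claimed. There is no serious obstacle here: the whole statement is a direct bookkeeping consequence of Lemma~\ref{lem:abelianization semidirect} together with the standard computation of the coinvariants of a permutation module. The only point requiring a modicum of care is the coinvariants computation, and specifically the observation that the ``diagonal sum'' map realises the coinvariants; the hypothesis $n \geq 2$ is needed only to guarantee that $\Sym_n^{\ab}$ is nontrivial (for $n = 1$ the wreath product is just $G$ and the $\Z/2$ factor disappears).
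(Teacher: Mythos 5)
Your proof is correct and follows exactly the same route as the paper: apply Lemma~\ref{lem:abelianization semidirect} to $G \wr \Sym_n = G^n \rtimes \Sym_n$, identify $\Sym_n^{\ab} \cong \Z/2$, and compute the coinvariants of the permutation module $(G^{\ab})^n$ as $G^{\ab}$. The paper states this in one line; you merely spell out the coinvariants step (via the coordinate-sum map) more explicitly, which is fine.
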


\begin{proof}
It is a direct consequence of Lemma~\ref{lem:abelianization semidirect}, applied to $G \wr \Sym_\lambda = G^n \rtimes \Sym_\lambda$: $(G \wr \Sym_n)^{\ab} = ((G^n)^{\ab})_{\Sym_n} \times \Sym_n^{\ab} =  ((G^{\ab})^n)_{\Sym_n} \times (\Z/2) = G^{\ab} \times (\Z/2)$.
\end{proof}

\begin{corollary}\label{wr_ab_partitioned}
For any partition $\lambda = (n_1,\ldots, n_l)$ of $n$, if $l'$ denotes the number of indices $i \leq l$ such that $n_i \geq 2$, we have $(G \wr \Sym_\lambda)^{\ab} \cong (G^{\ab})^l \times (\Z/2)^{l'}$.
\end{corollary}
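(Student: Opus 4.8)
The plan is to deduce Corollary~\ref{wr_ab_partitioned} directly from Lemma~\ref{wr_ab} together with the product decomposition of wreath products noted at the start of this section. The key observation is that
\[
G \wr \Sym_\lambda \cong \prod_{i=1}^l G \wr \Sym_{n_i},
\]
so that, since abelianisation commutes with finite direct products, we have
\[
(G \wr \Sym_\lambda)^{\ab} \cong \prod_{i=1}^l (G \wr \Sym_{n_i})^{\ab}.
\]

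First I would treat the two possible shapes of factor separately. For each index $i$ with $n_i \geq 2$, Lemma~\ref{wr_ab} applies and gives $(G \wr \Sym_{n_i})^{\ab} \cong G^{\ab} \times (\Z/2)$. For each index $i$ with $n_i = 1$, we have $\Sym_{n_i} = \Sym_1 = \{1\}$, so $G \wr \Sym_1 \cong G$ and hence $(G \wr \Sym_{n_i})^{\ab} \cong G^{\ab}$; there is no $\Z/2$ factor in this case. Collecting the factors, every one of the $l$ indices contributes a copy of $G^{\ab}$, while exactly the $l'$ indices with $n_i \geq 2$ each contribute an additional $\Z/2$ factor. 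This yields
\[
(G \wr \Sym_\lambda)^{\ab} \cong (G^{\ab})^l \times (\Z/2)^{l'},
\]
as claimed.

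This argument is essentially a bookkeeping exercise, and I do not anticipate any genuine obstacle: the only mild subtlety is the borderline case $n_i = 1$, where one must remember that $\Sym_1$ is trivial and so contributes no sign factor — this is precisely what the definition of $l'$ (counting only blocks of size at least two) is designed to record. Everything else follows formally from Lemma~\ref{wr_ab} and the compatibility of abelianisation with direct products, so the proof is short.
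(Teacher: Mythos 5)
Your proposal is correct and follows exactly the paper's own argument: decompose $G \wr \Sym_\lambda$ as the direct product of the $G \wr \Sym_{n_i}$, apply Lemma~\ref{wr_ab} to the factors with $n_i \geq 2$, and note that $G \wr \Sym_1 \cong G$ contributes only $G^{\ab}$. Nothing further is needed.
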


\begin{proof}
$G \wr \Sym_\lambda$ decomposes as the direct product of the $G \wr \Sym_{n_i}$, and $(G \wr \Sym_{n_i})^{\ab}$ identifies with $G^{\ab}$ when $n_i = 1$, and with $G^{\ab} \times (\Z/2)$ if $n_i \geq 2$.
\end{proof}

\subsection{The stable case}

We now use a disjoint support argument to show that there is a stable behaviour for the LCS of $G \wr \Sym_\lambda$, occurring as soon as $n_i \geq 3$ for every $i \leq l$.

Recall that the usual generators $\tau_i$ of $\Sym_n$ are conjugate to each other, hence $\Sym_n^{\ab} \cong \Z/2$ is generated by their common class $\tau$, and $\LCS_2 = \LCS_\infty$ for $\Sym_n$. 
\begin{proposition}\label{LCS_wr_sym_stable}
Let $G$ be a group. If $n \geq 3$, the $\tau_i \tau_j^{-1}$ normally generate $\LCS_2(G \wr \Sym_n)$, and $(G \wr \Sym_n)^{\ab} \cong G^{\ab} \times \Z/2$. Moreover, the LCS of $G \wr \Sym_n$ stops at $\LCS_2$.
\end{proposition}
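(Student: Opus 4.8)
The plan is to apply the relative-LCS machinery from Lemma~\ref{commuting_representatives_rel}, using the semi-direct product decomposition $G \wr \Sym_n = G^n \rtimes \Sym_n$. By the general theory recalled in \Spar\ref{ss:relative_LCS}, we have $\Lie(G^n \rtimes \Sym_n) \cong \Lie^{\Sym_n}(G^n) \rtimes \Lie(\Sym_n)$, so it suffices to show that $\Lie^{\Sym_n}(G^n)$ is concentrated in degree one (i.e.\ that $\LCS_2^{\Sym_n}(G^n) = \LCS_3^{\Sym_n}(G^n)$) and that $\Lie(\Sym_n)$ stops at $\LCS_2$. The latter is exactly the standard fact, recalled just before the statement, that $\LCS_2(\Sym_n) = \LCS_\infty(\Sym_n)$ since $\Sym_n^{\ab} \cong \Z/2$ is cyclic, generated by the common class $\tau$ of the transpositions.

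First I would fix generating sets: take $S_K = \{\tau\}$ for $K^{\ab} = \Sym_n^{\ab}$, and for $(G^n{}^{\ab})_{\Sym_n} \cong G^{\ab}$ take a set $S_H$ of elements of the form $g^{(1)}$, a copy of a generator $g$ of $G$ placed in the first coordinate of $G^n$. Then I would verify the two commutation hypotheses of Lemma~\ref{commuting_representatives_rel} via disjoint support in the factors of $G^n$. For a pair $(g^{(1)}, h^{(1)})$ in $S_H^2$: since $n \geq 3$ and these elements represent classes in the \emph{coinvariants}, I may replace $h^{(1)}$ by its $\Sym_n$-translate $h^{(2)}$ (a conjugate, hence the same class in $(G^n{}^{\ab})_{\Sym_n}$), and $g^{(1)}$ and $h^{(2)}$ commute in $G^n$ because they live in different coordinates. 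For a pair $(g^{(1)}, \tau)$ in $S_H \times S_K$: here $n \geq 3$ is precisely what is needed, because I can choose the representative $\tau = \tau_2 = (2\,3)$ of the class $\tau \in \Sym_n^{\ab}$, whose support $\{2,3\}$ is disjoint from the coordinate $1$ supporting $g^{(1)}$; thus $\tau_2$ fixes the first factor and commutes with $g^{(1)}$ in $G^n \rtimes \Sym_n$.

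With both hypotheses verified, Lemma~\ref{commuting_representatives_rel} yields $\LCS_2^{\Sym_n}(G^n) = \LCS_3^{\Sym_n}(G^n)$, hence $\Lie^{\Sym_n}(G^n) = (G^n{}^{\ab})_{\Sym_n} \cong G^{\ab}$ is concentrated in degree one, and moreover
\[
\Lie(G \wr \Sym_n) \cong G^{\ab} \times \Lie(\Sym_n) \cong G^{\ab} \times \Z/2,
\]
where the second isomorphism uses that $\Lie(\Sym_n)$ is just $\Sym_n^{\ab} \cong \Z/2$ in degree one. This says exactly that $\Lie(G \wr \Sym_n)$ is concentrated in degree one and equal to $(G \wr \Sym_n)^{\ab}$, which by Lemma~\ref{wr_ab} is $G^{\ab} \times \Z/2$, as claimed. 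Consequently $\Lie_2(G \wr \Sym_n) = 0$, i.e.\ $\LCS_2(G \wr \Sym_n) = \LCS_3(G \wr \Sym_n)$, so the LCS stops at $\LCS_2$. The statement that the $\tau_i \tau_j^{-1}$ normally generate $\LCS_2(G \wr \Sym_n)$ follows because $\LCS_2(\Sym_n)$ is normally generated by these (they are the nontrivial elements of $\ker(\Sym_n \to \Sym_n^{\ab})$ arising as products of two transpositions) and the relative part $\LCS_2^{\Sym_n}(G^n)$ is absorbed: a chase through the decomposition $\LCS_2(G^n \rtimes \Sym_n) = \LCS_2^{\Sym_n}(G^n) \rtimes \LCS_2(\Sym_n)$ shows every generator of the first factor is itself a commutator $[\tau_i, g]$, reducing to the $\tau_i\tau_j^{-1}$ under conjugation.

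The only genuinely delicate point is making sure the disjoint-support argument is applied to \emph{representatives of the correct coinvariant classes} rather than to the naive generators — that is, exploiting the freedom to translate by $\Sym_n$ within a coinvariant class, which is exactly why $n \geq 3$ (giving room for three distinct coordinates/support sets) is the sharp hypothesis; for $n = 2$ there is no third coordinate to slide into and the argument genuinely fails, as the unstable analysis in \Spar\ref{par_unstable_wr} confirms.
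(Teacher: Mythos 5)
Your argument for the abelianisation and for $\LCS_2 = \LCS_3$ is correct and is essentially the paper's: the paper also invokes Lemma~\ref{commuting_representatives_rel} together with Lemma~\ref{wr_ab} for exactly this part, and your verification of its hypotheses (sliding a coinvariant class into a different coordinate, and choosing $\tau_2$ with support disjoint from the first coordinate, which is where $n \geq 3$ enters) is the intended disjoint-support argument.

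The weak point is your last paragraph, on the claim that the $\tau_i\tau_j^{-1}$ normally generate $\LCS_2(G \wr \Sym_n)$. First, this does not follow from $\LCS_2 = \LCS_3$: knowing the LCS stops tells you nothing about which normal subgroup of $\LCS_2$ a given family generates, so this claim needs its own argument. Second, the argument you sketch rests on the assertion that ``every generator of the first factor $\LCS_2^{\Sym_n}(G^n) = [G \wr \Sym_n, G^n]$ is itself a commutator $[\tau_i, g]$'', which is false: that subgroup also contains the commutators $[g^{(1)}, h^{(1)}] = [g,h]^{(1)}$ of two elements of $G^n$ sitting in the \emph{same} coordinate, and these are not of the form $[\tau_i, g]$ when $G$ is non-abelian. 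The correct repair is to run your disjoint-support computation \emph{modulo $N$}, the normal closure of the $\tau_i\tau_j^{-1}$, rather than modulo $\LCS_3$: since $\tau_1 \equiv \tau_2 \pmod N$ and $\tau_2$ fixes the first coordinate, one gets $h^{(1)} = \tau_2 h^{(1)} \tau_2^{-1} \equiv \tau_1 h^{(1)} \tau_1^{-1} = h^{(2)} \pmod N$, so all the classes $\overline{g}$ coincide with translates in other coordinates and hence commute with each other and with $\tau$; as these classes generate $(G \wr \Sym_n)/N$, that quotient is abelian and $\LCS_2 \subseteq N$. This is exactly how the paper proves the normal-generation statement (as a preliminary step, before applying Lemma~\ref{commuting_representatives_rel}); with that substitution your proof is complete.
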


\begin{proof}
Let $N$ be the subgroup of $G \wr \Sym_n$ normally generated by the $\tau_i \tau_j^{-1}$. These are in $\LCS_2(\Sym_n)$, whence also in $\LCS_2(G \wr \Sym_n)$, hence the latter contains $N$. In order to show the converse inclusion, we need to show that $(G \wr \Sym_n)/N$ is abelian. For any $g \in G$, let us denote by $\overline g$ the class of $(g, 1, \ldots , 1) \in G^n$ modulo $N$. We now show that the $\overline g$, together with $\tau$, generate $(G \wr \Sym_n)/N$, and we use a disjoint support argument to show that they commute with one another.

First, let us remark that $\overline g$ commutes with $\tau$. This comes from the fact that $\tau_2$ acts trivially on $(g, 1, \ldots , 1)$, thus commutes with it in $G \wr \Sym_n$. From this, we deduce that $\overline g$ is also the class of $\sigma (g, 1, \ldots , 1) \sigma^{-1} = (1, \ldots , 1, g, 1, \ldots , 1)$ for any $\sigma \in \Sym_n$ (whose class modulo $N$ is a power of $\tau$). In particular, for all $g, h \in G$, $(g, 1, \ldots , 1)$ commutes with $(1, h, \ldots , 1)$, hence $\overline g$ commutes with $\overline h$. 

Now, every $(g_1, \ldots , g_n) \in G^n$ is the product of the $(1, \ldots , 1, g_i, 1, \ldots , 1)$, so that all the elements $(1, \ldots , 1, g, 1, \ldots , 1)$ (for all $g \in G$ and any choice of position), together with the $\tau_i$, generate $G \wr \Sym_n$. This implies that their classes $\overline g$, together with $\tau$, generate $(G \wr \Sym_n)/N$. Since these generators commute with one another, this ends the proof that $N = \LCS_2(G \wr \Sym_n)$.

The rest of the statement is a direct application of Lemmas~\ref{wr_ab} and~\ref{commuting_representatives_rel}. 
\end{proof}

\begin{corollary}\label{LCS_wr_sym_stable_partitioned}
Let $G$ be a group, $n \geq 3$ be an integer and $\lambda = (n_1, \ldots , n_l)$ be a partition of $n$ with $n_i \geq 3$ for all $i$. Then the $\tau_\alpha \tau_\beta^{-1}$ for $\alpha$ and $\beta$ in the same block of $\lambda$ normally generate $\LCS_2(G \wr \Sym_\lambda)$, and $(G \wr \Sym_\lambda)^{\ab} \cong (G^{\ab} \times \Z/2)^l$. Moreover, the LCS of $G \wr \Sym_\lambda$ stops at $\LCS_2$.
\end{corollary}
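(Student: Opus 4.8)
The plan is to reduce the partitioned statement entirely to the non-partitioned case, \ref{LCS_wr_sym_stable}, which is the real content. First I would invoke the product decomposition noted at the start of Appendix~\ref{appendix:wreath-products}, namely
\[
G \wr \Sym_\lambda \cong \prod_{i=1}^l G \wr \Sym_{n_i},
\]
which holds because $\Sym_\lambda = \Sym_{n_1} \times \cdots \times \Sym_{n_l}$ acts blockwise on $G^n = \prod_i G^{n_i}$. Since the LCS of a finite direct product is the direct product of the LCS of the factors (and likewise for abelianisations and for normal generation of $\LCS_2$), every assertion in the corollary will follow coordinatewise from the corresponding assertion for a single factor $G \wr \Sym_{n_i}$, using the hypothesis $n_i \geq 3$ for each $i$.

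Concretely, I would carry out three steps in order. For the abelianisation, applying $\prod_i(-)^{\ab}$ to the decomposition and using Proposition~\ref{LCS_wr_sym_stable} (or equivalently Lemma~\ref{wr_ab}), each factor $(G \wr \Sym_{n_i})^{\ab} \cong G^{\ab} \times \Z/2$ since $n_i \geq 3 \geq 2$, giving $(G \wr \Sym_\lambda)^{\ab} \cong (G^{\ab} \times \Z/2)^l$. For the claim that the LCS stops at $\LCS_2$: since $\LCS_*$ commutes with finite direct products, $\LCS_2(G \wr \Sym_\lambda) = \prod_i \LCS_2(G \wr \Sym_{n_i}) = \prod_i \LCS_3(G \wr \Sym_{n_i}) = \LCS_3(G \wr \Sym_\lambda)$, where the middle equality is exactly the stopping statement of Proposition~\ref{LCS_wr_sym_stable} applied in each factor (valid as $n_i \geq 3$). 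For the normal generation statement, Proposition~\ref{LCS_wr_sym_stable} tells us that in the $i$-th factor $\LCS_2(G \wr \Sym_{n_i})$ is normally generated by the elements $\tau_\alpha \tau_\beta^{-1}$ with $\alpha,\beta$ in the $i$-th block; taking the union over $i$ and noting that normal generators of each factor of a direct product are normal generators of that factor inside the product yields the desired set of normal generators for $\LCS_2(G \wr \Sym_\lambda)$.

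The only point requiring a modicum of care is the compatibility of $\LCS_2$ and its normal generating sets with the product decomposition, which is a standard and elementary fact: for a direct product $\prod_i H_i$ one has $\LCS_k(\prod_i H_i) = \prod_i \LCS_k(H_i)$, and a normal subgroup of $H_i$ normally generated (inside $H_i$) by a set $S_i$ is, viewed inside the product, normally generated by $S_i$ as well, since conjugation in the product respects the factors. I expect no genuine obstacle here — the substance of the result lives entirely in Proposition~\ref{LCS_wr_sym_stable}, and this corollary is purely its blockwise bookkeeping. I would therefore keep the argument to a few lines, citing Proposition~\ref{LCS_wr_sym_stable} in each of the three steps and the product decomposition once at the outset.
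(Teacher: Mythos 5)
Your proposal is correct and is exactly the paper's argument: the paper's proof of this corollary is the single line ``apply Proposition~\ref{LCS_wr_sym_stable} to each factor of $G \wr \Sym_\lambda \cong \prod_{i=1}^l G \wr \Sym_{n_i}$'', and you have merely spelled out the (standard) compatibility of $\LCS_*$, abelianisation, and normal generation with finite direct products. No issues.
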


\begin{proof}
Apply Proposition~\ref{LCS_wr_sym_stable} to each factor of
$G \wr \Sym_\lambda \cong \prod\limits_{i=1}^l G \wr \Sym_{n_i}$.
\end{proof}

\subsection{Unstable cases}\label{par_unstable_wr}

Since $G \wr \Sym_1 = G$, the only case left in our study of $G \wr \Sym_\lambda$ is the case of $G \wr \Sym_2$, which can be quite complicated. We treat the case where $G$ is an abelian group, which we denote by $A$.

\begin{proposition}\label{A_wr_S2}
Let us denote by $\delta A$ the subgroup $\{(a,-a)\ |\ a \in A \}$ of $A^2$.  For all $i \geq 2$, we have  $\LCS_i(A \wr \Sym_2) = 2^{i-2} (\delta A)$. Moreover, the Lie algebra decomposes as:
\[\Lie(A \wr \Sym_2) \cong \left(A \oplus A/2A \oplus 2A/4A \oplus \cdots \right) \rtimes (\Z/2),\] 
where $A$ and $\Z/2$ are in degree $1$ and each factor of the form $2^{i-2}A/2^{i-1}A$ is in degree $i$. The Lie ring $A \oplus A/2A \oplus \cdots$ is abelian and the generator $T$ of $\Z/2$ acts on it via the degree-one map:
\[\begin{cases}
a \mapsto \overline a             &\text{in degree}\ 1, \\
\overline a \mapsto \overline{2a} &\text{in degree at least}\ 2.
\end{cases}\]
\end{proposition}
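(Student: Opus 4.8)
The plan is to apply the relative-LCS machinery of the appendix directly, treating $A \wr \Sym_2 = A^2 \rtimes (\Z/2)$ as a semi-direct product with abelian kernel $A^2$. The action of the nontrivial element $\tau \in \Sym_2$ on $A^2$ is the flip $(a,b) \mapsto (b,a)$, which is an involution. This is precisely the situation of Proposition~\ref{LCS_Klein_tau} and its corollary, with the involution $\tau$ acting on the abelian group $A^2$, so the bulk of the work is to identify the relevant invariant subgroups and then quote those results (after factoring the $\Z$-action through $\Z/2$).

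First I would compute the image $V := \ima(\tau - 1)$ inside $A^2$. Since $(\tau - 1)(a,b) = (b - a, a - b) = (b-a)\cdot(1,-1)$, the image is exactly $\delta A = \{(a,-a) \mid a \in A\}$, which is the subgroup named in the statement. This already gives $\LCS_2^{\Z/2}(A^2) = \delta A$ by the first line of the proof of Proposition~\ref{LCS_Klein_tau}. Next, for $v = (a,-a) \in \delta A$, one has $(\tau - 1)(v) = (-a - a, a + a) = -2(a,-a) = -2v$, so $\tau - 1$ acts as multiplication by $-2$ on $V = \delta A$; hence $\LCS_{i}^{\Z/2}(A^2) = 2^{i-2}(\delta A)$ for all $i \geq 2$, exactly as in Proposition~\ref{LCS_Klein_tau}. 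Using the decomposition $\LCS_*(A^2 \rtimes \Z/2) = \LCS_*^{\Z/2}(A^2) \rtimes \LCS_*(\Z/2)$ from \Spar\ref{ss:relative_LCS}, and noting $\LCS_i(\Z/2) = \{1\}$ for $i \geq 2$, this yields the claimed formula $\LCS_i(A \wr \Sym_2) = 2^{i-2}(\delta A)$.

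For the Lie ring, I would invoke the decomposition $\Lie(A^2 \rtimes \Z/2) \cong \Lie^{\Z/2}(A^2) \rtimes \Lie(\Z/2)$. The graded pieces of $\Lie^{\Z/2}(A^2)$ are $\Lie_1 = (A^2)_{\Z/2} = (A^2)/\delta A \cong A$ (the coinvariants, identified with $A$ via $(a,b) \mapsto a + b$, consistent with $\Lie_1^{\Z/2}(H) = (H^{\ab})_{\Z/2}$), and for $i \geq 2$, $\Lie_i = 2^{i-2}(\delta A)/2^{i-1}(\delta A) \cong 2^{i-2}A/2^{i-1}A$ via $(a,-a) \mapsto a$. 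This matches the stated graded abelian group $A \oplus A/2A \oplus 2A/4A \oplus \cdots$. The bracket with the degree-one generator $T$ of $\Z/2$ comes from the commutator action of $\tau$, i.e.\ from $v \mapsto (1 - \tau)(v)$; in degree $1$ this sends the class of $(a,0)$ to the class of $(a,0) - (0,a) = (a,-a)$, which is $\overline{a}$ in $\Lie_2$, giving $a \mapsto \overline a$; in degrees $\geq 2$ it coincides with $v \mapsto 2v$ on $V$, giving $\overline a \mapsto \overline{2a}$.

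The main obstacle is not conceptual but a matter of bookkeeping: keeping the two distinct identifications straight — $\Lie_1 \cong A$ via coinvariants (the quotient $A^2/\delta A$) versus $\Lie_{\geq 2} \cong$ subquotients of $\delta A$ itself — and verifying that the degree-one action $a \mapsto \overline a$ correctly records the passage from the coinvariant copy of $A$ into $\delta A$. I would take care to check that the map $(a,0) \mapsto (a,-a)$ is well-defined on coinvariants modulo $\delta A$ and lands in $V/2V$ as claimed, since this is the one place where the two identifications interact and where a sign or factor-of-two error could creep in. Everything else is a direct application of Proposition~\ref{LCS_Klein_tau} and Corollary~\ref{Lie_Klein_tau} with the action factored through $\Z/2$, exactly as the corollary's closing remark anticipates.
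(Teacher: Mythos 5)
Your proposal is correct and follows essentially the same route as the paper: the paper's proof likewise reduces everything to Proposition~\ref{LCS_Klein_tau} and Corollary~\ref{Lie_Klein_tau} with the $\Z$-action factored through $\Z/2$, identifying $V = \ima(\tau-1) = \delta A$, noting that $\tau - 1$ acts as multiplication by $-2$ on $V$, and identifying $A^2/V \cong A$ together with the map induced by $1-\tau$. Your extra care about the two identifications ($\Lie_1$ via coinvariants versus $\Lie_{\geq 2}$ via subquotients of $\delta A$) is exactly the right place to be careful, and your verification there is sound.
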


\begin{proof}
This is a straightforward application of Proposition~\ref{LCS_Klein_tau} and Corollary~\ref{Lie_Klein_tau} (adapted to an action of $\Sym_2 \cong \Z/2$ instead of $\Z$). Namely, $V = \delta A$ is the subspace of $A^2$ on which $\Sym_2$ acts by $- id$. Moreover, $A^2/V \cong A$ (via $\overline{(a,0)} \mapsfrom a$) and the map induced by $1 - \tau$ identifies with the one described in our statement.
\end{proof}

\begin{corollary}\label{LCS_G_wr_S2}
Let $G$ be a group and $\lambda$ be a partition with at least one block of size $2$. Suppose that the filtration $2^* G^{\ab}$ of $G^{\ab}$ does not stop. Then the LCS of $G \wr \Sym_\lambda$ does not stop.
\end{corollary}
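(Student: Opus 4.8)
The plan is to reduce the statement, in two formal steps, to the abelian wreath product $G^{\ab} \wr \Sym_2$, whose entire lower central series is computed in Proposition~\ref{A_wr_S2}, and then to read off the conclusion directly from that computation.

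First I would use the direct product decomposition $G \wr \Sym_\lambda \cong \prod_{i=1}^l G \wr \Sym_{n_i}$ recalled at the start of \Spar\ref{appendix:wreath-products}. Since $\lambda$ has at least one block of size $2$, one of the factors on the right is $G \wr \Sym_2$, so projecting onto that factor yields a surjection $G \wr \Sym_\lambda \twoheadrightarrow G \wr \Sym_2$. By the contrapositive of Lemma~\ref{lem:stationary_quotient}, it then suffices to show that the LCS of $G \wr \Sym_2$ does not stop. Next I would pass to the abelianisation: writing $A := G^{\ab}$, functoriality of the wreath product applied to the canonical projection $G \twoheadrightarrow A$ gives a surjection $G \wr \Sym_2 \twoheadrightarrow A \wr \Sym_2$, and Lemma~\ref{lem:stationary_quotient} again reduces us to proving that the LCS of $A \wr \Sym_2$ does not stop.

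Finally I would invoke Proposition~\ref{A_wr_S2}, which states that $\LCS_i(A \wr \Sym_2) = 2^{i-2}(\delta A)$ for all $i \geq 2$, where $\delta A = \{(a,-a)\mid a \in A\}$. The assignment $a \mapsto (a,-a)$ is a group isomorphism $A \cong \delta A$ carrying $2^j A$ onto $2^j(\delta A)$ for every $j$; hence the equality $2^{i-2}(\delta A) = 2^{i-1}(\delta A)$ holds if and only if $2^{i-2}A = 2^{i-1}A$. Thus, for $i \geq 2$, the series $\LCS_*(A \wr \Sym_2)$ is stationary at step $i$ precisely when the filtration $2^*A = 2^* G^{\ab}$ is. Moreover the hypothesis that $2^* G^{\ab}$ does not stop forces $A$ to be infinite (a decreasing chain of finite $2$-power subgroups would stabilise), so $A \wr \Sym_2$ is non-abelian and $\LCS_1 \neq \LCS_2$. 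Therefore the LCS of $A \wr \Sym_2$ fails to stop at every step exactly when $2^* G^{\ab}$ does not stop, which is the standing assumption, and the result follows.

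The argument is almost entirely formal, the content being carried by Proposition~\ref{A_wr_S2}; the only points requiring genuine care are the identification of the filtration $2^*(\delta A)$ with $2^* A$ via the isomorphism $\delta A \cong A$, the bookkeeping at the bottom of the series (verifying $\LCS_1 \neq \LCS_2$, which is where one uses that $2^* G^{\ab}$ not stopping makes $A$ infinite), and the observation that each reduction step preserves non-stopping — precisely the content of the contrapositive of Lemma~\ref{lem:stationary_quotient}.
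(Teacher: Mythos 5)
Your proof is correct and follows essentially the same route as the paper: reduce to the abelian case $G^{\ab}\wr\Sym_2$ via quotients and the contrapositive of Lemma~\ref{lem:stationary_quotient}, then read off non-stopping from Proposition~\ref{A_wr_S2}. The only difference is cosmetic --- you project to the size-$2$ block before abelianising, whereas the paper abelianises the whole wreath product first --- and you spell out the identification of $2^{*}(\delta A)$ with $2^{*}A$, which the paper leaves implicit.
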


\begin{proof}
The group $G^{\ab} \wr \Sym_\lambda$ is a quotient of $G \wr \Sym_\lambda$, whose LCS does not stop by Proposition~\ref{A_wr_S2}. Thus, our statement follows from Lemma~\ref{lem:stationary_quotient}.
\end{proof}

\begin{remark}
If $G^{\ab}$ is finitely generated, the condition in Corollary \ref{LCS_G_wr_S2} holds if and only if it is infinite. In general, the condition is equivalent to $2^i A \neq \{0\}$ for all $i\geq 1$, where $A$ is the quotient of $G^{\ab}$ by its maximal $2$-divisible subgroup.
\end{remark}

\subsection{About general wreath products} 

Let us indicate briefly how the work done above generalises to study the LCS of $G \wr K = G^X \rtimes K$, where $G$ is a group, $K$ is a group acting on a finite set $X$ and $G^X$ denotes the group of functions $X \to G$. Notice first that Lemma \ref{wr_ab} generalises easily to $(G \wr K)^{\ab} \cong (G^{\ab})^{X/K} \times K^{\ab}$.

Then, by looking closely at the proof of Proposition~\ref{LCS_wr_sym_stable}, one can devise a stability hypothesis that ensures that Lemma~\ref{commuting_representatives_rel} can be applied, so that $\Lie(G \wr K) \cong (G^{\ab})^{X/K} \times \Lie(K)$. In fact, one needs to assume that $K$ acts on $X$ without fixed points, and that we can find a set $S$ generating $K$ such that for all $s \in S$, each $K$-orbit of $X$ has a fixed point under the action of $s$. This means that each orbit must be large enough, but also large enough with respect to the supports of the generators of $K$. This is satisfied for instance when $K$ is a subgroup of $\Sym_n$ generated by transpositions (which means exactly that $K$ is conjugate to $\Sym_\lambda$ for some partition $\lambda$ of $n$), and all its orbits are of size at least $3$.

\chapter{Presentation of an extension}\label{section_pstation_of_ext}

Here we recall the classical construction of a presentation of a group extension from a presentation of the quotient and a presentation of the kernel, together with some knowledge of the structure of the extension (see also \cite[\Spar 2.4.3]{HBE}). We then apply this construction to show that $2$-nilpotent groups whose abelianisation is free are determined by their Lie ring. 

\medskip

Let $G$ be a group, which is an extension of a quotient $K$ by a normal subgroup $H$. Suppose that presentations of $H$ and $K$ are known, namely $H = \langle X | R \rangle$ and $K = \langle Y | S \rangle$, where $R$ is a subset of the free group $F[X]$ (resp.~$S \subset F[Y]$). For each $y$ in $Y$, let us fix a lift $\tilde y$ of the corresponding generator of $K$ to an element of $G$. Then a presentation of $G$ is given by 
\[G = \langle X \sqcup Y | R \cup \widetilde S \cup T \rangle,\]
where $\widetilde S$ and $T$ are obtained as follows:
\begin{itemize}
\item Each $s \in S$ is a word in the elements of $Y$ and their inverses. If we replace each $y$ in $s$ by its chosen lift $\tilde y$, we get an element $\tilde s$ of $G$, which is in fact in $H$, since its projection to $K$ is trivial by construction. Each element of $H$ is represented by a word on the elements of $X$ and their inverses, so we can choose some $w_s \in F[X]$ representing $\tilde s$. Then $\widetilde S$ is the following set of relations:
\[\widetilde S := \{s w_s^{-1}\ |\ s \in S\} \subset F[X \sqcup Y].\]
\item For each $y \in Y$ and each $x \in X$, the element $\tilde y x \tilde y^{-1}$ is an element of $H$, which can be represented by a word $w_{x,y} \in F[X]$. Then we define:
\[T := \{yxy^{-1}w_{x,y}^{-1}\ |\ x \in X,\ y \in Y\} \subset F[X \sqcup Y].\]
\end{itemize}

\begin{remark}
If the presentations of $H$ and $K$ are finite, this construction gives a finite presentation of $G$. 
\end{remark}

\begin{remark}[Split extensions]
When the extension splits (that is, when $G$ is a semi-direct product of $K$ by $H$), one usually chooses the lifts of generators of $K$ to be their images under a fixed section. Then the presentation obtained is somewhat simpler, since the relations in $S$ hold in $G$ (that is, $\widetilde S=S$).
\end{remark}

\begin{proposition}\label{pstation_of_ext}
The above presentation is indeed a presentation of the extension $G$.
\end{proposition}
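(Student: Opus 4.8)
Let $P := \langle X \sqcup Y \mid R \cup \widetilde S \cup T\rangle$ denote the group defined by the candidate presentation. The plan is to produce a surjection $\phi \colon P \twoheadrightarrow G$ and then show it is injective. First I would define $\phi$ on generators by sending each $x \in X$ to the corresponding element of $H \subseteq G$ and each $y \in Y$ to its chosen lift $\tilde y$. Checking that $\phi$ respects the three families of relations is routine and holds \emph{by construction}: the relations $R$ hold because they hold in $H$; each relation $s w_s^{-1} \in \widetilde S$ maps to $\tilde s\, \tilde s^{-1} = 1$ since $w_s$ was chosen to represent $\tilde s$; and each $yxy^{-1}w_{x,y}^{-1} \in T$ maps to $\tilde y x \tilde y^{-1}(\tilde y x \tilde y^{-1})^{-1} = 1$. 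Surjectivity is immediate, since $G$ is generated by $H = \langle \phi(X)\rangle$ together with the lifts $\tilde y = \phi(y)$, whose classes generate $K = G/H$.

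Next I would control $\ker\phi$ by comparing with the quotient map to $K$. Killing the generators $X$ in $P$ collapses $R$ and $T$ to trivial relations and turns each $\widetilde S$-relation $s w_s^{-1}$ into $s$ (as $w_s \in F[X]$), so the quotient of $P$ by the normal closure $\overline N$ of $X$ is exactly $\langle Y \mid S\rangle = K$; write $\pi \colon P \twoheadrightarrow K$ for the resulting projection. A check on generators shows $p \circ \phi = \pi$, where $p \colon G \twoheadrightarrow K$ is the extension. Hence any element of $\ker\phi$ is sent to $1$ by $\pi$, i.e. $\ker\phi \subseteq \overline N$. This is exactly where the relations $\widetilde S$ are essential: they are precisely what makes $P/\overline N$ equal to $K$ rather than something larger.

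The key remaining step is to show that the subgroup $N := \langle X\rangle \le P$ is already \emph{normal}, so that $\overline N = N$, and that $\phi$ restricts to an isomorphism $N \cong H$. Normality under conjugation by $X$ is automatic, and $T$ gives $yxy^{-1} = w_{x,y} \in N$ for $y \in Y$; the only delicate point — and the main obstacle — is conjugation by the \emph{inverses} $y^{-1}$, since $T$ records only one direction. Here I would use that $H = \langle X \mid R\rangle$ is a faithful presentation: letting $v_{x,y} \in F[X]$ represent $\tilde y^{-1} x \tilde y \in H$, one rewrites $y\, v_{x,y}\, y^{-1}$ into a word $u \in F[X]$ using $T$, observes that $\phi(u) = \tilde y(\tilde y^{-1} x \tilde y)\tilde y^{-1} = x$ in $H$, and concludes that $u = x$ is a consequence of $R$ — hence holds in $P$. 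Therefore $y^{-1} x y = v_{x,y} \in N$, which establishes normality and $\overline N = N$. Finally, since the relations $R$ hold among $X$ in $P$, there is a homomorphism $\iota \colon H \to P$ with image $N$ satisfying $\phi \circ \iota = \mathrm{id}_H$; as $\iota$ maps onto $N$, this identity forces $\phi|_N$ to be injective. Combining $\ker\phi \subseteq N$ with the injectivity of $\phi|_N$ gives $\ker\phi = 1$, so $\phi$ is an isomorphism, as claimed.
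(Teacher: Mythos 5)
Your proof is correct and follows essentially the same route as the paper's: both define the surjection from the presented group, use the faithfulness of $\langle X\mid R\rangle$ to identify $\langle X\rangle$ with $H$, handle the delicate point of conjugation by $y^{-1}$ via that identification (you by explicit rewriting of $y\,v_{x,y}\,y^{-1}$, the paper by observing that conjugation by $y$ is an automorphism of $H_0$ because it corresponds to conjugation by $\tilde y$ on $H$), and then identify the quotient by $\langle X\rangle$ with $\langle Y\mid S\rangle$. Your final kernel chase ($\ker\phi\subseteq N$ plus injectivity of $\phi|_N$) is just an unpacked version of the paper's appeal to the Five Lemma.
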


\begin{proof}
Let $G_0$ be the group defined by the above presentation. By construction, the assignments $x \mapsto x \in H \subset G$ and $y \mapsto \tilde y$ induce a well-defined morphism $\pi$ from $G_0$ to $G$. Let $H_0$ be the subgroup of $G_0$ generated by $X$. The morphism $\pi$ restricts to a morphism $\pi_H \colon H_0 \rightarrow H$. Since the relations $R$ are satisfied in $H_0$, we can construct an inverse to $\pi_H$: it is an isomorphism.

The relations $T$ ensure that $H_0$ is stable under left conjugation by the $y \in G_0$. Moreover, for all $h \in H$,  $\pi_H(y h y^{-1}) = \pi(y) \pi(h) \pi(y)^{-1} =  \tilde y \pi_H(h) \tilde y^{-1}$.
Since $H$ is normal in $G$, left conjugation by $\tilde y$ is an automorphism of $H$. Since $\pi_H$ is an isomorphism, left conjugation by $y \in G_0$ must be an automorphism of $H_0$, which implies that $H_0$ is stable under left conjugation by $y^{-1}$. Finally, $H_0$ is stable under left conjugation by $y^{\pm 1}$, and also (clearly) by $x^{\pm 1}$, so it is normal in $G_0$.

As a consequence, $\pi$ induces a morphism of extensions:
\[\begin{tikzcd}
H_0 \ar[r, hook] \ar[d, "\pi_H"] 
&G_0 \ar[r, two heads] \ar[d, "\pi"] 
&G_0/H_0 \ar[d, dashed, "\overline\pi"] \\
H \ar[r, hook] &G \ar[r, two heads]  &K.
\end{tikzcd}\]
The relations $R$ and $T$ become trivial in $G_0/H_0$, and $\widetilde S$ reduces to $S$ there, so this quotient admits the presentation $\langle Y | S \rangle$. This implies that $\overline\pi$ is an isomorphism. Since we already know that $\pi_H$ is an isomorphism, the Five Lemma allows us to conclude that $\pi$ is an isomorphism.
\end{proof}

\begin{remark}\label{inverses_of_gen_in_pstation_of_ext}
We can replace some of the generators $y$ by their inverses before doing this construction, so we can choose $T$ to encode either left conjugation by $y$, or right conjugation by $y$, for each $y$. 
\end{remark}

\begin{corollary}\label{cor:two_nilpotent_presentation}
If $G$ is a $2$-nilpotent group whose abelianisation is free abelian, then $G$ is determined (up to isomorphism) by its associated Lie ring.
\end{corollary}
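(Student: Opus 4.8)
The plan is to realise $G$ as a central extension and then invoke the explicit presentation of an extension (Proposition~\ref{pstation_of_ext}) to write down a presentation of $G$ that visibly depends only on the data packaged in $\Lie(G)$.

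First I would record the structural consequences of the hypotheses. Since $G$ is $2$-nilpotent, $\LCS_3(G) = \{1\}$, so that $B := \LCS_2(G)$ satisfies $[G, B] = \LCS_3(G) = \{1\}$; thus $B$ is a central (in particular abelian) subgroup of $G$, and $A := G/B = G^{\ab}$ is free abelian by hypothesis. We therefore have a central extension
\[
B \hookrightarrow G \twoheadrightarrow A.
\]
Its associated Lie ring is concentrated in degrees $1$ and $2$: one has $\Lie_1(G) = A$, $\Lie_2(G) = \LCS_2(G)/\LCS_3(G) = B$, and the only non-zero bracket is the commutator pairing $\beta \colon A \times A \to B$, $\beta(\bar g, \bar h) = \overline{[g,h]}$, which is $\Z$-bilinear and alternating. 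So the datum of $\Lie(G)$ is exactly the triple $(A, B, \beta)$, with $A$ free abelian and $B$ abelian.

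Next I would apply the construction of this appendix. Choose a $\Z$-basis $Y = \{y_i\}$ of $A$ and a presentation $B = \langle X \mid R\rangle$ as an abelian group; lift each $y_i$ to some $\tilde y_i \in G$. Because $A$ is free abelian, it admits the presentation $\langle Y \mid S \rangle$ with $S = \{[y_i,y_j] : i<j\}$ consisting only of commutators. By Proposition~\ref{pstation_of_ext}, $G = \langle X \sqcup Y \mid R \cup \widetilde S \cup T\rangle$, where: (i) $T$ expresses that $B$ is central, which uses only $2$-nilpotence; and (ii) for $s = [y_i,y_j] \in S$, the lift $\tilde s = [\tilde y_i, \tilde y_j]$ is a well-defined element of $B$, equal to $\beta(y_i, y_j)$, so that $\widetilde S$ records precisely the relations $[y_i,y_j] = w_{ij}$, where $w_{ij} \in F[X]$ represents $\beta(y_i,y_j)$. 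Hence the entire presentation is built from $(A,B,\beta)$ together with the auxiliary choices $(Y, \langle X\mid R\rangle)$ alone. Given another such group $G'$ and a graded Lie-ring isomorphism $\phi = \phi_1 \oplus \phi_2 \colon \Lie(G) \xrightarrow{\sim} \Lie(G')$, I would transport the basis $Y$ to the basis $\phi_1(Y)$ of $A'$ and the presentation of $B$ to one of $B'$ via $\phi_2$; the compatibility $\phi_2(\beta(y_i,y_j)) = \beta'(\phi_1 y_i, \phi_1 y_j)$ makes the two presentations literally coincide, whence $G \cong G'$.

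I expect the one point requiring care to be showing that the presentation genuinely involves no data beyond the bracket. This is exactly where the free-abelian hypothesis on $A = G^{\ab}$ is essential: if $A$ carried a relation such as $y_1^n = 1$, its lift would force $\tilde y_1^{\,n}$ to equal some element $c \in B$ which is not a value of $\beta$, and is hence invisible to $\Lie(G)$; different admissible values of $c$ would then produce non-isomorphic groups with the same Lie ring. Freeness guarantees that $S$ consists only of commutators, so that $\widetilde S$ is captured entirely by $\beta$; combined with centrality of $B$ (which pins down $T$ as the centrality relations, independently of the lifts), this removes the ambiguity that would otherwise obstruct reconstruction.
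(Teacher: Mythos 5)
Your proposal is correct and follows essentially the same route as the paper: realise $G$ as the central extension $\LCS_2(G) \hookrightarrow G \twoheadrightarrow G^{\ab}$, apply Proposition~\ref{pstation_of_ext} with the commutator presentation of the free abelian quotient, and observe that $\widetilde S$ is determined by the bracket $\Lie_1 \times \Lie_1 \to \Lie_2$ while $T$ reduces to centrality relations. Your added remarks on transporting the presentation along a Lie-ring isomorphism and on why freeness of $G^{\ab}$ is indispensable are accurate elaborations of the same argument.
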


\begin{proof}
We construct a presentation of $G$ which depends only on the structure of $\Lie(G)$ (and on some choices not involving elements of $G$). The group $G$ is an extension of $G^{\ab} = \Lie_1(G)$ by $\LCS_2(G) = \Lie_2(G)$, to which we can apply the previous construction. Since $G^{\ab}$ is free abelian on some set $Y$, a presentation of this group is given by generators $Y$ and relations $\{[y,z]\ |\ y,z \in Y\}$. Let $\langle X | R \rangle$ be a presentation of the group $\Lie_2(G)$. Then $G$ admits the presentation with generators $X \sqcup Y$ and relations $R \cup \widetilde S \cup T$, constructed as above. We need to show that $S$ and $T$ can be recovered from calculations in $\Lie(G)$ alone.
\begin{itemize}
\item Let $s \in S$, that is, $s = [y,z]$ for some $y, z \in Y$. Then, by definition of $\Lie(G)$, $\tilde s = [\tilde y, \tilde z]$ is the element of $\LCS_2(G) = \Lie_2(G)$ given by the bracket of $y, z \in \Lie_1(G)$.
\item Since $[G, \LCS_2(G)] = \{1\}$, the set $T$ consists of the relations $[y,x]$, for $x \in R$ and $y \in S$. 
\end{itemize}
Thus the above presentation of $G$ can be obtained from the data of $\Lie(G)$, as claimed.
\end{proof}

\begin{remark}
Corollary~\ref{cor:two_nilpotent_presentation} is not true in general if the abelianisation of $G$ is not free. For example, the dihedral group $D_8$ of order $8$ and the quaternion group $Q_8$ are not isomorphic, but they are $2$-nilpotent groups whose Lie rings are isomorphic. Indeed, in both cases, we have $\Lie_1 G = (\Z/2)^2$ and $\Lie_2 G = \Z/2$ and the Lie structure is fully determined by saying that whenever $a$ and $b$ are two distinct non-trivial elements of $\Lie_1 G$, then $[a,b]$ is non-trivial in $\Lie_2 G$.
\end{remark}

\backmatter
\bibliographystyle{amsalpha}
\bibliography{biblio}
\printindex

\end{document}